\numberwithin{equation}{section}
\theoremstyle{plain}
\newtheorem{theo}{Theorem}[section]
\newtheorem{prop}[theo]{Proposition}
\newtheorem{coro}[theo]{Corollary} 
\newtheorem{lemm}[theo]{Lemma}
\theoremstyle{definition}
\newtheorem{defi}[theo]{Definition}
\newtheorem{rema}[theo]{Remark}
\newtheorem{theo-defi}[theo]{Theorem-Definition}
\newtheorem{prop-defi}[theo]{Proposition-Definition}
\newtheorem{rema-defi}[theo]{Remark-Definition}
\newtheorem{exem-defi} [theo]{Example-Definition}
\newtheorem{exem}[theo]{Example}
\newtheorem{conj}[theo]{Conjecture}
\def \al{\alpha}
\def \bet{\beta}
\def \bul{\bullet}
\def \col{\colon}
\def \Del{\Delta}
\def \eps{\epsilon}
\def \Gam{\Gamma}
\def \gam{\gamma}
\def \kap{\kappa}
\def \Lam{\Lambda}
\def \lam{\lambda}
\def \Lo{\Longrightarrow}
\def \lo{\longrightarrow}
\def \lom{\longmapsto}
\def \mab{\mathbb}
\def \Om{\Omega}
\def \om{\omega}
\def \ol{\overline}
\def \os{\overset}
\def \parno{\par\noindent}
\def \sus{\subset}
\def \ul{\underline}
\def \us{\underset}
\def \vp{\varpi}
\def \vpl{\varprojlim}
\def \wh{\widehat}
\def \wt{\widetilde}
\newcommand{\getsfrom}{\ensuremath{
\longleftarrow\kern-.50em\lower.0ex\hbox%
{$\shortmid\,$}}}
\begin{document}
\title{Theory of weights for log convergent cohomologies II: 
the case of a proper SNCL scheme in characteristic $p>0$
}
\author{Yukiyoshi Nakkajima \date{}
\thanks{2020 Mathematics subject 
classification number: 14F30, 14F40, 14F08.   
\endgraf}}
\maketitle

\begin{flushright}
\end{flushright}

\par

$${\rm ABSTRACT}$$
\bigskip 
\par\noindent 
For a flat $p$-adic formal family $S$ of log points 
over a complete discrete valuation ring 
${\cal V}$ with perfect residue field of mixed characteristics $(0,p)$ and 
for a simple normal crossing log scheme $X$ over an exact closed 
log subscheme of $S$ defined 
by a nonzero element $\pi$ of the maximal ideal of ${\cal V}$, 
we construct fundamental filtered complexes  
$(A_{\rm conv}(X/S),P)$ and $(A^N_{\rm conv}(X/S),P)$ 
in the convergent topos $(\os{\circ}{X}/\os{\circ}{S})_{\rm conv}$ 
of $\os{\circ}{X}/\os{\circ}{S}$ 
(not in the log convergent topos $(X/S)_{\rm conv}$ of $X/S$) 
by using $(X/S)_{\rm conv}$. 
We prove that 
there exists a canonical isomorphism $(A^N_{\rm conv}(X/S),P)
\os{\sim}{\lo} (A_{\rm conv}(X/S),P)$. 
Because the complex $A_{\rm conv}(X/S)$ is shown to calculate 
the log convergent cohomology sheaf of 
$X/S$, the filtered complex $(A_{\rm conv}(X/S),P)$ produces 
the weight filtration on the log convergent cohomology sheaf 
if $\os{\circ}{X}/\os{\circ}{S}$ is proper. 
The filtered complex $(A_{\rm conv}(X/S),P)$ 
is a $p$-adic analogue of a generalization of Steenbrink-Fujisawa-Nakayama's $\infty$-adic filtered complex. 
The filtered complex $(A^N_{\rm conv}(X/S),P)$ 
is a $p$-adic analogue of a generalization of Rapoport-Zink-Nakayama's $l$-adic filtered complex 
and Steenbrink-Zucker-Fujisawa-Nakayama's $\infty$-adic filtered complex.  
We give a comparison theorem between 
the projection of $(A_{\rm conv}(X/S),P)$ 
in the Zariski topos $\os{\circ}{X}_{\rm zar}$ of $\os{\circ}{X}$ 
and the isozariskian weight-filtered complex 
$(A_{\rm zar}(X/S),P)\otimes^L_{\mab Z}{\mab Q}$ 
of $X/(S,p{\cal O}_S,[~])$ constructed in \cite{nb}, 
where $[~]$ is the canonical PD-structure on  $p{\cal O}_S$. 
\bigskip

$${\bf Contents}$$
\bigskip 
\parno
\S\ref{sec:intro}. Introduction 
\medskip 
\parno 
\S\ref{sec:logcd}. 
Log convergent topoi 
\medskip 
\parno
\S\ref{sec:lll}. Log convergent linearization functors
\medskip 
\parno
\S\ref{sec:vflvc}. Vanishing cycle sheaves in log convergent topoi 
\medskip 
\parno
\S\ref{eclf}. Log convergent linearization functors over the underlying formal scheme 
of a $p$-adic formal family of log points
\medskip 
\parno
\S\ref{sec:lcs}. 
Log convergent linearization functors of SNCL schemes
\medskip 
\parno
\S\ref{sec:rlct}. 
Simplicial log convergent topoi
\medskip 
\parno
\S\ref{sec:mplf}. 
Modified $P$-filtered convergent complexes and $p$-adic semi-purity
\medskip 
\parno
\S\ref{sec:wfcipp}. Convergent Steenbrink filtered complexes  
\medskip 
\parno
\S\ref{sec:wt}.  
Convergent Rapoport-Zink-Steenbrink-Zucker filtered complexes
\medskip 
\parno
\S\ref{sec:fpw}.
The functoriality of weight-filtered convergent and isozariskian complexes 
\medskip 
\parno
\S\ref{sec:bd}. Edge morphisms 
\medskip 
\parno
\S\ref{sec:mn}. 
Monodromy operators
\medskip 
\parno
\S\ref{sec:ct}. 
Comparison theorem 
\medskip
\parno
References

\section{Introduction}\label{sec:intro} 
This paper is a continuation of \cite{nb} and 
a simple normal crossing log version of \cite{nhw}. 
\par 
Inspired by the Weil conjecture and theory of Hodge-Deligne, A.~Grothendieck 
has proposed the motivic theory of weights which realizes  
the theories of weights on various cohomologies of algebraic varieties. 
Supported by his philosophy, in this paper 
we construct the $p$-adic weight spectral sequence of  
a proper SNCL(=simple normal crossing log) scheme over a $p$-adic family of log points
in a way which is philosophically similar (but technically non-similar)
to Rapoport-Zink-Nakyama's construction 
of the $l$-adic weight spectral sequence of 
a proper SNCL variety over a log point in \cite{rz} and \cite{nd}.  
In Rapoport-Zink's construction, the canonical filtration on a certain complex 
has an important role; in our construction,  
the canonical filtration on another complex has an important role. 
Our construction uses the theory of (log) convergent topoi in 
\cite{oc}, \cite{ofo}, \cite{s1}, \cite{s2}, \cite{s3} and \cite{nhw}.  
The use of the theory of the (log) convergent topoi is definitive because the construction 
using the theory of (log) crystalline topoi in \cite{bb}
and \cite{klog1} does not enable us to obtain the $p$-adic version of Rapoport-Zink's construction 
because the $p$-adic purity for the vanishing cycle sheaf obtained 
by the use of the morphism forgetting the log structure of a smooth scheme with 
an SNCD(=simple normal crossing divisor)
does {\it not} hold in the theory of the log crystalline topoi (\cite{nh2}). On the other hand, 
we have proved the $p$-adic purity for the vanishing cycle sheaf of the log scheme
in the log convergent topoi (\cite{nhw}). 
(We shall review these in this Introduction.) 
\par 
In order to explain our construction in more details, 
we recall results in \cite{nb} and \cite{nhw}. First we recall results in \cite{nb}, 
which are ideal generalizations of results in \cite{msemi} (see also \cite{ndw}). 
\par 
For a ringed topos $({\cal T},{\cal A})$, let 
${\rm D}^+{\rm F}({\cal A})$ and $D^+({\cal A})$ be the derived category of 
bounded below filtered complexes of ${\cal A}$-modules and the derived category of 
bounded below complexes of ${\cal A}$-modules, respectively. 
\par
For a log (formal) scheme $Y$, let $\os{\circ}{Y}$ be the 
underlying (formal) scheme of $Y$. 
Let $S$ be a $p$-adic formal family of log points defined in \cite{nb}; 
locally on $S$, $S$ is isomorphic to a log $p$-adic formal scheme 
$(\os{\circ}{S}, {\mab N}\oplus {\cal O}_S^*\lo {\cal O}_S)$, 
where the morphism ${\mab N}\oplus {\cal O}_S^*\lo {\cal O}_S$ 
is defined by the morphism
$(n,a)\lom 0^na$ $(n\in {\mab N}, a\in {\cal O}_S^*)$, where $0^n=0\in {\cal O}_S$ 
for $n\not =0$ and $0^0:=1\in {\cal O}_S$. 
Assume that the $p$-adic formal scheme 
$\os{\circ}{S}$ is flat over ${\rm Spf}({\mab Z}_p)$. 
Let $(S,{\cal I},\gam)$ be a $p$-adic formal PD-family of log points 
($S$ is a $p$-adic formal family of log points and 
${\cal I}$ is a quasi-coherent $p$-adic PD-ideal sheaf of ${\cal O}_S$ 
with PD-structure $\gam$). 
Let $S_0$ be an exact closed log subscheme of $S$ defined by ${\cal I}$.  
Let $X/S_0$ be a (not necessarily proper) 
SNCL scheme with structural morphism 
$f\col X\lo S_0\os{\sus}{\lo} S$. 
Let $\{\os{\circ}{X}_{\lam}\}_{\lam \in \Lam}$ be 
the set of smooth components of $X/S_0$ defined in \cite{nb}.  
(When $\os{\circ}{S}_0$ is the spectrum of a field of characteristic $p>0$, 
$\{\os{\circ}{X}_{\lam}\}_{\lam \in \Lam}$ can be taken as 
the set of the irreducible components of $\os{\circ}{X}$.) 
For a nonnegative integer $k$, let 
$$\os{\circ}{X}{}^{(k)}:=
\coprod_{\{\{\lam_0,\ldots, \lam_k\}~\vert \lam_i\in \Lam, \lam_i\not=\lam_j (i\not=j)\}} 
\os{\circ}{X}_{\lam_0}\cap \cdots \cap \os{\circ}{X}_{\lam_k}$$ 
be a scheme over $\os{\circ}{S}_0$ well-defined in \cite{nb}. 
Let $a^{(k)} \col \os{\circ}{X}{}^{(k)}\lo \os{\circ}{X}$ be the natural morphism. 
Let $\os{\circ}{f}{}^{(k)}\col \os{\circ}{X}{}^{(k)}\lo \os{\circ}{S}$ be the structural morphism. 
Let $\eps_{X/S} \col X\lo \os{\circ}{X}$ be a morphism forgetting the log structure of $X$ 
over a morphism $S\lo \os{\circ}{S}$ forgetting the log structure of $S$.  
Let $\eps^{\rm crys}_{X/S} \col (X/S)_{\rm crys}\lo (\os{\circ}{X}/\os{\circ}{S})_{\rm crys}$ 
be a morphism of (log) crystalline topoi induced by $\eps_{X/S}$. 
For fine log schemes $Z$ over $S_0$ and $W$ over $\os{\circ}{S}_0$, 
let $u^{\rm crys}_{Z/S} \col (Z/S)_{\rm crys}\lo Z_{\rm zar}$ and 
$u^{\rm crys}_{W/\os{\circ}{S}} \col 
(W/\os{\circ}{S})_{\rm crys}\lo W_{\rm zar}$ be the canonical projections, respectively. 
Let $E$ be a flat quasi-coherent crystal of ${\cal O}_{\os{\circ}{X}/\os{\circ}{S}}$-modules. 
In \cite{nb} we have constructed a filtered complex 
$(A_{\rm zar}(X/S,E),P)\in {\rm D}^+{\rm F}(f^{-1}({\cal O}_S))$ 
such that there exist the following canonical isomorphisms 
\begin{align*} 
\theta \col Ru^{\rm crys}_{X/S*}(\eps^{{\rm crys}*}_{X/S}(E))\os{\sim}{\lo} A_{\rm zar}(X/S,E)
\tag{1.0.1}\label{ali:ixuoa} 
\end{align*} 
and 
\begin{align*} 
{\rm gr}^P_kA_{\rm zar}(X/S,E)\os{\sim}{\lo} \bigoplus_{j\geq \max \{-k,0\}} 
&a^{(2j+k)}_* 
(Ru^{\rm crys}_{\os{\circ}{X}{}^{(2j+k)}/\os{\circ}{S}*}
(a^{(2j+k)*}_{{\rm crys}}(E)) \tag{1.0.2}\label{ali:ruovp}\\
&\otimes_{\mab Z}\vp_{\rm crys}^{(2j+k)}(\os{\circ}{X}/\os{\circ}{S}))[-2j-k]
\end{align*}
in $D^+(f^{-1}({\cal O}_S))$.  
Here $\vp_{\rm crys}^{(m)}(\os{\circ}{X}/\os{\circ}{S})$ $(m\in {\mab N})$ 
is the crystalline orientation sheaf associated to 
the set $\{\os{\circ}{X}_{\lam}\}_{\lam \in \Lam}$. That is, 
$\vp_{\rm crys}^{(m)}(\os{\circ}{X}/\os{\circ}{S})$ is 
the extension to $(\os{\circ}{X}{}^{(m)}/\os{\circ}{S})_{\rm crys}$ 
of the direct sum of  
$\os{m}{\bigwedge}{\mab Z}^E_{\os{\circ}{X}_{\lam_0}\cap \cdots \cap \os{\circ}{X}_{\lam_m}}$ 
in the Zariski topos $\os{\circ}{X}{}^{(m)}_{\rm zar}$ of $\os{\circ}{X}{}^{(m)}$ 
for the subsets $E=\{\os{\circ}{X}_{\lam_0}, \ldots, \os{\circ}{X}_{\lam_m}\}$'s of 
$\{\os{\circ}{X}_{\lam}\}_{\lam \in \Lam}$.  
We call the filtered complex $(A_{\rm zar}(X/S,E),P)$ the 
{\it zariskian Steenbrink complex} of $E$. 
When $E$ is trivial, we call 
$(A_{\rm zar}(X/S,E),P)$ the {\it zariskian Steenbrink complex} of $X/S$. 
If $E$ is an $F$-crystal on the crystalline site ${\rm Crys}(\os{\circ}{X}/\os{\circ}{S})$ 
of $\os{\circ}{X}/\os{\circ}{S}$, 
then the isomorphism (\ref{ali:ruovp}) becomes the following isomorphism 
\begin{align*} 
{\rm gr}^P_kA_{\rm zar}(X/S,E)\os{\sim}{\lo} \bigoplus_{j\geq \max \{-k,0\}} 
&a^{(2j+k)}_* 
(Ru^{\rm crys}_{\os{\circ}{X}{}^{(2j+k)}/\os{\circ}{S}*}
(a^{(2j+k)*}_{{\rm crys}}(E)) \tag{1.0.3}\label{ali:ruoavp}\\
&\otimes_{\mab Z}\vp_{\rm crys}^{(2j+k)}(\os{\circ}{X}/\os{\circ}{S}))(-j-k)[-2j-k]. 
\end{align*}
One may think that $(-j-k)$ means the usual Tate twist 
and that there arises no new problem. However this is not so. 
Let $F_{\os{\circ}{S}_0}\col \os{\circ}{S}_0\lo \os{\circ}{S}_0$ 
be the absolute Frobenius endomorphism 
of $\os{\circ}{S}_0$. 
First we have to consider the relative Frobenius morphism 
$F_{S_0/\os{\circ}{S}_0}\col  S_0\lo S_0\times_{\os{\circ}{S}_0,F_{\os{\circ}{S}_0}}\os{\circ}{S}_0=:S^{[p]}_0$ 
of $S_0$ over $\os{\circ}{S}_0$ as in \cite{ofo}. 
Then we have to consider a new fine log formal scheme 
$S_0^{[p]}(S)$, which has been considered 
in \cite{nb} as a special case. 
The log formal scheme $S_0^{[p]}(S)$ is, by definition, as follows: 
the underlying formal scheme of 
$S_0^{[p]}(S)$ is $\os{\circ}{S}$ and the log structure of 
$S_0^{[p]}(S)$ is the sub-log structure of $S$ 
such that the isomorphism 
$M_S/{\cal O}_S^*\os{\sim}{\lo} M_{S_0}/{\cal O}_{S_0}^*$ induces 
the following isomorphism 
\begin{align*} 
M_{S_0^{[p]}(S)}/{\cal O}_S^*\os{\sim}{\lo} 
{\rm Im}(F_{S_0/\os{\circ}{S}_0}^*\col 
F_{S_0/\os{\circ}{S}_0}^*(M_{S_0\times_{\os{\circ}{S}_0,F_{\os{\circ}{S}_0}}\os{\circ}{S}_0})
\lo M_{S_0})/{\cal O}_{S_0}^*. \tag{1.0.4}\label{ali:rus0avp}
\end{align*} 
Then we have an obvious morphism $(S,{\cal I},\gam)
\lo (S_0^{[p]}(S),{\cal I},\gam)$ of log PD-formal schemes. 
Now the Tate-twist $(-j-k)$ means the Tate twist 
with respect to 
the morphism $X\lo X\times_{\os{\circ}{S}_0,F_{\os{\circ}{S}_0}}\os{\circ}{S}_0$ 
induced by the absolute Frobenius endomorphism $F_X\col X\lo X$ of $X$
over $(S,{\cal I},\gam) \lo (S_0^{[p]}(S),{\cal I},\gam)$. 
Note also that the actions of the Frobenius endomorphisms  
on $p$-adic Steenbrink complexes and the $p$-adic weight spectral sequences 
of proper SNCL schemes over the log point of 
a perfect field of characteristic $p>0$ in some contexts 
are mistaken in published papers 
except \cite{ndw}, \cite{mat} and \cite{nb} (as far as I know)
because the actions of the Frobenii on the non-zero local section ``$d\log t$'' 
are not considered in the $p$-adic Steenbrink double complexes in almost all papers,  
where $t$ is a ``local log parameter'' of the log point. 
As a consequence of (\ref{ali:ixuoa}) and (\ref{ali:ruoavp}), 
we obtain the following spectral sequence: 
\begin{align*} 
E_1^{-k,q+k}=&\bigoplus_{j\geq \max \{-k,0\}} 
R^{q-2j-k}f^{\rm crys}_{\os{\circ}{X}{}^{(2j+k)}/\os{\circ}{S}*}(a^{(2j+k)*}_{{\rm crys}}(E)
\otimes_{\mab Z}\vp_{\rm crys}^{(2j+k)}(\os{\circ}{X}/\os{\circ}{S})))(-j-k)
\tag{1.0.5}\label{ali:ixsxa}\\
&\Lo R^qf^{\rm crys}_{X/S*}(E)
\end{align*}
if $E$ is an $F$-crystal on ${\rm Crys}(\os{\circ}{X}/\os{\circ}{S})$, 
where $f^{\rm crys}_{\os{\circ}{X}{}^{(2j+k)}/\os{\circ}{S}}
:=\os{\circ}{f}{}^{(2j+k)}\circ u^{\rm crys}_{\os{\circ}{X}{}^{(2j+k)}/\os{\circ}{S}}$ 
and $f^{\rm crys}_{X/S}:=f\circ u^{\rm crys}_{X/S}$. 
In \cite{nb} we have called this spectral sequence 
the $p$-adic weight spectral sequence of $X/S$ 
in the case where $\os{\circ}{f}$ is  proper and $E$ is trivial.

\par 
Next we recall main results in \cite{nhw}. This motivates us to 
gives a nice technical framework in this paper. 
\par 
Let ${\cal V}$ be a complete discrete valuation ring 
of mixed characteristics $(0,p)$ 
whose residue field $\kap$ is perfect. 
Let $\pi$ be a nonzero element of the maximal ideal of ${\cal V}$. 
Let $S$ be a $p$-adic formal ${\cal V}$-scheme in the sense of \cite{of}, 
that is, a noetherian formal scheme over ${\rm Spf}({\cal V})$ 
with the $p$-adic topology which is topologically of finite type 
over ${\rm Spf}({\cal V})$. 
Let $S_1$ be a closed subscheme of $S$ defined by 
the ideal sheaf $\pi{\cal O}_S$ of ${\cal O}_S$. 
Let $Y$ be a smooth scheme over $S_1$ and 
let $D$ be a relative SNCD(=simple normal crossing divisor) 
on $Y/S_1$. 
Stimulated by \cite{klog1} and \cite{fao}, 
we have constructed an 
fs(=fine and saturated) log structure 
$M(D)$ on $Y_{\rm zar}$ 
with morphism $M(D) \lo ({\cal O}_Y,*)$ in \cite{nh2}. 
If there exists the following cartesian diagram 
\begin{equation*}  
\begin{CD} 
D\cap V @>{\subset}>> V \\ 
@VVV @VVV \\ 
\ul{\rm Spec}_{S_1}
({\cal O}_{S_1}[x_1,\ldots,x_d]/(x_1\cdots x_a))
@>{\subset}>> 
\ul{\rm Spec}_{S_1}({\cal O}_{S_1}[x_1,\ldots,x_d]) 
\end{CD}
\end{equation*}
($a,d\in {\mab Z}_{\geq 1}$ $a\leq d$)
for an open subscheme $V$ of $Y$, 
where the right vertical morphism 
in the diagram above is \'{e}tale, 
then 
$$(M(D)\vert_V \lo ({\cal O}_V,*))
=(({\cal O}_V^*x_1^{\mab N}\cdots x_a^{\mab N},*) 
\os{\sus}{\lo} ({\cal O}_V,*)).$$ 
We denote the log scheme $(Y,M(D))$ simply by $(Y,D)$.  
\par 
Let $g\col Y \lo S_1 \os{\sus}{\lo} S$ be the structural morphism. 
Let $({Y/S})_{\rm conv}$ be the convergent topos of $Y/S$ defined in \cite{nhw},  
which is the relative version of the convergent topos in \cite{oc}. 
Let ${\cal K}_{Y/S}$ be the isostructure sheaf in $({Y/S})_{\rm conv}$.  
Let $({(Y,D)/S})_{\rm conv}(=
({(Y,D)/S})_{{\rm conv},{\rm zar}})$ be the log convergent topos 
of $(Y,D)/S$ which is the relative version of 
the log convergent topos in \cite{s2}.  
Let 
$$\eps^{\rm conv}_{(Y,D)/S} \col 
(({(Y,D)/S})_{\rm conv},{\cal K}_{(Y,D)/S}) 
\lo (({Y/S})_{\rm conv},{\cal K}_{Y/S})$$ 
be a morphism of ringed topoi forgetting the log structure $M(D)$.  
Set ${\cal K}_S:={\cal O}_S\otimes_{\mab Z}{\mab Q}$. 
Let 
$$u^{\rm conv}_{Y/S} \col 
(({Y/S})_{\rm conv},{\cal K}_{Y/S})
\lo ({Y}_{\rm zar},g^{-1}({\cal K}_S))$$ 
be the canonical projection. 
In \cite{nhw} we have constructed 
the following filtered complex
$$(C_{\rm conv}({\cal K}_{(Y,D)/S}),P)\in 
{\rm D}^+{\rm F}({\cal K}_{Y/S}).$$   
If there exists an immersion $(Y,D)\os{\sus}{\lo}{\cal Q}$ 
into a log smooth scheme over $S$, 
then 
$$(C_{\rm conv}({\cal K}_{(Y,D)/S}),P)
=(L^{\rm conv}_{\os{\circ}{\cal Q}{}^{\rm ex}/S}(\Om^{\bul}_{{\cal Q}^{\rm ex}/S}\otimes_{\mab Z}{\mab Q}),
\{L^{\rm conv}_{\os{\circ}{\cal Q}{}^{\rm ex}/S}(P_k\Om^{\bul}_{{\cal Q}^{\rm ex}/S}\otimes_{\mab Z}{\mab Q})\}_{k\in {\mab Z}})\in 
{\rm D}^+{\rm F}({\cal K}_{Y/S}),$$ 
where ${\cal Q}^{\rm ex}$ is the exactification of the immersion 
$(Y,D)\os{\sus}{\lo}{\cal Q}$ and 
$\{P_k\}_{k\in {\mab Z}}$ is the usual filtration on  $\Om^{\bul}_{{\cal Q}^{\rm ex}/S}$ 
obtained by counting the numbers of log poles of local sections of 
$\Om^{\bul}_{{\cal Q}^{\rm ex}/S}$
and $L^{\rm conv}_{\os{\circ}{\cal Q}{}^{\rm ex}/S}$ is the convergent linearization functor for 
coherent ${\cal O}_{{\cal Q}^{\rm ex}}\otimes_{\mab Z}{\mab Q}$-modules with respect to 
the immersion $Y\os{\sus}{\lo} \os{\circ}{\cal Q}{}^{\rm ex}$ over $S$. 
We have proved that the natural morphism 
$L^{\rm conv}_{\os{\circ}{\cal Q}{}^{\rm ex}/S}
(P_k\Om^{\bul}_{{\cal Q}^{\rm ex}/S}\otimes_{\mab Z}{\mab Q})
\lo L^{\rm conv}_{\os{\circ}{\cal Q}{}^{\rm ex}/S}(\Om^{\bul}_{{\cal Q}^{\rm ex}/S}\otimes_{\mab Z}{\mab Q})$
is indeed injective in \cite{nhw}, while we have proved that 
the integral log crystalline analogue of this morphism is {\it not} injective in \cite{nh2}.    
Hence we cannot consider the integral log crystalline analogue 
``$(C_{\rm crys}({\cal O}_{(Y,D)/S}),P)$'' of 
$(C_{\rm conv}({\cal K}_{(Y,D)/S}),P)$. 
\par 
In \cite{nhw} we have proved the following three theorems:

\begin{theo}[{\bf $p$-adic purity}]\label{theo:pap} 
Let $\{D_{\lam}\}_{\lam \in \Lam}$ be the set of the smooth components of $D/S_1$ defined in 
{\rm \cite{nh2}}. 
For a nonnegative integer $k$, set 
$$D^{(k)}:=
\coprod_{\{\{\lam_1,\ldots, \lam_k\}~\vert \lam_i\in \Lam, \lam_i\not=\lam_j (i\not=j)\}} 
D_{\lam_1}\cap \cdots \cap D_{\lam_k}$$ 
be a scheme over $\os{\circ}{S}_1$ well-defined in {\rm \cite{nh2}}. 
Let $b^{(k)}\col D^{(k)}\lo Y$ be the natural morphism. 
Then there exists a canonical isomorphism 
\begin{equation*} 
R^k\eps^{\rm conv}_{(Y,D)/S*}({\cal K}_{(Y,D)/S}) \os{\sim}{\lo} 
b^{(k)}_{{\rm conv}*}({\cal K}_{D^{(k)}/S}
\otimes_{\mab Z}
\vp^{(k)}_{\rm conv}(D/S))
\quad (k\in {\mab N}),
\tag{1.1.1}\label{eqn:vpcs}
\end{equation*}
where $\vp^{(k)}_{\rm conv}(D/S)$ is the convergent  
orientation sheaf associated to the set $\{D_{\lam}\}_{\lam \in \Lam}$. 
\end{theo}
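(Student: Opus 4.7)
The plan is to argue locally on $\os{\circ}{Y}$ using an exact closed immersion of $(Y,D)$ into a log smooth formal log scheme over $S$, to describe $R\eps^{\rm conv}_{(Y,D)/S*}$ through the linearized log de Rham complex and its filtration by the number of log poles, and then to extract the $k$-th cohomology sheaf via the Poincar\'e residue isomorphism.

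Since the statement is local on $\os{\circ}{Y}$, I first reduce to the case of an exact closed immersion $(Y,D)\os{\sus}{\lo}{\cal Q}$ into a log smooth $p$-adic formal log scheme ${\cal Q}$ over $S$; a concrete local model has $\os{\circ}{\cal Q}={\rm Spf}({\cal O}_S\{x_1,\ldots,x_d\})$ with the fine log structure associated to $x_1\cdots x_a$. By the description of $C_{\rm conv}({\cal K}_{(Y,D)/S})$ recalled in the excerpt, ${\cal K}_{(Y,D)/S}$ is represented via the convergent linearization functor $L^{\rm conv}_{\os{\circ}{\cal Q}{}^{\rm ex}/S}$ by the log de Rham complex $\Om^{\bullet}_{{\cal Q}^{\rm ex}/S}\otimes_{\mab Z}{\mab Q}$, and analogously ${\cal K}_{Y/S}$ is represented by $\Om^{\bullet}_{\os{\circ}{\cal Q}{}^{\rm ex}/S}\otimes_{\mab Z}{\mab Q}$. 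Consequently $R\eps^{\rm conv}_{(Y,D)/S*}({\cal K}_{(Y,D)/S})$ is computed, as a complex on $(Y/S)_{\rm conv}$, by the log complex viewed together with the inclusion of the non-log complex as the zeroth step of the pole filtration.

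The log de Rham complex carries the filtration $P_{\bullet}$ by the number of log poles, with $P_0=\Om^{\bullet}_{\os{\circ}{\cal Q}{}^{\rm ex}/S}$. The classical Poincar\'e residue isomorphism gives
$$
{\rm Res}_k\col {\rm gr}^P_k\Om^{\bullet}_{{\cal Q}^{\rm ex}/S}\otimes_{\mab Z}{\mab Q}
\os{\sim}{\lo}
a^{(k)}_*(\Om^{\bullet-k}_{{\cal Q}^{(k)}/S}\otimes_{\mab Z}{\mab Q})\otimes_{\mab Z}\vp^{(k)},
$$
where ${\cal Q}^{(k)}$ is the disjoint union of $k$-fold intersections of the smooth components of the log locus of ${\cal Q}^{\rm ex}$ and $\vp^{(k)}$ is the corresponding orientation sheaf. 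Applying $L^{\rm conv}$ and invoking the injectivity $L^{\rm conv}(P_k\Om^{\bullet}\otimes_{\mab Z}{\mab Q})\hookrightarrow L^{\rm conv}(\Om^{\bullet}\otimes_{\mab Z}{\mab Q})$ established in \cite{nhw}, I obtain a strict filtration on the linearized log de Rham complex whose associated $E_1$-sheet degenerates and computes $R^{\bullet}\eps^{\rm conv}_{(Y,D)/S*}({\cal K}_{(Y,D)/S})$. The shift by $k$ in the residue isomorphism then identifies $R^k\eps^{\rm conv}_{(Y,D)/S*}({\cal K}_{(Y,D)/S})$ with $b^{(k)}_{{\rm conv}*}({\cal K}_{D^{(k)}/S}\otimes_{\mab Z}\vp^{(k)}_{\rm conv}(D/S))$.

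Finally, the local identifications glue because the Poincar\'e residue is canonical up to sign and the orientation sheaf $\vp^{(k)}_{\rm conv}(D/S)$ absorbs exactly these sign ambiguities, while the functoriality of $L^{\rm conv}$ under \'etale transition morphisms ensures compatibility. The main obstacle is the strictness/degeneration step: the analogous integral statement in log crystalline cohomology fails (as discussed in the paper and in \cite{nh2}), so the argument must genuinely use rational coefficients together with the injectivity theorem of \cite{nhw}. A secondary point is that $R\eps^{\rm conv}_{(Y,D)/S*}$ takes values in $(Y/S)_{\rm conv}$ rather than $\os{\circ}{Y}_{\rm zar}$, so the residue isomorphism must be read at the level of convergent linearizations, which requires checking that $L^{\rm conv}$ interacts correctly with the filtration $P_{\bullet}$.
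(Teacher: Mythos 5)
Your proposal is correct and follows essentially the same route as the actual proof (given in \cite{nhw}, from which the present paper only quotes the statement, and mirrored here in \S\S 4--8 for the SNCL analogue): local embedding plus the vanishing-cycle Poincar\'e lemma to represent $R\eps^{\rm conv}_{(Y,D)/S*}({\cal K}_{(Y,D)/S})$ by the linearized log de Rham complex, the injectivity of $L^{\rm conv}(P_k\Om^{\bul}\otimes_{\mab Z}{\mab Q})\to L^{\rm conv}(\Om^{\bul}\otimes_{\mab Z}{\mab Q})$, the Poincar\'e residue computation of the graded pieces, and degeneration because each ${\rm gr}^P_k$ is concentrated in the single degree $k$. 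The gluing and independence-of-choices step that you treat as routine is handled in the same routine way there (via refinements of coverings and compatibility of linearizations with closed immersions), so no essential ingredient is missing.
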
 

On the other hand, in \cite{nh2} we have proved that 
\begin{equation*} 
R^k\eps^{\rm crys}_{(Y,D)/S*}({\cal O}_{(Y,D)/S}) \not=
b^{(k)}_{{\rm conv}*}({\cal O}_{D^{(k)}/S}
\otimes_{\mab Z}
\vp^{(k)}_{\rm crys}(D/S))
\quad (k\in {\mab N})
\end{equation*}
in general in the case where $\pi{\cal O}_S$ has a PD-structure. 
Here $\vp^{(k)}_{\rm crys}(D/S)$ is the crystalline   
orientation sheaf associated to the set $\{D_{\lam}\}_{\lam \in \Lam}$.

\begin{theo}[{\bf Comparison theorem between $(C_{\rm conv},P)$ 
and $(C_{\rm conv},\tau)$}]\label{theo:cteck} 
Let $\tau$ be the canonical filtration on a complex. 
Then the following canonical morphism 
\begin{equation*}
(C_{\rm conv}({\cal K}_{(Y,D)/S}),\tau)
{\lo} 
(C_{\rm conv}({\cal K}_{(Y,D)/S}),P)
\tag{1.2.1}\label{eqn:excrs}
\end{equation*}
is an isomorphism.  
\end{theo}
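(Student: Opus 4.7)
The plan is to establish (\ref{eqn:excrs}) as a filtered quasi-isomorphism by passing to graded pieces. Since $\tau$ is the canonical truncation filtration, ${\rm gr}^\tau_k C_{\rm conv}({\cal K}_{(Y,D)/S}) = {\cal H}^k(C_{\rm conv}({\cal K}_{(Y,D)/S}))[-k]$, and by construction of $C_{\rm conv}$ in \cite{nhw} this identifies with $R^k\eps^{\rm conv}_{(Y,D)/S*}({\cal K}_{(Y,D)/S})[-k]$. It therefore suffices to show that ${\rm gr}^P_k C_{\rm conv}({\cal K}_{(Y,D)/S})$ is concentrated in cohomological degree $k$ and coincides with ${\rm gr}^\tau_k$ there via the canonical comparison map.

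The next step is a local computation. Choose an exact immersion $(Y,D)\os{\sus}{\lo}{\cal Q}$ into a log smooth $p$-adic formal scheme over $S$, so that $(C_{\rm conv}({\cal K}_{(Y,D)/S}),P)$ is represented by the linearization $L^{\rm conv}_{\os{\circ}{\cal Q}{}^{\rm ex}/S}$ applied to the pole-order-filtered log de Rham complex of ${\cal Q}^{\rm ex}/S$, as recalled in the introduction. The classical Poincar\'e residue isomorphism identifies ${\rm gr}^P_k \Omega^{\bul}_{{\cal Q}^{\rm ex}/S}$ (up to a shift by $[-k]$) with the de Rham complex of the $k$-fold intersections of the log smooth components, tensored with the orientation sheaf $\vp^{(k)}$. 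Applying $L^{\rm conv}_{\os{\circ}{\cal Q}{}^{\rm ex}/S}$ and using its compatibility with pushforward along the closed immersions that cut out $D^{(k)}$, the complex ${\rm gr}^P_k C_{\rm conv}({\cal K}_{(Y,D)/S})$ becomes the log convergent complex of $D^{(k)}/S$ shifted by $[-k]$ and twisted by $\vp^{(k)}_{\rm conv}(D/S)$. Now I invoke the $p$-adic purity theorem (Theorem \ref{theo:pap}): it asserts that $R^k\eps^{\rm conv}_{(Y,D)/S*}({\cal K}_{(Y,D)/S}) \cong b^{(k)}_{{\rm conv}*}({\cal K}_{D^{(k)}/S}\otimes_{\mab Z}\vp^{(k)}_{\rm conv}(D/S))$, which is exactly the $0$-th cohomology sheaf of the complex just computed. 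Hence ${\rm gr}^P_k C_{\rm conv}$ is concentrated in degree $k$ with cohomology agreeing with ${\rm gr}^\tau_k$.

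The main obstacle will be to verify that these graded identifications are compatible with the \emph{canonical} morphism (\ref{eqn:excrs}), not merely that the two sides are abstractly isomorphic. Concretely, one must check that the edge map of the $P$-spectral sequence on $C_{\rm conv}$ agrees, through the Poincar\'e residue, with the purity isomorphism of Theorem \ref{theo:pap}; this amounts to a functoriality check for $L^{\rm conv}$ along the closed immersions $a^{(k)}$. To globalize beyond the local embedding ${\cal Q}$ and to ensure independence from the choice, I would pass to a simplicial resolution of $(Y,D)$ by exact immersions into log smooth formal schemes over $S$ and appeal to cohomological descent for the log convergent topos developed in \cite{nhw}; the comparison then reduces to the local computation above.
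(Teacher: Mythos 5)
Your proposal is correct and follows essentially the same route the paper uses: the statement itself is quoted from \cite{nhw}, but the proof of its SNCL analogue here ((\ref{theo:wtvsca})) proceeds exactly as you do, namely by reducing to graded pieces, computing ${\rm gr}^P_k$ through the Poincar\'e residue and the linearization over a simplicial embedding ((\ref{prop:grla}), (\ref{ali:cpr})) to get concentration in degree $k$, and matching this with ${\rm gr}^{\tau}_k$ via the (semi-)purity computation ((\ref{theo:calvpc}), resp.\ (\ref{theo:pap})), with independence of the chosen embeddings handled by cohomological descent. Your extra care about compatibility of the graded identifications with the canonical morphism is a point the paper treats only tersely, but it is covered by the standard argument and does not change the method.
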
 

\par 
Set 
$$(C_{\rm iso{\textrm -}zar}({\cal K}_{(Y,D)/S}),P)
:=
Ru^{\rm conv}_{Y/S*}((C_{\rm conv}({\cal K}_{(Y,D)/S}),P))
\in {\rm D}^+{\rm F}(g^{-1}({\cal K}_S)).$$ 

\begin{theo}[{\bf Comparison theorem between $(C_{\rm iso{\textrm -}zar},P)$ 
and $(C_{\rm zar},P)\otimes^L_{\mab Z}{\mab Q}$}]\label{theo:izaz} 
Let $(C_{\rm zar}({\cal O}_{(Y,D)/S}),P)\in 
{\rm D}^+{\rm F}(g^{-1}({\cal O}_S))$ be the filtered complex 
constructed in {\rm \cite{nh2}}. 
Then there exists a canonical isomorphism 
\begin{equation*}
(C_{\rm iso{\textrm -}zar}({\cal K}_{(Y,D)/S}),P) 
\os{\sim}{\lo} 
(C_{\rm zar}({\cal O}_{(Y,D)/S}),P)\otimes^L_{\mab Z}{\mab Q}.
\tag{1.3.1}\label{eqn:crcs}
\end{equation*} 
Consequently 
$(C_{\rm conv}({\cal K}_{(Y,D)/S}),P)\in  {\rm D}^+{\rm F}({\cal K}_{Y/S})$ 
is an ``object above'' 
$(C_{\rm zar}({\cal O}_{(Y,D)/S}),P)\otimes^L_{\mab Z}{\mab Q}
\in {\rm D}^+{\rm F}(g^{-1}({\cal K}_S))$. 
$($The filtered complex $(C_{\rm conv}({\cal K}_{(Y,D)/S}),P)$ 
lives in a ``higher world'' ${\rm D}^+{\rm F}({\cal K}_{Y/S})$ 
than ${\rm D}^+{\rm F}(g^{-1}({\cal K}_S)).)$ 
\end{theo}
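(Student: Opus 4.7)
The plan is to reduce to a local model in which both sides of (1.3.1) admit explicit descriptions via linearization functors, match them termwise including the filtration, and then globalize by cohomological descent.

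Working Zariski-locally on $Y$, I would choose an exact closed immersion $(Y,D)\sus {\cal Q}$ into a log smooth $p$-adic formal scheme over $S$ with exactification ${\cal Q}^{\rm ex}$. The excerpt already records the explicit local formula
\[
(C_{\rm conv}({\cal K}_{(Y,D)/S}),P)=(L^{\rm conv}_{\os{\circ}{\cal Q}{}^{\rm ex}/S}(\Om^{\bul}_{{\cal Q}^{\rm ex}/S}\otimes_{\mab Z}{\mab Q}),\{L^{\rm conv}_{\os{\circ}{\cal Q}{}^{\rm ex}/S}(P_k\Om^{\bul}_{{\cal Q}^{\rm ex}/S}\otimes_{\mab Z}{\mab Q})\}_{k}),
\]
and the integral $(C_{\rm zar}({\cal O}_{(Y,D)/S}),P)$ from \cite{nh2} has the parallel local description obtained by replacing $L^{\rm conv}_{\os{\circ}{\cal Q}{}^{\rm ex}/S}$ with the crystalline linearization $L^{\rm crys}_{\os{\circ}{\cal Q}{}^{\rm ex}/S}$, applying $Ru^{\rm crys}_{Y/S*}$, and working integrally over the PD-base $(S,\pi{\cal O}_S,[~])$.

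Next I would apply $Ru^{\rm conv}_{Y/S*}$ termwise to the convergent formula and compare it with $(Ru^{\rm crys}_{Y/S*}L^{\rm crys}_{\os{\circ}{\cal Q}{}^{\rm ex}/S}(\Om^{\bul}_{{\cal Q}^{\rm ex}/S}))\otimes^L_{\mab Z}{\mab Q}$. For a coherent ${\cal O}_{{\cal Q}^{\rm ex}}$-module $F$, the convergent Poincar\'{e}-type lemma identifies $Ru^{\rm conv}_{Y/S*}L^{\rm conv}_{\os{\circ}{\cal Q}{}^{\rm ex}/S}(F\otimes_{\mab Z}{\mab Q})$ with the Zariski realization of $F\otimes{\mab Q}$ along $Y\sus \os{\circ}{\cal Q}{}^{\rm ex}$, and the analogous crystalline identity, combined with the Berthelot--Ogus-type comparison between crystalline and convergent linearizations, yields the same answer after $\otimes^L_{\mab Z}{\mab Q}$. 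Applied termwise to $\Om^{\bul}_{{\cal Q}^{\rm ex}/S}$, this gives the underlying quasi-isomorphism of complexes in the local model.

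For the filtration, both $P_k$'s arise as the image of the honest subcomplex $P_k\Om^{\bul}_{{\cal Q}^{\rm ex}/S}$ under the respective linearization. As the excerpt emphasizes, this inclusion is \emph{not} injective crystallinely, but after $\otimes^L_{\mab Z}{\mab Q}$ and after pushing forward to $Y_{\rm zar}$ the failure disappears and the graded pieces on both sides are controlled by the $p$-adic purity theorem (Theorem \ref{theo:pap}) together with its integral counterpart in \cite{nh2}; combined with the $\tau$-versus-$P$ comparison (Theorem \ref{theo:cteck}) this produces the filtered comparison locally. Finally I would globalize: cover $Y$ by opens admitting such immersions, form a simplicial exact embedding $(Y_{\bul},D_{\bul})\sus {\cal Q}_{\bul}$, and descend the local filtered isomorphism via cohomological descent in both topoi. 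The main obstacle is the handling of the filtration under these reductions: integrally $P_k$ is only an image, and one must verify that after $\otimes^L_{\mab Z}{\mab Q}$ and $Ru^{\rm crys}_{Y/S*}$ it becomes a genuine filtration by strict subcomplexes of $(C_{\rm iso{\textrm -}zar}({\cal K}_{(Y,D)/S}),P)$ and that the local filtered comparisons glue coherently. This is the technical heart of the argument, and Theorem \ref{theo:pap} is the decisive input that rescues the crystalline side after inversion of $p$.
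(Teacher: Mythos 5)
Two preliminary remarks. First, the paper does not reprove (\ref{theo:izaz}) at all: it is recalled verbatim from \cite{nhw}, and the closest in-paper model for its proof is the SNCL analogue (\ref{theo:izz}), established in \S\ref{sec:ct}. Measured against that proof, your skeleton (choose a local exact immersion $(Y,D)\os{\sus}{\lo}{\cal Q}$, use the explicit linearized description of $(C_{\rm conv},P)$, compare with the crystalline linearization after $\otimes^L_{\mab Z}{\mab Q}$, then globalize by a simplicial embedding and cohomological descent) is the right one and matches the paper's method. The mechanism that actually connects the two sides, however, is more concrete than your appeal to a ``Berthelot--Ogus-type comparison of linearizations'': by universality the log PD-envelope ${\mathfrak D}_{\bul}$ of the simplicial immersion factors through the system of universal enlargements (this is the map $\ol{\lam}_{n(\bul)}$ in the proof of (\ref{lemm:alp})), and it is the pullback along this map, together with (\ref{theo:hct}) and the realization formula (\ref{prop:cdfza}), that produces the morphism of filtered complexes one then has to show is a filtered isomorphism.

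The genuine flaw is in your treatment of the filtration, which you yourself call the technical heart. You claim the graded pieces ``on both sides'' are controlled by (\ref{theo:pap}) ``together with its integral counterpart in \cite{nh2}''; but the paper states explicitly, immediately after (\ref{theo:pap}), that the integral crystalline analogue of purity is \emph{false} in general, so there is no such counterpart to invoke. Likewise (\ref{theo:cteck}) is a purely convergent-side statement and cannot help on the crystalline side, where $P$ is only a quotient/image filtration. What actually makes the filtered comparison work is the Poincar\'{e} residue isomorphism, which identifies ${\rm gr}^P_k$ on \emph{both} sides with shifted complexes supported on the strata $D^{(k)}$ tensored with orientation sheaves (the convergent version is the analogue of (\ref{eqn:grpdl}); the crystalline version holds integrally at the level of the de Rham complex on the PD-envelope, even though purity and the injectivity of $L^{\rm crys}(P_k\Om^{\bul})\lo L^{\rm crys}(\Om^{\bul})$ fail). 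The filtered statement then reduces, stratum by stratum, to the trivial-log convergent--crystalline comparison (\ref{theo:hct}) for the smooth schemes $D^{(k)}/S$, exactly as in the computation (\ref{cd:yapc}) in the proof of (\ref{lemm:alp}). Replace your purity-based step by this residue argument and the proposal becomes the paper's proof; as written, the step you rely on rests on a statement the paper disproves.
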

\par
Now we can state the main results in this paper as follows. 
\par 
Let ${\cal V}$ be as above. 
Let $S$ be a $p$-adic formal family of log points over ${\cal V}$. 
Let $X/S_1$ be an SNCL scheme 
and let $f\col X\lo S_1\os{\sus}{\lo} S$ be 
the structural morphism. 
Let ${\cal K}_{X/S}$ and ${\cal K}_{\os{\circ}{X}/\os{\circ}{S}}$ 
be the isostructure sheaves in $({X/S})_{\rm conv}$ and 
$({\os{\circ}{X}/\os{\circ}{S}})_{\rm conv}$, respectively. 
Let $E$ be an isocrystal in $(\os{\circ}{X}/\os{\circ}{S})_{\rm conv}$. 
(In this paper an isocrystal is always assumed to be coherent as in \cite{bprep}.)
In this paper we construct the following two filtered complexes 
$$(A_{\rm conv}(X/S,E),P)\in {\rm D}^+{\rm F}({\cal K}_{\os{\circ}{X}/\os{\circ}{S}}),$$ 
$$(A^N_{\rm conv}(X/S,E),P)\in {\rm D}^+{\rm F}({\cal K}_{\os{\circ}{X}/\os{\circ}{S}}).$$ 
(We prove that these filtered complexes are canonically isomorphic.  
The notation $N$ in the latter complex stands for the notation of 
the ``logarithm of the local monodromy''.)
We also define the following filtered complex by using the projection 
$u^{\rm conv}_{\os{\circ}{X}/\os{\circ}{S}}\col ((\os{\circ}{X}/\os{\circ}{S})_{\rm conv},
{\cal K}_{\os{\circ}{X}/\os{\circ}{S}}) \lo (\os{\circ}{X}_{\rm zar},f^{-1}({\cal K}_S))$: 
$$(A_{\rm iso{\textrm -}zar}(X/S,E),P):=
Ru^{\rm conv}_{\os{\circ}{X}/\os{\circ}{S}*}((A_{\rm conv}(X/S,E),P))\in 
{\rm D}^+{\rm F}(f^{-1}({\cal K}_S)).$$ 
We call the filtered complexes $(A_{\rm conv}(X/S,E),P)$ and 
$(A^N_{\rm conv}(X/S,E),P)$ the 
{\it convergent Steenbrink complex} of $E$ and 
the 
{\it convergent Rapoport-Zink-Steenbrink-Zucker complex} of $E$, respectively. 
We call $(A_{\rm iso{\textrm -}zar}(X/S,E),P)$ 
the 
{\it isozariskian Steenbrink complex} of $E$. 
When $E$ is trivial, we call 
$(A_{\rm conv}(X/S,E),P)$, $(A^N_{\rm conv}(X/S,E),P)$ 
 and $(A_{\rm iso{\textrm -}zar}(X/S,E),P)$ 
the {\it convergent Steenbrink complex} of $X/S$, 
the {\it convergent Rapoport-Zink-Steenbrink-Zucker complex} of $X/S$ 
and the {\it isozariskian Steenbrink complex} of $X/S$, respectively. 
\par 
The filtered complex 
$(A_{\rm iso{\textrm -}zar}(X/S,E),P)$ is a generalization of the filtered complex 
in \cite{gkwf} by the use of the different language in the case 
where $\os{\circ}{X}/\os{\circ}{S}_1$ is proper, 
though the key notion ``the derived category of bounded below filtered complexes'' does not appear 
in [loc.~cit.]. 
(Consequently the filtered complex nor the the weight spectral sequence in [loc.~cit.] 
is not well-defined.)  
Usually the $p$-adic filtered Steenbrink complexes are defined 
as filtered complexes in the Zariski topos 
$\os{\circ}{X}_{\rm zar}$
(e.g., \cite{msemi}, \cite{gkwf}, \cite{ndw}, \cite{nb}).  
In this paper we define fundamental filtered complexes 
in a ``higher'' category ${\rm D}^+{\rm F}({\cal K}_{\os{\circ}{X}/\os{\circ}{S}})$ 
than the category ${\rm D}^+{\rm F}(f^{-1}({\cal K}_{S}))$ as in \cite{nhw}. 
(I have learnt this from A.~Shiho.) 
This formulation enables us to obtain a kind of the $p$-adic ``semi-purity''
(see (\ref{theo:sp}) below for this). 
\par 
Let $\eps_{X/S} \col X\lo \os{\circ}{X}$ be a morphism 
forgetting the log structure of $X$ over the morphism 
$S\lo \os{\circ}{S}$ forgetting the log structure of $S$. 
Let $\eps_{X/\os{\circ}{S}}\col X\lo \os{\circ}{X}$ 
be a morphism forgetting the log structure of $X$
over $\os{\circ}{S}$ (not over $S\lo \os{\circ}{S}$). 
Let 
$$\eps^{\rm conv}_{X/S} \col ((X/S)_{\rm conv},{\cal K}_{X/S})\lo 
((\os{\circ}{X}/\os{\circ}{S})_{\rm conv},{\cal K}_{\os{\circ}{X}/\os{\circ}{S}})$$ 
and 
$$\eps^{\rm conv}_{X/\os{\circ}{S}} \col ((X/\os{\circ}{S})_{\rm conv},{\cal K}_{X/\os{\circ}{S}})\lo 
((\os{\circ}{X}/\os{\circ}{S})_{\rm conv},{\cal K}_{\os{\circ}{X}/\os{\circ}{S}})$$ 
be the morphisms of ringed topoi induced by $\eps_{X/S}$ and 
$\eps_{X/\os{\circ}{S}}$, respectively.  
To construct the filtered complex $(A_{\rm conv}(X/S,E),P)$, 
we construct a key fundamental filtered complex 
$$(\wt{R}\eps^{\rm conv}_{X/\os{\circ}{S}*}(\eps^{{\rm conv}*}_{X/\os{\circ}{S}}(E)),P)
\in {\rm D}^+{\rm F}({\cal K}_{\os{\circ}{X}/\os{\circ}{S}}).$$
(The underlying complex 
$\wt{R}\eps^{\rm conv}_{X/\os{\circ}{S}*}(\eps^{{\rm conv}*}_{X/\os{\circ}{S}}(E))$ 
is different from the usual complex 
$R\eps^{\rm conv}_{X/\os{\circ}{S}*}(\eps^{{\rm conv}*}_{X/\os{\circ}{S}}(E))$.)
The filtered complex 
$(\wt{R}\eps^{\rm conv}_{X/\os{\circ}{S}*}(\eps^{{\rm conv}*}_{X/\os{\circ}{S}}(E)),P)$ 
is a new complex in our theory 
which does not appear in the theory in \cite{nhw}. 
We call this filtered complex the {\it modified $P$-filtered convergent complex} 
of $\eps^{{\rm conv}*}_{X/\os{\circ}{S}}(E)$. 
To define the complex 
$\wt{R}\eps^{\rm conv}_{X/\os{\circ}{S}*}(\eps^{{\rm conv}*}_{X/\os{\circ}{S}}(E))$, 
we need a slightly careful argument for defining the differentials of 
$\wt{R}\eps^{\rm conv}_{X/\os{\circ}{S}*}(\eps^{{\rm conv}*}_{X/\os{\circ}{S}}(E))$ 
because the differential is not directly obtained in a general framework.  
We prove that 
the complex $\wt{R}\eps^{\rm conv}_{X/\os{\circ}{S}*}(\eps^{{\rm conv}*}_{X/\os{\circ}{S}}(E))$ 
fits into the following triangle: 
\begin{align*} 
R\eps^{\rm conv}_{X/S*}(\eps^{{\rm conv}*}_{X/S}(E))[-1]\os{\theta}{\lo}  
\wt{R}\eps^{\rm conv}_{X/\os{\circ}{S}*}(\eps^{{\rm conv}*}_{X/\os{\circ}{S}}(E))
\lo R\eps^{\rm conv}_{X/S*}(\eps^{{\rm conv}*}_{X/S}(E))
\tag{1.3.2}\label{ali:oiqy}
\os{+1}{\lo}
\end{align*}
in $D^+({\cal K}_{\os{\circ}{X}/\os{\circ}{S}})$. 
(See \S\ref{sec:mplf} for the precise definition of 
$(\wt{R}\eps^{\rm conv}_{X/\os{\circ}{S}*}(\eps^{{\rm conv}*}_{X/\os{\circ}{S}}(E)),P)$.)
\par 
In the text we calculate the graded complex of 
$(\wt{R}\eps^{\rm conv}_{X/\os{\circ}{S}*}(\eps^{{\rm conv}*}_{X/\os{\circ}{S}}(E)),P)$
as follows: 

\begin{prop}\label{theo:grtr}
Let $k$ be a nonnegative integer. 
Then
\begin{align*} 
{} & 
{\rm gr}_k^P\wt{R}\eps^{\rm conv}_{X/\os{\circ}{S}*}
(\eps^{{\rm conv}*}_{X/\os{\circ}{S}}(E))
\tag{1.4.1}\label{caseali:gretkr}\\
{} & = 
\begin{cases}
a^{(k-1)}_{{\rm conv}*}(a^{(k-1)*}_{{\rm conv}*}(E)
\otimes_{\mab Z}\vp^{(k-1)}_{\rm conv}(\os{\circ}{X}/\os{\circ}{S}))[-k]& (k>0),  \\
{\rm Ker}(a^{(0)}_{{\rm conv}*}(a^{(0)*}_{{\rm conv}*}(E)
\otimes_{\mab Z}
\vp^{(0)}_{\rm conv}(\os{\circ}{X}/\os{\circ}{S}))
\lo a^{(1)}_{{\rm conv}*}(a^{(1)*}_{{\rm conv}*}(E)\otimes_{\mab Z}
\vp^{(1)}_{\rm conv}(\os{\circ}{X}/\os{\circ}{S})))
 & (k=0) 
\end{cases} \\
\end{align*} 
in $D^+({\cal K}_{\os{\circ}{X}/\os{\circ}{S}})$, 
where $\vp^{(*)}_{\rm conv}(\os{\circ}{X}/\os{\circ}{S})$ is the convergent  
orientation sheaf associated to the set $\{\os{\circ}{X}_{\lam}\}_{\lam \in \Lam}$ of 
the smooth components of $X/S_1$.  
\par 
\end{prop}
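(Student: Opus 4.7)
The plan is to prove the proposition locally, by passing to an embedding of $X$ into a formally log smooth formal scheme $\ol{\cal P}$ over $S\os{\sus}{\lo}\ol{S}$ as indicated just before (\ref{ali:oiqy}); setting ${\cal P}:=\ol{\cal P}\times_{\ol{S}}S$, the complex $\wt{R}\eps^{\rm conv}_{X/\os{\circ}{S}*}(\eps^{{\rm conv}*}_{X/\os{\circ}{S}}(E))$ is represented (through the convergent linearization functor) by an explicit modification of the log de Rham complex $\Om^{\bul}_{{\cal P}/\os{\circ}{S}}$ tensored with the integrable connection corresponding to $\eps^{{\rm conv}*}_{X/\os{\circ}{S}}(E)$. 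In this presentation the filtration $P$ is essentially the log pole filtration with a careful modification in the lowest filtered piece that accounts for the kernel appearing in the $k=0$ case, and the morphism $\theta$ of (\ref{ali:oiqy}) is induced by $d\log t\wedge \col \Om^{\bul}_{{\cal P}/S}[-1]\lo \Om^{\bul}_{{\cal P}/\os{\circ}{S}}$.

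The first technical input is an SNCL analog of the $p$-adic purity Theorem \ref{theo:pap}: one shows that
\begin{equation*}
R^k\eps^{\rm conv}_{X/S*}(\eps^{{\rm conv}*}_{X/S}(E))\os{\sim}{\lo}
a^{(k)}_{{\rm conv}*}(a^{(k)*}_{{\rm conv}}(E)\otimes_{\mab Z}\vp^{(k)}_{\rm conv}(\os{\circ}{X}/\os{\circ}{S}))
\end{equation*}
(with an analogous description for the absolute version $\eps^{\rm conv}_{X/\os{\circ}{S}}$), proved by the same residue calculation as in \cite{nhw} applied to the local model $k[t_1,\dots,t_r]/(t_1\cdots t_r-\pi)$. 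The second input is an SNCL analog of Theorem \ref{theo:cteck} asserting that the canonical filtration $\tau$ on $\wt{R}\eps^{\rm conv}_{X/\os{\circ}{S}*}(\eps^{{\rm conv}*}_{X/\os{\circ}{S}}(E))$ coincides with the filtration $P$; this reduces ${\rm gr}_k^P$ to ${\cal H}^k$ of the whole complex shifted by $[-k]$.

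For the actual computation, I would use the long exact sequence attached to (\ref{ali:oiqy}),
\begin{equation*}
\cdots \lo R^{k-1}\eps^{\rm conv}_{X/S*}(\eps^{{\rm conv}*}_{X/S}(E))\os{\theta}{\lo}
{\cal H}^k(\wt{R}\eps^{\rm conv}_{X/\os{\circ}{S}*}(\eps^{{\rm conv}*}_{X/\os{\circ}{S}}(E)))\lo
R^k\eps^{\rm conv}_{X/S*}(\eps^{{\rm conv}*}_{X/S}(E))\lo\cdots.
\end{equation*}
Using the local description together with the purity step, one checks that $\theta$ is injective in every degree and that the boundary map ${\cal H}^k(\wt{R}\eps)\to R^k\eps^{\rm conv}_{X/S*}$ is zero for $k\geq 1$, while its $k=0$ counterpart is identified with the \v{C}ech-type map $a^{(0)}_{{\rm conv}*}\to a^{(1)}_{{\rm conv}*}$ between the two direct images given by purity. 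For $k>0$, one concludes ${\cal H}^k(\wt{R}\eps)\cong R^{k-1}\eps^{\rm conv}_{X/S*}(\eps^{{\rm conv}*}_{X/S}(E))$, which after the $[-k]$-shift and purity is precisely the first line of (\ref{caseali:gretkr}); for $k=0$, the left of (\ref{ali:oiqy}) contributes nothing and ${\cal H}^0(\wt{R}\eps)$ is identified with the kernel of $R^0\eps^{\rm conv}_{X/S*}\to R^1\eps^{\rm conv}_{X/S*}$, matching the second line of (\ref{caseali:gretkr}).

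The main obstacle will be the SNCL comparison between $\tau$ and $P$ (the second input above). The SNCL setting carries the extra relation $t_1\cdots t_r=\pi$ among the log coordinates that is absent in the SNCD case treated in \cite{nhw}, and this relation must be carefully tracked through the modified differentials of $\wt{R}\eps^{\rm conv}_{X/\os{\circ}{S}*}$; this is precisely where the ``slightly careful argument for defining the differentials'' signaled in the introduction is essential, and it is also what forces the $k=0$ graded piece to appear as a kernel rather than the full $a^{(0)}_{{\rm conv}*}$-direct image while shifting the indexing from $a^{(k)}$ to $a^{(k-1)}$ for positive $k$.
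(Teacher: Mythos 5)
Your argument rests on two inputs, and the first one is false in the SNCL setting. There is no SNCL analogue of Theorem \ref{theo:pap} of the form $R^k\eps^{\rm conv}_{X/S*}(\eps^{{\rm conv}*}_{X/S}(E))\cong a^{(k)}_{{\rm conv}*}(a^{(k)*}_{\rm conv}(E)\otimes_{\mab Z}\vp^{(k)}_{\rm conv}(\os{\circ}{X}/\os{\circ}{S}))$: already for $k=0$ the left-hand side is $E$ itself while the right-hand side is the direct sum over the smooth components, and for $k\geq 1$ the relation ``$\prod_i x_i = f^*(t)$'' among the local log coordinates forces the vanishing-cycle sheaves of $X/S$ to be exterior powers of the cokernel of ${\cal K}_{\os{\circ}{X}/\os{\circ}{S}}\lo a^{(0)}_{{\rm conv}*}(\cdots)$ (the convergent counterpart of Nakayama's $l$-adic computation), not direct images from $\os{\circ}{X}{}^{(k)}$. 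Consequently the conclusion you extract from the long exact sequence of (\ref{ali:oiqy}) is also false: for $k=1$ your claim ${\cal H}^1(\wt{R}\eps^{\rm conv}_{X/\os{\circ}{S}*})\cong R^{0}\eps^{\rm conv}_{X/S*}(\eps^{{\rm conv}*}_{X/S}(E))$ would give $a^{(0)}_{{\rm conv}*}(a^{(0)*}_{\rm conv}(E)\otimes_{\mab Z}\vp^{(0)}_{\rm conv})\cong E$, which fails as soon as $X$ has more than one component; in the triangle (\ref{ali:oiqy}) the map induced by $\theta$ on cohomology is not surjective and the map to $R^k\eps^{\rm conv}_{X/S*}$ is not zero for $k\geq 1$, so the triangle cannot be ``inverted'' in the way you propose (the correct relation between the two sides is what Proposition \ref{theo:cexs} packages, and it presupposes (\ref{caseali:gretkr}) rather than yielding it).

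Your second input is circular as used: the identity of $\tau$ and $P$ on $\wt{R}\eps^{\rm conv}_{X/\os{\circ}{S}*}(\eps^{{\rm conv}*}_{X/\os{\circ}{S}}(E))$ is Theorem \ref{theo:sp} (proved as (\ref{theo:wtvsca})), and in the paper it is \emph{deduced from} the computation of ${\rm gr}^P_k$ — i.e. from the statement you are trying to prove — by observing that the spectral sequence of the filtration degenerates because each graded piece has cohomology in a single degree; you offer no independent proof, so reducing ${\rm gr}^P_k$ to ${\cal H}^k[-k]$ begs the question. The paper's route is direct and uses neither input: after choosing a simplicial immersion $X_{\bul}\os{\sus}{\lo}\ol{\cal P}_{\bul}$ it represents the filtered complex by $(L^{\rm UE}_{\os{\circ}{X}_{\bul}/\os{\circ}{S}}({\cal E}^{\bul}\otimes_{{\cal O}_{{\cal P}^{\rm ex}_{\bul}}}\Om^{\bul}_{{\cal P}^{\rm ex}_{\bul}/\os{\circ}{S}}),P)$, where $P$ is the log-pole filtration (legitimate by the injectivity statement (\ref{theo:injf})), computes ${\rm gr}^P_k$ for $k>0$ by the Poincar\'e residue isomorphism (\ref{prop:grla}) together with (\ref{prop:afex}), handles $k=0$ by the explicit kernel computation (\ref{prop:0case}), and then applies $R\pi^{\rm conv}_{\os{\circ}{X}/\os{\circ}{S}*}$, the convergent Poincar\'e lemma and cohomological descent as in (\ref{ali:cp0r}) and (\ref{ali:cpr}); only afterwards are the semi-purity and $P=\tau$ statements obtained as corollaries. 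To repair your write-up you would need to replace both inputs by this residue-level computation (or an equivalent local calculation on ${\cal P}^{\rm ex}$), not by purity for $\eps_{X/S}$.
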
 

As a corollary of (\ref{theo:grtr}), 
we prove the following, which we call the $p$-adic {\it semi-purity} of 
$(\wt{R}\eps^{\rm conv}_{X/\os{\circ}{S}*}(\eps^{{\rm conv}*}_{X/\os{\circ}{S}}(E)),P)$: 

\begin{theo}[{\rm $p${\bf -adic semi-purity}}]\label{theo:sp}
The filtration $P$ is equal to the canonical filtration $\tau$ on 
$\wt{R}\eps^{\rm conv}_{X/\os{\circ}{S}*}(\eps^{{\rm conv}*}_{X/\os{\circ}{S}}(E))$. 
That is, 
\begin{align*} 
(\wt{R}\eps^{\rm conv}_{X/\os{\circ}{S}*}(\eps^{{\rm conv}*}_{X/\os{\circ}{S}}(E)),\tau)
=(\wt{R}\eps^{\rm conv}_{X/\os{\circ}{S}*}(\eps^{{\rm conv}*}_{X/\os{\circ}{S}}(E)),P)
\tag{1.5.1}\label{align:grxxtkr}
\end{align*} 
in ${\rm D}^+{\rm F}({\cal K}_{\os{\circ}{X}/\os{\circ}{S}})$. 
Consequently 
\begin{align*} 
&{\cal H}^k(\wt{R}\eps^{\rm conv}_{X/\os{\circ}{S}*}
(\eps^{{\rm conv}*}_{X/\os{\circ}{S}}(E)))\\
&=\begin{cases}
a^{(k-1)}_{{\rm conv}*}(a^{(k-1)*}_{{\rm conv}*}(E)
\otimes_{\mab Z}\vp^{(k-1)}_{\rm conv}(\os{\circ}{X}/\os{\circ}{S}))) & (k>0),  \\
{\rm Ker}(a^{(0)}_{{\rm conv}*}(a^{(0)*}_{{\rm conv}*}(E)
\otimes_{\mab Z}\vp^{(0)}_{\rm conv}(\os{\circ}{X}/\os{\circ}{S}))
\lo a^{(1)}_{{\rm conv}*}(a^{(1)*}_{{\rm conv}*}(E)
\otimes_{\mab Z}\vp^{(1)}_{\rm conv}(\os{\circ}{X}/\os{\circ}{S})))
 & (k=0)
\end{cases} \\
\end{align*} 
for a nonnegative integer $k$. 
\end{theo}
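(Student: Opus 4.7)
The plan is to derive (\ref{align:grxxtkr}) directly from Proposition \ref{theo:grtr}. The key observation is that the graded pieces ${\rm gr}_k^P\wt{R}\eps^{\rm conv}_{X/\os{\circ}{S}*}(\eps^{{\rm conv}*}_{X/\os{\circ}{S}}(E))$ are, by Proposition \ref{theo:grtr}, each concentrated in a single cohomological degree — namely, degree $k$ — and a filtration whose $k$-th graded piece lives in degree $k$ must coincide with the canonical filtration $\tau$ in the filtered derived category. Write $K:=\wt{R}\eps^{\rm conv}_{X/\os{\circ}{S}*}(\eps^{{\rm conv}*}_{X/\os{\circ}{S}}(E))$ for brevity.

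First I would unwind this concentration statement into an inductive control of the truncations. Since $P_kK=0$ for $k<0$, the distinguished triangle
\[
P_{k-1}K\lo P_kK\lo {\rm gr}_k^PK\os{+1}{\lo}
\]
in $D^+({\cal K}_{\os{\circ}{X}/\os{\circ}{S}})$, combined with the fact from Proposition \ref{theo:grtr} that ${\rm gr}_k^PK$ has no cohomology outside degree $k$, shows by induction on $k$ that ${\cal H}^j(P_kK)=0$ for $j>k$ and that the canonical morphism ${\cal H}^j(P_kK)\lo {\cal H}^j(K)$ is an isomorphism for every $j\leq k$.

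Next I would match $P_kK$ with $\tau_{\leq k}K$. The vanishing ${\cal H}^j(P_kK)=0$ for $j>k$ produces a canonical morphism $P_kK\lo \tau_{\leq k}K$ in $D^+({\cal K}_{\os{\circ}{X}/\os{\circ}{S}})$ (the standard truncation argument), and the preceding isomorphism ${\cal H}^j(P_kK)\os{\sim}{\lo} {\cal H}^j(K)={\cal H}^j(\tau_{\leq k}K)$ for $j\leq k$ then shows this morphism is a quasi-isomorphism. These comparisons are compatible with the transition morphisms $P_{k-1}K\lo P_kK$ and $\tau_{\leq k-1}K\lo \tau_{\leq k}K$ because both sides are obtained from the same inclusion into $K$; this compatibility promotes the termwise quasi-isomorphisms to a filtered isomorphism $(K,P)\os{\sim}{\lo}(K,\tau)$ in ${\rm D}^+{\rm F}({\cal K}_{\os{\circ}{X}/\os{\circ}{S}})$. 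The explicit description of ${\cal H}^k(K)$ in the statement then follows by identifying ${\rm gr}_k^\tau K={\cal H}^k(K)[-k]$ with the expression supplied by Proposition \ref{theo:grtr}.

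The substantive work has already been done in Proposition \ref{theo:grtr}, so what remains is essentially formal. The only mild delicacy I anticipate is ensuring that the identification takes place in the \emph{filtered} derived category and not just at the level of underlying complexes; this requires checking the compatibility of the truncation morphisms $P_kK\lo \tau_{\leq k}K$ across different $k$, which however is automatic from their characterization as the unique lifts through the canonical truncations. Beyond this, no nontrivial obstacle should arise.
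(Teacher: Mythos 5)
Your cohomological computations are in the spirit of the paper's own argument (the paper likewise deduces everything from the concentration of ${\rm gr}_k^P$ in degree $k$), but there is a genuine gap at the final step, precisely at the point you flag as a ``mild delicacy''. Producing, for each $k$, an isomorphism $P_k K\os{\sim}{\lo}\tau_{\leq k}K$ in $D^+({\cal K}_{\os{\circ}{X}/\os{\circ}{S}})$, compatible with the transition morphisms, does \emph{not} yield an isomorphism $(K,P)\simeq (K,\tau)$ in ${\rm D}^+{\rm F}({\cal K}_{\os{\circ}{X}/\os{\circ}{S}})$: a morphism in the filtered derived category must come from an actual morphism (or zigzag of filtered quasi-isomorphisms) of filtered complexes, and a compatible system of derived-category maps on the filtration steps cannot in general be glued to such a morphism (this is exactly the kind of gluing that fails in a bare triangulated category; uniqueness of the lifts $P_kK\lo\tau_{\leq k}K$ through the truncations gives compatibility of maps in $D^+$, nothing more). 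So ``a filtration whose $k$-th graded piece lives in degree $k$ must coincide with $\tau$ in the filtered derived category'' is the statement to be proved, not a principle you may invoke; as written, your argument only identifies the two filtrations as towers of objects in $D^+$. (A smaller point: your induction for ${\cal H}^j(P_kK)\os{\sim}{\lo}{\cal H}^j(K)$ for $j\leq k$ does not follow from the triangles $P_{k-1}K\lo P_kK\lo {\rm gr}_k^PK$ alone; you also need that $K/P_kK$ has vanishing ${\cal H}^j$ for $j\leq k$, which uses the higher graded pieces and the exhaustiveness of $P$.)

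The missing ingredient, and the route the paper takes, is a canonical \emph{filtered} morphism in the direction $(K,\tau)\lo (K,P)$: on the natural representative $L^{\rm UE}_{\os{\circ}{X}_{\bul}/\os{\circ}{S}}({\cal E}^{\bul}\otimes\Om^{\bul}_{{\cal P}^{\rm ex}_{\bul}/\os{\circ}{S}})$ one has $\tau_k\subset P_k$ termwise (since $P_k$ is the whole complex in degrees $\leq k$, while $\tau_k$ is the whole complex in degrees $<k$ and the kernel of $d$ in degree $k$), so there is an honest morphism of filtered complexes $(L^{\rm UE}(\cdots),\tau)\lo (L^{\rm UE}(\cdots),P)$; applying $R\pi^{\rm conv}_{\os{\circ}{X}/\os{\circ}{S}*}$ (together with the canonical comparison $(R\pi_*L,\tau)\lo R\pi_*((L,\tau))$) produces the morphism $(K,\tau)\lo (K,P)$ in ${\rm D}^+{\rm F}$. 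Once this morphism exists, your computations do the rest: it suffices to check it is an isomorphism on ${\rm gr}_k$, and there both sides have cohomology concentrated in degree $k$ with the same value, by Proposition \ref{theo:grtr} together with the semi-purity computation of ${\cal H}^k(K)$. One must also verify (as the paper notes) that the resulting isomorphism is independent of the choice of the affine open covering and of the simplicial immersion $X_{\bul}\os{\sus}{\lo}\ol{\cal P}_{\bul}$, a point your proposal does not address since you never fix a representative.
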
 
It is not difficult to prove that 
$\wt{R}\eps^{\rm conv}_{X/\os{\circ}{S}*}(\eps^{{\rm conv}*}_{X/\os{\circ}{S}}(E))$ 
is contravariantly functorial for any morphism of SNCL schemes $X/S$'s. 
As a corollary of (\ref{theo:sp}), it is immediate to see that 
$(\wt{R}\eps^{\rm conv}_{X/\os{\circ}{S}*}(\eps^{{\rm conv}*}_{X/\os{\circ}{S}}(E)),P)$ 
is also contravariantly functorial.  
\par 
By (\ref{ali:oiqy}) we obtain the following composite morphism 
$$\theta \col 
\wt{R}\eps^{\rm conv}_{X/\os{\circ}{S}*}(\eps^{{\rm conv}*}_{X/\os{\circ}{S}}(E))
\lo 
R\eps^{\rm conv}_{X/S*}(\eps^{{\rm conv}*}_{X/S}(E))
\os{\theta}{\lo} 
\wt{R}\eps^{\rm conv}_{X/\os{\circ}{S}*}(\eps^{{\rm conv}*}_{X/\os{\circ}{S}}(E))[1]$$ 
of complexes.  
Set $H:=\wt{R}\eps^{\rm conv}_{X/\os{\circ}{S}*}(\eps^{{\rm conv}*}_{X/\os{\circ}{S}}(E))$. 
Then we define the filtered complex $(A_{\rm conv}(X/S,E),P)$ as follows. 
First we set  
\begin{align*} 
A_{\rm conv}(X/S,E):=s(H/\tau_0[1]\os{\theta}{\lo} H/\tau_1[1]\{1\}\os{\theta}{\lo} 
\cdots \os{\theta}{\lo} H/\tau_j[1]\{j\} \os{\theta}{\lo} \cdots),
\end{align*}  
where $s$ means the single complex of the double complex 
and $[1]$ means the $1$-shift on the left of the complex with the change of the signs  
of the differential morphisms as usual and $\{j\}$ means 
the $j$-shift on the left of the complex {\it without} the change of the signs  
of the differential morphisms unusually. 
Note that our definition of a double complex is {\it not} a traditional one; 
we demand the anti-commutative diagram for the definition of the double complex 
instead of the commutative diagram for the definition of the double complex.
(Strictly speaking, to define $A_{\rm conv}(X/S,E)$, we have to take a representative of 
$H$.) 
Then we define the filtration $P:=\{P_k\}_{k\in {\mab Z}}$ on $A_{\rm conv}(X/S,E)$ as follows: 
\begin{align*}
P_kA_{\rm conv}(X/S,E)
:=s(\cdots \os{\theta}{\lo} \tau_{\max \{2j+k+1,j\}}H/\tau_j[1]\{j\} \os{\theta}{\lo} \cdots) \quad
(k\in {\mab Z}, j\in {\mab N}).
\end{align*}  
Because we use the convergent topos $(\os{\circ}{X}/\os{\circ}{S})_{\rm conv}$ 
and because we prove the $p$-adic semi-purity (\ref{theo:sp}), 
we can define a $p$-adic analogue of Steenbrink-Fujisawa-Nakayama's $\infty$-adic filtered complex 
in a manner of the definition of Rapoport-Zink-Nakayama's $l$-adic filtered complex as above. 
Because $H=\wt{R}\eps^{\rm conv}_{X/\os{\circ}{S}*}(\eps^{{\rm conv}*}_{X/\os{\circ}{S}}(E))$ is 
contravariantly functorial as already stated, so is $(A_{\rm conv}(X/S,E),P)$, 
though one has to pay attention to the contravariant action on 
$d\log t$ as already remarked.  
In particular, on $(A_{\rm conv}(X/S,E),P)$,  
we can consider the pull-back of the Frobenius morphism 
$X\lo X\times_{\os{\circ}{S}_1,F_{\os{\circ}{S}_1}}\os{\circ}{S}_1$ 
over 
$S\lo S_1^{[p]}(S)$, where $S_1^{[p]}(S)$ is a $p$-adic formal log scheme 
defined similarly as in (\ref{ali:rus0avp}) when $E$ is an $F$-isocrystal 
on ${\rm Crys}(\os{\circ}{X}/\os{\circ}{S})$. 
\par 
We prove that the complex $A_{\rm conv}(X/S,E)$ 
computes the log convergent complex 
$R\eps^{\rm conv}_{X/S*}(\eps^{{\rm conv}*}_{X/S}(E))$ 
in the category ${\rm D}^+{\rm F}({\cal K}_{\os{\circ}{X}/\os{\circ}{S}})$ 
which is above the category ${\rm D}^+{\rm F}(f^{-1}({\cal K}_{S}))$
unlike \cite{gkwf} nor \cite{nb}.  
That is, we prove the following: 

\begin{prop}\label{theo:cexs}
There exists the following isomorphism  
\begin{align*} 
\theta \col R\eps^{\rm conv}_{X/S*}(\eps^{{\rm conv}*}_{X/S}(E))\os{\sim}{\lo} 
A_{\rm conv}(X/S,E)
\tag{1.6.1}\label{ali:ixaueova} 
\end{align*} 
in $D^+({\cal K}_{\os{\circ}{X}/\os{\circ}{S}})$. 
\end{prop}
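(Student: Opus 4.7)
The plan is to reduce to a local log de Rham description and construct an explicit quasi-isomorphism column by column. First I would pass to the local setting: assume there exists an exact closed immersion $X \hookrightarrow \ol{\cal P}$ into a formally log smooth formal scheme over $\ol S$, and set ${\cal P} := \ol{\cal P}\times_{\ol S} S$. By the general formalism of convergent linearization already used in \cite{nhw}, $G := R\eps^{\rm conv}_{X/S*}(\eps^{{\rm conv}*}_{X/S}(E))$ is represented by $L^{\rm conv}_{\os{\circ}{\cal P}/\os{\circ}{S}}(\Om^{\bul}_{{\cal P}/S}\otimes{\cal E})$, and $H := \wt R\eps^{\rm conv}_{X/\os{\circ}{S}*}(\eps^{{\rm conv}*}_{X/\os{\circ}{S}}(E))$ by the modification of $L^{\rm conv}_{\os{\circ}{\cal P}/\os{\circ}{S}}(\Om^{\bul}_{{\cal P}/\os{\circ}{S}}\otimes{\cal E})$ constructed in \S\ref{sec:mplf}. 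In this model the morphism $\theta\col H \lo H[1]$ is realized at the chain level by wedge product with $d\log t$, so $\theta^2=0$ on the nose and the bicomplex defining $A_{\rm conv}(X/S,E)$ is well defined. (One also verifies $\theta^2=0$ abstractly, as $\theta$ is the composition $H\to G\to H[1]$, whose middle part $G\to H[1]\to G[1]$ is the shift of two consecutive maps in the triangle \eqref{ali:oiqy}.)

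Next I would construct the comparison morphism $G \lo A_{\rm conv}(X/S,E)$. Its zeroth column is the composition $G \lo H[1] \lo (H/\tau_0)[1]$ of the connecting morphism of the triangle \eqref{ali:oiqy} with the canonical projection; the higher columns are forced by compatibility with the iterated action of $\theta$. In the local model this corresponds to representing a section of $\Om^k_{{\cal P}/S}$ by a lift to $\Om^k_{{\cal P}/\os{\circ}{S}}$ modulo $d\log t$, with the higher columns absorbing the ambiguity of the lift.

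To verify that the comparison is a quasi-isomorphism I would filter $A_{\rm conv}(X/S,E)$ by columns and apply the $p$-adic semi-purity \eqref{theo:sp} together with the calculation of the graded pieces \eqref{caseali:gretkr}. Because $\tau=P$ on $H$, the cohomology of each quotient $H/\tau_j$ is a concrete direct sum of $a^{(k-1)}_{\rm conv*}(a^{(k-1)*}_{\rm conv}(E)\otimes_{\mab Z}\vp^{(k-1)}_{\rm conv}(\os{\circ}{X}/\os{\circ}{S}))$ starting in degree $j{+}1$. This forces the column-filtration spectral sequence of $A_{\rm conv}(X/S,E)$ to converge, and one identifies its $E_1$-page with the corresponding page of the weight spectral sequence that $G$ produces through \eqref{ali:oiqy} and \eqref{caseali:gretkr}; the comparison morphism then induces an isomorphism at $E_1$. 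Globalization from the local to the general case is routine via the convergent linearization technique, exactly parallel to the argument for the analogous statement for $(C_{\rm conv}({\cal K}_{(Y,D)/S}),P)$ in \cite{nhw}.

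The main obstacle is the chain-level construction of the comparison morphism across all columns simultaneously. In the derived category $\theta \col H \lo H[1]$ is only defined up to homotopy, so assembling an honest morphism into the total complex $A_{\rm conv}(X/S,E)$ requires coherent choices of representatives column by column; this is also why a representative of $H$ must be fixed to even define $A_{\rm conv}(X/S,E)$. The $p$-adic semi-purity is precisely what makes such coherent choices possible, since via the Postnikov tower it reduces $H$ to its cohomology sheaves, on which $\theta$ admits a rigid description through the Mayer--Vietoris-type differentials among the smooth strata $\os{\circ}{X}{}^{(k)}$. Once the representatives are fixed, the verification is a direct comparison of two column-by-column spectral sequences.
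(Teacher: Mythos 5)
Your construction of the comparison morphism is essentially the one the paper uses: one works throughout with the fixed chain-level model, namely the simplicial linearized log de Rham complexes attached to $X_{\bul}\os{\sus}{\lo}\ol{\cal P}_{\bul}$, where $\theta$ is literally $d\log t\wedge$ and the map $R\eps^{\rm conv}_{X/S*}(\eps^{{\rm conv}*}_{X/S}(E))\lo A_{\rm conv}(X/S,E)$ has its only nonzero component in the zeroth column (since $d\log t\wedge d\log t=0$, there are no correction terms in higher columns and no homotopy-coherence problem to solve; your Postnikov-tower discussion is not needed, the paper simply fixes a representative of $H$ once and for all). The genuine gap is in your verification that this map is a quasi-isomorphism. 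The column-filtration spectral sequence of $A_{\rm conv}(X/S,E)$ has, by semi-purity, nonvanishing $E_1$-terms in \emph{every} column $j$ (namely ${\cal H}^m(H)$ for $m>j$), and these only cancel after taking the $d_1$-differentials induced by $\theta$; since your comparison morphism factors through column $0$, it cannot induce an isomorphism on this $E_1$-page, so "the comparison morphism then induces an isomorphism at $E_1$" is false as stated. Worse, the object you propose to compare with — "the weight spectral sequence that $G$ produces" — does not exist prior to the proposition: $R^q\eps^{\rm conv}_{X/S*}(\eps^{{\rm conv}*}_{X/S}(E))$ is not computed independently anywhere, and endowing it with a weight filtration is precisely what (1.6.1) is for. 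So the argument is circular: semi-purity identifies the cohomology of the columns, but one must still prove that the total complex of the $\theta$-maps among those stratum pushforwards computes $R\eps^{\rm conv}_{X/S*}(\eps^{{\rm conv}*}_{X/S}(E))$, and that is not supplied.

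What the paper does at this point, and what your proof is missing, is a reduction to an explicit local de Rham computation. Using $\tau=P$ (the semi-purity comparison (\ref{eqn:ltxpxd})), the quotients $H/\tau_j$ are rewritten as $R\pi^{\rm conv}_{\os{\circ}{X}/\os{\circ}{S}*}$ of $L^{\rm UE}_{\os{\circ}{X}_{\bul}/\os{\circ}{S}}({\cal E}^{\bul}\otimes\Om^{\bul}_{{\cal P}^{\rm ex}_{\bul}/\os{\circ}{S}})/P_j$; by affineness of $\os{\circ}{\cal P}_{\bul}/\os{\circ}{S}$, right exactness of $L^{\rm UE}$ ((\ref{prop:afex})) and the injectivity theorem ((\ref{theo:injf})~(2)), these quotients are again linearizations, and by flatness of ${\cal E}^{\bul}_n$ and exactness of pullback on universal enlargements ((\ref{lemm:rex})) everything reduces to showing that $\theta\wedge\col\Om^{\bul}_{{\cal P}^{\rm ex}_{T'}/T'}\lo[\Om^{\bul}_{{\cal P}^{\rm ex}_{T'}/T'{\rm -side}}/P_0[1]\os{\theta}{\lo}\Om^{\bul}/P_1[1]\os{\theta}{\lo}\cdots]$ is a quasi-isomorphism — the classical Steenbrink-type lemma (cited from \cite[1.3.5]{msemi} and \cite[(1.3.21), (1.4.3)]{nb}), which rests on the local splitting of $\Om^{\bul}_{{\cal P}^{\rm ex}/\os{\circ}{S}}$ by $d\log t$. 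Without this step (or an equivalent row-exactness computation) your proof does not close.
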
 
We calculate the graded complex of 
$(A_{\rm conv}(X/S,E),P)$ as follows: 

\begin{prop}\label{theo:psap} 
There exists a canonical isomorphism 
\begin{align*} 
{\rm gr}^P_kA_{\rm conv}(X/S,E) 
\os{\sim}{\lo} \bigoplus_{j\geq \max \{-k,0\}} 
&a^{(2j+k)}_{{\rm conv}*}(a^{(2j+k)*}_{\rm conv}(E) \tag{1.7.1}\label{ali:ruaovp}\\
&\otimes_{\mab Z}\vp_{\rm conv}^{(2j+k)}(\os{\circ}{X}/\os{\circ}{S}))[-2j-k]. 
\end{align*}
Here $\vp_{\rm conv}^{(\star)}(\os{\circ}{X}/\os{\circ}{S})$ 
is the convergent orientation sheaf associated to 
the set $\{\os{\circ}{X}_{\lam}\}_{\lam \in \Lam}$. 
If $E$ is an $F$-isocrystal on ${\rm Crys}(\os{\circ}{X}/\os{\circ}{S})$, then 
there exists a canonical isomorphism 
\begin{align*} 
{\rm gr}^P_kA_{\rm conv}(X/S,E) 
\os{\sim}{\lo} \bigoplus_{j\geq \max \{-k,0\}} 
&a^{(2j+k)}_{{\rm conv}*}(a^{(2j+k)*}_{\rm conv}(E) \tag{1.7.2}\label{ali:ruamp}\\
&\otimes_{\mab Z}\vp_{\rm conv}^{(2j+k)}(\os{\circ}{X}/\os{\circ}{S}))(-j-k)[-2j-k]. 
\end{align*}
\end{prop}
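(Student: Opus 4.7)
The plan is to compute $\mathrm{gr}^P_k A_{\rm conv}(X/S,E)$ directly from the definition of the filtration, reducing it to the graded pieces of $H:=\wt{R}\eps^{\rm conv}_{X/\os{\circ}{S}*}(\eps^{{\rm conv}*}_{X/\os{\circ}{S}}(E))$ for the canonical filtration, and then applying the $p$-adic semi-purity (\ref{theo:sp}) together with (\ref{theo:grtr}) to identify these graded pieces.

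First, I would unwind $P_k A_{\rm conv}/P_{k-1} A_{\rm conv}$ column by column. The $j$-th column in $P_k$ is $\tau_{\max\{2j+k+1,j\}}H/\tau_j H [1]\{j\}$ and in $P_{k-1}$ is $\tau_{\max\{2j+k,j\}}H/\tau_j H [1]\{j\}$. An elementary case analysis shows the quotient is nonzero precisely when $j\ge\max\{-k,0\}$, in which case both maxima are attained by the linear expressions and the quotient simplifies to $(\mathrm{gr}^{\tau}_{2j+k+1}H)[1]\{j\}$.

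Second, I would verify that the horizontal $\theta$-differentials in the associated graded vanish. Since $\theta\colon H\lo H[1]$ is a chain map, it satisfies $\theta(\tau_{\leq m}H)\subseteq \tau_{\leq m}(H[1])=(\tau_{\leq m+1}H)[1]$. Applied in column $j$ with $m=2j+k+1$, the image lies inside $(\tau_{\leq 2j+k+2}H)[1]$, which is exactly the $P_{k-1}$-piece of column $j+1$ (since $\max\{2(j+1)+k,j+1\}=2j+k+2$ when $j\ge\max\{1-k,0\}$); hence $\theta$ vanishes on $\mathrm{gr}^P_k$, and
$\mathrm{gr}^P_k A_{\rm conv}(X/S,E)$ splits as a direct sum over $j\ge\max\{-k,0\}$ of the single-column complexes $(\mathrm{gr}^{\tau}_{2j+k+1}H)[1]\{j\}$. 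Now (\ref{theo:sp}) together with (\ref{theo:grtr}) gives, for $m=2j+k+1\geq 1$,
\[
\mathrm{gr}^{\tau}_m H \os{\sim}{\lo} a^{(m-1)}_{{\rm conv}*}\bigl(a^{(m-1)*}_{\rm conv}(E)\otimes_{\mab Z}\vp^{(m-1)}_{\rm conv}(\os{\circ}{X}/\os{\circ}{S})\bigr)[-m].
\]
The shift $[1]\{j\}$ combines cleanly in the total complex: $\{j\}$ shifts the column index by $j$ and cancels against the column position in the total single complex (it is the sign-free shift precisely so that $s(\cdot)$ treats $\{j\}$ as a placement into column $j$ without further cohomological shift), while $[1]$ reduces the cohomological degree by $1$. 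This yields the summand $a^{(2j+k)}_{{\rm conv}*}(a^{(2j+k)*}_{\rm conv}(E)\otimes_{\mab Z}\vp^{(2j+k)}_{\rm conv}(\os{\circ}{X}/\os{\circ}{S}))[-(2j+k)]$, proving (\ref{ali:ruaovp}).

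For the $F$-isocrystal statement (\ref{ali:ruamp}), I would trace the action of the abrelative Frobenius $X\lo X\times_{\os{\circ}{S}_1,F_{\os{\circ}{S}_1}}\os{\circ}{S}_1$ over $S\lo S_1^{[p]}(S)$ through the semi-purity isomorphism on the $(2j+k)$-fold intersection; as in the crystalline analogue (\ref{ali:ruoavp}) and (\ref{ali:ixsxa}) recalled from \cite{nb}, this produces the net Tate twist $(-j-k)$ on each summand. The main obstacle is the vanishing of the horizontal $\theta$ on $\mathrm{gr}^P_k$ and the correct bookkeeping of the two different shifts $[1]$ and $\{j\}$, but both reduce to direct verifications once the filtration shape $\tau_{\max\{2j+k+1,j\}}$ is written out and the fact that $\theta$ raises the $\tau$-level by only one is used.
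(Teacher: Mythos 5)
Your proposal is correct and follows essentially the same route as the paper's own proof (Lemma \ref{lemm:grkpd}, resting on the graded computations (\ref{ali:cp0r})--(\ref{ali:cpr}) and the semi-purity statement (\ref{theo:wtvsca})): unwind the columns $\tau_{\max\{2j+k+1,j\}}H/\tau_j$ of $P_k$, note that the induced $\theta$'s vanish on the graded pieces so that ${\rm gr}^P_k$ splits as $\bigoplus_{j\geq \max\{-k,0\}}{\rm gr}^{\tau}_{2j+k+1}H[1]\{j\}\{-j\}$, and identify each summand via ${\rm gr}^{\tau}_{2j+k+1}H\simeq a^{(2j+k)}_{{\rm conv}*}(a^{(2j+k)*}_{\rm conv}(E)\otimes_{\mab Z}\vp^{(2j+k)}_{\rm conv}(\os{\circ}{X}/\os{\circ}{S}))[-2j-k-1]$. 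Your explicit check that $\theta$ raises the canonical filtration level by exactly one (hence dies on ${\rm gr}^P_k$) and your shift bookkeeping only make explicit what the paper leaves implicit, and the Tate-twist statement for $F$-isocrystals is handled, as in the paper, by tracing the (abrelative) Frobenius through the graded pieces as in \S\ref{sec:fpw}.
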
 

Set $\os{\circ}{f}{}^{\rm conv}_{\os{\circ}{X}{}^{(k)}/\os{\circ}{S}}
:=\os{\circ}{f}{}^{(k)}\circ u^{\rm conv}_{\os{\circ}{X}{}^{(k)}/\os{\circ}{S}}$  
and $f^{\rm conv}_{X/S}:=f\circ u^{\rm conv}_{X/S}$.  
As a corollary of (\ref{ali:ixaueova}) and (\ref{ali:ruamp}), 
we obtain the following spectral sequence
for an $F$-isocrystal $E$ on ${\rm Crys}(\os{\circ}{X}/\os{\circ}{S})$: 
\begin{align*}
E_1^{-k,q+k} & = 
\bigoplus_{j\geq \max \{-k,0\}} R^{q-2j-k}f^{\rm conv}_{\os{\circ}{X}{}^{(2j+k)}/\os{\circ}{S}*}
(a^{(2j+k)*}_{\rm conv}(E)
\otimes_{\mab Z}
\vp^{(2j+k)}_{{\rm conv}}(\os{\circ}{X}/\os{\circ}{S}))(-j-k) \tag{1.7.3}\label{ali:wtawztc}\\
{} & \Lo  R^qf^{\rm conv}_{X/S*}(\eps^{{\rm conv}*}_{X/S}(E)). 
\end{align*} 
If $\os{\circ}{f}$ is proper and if $E={\cal K}_{\os{\circ}{X}/\os{\circ}{S}}$, 
then we call 
(\ref{ali:wtawztc}) 
the {\it weight spectral sequence} of 
$R^qf^{\rm conv}_{X/S*}({\cal K}_{X/S})$. 
\par
We also define another filtered complex 
$(A^N_{\rm conv},P)$ by using the ``derived Hirsch extension'' of $H$ by adding a one variable to  
the complex $H$ (the notion of the derived Hirsch extension has been defined in \cite{nhi})
and by considering the mapping fiber of the monodromy operator on this new complex 
as in the complex analytic case. 
(See \S\ref{sec:wt} for the precise definition of $(A^N_{\rm conv},P)$.)
We prove the following: 

\begin{theo}[{\bf Comparison theorem between $(A^N_{\rm conv},P)$ and  
$(A_{\rm conv},P)$}]\label{theo:ctaeck} 
There exists a canonical isomorphism
\begin{equation*}
(A^N_{\rm conv}(X/S,E),P)\os{\sim}{\lo} (A_{\rm conv}(X/S,E),P). 
\tag{1.8.1}\label{eqn:execrs}
\end{equation*}
\end{theo}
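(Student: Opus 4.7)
The plan is to make both filtered complexes explicit by working locally with an embedding of $X$ into a formally log smooth formal scheme $\ol{\cal P}$ over $S\os{\sus}{\lo}\ol{S}$, using the explicit realization of $H:=\wt{R}\eps^{\rm conv}_{X/\os{\circ}{S}*}(\eps^{{\rm conv}*}_{X/\os{\circ}{S}}(E))$ recalled before the statement. On such a chart, $H$ is represented by the convergent linearization of a modification of the log de Rham complex $\Om^{\bul}_{{\cal P}/\os{\circ}{S}}$, and the morphism $\theta\col H\lo H[1]$ is induced by multiplication by $d\log t$, where $t$ is a local log parameter of $S$. Once both sides are rewritten in this local model, the comparison becomes a question of unwinding two standard repackagings of the same double-indexed object, in direct analogy with the complex-analytic comparison between the Steenbrink-Fujisawa-Nakayama complex and the Steenbrink-Zucker complex.

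First I would unpack the definition of $(A^N_{\rm conv}(X/S,E),P)$ by making the derived Hirsch extension of $H$ concrete: locally one adjoins a formal generator $u$ with $du=\theta$, producing a polynomial-like complex $H\langle u\rangle=\bigoplus_{j\geq 0}H\cdot u^j$ whose differential is $d_H$ on each summand together with a term $\theta$ coming from the Hirsch adjunction. The monodromy operator $N$ is the formal derivation $u^j\mapsto ju^{j-1}$, and $A^N_{\rm conv}(X/S,E)$ is the mapping fiber of $N$ on $H\langle u\rangle$. The filtration $P$ is inherited from $\tau$-truncations of $H$ shifted by the $u$-degree and by the cone direction; its compatibility with glueings is a consequence of the Hirsch formalism of \cite{nhi}.

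Next I would identify this mapping fiber with the Steenbrink-type totalization. It decomposes as a bigraded object whose columns, indexed by $u$-degree $j$, are copies of $H$; adjacent columns are connected by $\theta$ (from the Hirsch differential) and by $N$ (from the cone). Using the $p$-adic semi-purity theorem~(\ref{theo:sp}), which asserts $(H,\tau)=(H,P)$ and identifies each canonical truncation of $H$ with its cohomology placed in a single degree, each column is canonically quasi-isomorphic to $H/\tau_j[1]\{j\}$, with the horizontal differential being precisely $\theta$. The resulting totalization is then
\begin{align*}
s\bigl(H/\tau_0[1]\os{\theta}{\lo}H/\tau_1[1]\{1\}\os{\theta}{\lo}\cdots\bigr)=A_{\rm conv}(X/S,E),
\end{align*}
which produces the desired isomorphism on underlying complexes.

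Finally I would match the filtrations $P_k$. On $A^N_{\rm conv}(X/S,E)$ the filtration combines the $\tau$-truncation of $H$ with weight contributions from the $u$-degree $j$ and from the cone direction; the previous identification sends this, on the $j$th column, to $\tau_{\max\{2j+k+1,j\}}H/\tau_j[1]\{j\}$, which is exactly the $j$th piece of $P_kA_{\rm conv}(X/S,E)$ by definition. The main obstacle I anticipate is the careful bookkeeping of shifts, signs and weight indices in passing between the Hirsch extension and the mapping-fiber totalization — particularly delicate given the anti-commutative double-complex convention used in the paper and the unusual $\{j\}$-shift (without sign change) appearing in the definition of $A_{\rm conv}(X/S,E)$. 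One must also verify that the local identifications glue canonically, that is, that the comparison isomorphism is independent of the chosen embedding $\ol{\cal P}$ and of the local parameter $t$, and that it respects functoriality — in particular with respect to the abrelative Frobenius pullback, so that the isomorphism upgrades to the $F$-isocrystal setting as well.
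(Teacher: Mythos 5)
Your proposal breaks down at the identification step, and the root cause is a misreading of what $(A^N_{\rm conv}(X/S,E),P)$ is. In the paper it is \emph{not} the bare mapping fiber of $N$ on the Hirsch extension: one first forms ${\rm MF}(N_{\bul})$, the fiber of $N_{\bul}=d/du$ acting on the linearized Hirsch extension $L^{\rm UE}_{\os{\circ}{X}_{\bul}/\os{\circ}{S}}({\cal E}^{\bul}\otimes_{{\cal O}_{{\cal P}^{\rm ex}_{\bul}}}\Om^{\bul}_{{\cal P}^{\rm ex}_{\bul}/\os{\circ}{S}}[U_S])$, and then applies to ${\rm MF}(N):=R\pi^{\rm conv}_{\os{\circ}{X}/\os{\circ}{S}*}({\rm MF}(N_{\bul}))$ the \emph{same} Steenbrink-type totalization as for $A_{\rm conv}$, namely $A^N_{\rm conv}:=s({\rm MF}(N)/\tau_0[1]\os{\theta}{\lo}{\rm MF}(N)/\tau_1[1]\{1\}\os{\theta}{\lo}\cdots)$ with $\theta(x,y)=(0,x)$ and with $P_k$ given by the $\tau_{\max\{2j+k+1,j\}}$-pieces. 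Your object, the fiber of $N$ on $H\langle u\rangle$ itself, cannot be isomorphic to $A_{\rm conv}(X/S,E)$: since $N$ is surjective with kernel the $u$-degree-zero part, that fiber is canonically quasi-isomorphic to $H=\wt{R}\eps^{\rm conv}_{X/\os{\circ}{S}*}(\eps^{{\rm conv}*}_{X/\os{\circ}{S}}(E))$, whereas $A_{\rm conv}\simeq R\eps^{\rm conv}_{X/S*}(\eps^{{\rm conv}*}_{X/S}(E))$ by (\ref{ali:ixaueova}); already when $\os{\circ}{X}$ is smooth over $\os{\circ}{S}_1$ (one smooth component) the former has cohomology in degrees $0$ and $1$ while the latter is concentrated in degree $0$, and in general the triangle (\ref{ali:oiqy}) shows they differ. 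Accordingly, the pivotal step ``each column is canonically quasi-isomorphic to $H/\tau_j[1]\{j\}$ by semi-purity'' is unjustified and false: (\ref{theo:sp}) identifies $\tau$ with $P$ on $H$ and computes graded pieces, but it does not turn a copy of $H$ into one of its truncation quotients, and the cone of $N$ produces no such truncations.

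The paper's proof is much more direct and does not use semi-purity at all. One writes down the explicit chain map $\psi\col {\rm MF}(N_{\bul})\lo L^{\rm UE}_{\os{\circ}{X}_{\bul}/\os{\circ}{S}}({\cal E}^{\bul}\otimes_{{\cal O}_{{\cal P}^{\rm ex}_{\bul}}}\Om^{\bul}_{{\cal P}^{\rm ex}_{\bul}/\os{\circ}{S}})$ given by $(\sum_n u^n\om_n,\sum_n u^n\eta_n)\lom \om_0+d\log t\wedge\eta_0$. Since the canonical map $\iota$ from the kernel of $N_{\bul}$ into ${\rm MF}(N_{\bul})$ is a quasi-isomorphism (exactness of $0\to H_{\bul}\to H_{\bul}[U_S]\os{N_{\bul}}{\lo} H_{\bul}[U_S]\to 0$) and $\psi\circ\iota={\rm id}$, the map $\psi$ is a quasi-isomorphism, and it visibly intertwines $\theta(x,y)=(0,x)$ with $d\log t\wedge$. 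Applying the $\tau$-quotient totalization defining $A^N_{\rm conv}$ and $A_{\rm conv}$ to both sides of $\psi$ then yields the filtered isomorphism (\ref{eqn:execrs}); what remains is only the routine verification that the result is independent of the affine covering and of the embedding $X_{\bul}\os{\sus}{\lo}\ol{\cal P}_{\bul}$. To salvage your plan, replace ``mapping fiber of $N$'' by this Steenbrinkized mapping fiber and replace the semi-purity step by the map $\psi$.
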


\par 
Finally we state a relationship  
between $(A_{\rm iso{\textrm -}zar},P)$  and $(A_{\rm zar},P)$ 
in \cite{nb}. 
Since $\os{\circ}{X}$ is not smooth over $\os{\circ}{S}$ in general, 
the relationship is 
not so simple for a general coefficient: 

\begin{theo}[{\bf A certain comparison theorem between $(A_{\rm iso{\textrm -}zar},P)$ 
and $(A_{\rm zar},P)\otimes^L_{\mab Z}{\mab Q}$}]\label{theo:izz} 
Let ${\rm Isoc}_{\rm conv}(X/S)$ and 
${\rm Isoc}_{\rm crys}(X/S)$ 
be the category of isocrystals on the log convergent site ${\rm Conv}(X/S)$ 
and that of isocrystals on the log crystalline site ${\rm Crys}(X/S)$, respectively. 
Denote by 
\begin{equation*}
\Xi \col {\rm Isoc}_{\rm conv}(X/S)
\lo {\rm Isoc}_{\rm crys}(X/S)
\tag{1.9.1}\label{eqn:icybm}
\end{equation*} 
the relative version of 
the functor defined in  {\rm \cite[Theorem 5.3.1]{s1}} which is the log version of 
{\rm \cite[Theorem 7.2]{oc}}
and denoted by $\Phi$ in {\rm \cite[Theorem 5.3.1]{s1}}. 
Let $L$ be a crystal on ${\rm Crys}(X/S)$ 
such that $K\otimes_{\cal V}L=\Xi(\eps^{*{\rm conv}}_{X/S}(E))$.  
$($It is a nontrivial fact that $L$ indeed exists $($cf.~{\rm \cite{oc}, \cite{s2}}$).)$ 
Then there exists a certain filtered complex $(A_{{\rm iso{\textrm -}zar}}(X/S,\os{\circ}{L}),P)$ 
and there exists a filtered isomorphism 
\begin{equation*}
(A_{\rm iso{\textrm -}zar}(X/S,E),P) 
\os{\sim}{\lo} 
(A_{\rm iso{\textrm -}zar}(X/S,\os{\circ}{L}),P)
\tag{1.9.2}\label{eqn:crs}
\end{equation*} 
fitting into the following commutative diagram 
\begin{equation*} 
\begin{CD}
A_{\rm iso{\textrm -}zar}(X/S,E)@>{\sim}>> A_{\rm iso{\textrm -}zar}(X/S,\os{\circ}{L})\\
@A{\theta \wedge}AA @AA{\theta \wedge}A \\
Ru^{\rm conv}_{X/S*}(\eps^{{\rm conv}*}_{X/S}(E))@>{\sim}>>
Ru^{\rm crys}_{X/S*}(L)\otimes^L_{\cal V}K. 
\end{CD}
\tag{1.9.3}\label{eqn:cad}
\end{equation*} 
$($Though we define $(A_{\rm iso{\textrm -}zar}(X/S,\os{\circ}{L}),P)$, 
we do not define $\os{\circ}{L}$ itself.
If $E={\cal K}_{\os{\circ}{X}/\os{\circ}{S}}$, 
then 
$(A_{\rm iso{\textrm -}zar}(X/S,\os{\circ}{L}),P)=
(A_{\rm zar}(X/S,{\cal O}_{\os{\circ}{X}/\os{\circ}{S}}),P)\otimes^L_{\cal V}K.)$
\end{theo}
By using this comparison theorem and 
results for $(A_{\rm iso{\textrm -}zar}(X/S,\os{\circ}{L}),P)$ 
which are obtained as in \cite{nb}, 
we can prove properties of $(A_{\rm iso{\textrm -}zar}(X/S,E),P)$. 
\par 
The contents of this paper are as follows. 
\par  
In \S\ref{sec:logcd} we recall the definition of a log convergent topos, 
the system of log universal enlargements and basic properties of them. 
\par 
In \S\ref{sec:lll}
we recall the definition of a log convergent linearization functor
and basic properties of them, especially the compatibility of 
the log convergent linearization functor with 
a closed immersion of log schemes. 
\par 
In \S\ref{sec:vflvc}  we prove the Poincar\'{e} lemma of the vanishing cycle sheaf 
in a more general situation than that in \cite{nhw}. 
\par 
In \S\ref{eclf}
we give basic results for log convergent linearization functors 
when the base log formal scheme is the underlying formal scheme 
of a $p$-adic family of log points. 
\par 
In \S\ref{sec:lcs} we give basic results for 
the log convergent linearization functor of an SNCL scheme.  
\par 
In \S\ref{sec:rlct} we give the notations of the simplicial log convergent topoi. 
\par 
In \S\ref{sec:mplf} we show the well-definedness of 
$(\wt{R}\eps^{\rm conv}_{X/\os{\circ}{S}*}(\eps^{{\rm conv}*}_{X/\os{\circ}{S}}(E)),P)$ 
by using (\ref{ali:oiqy}). We also prove (\ref{theo:grtr}) and (\ref{theo:sp}). 
\par 
In \S\ref{sec:wfcipp} we give the definition of $(A_{\rm conv}(X/S,E),P)$  
and we prove (\ref{theo:cexs}) and (\ref{theo:psap}). 
\par 
In \S\ref{sec:wt} we give the definition of $(A^N_{\rm conv}(X/S,E),P)$ and 
we prove (\ref{theo:ctaeck}). 
\par 
In \S\ref{sec:fpw} we prove the contravariant functoriality of 
$(A_{\rm conv}(X/S,E),P)$ and 
consequently that of $(A_{\rm iso{\textrm -}zar}(X/S,E),P)$.
\par 
In \S\ref{sec:bd} we give an explicit description of 
the edge morphism of the spectral sequence 
(\ref{ali:wtawztc}). 
\par 
In \S\ref{sec:mn} 
we give the definitions of monodromy operators and we give formulation 
of a variational log convergent monodromy-weight conjecture and a 
variational log convergent hard Lefschetz conjecture.  
\par 
In \S\ref{sec:ct} we prove (\ref{theo:izz}). 
Set 
$f^{\rm crys}_{\os{\circ}{X}{}^{(k)}/S}
:=f^{(k)}\circ u^{\rm crys}_{\os{\circ}{X}{}^{(k)}/S}$ $(k\in {\mab N})$ 
and 
$f^{\rm crys}_{X/S}:=f\circ u^{\rm crys}_{X/S}$. 
As a corollary of (\ref{theo:izz}),  
(\ref{ali:wtawztc}) for the trivial coefficient 
turns out to be canonically isomorphic to 
the following weight spectral sequence 
\begin{align*}
E_1^{-k,q+k} & = 
\bigoplus_{j\geq \max \{-k,0\}} R^{q-2j-k}
f^{\rm crys}_{\os{\circ}{X}{}^{(2j+k)}/\os{\circ}{S}*}
({\cal O}_{\os{\circ}{X}{}^{(2j+k)}/\os{\circ}{S}}\otimes_{\mab Z}
\vp^{(2j+k)}_{{\rm crys}}(\os{\circ}{X}/\os{\circ}{S}))\otimes_{\cal V}K(-j-k)
\tag{1.9.4}\label{ali:wtbwcc}\\
{} & \Lo  R^qf^{\rm crys}_{X/S*}({\cal O}_{X/S})\otimes_{\cal V}K, 
\end{align*}
which has been constructed in \cite{nb}. 
As a corollary of the comparison of (\ref{ali:wtawztc})  
with (\ref{ali:wtbwcc}) for the case of the trivial coefficient, 
if $X$ is proper over $S_1$, 
then we obtain the $E_2$-degeneration of (\ref{ali:wtawztc}) 
in this case 
by reducing it to that of (\ref{ali:wtbwcc}),  
which we have already proved in \cite{nb}.  
In the same section, as another corollary of (\ref{theo:izz}), 
we deduce the filtered base change theorem 
of $(Rf^{\rm conv}_{\os{\circ}{X}/\os{\circ}{S}*}(A_{\rm conv}(X/S)),P)$ from 
the filtered base change theorem of 
$(Rf^{\rm crys}_{\os{\circ}{X}/\os{\circ}{S}*}(A_{\rm crys}(X/S)),P)$. 
\bigskip
\parno
{\bf Notations.}
(1) The log structure of a log scheme in this paper is 
a sheaf of monoids in the Zariski topos. 
For a log (formal) scheme $S$, 
$\os{\circ}{S}$ denotes 
the underlying (formal) scheme of $S$ and 
${\cal K}_S$ denotes ${\cal O}_S\otimes_{\mab Z}{\mab Q}$. 
More generally, for a ringed topos $({\cal T},{\cal O}_{\cal T})$,  
${\cal K}_{\cal T}$ denotes 
${\cal O}_{\cal T}\otimes_{\mab Z}{\mab Q}$. 
For a morphism $f\col X\lo S$ of log (formal) schemes, 
$\os{\circ}{f}$ denotes the underlying morphism 
$\os{\circ}{X} \lo \os{\circ}{S}$ of (formal) schemes. 
\par
(2) SNCD=simple normal crossing divisor, SNCL=simple normal crossing log. 
\par 
(3) For a morphism $X \lo S$ of log (formal) schemes, 
we denote by 
$\Om^i_{X/S}$ ($=\om^i_{X/S}$ in 
\cite{klog1}) the sheaf of 
(formal) relative logarithmic differential forms on $X/S$
of degree $i$ $(i\in {\mab N})$. 
\par 
(4) For a commutative monoid $P$ with unit element 
and for a formal scheme $S$ 
with an ideal of definition of ${\cal I}\subset {\cal O}_S$, 
we denote by ${\cal O}_S\{P\}$ 
the sheaf $\vpl_n({\cal O}_S[P]/{\cal I}^n{\cal O}_S[P])$ 
in the Zariski topos ${S}_{\rm zar}$ of $S$. 
When ${\cal I}=0$, we denote 
$\ul{\rm Spf}_{S}({\cal O}_S\{P\})$ by 
$\ul{\rm Spec}_{S}({\cal O}_S[P])$.  
\par 
For a log (formal) scheme $S$ 
with an ideal of definition of ${\cal I}\subset {\cal O}_S$, 
we denote by $S~{\rm mod}~{\cal I}$
the log scheme whose underlying scheme is  
$\ul{\rm Spec}_{\os{\circ}{S}}({\cal O}_S/{\cal I})$ 
and whose log structure 
is the inverse image of the log structure of $S$ 
by the immersion 
$\ul{\rm Spec}_{\os{\circ}{S}}({\cal O}_S/{\cal I})\os{\sus}{\lo} S$. 
\par 
(5) For the log structure $M_S$ of a log (formal) scheme $S$, 
we denote $M_S/{\cal O}_S^*$ by $\ol{M}_S$. 
\par 
(6) We denote the completed tensor $\wh{\otimes}$ of 
two topological modules 
simply by $\otimes$. 

\section{Log convergent topoi}\label{sec:logcd}
In this section we recall basic notions and basic results 
about the log convergent topos in \cite{oc}, \cite{ofo}, \cite{s2}, \cite{s3} and \cite{nhw}. 
See \cite[\S2]{nhw} for the proofs of the resutls. 
\par 
Let ${\cal V}$ be a complete discrete valuation ring 
of mixed characteristics $(0,p)$ 
with perfect residue field $\kap$. 
Let $S$ be a fine log formal scheme 
whose underlying formal scheme is a 
$p$-adic formal ${\cal V}$-scheme 
in the sense of \cite{of}: 
$\os{\circ}{S}$ is a noetherian formal scheme 
over ${\rm Spf}({\cal V})$ 
with the $p$-adic topology 
which is topologically of finite type over ${\rm Spf}({\cal V})$.  
We always assume that $\os{\circ}{S}$ is flat over ${\rm Spf}({\cal V})$. 
We call  $S$ a fine log $p$-adic formal ${\cal V}$-scheme.  
Let $\pi$ be a non-zero element of the maximal ideal 
of ${\cal V}$.  Set $S_1=S~{\rm mod}~\pi{\cal O}_S$. 
Let $Y$ be a fine log scheme over $S_1$ 
whose underlying scheme $\os{\circ}{Y}$ 
is of finite type over $\os{\circ}{S}_1$. 
As in \cite[p.~212]{ofo}, \cite[Definition 2.1.8]{s2} and \cite[I (2.1)]{s3}, 
we consider a quadruple $(U,T,\iota,u)$, 
where $u \col U \lo Y$ is a strict morphism of 
fine log schemes over $S_1$ and 
$\os{\circ}{u}\col \os{\circ}{U}\lo \os{\circ}{Y}$ is a morphism of schemes 
of finite type,  
$T$ is a fine log noetherian formal scheme over $S$ 
whose underlying formal scheme 
is topologically of finite type over $\os{\circ}{S}$ 
and $\iota \col U \os{\sus}{\lo} T$ is 
a closed immersion over $S$. 
We say that $T$ is a fine log $S$-formal scheme. 
As in \cite{nhw} we have assumed the condition 
``the strictness of $u$'',  
which has not been assumed 
in \cite[Definition 2.1.1, 2.1.9]{s2} 
and \cite[I (2.1), (2.2)]{s3}. 
We call the $(U,T,\iota,u)$  
a {\it prewidening} of $Y/S$. 
If $\os{\circ}{T}$ is $p$-adic, then we say that 
$(U,T,\iota,u)$ is a $p$-{\it adic} {\it prewidening} of $Y/S$.  
Set $T_1:=T~{\rm mod}~\pi{\cal O}_T$ and 
let $T_0$ be the log maximal reduced subscheme of $T_1$.  
Set also $\wt{T}:=T~{\rm mod}~(\pi{\textrm -}{\rm torsion})$. 
By imitating \cite[(1.1)]{oc},  
\cite[Definition 2.1.1, 2.1.9]{s2} and \cite[I (2.2)]{s3}, 
we obtain the notion of a widening, an exact (pre)widening 
and an enlargement of $Y/S$ as follows. 
\par 
(1) We say that 
a ($p$-adic) prewidening 
$(U,T, \iota,u)$ of $Y/S$
is a ($p$-{\it adic}) {\it widening} of $Y/S$ if 
$\os{\circ}{\iota} \col \os{\circ}{U} \os{\sus}{\lo} \os{\circ}{T}$ 
is defined by an ideal sheaf of definition of $\os{\circ}{T}$. 
\par 
(2)  We say that a ($p$-adic) (pre)widening 
$(U,T, \iota,u)$ of $Y/S$
is {\it exact} if $\iota$ is exact. 
\par 
(3) We say that an exact widening 
$(U,T,\iota,u)$ of $Y/S$ is 
an {\it enlargement} of $Y/S$ if $\os{\circ}{T}$ is flat over 
${\rm Spf}({\cal V})$ and if $T_0 
\subset U$; 
the latter condition is equivalent to an inclusion 
\begin{equation*} 
({\rm Ker}({\cal O}_T \lo {\cal O}_U))^N \subset \pi{\cal O}_T  
\quad (\exists N\in {\mab N}).  
\tag{2.0.1}\label{eqn:kotu} 
\end{equation*} 
\par 
We define a {\it morphism of {\rm (}$p$-adic{\rm )} 
{\rm (}exact{\rm )} {\rm (}pre{\rm )}widenings and 
a morphism of  enlargements} in an obvious way. 
As usual, we denote $(U,T,\iota,u)$ simply by $T$.  
For a morphism 
$f\col (U,T,\iota,u) \lo (U',T',\iota',u')$ of 
($p$-adic) (exact) (pre)widenings 
or enlargements of $Y/S$, we denote the morphism 
$T\lo T'$ of fine log ($p$-adic) formal schemes over $S$ 
by $f\col T \lo T'$ as usual. 
\par  
The objects of the log convergent site ${\rm Conv}(Y/S)$ 
are, by definition, the enlargements of $Y/S$; the morphisms 
of ${\rm Conv}(Y/S)$ are the morphisms of enlargements of 
$Y/S$; for an object $(U,T,\iota,u)$ of 
${\rm Conv}(Y/S)$, a family 
$\{(U_{\lam},T_{\lam},\iota_{\lam},u_{\lam})
\lo (U,T,\iota,u)\}_{\lam}$ 
of morphisms in  ${\rm Conv}(Y/S)$ is defined to 
be a covering family of $(U,T,\iota,u)$ if 
$T=\bigcup_{\lam}T_{\lam}$ is a Zariski open covering and if 
the natural morphism $U_{\lam}\lo T_{\lam}\times_TU$ is an isomorphism 
(cf.~\cite[p.~135]{oc}, \cite[Definition (2.1.3)]{s2}, \cite[I (2.4)]{s3}).  
Let $(Y/S)_{\rm conv}$ 
be the topos associated to the site ${\rm Conv}(Y/S)$.  
An object $E$ in $(Y/S)_{\rm conv}$ 
gives a Zariski sheaf $E_T$ 
in the Zariski topos $T_{\rm zar}$ of $T$ 
for $T=(U,T,\iota,u)\in {\rm Ob}({\rm Conv}(Y/S))$.  
As in \cite{oc} and \cite{s2}, 
for a prewidening $T$ of $Y/S$, 
we denote simply by $T$ 
the following sheaf 
\begin{equation*} 
h_T \col  {\rm Conv}(Y/S)\owns T' \lom 
\{\text{the morphisms $T' \lo T$ of prewidenings 
of $Y/S$}\} \in ({\rm Sets})  
\end{equation*} 
on ${\rm Conv}(Y/S)$.  
Let ${\cal K}_{Y/S}$ be an isostructure sheaf in 
$(Y/S)_{\rm conv}$ defined by 
the following formula 
$\Gam((U,T,\iota,u),{\cal K}_{Y/S}):=
\Gam(T, {\cal O}_T)\otimes_{\mab Z}{\mab Q}$ 
for $(U,T, \iota,u)\in {\rm Ob}({\rm Conv}(Y/S))$. 
\par 
Let $f \col Y \lo S_1\os{\sus}{\lo} S$ 
be the structural morphism. 
Let $\os{\circ}{Y}_{\rm zar}$ be the Zariski topos of $\os{\circ}{Y}$ and 
let 
\begin{equation*}
u^{\rm conv}_{Y/S} \col (Y/S)_{\rm conv} \lo \os{\circ}{Y}_{\rm zar} 
\tag{2.0.2}\label{eqn:uwks}
\end{equation*}
be a morphism of topoi 
characterized by a formula 
$u^{{\rm conv}*}_{Y/S}(E)(T)=
\Gam(U,u^{-1}(E))$ for a sheaf  
$E$ in $\os{\circ}{Y}_{\rm zar}$ and an enlargement $T=(U,T,\iota,u)$ of $Y/S$.  
Set $f^{\rm conv}_{Y/S}:=f\circ u^{\rm conv}_{Y/S}$ and  
${\cal K}_S:={\cal O}_S\otimes_{\mab Z}{\mab Q}$. 
Let 
\begin{equation*}
u^{\rm conv}_{Y/S} \col 
((Y/S)_{\rm conv},{\cal K}_{Y/S})
\lo (Y_{\rm zar}, f^{-1}({\cal K}_S)) 
\tag{2.0.3}\label{eqn:uwksr}
\end{equation*} 
be a morphism of ringed topoi induced by the morphism (\ref{eqn:uwks}). 
\par
Let ${\cal V}'$ be another complete discrete valuation ring 
of mixed characteristics $(0,p)$ with perfect residue field. 
Let $\pi'$ be a non-zero element of the maximal ideal 
of ${\cal V}'$. Let ${\cal V} \lo {\cal V}'$ be a morphism of 
commutative rings with unit elements inducing a morphism 
${\cal V}/\pi{\cal V} \lo {\cal V}'/\pi' {\cal V}'$. 
Let $S'$ be a fine log $p$-adic formal ${\cal V}'$-scheme 
($\os{\circ}{S}{}'$ is assumed to be flat over ${\cal V}'$).  
Set $S'_1:=S'~{\rm mod}~\pi'{\cal O}_{S'}$. 
Let $Y'$ be a fine log scheme over $S'_1$ such that 
the underlying structural morphism 
$\os{\circ}{Y}{}'\lo \os{\circ}{S}{}'_1$ is of finite type. 
For a commutative diagram 
\begin{equation*} 
\begin{CD}
Y' @>>> S'_1 @>{\subset}>> S' @>>>{\rm Spf}({\cal V}')\\
@V{g}VV @VVV  @VV{u}V @VVV \\ 
Y    @>>> S_1 @>{\subset}>> S  @>>>{\rm Spf}({\cal V}) 
\end{CD}
\tag{2.0.4}\label{cd:yypssp}
\end{equation*}
of fine log (formal) schemes, we have a natural morphism 
\begin{equation*} 
g_{\rm conv} \col ((Y'/S')_{\rm conv},{\cal K}_{Y'/S'}) 
\lo ((Y/S)_{\rm conv},{\cal K}_{Y/S})  
\end{equation*} 
of ringed topoi. 
\par 
The following is easy to prove:

\begin{prop}[{\bf \cite[(2.1)]{nhw}}]\label{prop:toi} 
Assume that ${\cal V}'={\cal V}$ and that 
the morphism ${\cal V} \lo {\cal V}'$ is the identity of ${\cal V}$ 
$(\pi'$ may be different from $\pi)$. 
Assume also that $S'=S$ and $Y'=Y\times_{S_1}S'_1$. 
Denote $g$ in {\rm (\ref{cd:yypssp})} by $i$. 
Let $f' \col Y' \lo S$ 
be the structural morphism. 
Then the following hold$:$ 
\par 
$(1)$ The direct image $i_{{\rm conv}*}$ 
from the category of abelian sheaves in 
$(Y'/S')_{\rm conv}$ to 
the category of abelian sheaves in 
$(Y/S)_{\rm conv}$ 
is exact. 
\par 
$(2)$ For a bounded below filtered complex 
$(E^{\bul},P)$ of ${\cal K}_{Y'/S}$-modules, 
\begin{equation*} 
Rf^{\rm conv}_{Y/S*}(i_{{\rm conv}*}((E^{\bul},P)))
=Rf'{}^{\rm conv}_{Y'/S*}((E^{\bul},P)) 
\tag{2.1.1}\label{eqn:ysep}
\end{equation*} 
in ${\rm D}^+{\rm F}({\cal K}_S)$. 
\par 
$(3)$ 
\begin{equation*} 
Rf^{\rm conv}_{Y/S*}({\cal K}_{Y/S})=
Rf'{}^{\rm conv}_{Y'/S*}({\cal K}_{Y'/S}) 
\tag{2.1.2}\label{eqn:ksep}
\end{equation*} 
in $D^+({\cal K}_S)$. 
\end{prop}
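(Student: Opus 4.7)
The plan is to exhibit an explicit formula for $i_{{\rm conv}*}$ that makes exactness (1) transparent, after which (2) follows by composition of derived functors and (3) is a direct calculation. The key geometric input is an enlargement-lifting observation. Let $(U,T,\iota,u)$ be any enlargement of $Y/S$. Since ${\cal V}$ is a discrete valuation ring and both $\pi,\pi'$ lie in its maximal ideal, some power $\pi'{}^a$ lies in $\pi{\cal V}$; hence $\pi'$ is nilpotent in ${\cal O}_{T_1}={\cal O}_T/\pi{\cal O}_T$ and vanishes on the reduced subscheme $T_0$. Combined with the enlargement condition $T_0\subset U$, the composite $T_0\os{\sus}{\lo}U\lo Y\lo S_1$ factors through $S'_1$, so $u(T_0)\subset Y\times_{S_1}S'_1=Y'$. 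Setting $U':=u^{-1}(Y')\subset U$, the quadruple $(U',T,\iota|_{U'},u|_{U'})$ is an enlargement of $Y'/S'$ with the same underlying formal scheme $T$.

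This yields the explicit formula
\[
(i_{{\rm conv}*}F)(U,T,\iota,u)=F(U',T,\iota|_{U'},u|_{U'})
\]
for any sheaf $F$ on ${\rm Conv}(Y'/S')$. Because exactness of a sequence of sheaves on either log convergent site is tested enlargement-by-enlargement as exactness of the associated Zariski sheaves on the underlying $T$, and because the formula leaves $T$ unchanged, the functor $i_{{\rm conv}*}$ is exact. This proves (1).

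For (2), the equality $f'=f\circ i$ of underlying structural morphisms gives $f'{}^{\rm conv}_{Y'/S*}=f^{\rm conv}_{Y/S*}\circ i_{{\rm conv}*}$ as functors of ringed topoi. By (1) we have $Ri_{{\rm conv}*}=i_{{\rm conv}*}$, so composition of derived functors yields the isomorphism in ${\rm D}^+({\cal K}_S)$; since an exact functor transports filtrations strictly, the isomorphism upgrades to ${\rm D}^+{\rm F}({\cal K}_S)$. Part (3) reduces via (2) to verifying $i_{{\rm conv}*}({\cal K}_{Y'/S})={\cal K}_{Y/S}$, but the explicit formula makes this manifest: both sides evaluated on $(U,T,\iota,u)$ equal $\Gamma(T,{\cal O}_T)\otimes_{\mab Z}{\mab Q}$, which depends only on $T$. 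The only delicate ingredient is the DVR arithmetic used in the first step to lift enlargements of $Y/S$ to ones of $Y'/S'$ with the same $T$; without it, the clean formula for $i_{{\rm conv}*}$ and hence its exactness would fail. Everything else is a formal adjunction argument.
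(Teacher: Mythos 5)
Your proposal is correct, and it is the natural argument: the paper itself gives no proof here (it defers to \cite[(2.1)]{nhw}), but your route --- showing that for every enlargement $(U,T,\iota,u)$ of $Y/S$ the quadruple $(U',T,\iota|_{U'},u|_{U'})$ with $U'=u^{-1}(Y')$ is an enlargement of $Y'/S'$, so that $i_{{\rm conv}*}$ is computed by $(i_{{\rm conv}*}F)(U,T)=F(U',T)$ and exactness can be tested enlargement-by-enlargement on the unchanged $T$ --- is exactly the style in which the paper computes such direct images elsewhere (cf.\ the formula (\ref{eqn:ephomep}) in the proof of (\ref{lemm:efstex})). Two small points are left implicit and should be recorded: besides $T_0\subset U'$ you must also note that $\iota|_{U'}\col U'\os{\sus}{\lo}T$ is an exact closed immersion defined by an ideal of definition (the ideal is ${\rm Ker}({\cal O}_T\to{\cal O}_U)+\pi'{\cal O}_T$, which is open and has a power contained in $\pi{\cal O}_T$), and the evaluation formula for $i_{{\rm conv}*}$ should be justified by the identification $i_{\rm conv}^{-1}(h_{(U,T)})=h_{(U',T)}$, which holds because $i$ is strict and $Y'\to Y$ is a monomorphism; both verifications are routine.
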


\par 
For a fine log $p$-adic formal ${\cal V}$-scheme $T$, 
let  ${\rm Coh}({\cal O}_T)$ be 
the category of coherent ${\cal O}_T$-modules. 
Let  ${\rm Coh}({\cal K}_T)$ be the full subcategory 
of the category of ${\cal K}_T$-modules 
whose objects are isomorphic to $K\otimes_{\cal V}{\cal F}$ 
for some ${\cal F}\in {\rm Coh}({\cal O}_T)$
(\cite[(1.1)]{of}). 
In \cite{nhw} we have said that a ${\cal K}_{Y/S}$-module $E$ is 
a coherent isocrystal 
if $E_T\in {\rm Coh}({\cal K}_T)$ 
for any enlargement $T$ of $Y/S$  and if 
$\rho_f \col f^*(E_{T'}) \lo E_T$ is an isomorphism 
for any morphism $f\col T\lo T'$ of enlargements of  $Y/S$ 
as in \cite{of}, \cite{oc}, \cite{ofo}, \cite{s2}. 
As stated in the Introduction, we call a coherent isocrystal simply an isocrystal as in \cite{bprep} 
in this paper. 
\par 
Next we recall the system of universal enlargements  
(cf.~\cite[(2.1)]{oc}, \cite[Definition 2.1.22]{s2}, \cite[I (2.10), (2.11), (2.12)]{s3}).  
\par  
Let $(U,T,\iota,u)$ be 
an enlargement of $Y/S$. 
We say that $(U,T,\iota,u)$ is 
an enlargement of $Y/S$ with radius $\leq \vert \pi \vert$
if ${\rm Ker}({\cal O}_T \lo {\cal O}_U) \subset \pi{\cal O}_T$. 
The following is a log version of \cite[(2.1)]{oc} 
and an immediate generalization of 
\cite[Definition 2.1.22, Lemma 2.1.23]{s2} 
for the case $n=1$ in [loc.~cit.]: 

\begin{prop}[{\bf \cite[(2.2)]{nhw}}]\label{prop:exue} 
Let $T=(U,T,\iota,u)$ be 
a $(p$-adic$)$ $($pre$)$widening of  
$Y/S$. Then there exists an enlargement 
$T_1=(U_1,T_1,\iota_1,u_1)$ of $Y/S$ 
with radius $\leq \vert \pi \vert$ 
with a morphism $T_1 \lo T$ of 
$(p$-adic$)$ $($pre$)$widenings of $Y/S$ 
satisfying the following universality$:$ 
any morphism $T':=(U',T',\iota',u') \lo T$ 
of $(p$-adic$)$ $($pre$)$widenings of $Y/S$ 
from an enlargement of $Y/S$ 
with radius $\leq \vert \pi \vert$ 
factors through 
a unique morphism   
$T' \lo T_1$ of enlargements of $Y/S$. 
\end{prop}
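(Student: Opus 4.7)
The plan is to imitate the universal enlargement construction of Ogus \cite{oc} and Shiho \cite{s2}, adapted to the fine log setting. I will construct $T_1$ by a blow-up-like modification of $T$ along the ideal defining $U$, then cut down to obtain flatness and the correct log structure.

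First I would work locally and assume $T=\mathrm{Spf}(A)$ is affine with $U=\mathrm{Spf}(A/I)$ for an ideal $I\sus A$. To enforce the condition $I\cdot {\cal O}_{T_1}\sus \pi {\cal O}_{T_1}$, I would introduce a formal variable for each generator $f$ of $I$ representing $f/\pi$: namely, form the $p$-adic completion of the subring $A[I/\pi]\sus A[1/\pi]$ generated by the elements $f/\pi$ for $f\in I$. Call this $A_1'$. The natural map $A\lo A_1'$ satisfies $I\cdot A_1'\sus \pi A_1'$ by construction. To get flatness over ${\cal V}$, I would divide by the ideal of $\pi$-torsion and $p$-adically complete, obtaining $A_1$; set $\os{\circ}{T}_1:=\mathrm{Spf}(A_1)$. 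Equip $T_1$ with the inverse image log structure from $T$ via the natural morphism $\os{\circ}{T}_1\lo \os{\circ}{T}$.

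Next I would define $\io_1\col U_1\os{\sus}{\lo} T_1$. Since $I\cdot A_1\sus \pi A_1$, the morphism $T_1\lo T$ factors through an exact closed immersion of a closed log formal subscheme containing $U\times_T T_1$ with the strict log structure. I would take $U_1$ to be the exact closed log subscheme of $T_1$ cut out by the intersection of $\pi A_1$ with the ideal generated by $I\cdot A_1/\pi$, arranged so that $T_{1,0}\sus U_1$ as required by the enlargement condition (\ref{eqn:kotu}). The strict morphism $u_1\col U_1\lo Y$ is the composition $U_1\lo U\lo Y$. A gluing argument using the uniqueness in the universal property below allows one to patch these local constructions into the global $T_1$.

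For the universal property, let $T'=(U',T',\io',u')\lo T$ be a morphism from an enlargement with radius $\leq|\pi|$. Then $I\cdot {\cal O}_{T'}\sus \pi {\cal O}_{T'}$. Since $\os{\circ}{T}{}'$ is flat over $\mathrm{Spf}({\cal V})$, $\pi$ is a non-zero-divisor on ${\cal O}_{T'}$, so for each $f\in I$ there exists a unique $g_f\in {\cal O}_{T'}$ with $f=\pi g_f$; mapping the generator $f/\pi$ of $A_1'$ to $g_f$ defines a unique ${\cal V}$-algebra homomorphism $A_1'\lo \Gam(T',{\cal O}_{T'})$. The flatness of ${\cal O}_{T'}$ kills $\pi$-torsion automatically, so this factors uniquely through $A_1$. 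Since the log structure on $T_1$ is inverse image, this morphism is automatically compatible with log structures, and it carries $U'$ into $U_1$ because $\io'$ is exact and $T'_0\sus U'$.

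The main obstacle I anticipate is not the construction of the underlying formal scheme (which is essentially Ogus's construction \cite[(2.1)]{oc}) but verifying two log-theoretic points: (i) that $\io_1$ is \emph{exact} so that $T_1$ is a genuine enlargement rather than just an exact widening with the flatness constraint, and (ii) that the $T_1$ constructed affinely glues well, for which one must use the universal property itself on affine overlaps. Showing (i) amounts to checking that the stalks of $\ol{M}_{T_1}$ and $\ol{M}_{U_1}$ agree, which follows because the log structures are pulled back from the exact morphism $U\os{\sus}{\lo} T$ and the construction only modifies the underlying formal scheme; a careful chart-by-chart verification using that $\iota$ is already exact should suffice.
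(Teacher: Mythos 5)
The underlying formal-scheme construction you describe (adjoin $f/\pi$ for $f\in I$, kill ${\cal V}$-torsion, complete $p$-adically, and verify the universal property using that $\pi$ is a non-zero-divisor on a flat enlargement) is indeed the Ogus--Shiho dilatation on which the cited proof is modeled, and that part of your argument is fine. The genuine gap is in the log-theoretic part, which is precisely what makes this a ``log version'' of \cite[(2.1)]{oc}: the proposition is stated for an \emph{arbitrary} ($p$-adic) (pre)widening, where only $u$ is assumed strict and the closed immersion $\iota\col U\os{\sus}{\lo}T$ is \emph{not} assumed exact (exactness is the separate condition (2), and the proposition is applied throughout the paper to non-exact immersions, e.g.\ to $X_{\bul}\os{\sus}{\lo}\ol{\cal P}_{\bul}$ in \S\ref{sec:mplf} and to $U\os{\sus}{\lo}T\times_S{\cal Y}$ in \S\ref{sec:lll}). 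Your final paragraph checks exactness of $\iota_1$ ``using that $\iota$ is already exact,'' which assumes what is not given. With a non-exact $\iota$, equipping $T_1$ with the inverse image of $M_T$ does not produce an enlargement: if you also give $U_1$ the log structure pulled back from $T$, then $\iota_1$ is exact by fiat, but there is in general no natural log morphism $U_1\lo U$ at all (one would need a map $M_U\vert_{U_1}\lo (\iota^*M_T)\vert_{U_1}$, whereas the immersion only gives the arrow in the other direction), so no strict $u_1$ over $Y$; if instead you take $U_1:=U\times_TT_1$ with its natural log structure (the paper's convention, cf.\ $Y_n:=Y\times_{\cal Y}{\mathfrak T}_{Y,n}({\cal Y})$), then $u_1$ is strict but $\iota_1$ is exact only when $\iota^*M_T\lo M_U$ becomes an isomorphism over $U_1$, i.e.\ essentially when $\iota$ was exact there. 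So for non-exact $\iota$ your $T_1$ is not an object of ${\rm Conv}(Y/S)$, and the statement is not proved.

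The missing idea is to exactify first: locally, choosing charts, factor the closed immersion as $U\os{\sus}{\lo}T^{\rm ex}\lo T$ with the first morphism an exact closed immersion and the second log \'etale, and then run your dilatation on $U\os{\sus}{\lo}T^{\rm ex}$ with the log structure pulled back from $T^{\rm ex}$ and $U_1:=U\times_{T^{\rm ex}}T_1$. The universal property then requires the step you omit: for an enlargement $T'$ of radius $\leq\vert\pi\vert$ one has $T'_0\subset U'$ and $\iota'$ exact, so the morphism $(U',T')\lo(U,T)$ factors uniquely through $T^{\rm ex}$ (universal property of the exactification against exact closed immersions whose source contains $T'_0$), and only after that does your flatness argument give the unique map $T'\lo T_1$; this same uniqueness is what lets the chart-dependent local constructions glue, so gluing cannot be quoted before the universal property is available in the exactified form. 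Two smaller points: your universal-property verification treats only the ring map and should be upgraded to uniqueness and compatibility at the level of quadruples (in particular that $U'\lo U_1$ is a morphism over $Y$), and your ad hoc choice of $U_1$ (cut out by $\pi A_1\cap(I\cdot A_1/\pi)$) should be replaced by the fibre product above, which is what the rest of the paper uses.
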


\par 
By replacing  ${\rm Ker}({\cal O}_{T} \lo {\cal O}_{U})$ 
by $({\rm Ker}({\cal O}_{T} \lo {\cal O}_{U}))^n$ 
$(n\in {\mab Z}_{\geq 1})$ in (\ref{prop:exue}), 
we have the inductive system $\{T_n\}_{n=1}^{\infty}$ 
($T_n=(U_n, T_n,\iota_n,u_n)$) (cf.~\cite{of}, \cite{oc} and \cite{s2}). 
We call $\{T_n\}_{n=1}^{\infty}$ the {\it system of the universal enlargements} 
of $(U,T,\iota,u)$. We denote the log formal scheme $T_n$ 
by ${\mathfrak T}_{U,n}(T)$. 
By abuse of notation, we also denote 
the enlargement $(U_n,T_n,\iota_n,u_n)$ 
by ${\mathfrak T}_{U,n}(T)$.  
For a $($not necessarily closed$)$ immersion 
$\iota \col U \os{\sus}{\lo} T$, we can define 
$T_n:={\mathfrak T}_{U,n}(T)$ as usual.  
For a prewidening $T=(U,T,\iota,u)$ of $Y/S$, 
we can define the following functor 
$$h_T \col {\rm Conv}(Y/S)\lo ({\rm Sets})$$
by using the morphisms of prewidening of $Y/S$ as usual. 
We also obtain the localized category 
$(Y/S)_{\rm conv}\vert_T$ as usual. 
Let $\bet_n \col T_n \lo T$ $(n\in {\mab Z}_{\geq 1})$ 
be the natural morphism. 
Let $E$ be a sheaf in $(Y/S)_{\rm conv}$. 
Let  $E_T$ be a sheaf in $T_{\rm zar}$ defined by 
a formula $E_T(T')={\rm Hom}_{(Y/S)_{\rm conv}}(h_{T'},E)$ 
as in \cite[p.~140]{oc} for a log formal open subscheme $T'$ of $T$. 
We can prove that $E_T=\vpl_n\bet_{n*}(E_{T_n})$ as usual. 
Let $\os{\to}{T}$ be the associated topos 
to the site defined in \cite[Definition 2.1.28]{s2},  
which is the log version of the site in \cite[\S3]{oc}. 
An object  of this site is 
a log open formal subscheme $V_n$ of $T_n$ 
for a positive integer  $n$; let $V_m$ be 
a log open formal subscheme of $T_m$;  
for $n>m$, ${\rm Hom}(V_n,V_m):= \emptyset$ 
and, for $n\leq m$, 
${\rm Hom}(V_n,V_m)$ 
is the set of morphisms $V_n \lo V_m$'s of 
log formal subschemes over the natural morphism $T_n \lo T_m$.
A covering of $V_n$ is a Zariski open covering of $V_n$. 
Let $\os{\to}{T}$ be the topos associated to this site. 
Let $\phi_n \col T_n \lo T_{n+1}$ be the natural morphism 
of enlargements of $Y/S$. 
Then an object ${\cal F}$ in $\os{\to}{T}$
is a family $\{({\cal F}_n,\psi_n)\}_{n=1}^{\infty}$ of pairs, 
where ${\cal F}_n$ is a Zariski sheaf in $(T_n)_{\rm zar}$ 
and $\psi_n \col \phi^{-1}_n({\cal F}_{n+1}) \lo {\cal F}_n$ 
is a morphism of Zariski sheaves in $(T_n)_{\rm zar}$. 
Let ${\cal K}_{\os{\to}{T}}
=\{({\cal K}_{T_n},\phi_n^{-1})\}_{n=1}^{\infty}$
be the isostructure sheaf of $\os{\to}{T}$, 
where $\phi_n^{-1}$ is the natural pull-back morphism
$\phi_n^{-1}({\cal K}_{T_{n+1}}) \lo {\cal K}_{T_n}$.  
Following \cite[p.~141]{oc}, 
we say that a ${\cal K}_{\os{\to}{T}}$-module 
${\cal F}$ is {\it coherent}  
if ${\cal F}_n \in {\rm Coh}({\cal K}_{T_n})$ 
for all $n\in {\mab Z}_{>0}$ 
and 
we say that ${\cal F}$ is {\it admissible} (resp.~{\it isocrystalline})
if the natural morphism 
${\cal K}_{T_n}\otimes_{\phi^{-1}_{n+1}({\cal K}_{T_{n+1}})}
\phi^{-1}_{n+1}({\cal F}_{n+1}) \lo {\cal F}_n$ 
is surjective (resp.~isomorphic). 
For an object $E$ of 
$(Y/S)_{\rm conv}\vert_{T}$ and 
for a morphism $T' \lo T$ of quasi-prewidenings of $Y/S$, 
then we have an associated object 
${\cal E}'=\{{\cal E}'_n\}_{n=1}^{\infty}$ 
$({\cal E}'_n:=E_{T'_n})$ in $\os{\to}{T'}$. 
Following \cite[p.~147]{oc}, we say that 
a ${\cal K}_{Y/S}\vert_T$-module $E$ is 
{\it admissible} (resp.~{\it isocrystalline})
if ${\cal E}'$ is coherent and admissible (resp.~isocrystalline) for any morphism $T' \lo T$ 
of quasi-prewidenings.  
In this isocrystalline case, we say that $E$ is an isocrystal of 
${\cal K}_{Y/S}\vert_T$-modules. 
Let $\bet \col \os{\to}{T} \lo T_{\rm zar}$ 
be the natural morphism of topoi 
which is denoted by $\gam$ in \cite[p.~55]{s2} 
(=the log version of $\gam$ in \cite[p.~141]{of}).  
\par

The following is a relative log version of 
\cite[Lemma 2.4]{oc} (with an additional assumption) 
and \cite[Lemma 2.1.26]{s2} (see also \cite[I (2.16)]{s3}):

\begin{lemm}[{\bf \cite[(2.9)]{nhw}}]\label{lemm:bcue} 
Let $(U',T',\iota',u')\lo (U,T,\iota,u)$ be a morphism 
of quasi-prewidenings of $Y/S$ such that $U'=U\times_TT'$.  
Then the natural morphism 
${\mathfrak T}_{U',n}(T') \lo 
\wt{{\mathfrak T}_{U,n}(T)\times_{T}T'}$ 
$(n\in {\mab Z}_{\geq 1})$ is an isomorphism. 
If $\os{\circ}{T}{}' \lo \os{\circ}{T}$ is flat, then the natural morphism 
${\mathfrak T}_{U',n}(T') \lo 
{\mathfrak T}_{U,n}(T)\times_{T}T'$ 
$(n\in {\mab Z}_{\geq 1})$ is an isomorphism. 
\end{lemm}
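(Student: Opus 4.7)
The plan is to verify that $\widetilde{{\mathfrak T}_{U,n}(T)\times_{T}T'}$ satisfies the universal property characterizing ${\mathfrak T}_{U',n}(T')$. Set $\mathcal{I} := {\rm Ker}({\cal O}_T \to {\cal O}_U)$ and $\mathcal{I}' := {\rm Ker}({\cal O}_{T'} \to {\cal O}_{U'})$. Since $U'=U\times_T T'$, the ideal $\mathcal{I}'$ is the image of $\mathcal{I}\otimes_{{\cal O}_T}{\cal O}_{T'}$ in ${\cal O}_{T'}$, so base change behaves well: the kernel $\mathcal{J}$ of ${\cal O}_{{\mathfrak T}_{U,n}(T)\times_T T'}\to {\cal O}_{U'}$ is the image of ${\rm Ker}({\cal O}_{{\mathfrak T}_{U,n}(T)}\to {\cal O}_U)\otimes {\cal O}_{T'}$, and the relation $({\rm Ker}({\cal O}_{{\mathfrak T}_{U,n}(T)}\to {\cal O}_U))^n\subset \pi{\cal O}_{{\mathfrak T}_{U,n}(T)}$ forces $\mathcal{J}^n\subset \pi{\cal O}_{{\mathfrak T}_{U,n}(T)\times_T T'}$. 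The log structure is handled automatically by pullback, since $u'$ is strict because $u$ is strict and the formation of strict closed immersions is preserved under base change. Thus $\widetilde{{\mathfrak T}_{U,n}(T)\times_T T'}$ is an enlargement of $Y/S$ whose $U$-part is $U'$ and whose radius is $\leq |\pi|^{1/n}$.

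Next I would verify universality. Given any morphism $f\col V \to T'$ of prewidenings from an enlargement $(V_0,V,\kappa,v)$ with $V_0 = U' \times_{T'} V$ and $(\ker({\cal O}_V \to {\cal O}_{V_0}))^n\subset \pi{\cal O}_V$, the composite $V \to T' \to T$ presents $V$ as a prewidening over $T$ with the same kernel to ${\cal O}_{V_0}$ (the intermediate passage through $U'$ versus $U$ is irrelevant at the level of the kernel ideal). Hence Proposition 2.3 (iterated $n$ times) produces a unique factorization $V \to {\mathfrak T}_{U,n}(T)$; pairing with $V\to T'$ yields a unique morphism $V \to {\mathfrak T}_{U,n}(T)\times_T T'$. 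Since $V$ is $\pi$-torsion-free (being an enlargement, hence flat over $\mathcal{V}$), this morphism factors uniquely through the quotient killing $\pi$-torsion, namely $\widetilde{{\mathfrak T}_{U,n}(T)\times_T T'}$. This establishes the universal property and hence the first assertion.

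For the second assertion, assume $\os{\circ}{T}{}' \to \os{\circ}{T}$ is flat. Then since ${\cal O}_{{\mathfrak T}_{U,n}(T)}$ is $\pi$-torsion-free as an ${\cal O}_T$-algebra and ${\cal O}_{T'}$ is flat over ${\cal O}_T$, multiplication by $\pi$ remains injective on ${\cal O}_{{\mathfrak T}_{U,n}(T)\times_T T'}$, so this fibre product is already $\pi$-torsion-free and the operation $\widetilde{(-)}$ is trivial. I expect the main subtlety to be not the scheme-theoretic verification above but the careful bookkeeping that the log structure pulled back from $T'$ to the universal enlargement ${\mathfrak T}_{U',n}(T')$ agrees with the iterated pullback through ${\mathfrak T}_{U,n}(T)\times_T T'$; this reduces to transitivity of log-structure pullback and the fact that the underlying formal scheme construction $\mathfrak{T}_{(-),n}$ commutes with base change in the flat case (and with $\widetilde{(-)}$ in general).
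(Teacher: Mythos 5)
You should first note that this paper contains no proof of the lemma to compare against: it is recalled verbatim from \cite[(2.4)]{nhw} (the relative log version of \cite[Lemma 2.4]{oc} and \cite[Lemma 2.1.26]{s2}), and \S\ref{sec:logcd} explicitly refers the reader to \cite[\S 2]{nhw} for all proofs. Judged on its own, your verification via the universal property is the natural route and matches in substance what the cited sources do; they typically make the base-change and flatness statements transparent by using the explicit local description of the universal enlargement (roughly, ${\rm Spf}$ of ${\cal O}_T\{{\cal I}^n/\pi\}$ modulo $\pi$-torsion), whereas you argue purely from the universal property of (\ref{prop:exue}) — either style works, and your torsion-free factorization step and the flat-base-change step ("tensoring an injection with a flat algebra stays injective") are the right mechanisms.

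Two points need repair before your write-up is watertight. First, ${\mathfrak T}_{U,n}(T)$ is not obtained by "iterating" (\ref{prop:exue}) $n$ times; it is obtained by applying it once after replacing ${\rm Ker}({\cal O}_T\to{\cal O}_U)$ by its $n$-th power, and its universal property is for enlargements $V$ with $({\rm Ker}({\cal O}_V\to{\cal O}_{V_0}))^n\subset\pi{\cal O}_V$. Your argument only ever uses this correct property, so the slip is harmless, but the phrase should go; also, in the universality check you should quantify over all morphisms of prewidenings $V\to T'$ from such enlargements, not only those with $V_0=U'\times_{T'}V$, since that is the class over which ${\mathfrak T}_{U',n}(T')$ is universal. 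Second, and more substantively, every fibre product here is taken in the category of fine log formal schemes, whose underlying formal scheme can be a proper closed formal subscheme of the naive fibre product of underlying formal schemes (integralization). Your identifications "${\cal I}'$ is the image of ${\cal I}\otimes_{{\cal O}_T}{\cal O}_{T'}$", "${\cal J}$ is the image of the base-changed kernel", and, in the flat case, the conclusion that ${\mathfrak T}_{U,n}(T)\times_T T'$ is already $\pi$-torsion-free all presuppose that the underlying formal scheme of the log fibre product is the scheme-theoretic one; a quotient of a $\pi$-torsion-free ring need not be $\pi$-torsion-free, so this is exactly where the argument could leak. You flag the log bookkeeping at the end but do not resolve it; one must either check that in the relevant situations the fibre products are computed compatibly with underlying formal schemes (e.g.\ when the morphisms involved are strict or exact, as in the applications (\ref{eqn:tzyb}) and (\ref{cd:pdbpp})), or fall back on the explicit local construction of the universal enlargement as in \cite{oc} and \cite{s2}. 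With that point settled, the remainder of your argument is correct.
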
  
\par


\section{Log convergent linearization functors}\label{sec:lll}
In this section we only recall results in \cite[\S3]{nhw} without any proof. 
Let $S$ and $Y/S_1$ be as in \S\ref{sec:logcd}.  
Let $\iota_Y \col Y \os{\subset}{\lo} {\cal Y}$ be an immersion into 
a fine log $S$-formal scheme. 
As in \cite[\S3]{nhw} we do not necessarily assume that ${\cal Y}/S$ is formally log smooth 
except (\ref{theo:pl}) and (\ref{prop:cdfza}). 
\par 
Let ${\mathfrak T}_Y({\cal Y})$ be 
the quasi-prewidening 
$(Y,{\cal Y},\iota_Y, {\rm id}_{Y})$ of $Y/S$. 
Let 
$\{{\mathfrak T}_{Y,n}({\cal Y})\}_{n=1}^{\infty}$ 
(${\mathfrak T}_{Y,n}({\cal Y})
:=(Y_n,{\mathfrak T}_{Y,n}({\cal Y}),Y_n\os{\sus}{\lo}
{\mathfrak T}_{Y,n}({\cal Y}),Y_n\lo Y)$,  
$Y_n:=Y\times_{\cal Y}{\mathfrak T}_{Y,n}({\cal Y})$)
be the system of the universal enlargements of 
${\mathfrak T}_Y({\cal Y})$. 
We call $\{{\mathfrak T}_{Y,n}({\cal Y})\}_{n=1}^{\infty}$ 
the {\it system of the universal enlargements of} 
$\iota_Y$ for short.   
Let 
$((Y/S)_{\rm conv}
\vert_{{\mathfrak T}_Y({\cal Y})},
{\cal K}_{Y/S}\vert_{{\mathfrak T}_Y({\cal Y})})$ 
be the localized ringed topos 
of $((Y/S)_{\rm conv},{\cal K}_{Y/S})$ 
at ${\mathfrak T}_Y({\cal Y})$ 
and let  
$$j_{{\mathfrak T}_Y({\cal Y})} 
\col 
((Y/S)_{\rm conv}\vert_{{\mathfrak T}_Y({\cal Y})},
{\cal K}_{Y/S}\vert_{{\mathfrak T}_Y({\cal Y})}) 
\lo ((Y/S)_{\rm conv}, {\cal K}_{Y/S})$$ 
be the natural morphism of the ringed topoi and let 
\begin{align*}
\varphi^*_{\os{\to}{\mathfrak T}_Y({\cal Y})} 
\col & \{\text{the category of  isocrystals of }
{\cal K}_{\os{\to}{\mathfrak T}_Y({\cal Y})}\text{-modules}\} 
\tag{3.0.1}\label{eqn:phyt} \\
{} & \lo \{\text{the category of isocrystals of }
{\cal K}_{Y/S}\vert_{{\mathfrak T}_Y({\cal Y})}\text{-modules}\}
\end{align*}
be a natural functor which is 
a relative log version (with $\otimes_{\mab Z}{\mab Q}$) 
of the functor in \cite[p.~147]{oc}: 
$\varphi^*_{\os{\to}{\mathfrak T}_Y({\cal Y})}(\{{\cal E}_n\}_{n=1}^{\infty})(T)
=\Gam(T,\psi_n^*({\cal E}_n))$ for $n\in {\mab Z}_{\geq 1}$, where  
$\psi_n \col T\lo {\mathfrak T}_{Y,n}({\cal Y})$ is a morphism such that the composite morphism 
$T\lo {\mathfrak T}_{Y,n}({\cal Y})\lo {\mathfrak T}_Y({\cal Y})$ is 
the given morphism 
for an object $T$ of 
${\rm Conv}(Y/S)\vert_{{\mathfrak T}_Y({\cal Y})}$. 
We also have the following natural functor 
\begin{equation*}
\varphi_{\os{\to}{\mathfrak T}_Y({\cal Y})*} 
\col 
((Y/S)_{\rm conv}
\vert_{{\mathfrak T}_Y({\cal Y})},
{\cal K}_{Y/S}\vert_{{\mathfrak T}_Y({\cal Y})}) 
\lo 
({\os{\to}{\mathfrak T}_Y({\cal Y})}_{\rm zar}, 
{\cal K}_{\os{\to}{\mathfrak T}_Y({\cal Y})})
\tag{3.0.2}\label{eqn:ltts}
\end{equation*} 
by setting $\varphi_{\os{\to}{\mathfrak T}_Y({\cal Y})*}(E)(U) 
:=E((U\os{\subset}{\lo} {\mathfrak T}_{Y,n}({\cal Y}))\lo {\mathfrak T}_Y({\cal Y}))$. 
For an isocrystal of 
${\cal K}_{\os{\to}{\mathfrak T}_Y({\cal Y})}$-module ${\cal E}$, set 
\begin{equation*} 
L^{\rm UE}_{Y/S}({\cal E})
:=j_{{\mathfrak T}_Y({\cal Y})*}
\varphi^*_{\os{\to}{\mathfrak T}_Y({\cal Y})}({\cal E})
\tag{3.0.3}\label{eqn:luys}
\end{equation*} 
by abuse of notation (cf.~\cite[p.~152]{oc}, \cite[p.~95]{s2}) 
(``UE'' is the abbreviation of the universal enlargement). 
Strictly speaking, we should denote  
$L^{\rm UE}_{Y/S}({\cal E})$ by 
$L^{\rm UE}_{(Y\os{\sus}{\lo} {\cal Y})/S}({\cal E})$ since 
$L^{\rm UE}_{Y/S}({\cal E})$ depends on the immersion $Y\os{\sus}{\lo} {\cal Y}$ 
over $S$. 
\par 
For an object $T:=(U,T,\iota,u)$ of ${\rm Conv}(Y/S)$, 
the value $L^{\rm UE}_{Y/S}({\cal E})_T$ of 
$L^{\rm UE}_{Y/S}({\cal E})$ at $T$ is calculated as follows. 
First we have the natural immersion 
$(\iota,\iota_Y\circ u)\col U\lo T\times_S{\cal Y}$ over $S$. 
Let 
$\{{\mathfrak T}_n\}_{n=1}^{\infty}$ 
be the system of the universal enlargements 
of this immersion.  
Then we have the following diagram 
of the inductive systems of enlargements of $Y/S$:  
\begin{equation*}
\begin{CD}
T
@<{\{p'_n\}_{n=1}^{\infty}}<<
\{{\mathfrak T}_n\}_{n=1}^{\infty} 
@>{\{p_n\}_{n=1}^{\infty}}>> 
\{{\mathfrak T}_{Y,n}({\cal Y})\}_{n=1}^{\infty}.
\end{CD}
\tag{3.0.4}\label{cd:ppp}
\end{equation*} 
Let ${\cal E}=\{{\cal E}_n\}_{n=1}^{\infty}$ 
be an isocrystal of 
${\cal K}_{\os{\to}{\mathfrak T}_Y({\cal Y})}$-modules.  
Then, as in the proof of \cite[(5.4)]{oc} and 
\cite[Theorem 2.3.5]{s2},   
we have the following formula: 
\begin{equation*}
L^{\rm UE}_{Y/S}({\cal E})_{T}
=\vpl_np'_{n*}p_n^*({\cal E}_n).  
\tag{3.0.5}\label{eqn:lueyset}
\end{equation*}

\begin{defi}\label{defi:lf}  
We call the functor 
\begin{equation*}
L^{\rm UE}_{Y/S} \col 
\{{\rm isocrystals}~{\rm of}~
{\cal K}_{\os{\to}{\mathfrak T}_Y({\cal Y})}
{\textrm -}{\rm modules}\} 
\lo \{{\cal K}_{Y/S}\text{-}{\rm modules}\} 
\tag{3.1.1}\label{eqn:lvnc}
\end{equation*} 
the {\it log convergent linearization functor with respect to} 
$\iota_Y$ for isocrystals of
${\cal K}_{\os{\to}{\mathfrak T}_Y({\cal Y})}$-modules. 
\end{defi}

In the following we recall results in \cite[\S3]{nhw}.

\begin{lemm}[{\rm \bf \cite[(3.2)]{nhw}}]\label{lemm:rex} 
Let $T=(U,T,\iota,u)$ be 
an exact quasi-$($pre$)$widening of  
$Y/S$ and let $\{T_n\}_{n=1}^{\infty}$ be 
the system of the universal enlargements of $T$. 
Let $\bet_n \col T_n \lo T$ be the natural morphism. 
Then, for each $n$, 
the pull-back functor 
\begin{equation*} 
\beta^*_n \col 
\{{\rm coherent}~{\cal K}_T{\textrm -}{\rm modules}\} 
\lo 
\{{\rm coherent }~{\cal K}_{T_n}{\textrm -}{\rm modules}\} 
\end{equation*} 
is exact. 
\end{lemm}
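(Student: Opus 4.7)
The plan is to reduce the assertion to a local, purely scheme-theoretic flatness statement and then to exploit the fact that the morphism $\beta_n\col T_n \lo T$ induces an open immersion on Raynaud generic fibres. Since exactness of pull-back is local on source and target, and the structural morphism $T_n \lo T$ is affine, I would first reduce to the situation where $T = \ul{\rm Spf}(A)$ is affine (with $A$ noetherian and $\pi$-adically complete) and $U = \ul{\rm Spec}(A/I)$ for a finitely generated ideal $I \sus A$. The hypothesis that $(U,T,\iota,u)$ is exact is essential here: it ensures that the log structure of $T_n$ is merely the pull-back of that of $T$, so the entire problem reduces to a question about the underlying noetherian formal schemes.

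Next, I would write $T_n = \ul{\rm Spf}(A_n)$ explicitly, where $A_n$ is the quotient by $\pi$-torsion of the separated $\pi$-adic completion of the $A$-subalgebra $A[I^n/\pi] \sus A[1/\pi]$ generated by the elements $a/\pi$ with $a \in I^n$. This description is forced by the universal property recalled in (\ref{prop:exue}). Hence it suffices to show that ${\cal K}_{T_n}$ is flat over $\beta_n^{-1}{\cal K}_T$ when restricted to coherent modules, i.e.~that $A_n \otimes_{\mab Z}{\mab Q}$ is flat over $A \otimes_{\mab Z}{\mab Q}$ in an appropriate topological sense, so that the tensor product computing $\beta_n^{*}$ is exact on coherent input.

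For this flatness, the decisive observation is that on Raynaud generic fibres, $T_n^{\rm rig}$ is the rational subdomain of $T^{\rm rig}$ defined by the inequalities $|f|^n \leq |\pi|$ for all $f \in I$ — essentially a tubular neighbourhood of $U$ of radius $|\pi|^{1/n}$. Since open immersions of affinoid rigid analytic spaces are flat, $\beta_n^{*}$ is exact on coherent ${\cal K}_T$-modules. Alternatively, and perhaps more naturally in the present $p$-adic formal framework, one can argue directly using that $\pi$-adic completion along a finitely generated ideal preserves flatness in the noetherian setting. The main obstacle I foresee is the careful interleaving of $\pi$-adic completion, passage to the $\pi$-torsion-free quotient, and inversion of $p$: one must invoke the noetherianity of $A$ and of $A_n$, together with the Artin-Rees lemma, to check that these operations commute on coherent modules, so that the generic-fibre flatness descends to the statement about coherent ${\cal K}_T$-modules that is actually required.
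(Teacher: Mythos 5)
You should note first that this paper contains no proof of this lemma at all: \S3 explicitly only recalls results from the reference [NS2] ``without any proof'', so there is no in-text argument to compare with; your route is, however, the standard one in this Ogus--Shiho convergent framework and is correct as outlined. The exactness hypothesis does reduce the universal enlargement to the dilatation of the underlying affine formal scheme $\mathrm{Spf}(A)$ along $I^n$ (log structures being pulled back), its Raynaud generic fibre is the Weierstrass (rational) subdomain of $T^{\mathrm{rig}}$ cut out by $\vert g\vert \leq \vert \pi \vert$ for $g\in I^n$, and flatness of affinoid-subdomain inclusions yields flatness of $A_n\otimes_{\mathbb Z}{\mathbb Q}$ over $A\otimes_{\mathbb Z}{\mathbb Q}$ (the $\pi$-torsion is $p$-power torsion, so it is invisible after inverting $p$); the one step worth making explicit beyond what you wrote is the identification of $\beta_n^*(K\otimes_{\mathcal V}{\mathcal F})$ with the algebraic base change of $\Gamma(T,{\mathcal F})\otimes_{\mathbb Z}{\mathbb Q}$ along $A\otimes_{\mathbb Z}{\mathbb Q}\lo A_n\otimes_{\mathbb Z}{\mathbb Q}$, using that completed and ordinary tensor products agree for coherent modules over these noetherian formal schemes, after which exactness of $\beta_n^*$ on coherent ${\mathcal K}_T$-modules follows at once.
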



\begin{prop}[{\rm \bf \cite[(3.5)]{nhw}}]\label{prop:afex} 
If the morphism $\os{\circ}{\cal Y} \lo \os{\circ}{S}$ is affine, 
then the functor $L^{\rm UE}_{Y/S}$ in {\rm (\ref{eqn:lvnc})} 
is right exact.  
\end{prop}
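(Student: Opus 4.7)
The plan is to verify right exactness by showing that $L^{\rm UE}_{Y/S}$ sends epimorphisms of isocrystals to epimorphisms of ${\cal K}_{Y/S}$-modules. Since being an epimorphism of sheaves in $(Y/S)_{\rm conv}$ is a local property on the generating family of enlargements, it suffices, after Zariski localization, to check that for each affine enlargement $T=(U,T,\iota,u)$ the induced morphism $L^{\rm UE}_{Y/S}({\cal E})_T \lo L^{\rm UE}_{Y/S}({\cal F})_T$ is surjective. By the explicit formula (\ref{eqn:lueyset}) together with the diagram (\ref{cd:ppp}), this morphism takes the form
\begin{equation*}
\vpl_n p'_{n*} p_n^*({\cal E}_n) \lo \vpl_n p'_{n*} p_n^*({\cal F}_n).
\end{equation*}

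First I would exploit the affine hypothesis. Since $\os{\circ}{\cal Y} \lo \os{\circ}{S}$ is affine and $\os{\circ}{T}$ is affine over $\os{\circ}{S}$, the fibre product $\os{\circ}{T} \times_{\os{\circ}{S}} \os{\circ}{\cal Y}$ is affine, and because the universal enlargement construction (a $p$-adically completed blow-up followed by removal of $\pi$-torsion) respects affine-ness, each ${\mathfrak T}_n$ is an affine $p$-adic formal scheme; in particular $p'_n \col {\mathfrak T}_n \lo T$ is an affine morphism.

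Next, for each fixed $n$, I claim the morphism $p'_{n*} p_n^*({\cal E}_n) \lo p'_{n*} p_n^*({\cal F}_n)$ is surjective. Indeed, the surjectivity of ${\cal E}_n \lo {\cal F}_n$ as coherent ${\cal K}_{{\mathfrak T}_{Y,n}({\cal Y})}$-modules is preserved by the right-exact pull-back $p_n^*$, and then, because $p'_n$ is affine, the direct image $p'_{n*}$ is exact on (quasi-)coherent sheaves, so surjectivity is preserved through it.

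The remaining task, which I expect to be the main obstacle, is to pass from the level-wise surjectivity to surjectivity after applying $\vpl_n$. For this I will verify a Mittag-Leffler condition on the pro-systems $\{p'_{n*} p_n^*({\cal E}_n)\}_n$ and $\{p'_{n*} p_n^*({\cal F}_n)\}_n$. The isocrystalline condition on ${\cal E}$ and ${\cal F}$, combined with the flat base change compatibility of universal enlargements provided by (\ref{lemm:bcue}), controls the transition morphisms (after suitable ${\cal K}$-linearization they become isomorphisms); together with the level-wise surjectivity this yields surjectivity of the inverse limit, completing the proof of right exactness.
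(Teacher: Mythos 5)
You should first note that this paper never proves (\ref{prop:afex}): \S\ref{sec:lll} explicitly recalls it from \cite[(3.5)]{nhw} ``without any proof'', so your argument can only be measured against what such a proof must contain. Your opening moves are sound and certainly part of any correct argument: reducing to sections over affine enlargements via (\ref{eqn:lueyset}) and (\ref{cd:ppp}), observing that affineness of $\os{\circ}{\cal Y}\lo \os{\circ}{S}$ forces each ${\mathfrak T}_n$ to be affine (the universal enlargement is a dilatation, not a full blow-up), so that $p'_{n*}$ is exact on coherent sheaves and the level-wise morphisms $p'_{n*}p_n^*({\cal E}_n)\lo p'_{n*}p_n^*({\cal F}_n)$ are surjective. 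But two points are genuine gaps. First, right exactness means preservation of cokernels, not merely of epimorphisms; the way (\ref{prop:afex}) is actually used later (e.g.\ to identify $L^{\rm UE}_{\os{\circ}{X}_{\bul}/\os{\circ}{S}}(\cdots)/P_jL^{\rm UE}_{\os{\circ}{X}_{\bul}/\os{\circ}{S}}$ with $L^{\rm UE}_{\os{\circ}{X}_{\bul}/\os{\circ}{S}}(\cdots/P_j)$ in the proof of (\ref{theo:ifc})) needs exactness of $L^{\rm UE}({\cal G})\lo L^{\rm UE}({\cal E})\lo L^{\rm UE}({\cal F})\lo 0$, and preserving epimorphisms alone does not give exactness at the middle term.

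Second, and more seriously, your treatment of the $\vpl_n$ step does not work as stated. The isocrystal condition gives an isomorphism $p_n^*({\cal E}_n)\cong \psi_n^*p_{n+1}^*({\cal E}_{n+1})$ on ${\mathfrak T}_n$, where $\psi_n\col {\mathfrak T}_n\lo {\mathfrak T}_{n+1}$ is the natural morphism; but the resulting transition morphism $p'_{n+1*}p_{n+1}^*({\cal E}_{n+1})\lo p'_{n*}p_n^*({\cal E}_n)$ is a restriction from the larger universal enlargement to the smaller one, which is typically injective with dense image and is neither an isomorphism nor surjective, so the algebraic Mittag--Leffler criterion you invoke does not apply; moreover (\ref{lemm:bcue}) concerns base change in $T$, not the transition maps in $n$. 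The ingredient that actually closes this gap is the vanishing of $R^1\vpl_n$ for coherent crystals on the system of universal enlargements over an affine base, i.e.\ the (log version of the) lemma \cite[(3.8)]{oc} --- precisely what this paper itself invokes in the proof of (\ref{prop:0case}) --- whose proof is a completeness/density argument, not a stabilization of images. Applying that vanishing to the kernel systems (for instance to $\{p'_{n*}p_n^*({\cal G}_n)\}_n$ with ${\cal G}:={\rm Ker}({\cal E}\lo {\cal F})$, after checking that this is again of the required coherent crystalline type) yields both the surjectivity of the limit and the missing exactness at the middle term; without it, your final step is unsupported.
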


\begin{lemm}[{\rm \bf \cite[(3.6)]{nhw}}]\label{lemm:lcl} 
Assume that $\Om^1_{{\cal Y}/S}$ is locally free. 
Let ${\cal E}:=\{{\cal E}_n\}_{n=1}^{\infty}$ be an isocrystal of 
${\cal K}_{\os{\to}{\mathfrak T}_Y({\cal Y})}$-modules. 
Then there exists a family $\{\nabla^i\}_{i=0}^{\infty}$ 
of natural morphisms  
\begin{equation*} 
\nabla^i  \col 
L^{\rm UE}_{Y/S}({\cal E}\otimes_{{\cal O}_{\cal Y}}
\Om^i_{{\cal Y}/S}) 
\lo 
L^{\rm UE}_{Y/S}({\cal E}\otimes_{{\cal O}_{\cal Y}}
\Om^{i+1}_{{\cal Y}/S}) 
\end{equation*} 
which makes $\{(L^{\rm UE}_{Y/S}({\cal E}\otimes_{{\cal O}_{\cal Y}}
\Om^i_{{\cal Y}/S}),\nabla^i)\}_{i=0}^{\infty}$ 
a complex of ${\cal K}_{Y/S}$-modules. 
\end{lemm}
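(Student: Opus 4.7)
The plan is as follows. First I would localize on $\mathcal{Y}$ in order to trivialize $\Omega^1_{\mathcal{Y}/S}$, and then construct the differentials $\nabla^i$ on the value of $L^{\rm UE}_{Y/S}(\mathcal{E}\otimes_{\mathcal{O}_{\cal Y}}\Omega^{\bullet}_{\mathcal{Y}/S})$ at each enlargement $T=(U,T,\iota,u)\in {\rm Ob}({\rm Conv}(Y/S))$, then glue. Fix such a $T$ and recall from (\ref{cd:ppp}) and (\ref{eqn:lueyset}) that if $\{\mathfrak{T}_n\}_{n=1}^{\infty}$ denotes the system of universal enlargements of the immersion $U\os{\sus}{\lo} T\times_S \mathcal{Y}$, with projections $p_n\col \mathfrak{T}_n\lo \mathfrak{T}_{Y,n}(\mathcal{Y})$ and $p'_n\col \mathfrak{T}_n\lo T$, then
\[
L^{\rm UE}_{Y/S}(\mathcal{E}\otimes_{\mathcal{O}_{\cal Y}}\Omega^{i}_{\mathcal{Y}/S})_T
=\vpl_n p'_{n*}\,p_n^{*}(\mathcal{E}_n\otimes_{\mathcal{O}_{\cal Y}}\Omega^{i}_{\mathcal{Y}/S}).
\]

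Second, I would construct on each $\mathfrak{T}_n$ an integrable connection $\nabla^{\mathcal{E}}_n\col p_n^{*}(\mathcal{E}_n)\lo p_n^{*}(\mathcal{E}_n)\otimes p_n^{*}(\Omega^{1}_{\mathcal{Y}/S})$ relative to $T$, using the isocrystalline property of $\{\mathcal{E}_n\}_{n=1}^{\infty}$. Concretely, one considers the two natural projections from the universal enlargement of the diagonal immersion $\mathfrak{T}_n \os{\sus}{\lo}\mathfrak{T}_n\times_T\mathfrak{T}_n$ and uses the isocrystal transition isomorphisms to produce a canonical identification of the two pullbacks of $p_n^{*}(\mathcal{E}_n)$ there; subtracting these and truncating at first order gives the Taylor connection $\nabla^{\mathcal{E}}_n$. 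Combining this with the (log) de Rham differential $d\col \Omega^{i}_{\mathcal{Y}/S}\lo \Omega^{i+1}_{\mathcal{Y}/S}$ via the usual Leibniz rule,
\[
\nabla^{i}_n(e\otimes \om):=\nabla^{\mathcal{E}}_n(e)\wedge \om+(-1)^{i}\,e\otimes d\om,
\]
yields a map $p_n^{*}(\mathcal{E}_n\otimes \Omega^{i}_{\mathcal{Y}/S}) \lo p_n^{*}(\mathcal{E}_n\otimes \Omega^{i+1}_{\mathcal{Y}/S})$. The isocrystalline condition guarantees compatibility of $\nabla^{\mathcal{E}}_n$ with the transition morphisms $\mathfrak{T}_n\lo \mathfrak{T}_{n+1}$, so after applying $p'_{n*}$ and passing to the inverse limit over $n$ one obtains the desired morphism $\nabla^{i}$ on $L^{\rm UE}_{Y/S}(\mathcal{E}\otimes \Omega^{i}_{\mathcal{Y}/S})_T$.

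Third, I would verify the functoriality of this construction in $T\in {\rm Ob}({\rm Conv}(Y/S))$ using (\ref{lemm:bcue}) for the base change of universal enlargements along a morphism $T'\lo T$: the compatibility of $\nabla^{\mathcal{E}}_n$ under base change is automatic from its construction via the universal property of the diagonal enlargement. This shows that the local morphisms glue to a morphism of sheaves in $(Y/S)_{\rm conv}$, and independence from the local basis of $\Omega^{1}_{\mathcal{Y}/S}$ is standard.

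Finally, the relation $\nabla^{i+1}\circ \nabla^{i}=0$ reduces to the integrability of $\nabla^{\mathcal{E}}_n$ together with $d^{2}=0$ on $\Omega^{\bullet}_{\mathcal{Y}/S}$; the integrability in turn follows from the usual cocycle identity for the Taylor connection on the triple self-product $\mathfrak{T}_n\times_T\mathfrak{T}_n\times_T\mathfrak{T}_n$, again supplied by the isocrystalline condition. The hard part will be to set up the Taylor connection $\nabla^{\mathcal{E}}_n$ and its integrability rigorously in this log convergent framework, since the relevant self-products must be re-enlarged universally and the convergence of the Taylor expansion in the $\pi$-adic topology must be controlled; once this is in place the rest of the argument is formal and closely mirrors the classical crystalline/convergent Poincar\'e lemma construction used in \cite{oc} and \cite{s2}.
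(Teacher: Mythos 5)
Your overall frame---evaluate the linearization at an enlargement $T$ via the formula (\ref{eqn:lueyset}) attached to the diagram (\ref{cd:ppp}), define the differentials on these values, then check functoriality in $T$ and glue---is the right organization (note that the present paper gives no proof of (\ref{lemm:lcl}): it recalls the statement from \cite[(3.6)]{nhw}). The genuine gap is in your central step. You propose to produce the ``Taylor connection'' $\nabla^{\cal E}_n$ on $p_n^*({\cal E}_n)$ by identifying the two pullbacks of ${\cal E}$ on a universal enlargement of the diagonal immersion ${\mathfrak T}_n\hookrightarrow {\mathfrak T}_n\times_T{\mathfrak T}_n$, ``using the isocrystal transition isomorphisms''. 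But the hypothesis of the lemma is only that ${\cal E}=\{{\cal E}_n\}_{n=1}^{\infty}$ is an isocrystal of ${\cal K}_{\os{\to}{\mathfrak T}_Y({\cal Y})}$-modules: the only structure given is the system of isomorphisms $\phi_n^*({\cal E}_{n+1})\cong {\cal E}_n$ along the single tower $\{{\mathfrak T}_{Y,n}({\cal Y})\}_{n=1}^{\infty}$ attached to the fixed immersion $Y\hookrightarrow{\cal Y}$. This datum contains no comparison between the pullbacks of ${\cal E}$ along two \emph{different} morphisms to that tower, in particular none along the two projections from any diagonal-type enlargement (the universal property of ${\mathfrak T}_{Y,n}({\cal Y})$ gives a unique factorization \emph{per given} morphism to ${\cal Y}$, so the two composites remain genuinely distinct and nothing identifies the corresponding pullbacks). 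Such an identification is precisely a stratification, i.e.\ to first order the connection you are trying to construct, so the argument is circular at its key point; the same remark applies to the compatibility with the transition morphisms and to the integrability, which you also attribute to ``the isocrystalline condition''. The degenerate case makes this concrete: if $Y\hookrightarrow{\cal Y}$ is exact with ideal $\pi{\cal O}_{\cal Y}$ (e.g.\ ${\cal Y}$ a smooth formal lift with $Y={\cal Y}\times_S S_1$), then ${\mathfrak T}_{Y,n}({\cal Y})={\cal Y}$ for every $n$ and an isocrystal of ${\cal K}_{\os{\to}{\mathfrak T}_Y({\cal Y})}$-modules is nothing but a coherent ${\cal K}_{\cal Y}$-module with no connection, so your recipe has no input from which to manufacture $\nabla^{\cal E}_n$ at all.

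By contrast, the construction being recalled is carried out directly on the values $\vpl_np'_{n*}p_n^*({\cal E}_n\otimes_{{\cal O}_{\cal Y}}\Om^i_{{\cal Y}/S})$ of the linearization, using the geometry of the universal enlargements of the immersion $U\hookrightarrow T\times_S{\cal Y}$ together with the de Rham differential of $\Om^{\bul}_{{\cal Y}/S}$, and it is this explicit description that the rest of the paper relies on (cf.\ the remark in the proof of (\ref{coro:nia}) about ``writing down the connection in (\ref{lemm:lcl})'', and the way \S\ref{eclf} manipulates the resulting differentials); no stratification datum on ${\cal E}$ and no log smoothness of ${\cal Y}/S$ enters. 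If you insist on the Taylor-isomorphism route, you must first strengthen the hypothesis to ``${\cal E}$ is induced by an isocrystal of ${\cal K}_{Y/S}$-modules on the convergent site'' (as happens in every application in this paper, e.g.\ (\ref{theo:pl})), where the crystal property evaluated on enlargements of $Y\hookrightarrow{\cal Y}\times_S{\cal Y}$ really does furnish the two-pullback identification; but then you are proving a strictly weaker statement than (\ref{lemm:lcl}), and you still owe the points you defer as ``the hard part'': the $\pi$-adic convergence needed to extend the first-order (and higher) comparison across the universal enlargements, and the compatibility of the resulting maps with $p'_{n*}$, with $\vpl_n$, and with base change (\ref{lemm:bcue}) so that they glue over ${\rm Conv}(Y/S)$.
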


The following is a relative version of 
\cite[(5.4)]{oc} and \cite[Theorem 2.3.5 (2)]{s2}, 
though the formulation of it is rather different from them:

\begin{theo}[{\rm \bf Log converegent Poncar\'{e} lemma (\cite[(3.7)]{nhw})}]\label{theo:pl}
Assume that ${\cal Y}/S$ is log smooth. 
Let $E$ be an isocrystal of ${\cal K}_{Y/S}$-modules. 
Set ${\cal E}_n:=E_{{\mathfrak T}_{Y,n}({\cal Y})}$ and 
${\cal E}:=\{{\cal E}_n\}_{n=1}^{\infty}$. 
Then the natural morphism 
\begin{equation*}
E \lo
L^{\rm UE}_{Y/S}({\cal E}\otimes_{{\cal O}_{\cal Y}}
\Om^{\bul}_{{\cal Y}/S})
\tag{3.5.1}\label{eqn:epdlym}
\end{equation*}
is a quasi-isomorphism. 
\end{theo}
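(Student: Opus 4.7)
The plan is to adapt the standard proof of the convergent Poincar\'{e} lemma from \cite[(5.4)]{oc} and \cite[Theorem 2.3.5]{s2} to the relative log setting. The assertion is a quasi-isomorphism of complexes of ${\cal K}_{Y/S}$-modules on the log convergent topos, so it suffices to verify it after evaluation at an arbitrary enlargement $T=(U,T,\iota,u)$ of $Y/S$. Using the explicit formula (\ref{eqn:lueyset}), the right hand side evaluated at $T$ becomes
\[
\vpl_n p'_{n*}p_n^*({\cal E}_n\otimes_{{\cal O}_{\cal Y}}\Om^{\bul}_{{\cal Y}/S}),
\]
where $\{{\mathfrak T}_n\}_{n=1}^{\infty}$ is the system of universal enlargements of the immersion $(\iota,\iota_{\cal Y}\circ u)\col U\os{\sus}{\lo}T\times_S{\cal Y}$, while the left hand side is simply $E_T$.

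First I would localize on $\os{\circ}{T}$ and on $\os{\circ}{\cal Y}$: since ${\cal Y}/S$ is log smooth, we may, \'etale locally, assume that ${\cal Y}/S$ admits log coordinates, so that $\Om^1_{{\cal Y}/S}$ is free and the de Rham complex $\Om^{\bul}_{{\cal Y}/S}$ is a Koszul-type complex on explicit $1$-forms $\om_1,\dots,\om_d$ (each either of the form $d\log m_i$ or a genuine differential). Next I would exploit the log smoothness of ${\cal Y}/S$ to set up, locally on $T$, a Taylor-style comparison between the two natural maps ${\mathfrak T}_n \rightrightarrows {\cal Y}$ coming from the two projections of $T\times_S{\cal Y}$: since $E$ is an isocrystal, the system $\{{\cal E}_n\}_n$ is equipped with a canonical isomorphism relating $p_n^*({\cal E}_n)$ to the pull-back of $E_T$ along $p'_n\col {\mathfrak T}_n\lo T$, given by a formal power series expansion in the local log coordinates. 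This Taylor expansion, combined with the Koszul shape of $\Om^{\bul}_{{\cal Y}/S}$, identifies the complex $L^{\rm UE}_{Y/S}({\cal E}\otimes\Om^{\bul}_{{\cal Y}/S})_T$ with an explicit Koszul-type complex on $E_T$ tensored with (a $p$-adic version of) a polynomial algebra; its cohomology vanishes in positive degrees and is $E_T$ in degree zero. Passing to the inverse limit over $n$ is then harmless since the relevant terms are coherent and the transition maps are well-behaved, as in (\ref{lemm:rex}).

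The main obstacle will be the absence of PD-structures on the universal enlargements: unlike the log crystalline setting, these enlargements are only $p$-adic formal neighborhoods, and the series implementing the Taylor expansion involve factorials in the denominators. This is precisely why it is essential to work with isocrystals of ${\cal K}$-modules rather than integral crystals; after inverting $p$ those denominators make sense, and the Koszul/Poincar\'{e} exactness goes through by the standard argument on polynomial algebras. A secondary technical point is the need to treat log coordinates attached to non-trivial monoid generators on the same footing as ordinary \'etale coordinates; log smoothness ensures that \'etale locally we are in a standard chart of the form $\ul{\rm Spec}^{\log}_S({\cal O}_S\{P\})$ for a fine monoid $P$, where the differentials $d\log$ of the generators of $P$ give the desired Koszul frame and the calculation reduces to the well-known one.
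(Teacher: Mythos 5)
The paper itself does not prove this statement: \S\ref{sec:lll} explicitly recalls it without proof from \cite[(3.7)]{nhw}, which is the relative log version of the convergent Poincar\'e lemma of \cite[(5.4)]{oc} and \cite[Theorem 2.3.5]{s2}. Your overall architecture — reduce to values at an arbitrary enlargement $T$ via the formula (\ref{eqn:lueyset}), localize, use a chart coming from log smoothness, and identify $L^{\rm UE}_{Y/S}({\cal E}\otimes_{{\cal O}_{\cal Y}}\Om^{\bul}_{{\cal Y}/S})_T$ with the de Rham complex of the system of universal enlargements of $U\os{\sus}{\lo}T\times_S{\cal Y}$ over $T$ with coefficients pulled back from $E_T$ — is exactly the architecture of those proofs. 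However, two points at the core of your acyclicity argument are misstated, and the second is a genuine gap.

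First, the identification of $p_n^*({\cal E}_n)$ with $p'{}^*_n(E_T)$ is an immediate consequence of the isocrystal condition, since both ${\mathfrak T}_n\lo {\mathfrak T}_{Y,n}({\cal Y})$ and ${\mathfrak T}_n\lo T$ are morphisms of enlargements; no Taylor power-series expansion is involved, and no factorials occur anywhere — universal enlargements carry no PD-structure, so the "factorials in the denominators" you invoke are a crystalline phenomenon that simply does not arise here. Second, and more seriously, the denominators that do arise come from integration (a term $x^m\,dx$, or $x^m\,d\log x$ in the log directions after exactification, integrates with a denominator of size roughly $m$), and for these "inverting $p$ plus the standard polynomial-algebra argument" is not sufficient: on a bare formal completion the Poincar\'e lemma fails even after $\otimes_{\mab Z}{\mab Q}$, because the antiderivative of a convergent series need not have bounded denominators (e.g.\ $\sum_m x^{p^m-1}dx$). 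What makes the convergent setting work is the radius condition built into the universal enlargements, $({\rm Ker}({\cal O}_{{\mathfrak T}_n}\lo {\cal O}_U))^n\subset\pi{\cal O}_{{\mathfrak T}_n}$: the resulting decay of $|x|^{m}$ is linear in $m$ and beats the logarithmic growth of $v_p(m+1)$, so integration preserves sections over each ${\mathfrak T}_n$ after $\otimes_{\mab Z}{\mab Q}$. This quantitative estimate, together with the verification that the contraction is compatible with the transition maps and with $\vpl_n$ (where a vanishing of the derived inverse limit as in the log version of \cite[(3.8)]{oc} is used), is the actual content of the lemma in \cite{oc}, \cite{s2} and \cite{nhw}, and it is precisely the step your sketch passes over.
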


The following has been essentially proved 
in \cite[(6.6)]{oc} and \cite[Corollary 2.3.8]{s2}: 

\begin{prop}[{\rm \bf (\cite[(3.9)]{nhw})}]\label{prop:cdfza}
Let the notations be as in {\rm (\ref{theo:pl})}. 
Set $Y_n:=Y\times_{\cal Y}{\mathfrak T}_{Y,n}({\cal Y})$. 
Let 
$\bet_n \col Y_n \lo Y$ be 
the projection and identify 
${\mathfrak T}_{Y,n}({\cal Y})_{\rm zar}$ with 
$Y_{n,{\rm zar}}$. 
Then 
\begin{equation*}
Ru^{\rm conv}_{Y/S*}(E)=
\vpl_n\bet_{n*}({\cal E}_n{\otimes}_{{\cal O}_{{\cal Y}}} 
\Om^{\bul}_{{\cal Y}/S})
\tag{3.6.1}\label{eqn:uybo}
\end{equation*}
in $D^+(f^{-1}({\cal K}_S))$. 
\end{prop}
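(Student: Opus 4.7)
The plan is to reduce the computation of $Ru^{\rm conv}_{Y/S*}(E)$ to a term-by-term computation on the log de Rham resolution furnished by the log Poincar\'e lemma (\ref{theo:pl}).

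First, since ${\cal Y}/S$ is log smooth, by (\ref{theo:pl}) we have a quasi-isomorphism
$$E \os{\sim}{\lo} L^{\rm UE}_{Y/S}({\cal E}\otimes_{{\cal O}_{\cal Y}}\Om^{\bul}_{{\cal Y}/S})$$
in $D^+({\cal K}_{Y/S})$. Applying $Ru^{\rm conv}_{Y/S*}$ therefore gives
$$Ru^{\rm conv}_{Y/S*}(E) \os{\sim}{\lo} Ru^{\rm conv}_{Y/S*}(L^{\rm UE}_{Y/S}({\cal E}\otimes_{{\cal O}_{\cal Y}}\Om^{\bul}_{{\cal Y}/S})).$$
The task reduces to showing that each linearized term is $u^{\rm conv}_{Y/S*}$-acyclic and that its direct image is the expected one, i.e.\ that
$$R^{q}u^{\rm conv}_{Y/S*}(L^{\rm UE}_{Y/S}({\cal E}\otimes_{{\cal O}_{\cal Y}}\Om^i_{{\cal Y}/S}))=0 \qquad (q>0)$$
and
$$u^{\rm conv}_{Y/S*}(L^{\rm UE}_{Y/S}({\cal E}\otimes_{{\cal O}_{\cal Y}}\Om^i_{{\cal Y}/S}))
=\vpl_n\bet_{n*}({\cal E}_n\otimes_{{\cal O}_{\cal Y}}\Om^i_{{\cal Y}/S}).$$

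The second (underived) identity follows from unwinding the definitions: by (\ref{eqn:lueyset}), for an enlargement $T=(U,T,\iota,u)$ the value $L^{\rm UE}_{Y/S}(\cdot)_T$ is $\vpl_n p'_{n*}p^*_n(\cdot_n)$ with $p_n,p'_n$ as in (\ref{cd:ppp}). Taking $T$ to be an open piece $V\subset Y$ regarded as the trivial enlargement, $p_n$ becomes the natural morphism $V\times_{\cal Y}{\mathfrak T}_{Y,n}({\cal Y})\to {\mathfrak T}_{Y,n}({\cal Y})$ and one identifies sections over $V$ with $\Gam(V,\bet_{n*}({\cal E}_n\otimes\Om^i_{{\cal Y}/S}))$. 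Passing to the inverse limit over $n$ and sheafifying yields the formula.

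The main obstacle is the acyclicity statement. I would handle it by Čech techniques after localizing: cover $Y$ by open pieces over which the system $\{{\mathfrak T}_{Y,n}({\cal Y})\}_n$ has affine members (which is possible since ${\cal Y}$ is locally formally log smooth and the universal enlargement construction is local), and then use (\ref{lemm:bcue}) to compare the universal enlargements of a localization with those of the original. This reduces the vanishing of higher direct images to the vanishing of higher $\bet_{n*}$ of coherent ${\cal K}_{T_n}$-modules on each affine $T_n$, which is standard; one then argues that the $\vpl_n$ does not introduce any $R^1\vpl$ obstruction because the transition maps in the Mittag-Leffler system $\{\bet_{n*}({\cal E}_n\otimes \Om^i_{{\cal Y}/S})\}_n$ are surjective by the admissibility of $\cal E$ (coherent ${\cal K}_{\os{\to}{\mathfrak T}_Y({\cal Y})}$-modules attached to an isocrystal). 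Once this is verified locally, the identification globalizes and yields (\ref{eqn:uybo}) in $D^+(f^{-1}({\cal K}_S))$.

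As noted in the excerpt, the case of trivial coefficients is covered by \cite[(5.4)]{oc} and \cite[Corollary 2.3.8]{s2}; the argument above is essentially the adaptation of those proofs to the present relative log setting with an isocrystal coefficient, so the real work is the bookkeeping involved in checking that admissibility and the universal-enlargement base change (\ref{lemm:bcue}) suffice to run the classical argument verbatim.
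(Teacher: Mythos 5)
Your overall skeleton is the standard one: the paper itself gives no proof of (\ref{prop:cdfza}) (it is recalled from \cite[(3.9)]{nhw}, which adapts \cite[(6.6)]{oc} and \cite[Corollary 2.3.8]{s2}), and those sources argue exactly as you propose — resolve $E$ by the linearized log de Rham complex via (\ref{theo:pl}) and compute $Ru^{\rm conv}_{Y/S*}$ of each linearized term through the tower of universal enlargements. The gaps are in two of your justifications. First, there is no ``trivial enlargement'' $V\subset Y$ in the convergent site: by definition an enlargement $(U,T,\iota,u)$ has $\os{\circ}{T}$ flat over ${\rm Spf}({\cal V})$ with $T_0\subset U$, and a scheme on which $\pi$ vanishes is never such a $T$ (this is precisely the point where the convergent theory differs from the crystalline one, where $(U,U,{\rm id})$ is available). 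So you cannot obtain $u^{\rm conv}_{Y/S*}(L^{\rm UE}_{Y/S}({\cal E}\otimes_{{\cal O}_{\cal Y}}\Om^i_{{\cal Y}/S}))(V)$ by evaluating (\ref{eqn:lueyset}) at $T=V$. The correct mechanism is $u^{\rm conv}_{Y/S*}(F)(V)=\Gam((V/S)_{\rm conv},F)$, and the identification with $\vpl_n\Gam(\bet_n^{-1}(V),{\cal E}_n\otimes_{{\cal O}_{\cal Y}}\Om^i_{{\cal Y}/S})$ comes from the universality in (\ref{prop:exue}) — every enlargement mapping to ${\mathfrak T}_Y({\cal Y})$ factors through some ${\mathfrak T}_{Y,n}({\cal Y})$ — combined with the formula $E_T=\vpl_n\bet_{n*}(E_{T_n})$ recalled in \S\ref{sec:logcd}; your formula (\ref{eqn:lueyset}) is only valid at honest enlargements $T$.

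Second, the control of the inverse limit is not a consequence of admissibility: admissibility gives surjectivity of ${\cal K}_{T_n}\otimes_{\phi^{-1}_{n}({\cal K}_{T_{n+1}})}\phi^{-1}_{n}({\cal F}_{n+1})\lo{\cal F}_n$, which does not make the transition maps of the system $\{\bet_{n*}({\cal E}_n\otimes_{{\cal O}_{\cal Y}}\Om^i_{{\cal Y}/S})\}_{n}$ surjective, so Mittag--Leffler cannot be invoked in the way you state. What is actually needed is the (log version of the) lemma \cite[(3.8)]{oc} asserting $R^1\vpl_n=0$ for coherent crystals of ${\cal K}$-modules on the tower of universal enlargements, whose proof is a topological argument on affine formal schemes; this is exactly the lemma the present paper invokes in the proof of (\ref{prop:0case}). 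Relatedly, your term-by-term acyclicity claim needs more care than ``standard \v{C}ech techniques'': $L^{\rm UE}_{Y/S}=j_{{\mathfrak T}_Y({\cal Y})*}\varphi^*_{\os{\to}{\mathfrak T}_Y({\cal Y})}$ and $j_{{\mathfrak T}_Y({\cal Y})*}$ is only left exact, so one must argue via the topos $\os{\to}{\mathfrak T}_Y({\cal Y})$ and coherence as in \cite[Theorem 2.3.5, Corollary 2.3.8]{s2}. With these two substitutions (sections over the localized topos plus universality instead of a trivial enlargement, and Ogus's $R^1\vpl$ lemma instead of admissibility), your outline does become the proof in the cited references.
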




\par 
In the rest of this section we recall results in \cite[\S4]{nhw}, in which 
we have proved that log convergent linearization functors are 
compatible with closed immersions of log (formal) schemes. 
\par 
Let ${\cal V}$, $\pi$, $S$ and $S_1$ 
be as in \S\ref{sec:logcd}.

\begin{lemm}[{\bf {\cite[(4.1)]{nhw}}}]\label{lemm:increp}
Let $\iota_Y \col Y_1 \os{\sus}{\lo} Y_2$ be 
a closed immersion of 
fine log schemes over $S_1$ 
whose underlying schemes are of finite type 
over $\os{\circ}{S}_1$.   
Let $T_j:=(U_j,T_j,\iota_j,u_j)$ 
$(j=1,2)$ be a $($pre$)$widening of $Y_j/S$.
Let ${\iota}_U \col U_1 \os{\sus}{\lo} U_2$ 
and $\iota_T \col T_1 \os{\sus}{\lo} T_2$ 
be closed immersions  
of fine log $S$-formal schemes. 
Assume that ${\iota}_U$ and $\iota_T$
induce a morphism 
$(\iota_U,\iota_T) \col (U_1,T_1,\iota_1,u_1) 
\lo (U_2,T_2,\iota_2,u_2)$ 
of $($pre$)$widenings of $Y_1$ and $Y_2$. 
Let 
$$\iota^{\rm loc}_{\rm conv} \col 
(Y_1/S)_{\rm conv}\vert_{T_1} 
\lo 
(Y_2/S)_{\rm conv}\vert_{T_2}$$ 
be the induced morphism of topoi by $\iota_Y$ 
and $(\iota_U,\iota_T)$.
Let $(U,T,\iota,u)$ be a $($pre$)$widening of  $Y_2/S$ 
over $(U_2,T_2,\iota_2,u_2)$. 
Then $\iota^{{\rm loc}*}_{\rm conv}((U,T,\iota,u))=
(U\times_{U_2}U_1,T\times_{T_2}T_1,
\iota \times_{\iota_2}\iota_1,u\times_{u_2}u_1)$ 
as sheaves in $(Y_1/S)_{\rm conv}\vert_{T_1}$. 
\end{lemm}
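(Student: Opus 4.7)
The plan is to compute the pullback of the representable sheaf directly via the Yoneda lemma, once the morphism of topoi $\iota^{\rm loc}_{\rm conv}$ has been unwound as coming from a morphism of the underlying sites.

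First I would describe explicitly the site morphism inducing $\iota^{\rm loc}_{\rm conv}$. The given data $\iota_Y$ and $(\iota_U,\iota_T)$ furnish, by base change along $Y_1\os{\sus}{\lo}Y_2$ (resp.\ along $T_1\os{\sus}{\lo}T_2$), a functor from ${\rm Conv}(Y_2/S)\vert_{T_2}$ to ${\rm Conv}(Y_1/S)\vert_{T_1}$ sending an enlargement $(U',T',\iota',u')\to T_2$ of $Y_2/S$ to $(U'\times_{U_2}U_1,\,T'\times_{T_2}T_1,\,\iota'\times_{\iota_2}\iota_1,\,u'\times_{u_2}u_1)$; one checks that this is again a (pre)widening (resp.\ enlargement) of $Y_1/S$ over $T_1$, using that closed immersions and strictness are preserved under fine log base change, and that the pull-back is continuous because Zariski covers of $T'$ give Zariski covers of $T'\times_{T_2}T_1$. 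This is the continuous functor from which $\iota^{\rm loc}_{\rm conv}$ arises, and $\iota^{\rm loc*}_{\rm conv}$ is the associated sheaf-pullback.

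Next I would apply this to the representable sheaf $h_{(U,T,\iota,u)}$ on ${\rm Conv}(Y_2/S)\vert_{T_2}$. For any test enlargement $(V,W,j,v)\to T_1$ of $Y_1/S$, the set
\[
\Gam\bigl((V,W,j,v),\,\iota^{\rm loc*}_{\rm conv}(h_{(U,T,\iota,u)})\bigr)
\]
is, by the standard description of pullback of representable sheaves along a morphism of sites, the set of morphisms of (pre)widenings $(V,W,j,v)\to (U,T,\iota,u)$ over the morphism of (pre)widenings $(U_1,T_1,\iota_1,u_1)\lo (U_2,T_2,\iota_2,u_2)$. By the universal property of the fibre products in the categories of fine log schemes over $S_1$ and of fine log $S$-formal schemes (applicable because $\iota_Y$, $\iota_U$, $\iota_T$ are closed immersions, so the log structures pull back cleanly), such data is canonically the same as a morphism of (pre)widenings $(V,W,j,v)\to (U\times_{U_2}U_1,\,T\times_{T_2}T_1,\,\iota\times_{\iota_2}\iota_1,\,u\times_{u_2}u_1)$ over $T_1$. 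The asserted equality of sheaves follows from the Yoneda lemma.

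The main obstacle is simply the bookkeeping: verifying that the base-change functor on underlying sites is well defined and continuous, and that the quadruple obtained as the fibre product really is a (pre)widening of $Y_1/S$ (strictness of $u\times u_1$, exactness when relevant, and closedness of the immersion $\iota\times_{\iota_2}\iota_1$). All of these follow from the stability of closed immersions and of strict morphisms under fine log fibre products, so no genuine technical difficulty is expected beyond careful unwinding of definitions.
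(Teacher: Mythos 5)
Your overall strategy---compute $\iota^{{\rm loc}*}_{\rm conv}$ on the representable sheaf by Yoneda and the universal property of fibre products---is the right one, and your second paragraph is essentially the expected argument. The genuine problem is your first step: the base-change functor you propose does not exist at the level of the convergent sites. An object of ${\rm Conv}(Y_2/S)\vert_{T_2}$ is an \emph{enlargement}, and an enlargement is required to have underlying formal scheme flat over ${\rm Spf}({\cal V})$; but $T'\times_{T_2}T_1$ is a closed log formal subscheme of $T'$ and in general acquires $\pi$-torsion, so it is usually only a (pre)widening, not an enlargement. Your parenthetical check ``this is again a (pre)widening (resp.\ enlargement)'' therefore fails in the enlargement case; this is precisely why (\ref{lemm:bcue}) must pass to $\wt{{\mathfrak T}_{U,n}(T)\times_{T}T'}$ (killing $\pi$-torsion) and can drop the tilde only when $\os{\circ}{T}{}'\lo \os{\circ}{T}$ is flat. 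Consequently the claim that $\iota^{\rm loc}_{\rm conv}$ ``arises from'' a continuous base-change functor of sites, and with it your appeal to the standard formula for the pullback of a representable along a morphism of sites, is not available as stated.

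The conclusion itself is not in danger, because both sides of the lemma are sheaves attached to (pre)widenings rather than enlargements, but the justification of your first step has to be replaced. One should instead use the actual description of the inverse image for these functoriality morphisms: for a test enlargement $W$ of $Y_1/S$ over $T_1$, one has that $\Gam(W,\iota^{{\rm loc}*}_{\rm conv}(h_{(U,T,\iota,u)}))$ is the set of morphisms of (pre)widenings of $Y_2/S$ from $W$ (regarded over $Y_2$ by means of $\iota_Y$) to $(U,T,\iota,u)$ lying over $(U_1,T_1,\iota_1,u_1)\lo (U_2,T_2,\iota_2,u_2)$; this is exactly how the analogous pullback is computed in the proof of (\ref{lemm:efstex}), where the fibre product is taken in fine log (formal) schemes. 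From that description your universal-property argument and Yoneda finish the proof. Alternatively one could repair the site-level picture by composing base change with the system of universal enlargements, but then the identification of the pullback of a representable is no longer the one-line statement you invoke, so the direct Hom computation is the cleaner route.
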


\par 
Let $\iota_{Y,Z} \col Z \os{\subset}{\lo} Y$ 
be a closed immersion of fine log 
schemes over $S_1$.
Assume that there exists the following cartesian diagram
\begin{equation*}
\begin{CD}
Z  @>{\iota_{\cal Z}}>> {\cal Z} \\ 
@V{\iota_{Y,Z}}VV @VV{\iota_{{\cal Y},{\cal Z}}}V \\
Y @>{\iota_{\cal Y}}>> {\cal Y},
\end{CD}
\tag{3.7.1}\label{cd:zyectn}
\end{equation*}
where $\iota_{\cal Z}$ and $\iota_{\cal Y}$ 
are closed immersions into 
fine log $p$-adic $S$-formal schemes and 
$\iota_{{\cal Y},{\cal Z}}$ is a closed immersion of 
fine log $p$-adic $S$-formal schemes. 
Set ${\mathfrak T}_Z({\cal Z}):=
(Z,{\cal Z},\iota_{\cal Z},{\rm id}_Z)$ 
and  
${\mathfrak T}_Y({\cal Y})
:=(Y, {\cal Y},\iota_{\cal Y},{\rm id}_Y)$.  
Let 
$\{{\mathfrak T}_{Z,n}({\cal Z})\}_{n=1}^{\infty}$ 
and  
$\{{\mathfrak T}_{Y,n}({\cal Y})\}_{n=1}^{\infty}$ 
be the systems of the universal enlargements 
of $\iota_{\cal Z}$ and $\iota_{\cal Y}$ 
with natural morphisms 
$g_{{\cal Y},n} \col {\mathfrak T}_{Y,n}({\cal Y}) \lo {\cal Y}$ 
and 
$g_{{\cal Z},n} \col {\mathfrak T}_{Z,n}({\cal Z}) \lo {\cal Z}$,  
respectively. 
Because the diagram (\ref{cd:zyectn}) is cartesian, 
we have the following equality by (\ref{lemm:bcue}):
\begin{equation*} 
{\mathfrak T}_{Z,n}({\cal Z})
=\wt{{\mathfrak T}_{Y,n}({\cal Y})\times_{{\cal Y}}{\cal Z}}. 
\tag{3.7.2}\label{eqn:tzyb}
\end{equation*}
Set $Z_n:=Z\times_{\cal Z}{\mathfrak T}_{Z,n}({\cal Z})$ and 
$Y_n:=Y\times_{\cal Y}{\mathfrak T}_{Y,n}({\cal Y})$. 
Let $\iota_{{\cal Z},n}$ 
and $\iota_{{\cal Y},n}$ 
be the natural closed immersions 
$Z_n \os{\sus}{\lo} {\mathfrak T}_{Z,n}({\cal Z})$ 
and 
$Y_n \os{\sus}{\lo} {\mathfrak T}_{Y,n}({\cal Y})$, respectively.  
Let $\iota_{{\cal Y},{\cal Z},n} \col 
{\mathfrak T}_{Z,n}({\cal Z}) \os{\sus}{\lo} 
{\mathfrak T}_{Y,n}({\cal Y})$ be also the natural closed immersion. 
Let 
$$\os{\to}{\iota}{}_{{\cal Y},{\cal Z}*} \col 
\{{\cal K}_{\os{\to}{\mathfrak T}_Z({\cal Z})}\text{-modules}\} 
\lo \{{\cal K}_{\os{\to}{\mathfrak T}_Y({\cal Y})}\text{-modules}\}$$ 
be the natural direct image.

\begin{lemm}[{\bf \cite[(4.3)]{nh2}}]\label{lemm:phiit}  
Let 
$\iota^{\rm loc}_{\rm conv} \col 
(Z/S)_{\rm conv}
\vert_{{\mathfrak T}_{Z,n}({\cal Z})} 
\lo 
(Y/S)_{\rm conv}
\vert_{{\mathfrak T}_{Y,n}({\cal Y})}$ 
be the natural morphism of topoi. 
Then the following diagram 
\begin{equation*}
\begin{CD}
 \{\text{isocrystals of }
{\cal K}_{\os{\to}{\mathfrak T}_Z({\cal Z})}
\text{-modules}\}
@>{\varphi_{\os{\to}{\mathfrak T}_Z({\cal Z})}^*}>> \\
@V{\os{\to}{\iota}{}_{{\cal Y},{\cal Z}*}}VV  \\
\{\text{isocrystals of }
{\cal K}_{\os{\to}{\mathfrak T}_{Y}({\cal Y})}
\text{-modules}\} 
@>{\varphi_{\os{\to}{\mathfrak T}_Y({\cal Y})}^*}>> 
\end{CD}
\tag{3.8.1}
\end{equation*}
\begin{equation*}
\begin{CD}
\{\text{isocrystals of }
{\cal K}_{Z/S}\vert_{{\mathfrak T}_Z({\cal Z})} 
\text{-modules}\}  \\
@VV{\iota^{\rm loc}_{{\rm conv}*}}V\\
\{\text{isocrystals of }{\cal K}_{Y/S}
\vert_{{\mathfrak T}_Y({\cal Y})}\text{-modules}\}
\end{CD}
\end{equation*}
is commutative.
\end{lemm}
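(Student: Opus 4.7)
The plan is to verify the commutativity of the diagram by evaluating both composite functors on an arbitrary enlargement $T=(U,T,\iota,u)$ of $Y/S$ equipped with a structural morphism to ${\mathfrak T}_Y({\cal Y})$. Since an isocrystal of ${\cal K}_{Y/S}\vert_{{\mathfrak T}_Y({\cal Y})}$-modules is determined by the coherent ${\cal K}_T$-sheaf it assigns to each such $T$ together with the transition morphisms for morphisms of enlargements, this pointwise check on enlargements is enough.

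For the composite $\varphi^*_{\os{\to}{\mathfrak T}_Y({\cal Y})}\circ\os{\to}{\iota}{}_{{\cal Y},{\cal Z}*}$, I would set ${\cal F}_n:=\iota_{{\cal Y},{\cal Z},n*}({\cal E}_n)$ and apply the formula (\ref{eqn:lueyset}) to ${\cal F}=\{{\cal F}_n\}_{n=1}^{\infty}$: form the natural immersion $(\iota,\iota_{\cal Y}\circ u)\col U\os{\sus}{\lo}T\times_S{\cal Y}$, denote its system of universal enlargements by $\{{\mathfrak T}_n^Y\}_{n=1}^{\infty}$ with canonical morphisms $p_n^Y\col{\mathfrak T}_n^Y\lo{\mathfrak T}_{Y,n}({\cal Y})$ and $q_n^Y\col{\mathfrak T}_n^Y\lo T$; then the value at $T$ reads $\vpl_n (q_n^Y)_*(p_n^Y)^*\iota_{{\cal Y},{\cal Z},n*}({\cal E}_n)$. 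For the composite $\iota^{\rm loc}_{{\rm conv}*}\circ\varphi^*_{\os{\to}{\mathfrak T}_Z({\cal Z})}$, Lemma (\ref{lemm:increp}) identifies $\iota^{{\rm loc}*}_{\rm conv}(T)$ with the quasi-prewidening $P:=(U\times_Y Z,T\times_{\cal Y}{\cal Z},\iota\times\iota_{\cal Z},u\times{\rm id}_Z)$ of $Z/S$; writing $\{{\mathfrak T}_n^Z\}_{n=1}^{\infty}$ for the universal enlargements of the induced immersion $U\times_Y Z\os{\sus}{\lo}(T\times_{\cal Y}{\cal Z})\times_S{\cal Z}$ with canonical morphisms $p_n^Z\col{\mathfrak T}_n^Z\lo{\mathfrak T}_{Z,n}({\cal Z})$ and $q_n^Z\col{\mathfrak T}_n^Z\lo T\times_{\cal Y}{\cal Z}$, and $i\col T\times_{\cal Y}{\cal Z}\os{\sus}{\lo}T$ for the closed immersion induced by $\iota_{{\cal Y},{\cal Z}}$, the value of this composite at $T$ becomes $i_*\vpl_n (q_n^Z)_*(p_n^Z)^*({\cal E}_n)$.

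To match the two expressions, I would invoke the cartesianity of (\ref{cd:zyectn}) together with Lemma (\ref{lemm:bcue}), which yields the identification ${\mathfrak T}_n^Z=\wt{{\mathfrak T}_n^Y\times_{{\mathfrak T}_{Y,n}({\cal Y})}{\mathfrak T}_{Z,n}({\cal Z})}$ and an induced closed immersion $\iota_n'\col{\mathfrak T}_n^Z\os{\sus}{\lo}{\mathfrak T}_n^Y$ satisfying $q_n^Y\circ\iota_n'=i\circ q_n^Z$ and $p_n^Y\circ\iota_n'=\iota_{{\cal Y},{\cal Z},n}\circ p_n^Z$. The standard base-change identity for closed immersions then gives $(p_n^Y)^*\iota_{{\cal Y},{\cal Z},n*}({\cal E}_n)=\iota_{n*}'(p_n^Z)^*({\cal E}_n)$, and pushing forward along $q_n^Y$ and taking the inverse limit matches the two sides term by term, since $i_*$ is exact and commutes with $\vpl_n$. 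The main obstacle will be the tilde in (\ref{lemm:bcue}): because $p_n^Y$ and $p_n^Z$ need not be flat, one must verify that the $\pi$-torsion quotient recorded by the tilde is harmless at the level of coherent ${\cal K}$-modules. This reduces to the facts that ${\cal K}_{{\mathfrak T}_n^Z}={\cal K}_{{\mathfrak T}_n^Y\times_{{\mathfrak T}_{Y,n}({\cal Y})}{\mathfrak T}_{Z,n}({\cal Z})}$ after inverting $p$ and that base change for a closed immersion of noetherian formal schemes holds for coherent sheaves, which becomes routine once the coherent isocrystalline structure is carried through.
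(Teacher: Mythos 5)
The statement you set out to prove lives at the level of the functors $\varphi^*_{\os{\to}{\mathfrak T}_Z({\cal Z})}$, $\varphi^*_{\os{\to}{\mathfrak T}_Y({\cal Y})}$ and the \emph{localized} direct image $\iota^{\rm loc}_{{\rm conv}*}$, but your evaluation formulas compute different functors. By the definition (\ref{eqn:phyt}), the value of $\varphi^*_{\os{\to}{\mathfrak T}_Y({\cal Y})}(\{{\cal F}_n\}_n)$ at an object $(T,\phi)$ of $(Y/S)_{\rm conv}\vert_{{\mathfrak T}_Y({\cal Y})}$ is simply $\Gam(T,\psi_n^*({\cal F}_n))$ for a factorization $\psi_n\col T\lo {\mathfrak T}_{Y,n}({\cal Y})$ of the structural morphism $\phi$; no inverse limit over universal enlargements of the graph embedding $U\os{\sus}{\lo}T\times_S{\cal Y}$ occurs. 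The formula (\ref{eqn:lueyset}) that you apply computes instead $L^{\rm UE}_{Y/S}({\cal F})_T=(j_{{\mathfrak T}_Y({\cal Y})*}\varphi^*_{\os{\to}{\mathfrak T}_Y({\cal Y})}({\cal F}))_T$, i.e. the composite with $j_*$ evaluated in the non-localized topos, and likewise your expression for the other composite is a formula for $(\iota_{{\rm conv}*}L^{\rm UE}_{Z/S}({\cal E}))_T$. In other words, what your argument (at best) verifies is Corollary (\ref{linc}), which in the paper is \emph{deduced} from the present lemma together with (\ref{lemm:jilt}); it is not the stated lemma and does not formally imply it, since the lemma is precisely the $\varphi^*$-level ingredient that the $j_*$-level compatibility is built from. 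A secondary point: your identification ${\mathfrak T}_n^Z=\wt{{\mathfrak T}_n^Y\times_{{\mathfrak T}_{Y,n}({\cal Y})}{\mathfrak T}_{Z,n}({\cal Z})}$ is not a direct application of (\ref{lemm:bcue}) to the cartesian square (\ref{cd:zyectn}), because your ambient space is $(T\times_{\cal Y}{\cal Z})\times_S{\cal Z}$ rather than $T\times_S{\cal Z}=(T\times_S{\cal Y})\times_{\cal Y}{\cal Z}$, to which (\ref{lemm:bcue}) applies; comparing the two embeddings needs its own argument.

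Note that this paper states the lemma without proof, citing the earlier work, so the relevant comparison is with the argument it does carry out in the same style, namely the proof of (\ref{lemm:itmne}): there the check is performed on objects $(T'',\phi)$ of the localized topos, using the factorization of $\phi$ through the $n$-th universal enlargement and the explicit description of the localized inverse image. The intended proof of the present lemma is the same pattern: take $(T,\phi)$ in $(Y/S)_{\rm conv}\vert_{{\mathfrak T}_Y({\cal Y})}$, factor $\phi$ through some $\psi_n\col T\lo{\mathfrak T}_{Y,n}({\cal Y})$, identify $\iota^{{\rm loc}*}_{\rm conv}(T,\phi)$ with $(U\times_YZ,\,T\times_{\cal Y}{\cal Z},\ldots)$ by (\ref{lemm:increp}), observe via (\ref{eqn:tzyb}) that the induced structural morphism factors through ${\mathfrak T}_{Z,n}({\cal Z})=\wt{{\mathfrak T}_{Y,n}({\cal Y})\times_{\cal Y}{\cal Z}}$, and conclude by base change for coherent ${\cal K}$-modules along the closed immersion $\iota_{{\cal Y},{\cal Z},n}$, the tilde being harmless after $\otimes_{\mab Z}{\mab Q}$ because of the isocrystal (coherence and crystalline transition) condition. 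Your ingredients — (\ref{lemm:increp}), (\ref{lemm:bcue}), coherent base change along a closed immersion, care with torsion — are the right ones, but they must be fed into the $\varphi^*$-level evaluation on the localized topos, not into the linearization formula (\ref{eqn:lueyset}).
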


\begin{lemm}[{\bf \cite[(4.4)]{nh2}}]\label{lemm:jilt}
The following diagram of topoi
\begin{equation*}
\begin{CD}
(Z/S)_{\rm conv} \vert_{{\mathfrak T}_Z({\cal Z})} 
@>{j_{{\mathfrak T}_{Z}({\cal Z})}}>> (Z/S)_{\rm conv}\\ 
@V{\iota_{{\rm conv}}^{\rm loc}}VV 
@VV{\iota_{\rm conv}}V \\
(Y/S)_{\rm conv}\vert_{{\mathfrak T}_Y({\cal Y})} 
@>{j_{{\mathfrak T}_Y({\cal Y})}}>> 
(Y/S)_{\rm conv} \\  
\end{CD}
\tag{3.9.1}\label{cd:loctop}
\end{equation*}
is commutative. 
\end{lemm}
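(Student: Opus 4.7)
The plan is to check commutativity of the square (\ref{cd:loctop}) of topoi by showing that the two composite inverse image functors are naturally isomorphic. Since a morphism of topoi is determined by the inverse image functor on the generating family of representable sheaves, it suffices to evaluate $(\iota^{\rm loc}_{\rm conv})^* \circ j^*_{{\mathfrak T}_Y({\cal Y})}$ and $j^*_{{\mathfrak T}_Z({\cal Z})} \circ \iota^*_{\rm conv}$ on an arbitrary sheaf $E \in (Y/S)_{\rm conv}$ at an arbitrary object $T'' = (U'',T'',\iota'',u'') \in {\rm Conv}(Z/S)\vert_{{\mathfrak T}_Z({\cal Z})}$, and to identify the two results canonically in $T''$ and $E$.

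First I would make explicit the two ways in which $T''$ is viewed as a prewidening of $Y/S$. On one hand, composing $u''\col U''\lo Z$ with $\iota_{Y,Z}$ gives a strict morphism $U''\lo Y$ (strictness is preserved because $\iota_{Y,Z}$ is a strict closed immersion), so $T''$ together with its ambient immersion $\iota''$ yields a prewidening $T''_Y$ of $Y/S$; composing the morphism $T'' \lo {\mathfrak T}_Z({\cal Z})$ with $\iota_{{\cal Y},{\cal Z}}\col {\cal Z}\os{\sus}{\lo}{\cal Y}$ lifts this to a prewidening over ${\mathfrak T}_Y({\cal Y})$. On the other hand, \ref{lemm:increp} describes $(\iota^{{\rm loc}}_{\rm conv})^*$ at the site level via the cartesian product $(U\times_{U_2}U_1, T\times_{T_2}T_1,\dots)$, so evaluating on representables one sees that the pullback structure on $T''$ matches $T''_Y$ under the cartesian identification $T''\times_{{\mathfrak T}_Y({\cal Y})}{\mathfrak T}_Z({\cal Z})=T''$, which holds exactly because the diagram (\ref{cd:zyectn}) is cartesian (cf.\ (\ref{eqn:tzyb})).

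Unwinding the definitions, both composites applied to $E$ evaluated at $T''$ reduce to $E(T''_Y)$: for the right-hand composite, $j^*_{{\mathfrak T}_Z({\cal Z})}$ is restriction and $\iota_{\rm conv}^*(E)(T'')$ is by definition $E$ evaluated on $T''$ with its $Y/S$-prewidening structure coming from $\iota_{Y,Z}$; for the left-hand composite, $j^*_{{\mathfrak T}_Y({\cal Y})}$ is again restriction and, by the description of $(\iota^{{\rm loc}}_{\rm conv})^*$ on representables, evaluation at $T''$ amounts to evaluating $E$ at the same $T''_Y$ equipped with its induced morphism to ${\mathfrak T}_Y({\cal Y})$. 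Naturality in $T''$ and in $E$ is then automatic because both functors are (iterated) restrictions.

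The main obstacle is bookkeeping rather than substance: ensuring that the strict-morphism condition built into the definition of a prewidening is respected under both passages, and that the cartesian property (\ref{cd:zyectn}) gives the required unique factorization of any morphism $T''\lo {\mathfrak T}_Y({\cal Y})$ through ${\mathfrak T}_Z({\cal Z})$ so that the two prewidening structures on $T''$ genuinely coincide, not merely coincide up to canonical isomorphism. Once that identification is made precise, the commutativity of (\ref{cd:loctop}) follows formally.
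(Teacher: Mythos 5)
Your strategy of checking the square on inverse images is the right general idea, but the step you treat as definitional is exactly the point that needs proof, and as stated it is not available. You claim that $\iota^*_{\rm conv}(E)(T'')$ ``is by definition'' $E$ evaluated at $T''$ regarded, via composition with $\iota_{Y,Z}$, as a prewidening $T''_Y$ of $Y/S$. First, $T''_Y$ need not be an object of ${\rm Conv}(Y/S)$ at all: in this paper a (pre)widening requires the structural morphism $u\col U\lo Y$ to be \emph{strict}, and $\iota_{Y,Z}\col Z\os{\sus}{\lo}Y$ in (\ref{cd:zyectn}) is only a closed immersion of fine log schemes, not assumed exact, so $\iota_{Y,Z}\circ u''$ can fail to be strict (your remark that strictness ``is preserved because $\iota_{Y,Z}$ is a strict closed immersion'' assumes a hypothesis that is not in the statement). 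Second, even when $T''_Y$ is an object, the inverse image under $\iota_{\rm conv}$ is not computed by precomposition/evaluation: $\iota_{\rm conv}$ is the abstract functoriality morphism (\ref{cd:yypssp}), whose inverse image is determined on representable sheaves and extended by colimits; the evaluation formula you use would amount to asserting that ``composition with $\iota_{Y,Z}$'' is a continuous and cocontinuous site functor inducing $\iota_{\rm conv}$, which is neither the definition nor something you verify. The same unjustified identification is hidden in your treatment of the left-hand composite, where you pass from a description of $(\iota^{\rm loc}_{\rm conv})^*$ ``on representables'' to its value on an arbitrary sheaf $E$.

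The argument that does work (and is the one used for the analogous statement (\ref{lemm:ymjeps}), whose proof is in the text) stays at the level of representables and localizations: for $G$ in $(Y/S)_{\rm conv}$ one has $j^*_{{\mathfrak T}_Y({\cal Y})}(G)=G\times {\mathfrak T}_Y({\cal Y})$, and the essential computation is $(\iota^{\rm loc}_{\rm conv})^*({\mathfrak T}_Y({\cal Y}))={\mathfrak T}_Z({\cal Z})$, which follows from (\ref{lemm:increp}) together with the cartesianness of (\ref{cd:zyectn}); since inverse images commute with products one then gets $(\iota^{\rm loc}_{\rm conv})^*j^*_{{\mathfrak T}_Y({\cal Y})}(G)=\iota^*_{\rm conv}(G)\times{\mathfrak T}_Z({\cal Z})=j^*_{{\mathfrak T}_Z({\cal Z})}\iota^*_{\rm conv}(G)$. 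In other words, the cartesian diagram should be used to identify the pullback of the representable object ${\mathfrak T}_Y({\cal Y})$, not to manufacture an evaluation formula for $\iota^*_{\rm conv}$ on arbitrary sheaves. If you want to salvage your pointwise approach, you would have to restrict to the exact case and prove (not assume) that composition with $\iota_{Y,Z}$ defines a continuous and cocontinuous functor of sites inducing both $\iota_{\rm conv}$ and $\iota^{\rm loc}_{\rm conv}$, which again reduces to the representable computation above.
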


\begin{coro}[{\bf cf.~\cite[(4.5)]{nhw}}]\label{linc}
There exists a canonical 
isomorphism of functors 
\begin{equation*}
L^{\rm UE}_{Y/S} \circ \os{\to}{\iota}_{{\cal Y}, {\cal Z}*}
\lo 
\iota_{{\rm conv}*}\circ L^{\rm UE}_{Z/S} \tag{3.10.1}\label{coh:com}
\end{equation*}
for the coherent ${\cal K}_{\os{\to}{\mathfrak T}_Z({\cal Z})}$-modules.
\end{coro}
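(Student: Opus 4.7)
The plan is to deduce the isomorphism directly by composing the two preceding lemmas, since the functor $L^{\rm UE}$ is by definition a composite of $j_{{\mathfrak T}_\bullet(\cdot)*}$ and $\varphi^*_{\os{\to}{\mathfrak T}_\bullet(\cdot)}$ (see (\ref{eqn:luys})). So I would first unwind the definition on both sides:
\begin{equation*}
L^{\rm UE}_{Y/S}\circ \os{\to}{\iota}_{{\cal Y},{\cal Z}*}
= j_{{\mathfrak T}_Y({\cal Y})*}\circ \varphi^*_{\os{\to}{\mathfrak T}_Y({\cal Y})}\circ \os{\to}{\iota}_{{\cal Y},{\cal Z}*},
\qquad
\iota_{{\rm conv}*}\circ L^{\rm UE}_{Z/S}
= \iota_{{\rm conv}*}\circ j_{{\mathfrak T}_Z({\cal Z})*}\circ \varphi^*_{\os{\to}{\mathfrak T}_Z({\cal Z})}.
\end{equation*}
The task is then to produce a canonical isomorphism between the two right-hand sides, functorial in the input coherent (more precisely isocrystalline, since $\varphi^*$ is defined for isocrystals; for a coherent input one linearizes degree by degree) ${\cal K}_{\os{\to}{\mathfrak T}_Z({\cal Z})}$-module.

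The key step is to invoke Lemma~\ref{lemm:phiit}, which yields a canonical isomorphism
\begin{equation*}
\varphi^*_{\os{\to}{\mathfrak T}_Y({\cal Y})}\circ \os{\to}{\iota}_{{\cal Y},{\cal Z}*}
\os{\sim}{\lo} \iota^{\rm loc}_{{\rm conv}*}\circ \varphi^*_{\os{\to}{\mathfrak T}_Z({\cal Z})}.
\end{equation*}
Applying the direct-image functor $j_{{\mathfrak T}_Y({\cal Y})*}$ on the left of both sides, then using Lemma~\ref{lemm:jilt} (commutativity of the square (\ref{cd:loctop}) of topoi, which on the level of push-forward yields $j_{{\mathfrak T}_Y({\cal Y})*}\circ \iota^{\rm loc}_{{\rm conv}*} = \iota_{{\rm conv}*}\circ j_{{\mathfrak T}_Z({\cal Z})*}$), one obtains
\begin{equation*}
j_{{\mathfrak T}_Y({\cal Y})*}\circ \varphi^*_{\os{\to}{\mathfrak T}_Y({\cal Y})}\circ \os{\to}{\iota}_{{\cal Y},{\cal Z}*}
\os{\sim}{\lo}
j_{{\mathfrak T}_Y({\cal Y})*}\circ \iota^{\rm loc}_{{\rm conv}*}\circ \varphi^*_{\os{\to}{\mathfrak T}_Z({\cal Z})}
= \iota_{{\rm conv}*}\circ j_{{\mathfrak T}_Z({\cal Z})*}\circ \varphi^*_{\os{\to}{\mathfrak T}_Z({\cal Z})},
\end{equation*}
which is exactly the required isomorphism (\ref{coh:com}). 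Naturality is inherited from the naturality of each of the two input isomorphisms.

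The main thing to be careful about is the hypothesis: Lemma~\ref{lemm:phiit} is stated for isocrystals rather than for arbitrary coherent ${\cal K}_{\os{\to}{\mathfrak T}_Z({\cal Z})}$-modules, so I would need to verify that the argument extends to the coherent (not necessarily isocrystalline) case claimed in the corollary. This is not a real obstacle since the $\varphi^*$-construction is defined on each level $n$ (see the description after (\ref{eqn:phyt})) and the cartesian identification (\ref{eqn:tzyb}) given by Lemma~\ref{lemm:bcue} makes the base-change $\os{\to}{\iota}_{{\cal Y},{\cal Z}*}$ behave compatibly with pull-back to the $p_n, p'_n$ of (\ref{cd:ppp}); the computation (\ref{eqn:lueyset}) then gives a level-wise identification. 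The remaining, essentially bookkeeping, point is to confirm that $\os{\to}{\iota}_{{\cal Y},{\cal Z}*}$ sends coherent objects to coherent objects so that the formula makes sense, which follows from the fact that $\iota_{{\cal Y},{\cal Z},n}$ is a closed immersion for each $n$. Thus the corollary reduces cleanly to stringing together Lemmas~\ref{lemm:phiit} and~\ref{lemm:jilt}.
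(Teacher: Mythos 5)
Your proposal is correct and is exactly the intended derivation: the paper recalls this corollary from \cite[(4.5)]{nhw} without proof, and its placement immediately after (\ref{lemm:phiit}) and (\ref{lemm:jilt}) reflects that it follows precisely by unwinding $L^{\rm UE}=j_{{\mathfrak T}_{\bullet}(\cdot)*}\circ\varphi^*_{\os{\to}{\mathfrak T}_{\bullet}(\cdot)}$ and composing those two lemmas, as you do. Your side remark about the coherent versus isocrystalline hypothesis is a reasonable point of care, but it does not change the argument.
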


\section{Vanishing cycle sheaves in log convergent topoi}\label{sec:vflvc}
In this section we generalize results in \cite[\S5]{nhw}, 
which can be applicable for SNCL schemes
(see also \cite[(2.3)]{nh2} for the analogues of \cite[\S5]{nhw}).
Though one may think that the proofs in this section 
are imitations of those in \cite[\S5]{nhw}, 
we give the complete proofs because some proofs in this section are rather different 
from those in [loc.~cit.]. 
Because the formulation in this section is better than that of  [loc.~cit.],  
some of the proofs in this section become simpler than those of  [loc.~cit.]. 
\par 
Let $Y'/S'$, $Y/S$, $g\col Y'\lo Y$ and $S'\lo S$ be as in \S\ref{sec:logcd}. 
In this section we assume that $\os{\circ}{Y}{}'=\os{\circ}{Y}$. 
Let $M$ and $N$ be the log structures of $Y'$ and $Y$, respecitvely. 
We assume that $N\subset M$ and that the morphism $g\col Y'\lo Y$ 
is induced by the inclusion $N\subset M$ and ${\rm id}_{\os{\circ}{Y}}$.  
Denote the morphism $g\col Y'\lo Y$ over $S'_1\lo S_1$ by  
\begin{equation*} 
\eps_{(\os{\circ}{Y},M,N)/S'_1/S_1} \col Y' \lo Y 
\tag{4.0.1}\label{eqn:esfl}
\end{equation*} 
in this section. 
The morphism $\eps_{(\os{\circ}{Y},M,N)/S'_1/S_1}$ 
induces the following morphism of log convergent topoi: 
\begin{equation*}
\eps^{\rm conv}_{(\os{\circ}{Y},M,N)/S'/S} \col (Y'/S')_{\rm conv} \lo (Y/S)_{\rm conv}. 
\tag{4.0.2}\label{eqn:tsfl}
\end{equation*}
Let $f'\col Y'\lo S'$ and $f\col Y\lo S$ be the structural morphisms. 
We call the morphism $\eps_{(\os{\circ}{Y},M,N)/S'_1/S_1}$ 
a {\it morphism of log schemes} 
{\it forgetting the structure} $M\setminus N$ over $S'_1/S_1$ 
and we call the morphism 
$\eps^{\rm conv}_{(\os{\circ}{Y},M,N)/S'/S}$  
the {\it morphism of log convergent topoi forgetting the structure} 
$M\setminus N$ over $S'/S$. 
When the log structure of $S$ is trivial and $N$ is trivial, 
we call $\eps^{\rm conv}_{(\os{\circ}{Y},M,N)/S'/S}$  the 
{\it morphism forgetting the log structure} of $Y'$ over $S'/S$. 
For simplicity of notations, we denote 
$\eps_{(\os{\circ}{Y},M,N)/S'_1/S_1}$ and $\eps^{\rm conv}_{(\os{\circ}{Y},M,N)/S'/S}$ 
simply by $\eps$ and $\eps_{\rm conv}$ before (\ref{defi:eikys}). 
The morphism $\eps_{\rm conv}$ also induces the following morphism  
\begin{equation*}
\eps_{\rm conv} \col ((Y'/S')_{\rm conv}, {\cal K}_{Y'/S'}) \lo 
((Y/S)_{\rm conv}, {\cal K}_{Y/S}) 
\tag{4.0.3}\label{eqn:kmono}
\end{equation*} 
of ringed topoi. 
Let 
$$u^{\rm conv}_{Y'/S'}
\col 
((Y'/S')_{\rm conv},{\cal K}_{Y'/S'}) \lo 
(Y'_{\rm zar}, f'{}^{-1}({\cal K}_{S'}))=(Y_{\rm zar}, f'{}^{-1}({\cal K}_{S'}))$$ 
and 
$$u^{\rm conv}_{Y/S}
\col 
((Y/S)_{\rm conv},{\cal K}_{Y/S}) \lo (Y_{\rm zar}, f^{-1}({\cal K}_{S}))$$  
be the projections in (\ref{eqn:uwksr}). 
Let 
$$v\col (Y'_{\rm zar}, f'{}^{-1}({\cal K}_{S'}))\lo (Y_{\rm zar}, f^{-1}({\cal K}_{S}))$$  
be the natural morphism. 
Then we have the following equality:  
\begin{equation*}
u^{\rm conv}_{Y/S}\circ \eps_{\rm conv}= v\circ u^{\rm conv}_{Y'/S'}. 
\tag{4.0.4}\label{eqn:nbepu}
\end{equation*}
\par
Next we localize $\eps_{\rm conv}$'s in (\ref{eqn:tsfl}) and 
(\ref{eqn:kmono}).  
Let 
$$T'=(U',T',\iota',u')=
((\os{\circ}{U}{}',M_{U'}),(\os{\circ}{T}{}',M_{T'}),\iota',u')$$ 
and 
$$T=(U,T,\iota,u)=((\os{\circ}{U},N_U),(\os{\circ}{T},N_T),\iota,u)$$ 
be an exact prewidening of $Y'/S'$ and an exact prewidening of $Y/S$, respectively. 
Let $T' \lo T$ be a morphism of prewidenings over 
$\eps_{(\os{\circ}{Y},M,N)/S'_1/S_1}$. 
Then we have a morphism 
\begin{equation*}
\eps_{\rm conv} \vert_{T'T} \col 
(Y'/S')_{\rm conv}\vert_{T'} 
\lo 
(Y/S)_{\rm conv}\vert_{T} 
\tag{4.0.5}
\end{equation*}
of topoi and a morphism 
\begin{equation*}
\eps_{\rm conv} \vert_{T'T} \col 
((Y'/S')_{\rm conv}\vert_{T'},{\cal K}_{Y'/S'}\vert_{T'}) 
\lo 
((Y/S)_{\rm conv}\vert_{T}, {\cal K}_{Y/S}\vert_{T}) 
\tag{4.0.6}
\end{equation*}
of ringed topoi.
In \cite[(2.3.0.3)]{nh2} and \cite{nhw} we have assumed that, locally on $\os{\circ}{Y}$, 
there exists a finitely generated commutative monoid $P$ 
with unit element such that $P^{\rm gp}$ has no $p$-torsion 
and that there exists a chart $P \lo N$.  
However we do not assume this in this paper 
because we are given $N_T$ already as a setting. 
Henceforth we assume that the morphism $T'\lo T$ is integral and 
that the underlying morphism $\os{\circ}{T}{}'\lo \os{\circ}{T}$ is an immersion.  

The following is a generalization of \cite[(5.2)]{nhw}:  

\begin{lemm}[{\bf cf.~\cite[(5.2)]{nhw}}]\label{lemm:efstex}
Let the notations be as above. 
Then the functor 
$\eps_{\rm conv} \vert_{T'T*}$ is exact. 
\end{lemm}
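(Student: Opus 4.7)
The plan is to give an explicit formula for $\eps_{\mathrm{conv}}|_{T'T*}$ in terms of an integral log fiber product and then deduce exactness by the classical argument used for closed immersions at the Zariski level. First, for every enlargement $\tilde{T}=(\tilde{U},\tilde{T},\tilde{\iota},\tilde{u})$ of $Y/S$ over $T$, I would form the integral log fiber product $\tilde{T}':=\tilde{T}\times_T T'$ in the category of fine log formal schemes; this is well-defined because $T'\to T$ is integral, and its underlying formal scheme $\os{\circ}{\tilde{T}}\times_{\os{\circ}{T}}\os{\circ}{T}'$ sits as an immersion inside $\os{\circ}{\tilde{T}}$ since immersions are stable under base change. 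Passing to the system of universal enlargements $\{\mathfrak{T}_{\tilde{U}',n}(\tilde{T}')\}_{n\geq 1}$ via (\ref{prop:exue}) and using (\ref{lemm:bcue}), one obtains an inductive system of enlargements of $Y'/S'$ over $T'$ which, by the same reasoning as in (\ref{lemm:increp}), represents the inverse image of $\eps_{\mathrm{conv}}|_{T'T}$ on the representable sheaf $h_{\tilde{T}}$.

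From this representability I would derive the concrete formula
\[
(\eps_{\mathrm{conv}}|_{T'T*}F)(\tilde{T}) = \varprojlim_{n} F(\mathfrak{T}_{\tilde{U}',n}(\tilde{T}'))
\]
for every abelian sheaf $F$ on $(Y'/S')_{\mathrm{conv}}|_{T'}$ and every enlargement $\tilde{T}$ of $Y/S$ over $T$. Left exactness is then immediate, since evaluation at each object and inverse limits are both left exact. For right exactness, which carries the substantive content of the lemma, I would argue as in the classical proof that pushforward under a closed immersion of ringed spaces is exact: given a surjection $F_2\twoheadrightarrow F_3$ of abelian sheaves and a local section of $F_3$ at $\mathfrak{T}_{\tilde{U}',n}(\tilde{T}')$, one covers the latter by Zariski opens over which the section lifts to $F_2$, and extends each such open to a Zariski open of $\tilde{T}$ using the immersion hypothesis together with (\ref{lemm:bcue}) in the open (hence flat) case. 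This produces a matching Zariski covering of $\tilde{T}$ over which the section of $\eps_{\mathrm{conv}}|_{T'T*}F_3$ lifts to $\eps_{\mathrm{conv}}|_{T'T*}F_2$, which after sheafification gives surjectivity at the sheaf level.

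The main obstacle will be the bookkeeping in this last step: the open coverings produced on the successive $\mathfrak{T}_{\tilde{U}',n}(\tilde{T}')$ must be extended in a way compatible with $n$, so that passing to $\varprojlim_n$ does not destroy surjectivity. The decisive input is that the transition maps $\mathfrak{T}_{\tilde{U}',n+1}(\tilde{T}')\to\mathfrak{T}_{\tilde{U}',n}(\tilde{T}')$ form a Mittag--Leffler inductive system of immersions pulled back from $\tilde{T}$, which allows the covering-extension argument to be carried out compatibly with $\varprojlim_n$. Once this is in place, the proof proceeds in close parallel with (\ref{prop:toi}) and \cite[(5.2)]{nhw}, the only novelty being the weakening of the identity condition $\os{\circ}{T}{}'=\os{\circ}{T}$ to an immersion, which is precisely where the integrality of $T'\to T$ is needed to ensure that the fiber product stays in the fine log category.
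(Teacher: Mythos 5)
Your first half matches the paper's own route: you form the fine log fiber product $\tilde{T}'=\tilde{T}\times_{T}T'$, use integrality of $T'\lo T$ to guarantee that its underlying formal scheme is $\os{\circ}{\tilde{T}}\times_{\os{\circ}{T}}\os{\circ}{T}{}'$, hence an immersion into $\os{\circ}{\tilde{T}}$, and identify the inverse image of the representable sheaf $h_{\tilde{T}}$ with this fiber product. This is precisely what the paper establishes in (\ref{eqn:fibes})--(\ref{eqn:fibu}) (where the extra content is that the log structure of the fiber product is just the pullback of $M_{T'}$, which is what makes the resulting quadruple a prewidening of $Y'/S'$). The divergence, and the genuine gap, lies in how you pass from there to exactness. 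The paper stops at the fiber product: by (\ref{eqn:ephomep}) the direct image is computed by evaluation at the single object $T'_M$ (your $\tilde{T}'$), whose Zariski space is a locally closed subspace of $\os{\circ}{\tilde{T}}$; exactness is then the classical immersion argument, since every Zariski covering of $\tilde{T}'$ is induced from a Zariski covering of (an open part of) $\tilde{T}$, so local liftability for a surjection $F_2\lo F_3$ evaluated at $\tilde{T}'$ becomes local liftability of the direct images with respect to coverings of $\tilde{T}$.

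You instead insert the universal enlargements and write the pushforward as $\varprojlim_n F({\mathfrak T}_{\tilde{U}',n}(\tilde{T}'))$, and your right-exactness argument for that formula does not work as stated. The universal-enlargement tower is an inductive system ${\mathfrak T}_{\tilde{U}',n}(\tilde{T}')\lo {\mathfrak T}_{\tilde{U}',n+1}(\tilde{T}')$ of affine, dilatation-type morphisms over $\tilde{T}'$ (adjoin $({\rm Ker}({\cal O}\lo{\cal O}_U))^n/\pi$ and kill $\pi$-torsion); these are not immersions and are not pulled back from $\tilde{T}$, so for $n\geq 1$ the Zariski opens of ${\mathfrak T}_{\tilde{U}',n}(\tilde{T}')$ are not restrictions of opens of $\tilde{T}$, and the covering-extension step fails beyond the bottom layer $\tilde{T}'$ itself. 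Moreover, even if compatible coverings could be arranged, surjectivity does not pass through $\varprojlim_n$ by an appeal to Mittag--Leffler: the relevant transition maps $F({\mathfrak T}_{\tilde{U}',n+1}(\tilde{T}'))\lo F({\mathfrak T}_{\tilde{U}',n}(\tilde{T}'))$ have no reason to be surjective. The repair is to do what the paper does: show that the inverse image of the representable object is the sheaf of the single prewidening $\tilde{T}'$ itself, so that the direct image is evaluation there, and run the immersion argument at that level, never introducing the limit over universal enlargements.
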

\begin{proof}
Let $\phi_N \col T'':=(U'',T'',\iota'',u'')  \lo T=(U,T,\iota,u)$ 
($U''=(\os{\circ}{U}{}'',N_{U''})$, $T''=(\os{\circ}{T}{}'',N_{T''})$) 
be a morphism of widenings of $Y/S$ ($N_{U''}$ and $N_{T''}$ are, by definition, 
the log structures of $U''$ 
and $T''$, respectively). 
Consider the following quadruple 
$$(U''',T''',\iota''',u'''):=(U'\times_{U}U'',T'\times_TT'',\iota'\times_U\iota'',u'\times_Uu'').$$ 
Here the fiber products are considered in the category of fine log schemes. 
Because the morphism $T'\lo T$ is integral, $T'''$ and $U'''$ are integral. 
Let 
$$\phi_U \col (U''')^{\circ} \lo \os{\circ}{U}{}'$$ 
and 
$$\phi \col (T''')^{\circ} \lo \os{\circ}{T}{}'$$ 
be the underlying morphisms of 
(formal) schemes obtained by $\phi_N$. 
Set $M_{U''}:=\phi_U^*(M_{U'})$, $M_{T''}:=\phi^*(M_{T'})$,  $U'_M:=((U''')^{\circ},M_{U''})$ and 
$T'_M:=((T''')^{\circ},M_{T''})$. 
Let $\iota'_M \col {U}'_M \os{\sus}{\lo} T'_M$ 
and $u'_M \col U'_M \lo Y'$ 
be the natural morphisms. 
Let 
$$\phi_M \col (U'_M,T'_M,\iota'_M,u'_M)\lo (U',T',\iota',u')$$ 
be the natural morphism. 
Then $(U'_M,T'_M,\iota'_M,u'_M;\phi'_M)$ is an object of $(Y'/S')_{\rm conv} \vert_{T'}$.  
By the same proof as that of \cite[(2.3.3)]{nh2}, we see that 
\begin{equation*}
T'''=T'_M. 
\tag{4.1.1}\label{eqn:fibes}
\end{equation*}  
We also see that 
\begin{equation*}
U'''=U'_M. 
\tag{4.1.2}\label{eqn:fibu}
\end{equation*}
By the formula (\ref{eqn:fibes}) and 
(\ref{eqn:fibu}), $(\eps_{\rm conv} \vert_{T'T})^*(U'',T'',\iota'',u'';\phi_N)$ 
is represented by 
$(U'_M,T'_M,\iota'_M,u'_M;\phi_M)$.  
Therefore, 
for an object $E'$ in $(Y'/S')_{\rm conv} \vert_{T'}$, we obtain the following equalities 
\begin{align*}
{} & \Gamma((U'',T'',\iota'',u'';\phi_N), (\eps_{\rm conv} \vert_{T'T})_*(E')) 
\tag{4.1.3}\label{eqn:ephomep} \\ 
{} & =
{\rm Hom}_{(Y'/S')_{\rm conv}\vert_{T'}}
((\eps_{\rm conv} \vert_{T'T})^*(U'',T'',\iota'',u'';\phi_N), E') \\
{} & =E'(U'_M,T'_M,\iota',u'_M;\phi_M).
\end{align*}
Because $\os{\circ}{T}{}'\lo \os{\circ}{T}$ is an immersion, 
the projection $(T'_M)^{\circ}\lo \os{\circ}{T''}$ is an immersion. 
Hence, by the formula (\ref{eqn:ephomep}),  
we see that the functor $\eps_{\rm conv} \vert_{T'T*}$ is exact. 
\end{proof}

\begin{lemm}[{\bf cf.~\cite[(5.3)]{nhw}}]\label{lemm:ymjeps}
Let the notations be as above. Then the following diagram of 
topoi is commutative$:$
\begin{equation*}
\begin{CD}
(Y'/S')_{\rm conv} \vert_{T'} @>{j_{T'}}>> (Y'/S')_{\rm conv} \\ 
@V{\eps_{\rm conv} \vert_{T'T}}VV @VV{\eps_{\rm conv}}V  \\
(Y/S)_{\rm conv} \vert_{T} 
@>{j_{T}}>> (Y/S)_{\rm conv}.  \\ 
\end{CD}
\tag{4.2.1}\label{cd:ymnj}
\end{equation*}
The obvious analogue of {\rm (\ref{cd:ymnj})} for ringed topoi 
$((Y'/S')_{\rm conv},{\cal K}_{Y'/S'})$ and $((Y/S)_{\rm conv},{\cal K}_{Y/S})$ 
also holds.
\end{lemm}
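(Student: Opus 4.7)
The plan is to verify the commutativity of the square (4.2.1) directly by computing the two composed morphisms of topoi on the level of direct images and checking they agree on sections over arbitrary test objects. Since the ringed-topoi version only differs by keeping track of the isostructure sheaves, and since ${\cal K}_{Y/S}\vert_T$ and ${\cal K}_{Y'/S'}\vert_{T'}$ are defined as restrictions of ${\cal K}_{Y/S}$ and ${\cal K}_{Y'/S'}$, the ringed version will follow formally from the unringed one.

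First I would unwind the definitions. The localization morphism $j_T$ has the property that, for any sheaf $F$ on $(Y/S)_{\rm conv}\vert_T$ and any $(V,W,\iota,v) \in {\rm Ob}({\rm Conv}(Y/S))$, the sections of $j_{T*}(F)$ on $(V,W,\iota,v)$ are given by
\[
j_{T*}(F)(V,W,\iota,v) = \prod_{\phi_N \colon (V,W,\iota,v) \to T} F((V,W,\iota,v);\phi_N),
\]
and similarly for $j_{T'}$ with $T$ replaced by $T'$ and working over $Y'/S'$. On the other hand, $\eps_{\rm conv *}$ can be described by the pullback of log structure construction exactly as in the proof of (\ref{lemm:efstex}): for a sheaf $F'$ on $(Y'/S')_{\rm conv}$,
\[
(\eps_{\rm conv *}F')(V,W,\iota,v) = F'(V_M,W_M,\iota_M,v_M),
\]
where $(V_M,W_M,\iota_M,v_M)$ is obtained by enriching the log structures via pullback along the morphism $\eps$ (using (\ref{eqn:fibes}) and (\ref{eqn:fibu})). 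The analogous formula holds for $\eps_{\rm conv}\vert_{T'T*}$.

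Next, I would compute both compositions on an arbitrary sheaf $E$ on $(Y'/S')_{\rm conv}\vert_{T'}$ and evaluate at $(V,W,\iota,v)$. For $\eps_{\rm conv *}\circ j_{T'*}(E)$ the formulas above give the product over $\phi'_M\colon (V_M,W_M,\iota_M,v_M)\to T'$ in ${\rm Conv}(Y'/S')$ of $E$ evaluated at the corresponding localized object. For $j_{T*}\circ(\eps_{\rm conv}\vert_{T'T})_*(E)$ the formulas give the product over $\phi_N\colon (V,W,\iota,v)\to T$ in ${\rm Conv}(Y/S)$ of $E$ evaluated at the pulled-back object over $T'$. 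By the universal property of the fine log pullback of log structures (together with the assumption that $T'\to T$ is integral so fiber products in fine log schemes behave well, and the assumption that $\os{\circ}{T}{}'\to\os{\circ}{T}$ is an immersion so that $(T'''/T''')^{\circ}\to\os{\circ}{T}{}''$ is again an immersion), the map
\[
\{\phi_N\colon (V,W,\iota,v)\to T\} \;\longrightarrow\; \{\phi'_M\colon (V_M,W_M,\iota_M,v_M)\to T'\}
\]
sending $\phi_N$ to the induced morphism is a natural bijection, which identifies the two indexing sets and the two sheaf-values termwise. This yields the required isomorphism, functorially in $E$.

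The main technical point will be the careful comparison of the two sets of morphisms parametrizing the products, i.e.\ checking that enriching log structure and then mapping to $T'$ is the same data as mapping to $T$ in the original category. This is where the integrality of $T'\to T$ and the identities (\ref{eqn:fibes})--(\ref{eqn:fibu}) from the proof of (\ref{lemm:efstex}) do the real work; once these are invoked the rest of the argument is formal. The ringed-topoi enhancement then follows automatically since the comparison at each object respects the ring structures coming from ${\cal O}_{W}\otimes_{\mab Z}{\mab Q}$, which are unchanged by the log structure enrichment.
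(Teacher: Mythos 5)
There is a genuine gap: both explicit formulas on which your computation rests are incorrect. First, for localization at (the sheaf attached to) a prewidening $T$, the direct image is given by $j_{T*}(F)(V,W,\iota,v)=F\bigl((V,W,\iota,v)\times T \to T\bigr)$, i.e.\ the value on the product sheaf regarded over $T$; it is \emph{not} a product indexed by the morphisms $(V,W,\iota,v)\to T$, because a section of $h_{(V,W)}\times h_T$ over a test object $V_1$ is a pair of morphisms $V_1\to (V,W)$, $V_1\to T$, and the second one need not factor through $(V,W)$. Second, the objectwise description you use for the \emph{global} direct image, $(\eps_{{\rm conv}*}F')(V,W,\iota,v)=F'(V_M,W_M,\iota_M,v_M)$, is not available: the enrichment $(V_M,W_M)$ in the proof of (4.1) is constructed by pulling back $M_{T'}$ along the structure morphism $\phi$ to $T'$ (resp.\ $T$), so it only makes sense for the \emph{localized} morphism $\eps_{\rm conv}\vert_{T'T}$ (this is exactly formula (4.1.3)); for the unlocalized $\eps_{\rm conv}$ the pullback of a representable object is in general not representable, since there is no canonical $M$-enrichment of an arbitrary enlargement of $Y/S$. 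Consequently the two indexing sets you want to put in bijection never appear, and the asserted bijection is not the content of the lemma.

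The statement is proved much more economically on the side of inverse images, which is the route the paper takes: by the proof of (4.1) one has $(\eps_{\rm conv}\vert_{T'T})^*(T)=T'$, and since an inverse image functor is left exact it commutes with products, so for any object $G$ of $(Y/S)_{\rm conv}$ one gets $(\eps_{\rm conv}\vert_{T'T})^*j_T^*(G)=(\eps_{\rm conv}\vert_{T'T})^*(G\times T)=\eps_{\rm conv}^*(G)\times T'=j_{T'}^*\eps_{\rm conv}^*(G)$. This identity of inverse image functors gives the commutativity of (4.2.1) (the corresponding statement for direct images follows by adjunction), and the ringed version is immediate because the isostructure sheaves are untouched by the log enrichment. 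If you want to keep a direct-image argument, you must replace your product formulas by the correct ones and you will essentially be redoing this pullback computation in adjoint form.
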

\begin{proof} 
Let $G$ be an object of $(Y/S)_{\rm conv}$. 
By the proof of (\ref{lemm:efstex}),  
$(\eps_{\rm conv} \vert_{T'T})^* \allowbreak (T) \allowbreak = T'$.
Hence $(\eps_{\rm conv} \vert_{T'T})^*j_{T}^*(G)
=(\eps_{\rm conv} \vert_{T'T})^*(G\times T)=
\eps^*_{\rm conv}(G)\times T'=j_{T'}^*\eps^*_{\rm conv}(G)$. 
Hence the former statement follows. 
The latter statement immediately follows from this.
\end{proof}


\begin{lemm}[{\bf \cite[(5.4)]{nhw}}]\label{lemm:epsex}
Assume that $\os{\circ}{T}$ is affine. 
Let $E'$ be an admissible sheaf of 
${\cal K}_{Y'/S'}\vert_{T'}$-modules in $(Y'/S')_{\rm conv}\vert_{T'}$. 
Then the canonical morphism 
\begin{equation*} 
\eps_{{\rm conv}*}j_{T'*}(E') 
\lo R\eps_{{\rm conv}*}j_{T'*}(E') 
\tag{4.3.1}\label{eqn:ecjme} 
\end{equation*} 
is an isomorphism in the derived category 
$D^+({\cal K}_{Y/S})$. 
\end{lemm}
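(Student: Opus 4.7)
The plan is to reduce the question to the vanishing of higher Zariski cohomology of coherent $\mathcal{K}$-modules on affine $p$-adic formal schemes, which follows from Serre's affine vanishing after tensoring with ${\mab Q}$.

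The first step is to use the commutativity of the diagram (\ref{cd:ymnj}) in (\ref{lemm:ymjeps}) to factor the right-hand side as
\[
R\eps_{{\rm conv}*} j_{T'*}(E') \;=\; Rj_{T*}\, R(\eps_{\rm conv}\vert_{T'T})_*(E').
\]
Applying the exactness of $\eps_{{\rm conv}}\vert_{T'T*}$ from (\ref{lemm:efstex}), the inner higher direct image collapses to $(\eps_{{\rm conv}}\vert_{T'T})_*(E')=:G$. Thus the problem reduces to showing that the natural morphism $j_{T*}G \lo Rj_{T*}G$ is an isomorphism in $D^+({\cal K}_{Y/S})$.

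Next, I would analyze $R^q j_{T*}G$ for $q\geq 1$ by evaluating on an enlargement $T''=(U'',T'',\iota'',u'')$ of $Y/S$: this is the sheafification of the presheaf sending $T''$ to the cohomology of the restriction of $G$ to the slice site of $(Y/S)_{\rm conv}\vert_T$ at the appropriate fiber product. The admissibility of $E'$ gives a system of coherent ${\cal K}$-modules on the universal enlargements $\{{\mathfrak T}_{U',n}(T')\}_{n=1}^{\infty}$. By (\ref{lemm:bcue}), the universal enlargements of the fiber product in the convergent site are obtained from those of $T'$ by (integral) base change, so $G$ evaluated on these objects is the direct image along the closed immersion coming from the immersion $\os{\circ}{T}{}'\lo \os{\circ}{T}$ of a coherent $\mathcal{K}$-module; in particular, it remains coherent.

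Since $\os{\circ}{T}$ is affine, localizing on $T''$ we may assume every $p$-adic formal scheme appearing in the tower of universal enlargements over $T$ is affine. Hence Serre's theorem gives $H^q=0$ for $q\geq 1$ of coherent ${\cal O}$-modules on each stage, and after $\otimes_{{\mab Z}}{\mab Q}$ the same holds for coherent ${\cal K}$-modules. A Mittag--Leffler argument on the inverse system of universal enlargements (their transition maps are compatible with the admissible system) then implies that the limit cohomology also vanishes in positive degrees, giving $R^q j_{T*}G=0$ for $q\geq 1$. The main obstacle will be the bookkeeping in this last step: verifying precisely that $G=(\eps_{\rm conv}\vert_{T'T})_*(E')$ inherits a coherent-admissible description on the base-changed universal enlargements so that Serre vanishing applies stage-by-stage, and that the resulting Mittag--Leffler system behaves well under passage to the projective limit. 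The exactness of $\eps_{\rm conv}\vert_{T'T*}$ (which rests on the integrality of $T'\lo T$ and the immersion hypothesis on $\os{\circ}{T}{}'\lo\os{\circ}{T}$) and the base-change formula (\ref{lemm:bcue}) will be the key structural inputs.
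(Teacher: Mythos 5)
Your first step contains a genuine gap. From the commutativity $\eps_{\rm conv}\circ j_{T'}=j_T\circ(\eps_{\rm conv}\vert_{T'T})$ of (\ref{cd:ymnj}) one only gets
\begin{equation*}
R\eps_{{\rm conv}*}\,Rj_{T'*}(E')\;\cong\;Rj_{T*}\,R(\eps_{\rm conv}\vert_{T'T})_*(E')\;\cong\;Rj_{T*}\bigl((\eps_{\rm conv}\vert_{T'T})_*(E')\bigr),
\end{equation*}
the last isomorphism by (\ref{lemm:efstex}). The lemma, however, concerns $R\eps_{{\rm conv}*}$ applied to the \emph{underived} sheaf $j_{T'*}(E')$. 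To identify this with $Rj_{T*}\bigl((\eps_{\rm conv}\vert_{T'T})_*(E')\bigr)$ you would also need $j_{T'*}(E')\os{\sim}{\lo}Rj_{T'*}(E')$, i.e.\ the acyclicity of the admissible module $E'$ for the localization direct image $j_{T'*}$. This is not automatic ($j_{T'*}$ is only left exact), it is a statement of exactly the same nature and difficulty as the lemma itself, and your proposal never addresses it; moreover it would have to be checked at arbitrary enlargements of $Y'/S'$, where the hypothesis that $\os{\circ}{T}$ is affine gives no direct control, since $\os{\circ}{T}{}'$ is only an immersion into $\os{\circ}{T}$ and need not be affine. Consequently, even a complete proof of $R^qj_{T*}G=0$ $(q\geq 1)$ for $G=(\eps_{\rm conv}\vert_{T'T})_*(E')$ does not yield $R^q\eps_{{\rm conv}*}(j_{T'*}(E'))=0$, which is what the lemma asserts.

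The later ingredients you list — coherence of the restriction of $G$ to the base-changed towers of universal enlargements via (\ref{lemm:bcue}), vanishing of higher coherent cohomology on affine formal schemes after $\otimes_{\mab Z}{\mab Q}$, and a Mittag--Leffler/$R^1\vpl=0$ argument (the log version of Ogus's lemma, which the paper quotes elsewhere) — are indeed the correct inputs, and they are essentially what the proof that the paper cites (\cite[(5.4.1)]{nhw}, used together with (\ref{lemm:efstex}) and (\ref{lemm:ymjeps})) relies on. But they must be applied \emph{directly} to $\eps_{\rm conv}$: one computes $R^q\eps_{{\rm conv}*}(j_{T'*}(E'))$ by evaluating on enlargements, using the explicit representability of $(\eps_{\rm conv}\vert_{T'T})^*$ and the formula (\ref{eqn:ephomep}) established in the proof of (\ref{lemm:efstex}), so that the sections of $j_{T'*}(E')$ over the relevant objects are sections of $E'$ over objects lying over $\os{\circ}{T}$, and then the tower/Serre/Mittag--Leffler argument is invoked once, for $\eps_{\rm conv}$ itself. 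Your route instead needs that argument twice (silently for $j_{T'*}$, explicitly for $j_{T*}$) and justifies it only once, so as written the proof does not go through.
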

\begin{proof}
The proof is the same as that of 
\cite[(5.4.1)]{nhw} by using 
(\ref{lemm:efstex}) and 
(\ref{lemm:ymjeps}). 
\end{proof}

\begin{coro}[{\bf cf.~{\cite[(5.5)]{nhw}}}]\label{coro:epnr}
Let the notations be as above. 
Let ${\cal Y}':=(\os{\circ}{\cal Y},{\cal M})$ be a fine log $S'$-formal scheme. 
Let $\iota \col Y' \os{\subset}{\lo} {\cal Y}'$ be an immersion over $S'$.  
Let 
$\iota\col Y\os{\sus}{\lo} {\cal Y}$ be an immersion over $S$ 
fitting into the following commutative diagram 
of log formal schemes over $S'\lo S:$ 
\begin{equation*}
\begin{CD}
Y' @>{\iota'}>> {\cal Y}'  \\ 
@V{\eps^{\rm conv}_{(\os{\circ}{Y},M,N)/S'/S}}VV 
@VV{}V  \\
Y @>{\iota}>> {\cal Y}.   \\ 
\end{CD}
\tag{4.4.1}\label{cd:cchgmn}
\end{equation*}
Set ${\mathfrak T}:=(Y,{\cal Y},\iota,{\rm id}_Y)$ and 
${\mathfrak T}':=(Y',{\cal Y}',\iota',{\rm id}_{Y'})$. 
Let $\{{\mathfrak T}_n\}_{n=1}^{\infty}$ and $\{{\mathfrak T}'_n\}_{n=1}^{\infty}$ be 
the systems of the universal enlargements 
of $\iota$ and $\iota'$, respectively, 
with the following natural commutative diagram 
for $n\in {\mab Z}_{\geq 1}:$ 
\begin{equation*}
\begin{CD}
{\mathfrak T}'_n @>{g'_n}>> {\cal Y}'\\ 
@VVV @VV{}V  \\
{\mathfrak T}_n 
@>{g_n}>> {\cal Y}, \\ 
\end{CD}
\tag{4.4.2}\label{cd:zmnhg}
\end{equation*}
where $g'_n$ and $g_n$ are the natural morphisms. 
Assume that the underlying morphism 
$\os{\circ}{\mathfrak T}{}'_n\lo \os{\circ}{\mathfrak T}_n$
of the left vertical morphism ${\mathfrak T}'_n\lo {\mathfrak T}_n$ 
in {\rm (\ref{cd:zmnhg})} is an immersion. 
$($Then this morphism becomes a closed immersion.$)$ 
Let $L^{\rm UE}_{Y'/S'}$ 
be the log convergent linearization functor  for isocrystals of 
${\cal K}_{\os{\to}{\mathfrak T}{}'}$-modules  with respect to $\iota'$.  
Let ${\cal E}'$ be an isocrystal of 
${\cal K}_{\os{\to}{\mathfrak T}{}'}$-modules. 
Then the canonical morphism 
\begin{equation*}
\eps_{{\rm conv}*}L^{\rm UE}_{Y'/S'}({\cal E}') 
\lo R\eps_{{\rm conv}*}L^{\rm UE}_{Y'/S'}({\cal E}') 
\tag{4.4.3}\label{eqn:epsre}
\end{equation*}
is an isomorphism in $D^+({\cal K}_{Y/S})$. 
\end{coro}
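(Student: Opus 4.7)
The plan is to reduce to an application of \ref{lemm:epsex} carried out level by level on the system of universal enlargements $\{{\mathfrak T}'_n\}$ and $\{{\mathfrak T}_n\}$. Since the statement is Zariski-local on $\os{\circ}{\cal Y}$, I would first shrink to assume $\os{\circ}{\cal Y}$ (and hence $\os{\circ}{\cal Y}'$) is affine. By \eqref{eqn:luys},
$$L^{\rm UE}_{Y'/S'}({\cal E}') = j_{{\mathfrak T}'*}\varphi^*_{\os{\to}{\mathfrak T}'}({\cal E}'),$$
and the sheaf $F := \varphi^*_{\os{\to}{\mathfrak T}'}({\cal E}')$ is admissible, being an isocrystal of ${\cal K}_{Y'/S'}\vert_{{\mathfrak T}'}$-modules.

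The most direct route would be to apply \ref{lemm:epsex} straight to the pair $({\mathfrak T}',{\mathfrak T})$; this requires integrality and an immersion property of the underlying formal schemes, but the corollary's assumption supplies these only at each universal enlargement ${\mathfrak T}'_n \os{\sus}{\lo} {\mathfrak T}_n$, not on ${\cal Y}' \to {\cal Y}$ directly. I would therefore rerun the argument of \ref{lemm:efstex} at each finite level $n$: for a widening $T''$ of $Y/S$ equipped with a morphism to ${\mathfrak T}_n$, the pullback under $\eps_{\rm conv}\vert_{{\mathfrak T}'_n{\mathfrak T}_n}$ is represented by the fiber product in the category of fine log formal schemes, whose underlying formal scheme pulls back the immersion $\os{\circ}{\mathfrak T}'_n\os{\sus}{\lo}\os{\circ}{\mathfrak T}_n$ (using \ref{lemm:bcue} for compatibility with base change), and integrality is built in because the log structures on the ${\mathfrak T}'_n$'s and ${\mathfrak T}_n$'s are pulled back from ${\cal Y}'$ and ${\cal Y}$ through \eqref{cd:cchgmn}. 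This yields exactness of $\eps_{{\rm conv}*}\vert_{{\mathfrak T}'_n{\mathfrak T}_n}$, and combined with \ref{lemm:ymjeps} applied to $({\mathfrak T}'_n,{\mathfrak T}_n)$ produces an isomorphism $\eps_{{\rm conv}*}j_{{\mathfrak T}'_n*}(F_n) \overset{\sim}{\lo} R\eps_{{\rm conv}*}j_{{\mathfrak T}'_n*}(F_n)$ in $D^+({\cal K}_{Y/S})$, where $F_n$ is the restriction of $F$ to the enlargement ${\mathfrak T}'_n$.

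To promote these finite-level isomorphisms to the statement for $L^{\rm UE}_{Y'/S'}({\cal E}')$ itself, I would use the explicit formula \eqref{eqn:lueyset}: the value of the linearization at any enlargement is expressed as a projective limit through universal enlargements, which factor every test morphism from $T$ into ${\mathfrak T}'$. The admissibility of $F$ (inherited from the isocrystal structure of ${\cal E}'$) ensures the transition maps in this system are surjective, so Mittag--Leffler holds and $R\eps_{{\rm conv}*}$ commutes with the limit without introducing $\lim^1$ contributions.

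The main obstacle will be precisely this last step: formalizing the compatibility between $R\eps_{{\rm conv}*}$ and the inverse limit defining $L^{\rm UE}_{Y'/S'}({\cal E}')$, i.e.\ verifying that the finite-level isomorphisms at ${\mathfrak T}'_n$ glue into a global isomorphism in $D^+({\cal K}_{Y/S})$. The admissibility of $F$ together with the base-change compatibility \ref{lemm:bcue} for universal enlargements should make this automatic, but bookkeeping the inverse system cleanly is the delicate technical point.
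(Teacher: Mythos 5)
Your setup is on target: localizing to the affine case, writing $L^{\rm UE}_{Y'/S'}({\cal E}')=j_{{\mathfrak T}'*}\varphi^*_{\os{\to}{\mathfrak T}{}'}({\cal E}')$ as in (\ref{eqn:luys}), and recognizing that the admissibility of $\varphi^*_{\os{\to}{\mathfrak T}{}'}({\cal E}')$ is the input for (\ref{lemm:epsex}) is exactly what the paper intends. But you then decline to apply (\ref{lemm:epsex}) to the pair $({\mathfrak T}{}',{\mathfrak T})$ itself, and this is where you diverge: the paper's proof is precisely that one-step application. Note that by hypothesis ${\cal Y}'=(\os{\circ}{\cal Y},{\cal M})$ has the \emph{same} underlying formal scheme as ${\cal Y}$, so the underlying morphism of ${\cal Y}'\lo {\cal Y}$ is the identity of $\os{\circ}{\cal Y}$ and the immersion condition on underlying (formal) schemes needed for (\ref{lemm:efstex})--(\ref{lemm:epsex}) is available for $({\mathfrak T}{}',{\mathfrak T})$ directly; the level-$n$ hypothesis that $\os{\circ}{\mathfrak T}{}'_n\lo \os{\circ}{\mathfrak T}_n$ is an immersion is consumed by the admissibility/isocrystallinity bookkeeping (which is tested on all quasi-prewidenings over ${\mathfrak T}{}'$, in particular on the universal enlargements), not by any limit over $n$. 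So no passage to the limit is needed at all.

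The limit step you propose is, as you suspected, a genuine gap, and the justification offered does not close it. Admissibility means surjectivity of ${\cal K}_{T_n}\otimes_{\phi^{-1}_{n+1}({\cal K}_{T_{n+1}})}\phi^{-1}_{n+1}({\cal F}_{n+1})\lo {\cal F}_n$; this is \emph{not} surjectivity of the transition maps of the inverse system $\{j_{{\mathfrak T}{}'_n*}(F_n)\}_{n}$ of sheaves in $(Y/S)_{\rm conv}$, whose value at a test object $T''$ is the restriction map $F(T''\times{\mathfrak T}{}'_{n+1})\lo F(T''\times{\mathfrak T}{}'_n)$, which has no reason to be surjective; so Mittag--Leffler cannot be invoked as stated. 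Moreover, even granting vanishing of ${\varprojlim}^1$ on sections, you would still have to interchange $R\eps_{{\rm conv}*}$ with $\vpl_n$ at the level of sheaves (an $R\vpl$ comparison), and knowing that each $j_{{\mathfrak T}{}'_n*}(F_n)$ is $\eps_{{\rm conv}*}$-acyclic does not by itself imply that their inverse limit $j_{{\mathfrak T}{}'*}(F)$ is. Either supply that $R\vpl$ argument in full (nontrivial, and not in the paper), or do what the paper does: apply (\ref{lemm:efstex}) and (\ref{lemm:ymjeps}) once to the single pair $({\mathfrak T}{}',{\mathfrak T})$ and conclude by (\ref{lemm:epsex}).
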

\begin{proof} 
(\ref{coro:epnr}) immediately follows from (\ref{lemm:epsex}). 
\end{proof} 


\begin{lemm}[{\bf cf.~{\cite[(5.6)]{nhw}}}]\label{lemm:itmne}
Let the notations and the assumption be as in {\rm (\ref{coro:epnr})}. 
Let $L^{\rm UE}_{Y/S}$ be the log convergent linearization functor for isocrystals of 
${\cal K}_{\os{\to}{\mathfrak T}}$-modules. 
Then there exist natural isomorphisms 
\begin{equation*}
L^{\rm UE}_{Y/S} \os{\sim}{\lo} 
\eps_{\rm conv*}
L^{\rm UE}_{Y'/S'} 
\tag{4.5.1}\label{eqn:lnelmn}
\end{equation*} 
of functors for isocrystals of ${\cal K}_{\os{\to}{\mathfrak T}{}'}$-modules. 
Here we consider isocrystals of ${\cal K}_{\os{\to}{\mathfrak T}{}'}$-modules
as isocrystals of ${\cal K}_{\os{\to}{\mathfrak T}}$-modules by using the closed immersions 
${\mathfrak T}'_n\os{\sus}{\lo} {\mathfrak T}_n$'s.   
\end{lemm}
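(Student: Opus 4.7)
The plan is to compute both sides of (\ref{eqn:lnelmn}) on an arbitrary enlargement $T$ of $Y/S$, using formula (\ref{eqn:lueyset}) together with the explicit description of the pullback functor $\eps_{\rm conv}^*$ extracted from the proof of (\ref{lemm:efstex}), and then to identify them via base change for coherent sheaves along the closed immersions $\iota_{{\mathfrak T},n}\col {\mathfrak T}'_n \os{\sus}{\lo} {\mathfrak T}_n$.

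First, fix an enlargement $T = (U,T,\iota_T,u)$ of $Y/S$ equipped with a morphism $T \lo {\mathfrak T}$. Let $\{{\mathfrak T}^T_n\}_n$ be the system of universal enlargements of the induced immersion $U \os{\sus}{\lo} T \times_S {\cal Y}$, with natural morphisms $p_n\col {\mathfrak T}^T_n \lo {\mathfrak T}_n$ and $p'_n\col {\mathfrak T}^T_n \lo T$ as in the diagram (\ref{cd:ppp}). By (\ref{eqn:lueyset}), the value of the left-hand side of (\ref{eqn:lnelmn}) at $T$ equals $\vpl_n p'_{n*}p_n^*(\iota_{{\mathfrak T},n*}{\cal E}'_n)$, where ${\cal E}'_n$ is viewed as a coherent ${\cal K}_{{\mathfrak T}_n}$-module via $\iota_{{\mathfrak T},n*}$.

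Second, I would extract from the proof of (\ref{lemm:efstex}) — in particular the identifications (\ref{eqn:fibes}) and (\ref{eqn:fibu}) — an explicit description of the pullback $T' := \eps_{\rm conv}^*(T)$ as a pre-widening of $Y'/S'$: its underlying formal scheme is the scheme-theoretic fiber product $\os{\circ}{T} \times_{\os{\circ}{\cal Y}} \os{\circ}{\cal Y}{}'$, equipped with the log structure pulled back from ${\cal Y}'$. The closed immersion hypothesis on $\iota_{{\mathfrak T},n}$ combined with (\ref{lemm:bcue}) then identifies the universal enlargements of the natural immersion $U' \os{\sus}{\lo} T' \times_{S'}{\cal Y}'$ with the closed subschemes ${\mathfrak T}^T_n \times_{{\mathfrak T}_n}{\mathfrak T}'_n$ of ${\mathfrak T}^T_n$. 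Applying (\ref{eqn:lueyset}) on the $Y'/S'$-side and using the previous identification, the right-hand side of (\ref{eqn:lnelmn}) at $T$ equals $\vpl_n p'_{n*}q_{n*}r_n^*({\cal E}'_n)$, where $q_n\col {\mathfrak T}^T_n \times_{{\mathfrak T}_n}{\mathfrak T}'_n \os{\sus}{\lo} {\mathfrak T}^T_n$ is the base-changed closed immersion and $r_n\col {\mathfrak T}^T_n \times_{{\mathfrak T}_n}{\mathfrak T}'_n \lo {\mathfrak T}'_n$ is the natural projection. Third, the two expressions coincide by the standard base-change isomorphism $p_n^* \iota_{{\mathfrak T},n*} \os{\sim}{\lo} q_{n*}r_n^*$ for coherent sheaves along a closed immersion (after tensoring with $\mab Q$). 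The resulting natural isomorphism is functorial in ${\cal E}'$ and compatible with the transition maps in $n$, yielding the desired (\ref{eqn:lnelmn}).

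The main obstacle is the careful identification in the second step of the log structure on $T'$ and of the universal enlargements of the immersion $U' \os{\sus}{\lo} T' \times_{S'}{\cal Y}'$ via (\ref{lemm:bcue}). The closed immersion hypothesis on $\iota_{{\mathfrak T},n}$ is essential here: it triggers the clean form of (\ref{lemm:bcue}) (so that no $\pi$-torsion truncation is needed when forming the universal enlargements on the $Y'/S'$-side) and ensures that the fine log fiber product defining $T'$ has the expected underlying scheme-theoretic structure with pulled-back log structure from ${\mathfrak T}'_n$, which is precisely what matches the $\iota_{{\mathfrak T},n*}$-based description on the $Y/S$-side.
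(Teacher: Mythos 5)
Your overall strategy (compute both sides value-wise via (\ref{eqn:lueyset}) and conclude by base change along the closed immersions ${\mathfrak T}'_n\os{\sus}{\lo}{\mathfrak T}_n$) is close in spirit to what is needed, but your second step has a genuine gap. You claim that $\eps_{\rm conv}^*(T)$, for an arbitrary enlargement $T=(U,T,\iota,u)$ of $Y/S$, is a prewidening of $Y'/S'$ with underlying formal scheme $\os{\circ}{T}\times_{\os{\circ}{\cal Y}}\os{\circ}{\cal Y}{}'$ and log structure pulled back from ${\cal Y}'$. This does not make sense as stated: a general enlargement $T$ of $Y/S$ carries no morphism $\os{\circ}{T}\lo\os{\circ}{\cal Y}$ at all, so the fiber product is undefined; and for the global morphism $\eps_{\rm conv}$ the pullback of a representable sheaf $h_T$ is in general \emph{not} representable (there is no canonical way to enrich the log structure of $T$ by $M$, since $M$ only lives on $Y$ and $T$ only receives $U$). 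What the proof of (\ref{lemm:efstex}) actually computes is the pullback under the \emph{localized} morphism $\eps_{\rm conv}\vert_{T'T}$ of an object already equipped with a map to the base prewidening $T$ of the localization, where the extra log structure is supplied by $M_{T'}$; it gives you nothing about $\eps_{\rm conv}^*$ of a general enlargement. This is exactly the point the paper's argument is organized around: it never describes $\eps_{\rm conv}^*$ of a general object, but instead factors $L^{\rm UE}_{Y/S}=j_{{\mathfrak T}*}\varphi^*_{\os{\to}{\mathfrak T}}$ and $L^{\rm UE}_{Y'/S'}=j_{{\mathfrak T}'*}\varphi^*_{\os{\to}{\mathfrak T}{}'}$, uses the commutativity of $\eps_{\rm conv}$ with the localization morphisms (\ref{lemm:ymjeps}), and then proves the localized identity $(\eps_{\rm conv}\vert_{{\mathfrak T}'_n{\mathfrak T}_n})_*\varphi^*_{{\mathfrak T}'_n}=\varphi^*_{{\mathfrak T}_n}$ by the section computation based on (\ref{eqn:ephomep}), where the closed-immersion hypothesis enters through the identification $T'_M=T''\times_{{\mathfrak T}_n}{\mathfrak T}'_n$ and the projection-formula-type equality $\Gam(T'_M,({\cal K}_{{\mathfrak T}'_n}\otimes_{{\cal K}_{{\mathfrak T}_n}}{\cal K}_{T''})\otimes_{{\cal K}_{{\mathfrak T}'_n}}{\cal E}'_n)=\Gam(T'',{\cal K}_{T''}\otimes_{{\cal K}_{{\mathfrak T}_n}}{\cal E}'_n)$. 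To repair your argument you would have to replace your description of $\eps_{\rm conv}^*(T)$ by a correct computation of the sheaf $\eps_{\rm conv}^*(h_T)\times h_{{\mathfrak T}'}$ (which is governed by the immersion $U\times_YY'\os{\sus}{\lo}T\times_S{\cal Y}'$, not by $\os{\circ}{T}\times_{\os{\circ}{\cal Y}}\os{\circ}{\cal Y}{}'$), at which point you are essentially rederiving the paper's localized argument.

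A secondary inaccuracy: you assert that the closed-immersion hypothesis ``triggers the clean form of (\ref{lemm:bcue}) (so that no $\pi$-torsion truncation is needed).'' That is backwards: the untruncated statement in (\ref{lemm:bcue}) requires $\os{\circ}{T}{}'\lo\os{\circ}{T}$ to be \emph{flat}, and a closed immersion is typically not flat, so a priori only the $\wt{(~)}$-version applies. The reason torsion causes no harm here is that everything is an isostructure ($\otimes_{\mab Z}{\mab Q}$) computation, not the closed-immersion hypothesis; the latter is needed instead for the base-change/projection-formula identification along ${\mathfrak T}'_n\os{\sus}{\lo}{\mathfrak T}_n$ and for viewing ${\cal E}'$ as an isocrystal of ${\cal K}_{\os{\to}{\mathfrak T}}$-modules in the first place.
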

\begin{proof}
Let $\eps_{\rm conv} \vert_{{\mathfrak T}'_n{\mathfrak T}_n} 
\col  
(Y'/S')_{\rm conv}\vert_{{\mathfrak T}'_n} 
\lo 
(Y/S)_{\rm conv}\vert_{{\mathfrak T}_n}$ 
$(n\in {\mab Z}_{\geq 1})$ 
be the localized morphism of $\eps_{\rm conv}$ at 
${\mathfrak T}'_n$ and ${\mathfrak T}_n$.  
Using the formula (\ref{eqn:ephomep}), 
we can immediately check that 
$(\eps_{\rm conv} \vert_{{\mathfrak T}'_n{\mathfrak T}_n})_*
\varphi_{{\mathfrak T}'_n}^{*}=\varphi_{{\mathfrak T}_n}^*$ holds for 
isocrystals of ${\cal K}_{\os{\to}{\mathfrak T}{}'}$-modules 
${\cal E}'=\{{\cal E}'_n\}_{n=1}^{\infty}$ for sufficiently large $n$ 
for an object of $(T'',\phi)$ of $(Y/S)_{\rm conv}\vert_{{\mathfrak T}_n}$
as follows: 
\begin{align*} 
\Gam(T'',(\eps_{\rm conv} 
\vert_{{\mathfrak T}'_n{\mathfrak T}_n})_*\varphi_{{\mathfrak T}'_n}^{*}({\cal E}'_n))
&=\Gam(T'_M,\varphi_{{\mathfrak T}'_n}^{*}({\cal E}'_n))
=\Gam(T'_M,{\cal K}_{T'_M}\otimes_{{\cal K}_{{\mathfrak T}'_n}}{\cal E}'_n)\\
&=\Gam(T'_M,({\cal K}_{{\mathfrak T}'_n}
\otimes_{{\cal K}_{{\mathfrak T}_n}}{\cal K}_{T''})\otimes_{{\cal K}_{{\mathfrak T}'_n}}{\cal E}'_n)\\
&=\Gam(T'',{\cal K}_{T''}\otimes_{{\cal K}_{{\mathfrak T}_n}}{\cal E}'_n). 
\end{align*} 
Here we have assumed that the morphism $T''\lo {\mathfrak T}$ factors through 
$T''\lo {\mathfrak T}_n$ for some $n$ and we have used 
the assumption in (\ref{lemm:itmne}). 
Furthermore, by (\ref{lemm:ymjeps}) 
we obtain the following commutative diagram: 
\begin{equation*} 
\begin{CD} 
\{\text{isocrystals of }
{\cal K}_{\os{\to}{\mathfrak T}{}'}\text{-modules}\}   
@>{\varphi^*_{\os{\to}{\mathfrak T}{}'}}>> 
\{\text{isocrystals of }
{\cal K}_{Y'/S'}\vert_{{\mathfrak T}'}\text{-modules}\} \\
@V{\bigcap}VV @V{\{(\eps_{\rm conv} \vert_{{\mathfrak T}_n'{\mathfrak T}_n})_*\}_{n=1}^{\infty}}VV \\
\{\text{isocrystals of }
{\cal K}_{\os{\to}{\mathfrak T}}\text{-modules}\}  
@>{{\varphi^*_{\os{\to}{\mathfrak T}}}}>> 
\{\text{isocrystals of }{\cal K}_{Y/S}\vert_{\mathfrak T}\text{-modules}\}   
\end{CD}
\tag{4.5.2}\label{cd:longcom}
\end{equation*} 
\begin{equation*} 
\begin{CD}  
@>{j_{{\mathfrak T}{}'_*}}>> 
\{\text{crystals of }
{\cal K}_{Y'/S'}\text{-modules}\}   \\ 
@. @V{\eps_{\rm conv*}}VV  \\
@>{j_{{\mathfrak T}_*}}>> 
\{\text{crystals of }{\cal K}_{Y/S}\text{-modules}\}.
\end{CD}
\end{equation*} 
Hence we obtain the isomorphism (\ref{eqn:lnelmn}). 
\par 
\end{proof}


\begin{defi}\label{defi:eikys}
Let $E'$ be an isocrystal of ${\cal K}_{Y'/S'}$-modules. 
We call $R\eps^{\rm conv}_{(\os{\circ}{Y},M,N)/S'/S*}(E')$ 
the {\it vanishing cycle sheaf} of $E'$ {\it along} $M\setminus N$ over $S'/S$. 
We call $R\eps^{\rm conv}_{(\os{\circ}{Y},M,N)/S'/S*}({\cal K}_{Y'/S'})$ the 
{\it vanishing cycle sheaf} of $Y/S'/S$ {\it along} $M\setminus N$.
If $N$ is trivial, we omit the word 
``along $M\setminus N$''. 
\end{defi} 

\par 
Now assume that ${\cal Y}'$ is formally log smooth over $S'$. 
Then we have the following integrable connection for 
a coherent crystal 
${\cal E}':=\{{\cal E}'_n\}_{n=1}^{\infty}$ of 
${\cal K}_{\os{\to}{\mathfrak T}{}'}$-modules by (\ref{lemm:lcl}): 
\begin{align*} 
\nabla^i\col 
L^{\rm UE}_{Y'/S'}({\cal E}'\otimes_{{\cal O}_{{\cal Y}'}}\Om^i_{{\cal Y}'/S}) 
\lo 
L^{\rm UE}_{Y'/S'}({\cal E}'\otimes_{{\cal O}_{{\cal Y}'}}
\Om^{i+1}_{{\cal Y}'/S'}). 
\end{align*} 
Hence we can define the following integrable connection 
\begin{align*} 
\nabla^i \col 
L^{\rm UE}_{Y/S}({\cal E}'\otimes_{{\cal O}_{{\cal Y}'}}\Om^i_{{\cal Y}'/S'}) 
\lo 
L^{\rm UE}_{Y/S}({\cal E}'\otimes_{{\cal O}_{{\cal Y}'}}\Om^{i+1}_{{\cal Y}'/S'}) 
\tag{4.6.1}\label{cd:lnhg}
\end{align*} 
fitting into the following  commutative$:$ diagram
\begin{equation*}
\begin{CD}
L^{\rm UE}_{Y/S}({\cal E}'\otimes_{{\cal O}_{{\cal Y}'}}\Om^i_{{\cal Y}'/S'}) 
@>{\nabla^i}>> L^{\rm UE}_{Y/S}
({\cal E}'\otimes_{{\cal O}_{{\cal Y}'}}\Om^{i+1}_{{\cal Y}'/S'})  \\ 
@V{\simeq}VV @VV{\simeq}V  \\
\eps^{\rm conv}_{(\os{\circ}{Y},M,N)/S'/S*}
L^{\rm UE}_{Y'/S'}({\cal E}'\otimes_{{\cal O}_{{\cal Y}}}\Om^i_{{\cal Y}'/S'}) 
@>{\eps^{\rm conv}_{(\os{\circ}{Y},M,N)/S'/S*}(\nabla^i)}>> 
\eps^{\rm conv}_{(\os{\circ}{Y},M,N)/S'/S*}
L^{\rm UE}_{Y'/S'}({\cal E}'\otimes_{{\cal O}_{{\cal Y}'}}
\Om^{i+1}_{{\cal Y}'/S'}).  \\ 
\end{CD}
\tag{4.6.2}\label{cd:lnfhg}
\end{equation*}


\par 
The following theorem is a main result in this section:

\begin{theo}
[{\bf Poincar\'{e} lemma of a vanishing cycle sheaf (cf.~\cite[(5.8)]{nhw})}]
\label{theo:cpvcs} 
Let the notations and the assumption 
be as in {\rm (\ref{coro:epnr})}. 
Assume that ${\cal Y}'$ is formally log smooth over $S'$. 
Let $E'$ be an isocrystal of ${\cal K}_{Y'/S'}$-modules.   
Let $({\cal E}',\nabla')$ be the ${\cal K}_{\os{\to}{\mathfrak T}{}'}$-module 
with integrable connection  corresponding to $E:$ 
$\nabla \col {\cal E}' \lo {\cal E}'\otimes_{{\cal O}_{{\cal Y}'}}\Om^1_{{\cal Y}'/S'}$. 
Assume that we are given the commutative diagram {\rm (\ref{cd:cchgmn})}. 
Then there exists a canonical isomorphism 
\begin{equation*}
R\eps^{\rm conv}_{(\os{\circ}{Y},M,N)/S*}(E') \os{\sim}{\lo} 
L^{\rm UE}_{Y/S}({\cal E}'\otimes_{{\cal O}_{{\cal Y}'}}
\Om^{\bul}_{{\cal Y}'/S'})
\tag{4.7.1}\label{eqn:ceslvc}
\end{equation*} 
in $D^+({\cal K}_{Y/S})$. 
\end{theo}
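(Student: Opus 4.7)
The strategy is to combine three tools already established in the paper: the log Poincaré lemma (\ref{theo:pl}), the acyclicity statement (\ref{coro:epnr}) for $\eps_{\rm conv*}$ on log convergent linearizations, and the compatibility isomorphism (\ref{lemm:itmne}) between $L^{\rm UE}_{Y/S}$ and $\eps_{\rm conv*}L^{\rm UE}_{Y'/S'}$. The proof is carried out in three steps; the output is a composite of canonical isomorphisms in $D^+({\cal K}_{Y/S})$.

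\emph{Step 1 (reduction via Poincaré).} Since ${\cal Y}'/S'$ is formally log smooth, (\ref{theo:pl}) applied to $Y'/S'$ and the immersion $\iota'\colon Y'\os{\sus}{\lo}{\cal Y}'$ furnishes a canonical quasi-isomorphism
\begin{equation*}
E' \os{\sim}{\lo} L^{\rm UE}_{Y'/S'}({\cal E}'\otimes_{{\cal O}_{{\cal Y}'}}\Om^\bullet_{{\cal Y}'/S'})
\end{equation*}
in $D^+({\cal K}_{Y'/S'})$. Applying $R\eps^{\rm conv}_{(\os{\circ}{Y},M,N)/S'/S*}$, the problem reduces to producing a canonical isomorphism
\begin{equation*}
R\eps_{{\rm conv}*}\,L^{\rm UE}_{Y'/S'}({\cal E}'\otimes_{{\cal O}_{{\cal Y}'}}\Om^\bullet_{{\cal Y}'/S'})
\os{\sim}{\lo}
L^{\rm UE}_{Y/S}({\cal E}'\otimes_{{\cal O}_{{\cal Y}'}}\Om^\bullet_{{\cal Y}'/S'}).
\end{equation*}

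\emph{Step 2 (removing the derived functor).} For each degree $i$, the object ${\cal E}'\otimes_{{\cal O}_{{\cal Y}'}}\Om^i_{{\cal Y}'/S'}$ is an isocrystal of ${\cal K}_{\os{\to}{\mathfrak T}{}'}$-modules, so (\ref{coro:epnr}) gives
\begin{equation*}
\eps_{{\rm conv}*}\,L^{\rm UE}_{Y'/S'}({\cal E}'\otimes\Om^i_{{\cal Y}'/S'})
\os{\sim}{\lo}
R\eps_{{\rm conv}*}\,L^{\rm UE}_{Y'/S'}({\cal E}'\otimes\Om^i_{{\cal Y}'/S'}).
\end{equation*}
Thus $R\eps_{{\rm conv}*}$ may be replaced by $\eps_{{\rm conv}*}$ termwise, yielding the complex $\eps_{{\rm conv}*} L^{\rm UE}_{Y'/S'}({\cal E}'\otimes\Om^\bullet_{{\cal Y}'/S'})$.

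\emph{Step 3 (transfer to $Y/S$).} The isomorphism of functors (\ref{eqn:lnelmn}) of (\ref{lemm:itmne}), applied in each degree, yields
\begin{equation*}
L^{\rm UE}_{Y/S}({\cal E}'\otimes\Om^i_{{\cal Y}'/S'})
\os{\sim}{\lo}
\eps_{{\rm conv}*}\,L^{\rm UE}_{Y'/S'}({\cal E}'\otimes\Om^i_{{\cal Y}'/S'}).
\end{equation*}
The commutativity of the diagram (\ref{cd:lnfhg}) shows that these termwise isomorphisms intertwine the connections, hence assemble into an isomorphism of complexes in $D^+({\cal K}_{Y/S})$. Composing Steps 1--3 gives the canonical isomorphism (\ref{eqn:ceslvc}).

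\emph{Main obstacle.} The delicate point is Step 2: to invoke (\ref{coro:epnr}) one needs (\ref{lemm:epsex}), whose proof assumed $\os{\circ}{T}$ affine. Thus the acyclicity of $\eps_{{\rm conv}*}L^{\rm UE}_{Y'/S'}(-)$ must first be verified Zariski-locally on each universal enlargement ${\mathfrak T}_n$ (where affineness is achieved), and then globalized using the Zariski local nature of higher direct images together with the fact that both sides of (\ref{eqn:lnelmn}) sheafify correctly under the localization morphisms $\eps_{\rm conv}|_{{\mathfrak T}'_n{\mathfrak T}_n}$; the hypothesis that $\os{\circ}{{\mathfrak T}}'_n\lo\os{\circ}{{\mathfrak T}}_n$ is an immersion (built into (\ref{coro:epnr})) is exactly what makes this globalization go through, since it guarantees the exactness input of (\ref{lemm:efstex}). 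Once this acyclicity is in place, the remaining verifications are formal, and canonicity is preserved because each of (\ref{theo:pl}), (\ref{coro:epnr}) and (\ref{lemm:itmne}) produces canonical isomorphisms.
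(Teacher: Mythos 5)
Your proposal is correct and follows essentially the same route as the paper's own proof: the log Poincaré lemma (\ref{theo:pl}) to identify $E'$ with the linearized de Rham complex, then (\ref{coro:epnr}) to drop the derived functor termwise, then (\ref{lemm:itmne}) together with the definition of the differential in (\ref{cd:lnhg})/(\ref{cd:lnfhg}) to identify the result with $L^{\rm UE}_{Y/S}({\cal E}'\otimes_{{\cal O}_{{\cal Y}'}}\Om^{\bul}_{{\cal Y}'/S'})$. Your closing remark about the affineness hypothesis in (\ref{lemm:epsex}) is a reasonable gloss on how (\ref{coro:epnr}) is justified, but it is not an extra step beyond what the paper already cites.
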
 
\begin{proof} 
By (\ref{eqn:epdlym}) we obtain the following isomorphism 
\begin{align*}
E' \os{\sim}{\lo} 
L^{\rm UE}_{Y'/S'}
({\cal E}'\otimes_{{\cal O}_{{\cal Y}'}}\Om^{\bul}_{{\cal Y}'/S'}).
\tag{4.7.2}\label{eqn:cplvc}   
\end{align*}  
Applying $R\eps^{\rm conv}_{(\os{\circ}{Y},M,N)/S'/S*}$ to 
both hand sides of (\ref{eqn:cplvc}) and 
using (\ref{coro:epnr}), (\ref{lemm:itmne}) and the definition of 
the differential (\ref{cd:lnhg}), 
we obtain the following: 
\begin{align*}
R\eps^{\rm conv}_{(\os{\circ}{Y},M,N)/S'/S*}(E') 
& \os{\sim}{\lo}  
R\eps^{\rm conv}_{(\os{\circ}{Y},M,N)/S'/S*}
L^{\rm UE}_{Y'/S'}({\cal E}'\otimes
_{{\cal O}_{{\cal Y}'}}\Om^{\bul}_{{\cal Y}'/S'})\\
{} & \os{\sim}{\longleftarrow}
\eps^{\rm conv}_{(\os{\circ}{Y},M,N)/S'/S*}
L^{\rm UE}_{Y'/S'}({\cal E}'\otimes_{{\cal O}_{{\cal Y}'}}\Om^{\bul}_{{\cal Y}'/S'})\\ 
{}&= 
L^{\rm UE}_{Y/S}({\cal E}'
\otimes_{{\cal O}_{{\cal Y}'}}\Om^{\bul}_{{\cal Y}'/S'}).
\end{align*}
\end{proof} 

\begin{rema}\label{rema:fgmm} 
In \cite[(5.8)]{nhw} we have assumed that 
the open log analogue of ${\cal Y}/S$ is log smooth, 
while we do not assume that 
${\cal Y}/S$ is log smooth in this paper. This is a different point from the theory in [loc.~cit.]. 
In ???
\end{rema}

\begin{prop}\label{prop:nia} 
Assume that ${\cal Y}'/S'$ and ${\cal Y}/S$ 
are formally log smooth over $S'$ and $S$, respectively. 
Let ${\cal E}:=\{{\cal E}_n\}_{n=1}^{\infty}$ be 
a coherent crystal  of 
${\cal K}_{\os{\to}{\mathfrak T}}$-modules. 
Set ${\cal E}':= {\cal E}\otimes_{{\cal K}_{\os{\to}{\mathfrak T}}}
{\cal K}_{\os{\to}{\mathfrak T}{}'}$. 
Let 
\begin{align*} 
\nabla \col 
L^{\rm UE}_{Y/S}({\cal E}) \lo 
L^{\rm UE}_{Y/S}({\cal E}\otimes_{{\cal O}_{\cal Y}}
\Om^{1}_{{\cal Y}/S})
\tag{4.9.1}\label{cd:lnaefhg}
\end{align*} 
be the integrable connection obtained in {\rm (\ref{lemm:lcl})}. 
Let 
\begin{align*} 
\nabla^i\col L^{\rm UE}_{Y/S}({\cal E}\otimes_{{\cal O}_{\cal Y}}\Om^i_{{\cal Y}/S}) 
\lo 
 L^{\rm UE}_{Y/S}
({\cal E}\otimes_{{\cal O}_{\cal Y}}\Om^{i+1}_{{\cal Y}/S})  
\quad (i\in {\mab N})
\end{align*} 
be the induced differential by {\rm (\ref{cd:lnaefhg})}. 
Then the following diagram is commutative$:$
\begin{equation*}
\begin{CD}
L^{\rm UE}_{Y/S}({\cal E}'\otimes_{{\cal O}_{{\cal Y}'}}\Om^i_{{\cal Y}'/S'}) 
@>{\nabla^i}>> L^{\rm UE}_{Y/S}
({\cal E}'\otimes_{{\cal O}_{{\cal Y}'}}\Om^{i+1}_{{\cal Y}'/S'})  \\ 
@AAA @AAA  \\
L^{\rm UE}_{Y/S}({\cal E}\otimes_{{\cal O}_{{\cal Y}}}\Om^i_{{\cal Y}/S}) 
@>{\nabla^i}>>
L^{\rm UE}_{Y/S}({\cal E}\otimes_{{\cal O}_{\cal Y}}
\Om^{i+1}_{{\cal Y}/S}). 
\end{CD}
\tag{4.9.2}\label{cd:lnafhg}
\end{equation*}
\end{prop}
\begin{proof} 
Though the proof is quite long, the proof is the same as that of \cite[(5.6)]{nh2}
by writing down the connection in (\ref{lemm:lcl}). 
\end{proof}

\section{Log convergent linearization functors over the underlying formal scheme 
of a $p$-adic formal family of log points}\label{eclf}
In this section we consider the (log) convergent linearization functor 
in the case where $S'$ and $S$ in the previous section are 
the underlying formal scheme of a 
$p$-adic formal ${\cal V}$-family of log points. 
\par 
Let $S$ be a formal family of log points, that is, 
$S$ is an fs(=fine and saturated) log formal scheme which is locally isomorphic to 
${\mab N}\oplus {\cal O}_S^*$ with structural morphism 
${\mab N}\owns n\lom 0^n\in {\cal O}_S$. 
In the following we assume that $\os{\circ}{S}$ is 
a flat $p$-adic formal ${\cal V}$-scheme in the sense of \cite{of}.  
We call this $S$ a $p$-adic ${\cal V}$-formal family of log points. 
Let us recall the formal scheme $\ol{S}$ defined in \cite{nb} as follows. 
\par 
Recall that, for the log structure $M_S$ of $S$, we denote 
$M_S/{\cal O}_S^*$ by $\ol{M}_S$. 
Let $S=\bigcup_{i\in I}S_i$ be an open covering of $S$ such that 
$\ol{M}_{S_i}\simeq {\mab N}$.  Take local sections 
$t_i$'s in $\Gam(S_i,M_S)$'s such that 
the images of $t_i$'s in $\Gam(S_i,\ol{M}_S)$'s are generators. 
Set $S_{ij}:= S_i\cap S_j$.  
Then there exist unique sections $u_{ji} \in 
\Gam(S_{ij},{\cal O}^*_S)$ such that 
\begin{equation*} 
t_{j}\vert_{S_{ij}}=u_{ji}t_{i}\vert_{S_{ij}}
\tag{5.0.1}\label{eqn:tkjd} 
\end{equation*} 
in $\Gam(S_i\cap S_j,M_S)$ for all $i$, $j$'s. 
Consider the ${\cal V}$-formal scheme 
$\wh{\mab A}{}^1_{\os{\circ}{S}_i}=
\ul{\rm Spf}_{\os{\circ}{S}_i}({\cal O}_{S_i}\{\tau_i\})$  
and the log ${\cal V}$-formal scheme 
$(\wh{\mab A}{}^1_{\os{\circ}{S}_i}, 
({\mab N}\owns n \lom \tau_i^n \in {\cal O}_{S_i}\{\tau_i\})^a)$.  
Denote this log $p$-adic ${\cal V}$-formal scheme by $\ol{S}_i$. 
Then, by patching $\ol{S}_i$ and $\ol{S}_j$ along 
$\ol{S}_{ij}:=\ol{S}_i\cap \ol{S}_j$ by the following equation 
\begin{equation*} 
\tau_j\vert_{\ol{S}_{ij}}=u_{ji}\tau_i\vert_{\ol{S}_{ij}},
\tag{5.0.2}\label{eqn:tkijd} 
\end{equation*} 
we have the log $p$-adic ${\cal V}$-formal scheme $\ol{S}=\bigcup_{i\in I}\ol{S}_i$. 
The isomorphism class of the log formal scheme $\ol{S}$ 
is independent of the choice of the system of 
generators $t_{i}$'s. 
By using this easy fact and by considering 
the refinement of two open coverings of $S$, 
we see that the isomorphism class
of the log formal scheme $\ol{S}$ is independent of 
the choice of the open covering $S=\bigcup_{i\in I}S_i$. 
Because 
$\os{\circ}{\ol{S}}_i
=\os{\circ}{S}_i\times_{{\rm Spf}({\cal V})}{\rm Spf}({\cal V}\{\tau_i\})$, 
the natural morphism $\ol{S} \lo \os{\circ}{S}$ is formally log smooth 
(cf.~\cite[(3.5)]{klog1}). 
In the following we denote $\tau_i$ by $t_i$ by convenience of notation. 
\par 
By killing ``$t_i$'''s in ${\cal O}_{\ol{S}}$, 
we have the following natural exact closed immersion 
\begin{equation*} 
S\os{\sus}{\lo} \ol{S}  
\tag{5.0.3}\label{eqn:stas}
\end{equation*} 
over $\os{\circ}{S}$. 
The local section $d\log t_i\in \Om^1_{\ol{S}_i/\os{\circ}{S}_i}$ defines a global section of 
$\Om^1_{\ol{S}/\os{\circ}{S}}$ by the equation (\ref{eqn:tkijd}) since 
$d\log u_{ij}=0$ in $\Om^1_{\ol{S}/\os{\circ}{S}}$. 
We denote this global section by $d\log t$. We also denote by $d\log t$ the image of 
$d\log t$ by the surjective morphism $\Om^1_{\ol{S}/\os{\circ}{S}}\lo 
\Om^1_{S/\os{\circ}{S}}$. 
\par 
Set $S_1:=\ul{\rm Spec}^{\log}_S({\cal O}_S/\pi{\cal O}_S)$.  
Let $Y/S_1$ be a log smooth scheme.  
Let $Y \os{\sus}{\lo} \ol{\cal Q}$ be an immersion into a log smooth 
log $p$-adic formal ${\cal V}$-scheme over $\ol{S}$. 
Let $\ol{T}$ be the prewidening over $\ol{S}$ obtained by the 
immersion $Y \os{\sus}{\lo} \ol{\cal Q}{}^{\rm ex}$. 
Set ${\cal Q}^{\rm ex}:=\ol{\cal Q}{}^{\rm ex}\times_{\ol{S}}S$. 
Let $T$ be the prewidening over $S$ obtained by the immersion 
$Y\os{\sus}{\lo} {\cal Q}^{\rm ex}$.  
Let $g\col {\cal Q}^{\rm ex}\lo S$ be the structural morphism. 
Let $\{(\ol{V}_n,\ol{T}_n)\}_{n=1}^{\infty}$ be the system of 
the universal enlargements of the immersion 
$Y\os{\sus}{\lo} \ol{\cal Q}$ over $\ol{S}$. 
Set $(V_n,T_n):=(\ol{V}_n,\ol{T}_n)\times_{\ol{S}}S$. 
Let $\bet_n \col V_n \lo Y$ 
be the natural morphism.  
Let $F$ be a flat isocrystal of ${\cal K}_{Y/\os{\circ}{S}}$-modules. 
Let $(\ol{\cal F}_n,\ol{\nabla}_n)$ be the flat coherent ${\cal K}_{\ol{T}_n}$-modules 
with integrable connection obtained by $F$: 
$\ol{\nabla}_n \col \ol{\cal F}_n\lo \ol{\cal F}_n
\otimes_{{\cal O}_{\ol{\cal Q}^{\rm ex}}}\Om^1_{\ol{\cal Q}^{\rm ex}/\os{\circ}{S}}$.  
Set ${\cal F}_n:=\ol{\cal F}_n{\otimes}_{{\cal O}_{\ol{S}}}{\cal O}_S$. 
Since $dt=td\log t$ for a local section $t$ such that 
${\cal O}_{\ol{S}}={\cal O}_S\{t\}$ locally on $\ol{S}$, 
$\ol{\nabla}_n$ 
induces the following integrable connection 
\begin{align*} 
{\nabla}_n \col {\cal F}_n\lo {\cal F}_n
\otimes_{{\cal O}_{{\cal Q}^{\rm ex}}}\Om^1_{{\cal Q}^{\rm ex}/\os{\circ}{S}}
\end{align*} 
and we obtain the projective system $\{{\nabla}_n\}_{n=0}^{\infty}$ and 
the following integrable connection 
\begin{align*} 
\vpl_n{\nabla}_n \col \vpl_n\bet_{n*}{\cal F}_n\lo 
(\vpl_n\bet_{n*}{\cal F}_n)\otimes_{{\cal O}_{{\cal Q}^{\rm ex}}}
\Om^1_{{\cal Q}^{\rm ex}/\os{\circ}{S}}. 
\end{align*} 
For simplicity of notation, set 
\begin{align*} 
{\cal F}\otimes_{{\cal O}_{{\cal Q}^{\rm ex}}}
\Om^{\bul}_{{\cal Q}^{\rm ex}/\os{\circ}{S}}:=
\{{\cal F}_n\otimes_{{\cal O}_{{\cal Q}^{\rm ex}}}
\Om^{\bul}_{{\cal Q}^{\rm ex}/\os{\circ}{S}}\}_{n=0}^{\infty}. 
\end{align*} 
Let $L^{\rm UE}_{Y/\os{\circ}{S}}$ and $L^{\rm UE}_{\os{\circ}{Y}/\os{\circ}{S}}$ 
be the (log) convergent linearization functors 
for coherent ${\cal K}_{\os{\to}{\ol{T}}}$-modules 
with respect to the immersions 
$Y \os{\sus}{\lo} \ol{\cal Q}{}^{\rm ex}$ over $\os{\circ}{S}$ 
and $\os{\circ}{Y} \os{\sus}{\lo} ({\ol{\cal Q}{}^{\rm ex}})^{\circ}$ 
over $\os{\circ}{S}$. 
Then the morphism 
\begin{align*} 
(\ol{T}_n)^{\circ}=({\mathfrak T}_{Y,n}(\ol{\cal Q}{}^{\rm ex}))^{\circ} \lo 
{\mathfrak T}_{\os{\circ}{Y},n}((\ol{\cal Q}{}^{\rm ex})^{\circ})
\end{align*} 
is an isomorphism and the assumption in (\ref{coro:epnr}) is satisfied. 
Let
$\eps_{Y/\os{\circ}{S}}\col ((Y/\os{\circ}{S})_{\rm conv},{\cal K}_{Y/\os{\circ}{S}})
\lo 
((\os{\circ}{Y}/\os{\circ}{S})_{\rm conv},{\cal K}_{\os{\circ}{Y}/\os{\circ}{S}})$ 
be the morphism forgetting the log structure of $Y$ over $\os{\circ}{S}$. 
Consider the complex 
$$\eps^{\rm conv}_{Y/\os{\circ}{S}*}
L^{\rm UE}_{Y/\os{\circ}{S}}
(\ol{\cal F}\otimes_{{\cal O}_{\ol{\cal Q}^{\rm ex}}}\Om^{\bul}_{\ol{\cal Q}{}^{\rm ex}/\os{\circ}{S}}).
$$
By (\ref{eqn:lnelmn}) and the log Poincar\'{e} lemma of a vanishing cycle sheaf 
((\ref{theo:cpvcs})), 
we have the following equality: 
\begin{align*} 
\eps^{\rm conv}_{Y/\os{\circ}{S}*}
L^{\rm UE}_{Y/\os{\circ}{S}}
(\ol{\cal F}\otimes_{{\cal O}_{\ol{\cal Q}{}^{\rm ex}}}
\Om^{\bul}_{\ol{\cal Q}{}^{\rm ex}/\os{\circ}{S}})
=L^{\rm UE}_{\os{\circ}{Y}/\os{\circ}{S}}
(\ol{\cal F}\otimes_{{\cal O}_{\ol{\cal Q}{}^{\rm ex}}}
\Om^{\bul}_{\ol{\cal Q}{}^{\rm ex}/\os{\circ}{S}})
(\simeq R\eps^{\rm conv}_{Y/\os{\circ}{S}*}
L^{\rm UE}_{Y/\os{\circ}{S}}
(\ol{\cal F}
\otimes_{{\cal O}_{\ol{\cal Q}{}^{\rm ex}}}\Om^{\bul}_{\ol{\cal Q}{}^{\rm ex}/\os{\circ}{S}})). 
\end{align*} 
However this complex is {\it not} our desired complex. 
Instead we consider the following sheaves for various $i$'s: 
\begin{align*} 
L^{\rm UE}_{\os{\circ}{Y}/\os{\circ}{S}}
({\cal F}\otimes_{{\cal O}_{\ol{\cal Q}^{\rm ex}}}\Om^i_{\ol{\cal Q}{}^{\rm ex}/\os{\circ}{S}})
&=L^{\rm UE}_{\os{\circ}{Y}/\os{\circ}{S}}
({\cal F}\otimes_{{\cal O}_{{\cal Q}^{\rm ex}}}\Om^i_{{\cal Q}{}^{\rm ex}/\os{\circ}{S}})
\tag{5.0.4}\label{eqn:plavc} \\
&=\eps^{\rm conv}_{Y/\os{\circ}{S}*}(L^{\rm UE}_{Y/\os{\circ}{S}}
({\cal F}\otimes_{{\cal O}_{{\cal Q}^{\rm ex}}}\Om^i_{{\cal Q}{}^{\rm ex}/\os{\circ}{S}}))\\
&(\simeq R\eps^{\rm conv}_{Y/\os{\circ}{S}*}(L^{\rm UE}_{Y/\os{\circ}{S}}
({\cal F}\otimes_{{\cal O}_{{\cal Q}^{\rm ex}}}\Om^i_{{\cal Q}{}^{\rm ex}/\os{\circ}{S}}))).
\end{align*} 
Here note that 
$\Om^i_{\ol{\cal Q}{}^{\rm ex}/\os{\circ}{S}}\not=
\Om^i_{(\ol{\cal Q}{}^{\rm ex})^{\circ}/\os{\circ}{S}}$ 
and that ${\cal Q}{}^{\rm ex}$ is not formally log smooth over 
$\os{\circ}{S}$ in general. 

\begin{lemm}\label{lemm:cpax}
The differential 
\begin{align*} 
\nabla^i \col L^{\rm UE}_{Y/\os{\circ}{S}}
(\ol{\cal F}\otimes_{{\cal O}_{\ol{\cal Q}{}^{\rm ex}}}
\Om^i_{\ol{\cal Q}{}^{\rm ex}/\os{\circ}{S}})
\lo 
L^{\rm UE}_{Y/\os{\circ}{S}}
(\ol{\cal F}\otimes_{{\cal O}_{\ol{\cal Q}{}^{\rm ex}}}
\Om^{i+1}_{\ol{\cal Q}{}^{\rm ex}/\os{\circ}{S}})
\tag{5.1.1}\label{eqn:epdomaep}
\end{align*}
induces the following differential$:$ 
\begin{align*} 
\nabla^i \col 
\eps^{\rm conv}_{Y/\os{\circ}{S}*}L^{\rm UE}_{Y/\os{\circ}{S}}
({\cal F}\otimes_{{\cal O}_{{\cal Q}{}^{\rm ex}}}\Om^i_{{\cal Q}{}^{\rm ex}/\os{\circ}{S}})
\lo 
\eps^{\rm conv}_{Y/\os{\circ}{S}*}L^{\rm UE}_{Y/\os{\circ}{S}}
({\cal F}\otimes_{{\cal O}_{{\cal Q}{}^{\rm ex}}}\Om^{i+1}_{{\cal Q}{}^{\rm ex}/\os{\circ}{S}}). 
\tag{5.1.2}\label{eqn:eldomaep}
\end{align*}
\end{lemm}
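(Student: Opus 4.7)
\textbf{Proof plan for \textup{(\ref{lemm:cpax})}.}
The plan is to show that the differential $\nabla^i$ of \textup{(\ref{eqn:epdomaep})}, originally defined on the ``bar'' complex with coefficients in $\ol{\cal F}\otimes_{{\cal O}_{\ol{\cal Q}^{\rm ex}}}\Om^{\bul}_{\ol{\cal Q}^{\rm ex}/\os{\circ}{S}}$, factors through the natural surjection obtained by reducing modulo $t$, thereby inducing the desired differential \textup{(\ref{eqn:eldomaep})} on the ``non-bar'' complex. The decisive input is the identity
\begin{equation*}
dt = t\cdot d\log t \quad\text{in } \Om^1_{\ol{\cal Q}^{\rm ex}/\os{\circ}{S}},
\end{equation*}
which is already used on the paragraph containing \textup{(\ref{eqn:plavc})} to define the induced connection $\nabla_n$ on ${\cal F}_n$.

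First I would note that since $\ol{\cal Q}^{\rm ex}$ is formally log smooth over $\os{\circ}{S}$ (via $\ol S$), the hypothesis of \textup{(\ref{lemm:lcl})} is met and $\nabla^i$ in \textup{(\ref{eqn:epdomaep})} is a well-defined morphism of ${\cal K}_{Y/\os{\circ}{S}}$-modules. Because the morphism $(\ol T_n)^{\circ}\lo {\mathfrak T}_{\os{\circ}{Y},n}((\ol{\cal Q}^{\rm ex})^{\circ})$ is an isomorphism (the observation made just before \textup{(\ref{eqn:plavc})}), the hypothesis of \textup{(\ref{lemm:itmne})} holds, so applying $\eps^{\rm conv}_{Y/\os{\circ}{S}*}$ to $\nabla^i$ yields a differential on $\eps^{\rm conv}_{Y/\os{\circ}{S}*}L^{\rm UE}_{Y/\os{\circ}{S}}(\ol{\cal F}\otimes\Om^i_{\ol{\cal Q}^{\rm ex}/\os{\circ}{S}}) = L^{\rm UE}_{\os{\circ}{Y}/\os{\circ}{S}}(\ol{\cal F}\otimes\Om^i_{\ol{\cal Q}^{\rm ex}/\os{\circ}{S}})$ by \textup{(\ref{eqn:lnelmn})}.

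Next, from the exact closed immersion $S\os{\sus}{\lo}\ol S$ defined by killing $t$, together with the base change ${\cal Q}^{\rm ex}=\ol{\cal Q}^{\rm ex}\times_{\ol S}S$, one obtains the two natural surjections
\begin{equation*}
\ol{\cal F}_n\twoheadrightarrow{\cal F}_n,\qquad \Om^i_{\ol{\cal Q}^{\rm ex}/\os{\circ}{S}}\twoheadrightarrow\Om^i_{{\cal Q}^{\rm ex}/\os{\circ}{S}}
\end{equation*}
given by tensoring with ${\cal O}_{{\cal Q}^{\rm ex}}$ over ${\cal O}_{\ol{\cal Q}^{\rm ex}}$. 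Working locally on $\os{\circ}{S}$ so that $\os{\circ}{\cal Q}^{\rm ex}\lo \os{\circ}{S}$ is affine, the right exactness of $L^{\rm UE}_{Y/\os{\circ}{S}}$ \textup{(\ref{prop:afex})} together with the exactness of $\eps^{\rm conv}_{Y/\os{\circ}{S}*}$ \textup{(\ref{lemm:efstex})} shows that the target of \textup{(\ref{eqn:eldomaep})} is obtained from that of \textup{(\ref{eqn:epdomaep})} as the quotient by the image of the $t$-multiples $te\otimes\omega$.

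The main step, which is the main obstacle, is to verify that $\nabla^i$ sends $t$-multiples to $t$-multiples. For local sections $e$ of $\ol{\cal F}_n$ and $\omega$ of $\Om^i_{\ol{\cal Q}^{\rm ex}/\os{\circ}{S}}$, the Leibniz rule applied to $\ol\nabla$ gives
\begin{equation*}
\nabla^i(te\otimes\omega)
=t\,\nabla^i(e\otimes\omega)+e\otimes(dt\wedge\omega)
=t\,\nabla^i(e\otimes\omega)+te\otimes(d\log t\wedge\omega),
\end{equation*}
where the second equality uses $dt=t\,d\log t$ and the linearity of the tensor product over ${\cal O}_{\ol{\cal Q}^{\rm ex}}$. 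Both summands on the right lie in the image of the $t$-multiples in $\ol{\cal F}_n\otimes\Om^{i+1}_{\ol{\cal Q}^{\rm ex}/\os{\circ}{S}}$, hence vanish after applying $L^{\rm UE}_{Y/\os{\circ}{S}}$ and passing to the non-bar side. Therefore $\nabla^i$ descends through the quotient, yielding a canonical differential \textup{(\ref{eqn:eldomaep})}. Finally, the local descriptions patch together because the construction is functorial in the open sets of $\os{\circ}{S}$ on which the local generator $t$ of $\ol M_S$ has been chosen (compare \textup{(\ref{eqn:tkjd})} and \textup{(\ref{eqn:tkijd})}): a change of chart replaces $t$ by a unit multiple, under which $d\log t$ is invariant modulo $d\log$ of units and the descended differential is unaffected.
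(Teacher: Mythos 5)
Your guiding idea---descend the differential from the $\ol{\cal Q}{}^{\rm ex}$-side modulo $t$, using $dt=t\,d\log t$---is exactly the mechanism of the paper's proof, but two steps of your execution do not hold up as written. First, the reduction to the affine case is faulty: shrinking $\os{\circ}{S}$ does not make $\os{\circ}{\cal Q}{}^{\rm ex}\lo\os{\circ}{S}$ affine, and \S\ref{eclf} imposes no affineness hypothesis on $\ol{\cal Q}$; hence your appeal to the right exactness of $L^{\rm UE}_{Y/\os{\circ}{S}}$ ((\ref{prop:afex})) to present $\eps^{\rm conv}_{Y/\os{\circ}{S}*}L^{\rm UE}_{Y/\os{\circ}{S}}({\cal F}\otimes_{{\cal O}_{{\cal Q}^{\rm ex}}}\Om^i_{{\cal Q}^{\rm ex}/\os{\circ}{S}})$ as the quotient of the barred object by the image of the $t$-multiples is not justified. (You could localize on $Y$ instead, but then you would need the compatibility of the linearizations and of the induced differentials with restriction, and a gluing argument, none of which you address.) Second, and more seriously, your Leibniz computation is carried out on local sections of $\ol{\cal F}_n\otimes\Om^i_{\ol{\cal Q}^{\rm ex}/\os{\circ}{S}}$, i.e.\ before linearization; but what has to descend is the differential (\ref{eqn:epdomaep}) on the linearized complex, which is not the functorial image of $\ol{\nabla}$ under $L^{\rm UE}_{Y/\os{\circ}{S}}$ --- this is precisely the difficulty recorded in (\ref{rema:ds}): ${\cal Q}^{\rm ex}$ is not formally log smooth over $\os{\circ}{S}$, so no general framework produces the differential, and asserting that the two Leibniz terms ``vanish after applying $L^{\rm UE}$'' begs the question of how the linearized differential interacts with multiplication by $t$.

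What closes this gap in the paper is an explicit evaluation: for an object $(\os{\circ}{U}{}',\os{\circ}{T}{}')$ of ${\rm Conv}(\os{\circ}{Y}/\os{\circ}{S})$ one lifts the map to $(\ol{\cal Q}{}^{\rm ex})^{\circ}$ from $\os{\circ}{U}{}'$ to $\os{\circ}{T}{}'$ by formal smoothness of $(\ol{\cal Q}{}^{\rm ex})^{\circ}/\os{\circ}{S}$, forms the universal enlargements in the diagram (\ref{cd:polpp}), and computes the values of both the barred and unbarred sheaves as $\vpl_np'_{n*}\ol{p}_n^*(-)$ ((\ref{eqn:ephaomep}) and (\ref{eqn:epomep})), using (\ref{lemm:efstex}) and (\ref{lemm:ymjeps}); on these explicit values the relation $dt=t\,d\log t=0$ in ${\cal F}\otimes_{{\cal O}_{{\cal Q}^{\rm ex}}}\Om^1_{{\cal Q}^{\rm ex}/\os{\circ}{S}}$ immediately yields the family $\{\nabla^i\}$ of (\ref{eqn:eldomaep}), with no affineness or right-exactness input. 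If you replace your quotient presentation by this evaluation at arbitrary enlargements (or, equivalently, prove a Leibniz-type identity for the linearized differential with respect to multiplication by $t$ on the values), your argument becomes correct and is then essentially the paper's.
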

\begin{proof} 
Let $(\os{\circ}{U}{}',\os{\circ}{T}{}')$ be an object of $(\os{\circ}{Y}/\os{\circ}{S})_{\rm conv}$. 
Since $(\ol{\cal Q}{}^{\rm ex})^{\circ}$ is formally smooth over $\os{\circ}{S}$, 
there exists a morphism $\os{\circ}{T}{}'\lo (\ol{\cal Q}{}^{\rm ex})^{\circ}$ over $\os{\circ}{S}$
which lifts the composite morphism 
$\os{\circ}{U}{}'\lo \os{\circ}{Y}\os{\sus}{\lo} 
(\ol{\cal Q}{}^{\rm ex})^{\circ}$. 
This morphism induces a morphism 
$\os{\circ}{T}{}'\lo \os{\circ}{\ol{T}}$ of prewideings of $\os{\circ}{Y}/\os{\circ}{S}$.  
Let
\begin{align*} 
\ol{\varphi}{}^*: 
\{{\rm isocrystals~of~}{\cal K}_{\os{\to}{\ol{T}}}{\rm -modules}\}
\lo 
\{{\rm isocrystals~of~}{\cal K}_{Y/\os{\circ}{S}}\vert_{\ol{T}}{\rm -modules}\}
\end{align*} 
be the functor (\ref{eqn:phyt}). 
Consider the following diagram 
\begin{equation*}
\begin{CD}
\{{\mathfrak T}_{\os{\circ}{U}{}',n}(\os{\circ}{T}{}'
\times_{\os{\circ}{S}}\ol{T})\}_{n=1}^{\infty} 
@>{\{\ol{p}_n\}_{n=1}^{\infty}}>> 
\{\ol{T}_n\}_{n=1}^{\infty}\\ 
@V{\{p'_n\}_{n=1}^{\infty}}VV \\
\os{\circ}{T}{}'@. @. .
\end{CD}
\tag{5.1.3}\label{cd:polpp}
\end{equation*} 
By (\ref{lemm:efstex}) and (\ref{lemm:ymjeps}) we have the following: 
\begin{align*}
(\eps^{\rm conv}_{Y/\os{\circ}{S}*}L^{\rm UE}_{Y/\os{\circ}{S}}
({\cal F}\otimes_{{\cal O}_{{\cal Q}{}^{\rm ex}}}\Om^i_{{\cal Q}{}^{\rm ex}
/\os{\circ}{S}})))_{\os{\circ}{T}{}'} 
&=(\eps^{\rm conv}_{Y/\os{\circ}{S}*}j_{\ol{T}*}\ol{\varphi}{}^*
({\cal F}\otimes_{{\cal O}_{{\cal Q}{}^{\rm ex}}}
\Om^i_{{\cal Q}{}^{\rm ex}/\os{\circ}{S}})
)_{\os{\circ}{T}{}'}
\tag{5.1.4}\label{eqn:ephaomep} \\ 
{} &=(j_{\os{\circ}{\ol{T}}*}
(\eps^{\rm conv}_{Y/\os{\circ}{S}})_{\ol{T}\,\os{\circ}{\ol{T}}{}*}
\ol{\varphi}{}^*({\cal F}\otimes_{{\cal O}_{{\cal Q}{}^{\rm ex}}}
\Om^i_{{\cal Q}{}^{\rm ex}/\os{\circ}{S}}))_{\os{\circ}{T}{}'} \\
{}& =((\eps^{\rm conv}_{Y/\os{\circ}{S}*})_{\ol{T}\,\os{\circ}{\ol{T}}{}*}
\ol{\varphi}{}^*({\cal F}\otimes_{{\cal O}_{{\cal Q}{}^{\rm ex}}}
\Om^i_{{\cal Q}{}^{\rm ex}/\os{\circ}{S}}))_{\os{\circ}{T}{}'\times_{\os{\circ}{S}}
\os{\circ}{\ol{T}}}\\
{} & =  (
\ol{\varphi}{}^*({\cal F}\otimes_{{\cal O}_{{\cal Q}{}^{\rm ex}}}
\Om^i_{{\cal Q}{}^{\rm ex}/\os{\circ}{S}}))_{\os{\circ}{T}{}'\times_{\os{\circ}{S}}{\ol{T}}}\\
{}& =\vpl_np'_{n*}\ol{p}_n^*({\cal F}
\otimes_{{\cal O}_{{\cal Q}{}^{\rm ex}}}\Om^i_{{\cal Q}{}^{\rm ex}/\os{\circ}{S}}).   
\end{align*} 
On the other hand, we have the following similarly: 
\begin{align*}
(\eps^{\rm conv}_{Y/\os{\circ}{S}*}L^{\rm UE}_{Y/\os{\circ}{S}}
(\ol{\cal F}\otimes_{{\cal O}_{{\cal Q}{}^{\rm ex}}}\Om^i_{\ol{\cal Q}{}^{\rm ex}/\os{\circ}{S}}))
_{\os{\circ}{T}{}'}
&=(\eps^{\rm conv}_{Y/\os{\circ}{S}*}j_{\ol{T}*}\ol{\varphi}^*
(\ol{\cal F}\otimes_{{\cal O}_{{\cal Q}{}^{\rm ex}}}\Om^i_{{\cal Q}{}^{\rm ex}/\os{\circ}{S}}))
_{\os{\circ}{T}{}'}
\tag{5.1.5}\label{eqn:epomep} \\ 
{} &=\vpl_np'_{n*}\ol{p}_n^*(\ol{\cal F}
\otimes_{{\cal O}_{\ol{\cal Q}{}^{\rm ex}}}\Om^i_{\ol{\cal Q}{}^{\rm ex}/\os{\circ}{S}}).
\end{align*} 
Because 
$dt=td\log t=0$ in ${\cal F}\otimes_{{\cal O}_{\cal Q}}\Om^1_{{\cal Q}{}^{\rm ex}/\os{\circ}{S}}$, 
we have the set $\{\nabla^i\}_{i=0}^{\infty}$ of 
differentials in (\ref{eqn:eldomaep}).  
\end{proof}

\begin{rema}\label{rema:ds}
The differential $\nabla^i$ in (\ref{eqn:eldomaep}) is not a special case of 
the differential (\ref{cd:lnhg}) in the case ${\cal Y}'/S'={\cal Q}^{\rm ex}/\os{\circ}{S}$ 
since ${\cal Q}^{\rm ex}$ is not necessarily formally log smooth over $\os{\circ}{S}$. 
\end{rema}

\begin{coro}\label{coro:ddn}
The differential {\rm (\ref{eqn:eldomaep})} 
induces the following differential$:$
\begin{align*}  
\nabla^i \col L^{\rm conv}_{\os{\circ}{Y}/\os{\circ}{S}}
({\cal F}\otimes_{{\cal O}_{{\cal Q}^{\rm ex}}}
\Om^i_{{\cal Q}^{\rm ex}/\os{\circ}{S}})
\lo 
L^{\rm conv}_{\os{\circ}{Y}/\os{\circ}{S}}
({\cal F}\otimes_{{\cal O}_{{\cal Q}^{\rm ex}}}
\Om^{i+1}_{{\cal Q}^{\rm ex}/\os{\circ}{S}}). 
\end{align*} 
\end{coro}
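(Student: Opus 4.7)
The plan is to deduce this corollary as an immediate renaming of Lemma \ref{lemm:cpax}, using the identification of functors already established. Recall from the chain of equalities (\ref{eqn:plavc}) that
\[
L^{\rm UE}_{\os{\circ}{Y}/\os{\circ}{S}}\bigl({\cal F}\otimes_{{\cal O}_{{\cal Q}^{\rm ex}}}\Om^i_{{\cal Q}^{\rm ex}/\os{\circ}{S}}\bigr)
\;=\;\eps^{\rm conv}_{Y/\os{\circ}{S}*}L^{\rm UE}_{Y/\os{\circ}{S}}\bigl({\cal F}\otimes_{{\cal O}_{{\cal Q}^{\rm ex}}}\Om^i_{{\cal Q}^{\rm ex}/\os{\circ}{S}}\bigr).
\]
This identification is provided by Lemma \ref{lemm:itmne} (the isomorphism (\ref{eqn:lnelmn})) applied to the pair $(M,N)$ where $M$ is the log structure of $Y$ and $N$ is trivial, and its hypothesis (that the underlying morphism of the system of universal enlargements is an immersion) is verified in the paragraph preceding (\ref{eqn:plavc}) thanks to the formal log smoothness of $(\ol{\cal Q}{}^{\rm ex})^{\circ}$ over $\os{\circ}{S}$. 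The $L^{\rm conv}_{\os{\circ}{Y}/\os{\circ}{S}}$ appearing in the statement of the corollary is, in the notation of \S\ref{sec:lll}, exactly this functor $L^{\rm UE}_{\os{\circ}{Y}/\os{\circ}{S}}$ associated to the immersion $\os{\circ}{Y}\os{\sus}{\lo} (\ol{\cal Q}{}^{\rm ex})^{\circ}$ over $\os{\circ}{S}$.

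Given this dictionary, the plan is simply to transport the differential $\nabla^i$ from (\ref{eqn:eldomaep}) via this identification. Concretely, I would write down the chain
\[
L^{\rm conv}_{\os{\circ}{Y}/\os{\circ}{S}}\bigl({\cal F}\otimes\Om^i_{{\cal Q}^{\rm ex}/\os{\circ}{S}}\bigr)
\;\cong\;
\eps^{\rm conv}_{Y/\os{\circ}{S}*}L^{\rm UE}_{Y/\os{\circ}{S}}\bigl({\cal F}\otimes\Om^i_{{\cal Q}^{\rm ex}/\os{\circ}{S}}\bigr)
\os{\nabla^i}{\lo}
\eps^{\rm conv}_{Y/\os{\circ}{S}*}L^{\rm UE}_{Y/\os{\circ}{S}}\bigl({\cal F}\otimes\Om^{i+1}_{{\cal Q}^{\rm ex}/\os{\circ}{S}}\bigr)
\;\cong\;
L^{\rm conv}_{\os{\circ}{Y}/\os{\circ}{S}}\bigl({\cal F}\otimes\Om^{i+1}_{{\cal Q}^{\rm ex}/\os{\circ}{S}}\bigr),
\]
the middle arrow being the differential produced in Lemma \ref{lemm:cpax}. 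The composite is the sought-after $\nabla^i$.

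The only substantive technical point — that the connection $\ol{\nabla}^i$ on $\ol{\cal F}\otimes \Om^{\bul}_{\ol{\cal Q}{}^{\rm ex}/\os{\circ}{S}}$ descends to a well-defined differential on the $S$-version, via the relation $dt = t\,d\log t$ so that the ``$d\log t$'' component dies after base change from $\ol{S}$ to $S$ — has already been carried out in Lemma \ref{lemm:cpax} by the explicit computation (\ref{eqn:ephaomep})--(\ref{eqn:epomep}). Consequently there is no genuine obstacle left; the corollary is a formal consequence of Lemma \ref{lemm:cpax} together with the identification (\ref{eqn:plavc}) (equivalently, Lemma \ref{lemm:itmne}). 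The one point requiring small attention is merely the bookkeeping to keep the two notations $L^{\rm UE}_{\os{\circ}{Y}/\os{\circ}{S}}$ and $L^{\rm conv}_{\os{\circ}{Y}/\os{\circ}{S}}$ straight throughout.
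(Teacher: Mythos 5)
Your proposal is correct and matches the paper's own argument: the paper's proof is exactly the citation of Lemma (\ref{lemm:itmne}) (via the identification (\ref{eqn:plavc})) together with the differential constructed in Lemma (\ref{lemm:cpax}), which is precisely the chain you write down.
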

\begin{proof} 
This follows from (\ref{lemm:itmne}) and (\ref{lemm:cpax}). 
\end{proof} 
By (\ref{coro:ddn}) we can consider the following complex 
\begin{align*}  
L^{\rm conv}_{\os{\circ}{Y}/\os{\circ}{S}}
({\cal F}\otimes_{{\cal O}_{{\cal Q}^{\rm ex}}}
\Om^{\bul}_{{\cal Q}^{\rm ex}/\os{\circ}{S}})\simeq 
\eps_{Y/\os{\circ}{S}*}
L^{\rm conv}_{Y/\os{\circ}{S}}
({\cal F}\otimes_{{\cal O}_{{\cal Q}^{\rm ex}}}
\Om^{\bul}_{{\cal Q}^{\rm ex}/\os{\circ}{S}}),
\end{align*} 
which is isomorphic to 
$R\eps_{Y/\os{\circ}{S}*}
L^{\rm conv}_{Y_/\os{\circ}{S}}
({\cal F}\otimes_{{\cal O}_{{\cal Q}^{\rm ex}}}
\Om^{\bul}_{{\cal Q}^{\rm ex}/\os{\circ}{S}})$ ((\ref{coro:epnr})). 
In \S\ref{sec:mplf} below we consider 
the cosimplicial version of this complex. 


\begin{lemm}\label{lemm:objt}
Consider $T$ as an object
of $(Y/\os{\circ}{S})_{\rm conv}$ 
by using the morphism $S\lo \os{\circ}{S}$. 
Let 
\begin{align*}
\varphi^*_{/\os{\circ}{S}}
\col & \{\text{the category of isocrystals of }
{\cal K}_{\os{\to}{T}}\text{-modules}\} \\
{} & \lo \{\text{the category of isocrystals of }
{\cal K}_{Y/\os{\circ}{S}}\vert_{T}\text{-modules}\}
\end{align*}
be a natural functor defined by 
$\varphi^*_{/\os{\circ}{S}}(\{{\cal E}_n\}_{n=1}^{\infty})(T')
=\Gam(T',\psi_n^*({\cal E}_n))$ for $n\in {\mab Z}_{\geq 1}$, where  
$\psi_n \col T'\lo T_n$ is a morphism such that the composite morphism 
$T'\lo T_n\lo T$ is 
the given morphism 
for an object $T'$ of ${\rm Conv}(Y/\os{\circ}{S})\vert_{T}$. 
Let 
\begin{align*}
\os{\circ}{\varphi}{}^*
\col & \{\text{the category of isocrystals of }
{\cal K}_{\os{\to}{T}}\text{-modules}\} \\
{} & \lo \{\text{the category of isocrystals of }
{\cal K}_{\os{\circ}{Y}/\os{\circ}{S}}\vert_{\os{\circ}{T}}\text{-modules}\}
\end{align*}
be a natural functor defined by 
$\os{\circ}{\varphi}{}^*(\{{\cal E}_n\}_{n=1}^{\infty})(T')
=\Gam(T',\psi_n^*({\cal E}_n))$ for $n\in {\mab Z}_{\geq 1}$, where  
$\psi_n \col T'\lo \os{\circ}{T}_n$ is a morphism 
such that the composite morphism 
$T'\lo \os{\circ}{T}_n\lo \os{\circ}{T}$ is 
the given morphism 
for an object $T'$ of ${\rm Conv}(\os{\circ}{Y}/\os{\circ}{S})\vert_{\os{\circ}{T}}$. 
$($Note that, because the immersion $Y\os{\sus}{\lo} {\cal Q}^{\rm ex}$ is exact, 
$\{\os{\circ}{T}_n:=(T_n)^{\circ}\}_{n=1}^{\infty}$ is the system of 
the universal enlargements of the immersion 
$\os{\circ}{Y}\os{\sus}{\lo} ({\cal Q}^{\rm ex})^{\circ}.)$ 
Then 
\begin{align*} 
L_{Y/\os{\circ}{S}}^{\rm UE}({\cal E}) 
= j_{T*}\varphi^*_{/\os{\circ}{S}}(\{{\cal E}_n\}_{n=1}^{\infty}) \in 
(Y/\os{\circ}{S})_{\rm conv}, 
\tag{5.4.1}\label{eqn:oxlndz}
\end{align*} 
\begin{align*} 
L_{\os{\circ}{Y}/\os{\circ}{S}}^{\rm UE}({\cal E}) 
= j_{\os{\circ}{T}*}\os{\circ}{\varphi}{}^*(\{{\cal E}_n\}_{n=1}^{\infty}) \in 
(\os{\circ}{Y}/\os{\circ}{S})_{\rm conv}. 
\tag{5.4.2}\label{eqn:oxlpndz}
\end{align*} 
\end{lemm}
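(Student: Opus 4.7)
The plan is to unwind both sides of each equality by reducing them to the general definition (\ref{eqn:luys}) of the log convergent linearization functor, applied to the appropriate immersion over $\os{\circ}{S}$, and then to identify the corresponding system of universal enlargements.

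For (\ref{eqn:oxlndz}), I would view $T=(Y,{\cal Q}^{\rm ex},\iota,{\rm id}_Y)$ as a prewidening of $Y/\os{\circ}{S}$ by composing with $S\lo \os{\circ}{S}$. A direct check from the universal property in (\ref{prop:exue}) and its iteration to higher radii shows that the system of universal enlargements of this prewidening inside ${\rm Conv}(Y/\os{\circ}{S})$ is again $\{T_n\}_{n=1}^{\infty}$: any enlargement $T'$ of $Y/\os{\circ}{S}$ with radius $\leq |\pi|$ equipped with a morphism $T'\lo T$ automatically acquires an $S$-structure via the composite $T'\lo T\lo S$, so the universality over $\os{\circ}{S}$ reduces to the universality over $S$ already verified in (\ref{prop:exue}). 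Since $\varphi^*_{/\os{\circ}{S}}$ is, by construction, precisely the instance of the functor (\ref{eqn:phyt}) for the localized topos $(Y/\os{\circ}{S})_{\rm conv}\vert_{T}$, the formula (\ref{eqn:luys}) gives (\ref{eqn:oxlndz}) at once.

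For (\ref{eqn:oxlpndz}), I would apply the same template to the immersion of formal schemes $\os{\circ}{Y}\os{\sus}{\lo}({\cal Q}^{\rm ex})^{\circ}$ over $\os{\circ}{S}$, equipped with the trivial log structures. Setting $\os{\circ}{T}$ to be the associated prewidening of $\os{\circ}{Y}/\os{\circ}{S}$, the parenthetical assertion in the statement identifies its system of universal enlargements with $\{\os{\circ}{T}_n\}_{n=1}^{\infty}$. The exactness of $\iota\col Y\os{\sus}{\lo}{\cal Q}^{\rm ex}$ enters here: under exactness, the iterated construction in (\ref{prop:exue}) is controlled entirely by powers of the ideal ${\rm Ker}({\cal O}_{T}\lo {\cal O}_{Y})$ on the underlying formal scheme, with the log structures playing no extra role, so that forgetting the log structures commutes with the universal enlargement construction. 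Applying (\ref{eqn:luys}) once more to this immersion of underlying formal schemes yields (\ref{eqn:oxlpndz}).

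The one place in the argument that requires genuine care, and where I expect the bulk of the verification to lie, is the compatibility between forgetting the log structure and taking universal enlargements under the exactness hypothesis (needed for (\ref{eqn:oxlpndz})); the remainder is a routine translation of (\ref{eqn:luys}) and (\ref{eqn:phyt}) into the notation of the statement.
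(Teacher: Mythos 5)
There is a genuine gap, and it sits exactly where you declare the argument to be ``routine''. You implicitly treat $L^{\rm UE}_{Y/\os{\circ}{S}}$ as the linearization functor attached to the immersion $Y\os{\sus}{\lo}{\cal Q}^{\rm ex}$ over $\os{\circ}{S}$, so that (\ref{eqn:oxlndz}) becomes an instance of the definition (\ref{eqn:luys}) once the universal enlargements of $T$ are identified with $\{T_n\}_{n=1}^{\infty}$. But in \S\ref{eclf} the functors $L^{\rm UE}_{Y/\os{\circ}{S}}$ and $L^{\rm UE}_{\os{\circ}{Y}/\os{\circ}{S}}$ are defined with respect to the immersions $Y\os{\sus}{\lo}\ol{\cal Q}{}^{\rm ex}$ and $\os{\circ}{Y}\os{\sus}{\lo}(\ol{\cal Q}{}^{\rm ex})^{\circ}$ over $\os{\circ}{S}$, i.e.\ via the prewidening $\ol{T}$ and the system $\{\ol{T}_n\}_{n=1}^{\infty}$, the coefficients $\{{\cal E}_n\}_{n=1}^{\infty}$ being viewed as ${\cal K}_{\os{\to}{\ol{T}}}$-modules through the closed immersions $T_n\os{\sus}{\lo}\ol{T}_n$. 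The content of the lemma is precisely the comparison between the construction over the ambient space $\ol{\cal Q}{}^{\rm ex}$ and the construction over the closed log formal subscheme ${\cal Q}^{\rm ex}=\ol{\cal Q}{}^{\rm ex}\times_{\ol{S}}S$; the statement you take as your starting point (``the linearizations are the linearizations with respect to $Y\os{\sus}{\lo}{\cal Q}^{\rm ex}$, resp.\ $\os{\circ}{Y}\os{\sus}{\lo}\os{\circ}{\cal Q}{}^{\rm ex}$'') is drawn in the paper as a consequence of the lemma, so your argument is circular at the decisive point. A secondary inaccuracy: by (\ref{lemm:bcue}) the universal enlargement of $Y\os{\sus}{\lo}{\cal Q}^{\rm ex}$ is $\wt{\ol{T}_n\times_{\ol{S}}S}$, the $\pi$-torsion-free quotient, and not $T_n=\ol{T}_n\times_{\ol{S}}S$ itself (the closed immersion $S\os{\sus}{\lo}\ol{S}$ is not flat), so even your identification of the universal enlargements of $T$ with $\{T_n\}$ requires an argument you do not give; and the point you single out as the bulk of the verification --- compatibility of forgetting the log structure with universal enlargements --- only concerns the parenthetical remark about $\{\os{\circ}{T}_n\}$, not the identities (\ref{eqn:oxlndz}) and (\ref{eqn:oxlpndz}) themselves.

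What is actually needed, and what the paper does, is a computation of both sides on an arbitrary enlargement $(U',T')$ of $Y/\os{\circ}{S}$. One first observes that the morphism $U'\lo Y\os{\sus}{\lo}\ol{T}$ factors through $T$. By the formula (\ref{eqn:lueyset}), the value $L^{\rm UE}_{Y/\os{\circ}{S}}({\cal E})_{T'}$ is the projective limit over $n$ of the pushforwards to $T'$ of the pullbacks of ${\cal E}_n$ along the universal enlargements ${\mathfrak T}_{U',n}(T'\times_{\os{\circ}{S}}\ol{T})$. The key step is then (\ref{lemm:bcue}) applied to the closed immersion $T\os{\sus}{\lo}\ol{T}$, giving ${\mathfrak T}_{U',n}(T'\times_{\os{\circ}{S}}T)=\wt{{\mathfrak T}_{U',n}(T'\times_{\os{\circ}{S}}\ol{T})}\times_{\ol{T}}T$; since the ${\cal E}_n$ are supported on the $T_n$, this lets one rewrite the limit as the corresponding limit over the enlargements relative to ${\cal Q}^{\rm ex}$, which is by definition the value of $j_{T*}\varphi^*_{/\os{\circ}{S}}(\{{\cal E}_n\}_{n=1}^{\infty})$ at $T'$; the same computation with $(\ol{\cal Q}{}^{\rm ex})^{\circ}$ and $({\cal Q}^{\rm ex})^{\circ}$ gives (\ref{eqn:oxlpndz}). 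None of this base-change argument appears in your proposal, and without it the equalities (\ref{eqn:oxlndz}) and (\ref{eqn:oxlpndz}) are not established.
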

\begin{proof} 
For an object $(U',T')$ of $(Y/\os{\circ}{S})_{\rm conv}$, 
the morphism $U'\lo Y\os{\sus}{\lo} \ol{T}$ 
factors through a morphism $U'\lo Y\os{\sus}{\lo} T$. 
Hence $U'=U'\times_{(T'\times_{\os{\circ}{S}}\ol{T})}(T'\times_{\os{\circ}{S}}T)$. 
Consider the following diagram: 
\begin{equation*}
\begin{CD}
\{{\mathfrak T}_{U',n}(T'\times_{\os{\circ}{S}}\ol{T})\times_{\ol{T}_n}T_n\}_{n=1}^{\infty} 
@>{\{p_n\}_{n=1}^{\infty}}>> \{T_n\}_{n=1}^{\infty}@>>>T\\
@V{\{p'_n\}_{n=1}^{\infty}}VV @VVV @VVV \\
\{{\mathfrak T}_{U',n}(T'\times_{\os{\circ}{S}}\ol{T})\}_{n=1}^{\infty} 
@>{\{\ol{p}_n\}_{n=1}^{\infty}}>> \{\ol{T}_n\}_{n=1}^{\infty}@>>> \ol{T}\\ 
@V{\{\ol{p}'_n\}_{n=1}^{\infty}}VV \\
T'.@. @. 
\end{CD}
\tag{5.4.3}\label{cd:pdpp}
\end{equation*} 
The right square in (\ref{cd:pdpp}) is cartesian. 
By (\ref{lemm:bcue}), 
\begin{align*} 
{\mathfrak T}_{U',n}(T'\times_{\os{\circ}{S}}T)
=\wt{{\mathfrak T}_{U',n}(\ol{T}{}'\times_{\os{\circ}{S}}\ol{T})}\times_{\ol{T}}T. 
\tag{5.4.4}\label{cd:pdbpp}
\end{align*} 
Hence we have the following: 
\begin{align*} 
L_{Y/\os{\circ}{S}}^{\rm UE}({\cal E})_{T'}&= \vpl_n\ol{p}{}'_{n*}\ol{p}_n^*({\cal E}_n)
=\vpl_np'_{n*}p_n^*({\cal E}_n). 
\tag{5.4.5}\label{cd:plbpp}
\end{align*} 
The last projective limit in (\ref{cd:plbpp}) is equal to 
$j_{T*}\varphi^*_{/\os{\circ}{S}}(\{{\cal E}_n\}_{n=1}^{\infty}) \in (Y/\os{\circ}{S})_{\rm conv}$. 
We obtain the formula (\ref{eqn:oxlpndz}) similarly. 
\end{proof} 

The lemma (\ref{lemm:objt}) 
implies that the linearizations $L_{Y/\os{\circ}{S}}$ and $L_{\os{\circ}{Y}/\os{\circ}{S}}$
are equal to the linearizations with respect to the immersions 
$Y\os{\sus}{\lo} {\cal Q}^{\rm ex}$ over $\os{\circ}{S}$ and 
$\os{\circ}{Y}\os{\sus}{\lo} \os{\circ}{\cal Q}{}^{\rm ex}$ over $\os{\circ}{S}$, respectively, for 
coherent ${\cal K}_{\os{\to}{T}}$-moduless.  
Note that ${\cal Q}{}^{\rm ex}$ and 
$({\cal Q}{}^{\rm ex})^{\circ}$
are not necessarily log smooth over $\os{\circ}{S}$ 
and not smooth over $\os{\circ}{S}$, respectively.

\begin{exem}\label{exem:sss}
In \S\ref{sec:mplf} below  we consider the following two cases for 
a ${\cal V}$-formal family $S$ of log points and an  SNCL scheme $Y$ over $S_1$:   
\par 
(1)   
$$\eps_{(\os{\circ}{Y},M,N)/S'/S}:=
\eps_{(\os{\circ}{X},M_X,{\cal O}_X^*)/S/\os{\circ}{S}}=:\eps_{X/S}.$$ 
\par 
(2) 
$$\eps_{(\os{\circ}{Y},M,N)/S'/S}:=\eps_{(\os{\circ}{X},M_X,{\cal O}_X^*)/\os{\circ}{S}/\os{\circ}{S}}=:\eps_{X/\os{\circ}{S}}.$$
(The latter case is quite less familiar than the former case.) 
\end{exem}

\section{Log convergent linearization functors of SNCL schemes}\label{sec:lcs}
In this section we show several properties of 
log convergent linearization functors of SNCL schemes. 
These results are SNCL versions of results in \cite[\S6]{nhw} 
in which several properties of the log convergent linearization functor of 
a smooth scheme with a relative SNCD have been shown.     
\par
For a log smooth log formal scheme ${\cal Q}$ over a log formal scheme $T$ 
and for a nonnegative integer $i$ and an integer $k$,  
set 
\begin{equation*} 
P_k
\Om^i_{{\cal Q}/T} =
\begin{cases} 
0 & (k<0), \\
{\rm Im}(\Om^k_{{\cal Q}/T}
{\otimes}_{{\cal O}_{\cal Q}}\Om^{i-k}_{\os{\circ}{\cal Q}/T} \lo 
\Om^i_{{\cal Q}/T}) & (0\leq k\leq i), \\
\Om^i_{{\cal Q}/T} & (k > i).
\end{cases}
\tag{6.0.1}\label{eqn:leolpw} 
\end{equation*}
Then we have a filtration 
$P:=\{P_k\}_{k\in {\mab Z}}$ 
on $\Om^i_{{\cal Q}/T}$. 
\bigskip 
In this section we assume that there exists an immersion 
$X \os{\sus}{\lo} \ol{\cal P}$
into a formally log smooth log ${\cal V}$-formal scheme over $\ol{S}$. 
Let $X\os{\sus}{\lo} \ol{\cal P}{}^{\rm ex}$ 
be the exactification of this immersion. 
\par 
Set ${\cal P}:=\ol{\cal P}\times_{\ol{S}}S$ and ${\cal P}^{\rm ex}:=\ol{\cal P}{}^{\rm ex}\times_{\ol{S}}S$. 
Then ${\cal P}^{\rm ex}$ is the exactification of 
the immersion $X\os{\sus}{\lo} {\cal P}$.  
\par 
For a finitely generated monoid $Q$ 
and a set $\{q_1,\ldots,q_n\}$ 
$(n\in {\mab Z}_{\geq 1})$ of generators of $Q$, 
we say that $\{q_1,\ldots,q_n\}$ is {\it minimal} if 
there does not exit a generator $\{q'_1,\ldots,q'_r\}$ of $Q$ such that $r<n$. 
Let $Y=(\os{\circ}{Y},M_Y)$ be 
an fs(=fine and saturated) log (formal) scheme. 
Let $y$ be a point of $\os{\circ}{Y}$. 
Let $m_{1,y},\ldots, m_{r,y}$ be local sections of $M_Y$ around $y$ 
whose images in $M_{Y,y}/{\cal O}_{Y,y}^*$ 
form a minimal set of generators of $M_{Y,y}/{\cal O}_{Y,y}^*$. 
Let $D(M_Y)_i$ $(1\leq i \leq r)$ 
be the local closed subscheme of 
$\os{\circ}{Y}$ defined by the ideal sheaf generated by 
the image of $m_{i,y}$ in ${\cal O}_Y$. 
For a nonnegative integer $k$, 
let $D^{(k+1)}(M_Y)$ be the disjoint union of 
the $(k+1)$-fold intersections of different $D(M_Y)_i$'s. 
Assume that 
\begin{equation*} 
M_{Y,y}/{\cal O}^*_{Y,y}\simeq {\mab N}^r
\tag{6.0.2}\label{eqn:epynr}
\end{equation*}
for any point $y$ of $\os{\circ}{Y}$ 
and for some $r\in {\mab N}$ depending on $y$. 
It is easy to show that the scheme $D^{(k+1)}(M_Y)$ is 
independent of the choice of $m_{1,y},\ldots, m_{r,y}$ 
and it is globalized (cf.~\cite[\S4]{nh3}). 
We denote this globalized scheme 
by the same symbol $D^{(k+1)}(M_Y)$. 
Set $D^{(0)}(M_Y):=\os{\circ}{Y}$. 
Let $c^{(k+1)} \col D^{(k+1)}(M_Y) \lo \os{\circ}{Y}$ 
be the natural morphism. 
\par 
As in \cite[(3.1.4)]{dh2} and \cite[(2.2.18)]{nh2}, 
we have an orientation sheaf $\vp_{\rm zar}^{(k+1)}(D(M_Y))$ 
$(k\in {\mab N})$ associated to the set of $D(M_Y)_i$'s. 
We have the following equality 
\begin{equation*} 
c^{(k+1)}_*\vp_{\rm zar}^{(k+1)}(D(M_Y)) 
=\bigwedge^{k+1}(M_Y^{\rm gp}/{\cal O}_Y^*) 
\tag{6.0.3}\label{eqn:bkezps}
\end{equation*} 
as sheaves of abelian groups on $\os{\circ}{Y}$.   

\par 
The following has been proved in \cite{nh3}: 

\begin{prop}[{\bf \cite[(4.3)]{nh3}}]\label{prop:mmoo} 
Let $g\col Y \lo Y'$ be a morphism of fs log schemes 
satisfying the condition {\rm (\ref{eqn:epynr})}. 
Assume that, for each point $y\in \os{\circ}{Y}$ 
and for each member $m$ of the minimal generators 
of $M_{Y,y}/{\cal O}^*_{Y,y}$, there exists 
a unique member $m'$ of the minimal generators of 
$M_{Y',\os{\circ}{g}(y)}/{\cal O}^*_{Y',\os{\circ}{g}(y)}$ 
such that $g^*(m')\in m^{{\mab Z}_{>0}}$. 
Then there exists a canonical morphism 
$g^{(k+1)}\col D^{(k+1)}(M_Y)\lo D^{(k+1)}(M_{Y'})$ fitting 
into the following commutative diagram of schemes$:$
\begin{equation*} 
\begin{CD} 
D^{(k+1)}(M_Y) @>{g^{(k+1)}}>> D^{(k+1)}(M_{Y'}) \\ 
@VVV @VVV \\ 
\os{\circ}{Y} @>{\os{\circ}{g}}>> \os{\circ}{Y}{}'. 
\end{CD} 
\tag{6.1.1}\label{cd:dmgdm}
\end{equation*} 
\end{prop}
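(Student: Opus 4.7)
The plan is to construct $g^{(k+1)}$ locally via the combinatorial data of the minimal generators and then to verify that the construction descends to a global morphism via the standard gluing for $D^{(k+1)}(M_Y)$ recalled from \cite{nh3}. Fix a point $y\in \os{\circ}{Y}$ and put $y':=\os{\circ}{g}(y)$. By the assumption (\ref{eqn:epynr}) the monoid $M_{Y,y}/{\cal O}^*_{Y,y}$ is isomorphic to ${\mab N}^r$ for some $r$; hence its set of minimal generators is unique and coincides with the standard basis. Let $\{m_1,\ldots,m_r\}$ and $\{m'_1,\ldots,m'_s\}$ denote these minimal generators for $M_{Y,y}/{\cal O}^*_{Y,y}$ and $M_{Y',y'}/{\cal O}^*_{Y',y'}$, respectively. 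By the hypothesis of the proposition, the assignment $m_i\mapsto m'_{\sigma(i)}$, where $\sigma(i)$ is the unique index with $g^*(m'_{\sigma(i)})\in m_i^{{\mab Z}_{>0}}$, defines a well-defined map $\sigma\col \{1,\ldots,r\}\to \{1,\ldots,s\}$.

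Next I would check that $\sigma$ is injective. If $\sigma(i)=\sigma(j)$, then the common element $g^*(m'_{\sigma(i)})$ lies both in $m_i^{{\mab Z}_{>0}}$ and in $m_j^{{\mab Z}_{>0}}$; since $M_{Y,y}/{\cal O}^*_{Y,y}\simeq {\mab N}^r$ has no nontrivial relations among distinct standard generators, this forces $i=j$. Now the local pieces $D(M_Y)_i$ and $D(M_{Y'})_{\sigma(i)}$ are cut out by the ideals generated by the images of $m_i$ and $m'_{\sigma(i)}$ in ${\cal O}_Y$ and ${\cal O}_{Y'}$. Because $g^*(m'_{\sigma(i)})=m_i^{n_i}u_i$ for some $n_i\in {\mab Z}_{>0}$ and $u_i\in {\cal O}^*_{Y,y}$, the pull-back of the defining ideal of $D(M_{Y'})_{\sigma(i)}$ is contained in the defining ideal of $D(M_Y)_i$. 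Hence $\os{\circ}{g}$ sends the local closed subscheme $D(M_Y)_i$ into $D(M_{Y'})_{\sigma(i)}$, and by intersecting $k+1$ of them and using the injectivity of $\sigma$ we obtain a local morphism
\begin{equation*}
D(M_Y)_{i_0}\cap\cdots\cap D(M_Y)_{i_k}\lo D(M_{Y'})_{\sigma(i_0)}\cap\cdots\cap D(M_{Y'})_{\sigma(i_k)}
\end{equation*}
whose target is genuinely a $(k{+}1)$-fold intersection of distinct components and thus a summand of $D^{(k+1)}(M_{Y'})$.

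To globalize, I would invoke the same descent mechanism used in \cite[\S4]{nh3} to define $D^{(k+1)}(M_Y)$ itself. The only datum involved in the local construction above is the assignment $m_i\mapsto m'_{\sigma(i)}$ determined by the uniqueness clause in the hypothesis; this datum is independent of the choice of lifts of $m_i,m'_j$ to actual local sections of $M_Y,M_{Y'}$, because replacing $m_i$ by $m_i\cdot v$ with $v\in {\cal O}^*_{Y,y}$ does not change its image in $M_{Y,y}/{\cal O}^*_{Y,y}$, and similarly for $m'_j$. Hence the local morphisms agree on overlaps and patch to a canonical morphism $g^{(k+1)}\col D^{(k+1)}(M_Y)\lo D^{(k+1)}(M_{Y'})$. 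Commutativity of (\ref{cd:dmgdm}) is tautological from the construction, since on each local piece $g^{(k+1)}$ is literally the restriction of $\os{\circ}{g}$.

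The only nontrivial point is the uniqueness/injectivity of $\sigma$, which is exactly what makes the image of a $(k{+}1)$-fold intersection of distinct components land again in a $(k{+}1)$-fold intersection of \emph{distinct} components, and hence in $D^{(k+1)}(M_{Y'})$ rather than in the diagonal. I expect this, together with the verification that the local construction is independent of the choice of lifts, to be the main (albeit mild) obstacle; everything else is a direct unwinding of the definitions recalled from \cite{nh3}.
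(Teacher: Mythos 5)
Your argument is correct, and it is the natural one: the uniqueness hypothesis gives a well-defined matching $\sigma$ of minimal generators which is injective because $M_{Y,y}/{\cal O}^*_{Y,y}\simeq {\mab N}^r$ is free, the relation $g^*(m'_{\sigma(i)})\in m_i^{{\mab Z}_{>0}}$ shows the pulled-back defining ideal of $D(M_{Y'})_{\sigma(i)}$ lies in that of $D(M_Y)_i$, and the local morphisms of $(k+1)$-fold intersections glue exactly as in the construction of $D^{(k+1)}(M_Y)$ itself. Note that this paper gives no proof of the proposition but simply quotes it from \cite[(4.3)]{nh3}, so there is nothing further to compare; your reconstruction matches the expected argument there.
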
 

\par 
When $Y=X$, 
we denote $D^{(k+1)}(M_Y)$ by $\os{\circ}{X}{}^{(k)}$ and 
$\vp_{\rm zar}^{(k+1)}(D(M_Y))$ by 
$\vp^{(k)}_{\rm zar}(\os{\circ}{X}/S_1)$. 
The sheaf $\vp^{(k)}_{\rm zar}(\os{\circ}{X}/S_1)$ is 
extended to an abelian sheaf  
$\vp^{(k)}_{{\rm conv}}(\os{\circ}{X}/S)$ 
in the convergent topos $(\os{\circ}{X}{}^{(k)}/\os{\circ}{S})_{\rm conv}$. 
We call
$\vp^{(k)}_{\rm zar}(\os{\circ}{X}/S_1)$ and $\vp^{(k)}_{\rm conv}(\os{\circ}{X}/S)$ 
the {\it zariskian orientation sheaf}  of 
$\os{\circ}{X}{}^{(k)}/\os{\circ}{S}_1$ and the {\it convergent orientation sheaf} 
of $\os{\circ}{X}{}^{(k)}/\os{\circ}{S}$, respectively.


Because ${\cal P}^{\rm ex}$ is a formal SNCL scheme over $S$ (\cite[(1.14.1)]{nb}), 
we can define $(\os{\circ}{\cal P}{}^{\rm ex})^{(k)}$ as usual and we see that 
$$D^{(k+1)}(M_{\ol{\cal P}{}^{\rm ex}})=D^{(k+1)}(M_{{\cal P}^{\rm ex}})
=\os{\circ}{\cal P}{}^{{\rm ex},(k)}.$$ 
Let 
$a^{(k)} \col \os{\circ}{X}{}^{(k)}\lo \os{\circ}{X}$ and 
$b^{(k)}: \os{\circ}{\cal P}{}^{{\rm ex},(k)}
\lo \os{\circ}{\cal P}{}^{\rm ex}$ 
be the morphisms induced by the natural closed immersions. 
\par 
Let the middle objects in the following table 
be the prewidenings obtained by
the left immersions over $S$ and $\os{\circ}{S}$ and 
let the right objects be the systems 
of the universal enlargements of 
these prewidenings:

\begin{equation*}
\begin{tabular}{|l|l|l|l|} \hline 
$X \os{\subset}{\lo} {\cal P}^{\rm ex}$  
& $T$ & 
$\os{\to}{T}:=\{T_n\}_{n=1}^{\infty}$ & over $S$ \\
\hline   
$X \os{\subset}{\lo} \ol{\cal P}{}^{\rm ex}$  
& $\ol{T}$ & $\os{\to}{\ol{T}}:=\{\ol{T}_n\}_{n=1}^{\infty}$ & over $\os{\circ}{S}$
\\ \hline   
$\os{\circ}{X} \os{\subset}{\lo} \os{\circ}{\ol{\cal P}}{}^{\rm ex}$  
& $\os{\circ}{\ol{T}}$ & 
$\os{\to}{\os{\circ}{\ol{T}}}:=\{\os{\circ}{\ol{T}}_n\}_{n=1}^{\infty}$
& over $\os{\circ}{S}$\\ \hline    
$\os{\circ}{X}{}^{(k)} \os{\subset}{\lo} 
\os{\circ}{\cal P}{}^{{\rm ex},(k)}$ & $\os{\circ}{T}{}^{(k)}$ & 
$\os{\to}{\os{\circ}{T}{}^{(k)}}:=
\{\os{\circ}{T}{}^{(k)}_n\}_{n=1}^{\infty}$ & over $\os{\circ}{S}$
\\ \hline 
\end{tabular}
\end{equation*}
\bigskip
\parno
Let $g_n \col T_n \lo  {\cal P}^{\rm ex}$ 
and  
$\os{\circ}{g}{}^{(k)}_n \col \os{\circ}{T}{}^{(k)}_n \lo \os{\circ}{\cal P}{}^{{\rm ex},(k)}$ 
$(n\in {\mab Z}_{\geq 1})$ 
be the natural morphisms.  
Let  $c^{(k)}_n \col \os{\circ}{T}{}^{(k)}_n \lo \os{\circ}{\ol{T}}_n$
be the morphism  induced by 
$b^{(k)} \col \os{\circ}{\cal P}{}^{{\rm ex},(k)} \lo \os{\circ}{\ol{\cal P}}{}^{\rm ex}$. 

\begin{lemm}[{\rm {\bf cf.~\cite[(2.2.16) (1)]{nh2}}}]
\label{lemm:dpinc}
The natural morphism 
$\os{\circ}{T}{}^{(k)}_n \lo 
\os{\circ}{\ol{T}}_n\times_{\os{\circ}{\ol{\cal P}}{}^{\rm ex}} 
(\os{\circ}{\cal P}{}^{\rm ex})^{(k)}=
\os{\circ}{{T}}_n\times_{\os{\circ}{{\cal P}}{}^{\rm ex}} 
(\os{\circ}{\cal P}{}^{\rm ex})^{(k)}$ 
is an isomorphism. 
\end{lemm}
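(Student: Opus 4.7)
The plan is to deduce this lemma from the base-change result for universal enlargements (Lemma \ref{lemm:bcue}), with the main work being a local argument showing that the fiber product on the right is already $\pi$-torsion free, so that the tilde appearing in \ref{lemm:bcue} disappears.

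First I would verify the Cartesian compatibility $\os{\circ}{X}{}^{(k)}=\os{\circ}{X}\times_{\os{\circ}{\ol{\cal P}}{}^{\rm ex}}\os{\circ}{\cal P}{}^{{\rm ex},(k)}$. Because the immersion $X\os{\sus}{\lo}{\cal P}^{\rm ex}$ is exact, Proposition \ref{prop:mmoo} applies and identifies each smooth component of $X/S_1$ with a scheme-theoretic intersection $\os{\circ}{\cal P}{}^{\rm ex}_\mu\cap\os{\circ}{X}$ for a unique $\mu$; combined with the equality $D^{(k+1)}(M_{\ol{\cal P}{}^{\rm ex}})=D^{(k+1)}(M_{{\cal P}{}^{\rm ex}})=\os{\circ}{\cal P}{}^{{\rm ex},(k)}$ recorded just above the lemma, this gives the desired Cartesian square. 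Applying Lemma \ref{lemm:bcue} to the morphism of prewidenings
\[
(\os{\circ}{X}{}^{(k)},\os{\circ}{\cal P}{}^{{\rm ex},(k)})\lo(\os{\circ}{X},\os{\circ}{\ol{\cal P}}{}^{\rm ex})
\]
of $\os{\circ}{X}{}^{(k)}/\os{\circ}{S}$ and $\os{\circ}{X}/\os{\circ}{S}$ then yields the canonical isomorphism
\[
\os{\circ}{T}{}^{(k)}_n\os{\sim}{\lo}\wt{\os{\circ}{\ol{T}}_n\times_{\os{\circ}{\ol{\cal P}}{}^{\rm ex}}\os{\circ}{\cal P}{}^{{\rm ex},(k)}}.
\]

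The main task, which I expect to be the true obstacle, is to show that $\os{\circ}{\ol{T}}_n\times_{\os{\circ}{\ol{\cal P}}{}^{\rm ex}}\os{\circ}{\cal P}{}^{{\rm ex},(k)}$ is already $\pi$-torsion free, so that the tilde is superfluous. My approach would be \'{e}tale-local on $\os{\circ}{\ol{\cal P}}{}^{\rm ex}$, working in a standard SNCL chart of the form ${\rm Spf}({\cal O}_{\os{\circ}{\ol{S}}}\{x_0,\ldots,x_d\}/(x_0\cdots x_a-t))$: in this chart $\os{\circ}{\cal P}{}^{{\rm ex},(k)}$ (on the component indexed by $\{i_0,\ldots,i_k\}$) is cut out by $(x_{i_0},\ldots,x_{i_k})$, which automatically forces $t=0$, and the resulting quotient is smooth over $\os{\circ}{S}$ and in particular ${\cal V}$-flat; hence $(x_{i_0},\ldots,x_{i_k})$ is a regular sequence modulo $\pi$ in ${\cal O}_{\os{\circ}{\ol{\cal P}}{}^{\rm ex}}$. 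From the explicit construction of $\os{\circ}{\ol{T}}_n$ as a $\pi$-adically completed blow-up of $({\rm Ker}({\cal O}_{\os{\circ}{\ol{\cal P}}{}^{\rm ex}}\to {\cal O}_{\os{\circ}{X}}))^n$ together with $\pi$, with $\pi$-torsion killed, one checks locally that this regularity modulo $\pi$ is preserved in ${\cal O}_{\os{\circ}{\ol{T}}_n}$, so the fiber product remains ${\cal V}$-flat. This is the same sort of local computation as in \cite[(2.2.16)(1)]{nh2}, which the statement cites.

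Finally, the stated equality $\os{\circ}{\ol{T}}_n\times_{\os{\circ}{\ol{\cal P}}{}^{\rm ex}}\os{\circ}{\cal P}{}^{{\rm ex},(k)}=\os{\circ}{T}_n\times_{\os{\circ}{\cal P}{}^{\rm ex}}\os{\circ}{\cal P}{}^{{\rm ex},(k)}$ is obtained by running the same machinery on the base change ${\cal P}^{\rm ex}=\ol{\cal P}{}^{\rm ex}\times_{\ol{S}}S$ with common widening $X$: Lemma \ref{lemm:bcue} presents $\os{\circ}{T}_n$ as the tilde of $\os{\circ}{\ol{T}}_n\times_{\os{\circ}{\ol{\cal P}}{}^{\rm ex}}\os{\circ}{\cal P}{}^{\rm ex}$, and further base change along $\os{\circ}{\cal P}{}^{{\rm ex},(k)}\os{\sus}{\lo}\os{\circ}{\cal P}{}^{\rm ex}$, combined with the observation that $t\in(x_{i_0},\ldots,x_{i_k})$ in the local model (so the extra cut $t=0$ is already implicit), together with the same local flatness argument as above, gives the equality on the nose.
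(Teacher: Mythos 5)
Your overall route is the same as the paper's: the paper's proof is a single sentence observing that ``the defining ideal of the immersion $\os{\circ}{X}\os{\sus}{\lo}\os{\circ}{\ol{\cal P}}{}^{\rm ex}$ has no relation with the ideal of the image of the log structure of $\ol{\cal P}{}^{\rm ex}$'' and then deferring to the SNCD case \cite[(6.6)]{nhw}; your reduction via (\ref{lemm:bcue}) to the claim that $\os{\circ}{\ol{T}}_n\times_{\os{\circ}{\ol{\cal P}}{}^{\rm ex}}\os{\circ}{\cal P}{}^{{\rm ex},(k)}$ has no $\pi$-torsion is exactly the intended skeleton, and your handling of the middle equality is consistent with it.

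The gap is in the decisive local step. You argue that $(x_{i_0},\ldots,x_{i_k})$ is a regular sequence modulo $\pi$ in ${\cal O}_{\os{\circ}{\ol{\cal P}}{}^{\rm ex}}$ and then assert that ``this regularity modulo $\pi$ is preserved in ${\cal O}_{\os{\circ}{\ol{T}}_n}$.'' That implication is false without a further hypothesis on the kernel ideal $I={\rm Ker}({\cal O}_{\os{\circ}{\ol{\cal P}}{}^{\rm ex}}\lo{\cal O}_{\os{\circ}{X}})$, which your chart never records: in a toy example (forgetting log structures), let the ambient be ${\rm Spf}({\cal V}\{x\})$ and $I=(\pi,x)$; then the first universal enlargement is ${\rm Spf}({\cal V}\{x,z\}/(\pi z-x))\cong{\rm Spf}({\cal V}\{z\})$, which is ${\cal V}$-flat and on which $x$ is regular modulo $\pi$ upstairs, yet ${\cal V}\{z\}/(\pi z)$ has $\pi$-torsion, so the naive fiber product with $V(x)$ is not the universal enlargement of the restricted immersion and the tilde cannot be dropped. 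What makes the actual situation work --- and what the paper's one-line proof explicitly names --- is that, because the immersion $X\os{\sus}{\lo}\ol{\cal P}{}^{\rm ex}$ is exact and $X$ is SNCL over $S_1$, locally $I$ is generated by $\pi$ together with elements of a regular system of parameters transversal to the coordinates $x_0,\ldots,x_a$ coming from the log structure (note that $t=x_0\cdots x_a$ does lie in $I$, so this ``no relation'' statement must be formulated and used with some care, as in \cite[(6.6)]{nhw} and \cite[(2.2.16)]{nh2}); only after fixing such generators of $I$ can one compute the blow-up/enlargement explicitly and see that reduction modulo $(x_{i_0},\ldots,x_{i_k})$ stays ${\cal V}$-flat. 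As written, your key step is an assertion whose stated hypotheses do not imply it; you need to state this transversality of $I$ with the component ideals and carry it through the local computation (the same input is needed for your final paragraph on the equality $\os{\circ}{\ol{T}}_n\times_{\os{\circ}{\ol{\cal P}}{}^{\rm ex}}\os{\circ}{\cal P}{}^{{\rm ex},(k)}=\os{\circ}{T}_n\times_{\os{\circ}{\cal P}{}^{\rm ex}}\os{\circ}{\cal P}{}^{{\rm ex},(k)}$).
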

\begin{proof}
Since the defining ideal of the immersion $\os{\circ}{X} \os{\sus}{\lo} 
\os{\circ}{\ol{\cal P}}{}^{\rm ex}$ has no relation with 
the ideal of the image of the log structure of 
$\ol{\cal P}{}^{\rm ex}$, the proof is similar to that of \cite[(6.6)]{nhw}. 
\end{proof}

\par
Set 
$X_n:=X\times_{{\cal P}^{\rm ex}}T_n$ 
and 
$\os{\circ}{X}{}^{(k)}_n:=\os{\circ}{X}{}^{(k)}
\times_{\os{\circ}{\ol{\cal P}}{}^{\rm ex}}
\os{\circ}{T}{}_n=\os{\circ}{X}{}^{(k)}
\times_{\os{\circ}{\cal P}{}^{\rm ex}}
\os{\circ}{T}{}_n$ $(n\in {\mab N})$.  
As usual, we denote the left representable objects 
in the following table 
by the right ones for simplicity of notation:

\bigskip
\begin{tabular}{|l|l|} \hline 
$(X_n \os{\subset}{\lo}T_n)\in 
(X/S)_{\rm conv}$ & $T_n$ \\  \hline 
$(X_n \os{\subset}{\lo}\ol{T}_n)\in 
(X/\os{\circ}{S})_{\rm conv}$ & $\ol{T}_n$ \\  \hline 
$(\os{\circ}{X}_n \os{\subset}{\lo}\os{\circ}{\ol{T}}_n)\in 
(\os{\circ}{X}/\os{\circ}{S})_{\rm conv}$ & $\os{\circ}{\ol{T}}_n$ \\  \hline 
$(\os{\circ}{X}{}^{(k)}_n \os{\subset}{\lo} \os{\circ}{T}{}^{(k)}_n)\in 
(\os{\circ}{X}{}^{(k)}/\os{\circ}{S})_{\rm conv}$
& $\os{\circ}{T}{}^{(k)}_n$  \\ \hline 
\end{tabular}
\bigskip
\parno
\par 
Let

\bigskip
\begin{tabular}{|l|} \hline 
$j_T: (X/S)_{\rm conv}\vert_{T} \lo (X/S)_{\rm conv}$ 
\\ \hline
$j_{\ol{T}}: (X/\os{\circ}{S})_{\rm conv}\vert_{\ol{T}} \lo (X/\os{\circ}{S})_{\rm conv}$ 
\\ \hline
$j_{\os{\circ}{\ol{T}}}: (\os{\circ}{X}/\os{\circ}{S})_{\rm conv}\vert_{\os{\circ}{\ol{T}}} 
\lo (\os{\circ}{X}/\os{\circ}{S})_{\rm conv}$ 
\\ \hline
$j_{\os{\circ}{T}{}^{(k)}}: 
(\os{\circ}{X}{}^{(k)}/\os{\circ}{S})_{\rm conv}\vert_{\os{\circ}{T}{}^{(k)}} \lo 
(\os{\circ}{X}{}^{(k)}/\os{\circ}{S})_{\rm conv}$  
\\ \hline 
\end{tabular}
\bigskip  
\parno
be localization functors and let

\bigskip
\begin{tabular}{|l|} \hline 
$\varphi^*: 
\{{\rm isocrystals~of~}{\cal K}_{\os{\to}{T}}{\rm -modules}\}
\lo $
$\{{\rm isocrystals~of~}{\cal K}_{X/S}\vert_T{\rm -modules}\}$ \\ \hline
$\ol{\varphi}{}^*: 
\{{\rm isocrystals~of~}{\cal K}_{\os{\to}{\ol{T}}}{\rm -modules}\}
\lo $
$\{{\rm isocrystals~of~}{\cal K}_{X/\os{\circ}{S}}\vert_{\ol{T}}{\rm -modules}\}$ \\ \hline 
$\os{\circ}{\ol{\varphi}}{}^*: 
\{{\rm isocrystals~of~}{\cal K}_{\os{\to}{{\ol{T}}}}{\rm -modules}\}
\lo $
$\{{\rm isocrystals~of~}{\cal K}_{\os{\circ}{X}/\os{\circ}{S}}\vert_{\os{\circ}{\ol{T}}}{\rm -modules}\}$ \\ \hline 
$\os{\circ}{\varphi}{}^{(k)*}: 
\{{\rm isocrystals~of~}
{\cal K}_{\os{\to}{\os{\circ}{T}{}^{(k)}}}{\rm -modules}\}\lo $  
$\{{\rm isocrystals~of~}{\cal K}_{\os{\circ}{X}{}^{(k)}/\os{\circ}{S}}\vert_{\os{\circ}{T}{}^{(k)}}{\rm -modules}\}$  \\ \hline 
\end{tabular} 
\bigskip
\parno 
be the morphisms of pull-backs 
defined in (\ref{eqn:phyt}).   
Note that 
\begin{align*} 
\{{\rm isocrystals}~{\rm of}~{\cal K}_{\os{\to}{T}}{\rm -modules}\}  
\subset \{{\rm isocrystals}~{\rm of}~
{\cal K}_{\os{\to}{{\ol{T}}}}{\rm -modules}\}. 
\tag{6.2.1}\label{ali:mal}
\end{align*} 
Note also that isocrystals of 
${\cal K}_{X/S}\vert_T$-modules e.t.c. 
are assumed to be coherent.

\par  
Let ${\cal E}:=\{{\cal E}_n\}_{n=1}^{\infty}$ be 
a coherent ${\cal K}_{\os{\to}{T}}$-module.  
Let ${\cal F}:=\{{\cal F}_n\}_{n=1}^{\infty}$ be 
a coherent ${\cal K}_{\os{\to}{\ol{T}}}$-module.  
Let ${\cal G}:=\{{\cal G}_n\}_{n=1}^{\infty}$ 
be a coherent ${\cal K}_{\os{\to}{\os{\circ}{T}{}^{(k)}}}$-module. 
Set 
$$L_{X/S}^{\rm UE}({\cal E}) 
:= j_{T*} \varphi^*(\{{\cal E}_n\}_{n=1}^{\infty}) 
\in (X/S)_{\rm conv},$$ 
$$L_{X/\os{\circ}{S}}^{\rm UE}({\cal F}) 
:= j_{\ol{T}*} \ol{\varphi}^*(\{{\cal F}_n\}_{n=1}^{\infty}) \in (X/\os{\circ}{S})_{\rm conv},$$ 
$$L_{\os{\circ}{X}/\os{\circ}{S}}^{\rm UE}({\cal F}) 
:= j_{\os{\circ}{\ol{T}}*} \os{\circ}{\ol{\varphi}}{}^*(\{{\cal F}_n\}_{n=1}^{\infty}) 
\in (\os{\circ}{X}/\os{\circ}{S})_{\rm conv}$$ 
and 
$$L^{\rm UE}_{\os{\circ}{X}{}^{(k)}/\os{\circ}{S}}({\cal G})
:= j_{\os{\circ}{T}{}^{(k)}*} \os{\circ}{\varphi}{}^{(k)*}(\{{\cal G}_n\}_{n=1}^{\infty}) 
\in 
(\os{\circ}{X}{}^{(k)}/\os{\circ}{S})_{\rm conv}.$$


\par 
Let $E$ be a flat isocrystal of 
${\cal K}_{\os{\circ}{X}/\os{\circ}{S}}$-modules. 
Let $\ol{\cal E}:=\{\ol{\cal E}_n\}_{n=1}^{\infty}$ be 
a flat coherent ${\cal K}_{\os{\to}{\os{\circ}{\ol{T}}}}$-module 
corresponding to $E$.
Since ${\cal K}_{\os{\to}{\os{\circ}{\ol{T}}}}={\cal K}_{\os{\to}{{\ol{T}}}}$, 
$\ol{\cal E}:=\{\ol{\cal E}_n\}_{n=1}^{\infty}$ is also 
a flat coherent ${\cal K}_{\os{\to}{\os{\circ}{\ol{T}}}}$-module 
corresponding to $\eps_{X/\os{\circ}{S}}^*(E)$.
Let ${\cal E}:=\{{\cal E}_n\}_{n=1}^{\infty}$ be 
a flat coherent ${\cal K}_{\os{\to}{T}}$-module 
corresponding to $\eps_{X/S}^*(E)$.  
Then ${\cal E}_n=\ol{\cal E}_n{\otimes}_{{\cal O}_{\ol{S}}}{\cal O}_S$. 
Let ${\cal E}^{(k)}:=\{{\cal E}^{(k)}_n\}_{n=1}^{\infty}$ be a flat coherent 
${\cal K}_{\os{\to}{\os{\circ}{T}{}^{(k)}}}$-module corresponding to $a^{(k)*}_{\rm crys}(E)$.  
By (\ref{lemm:lcl}) we have a complex
$L^{\rm UE}_{X/S}
({\cal E}\otimes_{{\cal O}_{\cal P}}\Om^{\bul}_{{\cal P}^{\rm ex}/S})$ of 
${\cal K}_{X/S}$-modules.
By the log convergent Poincar\'{e} lemma ((\ref{theo:pl})), 
we have a natural quasi-isomorphism
\begin{equation*}
\eps_{X/S}^*(E) \os{\sim}{\lo} L^{\rm UE}_{X/S}
({\cal E}\otimes_{{\cal O}_{{\cal P}^{\rm ex}}}\Om^{\bul}_{{\cal P}^{\rm ex}/S}).
\tag{6.2.2}\label{eqn:oxdz}
\end{equation*}
Similarly we have the following quasi-isomorphism:
\begin{equation*}
a^{(k)*}_{\rm conv}(E)
\os{\sim}{\lo}
L^{\rm UE}_{\os{\circ}{X}{}^{(k)}/\os{\circ}{S}}
({\cal E}^{(k)}\otimes_{{\cal O}_{{\cal P}^{{\rm ex},(k)}}}
\Om^{\bul}_{(\os{\circ}{\cal P}{}^{\rm ex})^{(k)}/\os{\circ}{S}}). 
\tag{6.2.3}\label{eqn:odkzs}
\end{equation*}

\par 
Let us recall the morphism 
$\eps_{X/\os{\circ}{S}}\col ((X/\os{\circ}{S})_{\rm conv},{\cal K}_{X/\os{\circ}{S}})
\lo 
((\os{\circ}{X}/\os{\circ}{S})_{\rm conv},{\cal K}_{\os{\circ}{X}/\os{\circ}{S}})$ 
forgetting the log structure of $X$ over $\os{\circ}{S}$ ((\ref{exem:sss})). 
By (\ref{lemm:cpax})
the differential 
\begin{align*} 
\nabla^i \col L^{\rm UE}_{X/\os{\circ}{S}}
(\ol{\cal E}\otimes_{{\cal O}_{\ol{\cal P}{}^{\rm ex}}}\Om^i_{\ol{\cal P}{}^{\rm ex}/\os{\circ}{S}})
\lo 
L^{\rm UE}_{X/\os{\circ}{S}}
(\ol{\cal E}\otimes_{{\cal O}_{\ol{\cal P}{}^{\rm ex}}}\Om^{i+1}_{\ol{\cal P}{}^{\rm ex}/\os{\circ}{S}})
\tag{6.2.4}\label{eqn:epdomep}
\end{align*}
induces the following differential$:$ 
\begin{align*} 
\nabla^i \col 
\eps^{\rm conv}_{X/\os{\circ}{S}*}L^{\rm UE}_{X/\os{\circ}{S}}
({\cal E}\otimes_{{\cal O}_{{\cal P}{}^{\rm ex}}}\Om^i_{{\cal P}{}^{\rm ex}/\os{\circ}{S}})
\lo 
\eps^{\rm conv}_{X/\os{\circ}{S}*}L^{\rm UE}_{X/\os{\circ}{S}}
({\cal E}\otimes_{{\cal O}_{{\cal P}{}^{\rm ex}}}\Om^{i+1}_{{\cal P}{}^{\rm ex}/\os{\circ}{S}}). 
\tag{6.2.5}\label{eqn:eldomep}
\end{align*}
Hence we have the following complex by (\ref{coro:ddn}): 
\begin{align*}  
L^{\rm conv}_{\os{\circ}{X}/\os{\circ}{S}}
({\cal E}\otimes_{{\cal O}_{{\cal P}^{\rm ex}}}
\Om^{\bul}_{{\cal P}^{\rm ex}/\os{\circ}{S}}). 
\tag{6.2.6}\label{ali:linp}
\end{align*}  
By (\ref{lemm:objt}) the linearization in (\ref{ali:linp}) can be calculated 
not only with respect to the immersion 
$\os{\circ}{X}\os{\sus}{\lo} (\ol{\cal P}{}^{\rm ex})^{\circ}$ over $\os{\circ}{S}$ 
but also 
with respect to the immersion 
$\os{\circ}{X}\os{\sus}{\lo} ({\cal P}^{\rm ex})^{\circ}$ over $\os{\circ}{S}$. 
In the following we calculate the linearization in (\ref{ali:linp}) 
with respect to the latter immersion.  
\par 
The family  
$\{P_k\Om^i_{{\cal P}^{\rm ex}/\os{\circ}{S}}\}_{i\in {\mab N}}$
gives a subcomplex 
$P_k\Om^{\bul}_{{\cal P}^{\rm ex}/\os{\circ}{S}}$ 
of $\Om^{\bul}_{{\cal P}^{\rm ex}/\os{\circ}{S}}$. 
Since the following diagram 
\begin{equation*} 
\begin{CD}
X@>{\subset}>> \ol{\cal P}{}^{\rm ex}\\
@V{\eps_{X/\os{\circ}{S}}}VV @VVV \\
\os{\circ}{X}@>{\subset}>> (\ol{\cal P}{}^{\rm ex})^{\circ}
\end{CD}
\end{equation*} 
is commutative, the following diagram is commutative by (\ref{prop:nia}): 
\begin{equation*} 
\begin{CD}
\eps^{\rm conv}_{X/\os{\circ}{S}*}L^{\rm UE}_{X/\os{\circ}{S}}
(\ol{\cal E}\otimes_{{\cal O}_{\ol{\cal P}{}^{\rm ex}}}\Om^i_{\ol{\cal P}{}^{\rm ex}/\os{\circ}{S}})
@>{\nabla^i}>> 
\eps^{\rm conv}_{X/\os{\circ}{S}*}L^{\rm UE}_{X/\os{\circ}{S}}
(\ol{\cal E}
\otimes_{\ol{\cal O}_{\ol{\cal P}{}^{\rm ex}}}\Om^{i+1}_{\ol{\cal P}{}^{\rm ex}/\os{\circ}{S}}) \\
@AAA @AAA \\
L^{\rm UE}_{\os{\circ}{X}/\os{\circ}{S}}
(\ol{\cal E}\otimes_{{\cal O}_{\ol{\cal P}{}^{\rm ex}}}
\Om^i_{(\ol{\cal P}{}^{\rm ex})^{\circ}/\os{\circ}{S}})
@>{\nabla^i}>> 
L^{\rm UE}_{\os{\circ}{X}/\os{\circ}{S}}
(\ol{\cal E}\otimes_{{\cal O}_{\ol{\cal P}{}^{\rm ex}}}
\Om^{i+1}_{(\ol{\cal P}{}^{\rm ex})^{\circ}/\os{\circ}{S}}). 
\end{CD}
\end{equation*}  
Hence we have the following commutative diagram by (\ref{coro:ddn}):
\begin{equation*} 
\begin{CD}
\eps^{\rm conv}_{X/\os{\circ}{S}*}L^{\rm UE}_{X/\os{\circ}{S}}
({\cal E}\otimes_{{\cal O}_{{\cal P}{}^{\rm ex}}}\Om^i_{{\cal P}{}^{\rm ex}/\os{\circ}{S}})
@>{\nabla^i}>> 
\eps^{\rm conv}_{X/\os{\circ}{S}*}L^{\rm UE}_{X/\os{\circ}{S}}
({\cal E}\otimes_{{\cal O}_{{\cal P}{}^{\rm ex}}}\Om^{i+1}_{{\cal P}{}^{\rm ex}/\os{\circ}{S}}) \\
@| @| \\
L^{\rm UE}_{\os{\circ}{X}/\os{\circ}{S}}
({\cal E}
\otimes_{{\cal O}_{{\cal P}{}^{\rm ex}}}\Om^i_{{\cal P}{}^{\rm ex}/\os{\circ}{S}})
@>{\nabla^i}>> 
L^{\rm UE}_{\os{\circ}{X}/\os{\circ}{S}}
({\cal E}\otimes_{{\cal O}_{{\cal P}{}^{\rm ex}}}
\Om^{i+1}_{{\cal P}{}^{\rm ex}/\os{\circ}{S}}) \\
@AAA @AAA \\
L^{\rm UE}_{\os{\circ}{X}/\os{\circ}{S}}
({\cal E}\otimes_{{\cal O}_{{\cal P}{}^{\rm ex}}}\Om^i_{({\cal P}{}^{\rm ex})^{\circ}/\os{\circ}{S}})
@>{\nabla^i}>> 
L^{\rm UE}_{\os{\circ}{X}/\os{\circ}{S}}
({\cal E}\otimes_{{\cal O}_{{\cal P}{}^{\rm ex}}}\Om^{i+1}_{({\cal P}{}^{\rm ex})^{\circ}/\os{\circ}{S}}). 
\end{CD}
\end{equation*}  
Since the connection $\nabla^0 \col 
L^{\rm UE}_{\os{\circ}{X}/\os{\circ}{S}}(\ol{\cal E})
\lo 
L^{\rm UE}_{\os{\circ}{X}/\os{\circ}{S}}(\ol{\cal E}\otimes_{{\cal O}_{\ol{\cal P}{}^{\rm ex}}}
\Om^1_{(\ol{\cal P}{}^{\rm ex})^{\circ}/\os{\circ}{S}})$ 
has no log poles, so is 
$\nabla^0 \col L^{\rm UE}_{\os{\circ}{X}/\os{\circ}{S}}({\cal E})
\lo 
L^{\rm UE}_{\os{\circ}{X}/\os{\circ}{S}}({\cal E}\otimes_{{\cal O}_{{\cal P}{}^{\rm ex}}}
\Om^1_{({\cal P}{}^{\rm ex})^{\circ}/\os{\circ}{S}})$.  
Consequently we indeed have a complex 
$$P_kL^{\rm UE}_{\os{\circ}{X}/\os{\circ}{S}}({\cal E}
\otimes_{{\cal O}_{\ol{\cal P}^{\rm ex}}}\Om^{\bul}_{{\cal P}^{\rm ex}/\os{\circ}{S}})
:=
L^{\rm UE}_{\os{\circ}{X}/\os{\circ}{S}}(P_k({\cal E}
\otimes_{{\cal O}_{{\cal P}^{\rm ex}}}
\Om^{\bul}_{{\cal P}^{\rm ex}/\os{\circ}{S}}))\quad (k\in {\mab Z}).$$   
Here note that we cannot obtain the complex 
``$L^{\rm UE}_{X/S}
(P_k({\cal E}\otimes_{{\cal O}_{{\cal P}^{\rm ex}}}
\Om^{\bul}_{{\cal P}^{\rm ex}/\os{\circ}{S}}))$''.

\par 
As in \cite[(6.7)]{nhw}, the following theorem (2) is 
one of key theorems in this paper.  

\begin{theo}\label{theo:injf}
$(1)$ 
The natural morphism 
\begin{equation*} 
{\cal E}_n{\otimes}_{{\cal O}_{{\cal P}^{\rm ex}}}P_k
\Om^{\bul}_{{\cal P}^{\rm ex}/\os{\circ}{S}} 
\lo 
{\cal E}_n{\otimes}_{{\cal O}_{{\cal P}^{\rm ex}}}
\Om^{\bul}_{{\cal P}^{\rm ex}/\os{\circ}{S}}
\tag{6.3.1}\label{eqn:yxpd}
\end{equation*}
is injective. 
\par 
$(2)$ The natural morphism 
\begin{equation*} 
P_k
L^{\rm UE}_{\os{\circ}{X}/\os{\circ}{S}}
({\cal E}\otimes_{{\cal O}_{{\cal P}^{\rm ex}}}
\Om^{\bul}_{{\cal P}^{\rm ex}/\os{\circ}{S}}) 
\lo 
L^{\rm UE}_{\os{\circ}{X}/\os{\circ}{S}}
({\cal E}\otimes_{{\cal O}_{{\cal P}^{\rm ex}}}
\Om^{\bul}_{{\cal P}^{\rm ex}/\os{\circ}{S}}) 
\tag{6.3.2}\label{eqn:yxpdz}
\end{equation*}
is injective. 
\end{theo}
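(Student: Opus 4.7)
The plan is to mirror the strategy of \cite[(6.7)]{nhw}, adapting it to the SNCL setting. Both assertions follow from a local-on-${\cal P}^{\rm ex}$ claim: that $P_k\Om^i_{{\cal P}^{\rm ex}/\os{\circ}{S}}$ is a direct summand of $\Om^i_{{\cal P}^{\rm ex}/\os{\circ}{S}}$ as an ${\cal O}_{{\cal P}^{\rm ex}}$-module for every $i$. Assuming this local splitness, (1) follows because the pullback $g_n^*$ of the locally split short exact sequence $0\to P_k\Om^i\to \Om^i\to \Om^i/P_k\Om^i\to 0$ along $g_n\col T_n\lo {\cal P}^{\rm ex}$ remains short exact, and then the flatness of ${\cal E}_n$ over ${\cal O}_{T_n}$ preserves the injection upon tensoring. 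For (2) one applies the additive functor $L^{\rm UE}_{\os{\circ}{X}/\os{\circ}{S}}$ to the locally split short exact sequence of ${\cal K}_{\os{\to}{T}}$-modules obtained after tensoring with ${\cal E}_n$; local splitness ensures that exactness is preserved even though $L^{\rm UE}_{\os{\circ}{X}/\os{\circ}{S}}$ is only known to be right exact by (\ref{prop:afex}).

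To prove the local splitness, I will work in an \'{e}tale-local chart of ${\cal P}^{\rm ex}$ of the form $\ul{\rm Spf}_{\os{\circ}{S}}({\cal O}_S\{x_1,\ldots,x_r,y_1,\ldots,y_s\}/(x_1\cdots x_r))$, with log structure generated by $x_1,\ldots,x_r$ compatibly with $t = x_1\cdots x_r$ in the log structure pulled back from $S$. Such a chart exists by the SNCL assumption on $X/S_1$ combined with the formal log smoothness of $\ol{\cal P}/\ol{S}$. In this chart, $\Om^1_{{\cal P}^{\rm ex}/\os{\circ}{S}}$ is locally free on $d\log x_1,\ldots,d\log x_r,dy_1,\ldots,dy_s$ (the image of $d\log t$ is absorbed as $\sum_i d\log x_i$), so $\Om^i_{{\cal P}^{\rm ex}/\os{\circ}{S}}$ is locally free. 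Writing $dx_i = x_i\,d\log x_i$, one reads off a generating family for $P_k\Om^i_{{\cal P}^{\rm ex}/\os{\circ}{S}}$: it consists of wedges in which the number of $d\log x_{\bullet}$-factors is at most $k$, weighted appropriately by powers of the $x_i$'s. A direct combinatorial argument then exhibits a complementary basis of $\Om^i/P_k\Om^i$ made of wedges with strictly more than $k$ of the $d\log x_i$-factors, proving local freeness of the quotient.

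The main obstacle will be the bookkeeping in this combinatorial splitting. In \cite[(6.7)]{nhw} the ambient formal scheme is formally smooth over the base, so no non-trivial relations appear among the $dx_i$'s. Here the relation $x_1\cdots x_r=0$ in ${\cal O}_{{\cal P}^{\rm ex}}$ forces the identity $\sum_i(x_1\cdots\widehat{x_i}\cdots x_r)\,dx_i=0$ in $\Om^1_{\os{\circ}{{\cal P}^{\rm ex}}/\os{\circ}{S}}$, and one must verify that this relation does not create torsion upon passing to $\Om^i/P_k\Om^i$. The key observation is that the image of $(x_1\cdots\widehat{x_i}\cdots x_r)\,dx_i$ inside $\Om^1_{{\cal P}^{\rm ex}/\os{\circ}{S}}$ equals $(x_1\cdots x_r)\,d\log x_i=0$ already at the log level, so no spurious relations appear in the target. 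Once this verification is in place, the splitting is realized by the obvious projection onto monomials with the prescribed number of log factors, and both parts of the theorem follow.
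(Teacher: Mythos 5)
Your proposal rests on the claim that $P_k\Omega^i_{{\cal P}^{\rm ex}/\os{\circ}{S}}$ is locally a direct summand of $\Omega^i_{{\cal P}^{\rm ex}/\os{\circ}{S}}$, and that claim is false; with it, both deductions collapse. In your own chart ${\cal O}_{{\cal P}^{\rm ex}}\cong{\cal O}_S\{x_1,\ldots,x_d\}/(x_1\cdots x_r)$, the sheaf $\Omega^1_{{\cal P}^{\rm ex}/\os{\circ}{S}}$ is free on $d\log x_1,\ldots,d\log x_r,dx_{r+1},\ldots,dx_d$, but $P_0\Omega^1$ is generated by $dx_i=x_i\,d\log x_i$ $(i\le r)$ and the $dx_j$ $(j>r)$, so the quotient is $\bigoplus_{i\le r}({\cal O}_{{\cal P}^{\rm ex}}/x_i{\cal O}_{{\cal P}^{\rm ex}})\,d\log x_i$. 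This is not a projective ${\cal O}_{{\cal P}^{\rm ex}}$-module (its rank jumps from $1$ to $0$ between the components $\{x_i=0\}$ and the others), so the inclusion $P_0\Omega^1\subset\Omega^1$ cannot be locally split. Equivalently, your "obvious projection onto monomials with at most $k$ log factors" is not ${\cal O}$-linear: the identity $dx_i=x_i\,d\log x_i$ puts $x_i$ times a one-log monomial inside $P_0$, so the span of the monomials with more than $k$ log factors meets $P_k$ nontrivially and no complementary submodule exists. This is not a bookkeeping issue but structural: the graded pieces ${\rm gr}^P_k$ are pushforwards from $\os{\circ}{\cal P}{}^{{\rm ex},(k-1)}$ (see (6.4.2) and (6.5.1)), so $P_k$ is never a subbundle, and the verification you flag about the relation $\sum_i(x_1\cdots\widehat{x_i}\cdots x_r)dx_i=0$ addresses the wrong point ($\Omega^i_{{\cal P}^{\rm ex}/\os{\circ}{S}}$ itself is locally free; the problem is the sub, not the ambient module).

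The paper's proof needs no splitting. Part (1) follows from Lemma (\ref{lemm:rex}) — the pullback to the universal enlargements is exact on coherent ${\cal K}$-modules, a special feature of the isocoherent setting, not a general base-change fact — together with flatness of ${\cal E}_n$. For part (2) one evaluates $L^{\rm UE}_{\os{\circ}{X}/\os{\circ}{S}}$ on an arbitrary enlargement $(\os{\circ}{U}{}',\os{\circ}{T}{}')$: using formal smoothness of $\os{\circ}{\ol{\cal P}}{}^{\rm ex}$ over $\os{\circ}{S}$ one lifts to a morphism $\os{\circ}{T}{}'\lo\os{\circ}{\ol{\cal P}}{}^{\rm ex}$, writes the value via (\ref{eqn:lueyset}) as $\vpl_np'_{n*}\{p_n^*({\cal E}_n)\otimes q_n'{}^*r^*(-)\}$, and uses left exactness of $\vpl_n$ and $p'_{n*}$ plus (\ref{lemm:rex}) for $q_n'{}^*$; everything then reduces to the injectivity of $r^*(P_k\Omega^{\bul}_{{\cal P}^{\rm ex}/\os{\circ}{S}})\lo r^*(\Omega^{\bul}_{{\cal P}^{\rm ex}/\os{\circ}{S}})$, which holds because $r^*(P_k\Omega^{\bul}_{{\cal P}^{\rm ex}/\os{\circ}{S}})=P_k\Omega^{\bul}_{{\cal P}^{\rm ex}_{T'}/T'}$ is by definition a subsheaf (base-change compatibility of $P$ for SNCL formal schemes). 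To repair your write-up you would have to replace "locally split" throughout by these two ingredients; as it stands the argument has a genuine gap.
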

\begin{proof} 
(1): (1) immediately follows from (\ref{lemm:rex}). 
\par 
(2):  
This is a local problem.  
Let $(\os{\circ}{U}{}',\os{\circ}{T}{}',\os{\circ}{\iota},\os{\circ}{u})$ be an object of 
${\rm Conv}(\os{\circ}{X}/\os{\circ}{S})$. 
We may assume that $\os{\circ}{T}{}'$ is affine. 
Since $\os{\circ}{\ol{\cal P}}{}^{\rm ex}$ 
is formally smooth over $\os{\circ}{S}$, 
we have a morphism $\ol{e}\col \os{\circ}{T}{}'\lo 
\os{\circ}{\ol{\cal P}}{}^{\rm ex}$ over $\os{\circ}{S}$ 
such that the composite morphism 
$\os{\circ}{U}{}'\os{\sus}{\lo} \os{\circ}{T}{}'\lo \os{\circ}{\ol{\cal P}}{}^{\rm ex}$ 
is equal to the composite morphism 
$\os{\circ}{U}{}'\lo \os{\circ}{X}\os{\sus}{\lo} \os{\circ}{\ol{\cal P}}{}^{\rm ex}$ . 
Let $r\col \os{\circ}{T}{}'\times_{\os{\circ}{S}}\os{\circ}
{\ol{\cal P}}{}^{\rm ex}\lo \os{\circ}{\ol{\cal P}}{}^{\rm ex}$ 
be the second projection.  
Let $T'$ be a log formal scheme  
whose underlying formal scheme is $\os{\circ}{T}{}'$ and 
whose log structure is the inverse image of that of $S$. 
Set $\ol{\cal P}{}^{\rm ex}_{T'}:=\ol{\cal P}{}^{\rm ex}\times_{\os{\circ}{S}}\os{\circ}{T}{}'$ 
(by abuse of notation), 
${\cal P}^{\rm ex}_{T'}:={\cal P}^{\rm ex}\times_{S}T'$ and 
${\cal P}^{\rm ex}_{\os{\circ}{T}{}'}:={\cal P}^{\rm ex}\times_{\os{\circ}{S}}\os{\circ}{T}{}'$. 
Then ${\cal P}^{\rm ex}_{T'}$ is 
a formal SNCL scheme over $T'$ and 
$\os{\circ}{\cal P}{}^{\rm ex}_{T'}:=\os{\circ}{\cal P}{}^{\rm ex}\times_{\os{\circ}{S}}\os{\circ}{T}{}'$. 
We have the following diagram:  
\begin{equation*}
\begin{CD}
\os{\circ}{T}{}'\\
@A{\{p'_n\}_{n=1}^{\infty}}AA \\
\{{\mathfrak T}_{\os{\circ}{U}{}',n}(\os{\circ}{\cal P}{}^{\rm ex}_{T'})\}_{n=1}^{\infty} 
@>{\{p_n\}_{n=1}^{\infty}}>>
\{{\mathfrak T}_{\os{\circ}{X},n}(\os{\circ}{{\cal P}}{}^{\rm ex})\}_{n=1}^{\infty}
=\{\os{\circ}{T}_n\}_{n=1}^{\infty} \\ 
 @V{\{q'_n\}_{n=1}^{\infty}}VV 
 @VV{\{q_n\}_{n=1}^{\infty}}V\\
\os{\circ}{\cal P}{}^{\rm ex}_{T'}
@>{r}>> \os{\circ}{{\cal P}}{}^{\rm ex}.\\ 
\end{CD}
\tag{6.3.3}\label{cd:utsq} 
\end{equation*} 
By the remark after (\ref{ali:linp}) and (\ref{eqn:lueyset}), 
\begin{align*} 
(P_kL^{\rm UE}_{\os{\circ}{X}/\os{\circ}{S}}
({\cal E}\otimes_{{\cal O}_{{\cal P}^{\rm ex}}}
\Om^{\bul}_{{\cal P}^{\rm ex}/\os{\circ}{S}}))_{\os{\circ}{T}{}'}
&=(P_kL^{\rm UE}_{\os{\circ}{X}/\os{\circ}{S}}
({\cal E}\otimes_{{\cal O}_{{\cal P}^{\rm ex}}}
\Om^{\bul}_{{\cal P}^{\rm ex}/\os{\circ}{S}}))_{\os{\circ}{T}{}'}\\
&= \vpl_np'_{n*}p_n^*
({\cal E}_n\otimes_{{\cal O}_{T_n}}
q_n^*(P_k\Om^{\bul}_{{\cal P}{}^{\rm ex}/\os{\circ}{S}}))\\
&=\vpl_np'_{n*}\{p_n^*
({\cal E}_n)\otimes_{p_n^*({\cal O}_{T_n})}p_n^*
q_n^*(P_k\Om^{\bul}_{{\cal P}{}^{\rm ex}/\os{\circ}{S}})\}\\
&= \vpl_np'_{n*}\{p_n^*
({\cal E}_n)\otimes_{q_n'\!{}^*r^*({\cal O}_{{\cal P}{}^{\rm ex}})}q_n'\!{}^*
r^*(P_k\Om^{\bul}_{{\cal P}{}^{\rm ex}/\os{\circ}{S}})\}.
\end{align*}  
Because $\vpl_n$ and $p'_{n*}$ are left exact 
and because $q_n'\!{}^*$ is left exact by (\ref{lemm:rex}), 
we have only to prove that the morphism 
$r^*(P_k\Om^{\bul}_{{\cal P}{}^{\rm ex}/\os{\circ}{S}})  \lo 
r^*(\Om^{\bul}_{{\cal P}{}^{\rm ex}/\os{\circ}{S}})$ is injective. 
This injectivity is clear because 
\begin{align*} 
r^*(P_k\Om^{\bul}_{{\cal P}{}^{\rm ex}/\os{\circ}{S}})
=P_k\Om^i_{{\cal P}{}^{\rm ex}_{\os{\circ}{T}{}'}/{\os{\circ}{T}{}'}}.
\end{align*}  
\end{proof}

By (\ref{theo:injf}) (2), the family 
$\{P_kL^{\rm UE}_{\os{\circ}{X}/\os{\circ}{S}}
({\cal E}\otimes_{{\cal O}_{{\cal P}^{\rm ex}}}
\Om^{\bul}_{{\cal P}^{\rm ex}/\os{\circ}{S}})\}_{k \in {\mab Z}}$ 
of ${\cal K}_{\os{\circ}{X}/\os{\circ}{S}}$-modules 
defines a filtration on the complex
$L^{\rm UE}_{\os{\circ}{X}/\os{\circ}{S}}
({\cal E}\otimes_{{\cal O}_{{\cal P}^{\rm ex}}}
\Om^{\bul}_{{\cal P}^{\rm ex}/\os{\circ}{S}})$. 
Hence we obtain an object 
$$(L^{\rm UE}_{\os{\circ}{X}/\os{\circ}{S}}
({\cal E}\otimes_{{\cal O}_{{\cal P}^{\rm ex}}}\Om^{\bul}_{{\cal P}^{\rm ex}/\os{\circ}{S}}), 
\{P_k
L^{\rm UE}_{\os{\circ}{X}/\os{\circ}{S}}
({\cal E}\otimes_{{\cal O}_{{\cal P}^{\rm ex}}}
\Om^{\bul}_{{\cal P}^{\rm ex}/\os{\circ}{S}})\}_{k\in {\mab Z}})$$  
in ${\rm C}^+{\rm F}({\cal K}_{X/S})$. 
For simplicity of notation, we denote it by 
$$(L^{\rm UE}_{\os{\circ}{X}/\os{\circ}{S}}({\cal E}\otimes_{{\cal O}_{{\cal P}^{\rm ex}}}
\Om^{\bul}_{{\cal P}^{\rm ex}/\os{\circ}{S}}),P).$$


\begin{prop}\label{prop:grla}
Let $k$ be a positive integer. 
Assume that $\os{\circ}{\cal P}{}^{\rm ex}$ is affine over $\os{\circ}{S}$. 
Then there exist the following quasi-isomorphisms 
\begin{align*} 
{\rm gr}_k^{P}
& L^{\rm UE}_{\os{\circ}{X}/\os{\circ}{S}}(
{\cal E}\otimes_{{\cal O}_{{\cal P}^{\rm ex}}}
\Om^{\bul}_{{\cal P}^{\rm ex}/\os{\circ}{S}}) 
\os{\sim}{\longleftarrow}  
a^{(k-1)}_{{\rm conv}*}
(a^{(k-1)*}_{\rm conv}(E)
\otimes_{\mab Z}\vp^{(k-1)}_{{\rm conv}}(\os{\circ}{X}/\os{\circ}{S}))[-k]
\tag{6.4.1}\label{eqn:grpdl}\\ 
\end{align*}
in $D^+({\cal K}_{X/S})$. 
\end{prop}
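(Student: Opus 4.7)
The plan is to combine the classical Poincar\'{e} residue isomorphism for SNCL differentials with the log convergent Poincar\'{e} lemma applied on each multi-intersection stratum. I organize the argument into three steps.

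First, I would compute the Poincar\'{e} residue of ${\rm gr}_k^P\Om^{\bul}_{{\cal P}^{\rm ex}/\os{\circ}{S}}$ at the level of log differentials. Zariski-locally we may choose SNCL coordinates $x_1\cdots x_a = t$ on ${\cal P}^{\rm ex}/S$, under which $\Om^1_{{\cal P}^{\rm ex}/\os{\circ}{S}}$ admits the local basis $d\log x_1,\ldots,d\log x_a,dx_{a+1},\ldots,dx_d$ subject to the single relation $\sum_{i=1}^a d\log x_i = d\log t$; the filtration $P$ counts the number of log factors among $d\log x_1,\ldots,d\log x_a$. The standard Poincar\'{e} residue then gives a canonical isomorphism
\begin{equation*}
{\rm Res}\colon {\rm gr}_k^P\Om^{\bul}_{{\cal P}^{\rm ex}/\os{\circ}{S}}\os{\sim}{\lo} b^{(k-1)}_*\bigl(\Om^{\bul-k}_{\os{\circ}{\cal P}{}^{{\rm ex},(k-1)}/\os{\circ}{S}}\otimes_{\mab Z}\vp^{(k-1)}_{\rm zar}(\os{\circ}{X}/\os{\circ}{S})\bigr),
\end{equation*}
globalized by the coordinate-independence of the residue and compatible with tensoring by the flat coherent ${\cal E}$.

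Second, I would transfer the residue through the linearization. Since $\os{\circ}{\cal P}{}^{\rm ex}$ is affine by hypothesis, Proposition~\ref{prop:afex} gives right-exactness of $L^{\rm UE}_{\os{\circ}{X}/\os{\circ}{S}}$; combined with the injectivity established in Theorem~\ref{theo:injf}~(2), applying $L^{\rm UE}_{\os{\circ}{X}/\os{\circ}{S}}$ to the short exact sequence $0\to P_{k-1}\to P_k\to {\rm gr}_k^P\to 0$ of the tensor product ${\cal E}\otimes\Om^{\bul}_{{\cal P}^{\rm ex}/\os{\circ}{S}}$ yields in each degree the identification
\begin{equation*}
{\rm gr}_k^P L^{\rm UE}_{\os{\circ}{X}/\os{\circ}{S}}\bigl({\cal E}\otimes_{{\cal O}_{{\cal P}^{\rm ex}}}\Om^{\bul}_{{\cal P}^{\rm ex}/\os{\circ}{S}}\bigr)\simeq L^{\rm UE}_{\os{\circ}{X}/\os{\circ}{S}}\bigl({\cal E}\otimes_{{\cal O}_{{\cal P}^{\rm ex}}}{\rm gr}_k^P\Om^{\bul}_{{\cal P}^{\rm ex}/\os{\circ}{S}}\bigr).
\end{equation*}
The base-change formula (\ref{lemm:dpinc}) and the closed-immersion compatibility (\ref{coh:com}) applied to $b^{(k-1)}\colon \os{\circ}{\cal P}{}^{{\rm ex},(k-1)}\os{\sus}{\lo} \os{\circ}{\cal P}{}^{\rm ex}$ then rewrite the right-hand side, using Step~1, as
\begin{equation*}
a^{(k-1)}_{{\rm conv}*}L^{\rm UE}_{\os{\circ}{X}{}^{(k-1)}/\os{\circ}{S}}\bigl({\cal E}^{(k-1)}\otimes_{{\cal O}_{\os{\circ}{\cal P}{}^{{\rm ex},(k-1)}}}\Om^{\bul-k}_{\os{\circ}{\cal P}{}^{{\rm ex},(k-1)}/\os{\circ}{S}}\bigr)\otimes_{\mab Z}\vp^{(k-1)}_{\rm conv}(\os{\circ}{X}/\os{\circ}{S}).
\end{equation*}

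Finally, since $\os{\circ}{\cal P}{}^{{\rm ex},(k-1)}$ is formally smooth over $\os{\circ}{S}$ with trivial log structure, the convergent Poincar\'{e} lemma (\ref{theo:pl}) applied to the exact immersion $\os{\circ}{X}{}^{(k-1)}\os{\sus}{\lo} \os{\circ}{\cal P}{}^{{\rm ex},(k-1)}$ produces a quasi-isomorphism
\begin{equation*}
a^{(k-1)*}_{\rm conv}(E)\os{\sim}{\lo} L^{\rm UE}_{\os{\circ}{X}{}^{(k-1)}/\os{\circ}{S}}\bigl({\cal E}^{(k-1)}\otimes_{{\cal O}_{\os{\circ}{\cal P}{}^{{\rm ex},(k-1)}}}\Om^{\bul}_{\os{\circ}{\cal P}{}^{{\rm ex},(k-1)}/\os{\circ}{S}}\bigr).
\end{equation*}
Pushing forward by $a^{(k-1)}_{\rm conv}$, tensoring with $\vp^{(k-1)}_{\rm conv}(\os{\circ}{X}/\os{\circ}{S})$, and shifting by $[-k]$ yields the desired quasi-isomorphism (\ref{eqn:grpdl}). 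The main technical obstacle will be the global construction of the Poincar\'{e} residue in Step~1 when the relation $\sum d\log x_i = d\log t$ entangles the SNCL and base-log generators; this must be performed so that the graded piece is controlled by intersections of the smooth components of $X$ alone, consistently with the definition of $\vp^{(k-1)}(\os{\circ}{X}/\os{\circ}{S})$ and independently of the particular local generator $t$ chosen.
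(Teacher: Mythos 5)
Your proposal is correct and follows essentially the same route as the paper's own (very terse) proof: the Poincar\'{e} residue isomorphism for ${\rm gr}_k^P\Om^{\bul}_{{\cal P}^{\rm ex}/\os{\circ}{S}}$, the identification ${\rm gr}_k^PL^{\rm UE}\simeq L^{\rm UE}({\rm gr}_k^P)$ via (\ref{theo:injf}) (2) and (\ref{prop:afex}), the compatibility of linearization with the closed immersions $b^{(k-1)}$, and the convergent Poincar\'{e} lemma on $\os{\circ}{X}{}^{(k-1)}\os{\sus}{\lo}\os{\circ}{\cal P}{}^{{\rm ex},(k-1)}$, which is exactly what the paper packages as ``the SNCL version of the proof of \cite[(6.9)]{nhw}''. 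The only blemish is notational: the residue at the level of ${\cal P}^{\rm ex}$-differentials should carry the orientation sheaf $\vp^{(k-1)}_{\rm zar}(\os{\circ}{\cal P}{}^{\rm ex}/\os{\circ}{S})$ rather than that of $\os{\circ}{X}$, the two being identified only after restriction along $\os{\circ}{X}{}^{(k-1)}=\os{\circ}{X}\times_{\os{\circ}{\cal P}{}^{\rm ex}}\os{\circ}{\cal P}{}^{{\rm ex},(k-1)}$.
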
 
\begin{proof} 
Let 
$b^{(k)}\col \os{\circ}{\cal P}{}^{(k)}\lo \os{\circ}{\cal P}$ 
be the natural morphism. 
Then, as in the proof of \cite[(1.3.14)]{nb}, 
we have the following isomorphism 
\begin{equation*}  
{\rm Res} \col {\rm gr}_k^P{\Om}^{\bul}_{{\cal P}^{\rm ex}/\os{\circ}{S}}
\os{\sim}{\lo} 
b^{(k-1)}_*
(\Om^{\bul}_{\os{\circ}{\cal P}{}^{{\rm ex},(k-1)}/\os{\circ}{S}}
\otimes_{\mab Z}\vp^{(k-1)}_{\rm zar}(\os{\circ}{\cal P}{}^{\rm ex}/\os{\circ}{S}))[-k] 
\quad (k\geq 1)
\tag{6.4.2}\label{eqn:grps}
\end{equation*} 
by using the immersion $X\os{\sus}{\lo} \ol{\cal P}$ over $\ol{S}$. 
Hence, by (\ref{theo:injf}) (2) and (\ref{prop:afex}) and by the SNCL version of 
the proof of \cite[(6.9)]{nhw}, 
we have the isomorphism (\ref{eqn:grpdl}).  
\end{proof}

For the case $k=0$, we obtain the following: 

\begin{prop}\label{prop:0case}
There exists the following isomorphism$:$ 
\begin{align*}
&P_0L^{\rm UE}_{\os{\circ}{X}/\os{\circ}{S}}
({\cal E}\otimes_{{\cal O}_{{\cal P}^{\rm ex}}}\Om^{\bul}_{{\cal P}^{\rm ex}/\os{\circ}{S}}) 
\os{\sim}{\lo} \tag{6.5.1}\label{ali:ixea} \\
&(a^{(0)}_{{\rm conv}*}(a^{(0)*}_{{\rm conv}*}(E)\otimes_{\mab Z}\varpi^{(0)}_{\rm conv}
(\os{\circ}{X}/\os{\circ}{S}))
\lo a^{(1)}_{{\rm conv}*}(a^{(1)*}_{{\rm conv}*}(E)\otimes_{\mab Z}\varpi^{(1)}_{\rm conv}
(\os{\circ}{X}/\os{\circ}{S}))\lo \cdots )
\\
&={\rm Ker}(a^{(0)}_{{\rm conv}*}
(a^{(0)*}_{{\rm conv}*}(E)\otimes_{\mab Z}\varpi^{(0)}_{\rm conv}
(\os{\circ}{X}/\os{\circ}{S}))\lo 
a^{(1)}_{{\rm conv}*}(a^{(1)*}_{{\rm conv}*}(E)\otimes_{\mab Z}\varpi^{(1)}_{\rm conv}
(\os{\circ}{X}/\os{\circ}{S}))). 
\end{align*} 
\end{prop}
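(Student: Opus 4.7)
The plan is to parallel the proof of Proposition~\ref{prop:grla}, replacing the Poincar\'e residue argument (unavailable for $k=0$) by a Mayer--Vietoris reduction along the smooth components of the formal SNC scheme $\os{\circ}{\cal P}^{\rm ex}$. By definition~\eqref{eqn:leolpw}, $P_0\Om^i_{{\cal P}^{\rm ex}/\os{\circ}{S}}$ is the image of the natural map $\Om^i_{\os{\circ}{\cal P}^{\rm ex}/\os{\circ}{S}}\lo\Om^i_{{\cal P}^{\rm ex}/\os{\circ}{S}}$, so $P_0\Om^\bul_{{\cal P}^{\rm ex}/\os{\circ}{S}}$ is the (image of the) non-log de Rham complex of the SNC formal scheme $\os{\circ}{\cal P}^{\rm ex}$. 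It remains to identify $L^{\rm UE}_{\os{\circ}{X}/\os{\circ}{S}}({\cal E}\otimes P_0\Om^\bul_{{\cal P}^{\rm ex}/\os{\circ}{S}})$ with the \v{C}ech complex on the right-hand side of~\eqref{ali:ixea} and then to observe that this \v{C}ech complex is acyclic outside degree $0$.

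For the first identification, consider the augmented simplicial scheme $\{\os{\circ}{\cal P}^{(k)}\}_{k\geq 0}\lo\os{\circ}{\cal P}^{\rm ex}$, whose terms are smooth over $\os{\circ}{S}$, and form the associated \v{C}ech double complex with $k$-th row $b^{(k)}_*\Om^\bul_{\os{\circ}{\cal P}^{(k)}/\os{\circ}{S}}\otimes_{\mab Z}\vp^{(k)}_{\rm zar}(\os{\circ}{\cal P}^{\rm ex}/\os{\circ}{S})$. After tensoring with ${\cal E}$ and applying $L^{\rm UE}_{\os{\circ}{X}/\os{\circ}{S}}$, the compatibility of the linearization with closed immersions (Corollary~\ref{linc}), applied to each $\os{\circ}{\cal P}^{(k)}\os{\sus}{\lo}\os{\circ}{\cal P}^{\rm ex}$, yields
\begin{equation*}
L^{\rm UE}_{\os{\circ}{X}/\os{\circ}{S}}\bigl({\cal E}\otimes b^{(k)}_*\Om^\bul_{\os{\circ}{\cal P}^{(k)}/\os{\circ}{S}}\otimes\vp^{(k)}_{\rm zar}\bigr)\simeq a^{(k)}_{{\rm conv}*}L^{\rm UE}_{\os{\circ}{X}^{(k)}/\os{\circ}{S}}\bigl({\cal E}^{(k)}\otimes\Om^\bul_{\os{\circ}{\cal P}^{(k)}/\os{\circ}{S}}\bigr)\otimes\vp^{(k)}_{\rm conv}.
\end{equation*}
The convergent Poincar\'e lemma (Theorem~\ref{theo:pl}), applicable because each $\os{\circ}{\cal P}^{(k)}/\os{\circ}{S}$ is smooth, collapses each row to $a^{(k)}_{{\rm conv}*}(a^{(k)*}_{{\rm conv}}E\otimes_{\mab Z}\vp^{(k)}_{\rm conv})$ concentrated in degree $0$, so the total complex coincides with the \v{C}ech complex in~\eqref{ali:ixea}.

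The second identification reduces, by flatness of ${\cal E}$, to the elementary conductor-type exact sequence for the structure sheaf of the SNC scheme $\os{\circ}{X}$, which is verified Zariski-locally. The principal obstacle lies in justifying that the \v{C}ech double complex in the first step resolves $P_0\Om^\bul_{{\cal P}^{\rm ex}/\os{\circ}{S}}$ after linearization: on individual sheaves the naive map $\Om^i_{\os{\circ}{\cal P}^{\rm ex}/\os{\circ}{S}}\lo\bigoplus_\lambda\Om^i_{\os{\circ}{\cal P}^{\rm ex}_\lambda/\os{\circ}{S}}$ can fail to be injective (for instance, $x_2\,dx_1=-x_1\,dx_2\in\Om^1_{V(x_1x_2)}$ restricts to zero on both components but is a nonzero element), so the quasi-isomorphism must be established at the level of the total complex, using the Poincar\'e lemma on each row to collapse the spectral sequence.
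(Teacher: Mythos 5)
You have correctly located the crux — the failure of injectivity of $\Om^i_{\os{\circ}{\cal P}{}^{\rm ex}/\os{\circ}{S}}\lo\bigoplus_\lambda\Om^i$ on components — but your proposed remedy does not close it. Collapsing the rows of the (non-augmented) \v{C}ech double complex by the Poincar\'e lemma only identifies its total complex with the complex $a^{(0)}_{{\rm conv}*}(\cdots)\lo a^{(1)}_{{\rm conv}*}(\cdots)\lo\cdots$; it says nothing about the augmentation from $P_0L^{\rm UE}_{\os{\circ}{X}/\os{\circ}{S}}({\cal E}\otimes\Om^{\bul}_{{\cal P}^{\rm ex}/\os{\circ}{S}})$, which is not a row of that double complex. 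To make the augmentation a quasi-isomorphism you need the augmented \v{C}ech sequence to be exact \emph{termwise in each de Rham degree, after the linearization has been evaluated on an arbitrary enlargement}, and this is precisely what is missing from your argument. The paper supplies it in two steps. First, at the Zariski level, the correct first term is not $\Om^{\bul}_{\os{\circ}{\cal P}{}^{\rm ex}/\os{\circ}{S}}$ but its image $P_0\Om^{\bul}_{{\cal P}^{\rm ex}/\os{\circ}{S}}\simeq\Om^{\bul}_{\os{\circ}{\cal P}{}^{\rm ex}/\os{\circ}{S}}/\Om^{\bul}_{{\cal P}^{\rm ex}/\os{\circ}{S}}(-\os{\circ}{\cal P}{}^{\rm ex})$ (this identification is \cite[(1.3.21) (4), (1.3.12.1)]{nb}), and with this replacement the augmented sequence $0\lo P_0\Om^{\bul}\lo b^{(0)}_*(\Om^{\bul}_{\os{\circ}{\cal P}{}^{{\rm ex},(0)}/\os{\circ}{S}}\otimes\vp^{(0)}_{\rm zar})\lo\cdots$ is exact in every degree by \cite[(4.2.2) (a), (c)]{di} — exactly the cure for your $x_2\,dx_1=-x_1\,dx_2$ example, since such classes die in $P_0$. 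Second, this termwise exactness must be shown to survive the operations computing $L^{\rm UE}$ at an enlargement $T'$: tensoring with the flat ${\cal E}_n$, pulling back by the exact functor of (\ref{lemm:rex}), pushing forward along the affine morphisms $b^{(k)}$ via the affine base change theorem, and passing to $\vpl_n$ using $R^1\vpl_n=0$ for coherent crystals. None of these steps appears in your proposal, and they cannot be replaced by a spectral-sequence argument on the total complex, because the linearization is not exact and a quasi-isomorphism of complexes of ${\cal O}_{{\cal P}^{\rm ex}}$-modules is not automatically preserved by it.

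A secondary gap of the same nature: your claim that the acyclicity of $a^{(0)}_{{\rm conv}*}(\cdots)\lo a^{(1)}_{{\rm conv}*}(\cdots)\lo\cdots$ in positive degrees ``reduces to the conductor sequence for ${\cal O}_{\os{\circ}{X}}$, verified Zariski-locally'' is not justified as stated: these are sheaves in $(\os{\circ}{X}/\os{\circ}{S})_{\rm conv}$, so the exactness has to be checked on values at enlargements, where the relevant modules live on universal enlargements inside $\os{\circ}{T}{}'\times_{\os{\circ}{S}}\os{\circ}{\cal P}{}^{\rm ex}$, not on $\os{\circ}{X}$ itself. In the paper this, too, falls out of the same termwise-exact sequence (its degree-zero part) after the flatness/affine base change/$\vpl_n$ argument, rather than from a purely Zariski-local statement about $\os{\circ}{X}$.
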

\begin{proof} 
The following proof is a slightly simpler log convergent version of \cite[(1.3.21)]{nb}.
\par 
Let the notations be as in the proof of (\ref{theo:injf}). 
Set 
$\os{\circ}{\cal P}{}^{{\rm ex},(k)}_{T'}
:=\os{\circ}{\cal P}{}^{{\rm ex},(k)}\times_{\os{\circ}{S}}\os{\circ}{T}{}'$ 
$(k\in {\mab N})$. 
Let  
$b^{(k)}_{\os{\circ}{T}{}'}\col \os{\circ}{\cal P}{}^{{\rm ex},(k)}_{T'}
\lo \os{\circ}{\cal P}{}^{{\rm ex}}_{T'}$ 
and 
$c^{(k)}_{n\os{\circ}{T}{}'}\col 
{\mathfrak T}_{\os{\circ}{U}{}'{}^{(k)},n}(\os{\circ}{\cal P}{}^{{\rm ex},(k)}_{T'})
\lo {\mathfrak T}_{\os{\circ}{U}{}',n}(\os{\circ}{T}{}'\times_{\os{\circ}{S}}\os{\circ}{\cal P}{}^{\rm ex})$
be the natural morphisms. 
Consider the following commutative diagram: 
\begin{equation*}
\begin{CD}
\{{\mathfrak T}_{\os{\circ}{U}{}'{}^{(k)},n}(\os{\circ}{\cal P}{}^{{\rm ex},(k)}_{T'})\}_{n=1}^{\infty} 
@>{\{q'{}^{(k)}_n\}_{n=1}^{\infty}}>> \os{\circ}{\cal P}{}^{{\rm ex},(k)}_{T'} \\
@V{\{c^{(k)}_{n\os{\circ}{T}{}'}\}_{n=1}^{\infty}}VV 
@VV{b^{(k)}_{\os{\circ}{T}{}'}}V\\
\{{\mathfrak T}_{\os{\circ}{U}{}',n}(\os{\circ}{{\cal P}}{}^{\rm ex}_{T'})\}_{n=1}^{\infty} 
@>{\{q'_n\}_{n=1}^{\infty}}>> \os{\circ}{{\cal P}}{}^{\rm ex}_{T'} \\ 
@V{\{p_n\}_{n=1}^{\infty}}VV \\
\{T_n\}_{n=1}^{\infty}. 
\end{CD}
\tag{6.5.2}\label{cd:utqsq} 
\end{equation*} 
and the diagram (\ref{cd:utsq}). 
By (\ref{lemm:dpinc}) and (\ref{lemm:bcue}) we see that the square in (\ref{cd:utqsq}) 
is a cartesian diagram. 
Then $c^{(0)}_{n\os{\circ}{T}{}'}$ induces the following morphism of complexes: 
\begin{align*} 
0 &\lo \vpl_nP_0(p_n^*({\cal E}_n)
\otimes_{{\cal O}_{{\cal P}^{\rm ex}_{T'}}}{\Om}^{\bul}_{{\cal P}^{\rm ex}_{T'}/T'}) 
\tag{6.5.3}\label{ali:epn}\\
& \lo \vpl_n\{c^{(0)}_{n\os{\circ}{T}{}'*}(c^{(0)*}_{n\os{\circ}{T}{}'}(p_n^*({\cal E}_n))
\otimes_{{\cal O}_{{\os{\circ}{\cal P}{}^{{\rm ex},(0)}_{T'}}}}
q'{}^{(0)*}_n(\Om^{\bul}_{\os{\circ}{\cal P}{}^{{\rm ex},(0)}_{T'}/\os{\circ}{T}{}'})\otimes_{\mab Z}
\vp^{(0)}_{\rm zar}(\os{\circ}{\cal P}{}^{\rm ex}_{T'}/\os{\circ}{T}{}')) \\
& \quad \quad \quad \quad\lo 
\vpl_n{c}{}^{(1)}_{n\os{\circ}{T}{}'*}(c^{(1)*}_{n\os{\circ}{T'}}(p_n^*({\cal E}_n))
\otimes_{{\cal O}_{\os{\circ}{\cal P}{}^{{\rm ex},(1)}_{T'}}}
q'{}^{(1)*}_n(\Om^{\bul}_{\os{\circ}{\cal P}{}^{{\rm ex},(1)}_{T'}/\os{\circ}{T}{}'}
\otimes_{\mab Z}\vp^{(1)}_{\rm zar}(\os{\circ}{\cal P}{}^{\rm ex}_{T'}/\os{\circ}{T}{}'))) \\
& \quad \quad \quad \quad\lo \cdots \lo \cdots \}. 
\end{align*}  
\par 
We claim that the whole complex (\ref{ali:epn}) is exact.  
Indeed, by \cite[(4.2.2) (a), (c)]{di} 
we have the following exact sequence 
\begin{equation*}
0 \lo 
\Om^{\bul}_{\os{\circ}{\ol{\cal P}}{}^{\rm ex}_{T'}/\os{\circ}{T}{}'}/
{\Om}^{\bul}_{{\ol{\cal P}{}^{\rm ex}_{T'}}/\os{\circ}{T}{}'}
(-\os{\circ}{\cal P}{}^{\rm ex}_{T'})
\lo 
b^{(0)}_{\os{\circ}{T}{}'*}(\Om^{\bul}_{\os{\circ}{\cal P}{}^{(0),{\rm ex}}_{T'}/\os{\circ}{T}{}'}
\otimes_{\mab Z}\vp^{(0)}_{\rm zar}(\os{\circ}{\cal P}{}^{\rm ex}_{T'}/\os{\circ}{T}{}')) 
\lo \cdots.  
\tag{6.5.4}\label{eqn:tztt}
\end{equation*} 
By (\ref{lemm:rex}) we have the following exact sequence 
\begin{equation*}
0 \lo q'^*_n(\Om^{\bul}_{\os{\circ}{\ol{\cal P}}{}^{\rm ex}_{T'}/{T'}}/
{\Om}^{\bul}_{\ol{\cal P}{}^{\rm ex}_{T'}/{T'}}(-\os{\circ}{\cal P}{}^{\rm ex}_{T'}))  
\lo 
q'{}^{*}_n(b^{(0)}_{T''*}(\Om^{\bul}_{\os{\circ}{\cal P}{}^{(0),{\rm ex}}_{T'}/T'})
\otimes_{\mab Z}\vp^{(0)}_{\rm zar}(\os{\circ}{\cal P}{}^{\rm ex}_{T'}/\os{\circ}{T}{}'))
\lo \cdots.  
\tag{6.5.5}\label{eqn:tztzt}
\end{equation*} 
Here note that  
$\Om^i_{\os{\circ}{\ol{\cal P}}{}^{\rm ex}_{T'}/T'}/
\Om^i_{\ol{\cal P}{}^{\rm ex}_{T'}/T'}(-\os{\circ}{\cal P}{}^{\rm ex}_{T'})$ 
$(i\in {\mab N})$ is an ${\cal O}_{{\cal P}^{\rm ex}_{T'}}$-module.   
By \cite[(1.3.21) (4), (1.3.12.1)]{nb}  we have 
the following isomorphism: 
\begin{align*}
\Om^{\bul}_{\os{\circ}{\ol{\cal P}}{}^{\rm ex}_{T'}/T'}/
\Om^{\bul}_{\ol{\cal P}{}^{\rm ex}_{T'}/T'}(-\os{\circ}{\cal P}{}^{\rm ex}_{T'})
\simeq P_0\Om^{\bul}_{\ol{\cal P}{}^{\rm ex}_{T'}/T'}/
\Om^{\bul}_{\ol{\cal P}{}^{\rm ex}_{T'}/T'}(-\os{\circ}{\cal P}{}^{\rm ex}_{T'})
=P_0\Om^{\bul}_{{\cal P}^{\rm ex}_{T'}/\os{\circ}{T}{}'}. 
\tag{6.5.6}\label{ali:opt} 
\end{align*} 
Because $E$ is a flat ${\cal K}_{\os{\circ}{X}/\os{\circ}{S}}$-module, 
$\ol{\cal E}_n$ is a flat ${\cal K}_{\ol{T}_n}$-module. 
Hence ${\cal E}_n$ is a flat ${\cal K}_{T_n}$-module and 
we have the following exact sequence of ${\cal K}_{T_n}$-modules: 
\begin{align*}
0 \lo &\{p_n({\cal E}_n)\otimes_{q'{}^{*}_n({\cal O}_{{\cal P}^{\rm ex}_{T'}})}
q'{}^{*}_n(P_0\Om^{\bul}_{{\cal P}^{\rm ex}_{T'}/{T'}})\}_{n=1}^{\infty}  
\tag{6.5.7}\label{ali:o0pt} \\
& \lo 
\{p_n^*({\cal E}_n)\otimes_{q'{}^{*}_n({\cal O}_{{\cal P}^{\rm ex}_{T'}})}
q'{}^{*}_n(b^{(0)}_{T'*}(\Om^{\bul}_{\os{\circ}{\cal P}{}^{(0)}_{T'}/{T'}
}\otimes_{\mab Z}\vp^{(0)}_{\rm zar}(\os{\circ}{\cal P}_{T'}/\os{\circ}{T}{}')))\}_{n=1}^{\infty}   
\lo \cdots.  
\end{align*} 
Because $b^{(k)}_{\os{\circ}{T}{}'}$ is an affine morphism, 
the affine base change theorem tells us that 
the target of the morphism (\ref{ali:o0pt}) is equal to 
\begin{align*}
\{p_n^*({\cal E}_n)\otimes_{q'{}^{*}_n({\cal O}_{{\cal P}^{\rm ex}_{T'}})}
c^{(0)}_{n\os{\circ}{T}{}'*}q'{}^{(0)*}_n
(\Om^{\bul}_{\os{\circ}{\cal P}{}^{(0)}_{T'}/{T'}
}\otimes_{\mab Z}\vp^{(0)}_{\rm zar}(\os{\circ}{\cal P}_{T'}/\os{\circ}{T}{}')))\}_{n=1}^{\infty}   
\lo \cdots.  
\end{align*} 
Because the exactness in question is a local problem, we may neglect the orientation sheaves 
$\vp^{(k)}_{\rm zar}(\os{\circ}{\cal P}_{T'}/\os{\circ}{T}{}')$'s. 
By considering this 
neglectance into account, the target is isomorphic to 
\begin{align*}
\{p_n^*({\cal E}_n)\otimes_{q'{}^{*}_n({\cal O}_{{\cal P}^{\rm ex}_{T'}})}
c^{(0)}_{n\os{\circ}{T}{}'*}q'{}^{(0)*}_n
(\Om^{\bul}_{\os{\circ}{\cal P}{}^{(0)}_{T'}/{T'}})\}_{n=1}^{\infty}   
\lo \cdots.  
\end{align*} 
For a coherent crystal $\{H_n\}_{n=1}^{\infty}$ of 
$\{{\cal K}_{{\mathfrak T}_{\os{\circ}{U}{}',n}(\os{\circ}{\cal P}{}^{\rm ex}_{T'}
)}\}_{n=1}^{\infty}$-modules, 
$R^1\vpl_nH_n=0$ by the log version of \cite[(3.8)]{oc} 
which follows from the proof of [loc.~cit.]. 
Hence the sequence (\ref{ali:epn}) is exact. 
\par 
Because the right hand side of (\ref{ali:epn}) with the abbreviation of  the orientation sheaves 
is equal to 
$$\{a^{(0)}_{{\rm conv}*}(L_{\os{\circ}{X}{}^{(0)}/\os{\circ}{S}}(c^{(0)*}({\cal E})
\otimes_{{\cal O}_{\os{\circ}{\cal P}{}^{(0),{\rm ex}}}}
\Om^{\bul}_{\os{\circ}{\cal P}{}^{(0),{\rm ex}}/\os{\circ}{S}}))
\}_T\lo \cdots \lo,$$
the right hand side is equal to 
$$a^{(0)}_{{\rm conv}*}(a^{(0)*}_{{\rm conv}}(E))
\lo a^{(1)}_{{\rm conv}*}
(a^{(1)*}_{{\rm conv}}(E))\lo \cdots$$ 
by the convergent Poincar\'{e} lemma (\cite[(5.6)]{oc}, \cite[(3.7)]{nhw}). 
\par 
The equality in (\ref{ali:ixea}) follows from the argument above. 
\end{proof}

\par 
For simplicity of notation,  set 
$$({\cal E}\otimes_{{\cal O}_{{\cal P}^{\rm ex}}}
\Om^{\bul}_{{\cal P}^{\rm ex}/\os{\circ}{S}},P):=
(\{{\cal E}_{n}\otimes_{{\cal O}_{{\cal P}^{\rm ex}}}
\Om^{\bul}_{{\cal P}^{\rm ex}/\os{\circ}{S}}\}_{n=1}^{\infty}, 
\{\{{\cal E}_{n}\otimes_{{\cal O}_{{\cal P}^{\rm ex}}}P_k
\Om^{\bul}_{{\cal P}^{\rm ex}/\os{\circ}{S}}\}_{k\in {\mab Z}}\}_{n=1}^{\infty}).$$
\begin{prop}\label{prop:ulcrz}
Let $\bet_n \col X_n \lo X$ be a natural morphism 
and 
identify $(T_n)_{{\rm zar}}$ with $(X_n)_{\rm zar}$. 
Then
\begin{equation*}
Ru^{\rm conv}_{X/S*}
(L^{\rm UE}_{X/S}
({\cal E}\otimes_{{\cal O}_{{\cal P}^{\rm ex}}}
\Om^{\bul}_{{\cal P}^{\rm ex}/\os{\circ}{S}}),P)
=\vpl_n \bet_{n*}(({\cal E}_{n}\otimes_{{\cal O}_{{\cal P}^{\rm ex}}}
\Om^{\bul}_{{\cal P}^{\rm ex}/\os{\circ}{S}},P))
\tag{6.6.1}\label{eqn:uxzl}
\end{equation*}
in ${\rm D}^+{\rm F}(f^{-1}({\cal K}_S))$.
\end{prop}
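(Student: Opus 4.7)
The plan is to derive this as a filtered upgrade of the non-filtered identity from Proposition \ref{prop:cdfza}, namely $Ru^{\rm conv}_{Y/S*}(E) = \vpl_n \bet_{n*}({\cal E}_n\otimes_{{\cal O}_{{\cal Y}}}\Om^{\bul}_{{\cal Y}/S})$. My starting point is the observation that the same technique used in that proof applies term-by-term to the complex of coherent ${\cal K}_{\os{\to}{T}}$-modules ${\cal E}\otimes_{{\cal O}_{{\cal P}^{\rm ex}}}\Om^i_{{\cal P}^{\rm ex}/\os{\circ}{S}}$ and to its subobjects $P_k({\cal E}\otimes_{{\cal O}_{{\cal P}^{\rm ex}}}\Om^i_{{\cal P}^{\rm ex}/\os{\circ}{S}})$, which by (\ref{theo:injf})(1) and (\ref{lemm:rex}) remain coherent subobjects.

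First I would verify the unfiltered identity
\begin{equation*}
Ru^{\rm conv}_{X/S*} L^{\rm UE}_{X/S}({\cal E}\otimes_{{\cal O}_{{\cal P}^{\rm ex}}}\Om^{\bul}_{{\cal P}^{\rm ex}/\os{\circ}{S}}) = \vpl_n\bet_{n*}({\cal E}_n\otimes_{{\cal O}_{{\cal P}^{\rm ex}}}\Om^{\bul}_{{\cal P}^{\rm ex}/\os{\circ}{S}})
\end{equation*}
by unwinding the formula (\ref{eqn:lueyset}) for the value of the linearization at an enlargement and pushing down to the Zariski topos via $u^{\rm conv}_{X/S}$. The derived pushforward collapses to the ordinary one because the higher direct images $R^i\bet_{n*}$ vanish for $i\geq 1$ on coherent ${\cal O}_{T_n}$-modules (locally $\bet_n$ is an affine morphism), and passage to $\vpl_n$ is compatible with $R u^{\rm conv}_{X/S*}$ by a log convergent analogue of \cite[(3.8)]{oc} applied to the coherent system $\{{\cal E}_n\otimes\Om^i_{{\cal P}^{\rm ex}/\os{\circ}{S}}\}_{n=1}^\infty$, ensuring $R^1\vpl_n = 0$ termwise.

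Second I would promote this to the filtered statement. By (\ref{theo:injf})(2), $P_k L^{\rm UE}_{\os{\circ}{X}/\os{\circ}{S}}({\cal E}\otimes_{{\cal O}_{{\cal P}^{\rm ex}}}\Om^{\bul}_{{\cal P}^{\rm ex}/\os{\circ}{S}})$ is a genuine subcomplex of the whole linearization, and at each enlargement $T_n$ it coincides sheaf-wise with $P_k({\cal E}_n\otimes_{{\cal O}_{{\cal P}^{\rm ex}}}\Om^{\bul}_{{\cal P}^{\rm ex}/\os{\circ}{S}})$. The argument of the previous step then applies verbatim to each subcomplex $P_k L^{\rm UE}(\cdots)$---its enlargement data $P_k({\cal E}_n\otimes\Om^{\bul})$ is coherent on $T_n$ and the affine base change together with the $R^1\vpl_n$-vanishing persists---yielding
\begin{equation*}
Ru^{\rm conv}_{X/S*} P_k L^{\rm UE}(\cdots) = \vpl_n \bet_{n*}P_k({\cal E}_n\otimes_{{\cal O}_{{\cal P}^{\rm ex}}}\Om^{\bul}_{{\cal P}^{\rm ex}/\os{\circ}{S}}).
\end{equation*}
Compatibility of the inclusions $P_k\hookrightarrow P_{k+1}\hookrightarrow\cdots$ on both sides is immediate from functoriality of $L^{\rm UE}$ and of the ordinary pushforward, so the identification upgrades to the filtered derived category.

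The main obstacle is to control the vanishing of $R^i\bet_{n*}$ on each filtration level $P_k({\cal E}_n\otimes\Om^{\bul})$ simultaneously with the $R^1\vpl_n$-vanishing; once these are both handled (they rest on coherence, which is preserved by $P_k$ thanks to (\ref{lemm:rex})), the rest is bookkeeping. One must take care that the subcomplex structure is preserved along the transition morphisms of the system $\{T_n\}_{n=1}^{\infty}$ of universal enlargements, and this is precisely where (\ref{theo:injf}) (relying on the left exactness of $\bet_n^*$ for coherent ${\cal K}_T$-modules established in (\ref{lemm:rex})) enters essentially.
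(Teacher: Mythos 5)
Your proposal is correct and follows essentially the same route as the paper, whose proof simply invokes the argument of the unfiltered statement (\ref{prop:cdfza}) $=$ \cite[(3.9)]{nhw} applied verbatim to the filtered complex: one runs the universal-enlargement computation of $Ru^{\rm conv}_{X/S*}$ on each level $P_k$, which is legitimate because (\ref{theo:injf}) (2) (resting on (\ref{lemm:rex})) makes $P_kL^{\rm UE}(\cdots)$ a genuine coherent subcomplex, exactly as you argue.
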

\begin{proof}
The proof is the same as that of \cite[(3.9)]{nhw}. 
\end{proof} 

\parno 
The following is an SNCL analogue of \cite[(5.13)]{nh3}, 
whose proof is the same as that of [loc.~cit.]: 

\begin{prop}\label{prop:rescos}
Let ${\cal V}'$, $\pi'$, $\kap'$ and $S'$ 
be similar objects to ${\cal V}$, $\pi$ $\kap$ and $S$
as in {\rm \S\ref{sec:logcd}}, respectively. 
Let $X' \os{\sus}{\lo} \ol{\cal P}{}'$ 
be a similar closed immersion over $\ol{S}{}'$ to $X\os{\sus}{\lo} \ol{\cal P}$ over $\ol{S}$. 
Let $a'{}^{(k)}\col \os{\circ}{\cal P}{}'^{(k)}\lo \os{\circ}{\cal P}{}'$ $(k\in {\mab N})$
be the natural morphism of log schemes over $S'_1$.  
Assume that there exists the following commutative diagram 
\begin{equation*} 
\begin{CD} 
X@>{\sus}>> \ol{\cal P}\\
@V{g}VV @VV{{\mathfrak g}}V \\
X' @>{\sus}>> \ol{\cal P}{}'
\end{CD}
\end{equation*} 
over $\ol{S}\lo \ol{S}{}'$. 
Let $E'$ be a similar isocrystal to $E$ in 
$((\os{\circ}{X}{}'/\os{\circ}{S}{}')_{\rm conv},{\cal K}_{\os{\circ}{X}{}'/\os{\circ}{S}{}'})$. 
Let $\Phi \col \os{\circ}{g}{}^*_{\rm conv}(E')\lo E$ be a morphism of isocrystals of 
${\cal K}_{\os{\circ}{X}{}/\os{\circ}{S}{}}$-modules. 
Let ${\cal M}^{\rm ex}$ and ${\cal M}'{}^{\rm ex}$ be 
the log structures of ${\cal P}^{\rm ex}$ and ${\cal P}'{}^{\rm ex}$, 
respectively. 
Assume that, for each point $x\in \os{\circ}{\cal P}{}^{\rm ex}$ 
and for each member $m$ of the minimal generators 
of ${\cal M}^{\rm ex}_{x}/{\cal O}^*_{{\cal P}^{\rm ex},x}$, 
there exists a unique member $m'$ of the minimal generators of 
${\cal M}'{}^{\rm ex}_{\os{\circ}{g}(x)}
/{\cal O}^*_{{\cal P}'{}^{\rm ex},\os{\circ}{g}(x)}$ 
such that $g^*(m')= m$ and such that the image of 
the other minimal generators of 
${\cal M}'{}^{\rm ex}_{\os{\circ}{g}(x)}
/{\cal O}^*_{{\cal P}'{}^{\rm ex},\os{\circ}{g}(x)}$ by $g^*$ 
are the trivial element of 
${\cal M}^{\rm ex}_{x}/{\cal O}^*_{{\cal P}^{\rm ex},x}$. 
Then the following diagram is commutative$:$
\begin{equation*} 
\begin{CD}  
{\rm Res}: 
{\mathfrak g}_{{\rm conv}*}
{\rm gr}_k^{P}L^{\rm UE}_{\os{\circ}{X}/\os{\circ}{S}}
({\cal E}\otimes_{{\cal O}_{{\cal P}^{\rm ex}}}
\Om^{\bul}_{{\cal P}^{\rm ex}/\os{\circ}{S}})
@>{\sim}>> \\
@AAA  \\
{\rm Res}: 
{\rm gr}_k^PL^{\rm UE}_{\os{\circ}{X}{}'/\os{\circ}{S}{}'}
({\cal E}'\otimes_{{\cal O}_{{\cal P}'{}^{\rm ex}}}
\Om^{\bul}_{{\cal P}'{}^{\rm ex}/\os{\circ}{S}{}'}) 
@>{\sim}>>
\end{CD}
\tag{6.7.1}\label{eqn:relcmrp} 
\end{equation*} 
\begin{equation*} 
\begin{CD}  
{\mathfrak g}_{{\rm conv}*}
a^{(k-1)}_{{\rm conv}*}
(L_{\os{\circ}{X}{}^{(k-1)}/\os{\circ}{S}}
({\cal E}\otimes_{{\cal O}_{{\cal P}^{\rm ex}}}
\Om^{\bul}_{\os{\circ}{\cal P}{}^{\rm ex}{}^{(k-1)}/\os{\circ}{S}})
\otimes_{\mab Z}\vp^{(k-1)}_{\rm conv}(\os{\circ}{X}/\os{\circ}{S}))[-k]\\
@AAA \\
a'{}^{(k-1)}_{{\rm conv}*}
(L_{\os{\circ}{X}{}'{}^{(k-1)}/\os{\circ}{S}{}'}
({\cal E}'\otimes_{{\cal O}_{{\cal P}{}'^{\rm ex}}}
\Om^{\bul}_{({\cal P}{}'{}^{\rm ex})^{\circ}{}^{(k-1)}/\os{\circ}{S}})
\otimes_{\mab Z}\vp^{(k-1)}_{\rm conv}(\os{\circ}{X}{}'/\os{\circ}{S}{}'))[-k].  
\end{CD}
\end{equation*} 
\end{prop}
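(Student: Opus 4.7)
\medskip
\noindent
\textbf{Proof proposal for (\ref{prop:rescos}).}
The plan is to prove the commutativity in three stages: first at the level of log de Rham complexes on $\ol{\cal P}{}^{\rm ex}$ and $\ol{\cal P}{}'{}^{\rm ex}$ (before applying any linearization), then to transfer this to the linearized complexes by invoking the functoriality results of \S\ref{sec:lll}, and finally to incorporate the coefficient morphism $\Phi$. Since the statement is local on $\os{\circ}{\cal P}{}^{\rm ex}$, I would reduce to the affine situation (and to the case where $\ol{M}_{{\cal P}^{\rm ex}}/{\cal O}^*$ and $\ol{M}_{{\cal P}'{}^{\rm ex}}/{\cal O}^*$ admit global minimal generators), which is the context in which the residue isomorphism (\ref{eqn:grps}) was actually constructed in \cite{nb}.

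The first stage is purely log-geometric. The hypothesis on minimal generators is exactly the hypothesis of (\ref{prop:mmoo}), applied to ${\mathfrak g}^{\rm ex}\col {\cal P}^{\rm ex}\lo {\cal P}'{}^{\rm ex}$; it yields a canonical morphism ${\mathfrak g}^{(k-1)}\col \os{\circ}{\cal P}{}^{{\rm ex},(k-1)}\lo \os{\circ}{\cal P}{}'{}^{{\rm ex},(k-1)}$ above $\os{\circ}{\mathfrak g}\col \os{\circ}{\cal P}{}^{\rm ex}\lo \os{\circ}{\cal P}{}'{}^{\rm ex}$, together with a morphism of orientation sheaves ${\mathfrak g}{}^{(k-1)*}\vp_{\rm zar}^{(k-1)}(\os{\circ}{\cal P}{}'{}^{\rm ex}/\os{\circ}{S}{}')\lo \vp_{\rm zar}^{(k-1)}(\os{\circ}{\cal P}{}^{\rm ex}/\os{\circ}{S})$. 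The minimal-generator hypothesis also guarantees that ${\mathfrak g}^{*}$ carries $P_{k}\Om^{\bul}_{{\cal P}'{}^{\rm ex}/\os{\circ}{S}{}'}$ into $P_{k}\Om^{\bul}_{{\cal P}^{\rm ex}/\os{\circ}{S}}$, because the $P$-filtration counts the number of log poles. Taking Poincar\'e residues along the appropriate $(k{-}1)$-fold intersections and using the explicit local description of $\mathrm{Res}$ in terms of $d\log m_i\we\cdots$, I would check by direct computation on a local minimal chart that the square
\begin{equation*}
\begin{CD}
{\mathfrak g}_*{\rm gr}_k^P\Om^{\bul}_{{\cal P}^{\rm ex}/\os{\circ}{S}} @>{{\rm Res}}>{\sim}> {\mathfrak g}_*b^{(k-1)}_*(\Om^{\bul}_{\os{\circ}{\cal P}{}^{{\rm ex},(k-1)}/\os{\circ}{S}}\otimes_{\mab Z}\vp_{\rm zar}^{(k-1)}(\os{\circ}{\cal P}{}^{\rm ex}/\os{\circ}{S}))[-k]\\
@AAA @AAA\\
{\rm gr}_k^P\Om^{\bul}_{{\cal P}'{}^{\rm ex}/\os{\circ}{S}{}'} @>{{\rm Res}}>{\sim}> b'{}^{(k-1)}_*(\Om^{\bul}_{\os{\circ}{\cal P}{}'{}^{{\rm ex},(k-1)}/\os{\circ}{S}{}'}\otimes_{\mab Z}\vp_{\rm zar}^{(k-1)}(\os{\circ}{\cal P}{}'{}^{\rm ex}/\os{\circ}{S}{}'))[-k]
\end{CD}
\end{equation*}
commutes; this is the analogue in the present SNCL setting of the statement proved in \cite[(5.13)]{nh3}, and the argument is formally the same.

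The second stage is to tensor with the appropriate coherent crystals and apply the log convergent linearization functor. Using the morphism $\Phi\col \os{\circ}{g}{}^*_{\rm conv}(E')\lo E$ one obtains a morphism of projective systems $\{{\cal E}'_n\}\lo \{{\cal E}_n\}$ (via the morphism of universal enlargements induced by ${\mathfrak g}$), and hence a morphism of filtered complexes of ${\cal O}_{{\cal P}^{\rm ex}}$-modules ${\mathfrak g}^{*}({\cal E}'\otimes P_k\Om^{\bul}_{{\cal P}'{}^{\rm ex}/\os{\circ}{S}{}'})\lo {\cal E}\otimes P_k\Om^{\bul}_{{\cal P}^{\rm ex}/\os{\circ}{S}}$. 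Applying $L^{\rm UE}_{\os{\circ}{X}/\os{\circ}{S}}$ to the commutative diagram of the previous paragraph and invoking (\ref{linc}) (compatibility of $L^{\rm UE}$ with closed immersions, here applied to the closed immersions $\os{\circ}{X}{}^{(k-1)}\sus\os{\circ}{\cal P}{}^{{\rm ex},(k-1)}$ and $\os{\circ}{X}{}'{}^{(k-1)}\sus\os{\circ}{\cal P}{}'{}^{{\rm ex},(k-1)}$) together with the convergent Poincar\'e lemma (\ref{theo:pl}) gives precisely the diagram (\ref{eqn:relcmrp}).

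The main obstacle will be the bookkeeping in the first stage: one must verify that under the minimal-generator hypothesis the pullback ${\mathfrak g}^{*}(d\log m'_{i_1}\we\cdots\we d\log m'_{i_k})$ equals $d\log m_{i_1}\we\cdots\we d\log m_{i_k}$ up to a unit that is compatible with the orientation sheaf maps, and that all other wedge products of $d\log m'_j$'s involving ``extra'' generators map to zero under ${\mathfrak g}^{*}$. This is what prevents spurious contributions from appearing on the left-hand side of the residue diagram and is exactly the content of the hypothesis in the statement; all other ingredients (exactness of the linearization functor on exact prewidenings (\ref{lemm:rex}), (\ref{prop:afex}), the log Poincar\'e lemma, and the injectivity (\ref{theo:injf})) are already available.
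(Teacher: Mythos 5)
Your proposal is correct and takes essentially the approach the paper intends: the paper itself gives no argument, merely asserting that the proof is the SNCL analogue of \cite[(6.13)]{nhw} (cf.~\cite[(5.13)]{nh3}), and your three-stage sketch — local compatibility of ${\rm Res}$ with ${\mathfrak g}^*$ under the minimal-generator hypothesis via (\ref{prop:mmoo}), then transfer to the linearized complexes using (\ref{linc}), (\ref{lemm:rex}), (\ref{prop:afex}), (\ref{theo:pl}) and (\ref{theo:injf}), then insertion of the coefficient morphism $\Phi$ through the universal enlargements — is exactly the expected reconstruction of that analogue. Your worry about unit factors is harmless, since with $g^*(m')=m$ in $\ol{M}$ the discrepancy is a $d\log u$ with $u$ a unit, which has no log pole and hence vanishes in ${\rm gr}_k^P$.
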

\begin{proof} 
We leave the proof of this proposition to the reader because the proof is only 
the SNCL analogue of that of 
\cite[(6.13)]{nhw}. 
\end{proof}

\section{Simplicial log convergent topoi}\label{sec:rlct}
In this section we give the notations of simplicial log convergent topoi and 
simplicial Zariski topoi for later sections. 
\par 
Let the notations be as in the beginning of 
\S\ref{sec:logcd}.  
Let $Y=\us{i\in I}{\bigcup}Y_i$ 
be an open covering of $Y$, where 
$I$ is a (not necessarily finite) set. 
Set $Y_0:=\coprod_{i\in I}Y_i$ and 
$Y_n:= {\rm cosk}_0^Y(Y_0)_n:= 
\underset{n~{\rm pieces}}{\underbrace{
Y_0\times_Y \cdots \times_YY_0}}$.  
Then we have the simplicial log scheme $Y_{\bul}=
Y_{\bul \in {\mab N}}$ over $Y$. 
We also have the ringed topos 
$(({Y_{\bul}/S})_{\rm conv},{\cal K}_{Y_{\bul}/S})$. 
Let 
$\pi^{\rm conv}_{Y/S} \col  
({Y_{\bul}/S})_{\rm conv} \lo 
({Y/S})_{\rm conv}$ 
be the natural morphism of topoi.
The morphism
$\pi^{\rm conv}_{Y/S}$ induces the following morphism 
\begin{equation*} 
\pi^{\rm conv}_{Y/S}
\col
(({Y_{\bul}/S})_{\rm conv},{\cal K}_{Y_{\bul}/S})
\lo 
(({Y/S})_{\rm conv},{\cal K}_{Y/S}) 
\tag{7.0.1}\label{eqn:pidf}
\end{equation*}
of ringed topoi and a morphism of filtered derived categories:
\begin{equation*}
R\pi^{\rm conv}_{Y/S*} \col
{\rm D}^+{\rm F}({\cal K}_{Y_{\bul}/S}) \lo 
{\rm D}^+{\rm F}({\cal K}_{Y/S}).
\tag{7.0.2}\label{eqn:rtalc}
\end{equation*}
\par
By using the composite morphisms $f_i \col 
\os{\circ}{Y}_i \os{\subset}{\lo} \os{\circ}{Y} 
\os{\os{\circ}{f}}{\lo} \os{\circ}{S}$ $(i\in I)$, we 
also have an analogous simplicial ringed topos 
$({\os{\circ}{Y}}_{\bul{\rm zar}},
f^{-1}_{\bul}({\cal K}_S))$ and 
an analogous morphism 
\begin{equation*} 
\pi^{\rm zar}_{Y/S*} \col ({\os{\circ}{Y}}_{\bul {\rm zar}}, 
f^{-1}_{\bul}({\cal K}_S)) \lo 
({\os{\circ}{Y}}_{\rm zar}, f^{-1}({\cal K}_S)) 
\tag{7.0.3}\label{eqn:pizs}
\end{equation*} 
of ringed topoi.
We have the following morphism of filtered derived categories:
\begin{equation*}
R\pi^{\rm zar}_{Y/S*} \col
{\rm D}^+{\rm F}(f^{-1}_{\bul}({\cal K}_S)) \lo 
{\rm D}^+{\rm F}(f^{-1}({\cal K}_S)). 
\tag{7.0.4}\label{eqn:rtazr}
\end{equation*}
Let $u^{\rm conv}_{Y/S} \col 
((Y/S)_{\rm conv}, {\cal K}_{Y/S})
\lo ({\os{\circ}{Y}}_{{\rm zar}},f^{-1}({\cal K}_S))$ 
be the projection in (\ref{eqn:uwksr}). 
We also have the following natural projection
\begin{equation*}
u^{\rm conv}_{Y_{\bul}/S} 
\col 
(({Y_{\bul}/S})_{\rm conv}, {\cal K}_{Y_{\bul}/S})
\lo ({\os{\circ}{Y}}_{\bul{\rm zar}},f^{-1}_{\bul}({\cal K}_S)). 
\tag{7.0.5}\label{eqn:uysr}
\end{equation*}
We have the following commutative diagram: 
\begin{equation*}
\begin{CD}
{\rm D}^+{\rm F}({\cal K}_{Y_{\bul}/S}) 
@>{R\pi^{\rm conv}_{Y/S*}}>> 
{\rm D}^+{\rm F}({\cal K}_{Y/S})  \\ 
@V{Ru^{\rm conv}_{Y_{\bul}/S*}}VV 
@VV{Ru^{\rm conv}_{Y/S*}}V \\
{\rm D}^+{\rm F}(f^{-1}_{\bul}({\cal K}_S))  
@>{R\pi^{\rm zar}_{Y/S*}}>> 
{\rm D}^+{\rm F}(f^{-1}({\cal K}_S)). 
\end{CD}
\tag{7.0.6}\label{eqn:ubys}
\end{equation*}
\par
Assume that we are given the commutative diagram 
(\ref{cd:yypssp}) and open coverings 
$\{Y_i\}_{i \in I}$ of $Y$ and 
$\{Y'_i\}_{i \in I}$ of $Y'$, respectively, 
such that $g$ induces a morphism 
$g_i \col Y'_i \lo Y_i$ of log schemes. 
Then the family $\{g_i\}_{i\in I}$ 
induces the natural morphism 
$g_{\bul}\col Y'_{\bul} \lo Y_{\bul}$
of simplicial log schemes over the morphism $Y'\lo Y$. 
Hence we obtain the following commutative diagram:
\begin{equation*}
\begin{CD}
{\rm D}^+{\rm F}({\cal K}_{Y'_{\bul}/S'}) 
@>{Rg_{\bul {\rm conv}*}}>> 
{\rm D}^+{\rm F}({\cal K}_{Y_{\bul}/S}) \\ 
@V{R\pi^{\rm conv}_{Y'/S'*}}VV 
@VV{R\pi^{\rm zar}_{Y/S*}}V \\
{\rm D}^+{\rm F}({\cal K}_{Y'/S'}) 
@>{Rg_{{\rm conv}*}}>>  
{\rm D}^+{\rm F}({\cal K}_{Y/S}).
\end{CD}
\tag{7.0.7}\label{cd:rgrtc}
\end{equation*} 
We also have the following commutative diagram 
\begin{equation*}
\begin{CD}
{\rm D}^+{\rm F}(f'{}^{-1}_{\bul}({\cal K}_{S'})) 
@>{Rg_{\bul{\rm zar}*}}>> 
{\rm D}^+{\rm F}(f^{-1}_{\bul}({\cal K}_S)) \\ 
@V{R\pi^{\rm zar}_{Y'/S'*}}VV @VV{R\pi^{\rm zar}_{Y/S*}}V \\
{\rm D}^+{\rm F}(f'{}^{-1}({\cal K}_{S'})) @>{Rg_{{\rm zar}*}}>>  
{\rm D}^+{\rm F}(f^{-1}({\cal K}_{S})).
\end{CD}
\tag{7.0.8}\label{cd:rgztac}
\end{equation*}
\par 
Let $M$ be the log structure of $Y$. 
Let $N$ be a sub log structure of $M$. 
Let $M_i$ (resp.~$N_i$) be the pull-back of 
$M$ (resp.~$N$) to $Y_i$ $(i\in I)$.  
Let 
$\eps_{(\os{\circ}{Y},M,N)/S'/S} \col Y=(\os{\circ}{Y},M) \lo (\os{\circ}{Y},N)$ 
be the morphism forgetting the structure $M\setminus N$ 
over a morphism $S'\lo S$ in \S{sec:logcd}. 
For simplicity of notation, denote $(\os{\circ}{Y},M)$ and 
$(\os{\circ}{Y},N)$ by $Y'$ and $Y$, respectively. 
The morphism $\eps_{(\os{\circ}{Y},M,N)/S'/S}$ 
induces the following morphisms of ringed topoi: 
\begin{equation*}
\eps^{\rm conv}_{(\os{\circ}{Y}_{\bul},M_{\bul},N_{\bul})/S'/S} \col 
((Y'_{\bul}/S')_{\rm conv},
{\cal K}_{Y'_{\bul}/S'}) 
\lo 
((Y_{\bul}/S)_{\rm conv},{\cal K}_{Y_{\bul}/S}).  
\tag{7.0.9}\label{eqn:ymnb}
\end{equation*} 
We have the following commutative diagram
\begin{equation*}
\begin{CD} 
((Y'_{\bul}/S')_{\rm conv},{\cal K}_{Y'_{\bul}/S'}) 
@>{\eps^{\rm conv}_{(\os{\circ}{Y}_{\bul},M_{\bul},N_{\bul})/S'/S}}>> 
((Y_{\bul}/S)_{\rm conv},{\cal K}_{Y_{\bul}/S})\\ 
@V{\pi^{\rm conv}_{Y'/S'}}VV  
@VV{\pi^{\rm conv}_{Y/S}}V \\
((Y'/S')_{\rm conv},{\cal K}_{Y'/S'}) 
@>{\eps^{\rm conv}_{(\os{\circ}{Y},M,N)/S'/S}}>> ((Y/S)_{\rm conv},{\cal K}_{Y/S}).
\end{CD}
\tag{7.0.10}\label{cd:epep}
\end{equation*} 

\section{Modified $P$-filtered convergent complexes and $p$-adic semi-purity}\label{sec:mplf}
In this section we define the modified $P$-filtered convergent complex of an isocrystal 
on the underlying scheme of an SNCL scheme in characteristic $p>0$. 
This filtered complex is the first fundamenetal complex in this paper. 
Especially it plays a central role in the construction of 
the convergent Steenbrink filtered complex, which will be proved in later sections. 
\par 
Let $S$ be a ${\cal V}$-formal family of log points. 
Let $Y/S_1$ be a log smooth scheme. 
Let $g\col Y\lo S$ be the structural morphism. 
Let $Y=\bigcup_{i\in I} Y_i$ be an affine open covering of $Y$. 
Let $Y_{\bul}$ be the simplicial log scheme obtained by this affine open covering of $Y$. 
\par 
Let $\star$ be nothing or $\bul$. 
Let  
\begin{align*} 
\eps_{Y_{\star}/S/\os{\circ}{S}} \col 
((Y_{\star}/S)_{\rm conv},{\cal K}_{Y_{\star}/S}) \lo 
((Y_{\star}/\os{\circ}{S})_{\rm conv},{\cal K}_{Y_{\star}/\os{\circ}{S}}), 
\tag{8.0.1}\label{ali:eysss}
\end{align*} 
\begin{align*} 
\eps_{Y_{\star}/\os{\circ}{S}} \col 
((Y_{\star}/\os{\circ}{S})_{\rm conv},{\cal K}_{Y_{\star}/\os{\circ}{S}}) \lo 
((\os{\circ}{Y}_{\star}/\os{\circ}{S})_{\rm conv},
{\cal K}_{\os{\circ}{Y}_{\star}/\os{\circ}{S}}) 
\tag{8.0.2}\label{ali:eykskws}
\end{align*} 
and 
\begin{align*} 
\eps_{Y_{\star}/S} \col 
((Y_{\star}/S)_{\rm conv},{\cal K}_{Y_{\star}/S}) \lo 
((\os{\circ}{Y}_{\star}/\os{\circ}{S})_{\rm conv},
{\cal K}_{\os{\circ}{Y}_{\star}/\os{\circ}{S}})
\tag{8.0.3}\label{ali:eyksks}
\end{align*} 
be the morphisms forgetting the log structures. 
Then $\eps_{Y_{\star}/S} =\eps_{Y_{\star}/\os{\circ}{S}}\circ \eps_{Y_{\star}/S/\os{\circ}{S}}$. 
\par 
Let $Y_i \os{\sus}{\lo} \ol{\cal Q}_i$ be an immersion into a log smooth 
log affine $p$-adic formal ${\cal V}$-scheme over $\ol{S}$ 
such that the image of $\ol{\cal Q}_i$ in $\ol{S}$ is 
a log affine $p$-adic formal ${\cal V}$-scheme. 
If $Y_i$ is sufficiently small, this immersion indeed exists by \cite[(1.1.6)]{nb} 
and the criterion of the log smoothness in \cite{klog1}. 
Set $\ol{\cal Q}_0:=\coprod_i\ol{\cal Q}_i$ and 
$\ol{\cal Q}_{\bul}:={\rm cosk}_0^{\ol{S}}(\ol{\cal Q}_0)$. 
The underlying formal scheme of $\ol{\cal Q}_n$ is the disjoint union of 
affine $p$-adic formal ${\cal V}$-schemes.  
We have a simplicial immersion 
$Y_{\bul} \os{\sus}{\lo} \ol{\cal Q}_{\bul}$ 
into a log smooth log $p$-adic formal ${\cal V}$-scheme over $\ol{S}$. 
Set ${\cal Q}_{\bul}:=\ol{\cal Q}_{\bul}\times_{\ol{S}}S$. 
Let $g_{\bul} \col {\cal Q}^{\rm ex}_{\bul}\lo S$ be the structural morphism. 
Let $\{(\ol{V}_{\bul n},\ol{T}_{\bul n})\}_{n=1}^{\infty}$ be the system of 
the universal enlargements of the simplicial immersion 
$Y_{\bul} \os{\sus}{\lo} \ol{\cal Q}_{\bul}$ over $\ol{S}$. 
Set $(V_{\bul n},T_{\bul n}):=(\ol{V}_{\bul n},\ol{T}_{\bul n})\times_{\ol{S}}S$. 
Let $\bet_{\bul n} \col V_{\bul n} \lo Y_{\bul}$ 
be the natural morphism.  
Let $F$ be a flat isocrystal of ${\cal K}_{Y/\os{\circ}{S}}$-modules. 
Let $(\ol{\cal F}{}^{\bul}_n,\ol{\nabla}_n)$ be the flat coherent ${\cal K}_{\ol{T}_{\bul n}}$-modules 
with integrable connection obtained by $F$: 
$\ol{\nabla}_n \col \ol{\cal F}{}^{\bul}_n\lo \ol{\cal F}{}^{\bul}_n
\otimes_{{\cal O}_{\ol{\cal Q}^{\rm ex}_{\bul}}}\Om^1_{\ol{\cal Q}^{\rm ex}_{\bul}/\os{\circ}{S}}$.  
Set ${\cal F}_n:=\ol{\cal F}{}^{\bul}_n\otimes_{{\cal O}_{\ol{S}}}{\cal O}_S$. 
Since $dt=td\log t$ for a local section $t$ such that 
${\cal O}_{\ol{S}}={\cal O}_S\{t\}$ locally on $\ol{S}$, 
$\ol{\nabla}_n$ 
induces the following integrable connection 
\begin{align*} 
{\nabla}_n \col {\cal F}^{\bul}_n\lo {\cal F}^{\bul}_n
\otimes_{{\cal O}_{{\cal Q}^{\rm ex}_{\bul}}}\Om^1_{{\cal Q}^{\rm ex}_{\bul}/\os{\circ}{S}}
\end{align*} 
and we obtain the projective system $\{{\nabla}_n\}_{n=0}^{\infty}$ and 
the following integrable connection 
\begin{align*} 
\vpl_n{\nabla}_n \col \vpl_n\bet_{\bul n*}{\cal F}_n\lo 
(\vpl_n\bet_{\bul n*}{\cal F}_n)\otimes_{{\cal O}_{{\cal Q}^{\rm ex}_{\bul}}}
\Om^1_{{\cal Q}^{\rm ex}_{\bul}/\os{\circ}{S}}. 
\end{align*} 
For simplicity of notation, set 
\begin{align*} 
{\cal F}^{\bul}\otimes_{{\cal O}_{{\cal Q}^{\rm ex}_{\bul}}}
\Om^{\bul}_{{\cal Q}^{\rm ex}_{\bul}/\os{\circ}{S}}:=
\{{\cal F}^{\bul}_n\otimes_{{\cal O}_{{\cal Q}^{\rm ex}_{\bul}}}
\Om^{\bul}_{{\cal Q}^{\rm ex}_{\bul}/\os{\circ}{S}}\}_{n=0}^{\infty}. 
\end{align*} 
Let $L^{\rm UE}_{Y_{\bul}/\os{\circ}{S}}$ and $L^{\rm conv}_{\os{\circ}{Y}_{\bul}/\os{\circ}{S}}$ 
be the (log) convergent linearization functors with respect to the immersions 
$Y_{\bul} \os{\sus}{\lo} \ol{\cal Q}_{\bul}$ over $\os{\circ}{S}$ 
and $\os{\circ}{Y}_{\bul} \os{\sus}{\lo} \os{\circ}{\ol{\cal Q}}_{\bul}$ over $\os{\circ}{S}$. 
Consider the following sheaf 
\begin{align*}  
L^{\rm UE}_{\os{\circ}{Y}_{\bul}/\os{\circ}{S}}
({\cal F}^{\bul}\otimes_{{\cal O}_{{\cal Q}^{\rm ex}_{\bul}}}
\Om^i_{{\cal Q}^{\rm ex}_{\bul}/\os{\circ}{S}})\simeq 
\eps_{Y_{\bul}/\os{\circ}{S}*}
L^{\rm UE}_{Y_{\bul}/\os{\circ}{S}}
({\cal F}^{\bul}\otimes_{{\cal O}_{{\cal Q}^{\rm ex}_{\bul}}}
\Om^i_{{\cal Q}^{\rm ex}_{\bul}/\os{\circ}{S}})
\tag{8.0.4}\label{ali:lys}
\end{align*} 
for $i\in {\mab N}$, 
which is isomorphic to 
$R\eps_{Y_{\bul}/\os{\circ}{S}*}
L^{\rm UE}_{Y_{\bul}/\os{\circ}{S}}
({\cal F}^{\bul}\otimes_{{\cal O}_{{\cal Q}^{\rm ex}_{\bul}}}
\Om^i_{{\cal Q}^{\rm ex}_{\bul}/\os{\circ}{S}})$ ((\ref{coro:epnr})). 
By (\ref{coro:ddn}) 
we obtain the complex 
$$L^{\rm UE}_{\os{\circ}{Y}_{\bul}/\os{\circ}{S}}
({\cal F}^{\bul}\otimes_{{\cal O}_{{\cal Q}^{\rm ex}_{\bul}}}
\Om^{\bul}_{{\cal Q}^{\rm ex}_{\bul}/\os{\circ}{S}}).$$ 
Let $L_{Y_{\bul}/S}$ be the log convergent linearization functor for the immersion 
$Y_{\bul} \os{\sus}{\lo} {\cal Q}_{\bul}$ over $S$.   
Set 
${\cal F}^{\bul}\otimes_{{\cal O}_{{\cal Q}^{\rm ex}_{\bul}}}
\Om^{\bul}_{{\cal Q}^{\rm ex}_{\bul}/S}
=\{{\cal F}^{\bul}_n\otimes_{{\cal O}_{{\cal Q}^{\rm ex}_{\bul}}}
\Om^{\bul}_{{\cal Q}^{\rm ex}_{\bul}/S}\}_{n=1}^{\infty}$. 

\begin{theo}\label{theo:pyil}
Let $\pi^{\rm conv}_{\os{\circ}{Y}/\os{\circ}{S}}$ 
be the morphism of ringed topoi for $\os{\circ}{Y}/\os{\circ}{S}$ in 
{\rm (\ref{eqn:pidf})}.   
Then the complex 
$R\pi^{\rm conv}_{\os{\circ}{Y}/\os{\circ}{S}*}(L^{\rm UE}_{\os{\circ}{Y}_{\bul}/\os{\circ}{S}}
({\cal F}^{\bul}\otimes_{{\cal O}_{{\cal Q}^{\rm ex}_{\bul}}}
\Om^{\bul}_{{\cal Q}^{\rm ex}_{\bul}/\os{\circ}{S}}))\in 
{\rm D}^+{\rm F}({\cal K}_{\os{\circ}{Y}/\os{\circ}{S}})$ 
is independent of the choice of an affine open covering of $Y$ and 
an immersion $Y_i \os{\sus}{\lo} \ol{\cal Q}_i$ $(i\in I)$ into a log smooth 
log  affine $p$-adic formal ${\cal V}$-scheme over $\ol{S}$. . 
\end{theo}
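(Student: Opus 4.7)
The plan is to use the standard refinement technique for log (convergent) linearizations: given two data $(\{Y_i\}_{i\in I}, \{Y_i \os{\sus}{\lo} \ol{\cal Q}_i\})$ and $(\{Y'_j\}_{j\in J}, \{Y'_j\os{\sus}{\lo} \ol{\cal Q}'_j\})$, form the common refinement by taking the simplicial scheme associated to the covering $\{Y_i\cap Y'_j\}_{(i,j)\in I\times J}$ together with the product embeddings $Y_i\cap Y'_j \os{\sus}{\lo} \ol{\cal Q}_i\times_{\ol{S}}\ol{\cal Q}'_j$. One obtains natural morphisms from the simplicial object associated to this common refinement down to each of the two original simplicial data. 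It then suffices to show that each such morphism induces a filtered quasi-isomorphism of the complexes in $\mathrm{D}^+\mathrm{F}({\cal K}_{\os{\circ}{Y}/\os{\circ}{S}})$ after applying $R\pi^{\mathrm{conv}}_{\os{\circ}{Y}/\os{\circ}{S}*}$.

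Before dealing with the filtration, I would first prove the underlying complex is independent. Factor the comparison into two elementary steps. \emph{Step A (change of embedding, fixed covering)}: if $Y_{\bul}\os{\sus}{\lo} \ol{\cal Q}_{\bul}$ and $Y_{\bul}\os{\sus}{\lo}\ol{\cal Q}'_{\bul}$ are two simplicial embeddings, consider the product embedding $Y_{\bul}\os{\sus}{\lo} \ol{\cal Q}_{\bul}\times_{\ol{S}}\ol{\cal Q}'_{\bul}$. The projection to each factor is formally log smooth, and the log convergent Poincar\'e lemma (\ref{theo:pl}) applied \emph{levelwise} together with (\ref{lemm:itmne}) and the compatibility (\ref{coro:nia}) of the connection along formally log smooth maps will identify the de Rham complex of the product with each factor up to quasi-isomorphism. \emph{Step B (change of covering)}: the passage from a covering to a refinement is controlled by cohomological descent $R\pi^{\mathrm{conv}}_{\os{\circ}{Y}/\os{\circ}{S}*}$, which is well-known to give the same global object in the derived category regardless of the simplicial resolution by hypercoverings of the underlying topos. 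Both steps combine into the comparison with the product refinement.

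For the filtered refinement of the statement, the relevant filtration is the one induced by $P_k\Om^{\bul}_{{\cal Q}^{\rm ex}_{\bul}/\os{\circ}{S}}$ (the subcomplex of forms with $\leq k$ log poles), compatibly with pullback under the projections from the product. Here the crucial injectivity statement (\ref{theo:injf})(2) applied to each simplicial degree guarantees that the filtration is strict and that the quasi-isomorphisms of Step A and Step B preserve it; the $P$-graded quotients are described by (\ref{prop:grla}) and (\ref{prop:0case}) in terms of intrinsic data on $\os{\circ}{X}{}^{(k)}$, which makes the independence of the graded pieces manifest. Combining this with the unfiltered independence, the morphisms between the refinement and each original are filtered quasi-isomorphisms, hence isomorphisms in $\mathrm{D}^+\mathrm{F}({\cal K}_{\os{\circ}{Y}/\os{\circ}{S}})$.

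The main technical obstacle is that ${\cal Q}^{\rm ex}_{\bul}$ is \emph{not} formally log smooth over $\os{\circ}{S}$, so the Poincar\'e lemma (\ref{theo:pl}) does not apply directly to $\Om^{\bul}_{{\cal Q}^{\rm ex}_{\bul}/\os{\circ}{S}}$. I would bypass this by exploiting the identification (\ref{ali:lys}), i.e.\
\[
L^{\rm UE}_{\os{\circ}{Y}_{\bul}/\os{\circ}{S}}({\cal F}^{\bul}\otimes\Om^{\bul}_{{\cal Q}^{\rm ex}_{\bul}/\os{\circ}{S}})
\simeq R\eps_{Y_{\bul}/\os{\circ}{S}*}L^{\rm UE}_{Y_{\bul}/\os{\circ}{S}}({\cal F}^{\bul}\otimes\Om^{\bul}_{{\cal Q}^{\rm ex}_{\bul}/\os{\circ}{S}}),
\]
which together with the vanishing cycle Poincar\'e lemma (\ref{theo:cpvcs}) (noting that ${\cal Q}^{\rm ex}_{\bul}/S$ is formally log smooth, and $d\log t$ globalises by \S\ref{eclf}) allows the comparison to be carried out inside the log convergent topos $(Y_{\bul}/\os{\circ}{S})_{\rm conv}$, where the needed Poincar\'e-type quasi-isomorphisms are available. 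Applying $R\eps_{Y_{\bul}/\os{\circ}{S}*}$ and then $R\pi^{\mathrm{conv}}_{\os{\circ}{Y}/\os{\circ}{S}*}$ at the end transports the independence result to $\mathrm{D}^+\mathrm{F}({\cal K}_{\os{\circ}{Y}/\os{\circ}{S}})$.
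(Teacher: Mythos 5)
Your overall scaffolding (common refinement of the two coverings, product embeddings over $\ol{S}$, cohomological descent for the change of covering) is the right frame, and you correctly spot the central difficulty: ${\cal Q}^{\rm ex}_{\bul}$ is not formally log smooth over $\os{\circ}{S}$, so (\ref{theo:pl})/(\ref{theo:cpvcs}) do not identify $L^{\rm UE}_{\os{\circ}{Y}_{\bul}/\os{\circ}{S}}({\cal F}^{\bul}\otimes\Om^{\bul}_{{\cal Q}^{\rm ex}_{\bul}/\os{\circ}{S}})$ with anything intrinsic. But your proposed bypass does not close this gap. The identification (\ref{ali:lys}) merely transports the complex into $(Y_{\bul}/\os{\circ}{S})_{\rm conv}$; there is still no Poincar\'e-type quasi-isomorphism available there for the mod-$t$ complex relative to $\os{\circ}{S}$, because the only ambient object that is formally log smooth over $\os{\circ}{S}$ is $\ol{\cal Q}{}^{\rm ex}_{\bul}$, whose de Rham complex carries $\ol{\cal F}$ and $\Om^{\bul}_{\ol{\cal Q}{}^{\rm ex}_{\bul}/\os{\circ}{S}}$, not ${\cal F}$ and $\Om^{\bul}_{{\cal Q}^{\rm ex}_{\bul}/\os{\circ}{S}}$. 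Indeed, if such a Poincar\'e lemma existed, the complex would compute the ordinary $R\eps^{\rm conv}_{Y/\os{\circ}{S}*}$, which is exactly what the ``modified'' complex is designed not to be. What the paper actually uses, and what is missing from your argument, is the locally split exact sequence
\begin{equation*}
0\lo {\cal F}^{\bul}_n\otimes_{{\cal O}_{{\cal Q}^{\rm ex}_{\bul}}}\Om^{\bul}_{{\cal Q}^{\rm ex}_{\bul}/S}[-1]
\os{\theta_{\bul}\wedge}{\lo} {\cal F}^{\bul}_n\otimes_{{\cal O}_{{\cal Q}^{\rm ex}_{\bul}}}\Om^{\bul}_{{\cal Q}^{\rm ex}_{\bul}/\os{\circ}{S}}
\lo {\cal F}^{\bul}_n\otimes_{{\cal O}_{{\cal Q}^{\rm ex}_{\bul}}}\Om^{\bul}_{{\cal Q}^{\rm ex}_{\bul}/S}\lo 0,
\end{equation*}
which survives linearization and, after applying $R\pi^{\rm conv}_{\os{\circ}{Y}/\os{\circ}{S}*}$, exhibits the complex in question as the middle term of a triangle whose outer terms are both the intrinsic complex $R\eps^{\rm conv}_{Y/S*}(\eps^{{\rm conv}*}_{Y/S/\os{\circ}{S}}(F))$; this identification of the outer terms is legitimate because ${\cal Q}^{\rm ex}_{\bul}$ \emph{is} log smooth over $S$, so the $/S$-relative complex is handled by (\ref{theo:pl}), (\ref{theo:cpvcs}) and descent. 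Given two choices, the refinement morphisms are compatible with these triangles, and the comparison map is then a quasi-isomorphism by the five lemma. Without this (or an equivalent direct d\'evissage along the extra $/S$-directions of the product embedding), your Step A asserts the key quasi-isomorphism rather than proving it.

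A secondary point: the filtered part of your argument invokes (\ref{theo:injf})(2), (\ref{prop:grla}) and (\ref{prop:0case}), but these are established for the SNCL situation $X\os{\sus}{\lo}\ol{\cal P}$ (with smooth components and residue isomorphisms) and belong to the proof of the later filtered statement (\ref{theo:pil}); in the present theorem $Y$ is merely log smooth over $S_1$, no weight filtration on $\Om^{\bul}_{{\cal Q}^{\rm ex}_{\bul}/\os{\circ}{S}}$ is in play, and the paper's proof treats only the underlying complex. So that portion of your proposal addresses a different statement than the one at hand.
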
 
\begin{proof} 
Since the natural morphism 
${\cal Q}_{\bul}\lo \ol{\cal Q}_{\bul}$ is an immersion over $S\lo \os{\circ}{S}$, 
the induced morphism 
${\mathfrak T}_{X,n}({\cal Q})\lo {\mathfrak T}_{X,n}(\ol{\cal Q})$ is an immersion. 
Hence we obtain the following equality by the log Poincar\'{e} lemma (\ref{eqn:epdlym}) 
and (\ref{theo:cpvcs}): 
\begin{align*} 
R\eps^{\rm conv}_{Y_{\bul}/S*}(\eps^{{\rm conv}*}_{Y_{\bul}/S/\os{\circ}{S}}
(\pi^{{\rm conv}*}_{Y/\os{\circ}{S}*}(F)))
&=
R\eps^{\rm conv}_{Y_{\bul}/S*}(L^{\rm UE}_{Y_{\bul}/S}
({\cal F}^{\bul}\otimes_{{\cal O}_{{\cal Q}^{\rm ex}_{\bul}}}
\Om^{\bul}_{{\cal Q}^{\rm ex}_{\bul}/S})) 
\tag{8.1.1}\label{ali:ysbs}\\
&=L^{\rm UE}_{\os{\circ}{Y}_{\bul}/\os{\circ}{S}}
({\cal F}^{\bul}
\otimes_{{\cal O}_{{\cal Q}^{\rm ex}_{\bul}}}\Om^{\bul}_{{\cal Q}^{\rm ex}_{\bul}/S}). 
\end{align*} 
Hence the following formula holds by the cohomological descent: 
\begin{align*} 
R\pi^{\rm conv}_{\os{\circ}{Y}/\os{\circ}{S}*}(L^{\rm UE}_{\os{\circ}{Y}_{\bul}/\os{\circ}{S}}
({\cal F}^{\bul}\otimes_{{\cal O}_{{\cal Q}^{\rm ex}_{\bul}}}\Om^{\bul}_{{\cal Q}^{\rm ex}_{\bul}/S})) 
&=R\pi^{\rm conv}_{\os{\circ}{Y}/\os{\circ}{S}*}
R\eps^{\rm conv}_{Y_{\bul}/S*}(\eps^{{\rm conv}*}_{Y_{\bul}/S/\os{\circ}{S}}
(\pi^{{\rm conv}*}_{Y/\os{\circ}{S}*}(F)))\tag{8.1.2}\label{ali:yybs}\\
&=R\eps^{\rm conv}_{Y/S*}R\pi^{\rm conv}_{Y/S*}
(\pi^{{\rm conv}*}_{Y/S}(\eps^{{\rm conv}*}_{Y/S/\os{\circ}{S}}(F)))   \\
&=R\eps^{\rm conv}_{Y/S*}(\eps^{{\rm conv}*}_{Y/S/\os{\circ}{S}}(F)). 
\end{align*} 
It is immediate to see that 
the following sequence 
\begin{align*} 
0\lo {\cal F}^{\bul}_n\otimes_{{\cal O}_{{\cal Q}^{\rm ex}_{\bul}}}
\Om^{\bul}_{{\cal Q}^{\rm ex}_{\bul}/S}[-1]
\os{\theta_{\bul} \wedge}{\lo} {\cal F}^{\bul}_n\otimes_{{\cal O}_{{\cal Q}^{\rm ex}_{\bul}}}
\Om^{\bul}_{{\cal Q}^{\rm ex}_{\bul}/\os{\circ}{S}}
\lo 
{\cal F}^{\bul}_n\otimes_{{\cal O}_{{\cal Q}^{\rm ex}_{\bul}}}
\Om^{\bul}_{{\cal Q}^{\rm ex}_{\bul}/S}\lo 0
\tag{8.1.3}\label{ali:foq}
\end{align*} 
is exact. 
Here $\theta_{\bul}:=g^*_{\bul}(d\log t) \in \Om^1_{{\cal Q}^{\rm ex}_{\bul}/\os{\circ}{S}}$, 
where $t$ is a local section of $M_S$ which gives the local generator 
of $\ol{M}_S$. (Note again that $g^*_{\bul}(d\log t)$ is independent of the choice of $t$.) 
In fact, the following sequence 
\begin{align*} 
0\lo {\cal F}^{\bul}_n\otimes_{{\cal O}_{{\cal Q}^{\rm ex}_{\bul}}}
\Om^{i-1}_{{\cal Q}^{\rm ex}_{\bul}/S}
\os{\theta_{\bul} \wedge}{\lo} {\cal F}^{\bul}_n\otimes_{{\cal O}_{{\cal Q}^{\rm ex}_{\bul}}}
\Om^i_{{\cal Q}^{\rm ex}_{\bul}/\os{\circ}{S}}
\lo 
{\cal F}^{\bul}_n\otimes_{{\cal O}_{{\cal Q}^{\rm ex}_{\bul}}}
\Om^i_{{\cal Q}^{\rm ex}_{\bul}/S}\lo 0
\end{align*} 
is locally split for each $i\in {\mab N}$. 
Hence we obtain the following exact sequence 
\begin{align*} 
0\lo L^{\rm UE}_{\os{\circ}{Y}_{\bul}/\os{\circ}{S}}
({\cal F}^{\bul}\otimes_{{\cal O}_{{\cal Q}^{\rm ex}_{\bul}}}
\Om^{\bul}_{{\cal Q}^{\rm ex}_{\bul}/S})[-1]  
\lo 
L^{\rm UE}_{\os{\circ}{Y}_{\bul}/\os{\circ}{S}}
({\cal F}^{\bul}\otimes_{{\cal O}_{{\cal Q}^{\rm ex}_{\bul}}}
\Om^{\bul}_{{\cal Q}^{\rm ex}_{\bul}/\os{\circ}{S}}) 
\lo 
L^{\rm UE}_{\os{\circ}{Y}_{\bul}/\os{\circ}{S}}
({\cal F}^{\bul}\otimes_{{\cal O}_{{\cal Q}^{\rm ex}_{\bul}}}
\Om^{\bul}_{{\cal Q}^{\rm ex}_{\bul}/S})\lo 0.  
\end{align*} 
Applying $R\pi^{\rm conv}_{\os{\circ}{Y}/\os{\circ}{S}*}$ to 
this exact sequence and using (\ref{ali:yybs}), 
we have the following triangle: 
\begin{align*} 
R\eps^{\rm conv}_{Y/S*}(\eps^{{\rm conv}*}_{Y/S/\os{\circ}{S}}(F))[-1]\os{\theta \wedge}{\lo}  
R\pi^{\rm conv}_{\os{\circ}{Y}/\os{\circ}{S}*}(L^{\rm UE}_{\os{\circ}{Y}_{\bul}/\os{\circ}{S}}
({\cal F}^{\bul}\otimes_{{\cal O}_{{\cal Q}^{\rm ex}_{\bul}}}
\Om^{\bul}_{{\cal Q}^{\rm ex}_{\bul}/\os{\circ}{S}})) 
\lo R\eps^{\rm conv}_{Y/S*}(\eps^{{\rm conv}*}_{Y/S/\os{\circ}{S}}(F))
\tag{8.1.4}\label{ali:oqy}
\os{+1}{\lo} . 
\end{align*} 
\par 
Let $Y'_{\bul}\os{\sus}{\lo} \ol{\cal Q}{}'_{\bul}$ be another analogous simplicial immersion. 
Then, by considering a refinement of two affine open coverings of $Y$ 
and the fiber product, 
we may assume that there exists the following commutative diagram:
\begin{equation*} 
\begin{CD}
Y_{\bul} @>{\sus}>> \ol{\cal Q}_{\bul}\\
@VVV @VVV \\
Y'_{\bul} @>{\sus}>> \ol{\cal Q}{}'_{\bul}. 
\end{CD}
\end{equation*} 
Now it is a routine work to show that 
$R\pi^{\rm conv}_{\os{\circ}{Y}/\os{\circ}{S}*}(L^{\rm conv}_{\os{\circ}{Y}_{\bul}/\os{\circ}{S}}
({\cal F}^{\bul}\otimes_{{\cal O}_{{\cal Q}^{\rm ex}_{\bul}}}
\Om^{\bul}_{{\cal Q}^{\rm ex}_{\bul}/\os{\circ}{S}}))$ 
is independent of the choice of an affine open covering of $Y$ 
and the immersion $Y_i \os{\sus}{\lo} \ol{\cal Q}_i$ over 
$\ol{S}$. 
\end{proof} 

\begin{defi}\label{defi:crpi} 
We call 
$R\pi^{\rm conv}_{\os{\circ}{Y}/\os{\circ}{S}*}
(L^{\rm conv}_{\os{\circ}{Y}_{\bul}/\os{\circ}{S}}
({\cal F}\otimes_{{\cal O}_{{\cal Q}^{\rm ex}_{\bul}}}
\Om^{\bul}_{{\cal Q}^{\rm ex}_{\bul}/\os{\circ}{S}}))$ 
the {\it modified convergent complex} of $F$ and we denote 
it by 
$\wt{R}\eps^{\rm conv}_{Y/\os{\circ}{S}*}(F)$. 
When $F={\cal K}_{Y/\os{\circ}{S}}$, we call $\wt{R}\eps^{\rm conv}_{Y/\os{\circ}{S}*}(F)$ 
the {\it modfifed convergent complex} of $Y/\os{\circ}{S}$ and 
denote it by $\wt{R}\eps^{\rm conv}_{Y/\os{\circ}{S}*}({\cal K}_{Y/\os{\circ}{S}})$. 
We call $Ru^{\rm conv}_{\os{\circ}{Y}/\os{\circ}{S}*}(\wt{R}\eps_{Y/\os{\circ}{S}*}(F))$ 
the {\it modified isozariskian complex} of $F$ and 
we denote it by $\wt{R}u^{\rm conv}_{Y/\os{\circ}{S}*}(F)$.  
When $F={\cal K}_{Y/\os{\circ}{S}}$, we call $\wt{R}u^{\rm conv}_{Y/\os{\circ}{S}*}(F)$
the {\it modfifed convergent-isozariskian complex} of $Y/\os{\circ}{S}$ and 
denote it by $\wt{R}u^{\rm conv}_{Y/\os{\circ}{S}*}({\cal K}_{Y/\os{\circ}{S}})$. 
\end{defi}

By (\ref{ali:oqy}) we obtain the following triangles: 
\begin{align*} 
R\eps^{\rm conv}_{Y/S*}(\eps^{{\rm conv}*}_{Y/S/\os{\circ}{S}}(F))[-1]\os{\theta \wedge}{\lo}  
\wt{R}\eps^{\rm conv}_{Y/\os{\circ}{S}*}(F) 
\lo R\eps^{\rm conv}_{Y/S*}(\eps^{{\rm conv}*}_{Y/S/\os{\circ}{S}}(F))
\tag{8.2.1}\label{ali:oqay}
\os{+1}{\lo}, 
\end{align*} 
\begin{align*} 
Ru^{\rm conv}_{Y/S*}(\eps^{{\rm conv}*}_{Y/S/\os{\circ}{S}}(F))[-1]\os{\theta \wedge}{\lo}
\wt{R}u^{\rm conv}_{Y/\os{\circ}{S}*}(F) 
\lo Ru^{\rm conv}_{Y/S*}(\eps^{{\rm conv}*}_{Y/S/\os{\circ}{S}}(F))
\tag{8.2.2}\label{ali:ouqby}
\os{+1}{\lo} . 
\end{align*} 

\begin{prop}\label{prop:cwass}
Let the notations be as in {\rm (\ref{cd:yypssp})}. 
Assume that $Y'$ and $Y$ are log smooth over $S'_1$ and $S_1$, respectively. 
For a morphism $g^* \col g^*_{\rm conv}(F)\lo F'$ 
of ${\cal K}_{Y'/\os{\circ}{S}{}'}$-modules, 
there exists a morphism 
\begin{align*} 
g^*\col  \wt{R}\eps^{\rm conv}_{Y/\os{\circ}{S}*}(F) \lo 
R\os{\circ}{g}_{{\rm conv}*}\wt{R}\eps^{\rm conv}_{Y'/\os{\circ}{S}{}'*}(F) 
\tag{8.3.1}\label{ali:team}
\end{align*} 
such that 
${\rm id}_Y^*={\rm id}_{\wt{R}\eps^{\rm conv}_{Y/\os{\circ}{S}*}(F)}$ and 
$(g\circ h)^*=R\os{\circ}{g}_{{\rm conv}*}(h^*)\circ g^*$ for an analogous morphism
$h\col Y''\lo Y'$ over $S''\lo S'$ to $g$ 
and an analogus morphism $h^* \col h^*_{\rm conv}(F')\lo F''$ of ${\cal K}_{Y''/\os{\circ}{S}{}''}$-modules 
to $g^*\col g^*_{\rm conv}(F)\lo F'$ of ${\cal K}_{Y'/\os{\circ}{S}{}'}$-modules. 
\end{prop}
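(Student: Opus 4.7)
The plan is to construct the functoriality morphism by passing to suitable compatible simplicial embeddings and then invoking the known compatibility of the log convergent linearization functor with morphisms of log schemes, finally descending via $R\pi^{\rm conv}_{\os{\circ}{Y}/\os{\circ}{S}*}$.

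First I would choose affine open coverings $Y=\bigcup_{i\in I}Y_i$ and $Y'=\bigcup_{j\in J}Y'_j$ together with a set map $J\lo I$ making $g$ induce a morphism $g_{\bul}\col Y'_{\bul}\lo Y_{\bul}$ of the associated \v{C}ech simplicial log schemes, exactly as in \S\ref{sec:rlct}. Then I would choose immersions $Y_i\os{\sus}{\lo}\ol{\cal Q}_i$ and $Y'_j\os{\sus}{\lo}\ol{\cal Q}{}'_j$ into log smooth log affine $p$-adic formal schemes over $\ol{S}$ and $\ol{S}{}'$, respectively, and arrange compatibility by replacing $\ol{\cal Q}{}'_j$ with the product of $\ol{\cal Q}{}'_j$ and $\ol{\cal Q}_{i(j)}\times_{\ol{S}}\ol{S}{}'$ (the existence of such compatible log smooth embeddings is standard, cf.~the argument at the end of the proof of (\ref{theo:pyil})). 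This produces a morphism $\ol{\cal Q}{}'_{\bul}\lo \ol{\cal Q}_{\bul}$ of simplicial log formal schemes over $\ol{S}{}'\lo\ol{S}$ under the given simplicial immersion.

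Second, the compatibility induces a morphism of the associated systems of universal enlargements (via (\ref{lemm:bcue})) and hence, by the general compatibility of the log convergent linearization functor with morphisms of pairs $(Y,{\cal Y})$ recalled in \S\ref{sec:lll} (in particular (\ref{linc})), a morphism of complexes
\begin{equation*}
L^{\rm UE}_{\os{\circ}{Y}_{\bul}/\os{\circ}{S}}({\cal F}^{\bul}\otimes_{{\cal O}_{{\cal Q}^{\rm ex}_{\bul}}}\Om^{\bul}_{{\cal Q}^{\rm ex}_{\bul}/\os{\circ}{S}})\lo R\os{\circ}{g}_{\bul{\rm conv}*}L^{\rm UE}_{\os{\circ}{Y}{}'_{\bul}/\os{\circ}{S}{}'}({\cal F}'{}^{\bul}\otimes_{{\cal O}_{{\cal Q}'{}^{\rm ex}_{\bul}}}\Om^{\bul}_{{\cal Q}'{}^{\rm ex}_{\bul}/\os{\circ}{S}{}'}),
\end{equation*}
where the sheaves ${\cal F}'{}^{\bul}_n$ are obtained from $F'$ by the analogous construction on the $Y'$-side, and the morphism on coefficients comes from $g^*\col g^*_{\rm conv}(F)\lo F'$. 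Applying $R\pi^{\rm conv}_{\os{\circ}{Y}/\os{\circ}{S}*}$ and using the square (\ref{cd:rgrtc}) for the simplicial-to-non-simplicial passage (together with $\pi^{\rm conv}_{\os{\circ}{Y}'/\os{\circ}{S}{}'}\circ \os{\circ}{g}{}_{\bul{\rm conv}}=\os{\circ}{g}_{\rm conv}\circ \pi^{\rm conv}_{\os{\circ}{Y}'/\os{\circ}{S}{}'}$) produces the desired morphism (\ref{ali:team}).

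Finally, I would verify the two functoriality identities. For ${\rm id}_Y^*={\rm id}$ this is immediate from the construction once one takes the trivial choices. For $(g\circ h)^*=R\os{\circ}{g}_{{\rm conv}*}(h^*)\circ g^*$, given a further morphism $h\col Y''\lo Y'$, one chooses a tripled system of compatible coverings and embeddings $Y''_{\bul}\os{\sus}{\lo}\ol{\cal Q}''_{\bul}\lo \ol{\cal Q}{}'_{\bul}\lo \ol{\cal Q}_{\bul}$ (possible by the same fiber-product construction as above) and reads off both compositions from the same diagram of systems of universal enlargements. The main obstacle will be the independence-of-choices aspect: one must check that the morphism (\ref{ali:team}) is insensitive to the choice of compatible embeddings and coverings, so that the two sides of the composition law, computed a priori from different diagrams, agree. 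This is handled exactly as in the independence proof of (\ref{theo:pyil}): one passes to a common refinement via the fiber product of two compatible systems and uses that the resulting transition maps are canonical in the derived category. With this in place, both ${\rm id}^*={\rm id}$ and the cocycle condition reduce to equalities of morphisms of complexes at the simplicial level, which are obvious from the universal property of the systems of universal enlargements and from the naturality of the linearization functor.
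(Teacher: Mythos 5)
Your proposal is correct and follows essentially the same route as the paper: one replaces the chosen embedding of $Y'_{\bul}$ by its product with $\ol{\cal Q}_{\bul}\times_{\ol{S}}\ol{S}{}'$ (the paper's $\ol{\cal R}_{\bul}$) so that the simplicial immersions become compatible with $g_{\bul}$, induces a morphism of linearized de Rham complexes from this compatibility and the coefficient map $g^*$, and then applies $R\pi^{\rm conv}_{\os{\circ}{Y}/\os{\circ}{S}*}$ to obtain (\ref{ali:team}). The paper likewise dismisses the well-definedness and the identities ${\rm id}^*={\rm id}$, $(g\circ h)^*=R\os{\circ}{g}_{{\rm conv}*}(h^*)\circ g^*$ as routine, which you handle by the same common-refinement argument as in (\ref{theo:pyil}).
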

\begin{proof} 
(The proof is not so standard.) 
Let $Y'_{\bul}\lo Y_{\bul}$ be a morphism of simplicial log schemes obtained by 
affine open coverings of $Y'$ and $Y$ in the beginning of this section.  
Let $Y_{\bul}\os{\sus}{\lo} \ol{\cal Q}_{\bul}$ and 
$Y'_{\bul}\os{\sus}{\lo} \ol{\cal Q}{}'_{\bul}$ be immersions into 
formally log smooth log formal schemes
over $\ol{S}$ and $\ol{S}{}'$, respectively, which are constructed in the beginning of this section. 
Set $\ol{\cal R}_{\bul} :=\ol{\cal Q}{}'_{\bul}\times_{S'}(\ol{\cal Q}_{\bul}\times_{\ol{S}}\ol{S}{}')$. 
By using the immersion $Y'_{\bul}\os{\sus}{\lo} \ol{\cal Q}{}'_{\bul}$, 
the composite morphism 
$Y'_{\bul}\lo Y_{\bul} \os{\sus}{\lo} \ol{\cal Q}_{\bul}$ and 
the projection $\ol{\cal Q}_{\bul}\times_{\ol{S}}\ol{S}{}'\lo \ol{\cal Q}_{\bul}$,  
we have the following commutative diagram 
\begin{equation*} 
\begin{CD} 
Y'_{\bul}@>{\subset}>> \ol{\cal R}_{\bul} @>>>\ol{S}{}'\\
@VVV @VVV @VVV\\
Y_{\bul} @>{\subset}>> \ol{\cal Q}_{\bul} @>>>\ol{S}. 
\end{CD}
\end{equation*} 
Let 
\begin{align*} 
{\cal F}{}'^{\bul}\otimes_{{\cal O}_{{\cal R}^{\rm ex}_{\bul}}}
\Om^{\bul}_{{\cal R}^{\rm ex}_{\bul}/\os{\circ}{S}{}'}:=
\{{\cal F}{}'^{\bul}_n\otimes_{{\cal O}_{{\cal R}^{\rm ex}_{\bul}}}
\Om^{\bul}_{{\cal R}^{\rm ex}_{\bul}/\os{\circ}{S}{}'}\}_{n=0}^{\infty}. 
\end{align*} 
be an analogous object to 
${\cal F}^{\bul}\otimes_{{\cal O}_{{\cal Q}^{\rm ex}_{\bul}}}
\Om^{\bul}_{{\cal Q}^{\rm ex}_{\bul}/\os{\circ}{S}}$. 
Then the commutative diagram above and 
the morphism $g^* \col g^*_{\rm conv}(F)\lo (F')$ 
induces the following morphism 
\begin{align*} 
\os{\circ}{g}{}^*_{\rm conv}
L^{\rm UE}_{\os{\circ}{Y}_{\bul}/\os{\circ}{S}}
({\cal F}^{\bul}\otimes_{{\cal O}_{{\cal Q}^{\rm ex}_{\bul}}}
\Om^{\bul}_{{\cal Q}^{\rm ex}_{\bul}/\os{\circ}{S}})
\lo 
L^{\rm UE}_{\os{\circ}{Y}{}'_{\bul}/\os{\circ}{S}{}'}
({\cal F}{}'^{\bul}\otimes_{{\cal O}_{{\cal R}^{\rm ex}_{\bul}}}
\Om^{\bul}_{{\cal R}^{\rm ex}_{\bul}/\os{\circ}{S}{}'}). 
\end{align*} 
This morphism induces the morphism (\ref{ali:team}). 
\par 
It is a routine work to show that 
the morphism (\ref{ali:team}) is well-defined 
and that the relations in (\ref{prop:cwass}) hold. 
\end{proof} 

\par 
Let $X$ be an SNCL scheme over $S_1$. Let $f\col X\lo S_1\os{\sus}{\lo} S$ be 
the structural morphism. Let $X_{\bul}$ be the simplicial log scheme 
obtained by an affine covering of $X$. 
Let $X_{\bul}\os{\sus}{\lo} \ol{\cal P}_{\bul}$ be an immersion into 
a log smooth simplicial log $p$-adic formal ${\cal V}$-scheme over $\ol{S}$. 
Since $\ol{\cal P}{}^{\rm ex}$ is a simplcial strict semistable formal scheme over $\os{\circ}{S}$, 
the underlying formal scheme $\os{\circ}{\ol{\cal P}}{}^{\rm ex}$ is formally smooth 
over $\os{\circ}{S}$. 
Set ${\cal P}_{\bul}:=\ol{\cal P}_{\bul}\times_{\ol{S}}S$.  
Let $E$ be a flat isocrystal of ${\cal K}_{\os{\circ}{X}/\os{\circ}{S}}$-modules. 
Let $\{(\ol{U}_{\bul n},\ol{T}_{\bul n})\}_{n=1}^{\infty}$ be the system of 
the universal enlargements of the immersion 
$X_{\bul} \os{\sus}{\lo} \ol{\cal P}_{\bul}$ over $\ol{S}$. 
Set $(U_{\bul n},T_{\bul n}):=(\ol{U}_{\bul n},\ol{T}_{\bul n})\times_{\ol{S}}S$. 
Let $(\ol{\cal E}{}^{\bul}_{n},\nabla)$ be the corresponding integrable connection of 
the ${\cal K}_{\ol{T}_{\bul n}}$-module to $\pi^{{\rm conv}*}_{X/\os{\circ}{S}}\eps^*_{X/\os{\circ}{S}}(E)$: 
$\ol{\nabla}_n \col \ol{\cal E}{}^{\bul}_n\lo \ol{\cal E}{}^{\bul}_n
\otimes_{\cal P}\Om^1_{\ol{\cal P}{}^{\rm ex}_{\bul}/\os{\circ}{S}}$.  
Let $\bet_{\bul n} \col U_{\bul n} \lo X_{\bul}$ 
be the natural morphism.  
Set ${\cal E}^{\bul}_n:=\ol{\cal E}{}^{\bul}_n\otimes_{{\cal O}_{\ol{S}}}{\cal O}_S$. 
Let 
\begin{align*} 
{\nabla}_n \col {\cal E}^{\bul}_n\lo 
{\cal E}^{\bul}_n\otimes_{{\cal P}^{\rm ex}}\Om^1_{{\cal P}{}^{\rm ex}_{\bul}/\os{\circ}{S}}
\end{align*} 
be the induced integrable connection by $\ol{\nabla}_n$ obtained by using the relation
``$dt=t\log t$'' in $\Om^1_{\ol{\cal P}{}^{\rm ex}_{\bul}/\os{\circ}{S}}$ as before. 
We obtain the projective system $\{{\nabla}_n\}_{n=0}^{\infty}$ and 
the following integrable connection 
\begin{align*} 
\vpl_n{\nabla}_n \col \vpl_n\bet_{\bul n*}{\cal E}^{\bul}_n\lo 
(\vpl_n\bet_{\bul n*}{\cal E}^{\bul}_n)\otimes_{{\cal O}_{{\cal P}_{\bul}}}
\Om^1_{{{\cal P}^{\rm ex}_{\bul}}/\os{\circ}{S}}. 
\end{align*} 
Here we have identified 
$({T_{\bul n}})_{\rm zar}$ with $({U_{\bul n}})_{\rm zar}$. 
For simplicity of notation, set 
\begin{align*} 
({\cal E}^{\bul}\otimes_{{\cal O}_{{\cal P}^{\rm ex}_{\bul}}}
\Om^{\bul}_{{\cal P}^{\rm ex}_{\bul}/\os{\circ}{S}},P):=
(\{{\cal E}^{\bul}_n\otimes_{{\cal O}_{{\cal P}^{\rm ex}_{\bul}}}
\Om^{\bul}_{{\cal P}^{\rm ex}_{\bul}/\os{\circ}{S}}\}_{n=0}^{\infty},
\{\{{\cal E}^{\bul}_n\otimes_{{\cal O}_{{\cal P}^{\rm ex}_{\bul}}}
P_k\Om^{\bul}_{{\cal P}^{\rm ex}_{\bul}/\os{\circ}{S}}\}_{n=0}^{\infty}\}_{k\in {\mab Z}}). 
\end{align*} 
This is indeed a filtered complex by (\ref{theo:injf}) (1). 
We also have the following filtered complex by (\ref{theo:injf}) (2): 
\begin{align*} 
(L^{\rm UE}_{\os{\circ}{X}_{\bul}/\os{\circ}{S}}
({\cal E}^{\bul}\otimes_{{\cal O}_{{\cal P}^{\rm ex}_{\bul}}}
\Om^{\bul}_{{\cal P}^{\rm ex}_{\bul}/\os{\circ}{S}}),P)
:= &
(L^{\rm UE}_{\os{\circ}{X}_{\bul}/\os{\circ}{S}}
(\{{\cal E}^{\bul}_n\otimes_{{\cal O}_{{\cal P}^{\rm ex}_{\bul}}}
\Om^{\bul}_{{\cal P}^{\rm ex}_{\bul}/\os{\circ}{S}}\}_{n=0}^{\infty}),\\
&\{L^{\rm conv}_{\os{\circ}{X}_{\bul}/\os{\circ}{S}}
(\{{\cal E}^{\bul}_n\otimes_{{\cal O}_{{\cal P}^{\rm ex}_{\bul}}}
P_k\Om^{\bul}_{{\cal P}^{\rm ex}_{\bul}/\os{\circ}{S}}\}_{n=0}^{\infty})\}_{k\in {\mab Z}}).   
\end{align*} 

In order to prove (\ref{theo:pil}) below, we need the following:

\begin{lemm}[{\rm {\bf cf.~\cite[(4.14)]{nh3}}}]
\label{lemm:knit}
Let $k$ be a nonnegative integer. Then the following hold$:$ 
\par 
$(1)$ For the morphism 
$g \col  \os{\circ}{\cal P}{}^{\rm ex}_n\lo 
\os{\circ}{\cal P}{}^{\rm ex}_{n'}$ 
corresponding to a morphism $[n']\lo [n]$ in $\Del$, 
the assumption in {\rm (\ref{prop:mmoo})} is satisfied  for $Y={\cal P}{}^{\rm ex}_n$ 
and $Y'={\cal P}{}^{\rm ex}_{n'}$. 
Consequently there exists a morphism 
$g^{(k)}\col \os{\circ}{\cal P}{}^{{\rm ex},(k)}_{n}\lo \os{\circ}{\cal P}{}^{{\rm ex},(k)}_{n'}$ 
$(k\in {\mab N})$ fitting into 
the commutative diagram {\rm (\ref{cd:dmgdm})} 
for $Y={\cal P}{}^{\rm ex}_n$ and $Y'={\cal P}{}^{\rm ex}_{n'}$.
\par 
$(2)$ 
The family 
$\{\os{\circ}{\cal P}{}^{{\rm ex},(k)}_{n}\}_{n\in {\mab N}}$ 
gives a simplicial scheme $\os{\circ}{\cal P}{}^{{\rm ex},(k)}_{\bul}$ 
with natural morphism 
$b^{(k)}_{\bul} \col \os{\circ}{\cal P}{}^{{\rm ex},(k)}_{\bul}
\lo \os{\circ}{\cal P}{}^{\rm ex}_{\bul}$ 
of simplicial schemes. 
\end{lemm}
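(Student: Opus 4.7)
The plan is to verify the pointwise hypothesis of (\ref{prop:mmoo}) for the morphism $g\col \os{\circ}{\cal P}{}^{\rm ex}_n\to \os{\circ}{\cal P}{}^{\rm ex}_{n'}$ by analyzing the local SNCL structure of ${\cal P}^{\rm ex}_n$ along the diagonal image of $X_n$, apply (\ref{prop:mmoo}) to produce the morphism $g^{(k)}$ and the square (\ref{cd:dmgdm}), and then read off (2) from the contravariant functoriality of the construction $g\mapsto g^{(k)}$ built into (\ref{prop:mmoo}).

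To carry this out, I would first unwind the coskeletal description $\ol{\cal P}_{\bul}={\rm cosk}_0^{\ol S}(\coprod_i \ol{\cal P}_i)$: on the component of $\ol{\cal P}_n$ indexed by a tuple $(i_0,\ldots,i_n)$, the immersion $X_n \os{\sus}{\lo} \ol{\cal P}_n$ is the diagonal immersion $X_{i_0}\cap\cdots\cap X_{i_n}\os{\sus}{\lo}\ol{\cal P}_{i_0}\times_{\ol S}\cdots\times_{\ol S}\ol{\cal P}_{i_n}$, and the simplicial map attached to $[n']\to [n]$ is the obvious projection (for injections in $\Del$) or diagonal insertion (for surjections) of factors over $\ol S$. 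Since ${\cal P}^{\rm ex}_n$ is a formal SNCL scheme by \cite[(1.14.1)]{nb}, the stalk $\ol M_{{\cal P}^{\rm ex}_n,y}$ at each point $y$ is free, and its minimal generators are in bijection with the smooth components of $\os{\circ}{\cal P}{}^{\rm ex}_n$ through $y$. The key structural claim that I would then need is that these smooth components are canonically labelled by a subset of $\Lam$, the index set of smooth components of $\os{\circ}{X}$: the component corresponding to $\lam\in\Lam$ should be the pullback of $\os{\circ}{X}_{\lam}\cap X_{i_j}\subset \os{\circ}{\cal P}{}^{\rm ex}_{i_j}$ along the $j$-th projection, and the exactification of the diagonal immersion should make this independent of the choice of $j$.

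Granted this labelling, the projection/insertion $g$ visibly preserves the $\Lam$-label of each smooth component, so for each minimal generator $m$ of $\ol M_{{\cal P}^{\rm ex}_n,y}$ labelled by some $\lam$, the unique minimal generator $m'$ of $\ol M_{{\cal P}^{\rm ex}_{n'},g(y)}$ with $g^*(m')\in m^{{\mab Z}_{>0}}$ is the one labelled by the same $\lam$, and in fact $g^*(m')=m$; this verifies the hypothesis of (\ref{prop:mmoo}) and produces $g^{(k)}$ fitting into (\ref{cd:dmgdm}), proving (1). Part (2) is then a formal consequence: the functoriality of the assignment $g\mapsto g^{(k)}$ built into (\ref{prop:mmoo}), applied to every morphism in $\Del$, assembles the $g^{(k)}$'s into a simplicial scheme $\os{\circ}{\cal P}{}^{{\rm ex},(k)}_{\bul}$ together with the simplicial morphism $b^{(k)}_{\bul}$. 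The hard part will be the $\Lam$-labelling claim itself: at a diagonal point the pushout monoid underlying the fiber product log structure is ${\mab N}^r\oplus_{\mab N}\cdots\oplus_{\mab N}{\mab N}^r$, which is strictly larger than ${\mab N}^r$, and one must check that exactification collapses it to the single ${\mab N}^r$ coming from $X$ in a way that canonically identifies the generators of each factor with a fixed $\Lam$-indexed basis. This is essentially the local assertion that the diagonal of an $(n+1)$-fold fiber product of strict semistable pieces over the standard log point is again strict semistable, with the same divisorial index set, after exactification; once it is in hand, the remainder is routine bookkeeping.
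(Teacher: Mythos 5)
Your outline does follow the route that the paper's one-line proof points to (it simply says ``the proof is the same as that of \cite[(4.14)]{nh3}''): unwind the coskeletal description of $\ol{\cal P}_{\bul}$, verify the minimal-generator hypothesis of (\ref{prop:mmoo}) for each face and degeneracy map, and then assemble the resulting $g^{(k)}$'s into a simplicial scheme by the functoriality of the construction, which gives (2) formally. But as a proof your proposal has a genuine gap, and you name it yourself: the claim that the exactification of the diagonal immersion $X_{i_0}\cap\cdots\cap X_{i_n}\os{\sus}{\lo}\ol{\cal P}_{i_0}\times_{\ol S}\cdots\times_{\ol S}\ol{\cal P}_{i_n}$ is again a formal SNCL scheme whose smooth components are matched compatibly under the projections and partial diagonals --- equivalently, that every minimal generator $m$ of $\ol{M}_{{\cal P}^{\rm ex}_n,y}$ admits a (necessarily unique) minimal generator $m'$ of $\ol{M}_{{\cal P}^{\rm ex}_{n'},\os{\circ}{g}(y)}$ with $g^*(m')\in m^{{\mab Z}_{>0}}$ --- is exactly the content of the lemma. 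Deferring it with ``once it is in hand, the remainder is routine bookkeeping'' leaves the statement unproved, since everything else (uniqueness of $m'$, which follows from freeness of the stalks, and the assembly in (2)) is formal. The missing computation is precisely what the cited proof of \cite[(4.14)]{nh3} carries out.

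Two points to keep in mind when you fill this in. First, along $X_n$ the matching is soft and does not require the pushout-monoid analysis you worry about: the immersions $X_n\os{\sus}{\lo}{\cal P}^{\rm ex}_n$ and $X_{n'}\os{\sus}{\lo}{\cal P}^{\rm ex}_{n'}$ are exact and the transition morphism $X_n\lo X_{n'}$ is strict (on each component it is an open immersion with log structure restricted from $X$), so at a point $y\in X_n$ the map $g^*$ on $\ol{M}$-stalks is identified with the identity of $\ol{M}_{X,x}\cong{\mab N}^r$ and sends minimal generators bijectively to minimal generators, giving $g^*(m')=m$. The genuine work lies in (i) knowing that ${\cal P}^{\rm ex}_n$ is a formal SNCL scheme, so that condition (\ref{eqn:epynr}) holds at \emph{every} point (this is the input \cite[(1.14.1)]{nb}), and (ii) verifying the hypothesis of (\ref{prop:mmoo}) at every point of $\os{\circ}{\cal P}{}^{\rm ex}_n$, not only at points of $X_n$, since the proposition demands it pointwise on the whole underlying scheme; for points off $X_n$ you must actually write down the local description of the exactification (adjoining the units $x^{(j)}_{\lam}/x^{(0)}_{\lam}$ that identify the generators coming from the different factors) and check the generator condition there. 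Your sketch asserts the resulting $\Lam$-labelling but never establishes it, so the proof is incomplete as written.
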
 
\begin{proof}
The proof is the same as that of \cite[(4.14)]{nh3}. 
\end{proof}

\begin{theo}\label{theo:pil}
Let $\pi^{\rm conv}_{\os{\circ}{X}/\os{\circ}{S}}$ 
be the morphism of ringed topoi for $\os{\circ}{X}/\os{\circ}{S}$ in 
{\rm (\ref{eqn:pidf})}.   
Then the filtered complex 
$R\pi^{\rm conv}_{\os{\circ}{X}/\os{\circ}{S}*}
((L^{\rm UE}_{\os{\circ}{X}_{\bul}/\os{\circ}{S}}
({\cal E}^{\bul}\otimes_{{\cal O}_{{\cal P}^{\rm ex}_{\bul}}}
\Om^{\bul}_{{\cal P}^{\rm ex}_{\bul}/\os{\circ}{S}}),P))\in 
{\rm D}^+{\rm F}({\cal K}_{\os{\circ}{X}/\os{\circ}{S}})$ 
is independent of the choice of the open covering of $X$ and the simplicial immersion
$X_{\bul}\os{\sus}{\lo} \ol{\cal P}_{\bul}$ over $\ol{S}$. 
\end{theo}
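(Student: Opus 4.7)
The strategy parallels that of Theorem~\ref{theo:pyil}: given two choices of data, reduce to a situation where there is a natural morphism between the associated filtered complexes, and then show that this morphism becomes a filtered quasi-isomorphism after applying $R\pi^{\rm conv}_{\os{\circ}{X}/\os{\circ}{S}*}$. However, since the filtration $P$ is defined via the log structure of ${\cal P}^{\rm ex}_{\bul}$ and not intrinsically on $X$, the triangle (\ref{ali:oqay}) used in the proof of (\ref{theo:pyil}) does not by itself give canonicity at the filtered level; instead one must check canonicity on graded pieces.

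First I would carry out the reduction. Given two choices $(X_{\bul}, \ol{\cal P}_{\bul})$ and $(X'_{\bul}, \ol{\cal P}{}'_{\bul})$, pass to a common refinement of the two affine open coverings of $X$ and form a fiber-product embedding over $\ol{S}$. This produces a third choice dominating the original two, with natural morphisms of simplicial immersions, so it suffices to exhibit, for any morphism of simplicial immersions $({\cal P}^{\rm ex}_{\bul}) \leftarrow ({\cal R}^{\rm ex}_{\bul})$ over $\ol{S}$ compatible with the identity on $X$, a natural filtered morphism and to show it induces a filtered quasi-isomorphism after $R\pi^{\rm conv}_{\os{\circ}{X}/\os{\circ}{S}*}$.

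Next I would verify that such a morphism preserves the filtration $P$. The point is that the minimal generators of $M_{{\cal P}^{\rm ex}_n}/{\cal O}^*_{{\cal P}^{\rm ex}_n}$ and of $M_{{\cal R}^{\rm ex}_n}/{\cal O}^*_{{\cal R}^{\rm ex}_n}$ match, at each simplicial degree, along the refinement morphism in the sense of the hypothesis of Proposition~\ref{prop:mmoo}, because the SNCL strata of both simplicial formal schemes sit above the same SNCL stratification of $X_{\bul}$; hence the log-pole filtration (\ref{eqn:leolpw}) pulls back. On graded pieces, Proposition~\ref{prop:grla} for $k\geq 1$ and Proposition~\ref{prop:0case} for $k=0$ identify each ${\rm gr}^P_k$ with the intrinsic complex $a^{(k-1)}_{{\rm conv}*}(a^{(k-1)*}_{\rm conv}(E) \otimes_{\mab Z} \vp^{(k-1)}_{\rm conv}(\os{\circ}{X}/\os{\circ}{S}))[-k]$ (and its $k=0$ analogue). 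Proposition~\ref{prop:rescos} ensures that the Residue isomorphism is compatible with the refinement morphism, so the induced map on graded pieces is the identity of the intrinsic object. Cohomological descent for the simplicial convergent topos $(\os{\circ}{X}_{\bul}/\os{\circ}{S})_{\rm conv}\lo (\os{\circ}{X}/\os{\circ}{S})_{\rm conv}$ applied to these intrinsic graded pieces then promotes the graded quasi-isomorphism to a filtered quasi-isomorphism in ${\rm D}^+{\rm F}({\cal K}_{\os{\circ}{X}/\os{\circ}{S}})$.

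The main obstacle is the simplicial SNCL version of Proposition~\ref{prop:rescos}: one must check that for the refinement-and-fiber-product morphism ${\cal R}^{\rm ex}_{\bul}\lo {\cal P}^{\rm ex}_{\bul}$, the minimal-generator condition of Proposition~\ref{prop:mmoo} holds uniformly in the simplicial direction, so that the induced maps $\os{\circ}{\cal R}^{{\rm ex},(k)}_n \lo \os{\circ}{\cal P}^{{\rm ex},(k)}_n$ assemble into a morphism of simplicial schemes (using Lemma~\ref{lemm:knit}(1) on each side) and the Residue identifications glue to give a canonical identification of the graded simplicial complexes. Once this is in hand, the standard filtered quasi-isomorphism criterion (a morphism of filtered complexes is a filtered quasi-isomorphism iff it is so on every ${\rm gr}^P_k$) concludes the proof, and the usual cocycle argument shows that the resulting filtered isomorphism in ${\rm D}^+{\rm F}({\cal K}_{\os{\circ}{X}/\os{\circ}{S}})$ is independent of the intermediate choices.
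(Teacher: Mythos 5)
Your proposal is correct and follows essentially the same route as the paper: the paper cites (\ref{theo:pyil}) for the underlying complex, then identifies ${\rm gr}^P_k$ intrinsically via (\ref{prop:0case}), (\ref{prop:grla}), (\ref{lemm:knit}), the Poincar\'e lemma and cohomological descent, and leaves the remaining comparison over a common refinement as ``routine work.'' You have simply spelled out that routine work (refinement plus fiber product, compatibility of the residue maps via (\ref{prop:mmoo})/(\ref{prop:rescos}), and the graded-piece criterion for filtered quasi-isomorphisms), so no gap remains.
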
 
\begin{proof} 
By (\ref{theo:pyil}),  
$R\pi^{\rm conv}_{\os{\circ}{X}/\os{\circ}{S}*}
(L^{\rm UE}_{\os{\circ}{X}_{\bul}/\os{\circ}{S}}
({\cal E}^{\bul}\otimes_{{\cal O}_{{\cal P}^{\rm ex}_{\bul}}}
\Om^{\bul}_{{\cal P}^{\rm ex}_{\bul}/\os{\circ}{S}}))=: 
\wt{R}\eps^{\rm conv}_{X/S/\os{\circ}{S}*}(\eps^{{\rm conv}*}_{X/\os{\circ}{S}}(E))$ 
is independent of the choice of the open covering of $X$.  
\par 
Let $a_{\bul}^{(k-1)}\col \os{\circ}{X}{}^{(k-1)}_{\bul}\lo \os{\circ}{X}_{\bul}$ 
be the natural morphism.  
By (\ref{prop:0case}) and the cohomological descent we obtain the following: 
\begin{align*} 
&{\rm gr}_0^P
R\pi^{\rm conv}_{\os{\circ}{X}/\os{\circ}{S}*}
(L^{\rm UE}_{\os{\circ}{X}_{\bul}/\os{\circ}{S}}
({\cal E}^{\bul}\otimes_{{\cal O}_{{\cal P}^{\rm ex}_{\bul}}}
\Om^{\bul}_{{\cal P}^{\rm ex}_{\bul}/\os{\circ}{S}}))
=P_0R\pi^{\rm conv}_{\os{\circ}{X}/\os{\circ}{S}*}
(L^{\rm UE}_{\os{\circ}{X}_{\bul}/\os{\circ}{S}}
({\cal E}^{\bul}\otimes_{{\cal O}_{{\cal P}^{\rm ex}_{\bul}}}
\Om^{\bul}_{{\cal P}^{\rm ex}_{\bul}/\os{\circ}{S}}))
\tag{8.5.1}\label{ali:cp0r}\\
& =
R\pi^{\rm conv}_{\os{\circ}{X}/\os{\circ}{S}*}
(a^{(0)}_{\bul{\rm conv}*}(a^{(0)*}_{\bul {\rm conv}}(E^{\bul})
\otimes_{{\mab Z}}\vp_{\rm conv}^{(0)}(\os{\circ}{X}/\os{\circ}{S}))
\lo a^{(1)}_{\bul{\rm conv}*}(a^{(1)*}_{\bul}{\rm conv}(E^{\bul}))\otimes_{{\mab Z}}
\vp_{\rm conv}^{(1)}(\os{\circ}{X}_{\bul}/\os{\circ}{S}))
\lo \cdots )\\
&= (a^{(0)}_{{\rm conv}*}(a^{(0)*}_{\rm conv}(E)
\otimes_{{\mab Z}}\vp_{\rm conv}^{(0)}(\os{\circ}{X}/\os{\circ}{S}))
\lo a^{(1)}_{{\rm conv}*}(a^{(1)*}_{\rm conv}(E)\otimes_{{\mab Z}}
\vp_{\rm conv}^{(1)}(\os{\circ}{X}/\os{\circ}{S}))
\lo \cdots )\\
&={\rm Ker}(a^{(0)}_{{\rm conv}*}(a^{(0)*}_{\rm conv}(E)
\otimes_{\mab Z}\varpi^{(0)}_{\rm conv}
(\os{\circ}{X}/\os{\circ}{S})
\lo 
a^{(1)}_{{\rm conv}*}(a^{(1)*}_{\rm conv}(E)\otimes_{\mab Z}\varpi^{(1)}_{\rm conv}
(\os{\circ}{X}/\os{\circ}{S}))). 
\end{align*} 
\par 
Let $k$ be a positive integer. 
Because the filtered direct image and the gr-functor commutes 
(\cite[(1.3.4)]{nh2}), we have the following equalities by (\ref{lemm:knit}),  
(\ref{eqn:grpdl}) and the convergent Poincar\'{e} lemma: 
\begin{align*} 
{\rm gr}_k^PR\pi^{\rm conv}_{\os{\circ}{X}/\os{\circ}{S}*}
(L^{\rm UE}_{\os{\circ}{X}_{\bul}/\os{\circ}{S}}
({\cal E}^{\bul}\otimes_{{\cal O}_{{\cal P}^{\rm ex}_{\bul}}}
\Om^{\bul}_{{\cal P}^{\rm ex}_{\bul}/\os{\circ}{S}}))
&=R\pi^{\rm conv}_{\os{\circ}{X}/\os{\circ}{S}*}({\rm gr}_k^P
L^{\rm UE}_{\os{\circ}{X}_{\bul}/\os{\circ}{S}}
({\cal E}^{\bul}\otimes_{{\cal O}_{{\cal P}^{\rm ex}_{\bul}}}
\Om^{\bul}_{{\cal P}^{\rm ex}_{\bul}/\os{\circ}{S}}))
\tag{8.5.2}\label{ali:cpr}\\
&=R\pi^{\rm conv}_{\os{\circ}{X}/\os{\circ}{S}*}
(a^{(k-1)}_{\bul {\rm conv}*}(a^{(k-1)*}_{\bul {\rm conv}}(E)
\otimes_{{\mab Z}}\vp_{\rm conv}^{(k-1)}(\os{\circ}{X}_{\bul}/\os{\circ}{S})))[-k]\\
&= a^{(k-1)}_{{\rm conv}*}(a^{(k-1)*}_{\rm conv}(E)
\otimes_{{\mab Z}}\vp_{\rm conv}^{(k-1)}(\os{\circ}{X}/\os{\circ}{S}))[-k]. 
\end{align*} 
\par 
Because the rest of the proof is a routine work, we leave the rest of the proof to the reader. 
\end{proof}

\begin{defi}\label{defi:crpxi} 
We call 
$$R\pi^{\rm conv}_{\os{\circ}{X}/\os{\circ}{S}*}
((L^{\rm UE}_{\os{\circ}{X}_{\bul}/\os{\circ}{S}}
({\cal E}^{\bul}\otimes_{{\cal O}_{{\cal P}^{\rm ex}_{\bul}}}
\Om^{\bul}_{{\cal P}^{\rm ex}_{\bul}/\os{\circ}{S}}),P))$$  
the {\it modified $P$-filtered convergent complex} of $E$ and we denote 
it by 
$$(\wt{R}\eps^{\rm conv}_{X/\os{\circ}{S}*}(\eps^{{\rm conv}*}_{X/\os{\circ}{S}}(E)),P).$$ 
When $E={\cal K}_{\os{\circ}{X}/\os{\circ}{S}}$, we call 
$(\wt{R}\eps^{\rm conv}_{X/\os{\circ}{S}*}(\eps^{{\rm conv}*}_{X/\os{\circ}{S}}(E)),P)$ 
the {\it modfifed $P$-filtered convergent complex} of $X/\os{\circ}{S}$ and 
denote it by $(\wt{R}\eps_{X/\os{\circ}{S}*}({\cal K}_{X/\os{\circ}{S}}),P)$. 
We call $Ru^{\rm conv}_{\os{\circ}{X}/\os{\circ}{S}*}(
(\wt{R}\eps^{\rm conv}_{X/\os{\circ}{S}*}(\eps^{{\rm conv}*}_{X/\os{\circ}{S}}(E)),P))$ 
the {\it modified isozariskian $P$-filtered complex} of $E$ and 
we denote it by $(\wt{R}u^{\rm conv}_{X/\os{\circ}{S}*}(\eps^{{\rm conv}*}_{X/\os{\circ}{S}}(E)),P)$.  
When $E={\cal K}_{\os{\circ}{X}/\os{\circ}{S}}$, 
we call 
$Ru^{\rm conv}_{\os{\circ}{X}/\os{\circ}{S}*}(
(\wt{R}\eps^{\rm conv}_{X/\os{\circ}{S}*}(\eps^{{\rm conv}*}_{X/\os{\circ}{S}}(E)),P))$
the {\it modfifed $P$-filtered log isozariskian complex} of $Y/\os{\circ}{S}$ and 
denote it by $(\wt{R}u^{\rm conv}_{\os{\circ}{X}/\os{\circ}{S}*}({\cal K}_{Y/\os{\circ}{S}}),P)$. 
\end{defi}

Set 
\begin{align*} 
\wt{R}^k\eps^{\rm conv}_{X/\os{\circ}{S}*}(\eps^{{\rm conv}*}_{X/\os{\circ}{S}}(E)):=
{\cal H}^k(\wt{R}\eps^{\rm conv}_{X/\os{\circ}{S}*}(\eps^{{\rm conv}*}_{X/\os{\circ}{S}}(E)))
\quad (k\in {\mab Z}). 
\end{align*} 

The following is a main result in this section: 

\begin{theo}[{\bf $p$-adic semi-purity}]\label{theo:calvpc}
Let $k$ be a nonnegative integer. Then
\begin{align*} 
{} & 
\wt{R}^k\eps^{\rm conv}_{X/\os{\circ}{S}*}
(\eps^{{\rm conv}*}_{X/\os{\circ}{S}}(E))
\tag{8.7.1}\label{caseali:grmtkr}\\
{} & = 
\begin{cases}
a^{(k-1)}_{{\rm conv}*}(a^{(k-1)*}_{\rm conv}(E)
\otimes_{\mab Z}\varpi^{(k-1)}_{\rm conv}
(\os{\circ}{X}/\os{\circ}{S})) & (k>0),  \\
{\rm Ker}(a^{(0)}_{{\rm conv}*}(a^{(0)*}_{\rm conv}(E)
\otimes_{\mab Z}\varpi^{(0)}_{\rm conv}
(\os{\circ}{X}/\os{\circ}{S}))
\lo a^{(1)}_{{\rm conv}*}(a^{(1)*}_{\rm conv}(E)
\otimes_{\mab Z}\varpi^{(1)}_{\rm conv}
(\os{\circ}{X}/\os{\circ}{S})))
 & (k=0).  
\end{cases} \\
\end{align*} 
\end{theo}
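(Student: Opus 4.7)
The plan is to deduce the $p$-adic semi-purity directly from the computation of the $P$-graded pieces established in (\ref{theo:pil}), by means of a standard degeneration argument. Set $K:=\wt{R}\eps^{\rm conv}_{X/\os{\circ}{S}*}(\eps^{{\rm conv}*}_{X/\os{\circ}{S}}(E))$. By (\ref{theo:pil}), for each $k\geq 0$ the graded piece ${\rm gr}^P_kK$ is concentrated in cohomological degree $k$: for $k>0$ this is the content of (\ref{ali:cpr}) (a single sheaf shifted by $[-k]$), while for $k=0$ the computation (\ref{ali:cp0r}) together with (\ref{prop:0case}) identifies ${\rm gr}^P_0K$ with the kernel sheaf placed in degree $0$. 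For $k<0$ we have ${\rm gr}^P_kK=0$ by the definition (\ref{eqn:leolpw}) of the filtration $P$ on $\Om^{\bul}_{{\cal P}^{\rm ex}_{\bul}/\os{\circ}{S}}$.

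First I would prove by induction on $k\geq 0$ that ${\cal H}^i(P_kK)=0$ for $i>k$ and that the canonical morphism ${\cal H}^k(P_kK)\lo {\cal H}^k({\rm gr}^P_kK)$ is an isomorphism. The base case $k=0$ is automatic because $P_{-1}K=0$ forces $P_0K={\rm gr}^P_0K$, which is concentrated in degree $0$. For the inductive step, the distinguished triangle
\begin{equation*}
P_{k-1}K\lo P_kK\lo {\rm gr}^P_kK\os{+1}{\lo}
\end{equation*}
together with the inductive hypothesis (${\cal H}^i(P_{k-1}K)=0$ for $i\geq k$) and the degree-$k$ concentration of ${\rm gr}^P_kK$ yields, by an immediate inspection of the long exact cohomology sequence, both the vanishing ${\cal H}^i(P_kK)=0$ for $i>k$ and the isomorphism ${\cal H}^k(P_kK)\os{\sim}{\lo}{\cal H}^k({\rm gr}^P_kK)$.

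Second, I would observe that the morphism $P_kK\lo K$ induces an isomorphism on ${\cal H}^k$. Indeed, at the simplicial chain level the quotient complex $L^{\rm UE}_{\os{\circ}{X}_{\bul}/\os{\circ}{S}}({\cal E}^{\bul}\otimes_{{\cal O}_{{\cal P}^{\rm ex}_{\bul}}}(\Om^{\bul}_{{\cal P}^{\rm ex}_{\bul}/\os{\circ}{S}}/P_k\Om^{\bul}_{{\cal P}^{\rm ex}_{\bul}/\os{\circ}{S}}))$ vanishes in degrees $\leq k$, because $P_k\Om^i_{{\cal P}^{\rm ex}_{\bul}/\os{\circ}{S}}=\Om^i_{{\cal P}^{\rm ex}_{\bul}/\os{\circ}{S}}$ for $i\leq k$ by (\ref{eqn:leolpw}); since $R\pi^{\rm conv}_{\os{\circ}{X}/\os{\circ}{S}*}$ does not lower cohomological degrees, one obtains ${\cal H}^i(K/P_kK)=0$ for $i\leq k$, and the long exact sequence attached to the triangle $P_kK\lo K\lo K/P_kK\os{+1}{\lo}$ produces the isomorphism ${\cal H}^k(P_kK)\os{\sim}{\lo}{\cal H}^k(K)$. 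Chaining this with the isomorphism from the previous step yields the formula (\ref{caseali:grmtkr}).

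The main obstacle will be a careful book-keeping of the $k=0$ case, where ${\rm gr}^P_0K$ is first presented (after $R\pi^{\rm conv}_{\os{\circ}{X}/\os{\circ}{S}*}$) not as a single sheaf but as the Čech-type complex appearing in (\ref{ali:ixea}); one must invoke (\ref{prop:0case}) to identify this complex with its $0$-th cohomology (the displayed kernel) so that the inductive framework applies uniformly. A secondary, essentially formal point is to ensure that the defining triangle $P_{k-1}K\lo P_kK\lo {\rm gr}^P_kK$ in $D^+({\cal K}_{\os{\circ}{X}/\os{\circ}{S}})$ really is obtained by applying the triangulated functor $R\pi^{\rm conv}_{\os{\circ}{X}/\os{\circ}{S}*}$ to the short exact sequence of simplicial filtered complexes, which is immediate from the construction of $(\wt{R}\eps^{\rm conv}_{X/\os{\circ}{S}*}(\eps^{{\rm conv}*}_{X/\os{\circ}{S}}(E)),P)$ given in (\ref{defi:crpxi}).
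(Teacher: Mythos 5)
Your proposal is correct and takes essentially the same route as the paper: both arguments rest entirely on the computation (\ref{ali:cp0r})--(\ref{ali:cpr}) showing that ${\rm gr}^P_k$ of the modified complex is concentrated in cohomological degree $k$, the paper packaging the ensuing d\'evissage as the spectral sequence (\ref{eqn:ttc}) of the filtration $P$ (whose $E_1$-terms vanish off $q=k$, forcing degeneration), while you unwind the same vanishing by hand via the triangles $P_{k-1}\lo P_k\lo {\rm gr}^P_k$ and $P_k\lo K\lo K/P_k$. The two are equivalent, so no further comparison is needed.
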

\begin{proof}
The ``increasing filtration''
$\{P_k\wt{R}\eps^{\rm conv}_{X/\os{\circ}{S}*}(\eps^{{\rm conv}*}_{X/\os{\circ}{S}}(E))\}_{k\in {\mab Z}}$  
on $\wt{R}\eps^{\rm conv}_{X/\os{\circ}{S}*}
(\eps^{{\rm conv}*}_{X/\os{\circ}{S}}(E))$ gives 
the following spectral sequence
\begin{equation*}
E_1^{-k,q+k} 
= {\cal H}^q({\rm gr}_k^{P}\wt{R}\eps^{\rm conv}_{X/\os{\circ}{S}*}
(\eps^{{\rm conv}*}_{X/\os{\circ}{S}}(E))) 
\Lo \wt{R}^q\eps^{\rm conv}_{X/\os{\circ}{S}*}(E). 
\tag{8.7.2}\label{eqn:ttc}
\end{equation*}
By (\ref{ali:cpr}) and (\ref{ali:cp0r}) we have the following equality: 
\begin{align*} 
{} & 
{\cal H}^q({\rm gr}_k^{P}\wt{R}
\eps^{\rm conv}_{X/\os{\circ}{S}*}(\eps^{{\rm conv}*}_{X/\os{\circ}{S}}(E)))
\tag{8.7.3}\label{caseali:grhtkr}\\
{} & = 
\begin{cases}
{\cal H}^{q-k}
(a^{(k-1)}_{{\rm conv}*}(a^{(k-1)*}_{\rm conv}(E)\otimes_{\mab Z}\varpi^{(k-1)}_{\rm conv}
(\os{\circ}{X}/\os{\circ}{S})) & (k>0),  \\
{\cal H}^q({\rm Ker}(a^{(0)}_{{\rm conv}*}(a^{(0)*}_{\rm conv}(E)
\otimes_{\mab Z}\varpi^{(0)}_{\rm conv}
(\os{\circ}{X}/\os{\circ}{S}))
\lo a^{(1)}_{{\rm conv}*}(a^{(1)*}_{\rm conv}(E)\otimes_{\mab Z}\varpi^{(1)}_{\rm conv}
(\os{\circ}{X}/\os{\circ}{S})))))
 & (k=0).  
\end{cases} \\
\end{align*} 
Hence $E_1^{-k,q+k}=0$ for $q\not= k$. 
Now we obtain (\ref{caseali:grmtkr}) by the following equalities: 
\begin{align*} 
&\wt{R}^k\eps^{\rm conv}_{X/\os{\circ}{S}*}(\eps^{{\rm conv}*}_{X/\os{\circ}{S}}(E))
= {\cal H}^k(\wt{R}\eps^{\rm conv}_{X/\os{\circ}{S}*}(\eps^{{\rm conv}*}_{X/\os{\circ}{S}}(E))) 
=E_1^{-k,2k} \\
{} & = 
\begin{cases}
a^{(k-1)}_{{\rm conv}*}(a^{(k-1)*}_{\rm conv}(E)
\otimes_{\mab Z}\varpi^{(k-1)}_{\rm conv}
(\os{\circ}{X}/\os{\circ}{S})) & (k>0),  \\
{\rm Ker}(a^{(0)}_{{\rm conv}*}(a^{(0)*}_{\rm conv}(E)
\otimes_{\mab Z}\varpi^{(0)}_{\rm conv}
(\os{\circ}{X}/\os{\circ}{S}))
\lo 
a^{(1)}_{{\rm conv}*}(a^{(1)*}_{\rm conv}(E)
\otimes_{\mab Z}\varpi^{(1)}_{\rm conv}
(\os{\circ}{X}/\os{\circ}{S})))
 & (k=0).  
\end{cases} \\. 
\end{align*}
\end{proof}

\begin{theo}[{\bf Comparison theorem}]\label{theo:wtvsca}  
Let $\tau$ be the canonical filtration on a complex. 
There exists a canonical isomorphism 
\begin{equation*}
(\wt{R}\eps^{\rm conv}_{X/\os{\circ}{S}*}(\eps^{{\rm conv}*}_{X/\os{\circ}{S}}(E)),\tau)
\os{\sim}{\lo} (\wt{R}\eps^{\rm conv}_{X/\os{\circ}{S}*}(\eps^{{\rm conv}*}_{X/\os{\circ}{S}}(E)),P).  
\tag{8.8.1}\label{eqn:ltxpxd}
\end{equation*} 
\end{theo}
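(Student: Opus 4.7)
The plan is to deduce the equality $\tau = P$ from the computation of the graded pieces of $P$ already carried out in this section, combined with a general principle in the filtered derived category: \emph{if $(K,W)$ is an increasingly filtered object in ${\rm D}^+{\rm F}({\cal A})$ for some abelian category ${\cal A}$, such that $W_{k_0}=0$ for some $k_0\in {\mab Z}$ and ${\rm gr}^W_k K$ is concentrated in cohomological degree $k$ $($i.e., ${\cal H}^j({\rm gr}^W_k K)=0$ for $j\not=k)$ for all $k$, then there exists a canonical filtered isomorphism $(K,\tau)\os{\sim}{\lo}(K,W)$ in ${\rm D}^+{\rm F}({\cal A})$.} This principle is proved by induction on $k$: one compares the distinguished triangles
\begin{equation*}
\tau_{k-1}K \lo \tau_k K \lo {\cal H}^k(K)[-k] \os{+1}{\lo}
\end{equation*}
and
\begin{equation*}
W_{k-1}K \lo W_k K \lo {\rm gr}^W_k K \os{+1}{\lo},
\end{equation*}
using the canonical quasi-isomorphism ${\rm gr}^W_k K\simeq {\cal H}^k({\rm gr}^W_k K)[-k]\simeq {\cal H}^k(K)[-k]$ (the last identification forced by the $E_1$-degeneration induced by the hypothesis), together with the vanishing ${\rm Hom}({\cal H}^k(K)[-k],\tau_{k-1}K[1])=0$ which holds because $\tau_{k-1}K$ has cohomology only in degrees $\leq k-1$.

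To prove (\ref{theo:wtvsca}) it suffices to verify the hypothesis of this principle for $(K,W):=(\wt{R}\eps^{\rm conv}_{X/\os{\circ}{S}*}(\eps^{{\rm conv}*}_{X/\os{\circ}{S}}(E)),P)$. First, $P_{-1}=0$ by construction (since $P_{-1}\Om^{\bul}_{{\cal P}^{\rm ex}_{\bul}/\os{\circ}{S}}=0$), so the induction starts. Second, the computations (\ref{ali:cp0r}) and (\ref{ali:cpr}) performed in the proof of (\ref{theo:pil}) show that for $k>0$,
\begin{equation*}
{\rm gr}^P_k \wt{R}\eps^{\rm conv}_{X/\os{\circ}{S}*}(\eps^{{\rm conv}*}_{X/\os{\circ}{S}}(E))
\simeq a^{(k-1)}_{{\rm conv}*}(a^{(k-1)*}_{\rm conv}(E)\otimes_{\mab Z}\vp^{(k-1)}_{\rm conv}(\os{\circ}{X}/\os{\circ}{S}))[-k]
\end{equation*}
is a shifted sheaf placed in cohomological degree $k$, and ${\rm gr}^P_0 = P_0$ is the kernel sheaf placed in degree $0$. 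Hence the hypothesis is satisfied, and the general principle produces the desired canonical filtered isomorphism $(\wt{R}\eps^{\rm conv}_{X/\os{\circ}{S}*}(\eps^{{\rm conv}*}_{X/\os{\circ}{S}}(E)),\tau)\os{\sim}{\lo}(\wt{R}\eps^{\rm conv}_{X/\os{\circ}{S}*}(\eps^{{\rm conv}*}_{X/\os{\circ}{S}}(E)),P)$, which agrees with the identification of ${\cal H}^k$ given in (\ref{theo:calvpc}).

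The main obstacle is the inductive step in the general principle, where one has to lift an isomorphism $\tau_{k-1}K\os{\sim}{\lo}W_{k-1}K$ to $\tau_k K\os{\sim}{\lo}W_k K$ \emph{canonically} in the filtered derived category; the key point is that the obstruction lives in the Hom group cited above, whose vanishing both guarantees existence and ensures uniqueness of the lift, so that the family $\{\tau_k K \os{\sim}{\lo} W_k K\}_k$ assembles into a single morphism in ${\rm D}^+{\rm F}({\cal K}_{\os{\circ}{X}/\os{\circ}{S}})$. Once this inductive construction is carried out, all the geometric input is contained in the concentration statement for ${\rm gr}^P_k$, which has already been established in this section.
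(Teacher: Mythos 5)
There is a genuine gap in your construction of the canonical morphism. Write $K:=\wt{R}\eps^{\rm conv}_{X/\os{\circ}{S}*}(\eps^{{\rm conv}*}_{X/\os{\circ}{S}}(E))$ and $W=P$. Your inductive lift hinges on the vanishing of ${\rm Hom}({\cal H}^k(K)[-k],\tau_{k-1}K[1])$, justified by the fact that $\tau_{k-1}K$ has cohomology only in degrees $\leq k-1$. But this is a Hom from an object concentrated in degree $k$ to an object concentrated in degrees $\leq k-2$; by the hyperext spectral sequence it is assembled from ${\rm Ext}^{\geq 2}$ groups of sheaves, which do not vanish in general (the $t$-structure orthogonality ${\rm Hom}(D^{\leq n},D^{\geq n+1})=0$ goes in the opposite direction). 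The obstruction to the existence of the lift lives in ${\rm Hom}({\cal H}^k(K)[-k],W_kK[1])$, again of ${\rm Ext}^{\geq 1}$ type, so neither existence nor uniqueness of your lift is established. The direction that does work abstractly is the reverse one: since $P_kK\in D^{\leq k}$ (by the degree-$k$ concentration of ${\rm gr}^P_k$) and the cone of $\tau_kK\lo K$ lies in $D^{\geq k+1}$, the map $P_kK\lo K$ factors uniquely through $\tau_kK$, giving canonical isomorphisms $P_kK\os{\sim}{\lo}\tau_kK$ in $D^+$. Even then a second gap remains: a compatible family of isomorphisms on the filtration steps in the plain derived category does not by itself produce an isomorphism in ${\rm D}^+{\rm F}({\cal K}_{\os{\circ}{X}/\os{\circ}{S}})$, which is what the theorem asserts.

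The paper sidesteps both issues by producing the morphism at the chain level: on the representative $L^{\rm UE}_{\os{\circ}{X}_{\bul}/\os{\circ}{S}}({\cal E}\otimes_{{\cal O}_{{\cal P}^{\rm ex}_{\bul}}}\Om^{\bul}_{{\cal P}^{\rm ex}_{\bul}/\os{\circ}{S}})$ one has the termwise inclusion $\tau_k\subseteq P_k$ (because $P_k\Om^i_{{\cal P}^{\rm ex}_{\bul}/\os{\circ}{S}}=\Om^i_{{\cal P}^{\rm ex}_{\bul}/\os{\circ}{S}}$ for $i\leq k$), which gives the natural filtered morphism (8.8.2); applying $R\pi^{\rm conv}_{\os{\circ}{X}/\os{\circ}{S}*}$ via \cite[(2.7.2)]{nh2} yields an honest morphism (8.8.5) in the filtered derived category, and the comparison of graded pieces --- the part of your argument that is correct and coincides with the paper's, using (8.7.1) --- shows it is a filtered isomorphism. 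One must still check that the resulting isomorphism is independent of the choice of affine covering and of the simplicial immersion $X_{\bul}\os{\sus}{\lo}\ol{\cal P}_{\bul}$, which the paper records as a routine verification; your proposal does not address this either, since your $K$ is itself defined through such choices.
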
 
\begin{proof}   
There exists the following natural morphism 
of filtered complexes of ${\cal K}_{\os{\circ}{X}_{\bul}/\os{\circ}{S}}$-modules:
\begin{equation*}
(L^{\rm UE}_{\os{\circ}{X}_{\bul}/\os{\circ}{S}}
({\cal E}\otimes_{{\cal O}_{{\cal P}^{\rm ex}_{\bul}}}
\Om^{\bul}_{{\cal P}^{\rm ex}_{\bul}/\os{\circ}{S}}),\tau) 
\lo (L^{\rm UE}_{\os{\circ}{X}_{\bul}/\os{\circ}{S}}
({\cal E}\otimes_{{\cal O}_{{\cal P}^{\rm ex}_{\bul}}}
\Om^{\bul}_{{\cal P}^{\rm ex}_{\bul}/\os{\circ}{S}}),P).
\tag{8.8.2}\label{eqn:ltpxd}
\end{equation*} 
By \cite[(2.7.2)]{nh2} there exists a canonical morphism
\begin{equation*}
(R{\pi}^{\rm conv}_{\os{\circ}{X}/\os{\circ}{S}*}
L^{\rm UE}_{\os{\circ}{X}_{\bul}/\os{\circ}{S}}
({\cal E}\otimes_{{\cal O}_{{\cal P}^{\rm ex}_{\bul}}}
\Om^{\bul}_{{\cal P}^{\rm ex}_{\bul}/\os{\circ}{S}}),\tau) 
\lo 
R{\pi}^{\rm conv}_{\os{\circ}{X}/\os{\circ}{S}*}
((L^{\rm UE}_{\os{\circ}{X}_{\bul}/\os{\circ}{S}}
({\cal E}\otimes_{{\cal O}_{{\cal P}^{\rm ex}_{\bul}}}
\Om^{\bul}_{{\cal P}^{\rm ex}_{\bul}/\os{\circ}{S}}),\tau)).
\tag{8.8.3}\label{eqn:rtloxdz}
\end{equation*} 
Hence we obtain the following morphism 
\begin{equation*}
(R{\pi}^{\rm conv}_{\os{\circ}{X}/\os{\circ}{S}*}
L^{\rm UE}_{\os{\circ}{X}_{\bul}/\os{\circ}{S}}
({\cal E}\otimes_{{\cal O}_{{\cal P}^{\rm ex}_{\bul}}}
\Om^{\bul}_{{\cal P}^{\rm ex}_{\bul}/\os{\circ}{S}}),\tau)
\lo (R{\pi}^{\rm conv}_{\os{\circ}{X}/\os{\circ}{S}*}
L^{\rm UE}_{\os{\circ}{X}_{\bul}/\os{\circ}{S}}
({\cal E}\otimes_{{\cal O}_{{\cal P}^{\rm ex}_{\bul}}}
\Om^{\bul}_{{\cal P}^{\rm ex}_{\bul}/\os{\circ}{S}}),P). 
\tag{8.8.4}
\end{equation*}
which is nothing but a morphism 
\begin{equation*}
(\wt{R}\eps^{\rm conv}_{X/\os{\circ}{S}*}(\eps^{{\rm conv}*}_{X/\os{\circ}{S}}(E)),\tau)
\lo (\wt{R}\eps^{\rm conv}_{X/\os{\circ}{S}*}(\eps^{{\rm conv}*}_{X/\os{\circ}{S}}(E)),P).  
\tag{8.8.5}\label{eqn:repxto}
\end{equation*}
To prove that this morphism  is a 
filtered isomorphism, 
it suffices to prove that the induced morphism
\begin{equation*}
{\rm gr}_k^{\tau}
\wt{R}\eps^{\rm conv}_{X/\os{\circ}{S}*}(\eps^{{\rm conv}*}_{X/\os{\circ}{S}}(E))
\lo {\rm gr}_k^{P}
\wt{R}\eps^{\rm conv}_{X/\os{\circ}{S}*}(\eps^{{\rm conv}*}_{X/\os{\circ}{S}}(E)) \quad 
(k\in {\mab Z})
\tag{8.8.6}\label{eqn:xosxse}
\end{equation*}
is an isomorphism. 
By (\ref{caseali:grmtkr}) we have the following equalities: 
\begin{align*} 
{} & {\cal H}^q
({\rm gr}_k^{\tau}\wt{R}\eps^{\rm conv}_{X/\os{\circ}{S}*}
(\eps^{{\rm conv}*}_{X/\os{\circ}{S}}(E))) 
\tag{8.8.7}\label{caseali:grtxkr}\\ 
{} & = 
\begin{cases}  
a^{(k-1)}_{{\rm conv}*}
(a^{(k-1)*}_{{\rm conv}*}(E)
\otimes_{\mab Z}\varpi^{(k-1)}_{\rm conv}
(\os{\circ}{X}/\os{\circ}{S})) & (q=k>0),  \\
{\rm Ker}(a^{(0)}_{{\rm conv}*}(a^{(0)*}_{\rm conv}(E)
\otimes_{\mab Z}\varpi^{(0)}_{\rm conv}
(\os{\circ}{X}/\os{\circ}{S}))
\lo a^{(1)}_{{\rm conv}*}(a^{(1)*}_{\rm conv}(E)
\otimes_{\mab Z}\varpi^{(1)}_{\rm conv}
(\os{\circ}{X}/\os{\circ}{S})))
 & (q=k=0), \\
 0 & ({\rm  otherwise}). 
\end{cases} \\
\end{align*} 
By (\ref{caseali:grmtkr}),  
${\cal H}^q({\rm gr}_k^{P}
\wt{R}\eps^{\rm conv}_{X/\os{\circ}{S}*}(\eps^{{\rm conv}*}_{X/\os{\circ}{S}}(E)))$ 
is also equal to the last formulas in (\ref{caseali:grtxkr}). 
Hence the morphism (\ref{eqn:ltxpxd}) is an isomorphism. 
\par
The rest we have to show is that 
the morphism (\ref{eqn:ltxpxd}) is independent of 
the choice of an affine open covering of $X$ and 
the simplicial immersion $X_{\bul}\os{\sus}{\lo} \ol{\cal P}_{\bul}$. 
Because it is a routine work, 
we leave the rest of the proof to the reader. 
\end{proof}

\begin{coro}[{\rm {\bf Contravariant functoriality of the modified $P$-filttered complex}}]
\label{coro:cwxstxs}
Let the notations be as in {\rm (\ref{prop:cwass})}. 
Let $X'$ be an SNCL scheme over $S'_1$. 
Let  
\begin{equation*}
\begin{CD} 
X' @>{g}>> X\\
@VVV @VVV \\
S'_1  @>{u_1}>> S_1
\end{CD}
\tag{8.9.1}\label{cd:mpsmlgs}
\end{equation*}
be a commutative diagram of log schemes.  
Let $E'$ be an isocrystal on 
$(\os{\circ}{X}{}'/\os{\circ}{S}{}')_{\rm conv}$ 
and let 
\begin{align*} 
\os{\circ}{g}{}^* \col \os{\circ}{g}{}^*_{\rm crys}(E')\lo E
\tag{8.9.2}\label{ali:teawbfm}
\end{align*} 
be a morphism of isocrystals on $(\os{\circ}{X}{}'/\os{\circ}{S}{}')_{\rm conv}$. 
Then there exists a morphism 
\begin{align*} 
g^*\col  (\wt{R}\eps^{\rm conv}_{X'/\os{\circ}{S}{}'*}
(\eps^{{\rm conv}*}_{X'/\os{\circ}{S}{}'}(E')),P) \lo 
R\os{\circ}{g}_{{\rm conv}*}
((\wt{R}\eps^{\rm conv}_{X/\os{\circ}{S}*}(\eps^{{\rm conv}*}_{X/\os{\circ}{S}}(E)),P)) 
\tag{8.9.3}\label{ali:teafm}
\end{align*} 
such that 
${\rm id}_X^*={\rm id}_{\wt{R}\eps_{X/\os{\circ}{S}*}(\eps^{{\rm conv}*}_{X/\os{\circ}{S}}(E))}$ and 
$(g\circ h)^*=R\os{\circ}{g}_{{\rm conv}*}(h^*)\circ g^*$ for analogous morphisms 
$h\col X''\lo X'$ and $h^* \col \os{\circ}{h}{}^*_{\rm conv}(E')\lo E''$ of 
${\cal K}_{\os{\circ}{X}{}'/\os{\circ}{S}{}'}$-modules. 
\end{coro}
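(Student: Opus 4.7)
The plan is to reduce the filtered statement to the unfiltered functoriality (\ref{prop:cwass}) by invoking the $p$-adic semi-purity theorem (\ref{theo:wtvsca}). First, I apply (\ref{prop:cwass}) to the morphism $g$ of log smooth schemes (viewing $X, X'$ as log smooth over $S_1, S'_1$) together with the given compatibility $\os{\circ}{g}{}^*$ of isocrystals. This yields the required underlying morphism of complexes in $D^+({\cal K}_{\os{\circ}{X}/\os{\circ}{S}})$, as well as the identity ${\rm id}_X^*={\rm id}$ and the cocycle relation $(g\circ h)^*=R\os{\circ}{g}_{{\rm conv}*}(h^*)\circ g^*$ at the unfiltered level.

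Second, I promote the underlying morphism to a filtered one. By (\ref{theo:wtvsca}), the filtration $P$ coincides canonically with the canonical filtration $\tau$ on both sides. The canonical filtration is intrinsic to any complex, hence functorial for arbitrary morphisms in the derived category; thus the morphism produced in the first step is automatically compatible with $\tau$ on source and target. Composing with the canonical morphism $(R\os{\circ}{g}_{{\rm conv}*}(-),\tau)\to R\os{\circ}{g}_{{\rm conv}*}(-,\tau)$ coming from [nh2, (2.7.2)] (the same device already used in the proof of (\ref{theo:wtvsca})) and then transporting $\tau$ back to $P$ via the filtered isomorphisms of (\ref{theo:wtvsca}) applied on both sides produces the desired filtered morphism $g^*$.

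Third, the functoriality identities for $g^*$ transfer from the unfiltered setting. The identity ${\rm id}_X^*={\rm id}$ follows because the $\tau$-to-$P$ identifications supplied by (\ref{theo:wtvsca}) are canonical; the cocycle $(g\circ h)^*=R\os{\circ}{g}_{{\rm conv}*}(h^*)\circ g^*$ at the filtered level unwinds through these canonical identifications to the already established cocycle relation of (\ref{prop:cwass}), combined with the naturality of $\tau$ under $R\os{\circ}{g}_{{\rm conv}*}$ and under composition.

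The main obstacle lies in the second step, namely verifying naturality of the filtered identification $P\simeq \tau$ under the morphism $g$. This requires revisiting the constructions of (\ref{theo:pil}) and of (\ref{theo:wtvsca}) to confirm that, after passing to a common refinement of simplicial immersions $X_\bullet\os{\subset}{\to}\ol{\cal P}_\bullet$ and $X'_\bullet\os{\subset}{\to}\ol{\cal P}{}'_\bullet$ fitting over $g$, the filtered isomorphism produced in (\ref{theo:wtvsca}) is compatible with the morphism $g^*$ at the filtered level. This compatibility is a routine but tedious diagram chase that follows the same pattern as the independence-of-choices arguments already carried out in (\ref{theo:pil}) and (\ref{theo:wtvsca}); I would package the verification in one step at the end.
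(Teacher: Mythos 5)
Your proposal is correct and follows essentially the same route as the paper, whose proof of this corollary is simply to combine the unfiltered functoriality of (\ref{prop:cwass}) with the identification $P=\tau$ of (\ref{theo:wtvsca}) and the functoriality of the canonical filtration. The extra points you spell out (the device of \cite[(2.7.2)]{nh2} and the compatibility of the $P\simeq\tau$ identification with $g$) are exactly the ingredients the paper uses or leaves as routine, so nothing is missing.
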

\begin{proof} 
This follows from (\ref{prop:cwass}) and (\ref{theo:wtvsca}). 
\end{proof}


\par 
Let $\pi'$ be another non-zero element of the maximal ideal of ${\cal V}$. 
Assume that $\pi{\cal V} \subset \pi' {\cal V}$. 
Set  $S'_1:=\ul{\rm Spec}^{\log}_S({\cal O}_S/\pi')$. 
Then we have a natural closed immersion 
$S'_1 \os{\subset}{\lo} S_1$.  Set $X':=X\times_{S_1}S'_1$. 
Let $i \col X' \os{\subset}{\lo} X$ be the natural closed immersion. 
\par 
Because $i_{{\rm conv}*}$ is exact ((\ref{prop:toi}) (1)), 
we have the following functor:  
\begin{equation*} 
i_{{\rm conv}*} \col {\rm D}^+{\rm F}({\cal K}_{X'/S}) 
\lo 
{\rm D}^+{\rm F}({\cal K}_{X/S}). 
\tag{8.9.4}\label{eqn:dfkxs}
\end{equation*} 

\begin{prop}\label{prop:iekp}  
\begin{equation*} 
i_{{\rm conv}*}((\wt{R}\eps^{\rm conv}_{X'/\os{\circ}{S}*}
({\cal K}_{X'/\os{\circ}{S}}),\tau))
= 
(\wt{R}\eps^{\rm conv}_{X/\os{\circ}{S}*}({\cal K}_{X/\os{\circ}{S}}),\tau).  
\tag{8.10.1}\label{eqn:ikcp}
\end{equation*} 
\end{prop}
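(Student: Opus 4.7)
The plan is to reduce $(\ref{eqn:ikcp})$ to the equality of the underlying unfiltered complexes via $(\ref{theo:wtvsca})$, and then deduce the unfiltered equality by applying $\os{\circ}{i}_{\rm conv*}$ to the defining triangle $(\ref{ali:oqay})$.

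First I invoke $(\ref{theo:wtvsca})$ for both $X$ and $X'$: on both sides the $P$-filtration equals the canonical filtration $\tau$, which is determined by the underlying complex. Since $\os{\circ}{i}_{\rm conv*}$ is exact by $(\ref{prop:toi})$(1) applied to $(\os{\circ}{X},\os{\circ}{X}{}')$, it commutes with the truncation functors and hence with $\tau$. So it suffices to prove the unfiltered identity
$$\os{\circ}{i}_{\rm conv*}\,\wt{R}\eps^{\rm conv}_{X'/\os{\circ}{S}*}({\cal K}_{X'/\os{\circ}{S}}) \;=\; \wt{R}\eps^{\rm conv}_{X/\os{\circ}{S}*}({\cal K}_{X/\os{\circ}{S}})$$
in $D^+({\cal K}_{\os{\circ}{X}/\os{\circ}{S}})$.

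For this, I apply $\os{\circ}{i}_{\rm conv*}$ to the defining triangle $(\ref{ali:oqay})$ for $X'$. The outer terms $R\eps^{\rm conv}_{X'/S*}(\eps^{{\rm conv}*}_{X'/S/\os{\circ}{S}}({\cal K}_{X'/\os{\circ}{S}}))$ should transform to $R\eps^{\rm conv}_{X/S*}(\eps^{{\rm conv}*}_{X/S/\os{\circ}{S}}({\cal K}_{X/\os{\circ}{S}}))$ by using (i) the commutation $\os{\circ}{i}_{\rm conv*}\,R\eps^{\rm conv}_{X'/S*} = R\eps^{\rm conv}_{X/S*}\,i_{\rm conv*}$, coming from the commutative square of topoi induced by $\eps$ and $i$ together with $Ri_{\rm conv*}=i_{\rm conv*}$, and (ii) the base identity $i_{\rm conv*}{\cal K}_{X'/S} = {\cal K}_{X/S}$, an immediate check on enlargements since both sheaves evaluate a given enlargement $T=(U,T,\iota,u)$ to $\Gamma(T,{\cal O}_T)\otimes_{\mab Z}{\mab Q}$. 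The transition morphism $\theta\,\wedge$ is respected because it is induced by the common global section $d\log t \in \Omega^1_{\ol{S}/\os{\circ}{S}}$, which is independent of whether one works with $X$ or $X'$.

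Having transported the entire triangle $(\ref{ali:oqay})$ for $X'$ to the corresponding triangle for $X$, with isomorphisms on the outer vertices and compatible transition maps, the uniqueness of the cone in the triangulated category $D^+({\cal K}_{\os{\circ}{X}/\os{\circ}{S}})$ forces the middle vertex $\os{\circ}{i}_{\rm conv*}\wt{R}\eps^{\rm conv}_{X'/\os{\circ}{S}*}({\cal K}_{X'/\os{\circ}{S}})$ to be canonically isomorphic to $\wt{R}\eps^{\rm conv}_{X/\os{\circ}{S}*}({\cal K}_{X/\os{\circ}{S}})$. The main obstacle lies in verifying (i) and (ii) above, essentially transplanting $(\ref{prop:toi})$ from the global pushforward to $S$ over to pushforward along the closed immersion $\os{\circ}{i}$; but these compatibilities are formal consequences of the definition of the log convergent topos combined with the exactness of $i_{\rm conv*}$.
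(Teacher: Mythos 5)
Your reduction of (\ref{eqn:ikcp}) to an unfiltered statement is fine: since the statement already carries the canonical filtration $\tau$, exactness of $i_{{\rm conv}*}$ ((\ref{prop:toi}) (1)) does let $\tau$ pass through, and your identities (i) and (ii) are exactly the content of (\ref{prop:toi})-type arguments, so no objection there. The genuine gap is the last step, where you invoke ``uniqueness of the cone''. The complex $\wt{R}\eps^{\rm conv}_{X/\os{\circ}{S}*}({\cal K}_{X/\os{\circ}{S}})$ is not \emph{defined} as a cone, and cones in a triangulated category are not functorial: from isomorphisms on the two outer vertices of (\ref{ali:oqay}) you can at best complete to \emph{some} morphism of triangles, and only after exhibiting a commutative square over one of the maps belonging to the triangles themselves. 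The square you address (compatibility of $\theta\wedge$) cannot even be formulated at this stage, because it involves the middle vertex on which you have no map yet; after rotation, the square you would actually need is over the boundary morphism $A\lo A$ (the monodromy-type connecting map), and checking its compatibility under $i_{{\rm conv}*}$ is of the same nature as constructing the comparison of the middle terms directly. Moreover, even granting TR3, the resulting isomorphism is non-canonical, whereas (\ref{eqn:ikcp}) asserts a canonical identification of filtered objects (this canonicity is what gets used later, e.g.\ to replace $p$ by $\pi$).

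The missing idea is to produce an actual morphism between the two sides and then verify it is a filtered isomorphism. One can take the morphism coming from the contravariant functoriality (\ref{ali:teafm}) applied to the exact closed immersion $i\col X'\os{\sus}{\lo}X$ (or, equivalently, apply the exact functor $i_{{\rm conv}*}$ to the exact sequence (\ref{ali:foq}) of linearized complexes computed with a common embedding system $X_{\bul}\os{\sus}{\lo}\ol{\cal P}_{\bul}$, which also serves $X'_{\bul}=X_{\bul}\times_{S_1}S'_1$). Once a map exists, exactness of $i_{{\rm conv}*}$ reduces everything to cohomology sheaves, and these are identified by the computations (\ref{ali:cp0r}), (\ref{ali:cpr}) (equivalently (\ref{theo:calvpc})) together with $i_{{\rm conv}*}({\cal K}_{\os{\circ}{X}{}'^{(k)}/\os{\circ}{S}})={\cal K}_{\os{\circ}{X}{}^{(k)}/\os{\circ}{S}}$ from (\ref{prop:toi}) (3). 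This is in fact the route the paper takes: it argues through the strata $\os{\circ}{X}{}^{(k)}$ and the graded/cohomological description of the modified complex rather than through the triangle (\ref{ali:oqay}); your triangle strategy can be repaired, but only by carrying out this representative-level comparison, at which point the cone argument becomes superfluous.
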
 
\begin{proof}   
By (\ref{prop:toi}) (3)
we have the following equalities: 
\begin{align*}  
i_{{\rm conv}*}
({\cal K}_{{\os{\circ}{X}{}'^{(k)}}/\os{\circ}{S}})
& ={\cal K}_{{\os{\circ}{X}{}^{(k)}}/\os{\circ}{S}} \quad (k\in {\mab N}). 
\end{align*}  
Hence (\ref{prop:iekp}) follows from (\ref{ali:cp0r}) and (\ref{ali:cpr}).  
\end{proof}

\section{Convergent Steenbrink filtered complexes}\label{sec:wfcipp}
Let ${\cal V}$, $\pi$, $S$ and $S_1$ be as in the Introduction. 
Let the notations be as in the previous section. 
In this section we construct a filtered complex 
$(A_{\rm conv}(X/S,E),P)$ in ${\rm D}^+{\rm F}({\cal K}_{\os{\circ}{X}/\os{\circ}{S}})$ and 
we calculate ${\rm gr}_k^PA_{\rm conv}(X/S,E)$ ((\ref{theo:psap})). 
\par 
\par 
Let 
\begin{align*} 
\theta \col \wt{R}\eps^{\rm conv}_{X/\os{\circ}{S}*}(\eps^{{\rm conv}*}_{X/\os{\circ}{S}}(E))
\lo \wt{R}\eps^{\rm conv}_{X/\os{\circ}{S}*}(\eps^{{\rm conv}*}_{X/\os{\circ}{S}}(E))[1]
\tag{9.0.1}\label{ali:tre}
\end{align*} 
be a morphism of complexes induced 
by $f^*(d\log t) \wedge$, where $t$ is a local section of $M_S$ 
whose image in $\ol{M}_S$ is a local generator. 
Note that $\theta$ is indeed a morphism of complexes since $d\log t$ is a one-form. 
Set $H:=\wt{R}\eps^{\rm conv}_{X/\os{\circ}{S}*}(\eps^{{\rm conv}*}_{X/\os{\circ}{S}}(E))$. 
Strictly speaking, we have to take a flasque resolution $I^{\bul \bul}$ of 
$L^{\rm conv}_{\os{\circ}{X}_{\bul}/\os{\circ}{S}}
({\cal E}^{\bul}\otimes_{{\cal O}_{{\cal P}^{\rm ex}_{\bul}}}
\Om^{\bul}_{{\cal P}^{\rm ex}_{\bul}/\os{\circ}{S}})$ 
and we have to set $H:=\pi^{\rm conv}_{\os{\circ}{X}/\os{\circ}{S}*}(I^{\bul \bul})$. 
Instead of considering (\ref{ali:tre}), we consider the following morphism 
\begin{align*} 
\theta \col H \lo H\{1\}. 
\tag{9.0.2}\label{ali:trae}
\end{align*} 
Here $\{1\}$ means the $1$-shift on the left without the change of the complex. 
Note that $\theta$ in (\ref{ali:trae}) is not a morphism of complexes: 
``$d\theta=-\theta d$'' since $d\log t$ is a one-form. 
Consider the double complex 
\begin{equation*} 
\begin{CD} 
\cdots\\
@A{\theta}AA \\
H/\tau_j[1]\{j\}\\
@A{\theta}AA \\
\cdots \\
@A{\theta}AA \\
 H/\tau_1[1]\{1\}\\
@A{\theta}AA \\
H/\tau_0[1].
\end{CD}
\tag{9.0.3}\label{cd:v}
\end{equation*} 
Here we mean the differential morphism of the double complex 
by the morphism ``$d+\partial$'', where the relation between the horizontal differential morphism 
``$d$'' and the vertical differential morphism ``$\partial$'' 
is ``$d\partial +\partial d=0$''.  
Set 
\begin{align*} 
(A_{\rm conv}(X/S,E),P):=(s(H/\tau_0[1]\os{\theta}{\lo} H/\tau_1[1]\{1\}\os{\theta}{\lo} 
\cdots \os{\theta}{\lo} H/\tau_j[1]\{j\} \os{\theta}{\lo} \cdots),\{P_k\}_{k\in {\mab Z}}),
\end{align*}  
where 
\begin{align*}
P_kA_{\rm conv}(X/S,E):=s(\cdots \os{\theta}{\lo} \tau_{\max \{2j+k+1,j\}}H
/\tau_j[1]\{j\} \os{\theta}{\lo} \cdots) \quad
(k\in {\mab Z}, j\in {\mab N}).
\end{align*}  
Here we denote the double complex (\ref{cd:v}) as 
$$H/\tau_0[1]\os{\theta}{\lo} H/\tau_1[1]\{1\}\os{\theta}{\lo} 
\cdots \os{\theta}{\lo} H/\tau_j[1]\{j\} \os{\theta}{\lo} \cdots$$ 
for saving the space. 
Usually, 
$P_kA_{\rm conv}(X/S,E)$ is defined to be 
\begin{align*}
(\cdots \os{\theta}{\lo} \tau_{2j+k+1}H/\tau_j[j+1] \os{\theta}{\lo} \cdots). 
\end{align*}  
However this is mistaken since the inequality $2j+k+1\geq j$ does not necessarily hold.

The following is a key lemma for 
(\ref{theo:ifc}) below:  

\begin{lemm}\label{lemm:grkpd}
\begin{align*}
{\rm gr}^P_kA_{\rm conv}(X/S,E)
\os{\sim}{\lo} \bigoplus_{j\geq \max \{-k,0\}} 
&a^{(2j+k)}_{{\rm conv}*} (a^{(2j+k)*}_{{\rm conv}}(E)
\tag{9.1.1}\label{ali:ruoxvp}\\
&\otimes_{\mab Z}\vp_{\rm conv}^{(2j+k)}(\os{\circ}{X}/\os{\circ}{S}))[-2j-k]. 
\end{align*}
\end{lemm}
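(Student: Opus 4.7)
The plan is to compute $\mathrm{gr}^P_k A_{\rm conv}(X/S,E)$ column-by-column in the defining bicomplex, showing that the horizontal differentials vanish on the graded pieces, and then to invoke the $p$-adic semi-purity theorem (\ref{theo:calvpc}) to identify each surviving column. Set $H := \wt{R}\eps^{\rm conv}_{X/\os{\circ}{S}*}(\eps^{{\rm conv}*}_{X/\os{\circ}{S}}(E))$ throughout.

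First I would unwind the filtration definition. The $j$-th column of the bicomplex underlying $A_{\rm conv}(X/S,E)$ is $H/\tau_j[1]\{j\}$, and $P_k$ at column $j$ is the subcomplex $\tau_{\max\{2j+k+1,\,j\}}H/\tau_j$ with the same shifts. For $j \geq \max\{-k,0\}$ one has $2j+k+1 > j$ and $2j+k \geq j$, so $P_k$ and $P_{k-1}$ at column $j$ are $\tau_{2j+k+1}H/\tau_j$ and $\tau_{2j+k}H/\tau_j$ respectively, yielding
\[
\mathrm{gr}^P_k(\text{col } j) = \tau_{2j+k+1}H/\tau_{2j+k}H \;[1]\{j\}.
\]
For $0 \leq j < -k$ (which only occurs when $k<0$), both $\max\{2j+k+1,j\}$ and $\max\{2j+k,j\}$ equal $j$, so $P_k$ and $P_{k-1}$ at column $j$ are both $\tau_jH/\tau_jH = 0$, and the column vanishes in $\mathrm{gr}^P_k$.

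Next, I would show the horizontal differential $\theta$ vanishes on $\mathrm{gr}^P_k$. This is the crux: since $\theta\colon H \to H[1]$ in (\ref{ali:tre}) is a genuine morphism of complexes, it satisfies $\theta(\tau_nH) \subset \tau_n(H[1]) = (\tau_{n+1}H)[1]$. Thus an element of $P_k$ at column $j$, which is supported at $\tau$-level $\leq 2j+k+1$, is sent by $\theta$ to a class at $\tau$-level $\leq 2j+k+2 = 2(j+1)+k$ at column $j+1$; but $P_k$ at column $j+1$ has $\tau$-threshold $2(j+1)+k+1$, so the image lies in $P_{k-1}$ at column $j+1$. Therefore the induced horizontal differential on $\mathrm{gr}^P_k$ is zero, and the associated graded bicomplex decomposes as a direct sum of its columns.

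Third, I would apply $p$-adic semi-purity. For $j \geq \max\{-k,0\}$ the index $2j+k+1 \geq 1$, so (\ref{theo:calvpc}) gives
\[
\tau_{2j+k+1}H/\tau_{2j+k}H \simeq \mathcal{H}^{2j+k+1}(H)[-2j-k-1] = a^{(2j+k)}_{\mathrm{conv}*}\bigl(a^{(2j+k)*}_{\mathrm{conv}}(E) \otimes_{\mab Z} \vp^{(2j+k)}_{\mathrm{conv}}(\os{\circ}{X}/\os{\circ}{S})\bigr)[-2j-k-1].
\]
Accounting for the bicomplex shift $[1]\{j\}$ and the single-complex construction (under which the column $C_j$ at vertical position $j$ contributes $C_j^{n-j}$ to total degree $n$, with the $\{j\}$ absorbing a further $j$-shift without sign change), the sheaf $\mathcal{H}^{2j+k+1}(H)$ lands in total degree $(2j+k+1) - 1 - 0 = 2j+k$, exactly matching the shift $[-2j-k]$ of the claim. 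Summing over $j \geq \max\{-k,0\}$ gives (\ref{ali:ruoxvp}).

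The main obstacle will be the careful bookkeeping of the shifts $[1]$ and $\{j\}$ together with the total complex construction, since the unusual convention $\{j\}$ is introduced precisely so that $\theta$ (which anti-commutes with the internal differential, $d\theta+\theta d = 0$) serves as a legitimate horizontal differential in a bicomplex while each column retains its quasi-isomorphism type. Once these shifts are tracked correctly the vanishing of horizontal differentials on $\mathrm{gr}^P_k$ and the identification of each column via (\ref{theo:calvpc}) are immediate.
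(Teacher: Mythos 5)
Your proposal is correct and follows essentially the same route as the paper: compute ${\rm gr}^P_k$ column by column (columns with $j<\max\{-k,0\}$ vanish), observe that $\theta$ raises the $\tau$-level by one so the induced horizontal differentials are zero on the graded pieces, and then identify each surviving column via the semi-purity theorem (\ref{theo:calvpc}) after tracking the shifts $[1]$, $\{j\}$ and the total-complex degree. Your explicit verification that $\theta(\tau_nH)\subset\tau_{n+1}H$ (hence lands in $P_{k-1}$ of the next column) just makes precise the step the paper asserts by writing the zero arrows.
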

\begin{proof}  
We have the following equalities by (\ref{caseali:grmtkr}): 
\begin{align*} 
{\rm gr}^P_kA_{\rm conv}(X/S,E)&=
(\cdots \os{\theta}{\lo} {\rm gr}^P_{2j+k+1}H[1]\{j\} \os{\theta}{\lo} \cdots) \quad
(j\geq \max \{-k,0\})\\
&=(\cdots \os{0}{\lo} {\rm gr}^P_{2j+k+1}H[1]\{j\} \os{0}{\lo} \cdots)\quad
(j\geq \max \{-k,0\})\\
&=\bigoplus_{j\geq \max \{-k,0\}}{\rm gr}^P_{2j+k+1}H[1]\{j\}\{-j\}\\
&= \bigoplus_{j\geq \max \{-k,0\}}a^{(2j+k)}_{{\rm conv}*}(a^{(2j+k)*}_{{\rm conv}*}(E)
\otimes_{\mab Z}\vp_{\rm conv}^{(2j+k)}(\os{\circ}{X}/\os{\circ}{S}))[-2j-k]. 
\end{align*} 
\end{proof}

\begin{theo}\label{theo:ifc} 
There exists the following canonical isomorphism 
\begin{align*} 
\theta \col 
R\eps^{\rm conv}_{X/S*}(\eps^{{\rm conv}*}_{X/S}(E))\os{\sim}{\lo} 
A_{\rm conv}(X/S,E). 
\tag{9.2.1}\label{ali:pi}
\end{align*} 
\end{theo}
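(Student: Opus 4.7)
The plan is to construct the morphism $\theta$ at the chain level using the explicit representative of $H$ from Section \ref{sec:mplf}, and then verify it is a quasi-isomorphism via the column-wise spectral sequence on $A_{\rm conv}(X/S,E)$, with $p$-adic semi-purity (\ref{theo:calvpc}) playing the essential role.

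Abbreviating $A := R\eps^{\rm conv}_{X/S*}(\eps^{{\rm conv}*}_{X/S}(E))$ and $H := \wt{R}\eps^{\rm conv}_{X/\os{\circ}{S}*}(\eps^{{\rm conv}*}_{X/\os{\circ}{S}}(E))$, the triangle (\ref{ali:oqay}) reads $A[-1] \os{\theta\wedge}{\lo} H \os{\pi}{\lo} A \os{+1}{\lo}$, and its rotation supplies a natural morphism $A \lo H[1]$, which composes with the canonical quotient $H \lo H/\tau_0$ to yield a morphism $A \lo H/\tau_0[1]$ into the column-$0$ quotient of $A_{\rm conv}(X/S,E)$. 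Taking the concrete representative $H = R\pi^{\rm conv}_{\os{\circ}{X}/\os{\circ}{S}*}L^{\rm UE}_{\os{\circ}{X}_{\bul}/\os{\circ}{S}}({\cal E}^{\bul} \otimes_{{\cal O}_{{\cal P}^{\rm ex}_{\bul}}} \Om^{\bul}_{{\cal P}^{\rm ex}_{\bul}/\os{\circ}{S}})$ fixed in Section \ref{sec:mplf}, together with a Cartan--Eilenberg style flasque resolution, I would construct $\theta : A \lo A_{\rm conv}(X/S,E)$ by producing compatible lifts into each successive column $H/\tau_j[1]\{j\}$, exploiting that $\theta\wedge : H \lo H[1]$ factors through $A$ via the triangle and that $d\log t \wedge d\log t = 0$. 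This is essentially the iterated-cone (telescope) construction attached to the self-map of $A$ arising as the connecting morphism of (\ref{ali:oqay}), modified by the truncations $H \lo H/\tau_j$ in the $j$-th column.

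To verify $\theta$ is a quasi-isomorphism, I would use the spectral sequence associated with the column filtration $P$ on $A_{\rm conv}(X/S,E)$. By (\ref{theo:wtvsca}) the filtration $P$ on $H$ coincides with $\tau$, so each column $H/\tau_j[1]\{j\}$ is cohomologically concentrated in a single degree by (\ref{theo:calvpc}), and the graded pieces of $A_{\rm conv}(X/S,E)$ are those of (\ref{ali:ruoxvp}). The resulting $E_1$-page can be matched, summand by summand, with the long exact cohomology sequence of (\ref{ali:oqay}) unwound recursively: each application of the connecting morphism $A \lo A$ corresponds to passing from column $j$ to column $j+1$ in $A_{\rm conv}(X/S,E)$. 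Checking that $\theta$ induces the identity on $E_1$-terms under the canonical identifications of Section \ref{sec:mplf} would then yield the desired isomorphism.

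The main obstacle is making the chain-level construction of $\theta$ precise, since the iterated cone is an infinite construction and one must check its convergence in each cohomological degree. Semi-purity is what makes this work: because ${\cal H}^k(H)$ is concentrated in precisely one cohomological degree by (\ref{caseali:grmtkr}), the total complex (\ref{cd:v}) has only finitely many nonzero contributions in each degree, so the telescope terminates degree-wise and the required lifts exist uniquely up to explicit homotopy. A secondary subtlety is verifying that the construction is independent of the choice of representative, affine open covering of $X$, and simplicial immersion $X_{\bul} \os{\sus}{\lo} \ol{\cal P}_{\bul}$, which is routine along the lines of (\ref{theo:pil}) but requires some care with the unusual $\{j\}$-shifts entering the definition of $A_{\rm conv}(X/S,E)$.
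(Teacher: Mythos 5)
Your construction of the map itself is essentially the paper's: with the concrete representative $H=R\pi^{\rm conv}_{\os{\circ}{X}/\os{\circ}{S}*}L^{\rm UE}_{\os{\circ}{X}_{\bul}/\os{\circ}{S}}({\cal E}^{\bul}\otimes\Om^{\bul}_{{\cal P}^{\rm ex}_{\bul}/\os{\circ}{S}})$ the morphism is just $\om\mapsto d\log t\wedge\om$ into the $0$-th column, with zero components in all higher columns because $d\log t\wedge d\log t=0$; no iterated-cone or telescope argument is needed, so your worry about ``convergence'' of the lifts is a non-issue. (One small point you should still check, as the paper does, is the sign compatibility making $\theta_{\bul}$ a map of complexes, i.e.\ the anticommutation with $\nabla$.)

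The genuine gap is in your verification that $\theta$ is a quasi-isomorphism. First, the claim that each column $H/\tau_j[1]\{j\}$ is ``cohomologically concentrated in a single degree'' is false: semi-purity (\ref{theo:calvpc}) says each graded piece ${\rm gr}^{\tau}_kH$ sits in one degree, but the truncation quotient $H/\tau_j$ has cohomology in every degree $>j$. Second, the plan of ``matching $E_1$-terms'' for the filtration $P$ with the long exact sequence of (\ref{ali:oqay}) unwound recursively is not a proof: $\theta$ is a map from $A:=R\eps^{\rm conv}_{X/S*}(\eps^{{\rm conv}*}_{X/S}(E))$, which carries no filtration compatible with $P$ whose graded pieces agree with (\ref{ali:ruoxvp}) (indeed the whole point of the theorem is that they differ), so there is no ``identity on $E_1$'' to check, and semi-purity alone does not give the exactness you need when you pass from one column to the next. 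What is actually required, and what your proposal never supplies, is the concrete computation that the paper reduces to: using right-exactness of the linearization (\ref{prop:afex}), the injectivity (\ref{theo:injf}) (2), flatness of ${\cal E}_n$ and exactness of $q'{}^*_n$ (\ref{lemm:rex}), the statement localizes to the classical quasi-isomorphism
\begin{align*}
\theta \col \Om^{\bul}_{{\cal P}^{\rm ex}_{T'}/T'}\lo
[\Om^{\bul}_{{\cal P}^{\rm ex}_T/T}/P_0[1]\os{\theta}{\lo}
\Om^{\bul}_{{\cal P}^{\rm ex}_T/T}/P_1[1]\os{\theta}{\lo}  \cdots ]
\end{align*}
of Steenbrink--Mokrane type (\cite[1.3.5]{msemi}, \cite[(1.3.21), (1.4.3)]{nb}). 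Your argument neither proves nor invokes this local input, and without it (or an equivalent \v{C}ech-type exactness for the closed covering by the smooth components) the spectral-sequence bookkeeping cannot close.
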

\begin{proof}  
We have the following equalities by (\ref{eqn:ltxpxd}): 
\begin{align*} 
A_{\rm conv}(X/S,E)&=s(H/P_0[1]\os{\theta}{\lo} H/P_1[1]\{1\}\os{\theta}{\lo} 
\cdots \os{\theta}{\lo} H/P_j[1]\{j\} \os{\theta}{\lo} \cdots)\\
&=R{\pi}^{\rm conv}_{\os{\circ}{X}/\os{\circ}{S}*}
(\cdots \os{\theta_{\bul}}{\lo} L^{\rm UE}_{\os{\circ}{X}_{\bul}/\os{\circ}{S}}
({\cal E}^{\bul}\otimes_{{\cal O}_{{\cal P}^{\rm ex}_{\bul}}}
\Om^{\bul}_{{\cal P}^{\rm ex}_{\bul}/\os{\circ}{S}})/P_j[1]\{j\}\os{\theta_{\bul}}{\lo}\cdots)\\
&=
R{\pi}^{\rm conv}_{\os{\circ}{X}/\os{\circ}{S}*}
(\cdots \os{\theta_{\bul}}{\lo} L^{\rm UE}_{\os{\circ}{X}_{\bul}/\os{\circ}{S}}
({\cal E}^{\bul}\otimes_{{\cal O}_{{\cal P}^{\rm ex}_{\bul}}}
\Om^{\bul}_{{\cal P}^{\rm ex}_{\bul}/\os{\circ}{S}})/P_j[1]\{j\}\os{\theta_{\bul}}{\lo}\cdots ). 
\end{align*} 
It suffices to prove that the morphism 
\begin{align*} 
\theta_{\bul} \col L^{\rm UE}_{\os{\circ}{X}_{\bul}/\os{\circ}{S}}
({\cal E}^{\bul}\otimes_{{\cal O}_{{\cal P}^{\rm ex}_{\bul}}}
\Om^{\bul}_{{\cal P}^{\rm ex}_{\bul}/S})\lo &
(L^{\rm UE}_{\os{\circ}{X}_{\bul}/\os{\circ}{S}}({\cal E}^{\bul}\otimes_{{\cal O}_{{\cal P}^{\rm ex}_{\bul}}}
\Om^{\bul}_{{\cal P}^{\rm ex}_{\bul}/\os{\circ}{S}})/P_0L^{\rm UE}_{\os{\circ}{X}_{\bul}/\os{\circ}{S}}[1]
\os{\theta_{\bul}}{\lo} \tag{9.2.2}\label{ali:tt}\\
&
L^{\rm UE}_{\os{\circ}{X}_{\bul}/\os{\circ}{S}}({\cal E}^{\bul}\otimes_{{\cal O}_{{\cal P}^{\rm ex}_{\bul}}}
\Om^{\bul}_{{\cal P}^{\rm ex}_{\bul}/\os{\circ}{S}})/P_1
L^{\rm UE}_{\os{\circ}{X}_{\bul}/\os{\circ}{S}}[1]\os{\theta_{\bul}}{\lo}  \cdots )
\end{align*} 
is an isomorphism. 
Here one must note that 
the following diagram 
\begin{equation*} 
\begin{CD} 
L^{\rm UE}_{\os{\circ}{X}_{\bul}/\os{\circ}{S}}
({\cal E}^{\bul}\otimes_{{\cal O}_{{\cal P}^{\rm ex}_{\bul}}}
\Om^i_{{\cal P}^{\rm ex}_{\bul}/S}) @>{\theta_{\bul}}>>
(L^{\rm UE}_{\os{\circ}{X}_{\bul}/\os{\circ}{S}}
({\cal E}^{\bul}\otimes_{{\cal O}_{{\cal P}^{\rm ex}_{\bul}}}
\Om^{i+1}_{{\cal P}^{\rm ex}_{\bul}/\os{\circ}{S}})
/P_0L^{\rm UE}_{\os{\circ}{X}_{\bul}/\os{\circ}{S}}\\
@V{\nabla^i}VV @VV{-\nabla^{i+1}}V \\
L^{\rm UE}_{\os{\circ}{X}_{\bul}/\os{\circ}{S}}
({\cal E}^{\bul}\otimes_{{\cal O}_{{\cal P}^{\rm ex}_{\bul}}}
\Om^{i+1}_{{\cal P}^{\rm ex}_{\bul}/S}) @>{\theta_{\bul}}>>
(L^{\rm UE}_{\os{\circ}{X}_{\bul}/\os{\circ}{S}}
({\cal E}^{\bul}\otimes_{{\cal O}_{{\cal P}^{\rm ex}_{\bul}}}
\Om^{i+2}_{{\cal P}^{\rm ex}_{\bul}/\os{\circ}{S}})
/P_0L^{\rm UE}_{\os{\circ}{X}_{\bul}/\os{\circ}{S}}
\end{CD} 
\end{equation*} 
is commutative in order to confirm that the morphism $\theta_{\bul}$ in (\ref{ali:tt}) 
is a morphism of complexes. 
Because $\os{\circ}{\ol{\cal P}}_0$ is affine, 
the morphism $\os{\circ}{\ol{\cal P}}_{\bul}\lo \os{\circ}{\ol{S}}$ is affine. 
Hence the morphism $\os{\circ}{\cal P}_{\bul}\lo \os{\circ}{S}$ is also affine.   
By (\ref{prop:afex}) and (\ref{theo:injf}) (2),   
$L^{\rm UE}_{\os{\circ}{X}_{\bul}/\os{\circ}{S}}({\cal E}^{\bul}\otimes_{{\cal O}_{{\cal P}^{\rm ex}_{\bul}}}
\Om^{\bul}_{{\cal P}^{\rm ex}_{\bul}/\os{\circ}{S}})/P_jL^{\rm UE}_{\os{\circ}{X}_{\bul}/\os{\circ}{S}}$ 
is isomorphic to 
$L^{\rm UE}_{\os{\circ}{X}_{\bul}/\os{\circ}{S}}({\cal E}^{\bul}\otimes_{{\cal O}_{{\cal P}^{\rm ex}_{\bul}}}
\Om^{\bul}_{{\cal P}^{\rm ex}_{\bul}/\os{\circ}{S}}/P_j)$. 
\par 
Now we may assume that there exists an exact immersion 
$X\os{\sus}{\lo} \ol{\cal P}{}^{\rm ex}$ into an affine log scheme 
over $S_1\os{\sus}{\lo} \ol{S}$.  
Let the notations be as in the proofs of 
(\ref{theo:injf}) and (\ref{prop:0case}). 
Then we have only to prove that
\begin{align*} 
\theta \col p_n^*({\cal E}_n)\otimes_{q'{}^*_{\!n}({\cal O}_{{\cal P}^{\rm ex}_T})}
q'{}^*_{\!n}(\Om^{\bul}_{{\cal P}^{\rm ex}_T/T'})\lo &
[(p_n^*({\cal E}_n)\otimes_{q'{}^*_{\!n}({\cal O}_{{\cal P}^{\rm ex}_T})}
q'{}^*_{\!n}(\Om^{\bul}_{{\cal P}^{\rm ex}_T/T})/P_0[1]\os{\theta}{\lo} \\
&
(p_n^*({\cal E}_n)\otimes_{q'{}^*_{\!n}({\cal O}_{{\cal P}^{\rm ex}_T})}
q'{}^*_{\!n}(\Om^{\bul}_{{\cal P}^{\rm ex}_T/T})/P_1[1]\os{\theta}{\lo}  \cdots ].
\end{align*} 
is exact. 
Since ${\cal E}^{\bul}_n$ is a flat ${\cal K}_{T_n}$-module and $q'_n$ is exact ((\ref{lemm:rex})), 
it suffices to prove that the morphism 
\begin{align*} 
\theta \col \Om^{\bul}_{{\cal P}^{\rm ex}_{T'}/T'}\lo 
[\Om^{\bul}_{{\cal P}^{\rm ex}_T/T}/P_0[1]\os{\theta}{\lo} 
\Om^{\bul}_{{\cal P}^{\rm ex}_T/T}/P_1[1]\os{\theta}{\lo}  \cdots ]
\end{align*} 
is a quasi-isomorphism. 
This is well-known. 
(See \cite[1.3.5]{msemi} 
and \cite[(1.3.21), (1.4.3)]{nb} for the proof of this fact.)
\end{proof}

\begin{coro}\label{coro:ifc} 
There exists the following spectral sequences$:$ 
\begin{align*}
E_1^{-k,q+k}&=
\bigoplus_{j\geq \max \{-k,0\}} 
R^{q-2j-k}\os{\circ}{f}{}^{(2j+k)}_{{\rm conv}*}(a^{(2j+k)*}_{\rm conv}(E)
\otimes_{\mab Z}\vp_{\rm conv}^{(2j+k)}(\os{\circ}{X}/\os{\circ}{S}))
\tag{9.3.1}\label{ali:cccw}\\
&\Lo R^q\os{\circ}{f}_{{\rm conv}*}(R\eps^{\rm conv}_{X/S*}(\eps^{{\rm conv}*}_{X/S}(E))), 
\end{align*} 
\begin{align*}
E_1^{-k,q+k}&=
\bigoplus_{j\geq \max \{-k,0\}} 
R^{q-2j-k}\os{\circ}{f}_{\os{\circ}{X}{}^{(2j+k)}/\os{\circ}{S}*}(a^{(2j+k)*}_{\rm conv}(E)
\otimes_{\mab Z}\vp_{\rm conv}^{(2j+k)}(\os{\circ}{X}/\os{\circ}{S})))
\tag{9.3.2}\label{ali:ccwz}\\
&\Lo R^qf_{X/S*}(\eps^{{\rm conv}*}_{X/S}(E)). 
\end{align*} 
\end{coro}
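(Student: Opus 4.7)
The plan is to apply the standard spectral sequence of a (increasingly) filtered complex to $(A_{\rm conv}(X/S,E),P)$ after pushing it forward to the appropriate base, and to identify the inputs via (\ref{lemm:grkpd}) and (\ref{theo:ifc}).

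For (\ref{ali:cccw}), I would apply $R\os{\circ}{f}_{{\rm conv}*}$ (meaning the derived direct image along $\os{\circ}{f}\circ u^{\rm conv}_{\os{\circ}{X}/\os{\circ}{S}}$) to $(A_{\rm conv}(X/S,E),P)$. The classical spectral sequence of an increasingly filtered complex gives
\begin{align*}
E_1^{-k,q+k} = R^q\os{\circ}{f}_{{\rm conv}*}({\rm gr}^P_k A_{\rm conv}(X/S,E)) \Lo R^q\os{\circ}{f}_{{\rm conv}*}(A_{\rm conv}(X/S,E)).
\end{align*}
By (\ref{theo:ifc}) the abutment is canonically isomorphic to $R^q\os{\circ}{f}_{{\rm conv}*}(R\eps^{\rm conv}_{X/S*}(\eps^{{\rm conv}*}_{X/S}(E)))$, which matches the right-hand side of (\ref{ali:cccw}).

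Next I would rewrite the $E_1$-term using (\ref{lemm:grkpd}). Substituting the decomposition
\begin{align*}
{\rm gr}^P_kA_{\rm conv}(X/S,E) \os{\sim}{\lo} \bigoplus_{j\geq \max\{-k,0\}} a^{(2j+k)}_{{\rm conv}*}(a^{(2j+k)*}_{\rm conv}(E)\otimes_{\mab Z}\vp_{\rm conv}^{(2j+k)}(\os{\circ}{X}/\os{\circ}{S}))[-2j-k]
\end{align*}
and using that $a^{(2j+k)}$ is a closed immersion (so $a^{(2j+k)}_{{\rm conv}*}$ is exact), Leray for the composite $\os{\circ}{f}\circ a^{(2j+k)} = \os{\circ}{f}{}^{(2j+k)}$ gives
\begin{align*}
R\os{\circ}{f}_{{\rm conv}*}\circ a^{(2j+k)}_{{\rm conv}*} = R\os{\circ}{f}{}^{(2j+k)}_{{\rm conv}*}.
\end{align*}
Combined with the shift $[-2j-k]$, this yields exactly the direct sum appearing in the $E_1$-term of (\ref{ali:cccw}).

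For (\ref{ali:ccwz}), I would repeat the argument with the derived Zariski-level pushforward in place of $R\os{\circ}{f}_{{\rm conv}*}$, using the composition $f^{\rm conv}_{X/S}=\os{\circ}{f}\circ u^{\rm conv}_{\os{\circ}{X}/\os{\circ}{S}}\circ \eps^{\rm conv}_{X/S}$ and Leray together with (\ref{theo:ifc}) to rewrite the abutment as $Rf^{\rm conv}_{X/S*}(\eps^{{\rm conv}*}_{X/S}(E))$, and $\os{\circ}{f}{}^{(2j+k)}\circ u^{\rm conv}_{\os{\circ}{X}{}^{(2j+k)}/\os{\circ}{S}}=\os{\circ}{f}_{\os{\circ}{X}{}^{(2j+k)}/\os{\circ}{S}}$ to rewrite the $E_1$-term. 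No real obstacle arises; the only point requiring care is index bookkeeping (the shift $[-2j-k]$ and the convention that an increasing $P_k$ gives $E_1^{-k,q+k}=H^q({\rm gr}^P_k)$ converging to $H^q$), and verifying that $R\os{\circ}{f}_{{\rm conv}*}$ commutes with ${\rm gr}^P_k$, which is immediate from the strictness of the filtration in the formalism of filtered derived categories applied here.
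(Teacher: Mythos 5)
Your proposal is correct and follows exactly the route the paper intends: the corollary is stated without proof precisely because it is the standard spectral sequence of the filtered complex $(A_{\rm conv}(X/S,E),P)$ pushed forward by $R\os{\circ}{f}_{{\rm conv}*}$ (resp.\ the Zariski-level pushforward), with $E_1$-terms identified via (\ref{ali:ruoxvp}) and the abutment via (\ref{ali:pi}). Your bookkeeping of the shift $[-2j-k]$, the exactness of $a^{(2j+k)}_{{\rm conv}*}$, and the compatibility of ${\rm gr}^P_k$ with the filtered direct image (cf.\ the paper's appeal to \cite[(1.3.4)]{nh2} elsewhere) are exactly the points needed, so nothing is missing.
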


\begin{defi}\label{defi:ccx}  
Set $(A_{\rm iso{\textrm -}zar}(X/S,E),P):=Ru_{\os{\circ}{X}/\os{\circ}{S}*}((A_{\rm conv}(X/S,E),P))$.  
We call $(A_{\rm conv}(X/S,E),P)$ and $(A_{\rm iso{\textrm -}zar}(X/S,E),P)$
the {\it weight-filtered convergent complex} of $E$ and 
the {\it weight-filtered isozariskian complex} of $E$, respectively. 
When $E= {\cal K}_{\os{\circ}{X}/\os{\circ}{S}}$ and $\os{\circ}{X}/\os{\circ}{S}$ is proper, 
we call $(A_{\rm conv}(X/S,E),P)$ and $(A_{\rm iso{\textrm -}zar}(X/S,E),P)$ 
the {\it weight-filtered convergent complex} of $X/S$ and 
the {\it weight-filtered convergent isozariskian complex} of $X/S$, respectively. 
When $E= {\cal K}_{\os{\circ}{X}/\os{\circ}{S}}$, 
we call the spectral sequences (\ref{ali:cccw}) and 
(\ref{ali:ccwz})
the {\it convergent weight spectral sequence} of $X/S$ and 
the {\it  weight spectral sequence} of $X/S$, respectively. 
\end{defi}

By (\ref{ali:pi}) we can denote the filtered complexes  
$(A_{\rm conv}(X/S,E),P)$ and $(A_{\rm iso{\textrm -}zar}(X/S,E),P)$ 
by 
\begin{align*} 
(R\eps^{\rm conv}_{X/S*}(\eps^{{\rm conv}*}_{X/S}(E)),P)
\tag{9.4.1}\label{ali:ceswz}
\end{align*} 
and 
\begin{align*} 
(Ru^{\rm conv}_{X/S*}(\eps^{{\rm conv}*}_{X/S}(E)),P),
\tag{9.4.2}\label{ali:ccwez}
\end{align*} 
respectively.  
We can also denote the filtered complexes  
$R\os{\circ}{f}_{{\rm conv}*}((A_{\rm conv}(X/S,E),P))$ 
and $Rf_{X/S*}((A_{\rm iso{\textrm -}zar}(X/S,E),P))$ 
by 
\begin{align*}  
(Rf_{{\rm conv}*}(\eps^{{\rm conv}*}_{X/S}(E)),P)
\tag{9.4.3}\label{ali:cefswz}
\end{align*} 
and 
\begin{align*} 
(Rf_{X/S*}(\eps^{{\rm conv}*}_{X/S}(E)),P),
\tag{9.4.4}\label{ali:ccwfez}
\end{align*} 
respectively.


\begin{rema}\label{rema:l}
Let $l$ be a prime to $p$. Let $X_{\rm et}$ and $\os{\circ}{X}_{\rm et}$ 
be the Kummer log \'{e}tale topos of $X$ and the \'{e}tale topos of $\os{\circ}{X}$, 
respectively. Let $\eps_{X/S_1}\col X\lo \os{\circ}{X}$ be the morphism 
forgetting the log structure of $X$. 
Assume that $\os{\circ}{S}_1$ is the spectrum of a field $\kap$ of 
characteristic $p\geq 0$. 
In this case 
$R\eps^{\rm conv}_{X/S*}({\cal K}_{X/S})$ is a $p$-adic analogue of 
$R\eps_{X/S_1,{\rm et}}({\mab Z}_l)
\otimes^L_{{\mab Z}_l}{\mab Q}_l$ in \cite[p.~173]{nd}.  
\end{rema} 

\section{Convergent Rapoport-Zink-Steenbrink-Zucker filtered complexes}\label{sec:wt} 
The results in this section are complements of those in the previous section. 
In this section we construct another weight-filtered convergent complex
in a manner which is analogous to \cite{rz} and \cite{stz}, 
which we call a convergent Rapoport-Zink-Steenbrink-Zucker filtered complex. 
See also \cite{nd} and \cite{fn}. 
\par 
First let us recall the Hirsch extension (cf.~\cite{nhi}). 
\par 
Let the notations be as in the previous section. 
Let $\wt{t}$ be a local section of $M_S$ 
whose image $t$ in $\ol{M}_S:=M_S/{\cal O}_S^*$ is the local generator. 
Let $U_S$ be a free ${\cal O}_S$-module of rank $1$ with a basis $u$: 
$U_S={\cal O}_Su$. 
Though we do not ask what $u$ is, we consider $u$ as 
``$\log t$'' (not ``$\log \wt{t}$'') in our mind. 
For a local section $\wt{t}{}'=a\wt{t}$ $(a\in {\cal O}_S^*)$ of $M_S$, 
we do not change $U_S$.  
Let $\varphi \col U_S\lo \Om^1_{{\cal P}^{\rm ex}_{\bul}/\os{\circ}{S}}$ be the following composite morphism 
\begin{align*}
f^{-1}(U_S)\lo f^{-1}(\Om^1_{S/\os{\circ}{S}})
\lo
{\rm Ker}(\Om^1_{{\cal P}^{\rm ex}_{\bul}/\os{\circ}{S}}\lo \Om^2_{{\cal P}^{\rm ex}_{\bul}/\os{\circ}{S}})
\end{align*}
of $f^{-1}({\cal O}_S)$-modules, where 
the first morphism 
is defined by $u\lom d\log t:=d\log \wt{t}$. 
Let $\Gam_{{\cal O}_S}(U_S)$ be a locally free PD-algebra over ${\cal O}_S$ generated by $U_S$. 
Consider the Hirsch extension 
$\Gam_{{\cal O}_S}(U_S)\otimes_{{\cal O}_S}{\cal E}^{\bul}\otimes_{{\cal O}_{{\cal P}^{\rm ex}_{\bul}}}
\Om^{\bul}_{{\cal P}^{\rm ex}_{\bul}/\os{\circ}{S}}$
of the complex 
${\cal E}^{\bul}\otimes_{{\cal O}_{{\cal P}^{\rm ex}_{\bul}}}
\Om^{\bul}_{{\cal P}^{\rm ex}_{\bul}/\os{\circ}{S}}$ by $\varphi$; 
the differential is defined by the following: 
$$d(u^{[n]}\om)=u^{[n-1]}d\log t\wedge \nabla(\om)+u^{[n]}\nabla(\om) 
\quad (\om \in {\cal E}^{\bul}\otimes_{{\cal O}_{{\cal P}^{\rm ex}_{\bul}}}
\Om^{\bul}_{{\cal P}^{\rm ex}_{\bul}/\os{\circ}{S}},n\in {\mab N}).$$ 
We denote this complex by 
${\cal E}^{\bul}\otimes_{{\cal O}_{{\cal P}^{\rm ex}_{\bul}}}
\Om^{\bul}_{{\cal P}^{\rm ex}_{\bul}/\os{\circ}{S}}[U_S]$.
 (We do not need to mind what $U_S$ is in this paper. If you want to know the definition of $U_S$, 
 see \cite{nhi} for the definition of $U_S$.)
Set 
\begin{align*} 
N_{\bul}:=d/du \col {\cal E}^{\bul}\otimes_{{\cal O}_{{\cal P}^{\rm ex}_{\bul}}}
\Om^{\bul}_{{\cal P}^{\rm ex}_{\bul}/\os{\circ}{S}}[U_S]\lo 
{\cal E}^{\bul}\otimes_{{\cal O}_{{\cal P}^{\rm ex}_{\bul}}}
\Om^{\bul}_{{\cal P}^{\rm ex}_{\bul}/\os{\circ}{S}}[U_S] \quad (d/du(u^{[n]}):=u^{[n-1]}).
\end{align*} 
Since $N_{\bul}$ is surjective, we have the following exact sequence 
\begin{align*} 
0\lo 
{\cal E}^{\bul}\otimes_{{\cal O}_{{\cal P}^{\rm ex}_{\bul}}}
\Om^i_{{\cal P}^{\rm ex}_{\bul}/\os{\circ}{S}}\lo 
{\cal E}^{\bul}\otimes_{{\cal O}_{{\cal P}^{\rm ex}_{\bul}}}
\Om^i_{{\cal P}^{\rm ex}_{\bul}/\os{\circ}{S}}[U_S]\os{N_{\bul}}{\lo} 
{\cal E}^{\bul}\otimes_{{\cal O}_{{\cal P}^{\rm ex}_{\bul}}}
\Om^i_{{\cal P}^{\rm ex}_{\bul}/\os{\circ}{S}}[U_S]\lo 0 \quad (i\in {\mab N}).
\end{align*} 
Because this sequence is locally split, 
the following sequence is exact:  
\begin{align*} 
0&\lo 
L^{\rm UE}_{\os{\circ}{X}_{\bul}/\os{\circ}{S}}({\cal E}^{\bul}\otimes_{{\cal O}_{{\cal P}^{\rm ex}_{\bul}}}
\Om^{\bul}_{{\cal P}^{\rm ex}_{\bul}/\os{\circ}{S}})
\lo 
L^{\rm UE}_{\os{\circ}{X}_{\bul}/\os{\circ}{S}}({\cal E}^{\bul}\otimes_{{\cal O}_{{\cal P}^{\rm ex}_{\bul}}}
\Om^{\bul}_{{\cal P}^{\rm ex}_{\bul}/\os{\circ}{S}}[U_S])\\
&\os{N_{\bul}}{\lo} 
L^{\rm UE}_{\os{\circ}{X}_{\bul}/\os{\circ}{S}}({\cal E}^{\bul}\otimes_{{\cal O}_{{\cal P}^{\rm ex}_{\bul}}}
\Om^{\bul}_{{\cal P}^{\rm ex}_{\bul}/\os{\circ}{S}}[U_S])\lo 0.
\end{align*}
Hence we have the following canonical isomorphism 
\begin{align*} 
\iota \col L^{\rm UE}_{\os{\circ}{X}_{\bul}
/\os{\circ}{S}}({\cal E}^{\bul}\otimes_{{\cal O}_{{\cal P}^{\rm ex}_{\bul}}}
\Om^{\bul}_{{\cal P}^{\rm ex}_{\bul}/\os{\circ}{S}})
&\os{\sim}{\lo} 
{\rm MF}(N_{\bul})\tag{10.0.1}\label{ali:lxsie}\\
&=L^{\rm UE}_{\os{\circ}{X}_{\bul}/\os{\circ}{S}}({\cal E}^{\bul}\otimes_{{\cal O}_{{\cal P}^{\rm ex}_{\bul}}}
\Om^{\bul}_{{\cal P}^{\rm ex}_{\bul}/\os{\circ}{S}}[U_S])
\oplus 
L^{\rm UE}_{\os{\circ}{X}_{\bul}/\os{\circ}{S}}({\cal E}^{\bul}\otimes_{{\cal O}_{{\cal P}^{\rm ex}_{\bul}}}
\Om^{\bul}_{{\cal P}^{\rm ex}_{\bul}/\os{\circ}{S}}[U_S])[-1]. 
\end{align*}
Consider the following morphism 
\begin{align*} 
\psi \col {\rm MF}(N_{\bul})
\lo L^{\rm UE}_{\os{\circ}{X}_{\bul}/\os{\circ}{S}}
({\cal E}^{\bul}\otimes_{{\cal O}_{{\cal P}^{\rm ex}_{\bul}}}
\Om^{\bul}_{{\cal P}^{\rm ex}_{\bul}/\os{\circ}{S}})
\tag{10.0.2}\label{ali:lxse}
\end{align*}
defined by the following 
$$(\sum_{n=0}u^n\om_n,\sum_{n=0}u^n\eta_n)
\lom \om_0+d\log t\wedge \eta_0.$$
As in \cite[(3.1)]{np} we can check that this morphism is a morphism of complexes.

\begin{prop}\label{prop:psii}
The morphism $\psi$ is a quasi-isomorphism. 
\end{prop}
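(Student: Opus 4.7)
The plan is to deduce this proposition from the quasi-isomorphism $\iota$ of (\ref{ali:lxsie}) by recognizing $\psi$ as a left inverse of $\iota$ and then invoking a two-out-of-three argument for quasi-isomorphisms.

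First, I would unwind the construction of $\iota$ to obtain an explicit formula. The short exact sequence of complexes displayed just above (\ref{ali:lxsie}) identifies $L^{\rm UE}_{\os{\circ}{X}_{\bul}/\os{\circ}{S}}({\cal E}^{\bul}\otimes_{{\cal O}_{{\cal P}^{\rm ex}_{\bul}}}\Om^{\bul}_{{\cal P}^{\rm ex}_{\bul}/\os{\circ}{S}})$ with the kernel of the surjective chain map $N_{\bul}$, via the inclusion sending $\om$ to $u^{[0]}\om$ (note that $d(u^{[0]}\om)=u^{[-1]}d\log t\wedge\om+u^{[0]}d\om=d\om$ since $u^{[-1]}=0$). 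Composing with the standard quasi-isomorphism $\ker(N_{\bul}) \lo {\rm MF}(N_{\bul})$ given by $x\lom (x,0)$ (which is the natural qis coming from the triangle associated to the above short exact sequence), one finds that $\iota$ is explicitly $\om \lom (u^{[0]}\om, 0)$.

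Next, I would compute $\psi \circ \iota$ directly from the defining formula of $\psi$:
$$\psi(\iota(\om)) = \psi(u^{[0]}\om, 0) = \om + d\log t \wedge 0 = \om,$$
so $\psi \circ \iota$ is the identity of $L^{\rm UE}_{\os{\circ}{X}_{\bul}/\os{\circ}{S}}({\cal E}^{\bul}\otimes_{{\cal O}_{{\cal P}^{\rm ex}_{\bul}}}\Om^{\bul}_{{\cal P}^{\rm ex}_{\bul}/\os{\circ}{S}})$ on the nose. Passing to cohomology, this yields $H^{\bul}(\psi) \circ H^{\bul}(\iota) = {\rm id}$; since $H^{\bul}(\iota)$ is an isomorphism by (\ref{ali:lxsie}), it follows that $H^{\bul}(\psi)$ must be its two-sided inverse, hence an isomorphism in every degree. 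Therefore $\psi$ is a quasi-isomorphism.

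The main technical obstacle is confirming that $\psi$ really is a chain map, as the excerpt asserts by analogy with the calculation in \cite[(3.1)]{np}; this reduces to a careful sign check involving the differentials of the Hirsch extension and of the mapping fiber. Once the chain-map property is in hand, the argument above is a purely formal two-out-of-three consequence of the identification of the kernel of a surjective chain map with the associated mapping fiber in the derived category.
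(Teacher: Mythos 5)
Your proof is correct and follows essentially the same route as the paper: the paper's own argument is precisely that $\iota$ is a quasi-isomorphism and $\psi\circ\iota$ is the identity, so $\psi$ is a quasi-isomorphism. Your explicit identification of $\iota$ as $\om\lom(u^{[0]}\om,0)$ and the remark that the chain-map property of $\psi$ is the only point needing a sign check (which the paper settles by reference to \cite{np} before the proposition) just make the same argument more explicit.
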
 
\begin{proof} 
Since $\iota$ is a quasi-isomorphism and 
$\psi \circ \iota$ is the identity, $\psi$ is a quasi-isomorphism. 
\end{proof} 

\begin{coro} 
The morphism $\psi$ induces the following quasi-isomorphism
\begin{align*} 
\psi \col ({\rm MF}(N_{\bul}),\tau)
\lo (L^{\rm UE}_{\os{\circ}{X}/\os{\circ}{S}}({\cal E}^{\bul}\otimes_{{\cal O}_{{\cal P}_{\bul}}}
\Om^{\bul}_{{\cal P}_{\bul}/\os{\circ}{S}}),\tau), 
\tag{10.2.1}\label{ali:lxqse}
\end{align*}
which we denote by $\psi$ again. 
\end{coro}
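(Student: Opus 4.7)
The plan is to deduce the filtered statement from the unfiltered quasi-isomorphism of (\ref{prop:psii}) by invoking the functoriality of the canonical filtration and the fact that the associated graded pieces of $\tau$ are cohomology sheaves. The key observation is that, for any complex $C$ of abelian sheaves, one has the canonical identification
\begin{equation*}
{\rm gr}_k^{\tau}C \os{\sim}{\lo} {\cal H}^k(C)[-k] \quad (k\in {\mab Z}),
\end{equation*}
and the formation of $\tau_k$ is functorial on the category ${\rm C}^+({\cal K}_{\os{\circ}{X}/\os{\circ}{S}})$ of complexes.

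First I would check that $\psi$ in (\ref{ali:lxse}) respects $\tau$. This is automatic: for any morphism $\psi\col C\lo D$ of complexes, we have $\psi(\tau_kC)\sus \tau_kD$ because $\psi$ commutes with the differentials, hence preserves both kernels of differentials and images from lower degrees. Thus $\psi$ is a morphism in the category ${\rm C}^+{\rm F}({\cal K}_{\os{\circ}{X}/\os{\circ}{S}})$ of filtered complexes with the canonical filtration, and we obtain induced morphisms ${\rm gr}_k^{\tau}\psi\col {\rm gr}_k^{\tau}{\rm MF}(N_{\bul})\lo {\rm gr}_k^{\tau}L^{\rm UE}_{\os{\circ}{X}_{\bul}/\os{\circ}{S}}({\cal E}^{\bul}\otimes_{{\cal O}_{{\cal P}^{\rm ex}_{\bul}}}\Om^{\bul}_{{\cal P}^{\rm ex}_{\bul}/\os{\circ}{S}})$ for each $k$.

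Next I would show that each ${\rm gr}_k^{\tau}\psi$ is a quasi-isomorphism. Using the identification above, this morphism is identified, up to the shift $[-k]$, with the morphism ${\cal H}^k(\psi)\col {\cal H}^k({\rm MF}(N_{\bul}))\lo {\cal H}^k(L^{\rm UE}_{\os{\circ}{X}_{\bul}/\os{\circ}{S}}({\cal E}^{\bul}\otimes_{{\cal O}_{{\cal P}^{\rm ex}_{\bul}}}\Om^{\bul}_{{\cal P}^{\rm ex}_{\bul}/\os{\circ}{S}}))$ induced on cohomology sheaves. By (\ref{prop:psii}), $\psi$ is a quasi-isomorphism, so each ${\cal H}^k(\psi)$ is an isomorphism, and hence each ${\rm gr}_k^{\tau}\psi$ is an isomorphism of complexes (and in particular a quasi-isomorphism). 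This is exactly the defining condition for $\psi$ to be a filtered quasi-isomorphism with respect to $\tau$ on both sides.

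I do not expect any serious obstacle here: the content of the corollary is really just the standard fact that quasi-isomorphisms are automatically filtered quasi-isomorphisms for the canonical filtration, together with the functoriality of $\tau$. The only point requiring care is the verification that $\psi$ as given in (\ref{ali:lxse}) is already defined as a morphism of complexes (rather than only in the derived category), but this has been recorded in the text just before (\ref{prop:psii}), so no additional argument is needed.
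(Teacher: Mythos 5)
Your argument is correct and is exactly the (implicit) reasoning the paper relies on: since $\psi$ is a morphism of complexes it preserves $\tau$, and the unfiltered quasi-isomorphism of (\ref{prop:psii}) forces each ${\rm gr}_k^{\tau}\psi$ to be a quasi-isomorphism because ${\cal H}^*({\rm gr}_k^{\tau}C)$ is ${\cal H}^k(C)$ concentrated in degree $k$. Only a cosmetic slip: ${\rm gr}_k^{\tau}C$ is merely canonically quasi-isomorphic to ${\cal H}^k(C)[-k]$ (it is a two-term complex), so ${\rm gr}_k^{\tau}\psi$ is a quasi-isomorphism rather than an isomorphism of complexes, which is all that is needed.
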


\par 
Now consider the morphism 
\begin{align*} 
\theta \col {\rm MF}(N_{\bul})\lo {\rm MF}(N_{\bul})[1]
\end{align*} 
defined by 
``$(x,y)\lom (0,x)$''. 
Set 
\begin{align*} 
{\rm MF}(N):=R\pi^{\rm coinv}_{\os{\circ}{X}/\os{\circ}{S}*}({\rm MF}(N_{\bul}))
\end{align*} 
and 
\begin{align*} 
A^N_{\rm conv}(X/S,E):=s({\rm MF}(N)/\tau_0[1]\os{\theta}{\lo}
{\rm MF}(N)/\tau_1[1]\{1\}\os{\theta}{\lo}\cdots)
\end{align*} 
and 
\begin{align*} 
(A^N_{\rm conv}(X/S,E),P):=(A^N_{\rm conv}(X/S,E),
\{(\cdots \os{\theta}{\lo}
\tau_{\max \{2j+k+1,j\}}{\rm MF}(N)/\tau_j[1]\{j\}\os{\theta}{\lo}
\cdots\}_{k\in {\mab Z}}). 
\end{align*}

\begin{prop}\label{prop:rza}
The morphism $\psi$ induces the following canonical filtered isomorphism 
\begin{align*}
(A^N_{\rm conv}(X/S,E),P)\os{\sim}{\lo}
(A_{\rm conv}(X/S,E),P) , 
\tag{10.3.1}\label{ali:aisom}
\end{align*}
which we denote by $\psi$ again. 
\end{prop}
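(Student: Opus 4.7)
The plan is to deduce the filtered isomorphism from the local filtered quasi-isomorphism (10.2.1) in three steps: first, globalize by pushing forward through $R\pi^{\rm conv}_{\os{\circ}{X}/\os{\circ}{S}*}$; second, verify strict compatibility of $\psi$ with the $\theta$-operators used to define the two double complexes; third, assemble the double complexes, pass to the associated single complexes, and match up the $P$-filtrations column by column.

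First I would apply $R\pi^{\rm conv}_{\os{\circ}{X}/\os{\circ}{S}*}$ to (10.2.1). Because this functor preserves filtered quasi-isomorphisms in ${\rm D}^+{\rm F}$, I obtain a filtered quasi-isomorphism
\begin{equation*}
\psi \col ({\rm MF}(N),\tau)\os{\sim}{\lo}(H,\tau),
\end{equation*}
where $H=\wt{R}\eps^{\rm conv}_{X/\os{\circ}{S}*}(\eps^{{\rm conv}*}_{X/\os{\circ}{S}}(E))$. Second, I would check the compatibility $\psi\circ\theta_{\rm MF}=\theta_H\circ\psi$ by the local computation: for a local section $(x,y)=(\sum_{n\geq 0}u^{[n]}\om_n,\sum_{n\geq 0}u^{[n]}\eta_n)$ of ${\rm MF}(N_\bul)$,
\begin{equation*}
\psi(\theta_{\rm MF}(x,y))=\psi(0,x)=d\log t\wedge\om_0,
\end{equation*}
whereas
\begin{equation*}
\theta_H(\psi(x,y))=d\log t\wedge(\om_0+d\log t\wedge\eta_0)=d\log t\wedge\om_0
\end{equation*}
because $d\log t\wedge d\log t=0$. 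Hence the $\theta$-arrows on the two sides intertwine strictly with $\psi$.

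Combining these two facts, $\psi$ induces a morphism between the double complexes $({\rm MF}(N)/\tau_j[1]\{j\},\theta)_{j\in{\mab N}}$ and $(H/\tau_j[1]\{j\},\theta)_{j\in{\mab N}}$, and passing to the associated single complexes yields a morphism
\begin{equation*}
\psi\col A^N_{\rm conv}(X/S,E)\lo A_{\rm conv}(X/S,E).
\end{equation*}
Since $\psi$ is strict for $\tau$, it sends $\tau_{\max\{2j+k+1,j\}}{\rm MF}(N)/\tau_j[1]\{j\}$ into $\tau_{\max\{2j+k+1,j\}}H/\tau_j[1]\{j\}$ for every $j,k$, so $\psi(P_kA^N_{\rm conv}(X/S,E))\sus P_kA_{\rm conv}(X/S,E)$. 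To verify that $\psi$ is a filtered quasi-isomorphism, it suffices, by the filtration-by-columns spectral sequence of the double complex, to check that for each $j\geq\max\{-k,0\}$ the induced map ${\rm gr}^\tau_{2j+k+1}\psi\col{\rm gr}^\tau_{2j+k+1}{\rm MF}(N)\lo{\rm gr}^\tau_{2j+k+1}H$ is a quasi-isomorphism, which is immediate from the first step.

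The main technical obstacle I expect is the standard issue of lifting $\psi$---which is a morphism of complexes on $Y_\bul/\os{\circ}{S}$ but whose interaction with the push-forward $R\pi^{\rm conv}_{\os{\circ}{X}/\os{\circ}{S}*}$ takes place only in the derived category---to a \emph{strict} morphism of the chosen representatives used to define the double complexes $(A^N_{\rm conv}(X/S,E),P)$ and $(A_{\rm conv}(X/S,E),P)$ (compare the parenthetical remark preceding the definition of the latter). Concretely, one needs to pick compatible flasque representatives for which $\psi$, $\theta$, and the canonical truncations $\tau_\bullet$ are all realized as honest morphisms compatible with one another, so that the commutativity of the $\theta$-square lifts to an actual commutativity of the bicomplex diagrams and the assembled $\psi$ becomes a genuine morphism of filtered complexes rather than merely a morphism in the filtered derived category. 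Once such functorial representatives are fixed, the remainder of the argument is formal.
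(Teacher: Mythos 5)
Your proposal is correct and follows essentially the same route as the paper: the explicit check $\psi\circ\theta=(d\log t\wedge)\circ\psi$ using $d\log t\wedge d\log t=0$ is precisely the commutative square the paper declares obvious from the definition of $\psi$, and the conclusion that the quasi-isomorphism $\psi$ of (10.1)--(10.2) induces a filtered isomorphism of the assembled double complexes, checked column by column on ${\rm gr}$, is the paper's argument. The only additional point the paper records is the (routine, omitted) verification that the resulting morphism is independent of the choice of the affine open covering and of the embedding system $X_{\bul}\os{\sus}{\lo}\ol{\cal P}_{\bul}$ over $\ol{S}$, which your closing paragraph on choosing compatible representatives essentially subsumes.
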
 
\begin{proof} 
By the definition of $\psi$, It is obvious that 
the following diagram 
\begin{equation*} 
\begin{CD} 
{\rm MF}(N_{\bul}) @>{\psi}>> 
L^{\rm UE}_{\os{\circ}{X}/\os{\circ}{S}}
({\cal E}^{\bul}\otimes_{{\cal O}_{{\cal P}_{\bul}}}
\Om^{\bul}_{{\cal P}_{\bul}/\os{\circ}{S}})\\
@A{\theta}AA @AA{\theta}A \\
{\rm MF}(N_{\bul}) @>{\psi}>> 
L^{\rm UE}_{\os{\circ}{X}/\os{\circ}{S}}
({\cal E}^{\bul}\otimes_{{\cal O}_{{\cal P}_{\bul}}}
\Om^{\bul}_{{\cal P}_{\bul}/\os{\circ}{S}})
\end{CD}
\end{equation*} 
is commutative.  
Because $\psi$ is an isomorphism, the morphism (\ref{ali:aisom}) is an isomorphism. 
The rest we have to prove is that the morphism is independent of the choices of 
the affine open covering of $X_{\bul}$ and 
the embedding system $X_{\bul}\os{\sus}{\lo} \ol{\cal P}_{\bul}$ over $\ol{S}$. 
Because this is a routine work, we leave the detail of the proof to the reader. 
\end{proof}

\begin{defi}\label{defi:rzm}
We call $(A^N_{\rm conv}(X/S,E),P)$ the 
{\it convergent Rapoport-Zink-Steenbrink-Zucker filtered complex} 
of $E$. When $E={\cal K}_{\os{\circ}{X}/\os{\circ}{S}}$, 
we call $(A^N_{\rm conv}(X/S,E),P)$ the 
{\it convergent Rapoport-Zink-Steenbrink-Zucker filtered complex} of $X/S$. 
\end{defi} 

\begin{rema}
(1) Let the notations be as in (\ref{rema:l}). 
Set $X_{l^{-n}}:=X\times_{{\rm Spec}^{\log}({\mab Z}[{\mab N}])}
{\rm Spec}^{\log}({\mab Z}[l^{-n}{\mab N}])$, where 
the log structure of ${\rm Spec}^{\log}({\mab Z}[l^{-n}{\mab N}])$ 
is associated to the inclusion 
$l^{-n}{\mab N}\os{\subset}{\lo} {\mab Z}[l^{-n}{\mab N}]$. 
As in \cite[p.~723]{nd}, set $X_{{\rm t}l}:=\vpl_nX_{l^{-n}}$. 
Let $\pi \col X_{{\rm t}l}:\lo X_{\rm et}$ be the natural morphism. 
Then 
$R\pi^{\rm coinv}_{\os{\circ}{X}/\os{\circ}{S}*}
(L^{\rm UE}_{\os{\circ}{X}_{\bul}/\os{\circ}{S}}
({\cal E}^{\bul}\otimes_{{\cal O}_{{\cal P}^{\rm ex}_{\bul}}}
\Om^{\bul}_{{\cal P}^{\rm ex}_{\bul}/\os{\circ}{S}}[U_S]))$ is 
a $p$-adic analogue of 
$R(\eps_{X/S_1,{\rm et}}\pi)_*({\mab Z}_l)\otimes_{{\mab Z}_l}{\mab Q}_l$. 
Consequently 
$(A^N_{\rm conv}(X/S,E),P)$ is a $p$-adic analogue of the 
$(s(A^{\bul \bul}),W)\in {\rm D}^+{\rm F}(({\mab Q}_l)_{\os{\circ}{X}_{\rm et}})$, 
though the filtered derived category 
${\rm D}^+{\rm F}(({\mab Q}_l)_{\os{\circ}{X}_{\rm et}})$ 
does not appear in [loc.~cit.] nor \cite{rz}. 
Here $A^{\bul \bul}$ is the double complex in [loc.~cit] with the shift of the complex 
as in \S\ref{sec:wfcipp}. 
(It is easy to prove that $(s(A^{\bul \bul}),W)$ is indeed independent of the choice of 
the injective resolution $({\mab Z}/l^n)_{\os{\circ}{X}_{\rm et}}\lo I^{\bul}$ 
$(n\in {\mab N})$ by using the $l$-adic analogue of 
(\ref{lemm:grkpd}) and (\ref{theo:ifc}).)  
\end{rema}


\section{The functoriality of weight-filtered convergent and isozariskian complexes}\label{sec:fpw}
Let the notations be as in (\ref{coro:cwxstxs}). 
In this section we prove the contravariant functoriality of  
$(A_{\rm conv}(X/S,E),P)$,  $(A^N_{\rm conv}(X/S,E),P)$ 
and $(A_{\rm iso{\textrm -}zar}(X/S,E),P)$ by using (\ref{theo:wtvsca}). 
This theme has not been discussed in a lot of references except 
\cite{ndw}, \cite{nb} and \cite{nhi} (cf.~\cite{tsd}, \cite{mat}); 
even the pull-back of the abrelative
and the absolute Frobenius on $p$-adic Steenbrink complexes 
has not been discussed except the references above; 
the action (\ref{cd:fdegcbc}) below dividing the ``degree mapping'' 
has not been considered in a lot of references.  
Consequently the yoga of arithmetic weights cannot be used except them.

\begin{theo}[{\bf Contravariant Functoriality}]\label{theo:fcpwczo}
Let the notations be as in {\rm (\ref{coro:cwxstxs})}. 
Then there exists  a canonical morphism 
\begin{equation*}
g^*_{{\rm conv}}\col (A_{\rm conv}(X/S,E),P) \lo 
R\os{\circ}{g}_{{\rm conv}*}((A_{\rm conv}(X'/S',E'),P)) 
\tag{11.1.1}\label{eqn:fdfcc}
\end{equation*} 
fitting into the following commutative diagram 
\begin{equation*}
\begin{CD} 
A_{\rm conv}(X/S,E)@>{g^*_{{\rm conv}}}>>
R\os{\circ}{g}_{{\rm conv}*}(A_{\rm conv}(X'/S',E')) \\
@A{\theta}A{\simeq}A @A{\simeq}A{R\os{\circ}{g}_{{\rm conv}*}(\theta')}A\\
R\eps^{\rm conv}_{X/S*}(\eps^{{\rm conv}*}_{X/S}(E))@>{g^*_{{\rm conv}}}>>
R\os{\circ}{g}_{{\rm conv}*}R\eps^{\rm conv}_{X/S*}(\eps^{{\rm conv}*}_{X/S}(E')). 
\end{CD} 
\tag{11.1.2}\label{cd:fdxfcc}
\end{equation*} 
The morphisms $g^*_{{\rm conv}}$'s satisfy the transitive law and 
$(({\rm id}_{X/S})_{\rm conv})^*={\rm id}_{(A_{\rm conv}(X/S,E),P)}$. 
\end{theo}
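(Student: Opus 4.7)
The plan is to bootstrap the contravariant functoriality of $(A_{\rm conv}(X/S,E),P)$ from the already-established contravariant functoriality of the modified $P$-filtered convergent complex $(\wt{R}\eps^{\rm conv}_{X/\os{\circ}{S}*}(\eps^{{\rm conv}*}_{X/\os{\circ}{S}}(E)),P)$ given by (\ref{coro:cwxstxs}). Set $H:=\wt{R}\eps^{\rm conv}_{X/\os{\circ}{S}*}(\eps^{{\rm conv}*}_{X/\os{\circ}{S}}(E))$ and $H':=\wt{R}\eps^{\rm conv}_{X'/\os{\circ}{S}{}'*}(\eps^{{\rm conv}*}_{X'/\os{\circ}{S}{}'}(E'))$. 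By (\ref{coro:cwxstxs}) we already have a filtered morphism $g^\ast\colon (H,P)\to R\os{\circ}{g}_{{\rm conv}*}(H',P)$, and combining this with the $p$-adic semi-purity theorem (\ref{theo:wtvsca}) we may identify $P$ with the canonical filtration $\tau$ throughout. In particular, $g^\ast$ respects the truncations, so for every $j\in{\mab N}$ it descends to morphisms $H/\tau_j\lo R\os{\circ}{g}_{{\rm conv}*}(H'/\tau_j)$.

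The core technical step is to promote these row-wise maps to a morphism of the double complexes defining $A_{\rm conv}(X/S,E)$ and $A_{\rm conv}(X'/S',E')$. Here I expect the main obstacle: the column-to-column differential $\theta$ is wedge product with $d\log t$ where $t\in M_S$ is a local generator of $\ol{M}_S$, while on the $X'$-side it is wedge product with $d\log t'$ for a local generator $t'$ of $\ol{M}_{S'}$. Since $u_1\colon S_1'\to S_1$ is a morphism between $p$-adic formal families of log points, the induced map on characteristic monoids is multiplication by a positive integer $e$ (the local degree of $u_1$ on the log-point direction), so that $g^\ast(d\log t)=e\,d\log t'$. Thus the naive square
\[
\begin{CD}
H/\tau_j[1]\{j\} @>{\theta}>> H/\tau_{j+1}[1]\{j+1\} \\
@V{g^\ast}VV @VV{g^\ast}V \\
R\os{\circ}{g}_{{\rm conv}*}(H'/\tau_j[1]\{j\}) @>{\theta'}>> R\os{\circ}{g}_{{\rm conv}*}(H'/\tau_{j+1}[1]\{j+1\})
\end{CD}
\]
only commutes up to the factor $e$. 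The remedy, following the convention already alluded to in the Introduction after the definition of $(A_{\rm conv}(X/S,E),P)$, is to twist the row-$j$ component of $g^\ast_{\rm conv}$ by $e^j$ (equivalently, to define $g^\ast_{\rm conv}$ as the direct sum $\bigoplus_j e^{\,j}\,g^\ast$ on the rows of the double complex (\ref{cd:v})), which exactly absorbs the factor of $e$ produced each time one passes through a $\theta$-arrow and produces a genuine morphism of double complexes. This construction preserves the bi-filtration defining $P$, hence descends to the filtered morphism (\ref{eqn:fdfcc}). Globalisation from a choice of affine open covering and simplicial embedding is routine once one checks independence of choices, exactly as in the proof of (\ref{theo:pil}).

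The commutativity of (\ref{cd:fdxfcc}) is then verified by tracing through the quasi-isomorphism $\theta$ of (\ref{ali:pi}), which by construction is induced by the natural projection of the double complex (\ref{cd:v}) onto its $j=0$ column followed by the triangle (\ref{ali:oqay}); the pull-back $g^\ast_{\rm conv}$ commutes with this projection by the very definition just given, and the compatibility with the triangle (\ref{ali:oqay}) is the content of (\ref{coro:cwxstxs}) applied to both $R\eps^{\rm conv}_{X/S*}(\eps^{{\rm conv}*}_{X/S}(E))$ and $\wt{R}\eps^{\rm conv}_{X/\os{\circ}{S}*}(\eps^{{\rm conv}*}_{X/\os{\circ}{S}}(E))$ separately. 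The transitive law $(g\circ h)^\ast_{\rm conv}=R\os{\circ}{g}_{{\rm conv}*}(h^\ast_{\rm conv})\circ g^\ast_{\rm conv}$ follows from the corresponding law in (\ref{coro:cwxstxs}) together with the multiplicativity of the local degrees $e$ under composition, and the identity case ${\rm id}^\ast_{X/S,{\rm conv}}={\rm id}$ is immediate since $e=1$ there.

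The principal difficulty is thus the bookkeeping of the factor $e^j$ on row $j$: one must check that this normalisation is compatible with the definition of $P_k A_{\rm conv}$ (which cuts out the subcomplex $\tau_{\max\{2j+k+1,j\}}H/\tau_j[1]\{j\}$ row by row) and with the simplicial descent used in the construction of $H$ from an embedding system $X_\bullet\os{\sus}{\lo}\ol{\cal P}_\bullet$; these verifications are straightforward because the twist is performed entirely at the level of the column indexing and does not interact with either the internal differentials of $H$ or with $\tau$. A secondary subtlety, which causes no obstacle but must be tracked, is that the morphism $g^\ast(d\log t)=e\,d\log t'$ is only the image of a local generator under the map $\ol{M}_S\to\ol{M}_{S'}$; because $d\log u_{ji}=0$ in $\Omega^1_{\ol{S}/\os{\circ}{S}}$, the factor $e$ is a well-defined global integer and the construction is independent of the local choice of $t$, so that $g^\ast_{\rm conv}$ is globally defined on $S$ and $S'$.
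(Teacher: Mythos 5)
Your overall strategy coincides with the paper's: the paper also starts from the functoriality of $(H,\tau)=(\wt{R}\eps^{\rm conv}_{X/\os{\circ}{S}*}(\eps^{{\rm conv}*}_{X/\os{\circ}{S}}(E)),\tau)$ supplied by (\ref{coro:cwxstxs}), observes the relation $g^*\circ\theta=\deg(u)\,\theta'\circ g^*$, and repairs the resulting non-commutativity by rescaling the map column by column, defining $g_j:=(\deg u)^{-(j+1)}g^*$ on the $j$-th column of the double complex (\ref{cd:v}). So you have identified exactly the right obstacle and the right kind of remedy.

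However, the concrete normalization you propose does not work, and this is the one step where the argument as written fails. Write the column-$j$ component of your map as $c_j\,g^*$. Since $g^*(d\log t)=e\,d\log t'$, the square between columns $j$ and $j+1$ commutes if and only if $c_{j+1}\,e=c_j$, i.e. the constants must \emph{decrease} as $c_j=c_0e^{-j}$. Your choice $c_j=e^{j}$ goes in the opposite direction: each square then fails to commute by a factor $e^{2}$, so the proposed $\bigoplus_j e^{j}g^*$ is not a morphism of complexes at all. In addition, the initial constant is forced: the vertical arrow in (\ref{cd:fdxfcc}) is the map $\theta$ of (\ref{ali:pi}), which is induced by $\theta\wedge$ \emph{into} the $j=0$ column (not a projection out of it), so pulling back produces one further factor of $e$ and commutativity of (\ref{cd:fdxfcc}) requires $c_0e=1$, i.e. $c_0=e^{-1}$. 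Thus the correct twist is $c_j=e^{-(j+1)}$, which is precisely the paper's $g_j=(\deg u)^{-(j+1)}g^*$; note that this uses the invertibility of $\deg(u)$, harmless here because all complexes are ${\cal K}$-modules. With this correction the rest of your argument (compatibility with the filtration $P$, independence of the choice of embedding system, transitivity via multiplicativity of $\deg$, and the identity case) goes through as you describe.
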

\begin{proof} 
Set $H:=\wt{R}\eps^{\rm conv}_{X/\os{\circ}{S}*}(\eps^{{\rm conv}*}_{X/\os{\circ}{S}}(E))$ 
and $H':=\wt{R}\eps^{\rm conv}_{X'/\os{\circ}{S}{}'*}(\eps^{{\rm conv}*}_{X'/\os{\circ}{S}{}'}(E'))$.  
Let $\theta' \col H'\lo H'[1]$ be an analogous morphism to $\theta$. 
By (\ref{coro:cwxstxs}) and \cite[(2.7.2)]{nh2} there exists the following composite morphism 
\begin{align*} 
g^*\col  (H,\tau) \lo (R\os{\circ}{g}_{{\rm conv}*}(H'),\tau)\lo 
R\os{\circ}{g}_{{\rm conv}*}((H',\tau)).  
\end{align*} 
Recall the morphism $u\col S'\lo S$ in (\ref{cd:yypssp}). 
Because $u^* \circ \theta=(\deg u) \theta'$ by the definition of ${\rm deg}(u)$ 
(\cite[(1.1.42)]{nb}), 
we have the following commutative diagram: 
\begin{equation*} 
\begin{CD} 
H[1]\{j\}/\tau_{j+1} @>{\deg (u)^{-1}g^*}>> R\os{\circ}{g}_{{\rm conv}*}(H'[1]\{j\}/\tau_{j+1})\\
@A{\theta}AA @AA{R\os{\circ}{g}_{{\rm conv}*}(\theta')}A \\
H[1]\{j-1\}/\tau_j @>{g^*}>> R\os{\circ}{g}_{{\rm conv}*}(H'[1]\{j-1\}/\tau_j)
\end{CD} 
\end{equation*} 
for $j\in {\mab N}$. 
Hence the morphisms 
\begin{align*} 
g_j:=(\deg u)^{-(j+1)}g^* \col H[1]\{j-1\}/\tau_j \lo 
R\os{\circ}{g}_{{\rm conv}*}(H'[1]\{j-1\}/\tau_j)
\tag{11.1.3}\label{cd:fdegcbc}
\end{align*} 
for $j$'s induce 
the following morphism 
\begin{align*} 
g^*_{{\rm conv}}\col A_{\rm conv}(X/S,E) \lo 
R\os{\circ}{g}_{{\rm conv}*}(A_{\rm conv}(X'/S',E')). 
\end{align*} 
In fact, this morphism induces the morphism (\ref{eqn:fdfcc}). 
The commutativity of the diagram (\ref{cd:fdxfcc}) 
follows from that of the following commutative diagram 
\begin{equation*}
\begin{CD} 
H/\tau_0[1]@>{g^*_{{\rm conv}}}>>
R\os{\circ}{g}_{{\rm conv}*}(H/\tau_0[1]) \\
@A{\theta'}A{\simeq}A @A{\simeq}A{R\os{\circ}{g}_{{\rm conv}*}(\theta)}A\\
R\eps^{\rm conv}_{X/S*}(\eps^{{\rm conv}*}_{X/S}(E))@>{g^*_{{\rm conv}}}>>
R\os{\circ}{g}_{{\rm conv}*}R\eps^{\rm conv}_{X'/S'*}(\eps^{{\rm conv}*}_{X'/S'}(E)), 
\end{CD} 
\tag{11.1.4}\label{cd:fdxfcbc}
\end{equation*} 
which can be shown by the local calculation as usual. 
\end{proof}

\begin{coro}\label{coro:spfcz} 
Let $E^{\rm conv}_{\rm ss}(X/S,E)$ and $E_{\rm ss}(X/S,E)$
be the  spectral sequences {\rm (\ref{ali:cccw})} 
and {\rm (\ref{ali:ccwz})} for $X/S$ and $E$, respectively.
Let $E^{\rm conv}_{\rm ss}(X'/S',E')$ and $E_{\rm ss}(X'/S',E')$
be the  spectral sequences {\rm (\ref{ali:cccw})} 
and {\rm (\ref{ali:ccwz})} for $X'/S'$ and $E'$. 
Then the morphism $g^*_{\rm conv}$
induces a morphism 
\begin{equation*}
g^*_{\rm conv} \col 
u^{-1}E^{\rm conv}_{\rm ss}(X'/S',E') \lo E^{\rm conv}_{\rm ss}(X/S,E) 
\tag{11.2.1}
\end{equation*} 
and 
\begin{equation*}
g^*_{\rm conv} \col 
u^{-1}E_{\rm ss}(X'/S',E') \lo E_{\rm ss}(X/S,E) 
\tag{11.2.2}
\end{equation*} 
of spectral sequences. 
\end{coro}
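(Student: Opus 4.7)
The plan is to derive the corollary from Theorem \ref{theo:fcpwczo} by pushing the filtered morphism (\ref{eqn:fdfcc}) forward along the structural morphisms and then invoking the functoriality of the spectral sequence of a filtered complex. Since (\ref{eqn:fdfcc}) is a morphism in ${\rm D}^+{\rm F}({\cal K}_{\os{\circ}{X}/\os{\circ}{S}})$, the formalism automatically produces a morphism between the spectral sequences attached to $P$ once both sides are transported to a common ambient topos.

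First, I would apply $R\os{\circ}{f}_{{\rm conv}*}$ to (\ref{eqn:fdfcc}). The factorization $\os{\circ}{f}\circ\os{\circ}{g}=u\circ\os{\circ}{f}{}'$ at the level of underlying formal schemes identifies $R\os{\circ}{f}_{{\rm conv}*}R\os{\circ}{g}_{{\rm conv}*}$ with $Ru_*R\os{\circ}{f}{}'_{{\rm conv}*}$, so one obtains a filtered morphism
\begin{align*}
R\os{\circ}{f}_{{\rm conv}*}((A_{\rm conv}(X/S,E),P))\lo Ru_*R\os{\circ}{f}{}'_{{\rm conv}*}((A_{\rm conv}(X'/S',E'),P)).
\end{align*}
The $(u^{-1},Ru_*)$ adjunction then rewrites this as a filtered morphism on $\os{\circ}{S}{}'_{\rm zar}$, and because $u^{-1}$ is exact it commutes with the formation of the spectral sequence attached to a filtered complex. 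The resulting morphism of spectral sequences is the asserted $g^*_{\rm conv}$ for (\ref{ali:cccw}). For (\ref{ali:ccwz}) the argument is formally identical, replacing $R\os{\circ}{f}_{{\rm conv}*}$ by $Rf_{X/S*}$ and using the bottom row of the commutative square (\ref{cd:fdxfcc}) together with the isomorphism $\theta\col R\eps^{\rm conv}_{X/S*}(\eps^{{\rm conv}*}_{X/S}(E))\os{\sim}{\lo}A_{\rm conv}(X/S,E)$ of (\ref{ali:pi}).

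The principal obstacle is to verify that on the $E_1$-page the morphism induced by (\ref{eqn:fdfcc}) is precisely the direct sum of the natural contravariant pull-back morphisms on the intersection cohomologies $R^{q-2j-k}\os{\circ}{f}{}^{(2j+k)}_{{\rm conv}*}(a^{(2j+k)*}_{{\rm conv}}(E)\otimes_{\mab Z}\vp_{\rm conv}^{(2j+k)}(\os{\circ}{X}/\os{\circ}{S}))$ induced by the strata maps $g^{(2j+k)}\col\os{\circ}{X}{}'{}^{(2j+k)}\to\os{\circ}{X}{}^{(2j+k)}$ furnished by Proposition \ref{prop:mmoo}. For this one unwinds the explicit definition (\ref{cd:fdegcbc}) of $g^*_{\rm conv}$ on each graded piece ${\rm gr}^P_k A_{\rm conv}(X/S,E)$ through the canonical identification (\ref{ali:ruoxvp}), and then invokes the compatibility of the Residue isomorphism with morphisms of strata established in Proposition \ref{prop:rescos}. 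Once this identification on ${\rm gr}^P$ is in hand, the assertions on subsequent pages follow purely formally from the general theory of filtered derived categories.
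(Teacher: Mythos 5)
Your first two paragraphs are exactly the intended argument: the paper's own proof of this corollary is the one-line remark that it is straightforward, because a morphism in the filtered derived category (here (\ref{eqn:fdfcc}) from (\ref{theo:fcpwczo}), pushed forward by $R\os{\circ}{f}_{{\rm conv}*}$, resp.\ $Rf_{X/S*}$, and rewritten via $R\os{\circ}{f}_{{\rm conv}*}R\os{\circ}{g}_{{\rm conv}*}=Ru_{*}R\os{\circ}{f}{}'_{{\rm conv}*}$ and adjunction) automatically induces a morphism of the spectral sequences attached to the filtration $P$, and the $E_1$-terms appearing in (\ref{ali:cccw}) and (\ref{ali:ccwz}) are then obtained by transporting this induced map through the identification of ${\rm gr}^P$ already established in (\ref{lemm:grkpd}) and (\ref{theo:ifc}).

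Where you overreach is the third paragraph: the identification of the induced $E_1$-morphism with the direct sum of pull-backs along strata morphisms $\os{\circ}{X}{}'{}^{(2j+k)}\to\os{\circ}{X}{}^{(2j+k)}$ is \emph{not} needed for the statement, and it is not available in the generality of the corollary. For an arbitrary morphism $g$ as in (\ref{theo:fcpwczo}) the strata morphisms need not exist at all; Proposition (\ref{prop:mmoo}) and the residue compatibility (\ref{prop:rescos}) require the hypotheses on minimal generators of the log structures, and the paper only introduces the existence of $g_{\os{\circ}{X}{}^{(k)}}$ together with the conditions (11.2.4)--(11.2.5) \emph{after} this corollary, precisely in order to make the graded map explicit in (\ref{prop:grloc}). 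So if you present that verification as a necessary obstacle, your proof would not go through in the stated generality; dropping the third paragraph leaves a complete proof that coincides with the paper's.
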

\begin{proof}
The proof is straightforward.
\end{proof}

\par 
Let 
$a'{}^{(k)} \col \os{\circ}{X}{}'{}^{(k)}\lo \os{\circ}{X}{}'$ 
be the natural morphism.  
By  (\ref{theo:fcpwczo}) the morphism 
$g^*_{\rm conv}$ 
induces the following morphism 
\begin{align*}
{\rm gr}^{P}_k(g^*_{\rm conv}) 
\col  a^{(k)}_{{\rm conv}*}
(a^{(k)*}_{\rm conv}(E)
\otimes_{\mab Z}\vp^{(k)}_{\rm conv}(\os{\circ}{X}{}/\os{\circ}{S}))[-k] 
\tag{11.2.3}\label{ali:gdkcl}\\ 
\lo 
R\os{\circ}{g}_{{\rm conv}*}a'{}^{(k)}_{{\rm conv}*}
(a'{}^{(k)*}_{\rm conv}(E)
\otimes_{\mab Z}
\vp^{(k)}_{\rm conv}(\os{\circ}{X}/\os{\circ}{S}))[-k]. 
\end{align*} 
Assume that  $g$ induces a morphism 
$g_{\os{\circ}{X}{}^{(k)}} \col \os{\circ}{X}{}^{(k)} \lo 
\os{\circ}{X}{}'{}^{(k)}$ for any $k\in {\mab N}$. 
In this case we make the morphism 
${\rm gr}^{P}_k(g^*_{\rm conv})$ in 
(\ref{ali:gdkcl}) explicit under the following assumptions.
Assume that the following two conditions hold:
\bigskip
\parno  
(11.2.4): there exists the same cardinality of 
smooth components of $\os{\circ}{X}$ and $\os{\circ}{X}{}'$ over 
$S_1$ and $S'_1$, respectively:
$\os{\circ}{X}=\bigcup_{\lam \in \Lam}\os{\circ}{X}_{\lam}$, 
$\os{\circ}{X}{}'=\bigcup_{\lam \in \Lam}\os{\circ}{X}{}'_{\lam}$, 
where $\os{\circ}{X}_{\lam}$ and $\os{\circ}{X}{}'_{\lam}$ are 
smooth components of 
$\os{\circ}{X}/S_1$ and $\os{\circ}{X}{}'/S'_1$, respectively. 
\medskip
\parno 
(11.2.5): 
there exist positive integers $e_{\lam}$ and local generators 
$x_{\lam}$ and $x'_{\lam}$ of $M_X$ and $M_{X'}$ 
defining the equations of $\os{\circ}{X}_{\lam}$ and $\os{\circ}{X}{}'_{\lam}$  
$(\lam \in \Lam)$ such that $g^*(x_{\lam})=(x'_{\lam})^{e_{\lam}}$ in $\ol{M}_X=M_X/{\cal O}_X^*$.  
\bigskip 
\par
Set $\ul{\lam}:=\{\lam_1, \ldots, \lam_k\}$
$(\lam_j \in \Lam,~(\lam_i \not= \lam_j~(i\not= j)))$ 
and 
$\os{\circ}{X}_{\ul{\lam}}:=\os{\circ}{X}_{\lam_1}\cap \cdots \cap \os{\circ}{X}_{\lam_k}$, 
$\os{\circ}{X}{}'_{\ul{\lam}}:=\os{\circ}{X}{}'_{\lam_1}\cap \cdots \cap \os{\circ}{X}{}'_{\lam_k}$.  
Let $a_{\ul{\lam}} \col \os{\circ}{X}_{\ul{\lam}}\lo \os{\circ}{X}$ and 
$a'_{\ul{\lam}} \col \os{\circ}{X}{}'_{\ul{\lam}}\lo \os{\circ}{X}{}'$
be the natural exact closed immersions.
Consider the following 
direct factor of the morphism (\ref{ali:gdkcl}): 
\begin{align*}
g^*_{\ul{\lam}{\rm conv}} \col 
a_{\ul{\lam}{\rm conv}*}
(a^*_{\ul{\lam}{\rm conv}}(E)\otimes_{\mab Z}
\vp_{\ul{\lam}{\rm conv}}(\os{\circ}{X}/{\os{\circ}{S}})
[-k]  \tag{11.2.6}\label{ali:grgm}\\
\lo 
Rg_{\ul{\lam}{\rm conv}*}a'{}_{\ul{\lam}{\rm conv}*}
(a'{}^*_{\ul{\lam}{\rm conv}}(E')
\otimes_{\mab Z}
\vp_{\ul{\lam}{\rm conv}}(\os{\circ}{X}{}'/\os{\circ}{S}{}'))
[-k],  
\end{align*}
where $\vp_{\ul{\lam}{\rm conv}}(\os{\circ}{X}{}'/{\os{\circ}{S}{}'})$ and 
$\vp_{\ul{\lam}{\rm conv}}(\os{\circ}{X}/\os{\circ}{S})$ are 
the convergent orientation sheaves  
associated to the sets $\{\os{\circ}{X}{}'_{\lam_j}\}_{j=1}^k$ 
and $\{\os{\circ}{X}_{\lam_j}\}_{j=1}^k$, respectively,  
defined as in \cite[p.~81]{nh2}.

\begin{prop}\label{prop:grloc}
Let the notations 
and the assumptions be as above.
Let 
$$\os{\circ}{g}_{\ul{\lam}}
\col \os{\circ}{X}{}'_{\ul{\lam}} \lo \os{\circ}{X}_{\ul{\lam}}$$ 
be the induced morphism by $g$ and 
let 
\begin{align*} 
\os{\circ}{g}{}^*_{\ul{\lam}} \col 
\os{\circ}{g}{}^*_{\ul{\lam},{\rm crys}}(a^*_{\ul{\lam}}(E))
\lo a'{}^*_{\ul{\lam}}(E')
\tag{11.3.1}\label{ali:teawfm}
\end{align*} 
be the induced morphism of isocrystals on 
$(\os{\circ}{X}{}'_{\lam}/\os{\circ}{S}{}')_{\rm conv}$ by 
{\rm (\ref{ali:teawfm})}. 
Then the morphism 
$g^*_{\ul{\lam}{\rm conv}}$ in 
{\rm (\ref{ali:grgm})}  is equal to $(\prod_{j=1}^ke_{\lam_j})
(a'_{\ul{\lam}{\rm conv}*}\os{\circ}{g}{}^*_{\ul{\lam},{\rm conv}})$ for 
$k \geq 0$. Here we define 
$\prod_{j=1}^ke_{\lam_j}$ as $1$ for $k=0$.
\end{prop}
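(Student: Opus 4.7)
The plan is to reduce the claim to a local computation with log differentials via the residue isomorphism, following the pattern of \cite[(4.14)]{nh3} and (\ref{prop:rescos}). Since the statement concerns the degree-$k$ graded piece of a morphism defined via the modified $P$-filtered convergent complex, I will work on the level of the log de Rham complexes with their $P$-filtrations and then pass to the linearization.

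First I would choose compatible embedding systems. Using (11.2.4)--(11.2.5), I pick affine open coverings $X=\bigcup_iX_i$ and $X'=\bigcup_iX'_i$ indexed by the same set $I$ with $g(X'_i)\subset X_i$, and closed immersions $X_i\os{\sus}{\lo}\ol{\cal P}_i$ and $X'_i\os{\sus}{\lo}\ol{\cal P}{}'_i$ into formally log smooth affine formal schemes over $\ol{S}$ and $\ol{S}{}'$, chosen so that the local generators $x_\lam$ and $x'_\lam$ lift to local parameters $\wt{x}_\lam\in M_{\ol{\cal P}}$ and $\wt{x}{}'_\lam\in M_{\ol{\cal P}{}'}$. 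Next, by the formal log smoothness of $\ol{\cal P}_i/\ol{S}$, I can lift the composite $X'_i\lo X_i\os{\sus}{\lo}\ol{\cal P}_i$ to a morphism ${\mathfrak g}_i\col \ol{\cal P}{}'_i\lo \ol{\cal P}_i$ over $\ol{S}{}'\lo \ol{S}$ satisfying, at the level of the log structures modulo units, ${\mathfrak g}_i^*(\wt{x}_\lam)=(\wt{x}{}'_\lam)^{e_\lam}$. Cosimplicializing yields a morphism ${\mathfrak g}_\bul\col \ol{\cal P}{}'_\bul\lo \ol{\cal P}_\bul$ to which (\ref{prop:rescos}) applies (with the roles of primed/unprimed objects exchanged relative to the statement there).

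The key local computation is then the following. For any local section $\om$ of $\Om^{i-k}_{\os{\circ}{\cal P}{}^{\rm ex}/\os{\circ}{S}}$, the residue isomorphism (\ref{eqn:grps}) identifies ${\rm gr}^P_k\Om^i_{{\cal P}^{\rm ex}/\os{\circ}{S}}$ locally with forms of the shape
\[
d\log\wt{x}_{\lam_1}\wedge\cdots\wedge d\log\wt{x}_{\lam_k}\wedge \om \longmapsto \om\big|_{\os{\circ}{\cal P}{}^{{\rm ex},(k-1)}_{\ul{\lam}}}\otimes(\text{orientation}),
\]
where $\ul{\lam}=\{\lam_1,\dots,\lam_k\}$. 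Applying ${\mathfrak g}_\bul^*$ and using ${\mathfrak g}_\bul^*(d\log\wt{x}_\lam)=e_\lam\,d\log\wt{x}{}'_\lam$, one obtains
\[
{\mathfrak g}_\bul^*\bigl(d\log\wt{x}_{\lam_1}\wedge\cdots\wedge d\log\wt{x}_{\lam_k}\wedge \om\bigr)
=\Bigl(\prod_{j=1}^{k}e_{\lam_j}\Bigr)\,d\log\wt{x}{}'_{\lam_1}\wedge\cdots\wedge d\log\wt{x}{}'_{\lam_k}\wedge {\mathfrak g}_\bul^*(\om).
\]
Taking residues, the induced map on ${\rm gr}^P_k$ is $\prod_{j=1}^{k}e_{\lam_j}$ times the pullback by the induced morphism $\os{\circ}{g}_{\ul\lam}\col \os{\circ}{X}{}'_{\ul\lam}\lo \os{\circ}{X}_{\ul\lam}$, tensored with the orientation pullback. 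For $k=0$ the factor is empty and equals $1$, which matches the convention.

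Finally I would transport this calculation through the linearization $L^{\rm UE}_{\os{\circ}{X}_\bul/\os{\circ}{S}}$ and the functor $R\pi^{\rm conv}_{\os{\circ}{X}/\os{\circ}{S}*}$, using (\ref{prop:rescos}) to guarantee that the resulting diagram of residues is compatible with the construction of $g^*_{\rm conv}$ on $(\wt{R}\eps^{\rm conv}_{X/\os{\circ}{S}*},P)$ in (\ref{coro:cwxstxs}), and hence with $g^*_{\rm conv}$ on $(A_{\rm conv}(X/S,E),P)$ in (\ref{theo:fcpwczo}). The factor $(\deg u)^{-(j+1)}$ appearing in (\ref{cd:fdegcbc}) does not interfere because it is absorbed into the definition that makes the $\theta$-squares commute, and the graded pieces of $A_{\rm conv}$ are built from the graded pieces of $H$ via shifts only. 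The main obstacle is the verification of the compatibility condition in (\ref{prop:rescos}) for ${\mathfrak g}_\bul$: one must check that ${\mathfrak g}_\bul^*$ sends each minimal generator of $\ol{M}_{{\cal P}^{\rm ex}_\bul}$ either to a power of a unique minimal generator of $\ol{M}_{{\cal P}{}'^{\rm ex}_\bul}$ or to a unit, which follows from (11.2.5) together with the choice of liftings $\wt{x}_\lam,\wt{x}{}'_\lam$ above.
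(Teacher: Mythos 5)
Your proposal is correct and takes essentially the same route as the paper: the paper's own proof is just the instruction to imitate the crystalline analogue \cite[(2.9.3)]{nh2} and apply the convergent linearization, which is exactly the local computation you spell out (pulling back $d\log$ of the lifted local generators, picking up $\prod_{j=1}^{k}e_{\lambda_j}$, and passing through the residue isomorphism and $L^{\rm UE}$ via (\ref{prop:rescos})).
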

\begin{proof}
One has only to imitate the proof of 
\cite[(2.9.3)]{nh2} and make the convergent linearization of it. 
\end{proof}

In \cite[(1.5.7)]{nb} we have proved the following: 

\begin{prop}\label{prop:xxle} 
Let the assumptions and the notations be as above. 
Set $\Lam(x):=
\{\lam \in \Lam~\vert~x \in \os{\circ}{X}_{\lam}\}$.  
Then $\deg(u)_x=e_{\lam}$ for $\lam \in \Lam(x)$.  
In particular, 
$e({\lam})$'s are independent of
the choice of an element of $\Lam(x)$.  
\end{prop}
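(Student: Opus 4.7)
The plan is to unwind both $\deg(u)_x$ and $e_{\lam}$ into explicit factorizations inside the monoid $\ol{M}_{X',x'}$ at a point $x'$ above $x$, and then read off equality by comparing exponents in a free commutative monoid. Since the claim is asserted as already proved in \cite[(1.5.7)]{nb}, the argument is really a matter of recalling the ingredients and assembling them.

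First, I would recall from \cite[(1.1.42)]{nb} that for a morphism $u\col S'\lo S$ of $p$-adic formal families of log points, $\deg(u)$ is the locally constant positive-integer-valued function on $\os{\circ}{S}{}'$ characterized, for local generators $t\in \ol{M}_S$ and $t'\in \ol{M}_{S'}$, by the equation $u^*(t)=(t')^{\deg(u)}$ in $\ol{M}_{S'}$. Next, by the local model of an SNCL scheme (see \cite[(1.1.6)]{nb}), at each point $x\in \os{\circ}{X}$ the stalk $\ol{M}_{X,x}$ is a free commutative monoid on the set $\{x_{\lam}\}_{\lam \in \Lam(x)}$, and the structural pull-back sends the generator $t$ of $\ol{M}_{S,f(x)}$ to $\prod_{\lam \in \Lam(x)}x_{\lam}$; the analogous statement holds for $X'$ and $x'$.

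Fix $x'\in \os{\circ}{X}{}'$ with $g(x')=x$. The only preliminary step requiring argument is the identification $\Lam(x)=\Lam(x')$. If $\lam \in \Lam(x)$, then $x_{\lam}$ lies in the maximal ideal of ${\cal O}_{X,x}$; by the hypothesis (11.2.5) we have $g^*(x_{\lam})=v\,(x'_{\lam})^{e_{\lam}}$ in $M_{X'}$ for some unit $v\in {\cal O}_{X',x'}^*$, so $(x'_{\lam})^{e_{\lam}}$, and hence $x'_{\lam}$ itself, is forced into the maximal ideal of ${\cal O}_{X',x'}$, yielding $\lam \in \Lam(x')$. The reverse inclusion is symmetric: if $\lam \notin \Lam(x)$ then $x_{\lam}$ is a unit near $x$, so $g^*(x_{\lam})$ is a unit near $x'$, so $x'_{\lam}$ is a unit near $x'$, and $\lam \notin \Lam(x')$.

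Finally, I would compute the element $g^*f^*(t)=f'{}^*u^*(t)\in \ol{M}_{X',x'}$ in two ways:
\begin{align*}
g^*f^*(t) &= g^*\Bigl(\prod_{\lam \in \Lam(x)}x_{\lam}\Bigr) = \prod_{\lam \in \Lam(x)}(x'_{\lam})^{e_{\lam}},\\
f'{}^*u^*(t) &= f'{}^*\bigl((t')^{\deg(u)_x}\bigr) = \Bigl(\prod_{\lam \in \Lam(x')}x'_{\lam}\Bigr)^{\deg(u)_x}.
\end{align*}
Using $\Lam(x)=\Lam(x')$ and the freeness of $\ol{M}_{X',x'}$ on the generators $\{x'_{\lam}\}_{\lam \in \Lam(x')}$, comparing the exponent of each generator gives $e_{\lam}=\deg(u)_x$ for every $\lam \in \Lam(x)$, which is the desired identity; the ``in particular'' clause follows immediately since the right-hand side no longer depends on $\lam$. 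The main (minor) obstacle is precisely the verification $\Lam(x)=\Lam(x')$, which requires carefully translating the relation $g^*(x_{\lam})=(x'_{\lam})^{e_{\lam}}$ in $\ol{M}_X$ back to statements about vanishing loci in the underlying rings; once this is settled, the rest is a direct exponent comparison in a free abelian monoid.
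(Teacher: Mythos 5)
Your proof is correct and is exactly the expected argument: the paper itself gives no proof of this proposition, deferring to \cite[(1.5.7)]{nb}, and the local-chart computation you give (freeness of $\ol{M}_{X',x'}$ on the classes of the $x'_{\lam}$, the identity $g^*f^*(t)=f'{}^*u^*(t)$, the relation $g^*(x_{\lam})=(x'_{\lam})^{e_{\lam}}$ from (11.2.5), and comparison of exponents, together with the check $\Lam(x)=\Lam(x')$) is precisely what lies behind that citation. The only point to phrase carefully is that $\deg(u)$ is a function on $\os{\circ}{S}{}'$, so the statement should be read at a point $x'$ of $\os{\circ}{X}{}'$ (equivalently at a chosen preimage of $x$, whose existence is implicit in the statement), which is exactly how you set it up.
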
 

\begin{defi}\label{defi:wcel}
(1) We call 
$\{e_{\lam}\}_{\lam \in \Lam}\in 
({\mab Z}_{>0})^{\Lam}$ 
the {\it multi-degree} of $g$ 
with respect to the decompositions of 
$\{\os{\circ}{X}_{\lam}\}_{\lam}$ 
and $\{\os{\circ}{X}{}'_{\lam}\}_{\lam}$ of $\os{\circ}{X}$ and $\os{\circ}{X}{}'$, respectively.   
We  denote it by 
${\rm deg}_{\os{\circ}{X},\os{\circ}{X}'}(g) \in ({\mab Z}_{>0})^{\Lam}$. 
If $e_\lam$'s for all 
$\lam$'s 
are equal, 
we also denote 
$e_{\lam}\in {\mab Z}_{>0}$ 
by 
${\rm deg}_{\os{\circ}{X},\os{\circ}{X}{}'}(g) \in 
{\mab Z}_{>0}$.
\par
(2) Assume that 
$e_\lam$'s for all $\lam$'s are equal. 
Let $v \col {\cal E} \lo {\cal F}$ 
be a morphism of  
${\cal K}_S$-modules. 
Let $k$ be a nonnegative integer. 
The $D$-{\it twist} by $k$ 
$$v(-k) \col {\cal E}(-k;u) 
\lo {\cal F}(-k;u)$$  
of $v$ with respect to $u$
is, by definition, the morphism  
${\rm deg}(u)^kv 
\col {\cal E} \lo {\cal F}$.
\end{defi}

\begin{coro}\label{coro:dts}
Set $f^{\rm conv}_{X/S}:=f\circ u^{\rm conv}_{X/S}$. 
Assume that $e_\lam$'s for 
all $\lam$'s are equal. 
Let $E_{\rm ss}(X/S,E)$ be 
the following spectral sequence  
\begin{equation*} 
E_1^{-k,q+k}=\bigoplus_{j\geq \max \{-k,0\}} 
R^{q-2j-k}f^{\rm conv}_{\os{\circ}{X}{}^{(2j+k)}/\os{\circ}{S}*}(a^{(2j+k)*}_{\rm conv}(E)\otimes_{\mab Z}
\vp^{(2j+k)}_{{\rm conv}}(\os{\circ}{X}/\os{\circ}{S}))(-j-k,u) 
\tag{11.6.1}\label{eqn:dtxw}
\end{equation*}
$$\Lo 
R^qf^{\rm conv}_{X/S*}(\eps^{{\rm conv}*}_{X/S}(E))$$ 
and let $E_{\rm ss}(X'/S',E')$ be the obvious 
analogue of the above for $X'/S'$.  
Then there exists a morphism  
\begin{equation*} 
g^*_{\rm conv}
\col u^{-1}E_{\rm ss}(X'/S',E') 
\lo E_{\rm ss}(X/S,E)
\tag{11.6.2}\label{eqn:dtw}
\end{equation*} 
of spectral sequences. 
\end{coro}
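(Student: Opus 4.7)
The plan is to deduce the corollary from (\ref{coro:spfcz}) by computing the map induced by $g^*_{\rm conv}$ on each $E_1$-summand and absorbing the resulting scalar factors into the $D$-twist formalism of (\ref{defi:wcel}) (2). First, (\ref{coro:spfcz}) already yields a morphism of the ``naive'' (untwisted) spectral sequences attached to $R\os{\circ}{f}_{\rm conv*}$ applied to the filtered complexes $(A_{\rm conv}(X/S,E),P)$ and $(A_{\rm conv}(X'/S',E'),P)$. The problem therefore reduces to verifying that, under the hypothesis that every $e_\lam$ equals a common value $e:=\deg_{\os{\circ}{X},\os{\circ}{X}'}(g)$, the scalar accumulated on each $E_1$-summand coincides precisely with $\deg(u)^{j+k}$, i.e.\ with the $D$-twist $(-(j+k);u)$ appearing in (\ref{eqn:dtxw}).

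Next, I track these scalars summand by summand. By (\ref{ali:ruoxvp}), the summand of ${\rm gr}^P_k A_{\rm conv}(X/S,E)$ indexed by $j\geq\max\{-k,0\}$ is $a^{(2j+k)}_{\rm conv*}(a^{(2j+k)*}_{\rm conv}(E)\otimes_{\mab Z}\vp^{(2j+k)}_{\rm conv}(\os{\circ}{X}/\os{\circ}{S}))[-2j-k]$, which breaks up further along multi-indices $\ul{\lam}=\{\lam_0,\ldots,\lam_{2j+k}\}$ of cardinality $2j+k+1$. Applying (\ref{prop:grloc}) to the morphism $g^*$ acting on ${\rm gr}^\tau_\bullet H$ used in the proof of (\ref{theo:fcpwczo}), the induced pullback on the $\ul{\lam}$-component carries the factor $\prod_{i=0}^{2j+k}e_{\lam_i}=e^{2j+k+1}$. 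In addition, (\ref{cd:fdegcbc}) shows that the morphism $g_j$ actually used in column $j$ of the double complex defining $A_{\rm conv}$ is $g^*$ rescaled by $(\deg u)^{-(j+1)}=e^{-(j+1)}$. Combining these factors gives the net scalar
\begin{align*}
e^{2j+k+1}\cdot e^{-(j+1)}=e^{j+k}.
\end{align*}

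Finally, by (\ref{defi:wcel}) (2), the morphism ``multiplication by $\deg(u)^{j+k}$'' is by definition the $D$-twist by $-(j+k)$ with respect to $u$ of the underlying untwisted pullback. Hence, when the $E_1$-terms are interpreted with the $D$-twists $(-(j+k);u)$ as in (\ref{eqn:dtxw}), the induced map becomes the canonical pullback $a'^{(2j+k)}_{\rm conv*}\circ\os{\circ}{g}^{(2j+k)*}_{\rm conv}$, and $g^*_{\rm conv}$ upgrades to the desired morphism of $D$-twisted spectral sequences (\ref{eqn:dtw}). The only nontrivial step is the bookkeeping identity $e^{2j+k+1}\cdot e^{-(j+1)}=e^{j+k}$, where the ``$+1$'' in the first exponent reflects the convention that $\os{\circ}{X}^{(m)}$ parametrizes $(m+1)$-fold intersections of smooth components; once this convention is fixed, the computation is routine and the remaining compatibilities at later pages of the spectral sequence follow formally from the already established functoriality of the individual ingredients.
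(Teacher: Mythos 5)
Your proposal is correct and takes essentially the same route as the paper, whose proof simply says the corollary follows immediately from (\ref{prop:grloc}): you have just made explicit the scalar bookkeeping that the paper leaves implicit, namely that the residue factor $\prod_i e_{\lam_i}=e^{2j+k+1}$ from (\ref{prop:grloc}) on the $(2j+k+1)$-fold intersection component combines with the normalization $(\deg u)^{-(j+1)}$ of (\ref{cd:fdegcbc}) to give $e^{j+k}$, which is exactly the $D$-twist $(-(j+k);u)$ of (\ref{defi:wcel}) appearing in (\ref{eqn:dtxw}).
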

\begin{proof}
(\ref{coro:dts}) immediately follows 
from (\ref{prop:grloc}).
\end{proof}

\par 
Let $F_{S_1} \col S_1 \lo S_1$ be 
the absolute Frobenius endomorphism of $S_1$.
Set 
$X^{[p]}:=X\times_{\os{\circ}{S}_1,F_{\os{\circ}{S}_1}}\os{\circ}{S}_1$.
The abrelative Frobenius morphism 
$F \col X\lo  X^{[p]} $ over $F_{S_1/\os{\circ}{S}_1}\col S_1\lo S_1^{[p]}:=
S_1\times_{\os{\circ}{S}_1,F_{\os{\circ}{S}_1}}\os{\circ}{S}_1$ induces 
the usual relative Frobenius morphism 
$\os{\circ}{F}{}^{(k)} \col \os{\circ}{X}{}^{(k)} \lo (X^{[p]})^{\circ,(k)}$. 
It is easy to see that 
$(X^{[p]})^{\circ,(k)}=\os{\circ}{X}{}^{(k)}
\times_{\os{\circ}{S}_1,F_{\os{\circ}{S}_1}}\os{\circ}{S}_1
=:\os{\circ}{X}{}^{(k)'}$. 
We also set $\os{\circ}{X}{}':=\os{\circ}{X}\times_{\os{\circ}{S}_1,F_{\os{\circ}{S}_1}}\os{\circ}{S}_1$. 
Let $a^{(k)} \col \os{\circ}{X}{}^{(k)} \lo \os{\circ}{X}$ 
and 
$a^{(k)'} \col \os{\circ}{X}{}^{(k)'} \lo \os{\circ}{X}{}'$ 
be the natural morphisms. 
We define the 
relative Frobenius action
$$\Phi_{\os{\circ}{X}{}^{(k)}/\os{\circ}{S}} 
\col  a^{(k)'}_{{\rm conv}*}\vp^{(k)}_{\rm conv}(\os{\circ}{X}{}'/\os{\circ}{S}) 
\lo F_{{\rm conv}*}a^{(k)}_{{\rm conv}*}
\vp^{(k)}_{{\rm conv}}(\os{\circ}{X}/\os{\circ}{S})$$
as the identity 
under the natural identification
$$\vp^{(k)}_{\rm conv}(\os{\circ}{X}{}'/\os{\circ}{S}{}')
\os{\sim}{\lo} F^{(k)}_{{\rm conv}*}\vp^{(k)}_{{\rm conv}}(\os{\circ}{X}/\os{\circ}{S}).$$
Let $S_1^{[p]}(S)$ be a fine log formal scheme 
whose underlying formal scheme is $\os{\circ}{S}$ and the log structure of 
$S_1^{[p]}(S)$ is the sub-log structure of $S$ 
such that the isomorphism 
$M_S/{\cal O}_S^*\os{\sim}{\lo} M_{S_1}/{\cal O}_{S_1}^*$ induces 
the following isomorphism 
\begin{align*} 
M_{S_1^{[p]}(S)}/{\cal O}_S^*\os{\sim}{\lo} 
{\rm Im}(F_{S_1/\os{\circ}{S}_1}^*\col 
F_{S_1/\os{\circ}{S}_1}^*(M_{S_1^{[p]}})
\lo M_{S_1})/{\cal O}_{S_1}^*. 
\end{align*} 
Then we have an obvious morphism $S\lo S_1^{[p]}(S)$ of log formal schemes. 
When $g$ in (\ref{cd:mpsmlgs}) 
is equal to the relative Frobenius $F \col X \lo X^{[p]}$ over $S\lo S_1^{[p]}(S)$,
we denote (\ref{ali:ccwz})+(the compatibility with Frobenius) 
by the following:  
\begin{align*}
E_1^{-k,q+k}(X/S)= &\bigoplus_{j\geq \max \{-k,0\}} 
R^{q-2j-k}f^{\rm conv}_{\os{\circ}{X}{}^{(2j+k)}/\os{\circ}{S}*}
(a^{(2j+k)*}_{\rm conv}(E)\otimes_{\mab Z}
\vp^{(2j+k)}_{{\rm conv}}(\os{\circ}{X}/\os{\circ}{S})))(-j-k)  
\tag{11.6.3}\label{ali:wtfapwt}
\\
\Lo 
{} & R^qf^{\rm conv}_{X/S*}(\eps^{{\rm conv}*}_{X/S}(E)).
\end{align*}

\begin{defi}
In the case where $E$ is the trivial coefficient and $\os{\circ}{X}$ is proper over $\os{\circ}{S}_1$, 
we call the sequence (\ref{ali:wtfapwt}) 
the {\it weight spectral sequence of} 
$X/S$ (more precisely over $(S_1,S)$). 
\end{defi}

\section{Edge morphisms}\label{sec:bd}
In this section we define the cycle class 
in a convergent cohomology sheaf for a smooth divisor on a smooth scheme
(cf.~\cite[\S2]{fao}, \cite[(2.8)]{nh2}).
Then, as in \cite[(10.1)]{nh2}, we give a 
description of the edge morphism between 
the $E_1$-terms of the spectral sequence 
(\ref{ali:wtfapwt}). Because the proof is the same as that of 
\cite[(10.1)]{ndw}, 
we omit the proofs in this section. 
\par
Let $S$ and $S_1$ be as in \S\ref{sec:lcs}. 
Let $g \col Z \lo \os{\circ}{S}_1$ be a smooth scheme over $\os{\circ}{S}_1$. 
By abuse of notation, 
we also denote by $g$ the composite 
morphism $Z \lo \os{\circ}{S}_1 \os{\subset}{\lo} \os{\circ}{S}$. 
Let $D$ be a smooth divisor on $Z$.  
Let $F$ be an isocrystal in $(Z/\os{\circ}{S})_{\rm conv}$. 
Let 
$a \col D\os{\subset}{\lo} Z$ be 
the natural closed immersion over $\os{\circ}{S}_1$.  
Let 
$a_{\rm zar} \col D_{\rm zar}\lo Z_{\rm zar}$ 
be the induced morphism of Zariski topoi.
Let $a_{\rm conv} \col
(({D/\os{\circ}{S}})_{\rm conv},{\cal K}_{D/\os{\circ}{S}}) 
\lo 
(({Z/\os{\circ}{S}})_{\rm conv},
{\cal K}_{Z/\os{\circ}{S}})$ 
be also the induced morphism of convergent ringed topoi.
\par
Let $Z_{\bul}$ be the smooth simplicial scheme over $\os{\circ}{S}_1$ obtained by 
an affine covering of $Z$. 
Let $(Z_{\bul},D_{\bul}) \os{\sus}{\lo} {\cal Q}_{\bul}$ 
be a simplicial immersion into a log smooth simplicial scheme over $\os{\circ}{S}$. 
Let ${\cal M}^{\rm ex}_{\bul}$ be the log structure of ${\cal Q}^{\rm ex}_{\bul}$. 
Let $b_{\bul} \col D^{(1)}({\cal M}^{\rm ex}_{\bul})
{\lo} {\cal Q}^{\rm ex}_{\bul}$ 
be the natural simplicial morphism. 
Let $({\cal F}^{\bul},\nabla):=\{({\cal F}^{\bul}_n,\nabla)\}_{n=1}^{\infty}$ 
be an integrable connection obtained by $F$. 
By using the Poincar\'{e} residue isomorphism 
with respect to $D^{(1)}({\cal M}^{\rm ex}_{\bul})$, 
we have the following exact sequence:
\begin{align*}
0 &\lo 
{\cal F}^{\bul}\otimes_{{\cal O}_{{\cal Q}^{\rm ex}_{\bul}}}
\Om_{\os{\circ}{\cal Q}{}^{\rm ex}_{\bul}/\os{\circ}{S}}^{\bul} 
\lo {\cal F}^{\bul}\otimes_{{\cal O}_{{\cal Q}^{\rm ex}_{\bul}}}
\Om_{{\cal Q}^{\rm ex}_{\bul}/\os{\circ}{S}}^{\bul} 
\tag{12.0.1}\label{eqn:omresd} \\
&\os{\rm Res}{\lo} 
b_{{\bul}*}
(b^*_{{\bul}}({\cal F}^{\bul})\otimes_{{\cal O}_{D^{(1)}({\cal M}^{\rm ex}_{\bul})}}
\Om_{D^{(1)}({\cal M}^{\rm ex}_{\bul})/\os{\circ}{S}}^{\bul}
\otimes_{\mab Z}
\vp^{(1)}_{\rm zar}(D^{(1)}({\cal M}^{\rm ex}_{\bul})/\os{\circ}{S}))
[-1] \lo 0.
\end{align*} 
Let $L^{\rm UE}_{Z_{\bul}/\os{\circ}{S}}$ 
(resp.~$L^{\rm UE}_{D^{(1)}_{\bul}/\os{\circ}{S}}$)
be the convergent linearization functor 
with respect to the simplicial immersion  
$Z_{\bul} \os{\subset}{\lo} \os{\circ}{\cal Q}{}^{\rm ex}_{\bul}$
(resp.~$D_{\bul} 
\os{\subset}{\lo} D^{(1)}({\cal M}^{\rm ex}_{\bul}$)). 
By (\ref{linc}), 
$L^{\rm UE}_{Z_{\bul}/\os{\circ}{S}}b_{\bul{\rm zar}*}= 
a_{{\rm conv}\bul*}
L^{\rm UE}_{D^{(1)}_{\bul}/\os{\circ}{S}}$. 
Hence we have the following exact 
sequence by (\ref{prop:grla}) and (\ref{theo:injf}) (2): 
\begin{equation*}
0 \lo 
L^{\rm UE}_{Z_{\bul}/\os{\circ}{S}}
({\cal F}^{\bul}\otimes_{{\cal O}_{{\cal Q}^{\rm ex}_{\bul}}}
\Om_{\os{\circ}{\cal Q}{}^{\rm ex}_{\bul}/\os{\circ}{S}}^{\bul}) \lo 
L^{\rm UE}_{Z_{\bul}/\os{\circ}{S}}
({\cal F}^{\bul}\otimes_{{\cal O}_{{\cal Q}^{\rm ex}_{\bul}}}
\Om_{{\cal Q}^{\rm ex}_{\bul}/\os{\circ}{S}}^{\bul})
\tag{12.0.2}\label{eqn:xzl}
\end{equation*} 
$$\lo 
a_{{\rm conv}\bul*}
(L^{\rm UE}_{D_{\bul}/\os{\circ}{S}}
(b^*_{{\bul}}({\cal F}^{\bul})\otimes_{{\cal O}_{D^{(1)}({\cal M}^{\rm ex}_{\bul})}}
\Om_{D^{(1)}({\cal M}^{\rm ex}_{\bul})/\os{\circ}{S}}^{\bul})
\otimes_{\mab Z}\vp^{(1)}_{{\rm conv}}(D_{\bul}/\os{\circ}{S}))
[-1]) 
\lo 0.$$
Recall the morphisms  
$\pi^{\rm conv}_{Z/\os{\circ}{S}}$ and $\pi^{\rm conv}_{D/\os{\circ}{S}}$ 
of ringed topoi in \S\ref{sec:rlct}.  
By applying $R{\pi}^{\rm conv}_{Z/\os{\circ}{S}*}$ to (\ref{eqn:xzl}), 
we obtain the following triangle:  
\begin{equation*}
\lo R{\pi}^{\rm conv}_{Z/\os{\circ}{S}*}
L^{\rm UE}_{Z_{\bul}/\os{\circ}{S}}
({\cal F}^{\bul}\otimes_{{\cal O}_{{\cal Q}^{\rm ex}_{\bul}}}
\Om_{\os{\circ}{\cal Q}{}^{\rm ex}_{\bul}/\os{\circ}{S}}^{\bul}) \lo 
R{\pi}^{\rm conv}_{Z/\os{\circ}{S}*}
L^{\rm UE}_{Z_{\bul}/\os{\circ}{S}}
({\cal F}^{\bul}\otimes_{{\cal O}_{{\cal Q}^{\rm ex}_{\bul}}}
\Om_{{\cal Q}^{\rm ex}_{\bul}/\os{\circ}{S}}^{\bul}) \lo 
\tag{12.0.3} 
\end{equation*}
$$
a_{{\rm conv}*}
R{\pi}_{D/\os{\circ}{S}{\rm conv}*}
(L^{\rm UE}_{D^{(1)}_{\bul}/\os{\circ}{S}}
(b^*_{{\bul}}({\cal F}^{\bul})\otimes_{{\cal O}_{D^{(1)}({\cal M}^{\rm ex}_{\bul})}}
\Om^{\bul}_{D^{(1)}({\cal M}^{\rm ex}_{\bul})/\os{\circ}{S}})  
\otimes_{\mab Z}
\vp^{(1)}_{{\rm conv}}(D_{\bul}/\os{\circ}{S}))[-1] 
\os{+1}{\lo} 
\cdots. $$
By (\ref{theo:pl}), (\ref{theo:cpvcs}) and 
by the cohomological descent,  
we have the following triangle:
\begin{equation*}
\lo F \lo 
R\eps^{\rm conv}_{(Z,D)/\os{\circ}{S}*}(\eps^{{\rm conv}*}_{(Z,D)/\os{\circ}{S}}(F)) \lo 
a_{{\rm conv}*}
(a^*_{\rm conv}(F)\otimes_{\mab Z}
\vp^{(1)}_{{\rm conv}}(\os{\circ}{D}/\os{\circ}{S}))
[-1] 
\os{+1}{\lo} \cdots.
\tag{12.0.4}\label{eqn:oealog}
\end{equation*}
Hence we have the boundary morphism 
\begin{equation*}
d \col a_{{\rm conv}*}
(a^*_{\rm conv}(F)\otimes_{\mab Z}
\vp^{(1)}_{{\rm conv}}(\os{\circ}{D}/\os{\circ}{S}))[-1] \lo F[1]
\tag{12.0.5}
\end{equation*}
in $D^+({\cal K}_{Z/S})$ and the following morphism
\begin{equation*}
d \col a_{{\rm conv}*}
(a^*_{\rm conv}(F)\otimes_{\mab Z}
\vp^{(1)}_{{\rm conv}}(\os{\circ}{D}/\os{\circ}{S})) \lo F[2]
\tag{12.0.6} 
\end{equation*}
in $D^+({\cal K}_{Z/\os{\circ}{S}})$. 
Set 
\begin{equation*}
G^{\rm conv}_{D/Z,F}:=-d.  
\tag{12.0.7}\label{eqn:gdxz}
\end{equation*}
and call $G^{\rm conv}_{D/Z}$ the 
{\it log convergent Gysin morphism} of $D$.
Then we have the following cohomology class 
\begin{align*}
c^{\rm conv}_{Z/\os{\circ}{S}}(D,F):=G^{\rm conv}_{D/Z,F} \in 
{\cal E}{\it xt}^0_{{\cal K}_{Z/\os{\circ}{S}}} 
(& a_{{\rm conv}*}
(a^*_{\rm conv}(F)\otimes_{\mab Z}
\vp^{(1)}_{{\rm conv}}(\os{\circ}{D}/\os{\circ}{S})),F[2]).  
\tag{12.0.8}\label{eqn:lcycls} 
\end{align*}
It is a routine work to prove that $G^{\rm conv}_{D/Z,F}$ 
(and $c^{\rm conv}_{Z/\os{\circ}{S},F}(D,F)$) depends only  
on $Z/\os{\circ}{S}$. 
Since $\vp^{(1)}_{{\rm conv}}(D/\os{\circ}{S})$ 
is canonically isomorphic to ${\mab Z}$ and since 
there exists a natural morphism 
$F \lo 
a_{{\rm conv}*}a^*_{\rm conv}(F)$,
we have a cohomology class 
\begin{align*}
c^{\rm conv}_{Z/\os{\circ}{S}}(D,F) & \in  
{\cal E}{\it xt}^0_{{\cal K}_{Z/\os{\circ}{S}}}
({\cal K}_{Z/\os{\circ}{S}},
{\cal K}_{Z/\os{\circ}{S}}[2]) =:
\ul{\cal H}_{{\log}{\textrm -}{\rm conv}}^2(Z/\os{\circ}{S}).
\end{align*}

\par
Next we give the expression of the edge morphism $d_1^{\bul \bul}$ of
(\ref{ali:wtfapwt}) by the \v{C}ech-Gysin morphism.
\par
First, fix a decomposition 
$\{\os{\circ}{X}_{\lam}\}_{\lam \in \Lam}$ of $\os{\circ}{X}$ 
by smooth components of $\os{\circ}{X}$ over $S_1$ and fix a total order on $\Lam$. 
For $\ul{\lam}:=\{\lam_0,  \ldots, \lam_{k-1}\}$ $(\lam_0<\cdots < \lam_{k-1})$, 
$\ul{\lam}_j:=\{\lam_0,\ldots,  \wh{\lam}_j,  
\ldots, \lam_{k-1}\}$, set 
$\os{\circ}{X}_{\ul{\lam}}:=
\os{\circ}{X}_{\lam_0}\cap  \cdots \cap\os{\circ}{X}_{\lam_{k-1}}$ 
and $\os{\circ}{X}_{\ul{\lam}_j}:=
\os{\circ}{X}_{\lam_0}\cap  \cdots 
\wh{\os{\circ}{X}_{{\lam}_j}}\cap \cdots \cap \os{\circ}{X}_{\lam_{k-1}}$ for $k\geq 2$. 
Here $~\wh{}~$ means the elimination. 
Then $\os{\circ}{X}_{\ul{\lam}}$
is a smooth divisor on $\os{\circ}{X}_{\ul{\lam}_j}$ over $S_1$.
Let 
$\iota^{\ul{\lam}_j}_{\ul{\lam}} 
\col \os{\circ}{X}_{\ul{\lam}} 
\os{\subset}{\lo} 
\os{\circ}{X}_{\ul{\lam}_j}$ 
be the closed immersion.
As in \cite[p.~81]{nh2}, 
we have the following orientation sheaves  
$$\vp_{\ul{\lam}{\rm conv}}(\os{\circ}{X}/\os{\circ}{S}):=
\vp_{\lam_0 \cdots \lam_{k-1}{\rm conv}}(\os{\circ}{X}/\os{\circ}{S})$$ 
and
$$\vp_{\ul{\lam}_j{\rm conv}}(\os{\circ}{X}/\os{\circ}{S}):=
\vp_{\lam_0 \cdots \wh{\lam}_j 
\cdots \lam_{k-1}{\rm conv}}(\os{\circ}{X}/\os{\circ}{S})$$
associated to the sets $\{\os{\circ}{X}_{\lam_i}\}_{i=0}^k$ 
and $\{\os{\circ}{X}_{\lam_i}\}_{i=0}^k\setminus 
\{\os{\circ}{X}_{\lam_j}\}$, respectively.   
We denote by $(\lam_0\cdots \lam_{k-1})$ 
the local section giving a basis of 
$\vp_{\lam_0\cdots \lam_{k-1}{\rm conv}}(\os{\circ}{X}/\os{\circ}{S})$; 
we also denote by $(\lam_0\cdots \lam_{k-1})$ 
the local section giving a basis of 
$\vp_{\lam_0\cdots \lam_{k-1}{\rm zar}}(\os{\circ}{X}/S)$. 
Let $a_{\ul{\lam}}\col \os{\circ}{X}_{\lam}\lo \os{\circ}{X}$ be the natural closed immersion. 
By (\ref{eqn:gdxz}) we have a morphism
\begin{align*}
G^{{\ul{\lam}_j}{\rm conv}}_{\ul{\lam}}:=
G^{\rm conv}_{\os{\circ}{X}_{\ul{\lam}}/\os{\circ}{X}_{\ul{\lam}_j},a_{\ul{\lam}_j}^*(E)} 
\col & \iota^{\ul{\lam}_j}_{\ul{\lam}{\rm conv}*}
(a_{\ul{\lam}{\rm conv}}^*(E)
\otimes_{\mab Z}\vp_{\lam_j{\rm conv}}(\os{\circ}{X}/\os{\circ}{S}))\lo a_{\ul{\lam}_j}^*(E)[2].
\tag{12.0.9}\label{eqn:glmoo} 
\end{align*}
We fix an isomorphism 
\begin{equation*}
\vp_{\lam_j{\rm conv}}(\os{\circ}{X}/\os{\circ}{S}) 
\otimes_{\mab Z}\vp_{\ul{\lam}_j{\rm conv}}(\os{\circ}{X}/\os{\circ}{S})  
\os{\sim}{\lo}\vp_{\ul{\lam}{\rm conv}}(\os{\circ}{X}/\os{\circ}{S}) 
\tag{12.0.10}\label{eqn:coxts}
\end{equation*}
by the following morphism
$$(\lam_j)\otimes (\lam_0\cdots  
\wh{\lam}_j \cdots \lam_{k-1})
\lom (-1)^j(\lam_0\cdots \lam_{k-1}).$$
We identify 
$\vp_{\lam_j{\rm conv}}(\os{\circ}{X}/\os{\circ}{S})\otimes_{\mab Z} 
\vp_{\ul{\lam}_j{\rm conv}}(\os{\circ}{X}/\os{\circ}{S})$ with 
$\vp_{\ul{\lam}{\rm conv}}(\os{\circ}{X}/\os{\circ}{S})$ 
by this isomorphism.
We also have the following composite morphism
\begin{equation*}
(-1)^jG^{{\ul{\lam}_j}{\rm conv}}_{\ul{\lam}}
\col \iota^{\ul{\lam}_j}_{\ul{\lam}{\rm conv}*}
(a_{\ul{\lam}{\rm conv}}^*(E)
\otimes_{\mab Z}\vp_{\ul{\lam}{\rm conv}}(\os{\circ}{X}/\os{\circ}{S})) 
\os{\sim}{\lo} \tag{12.0.11}\label{eqn:m1gom}
\end{equation*} 
$$\iota^{\ul{\lam}_j}_{\ul{\lam}{\rm conv}*}
(a_{\ul{\lam}{\rm conv}}^*(E)
\otimes_{\mab Z}
\vp_{\lam_j{\rm conv}}(\os{\circ}{X}/\os{\circ}{S})\otimes_{\mab Z} 
\vp_{\ul{\lam}_j{\rm conv}}(\os{\circ}{X}/\os{\circ}{S})) 
\os{G^{{\ul{\lam}_j}{\rm conv}}_{\ul{\lam}}\otimes 1}{\lo} $$
$$a_{\ul{\lam}_j{\rm conv}}^*(E)
\otimes_{\mab Z}\vp_{\ul{\lam}_j{\rm conv}}(\os{\circ}{X}/\os{\circ}{S})[2]$$ 
defined by 
\begin{equation*}{\rm ``} x\otimes (\lam_0 \cdots  \lam_{k-1}) \lom 
(-1)^jG^{{\ul{\lam}_j}{\rm conv}}_{\ul{\lam}}(x)\otimes 
(\lam_0\cdots \wh{\lam}_j \cdots
\lam_{k-1}){\rm "}.
\tag{12.0.12}\label{eqn:ooolll}
\end{equation*}
The morphism (\ref{eqn:ooolll}) induces 
the following morphism 
of log convergent cohomologies:
\begin{equation*}
(-1)^j G^{{\ul{\lam}_j}{\rm conv}}_{\ul{\lam}} \col 
R^{q-k}
f^{\rm conv}_{\os{\circ}{X}_{\ul{\lam}}/\os{\circ}{S}*}
(a_{\ul{\lam}{\rm conv}}^*(E)
\otimes_{\mab Z}
\vp_{\ul{\lam}{\rm conv}}(\os{\circ}{X}/\os{\circ}{S}))\lo 
\tag{12.0.13}\label{eqn:cohgxdz}
\end{equation*}
$$R^{q-k+2}
f^{\rm conv}_{\os{\circ}{X}_{\ul{\lam}_j}/\os{\circ}{S}*}
(a_{\ul{\lam}_j{\rm conv}}^*(E)
\otimes_{\mab Z}\vp_{\ul{\lam}_j{\rm conv}}(\os{\circ}{X}/\os{\circ}{S})).$$ 

Set
\begin{align*}
& G:=\bigoplus_{j\geq \max \{-k,0\}}
\sum_{\{\ul{\lam}:=\{\lam_{0},\ldots, \lam_{2j+k}\}~ 
\vert ~ \lam_{\gam} < \lam_{\al}\; (\gam < \al)\}}
\sum_{\bet=0}^{2j+k}(-1)^{\bet}
G_{\ul{\lam}}^{\ul{\lam}_{{\bet}}} \col 
\tag{12.0.14}\label{eqn:togsn}\\
&\bigoplus_{j\geq \max \{-k,0\}} 
R^{q-2j-k}
f_{\os{\circ}{X}{}^{(2j+k)}/\os{\circ}{S}*}
(a^{(2j+k)*}_{\rm conv}(E)\otimes_{\mab Z} 
\vp^{(2j+k)}_{\rm crys}(\os{\circ}{X}/\os{\circ}{S}))\\ 
&(-j-k,u)\lo \\
&\bigoplus_{j\geq \max \{-k+1,0\}} 
R^{q-2j-k+2}
f_{\os{\circ}{X}{}^{(2j+k-1)}/\os{\circ}{S}*}
(a^{(2j+k-1)*}_{\rm conv}(E)
\otimes_{\mab Z} 
\vp^{(2j+k-1)}_{\rm conv}(\os{\circ}{X}/\os{\circ}{S}))\\
&(-j-k+1,u).
\end{align*}
\par 
Let $\iota_{\ul{\lam}}^{\ul{\lam}_{\bet}} \col \os{\circ}{X}_{\ul{\lam}} 
\os{\sus}{\lo} \os{\circ}{X}_{\ul{\lam}_{\bet}}$ be 
the natural immersion.  
The morphism $\iota_{\ul{\lam}}^{\ul{\lam}_{\bet}}$ 
induces the morphism  
\begin{equation}
(-1)^{\bet}
\iota_{\ul{\lam}{\rm crys}}^{\ul{\lam}_{\bet}*}
\col 
\iota_{\ul{\lam}{\rm crys}}^{\ul{\lam}_{\bet}*}
(a^*_{\ul{\lam}_j{\rm conv}}(E)
\otimes_{\mab Z}\vp_{\ul{\lam}_{\bet}{\rm crys}}
(\os{\circ}{X}/\os{\circ}{T})) 
\lo 
a^*_{\ul{\lam}{\rm conv}}(E)
\otimes_{\mab Z}\vp_{\ul{\lam}{\rm conv}}
(\os{\circ}{X}/\os{\circ}{S}).  
\tag{12.0.15}\label{eqn:defcbd}
\end{equation}
Set 
\begin{align*}
& \rho:=\bigoplus_{j\geq \max \{-k,0\}}
\sum_{\{\ul{\lam}:=\{\lam_{0},\ldots,<\lam_{2j+k}\}~ 
\vert ~\lam_{\gam}< \lam_{\alpha}\; 
(\gam < \alpha)\}}
\sum_{\bet=0}^{2j+k}(-1)^{\bet}
\iota_{\ul{\lam}{\rm conv}}^{\ul{\lam}_{\bet*}} 
\col 
\tag{12.0.16}\label{eqn:rhogsn}\\
&\bigoplus_{j\geq \max \{-k,0\}} 
R^{q-2j-k}
f_{\os{\circ}{X}{}^{(2j+k)}/\os{\circ}{S}*}
(E^m_{\os{\circ}{X}{}^{(2j+k)}
/\os{\circ}{T}}
\otimes_{\mab Z}\vp^{(2j+k)}_{\rm crys}
(\os{\circ}{X}/\os{\circ}{S}))  \\ 
&(-j-k,u)\lo \\ 
& \bigoplus_{j\geq \max \{-k,0\}} 
R^{q-2j-k}
f_{\os{\circ}{X}{}^{(2j+k+1)}/\os{\circ}{S}}
(E^m_{\os{\circ}{X}{}^{(2j+k+1)}
/\os{\circ}{T}}
\otimes_{\mab Z}\vp^{(2j+k+1)}_{\rm crys}
(\os{\circ}{X}/\os{\circ}{S}))\\
& (-j-k,u).
\end{align*}

\begin{prop}\label{prop:deccbd} 
The edge morphism between the $E_1$-terms of 
the spectral sequence {\rm (\ref{ali:wtfapwt})} 
is given by the following diagram$:$ 
\begin{equation*} 
\tag{12.1.1}\label{cd:gsmsd}
\end{equation*} 
\begin{equation*} 
\begin{split} 
{} & \bigoplus_{j\geq \max \{-k,0\}} 
R^{q-2j-k}
f_{\os{\circ}{X}{}^{(2j+k)}
/\os{\circ}{T}*}
(a^{(2j+k)*}_{\rm conv}(E) \\
{} & \phantom{R^{q-2m-k}
f_{(\os{\circ}{X}^{(k)}, 
Z\vert_{\os{\circ}{X}^{(k+m)}})/S*}
({\cal O}\quad \quad} 
\otimes_{\mab Z}\vp^{(2j+k)}_{\rm conv}
(\os{\circ}{X}_{T_0}/\os{\circ}{T}))(-j-k,u). 
\end{split}  
\end{equation*}  
$$\text{\scriptsize
{${
G+\rho}$}}
~\downarrow \quad \quad \quad \quad \quad \quad \quad 
\quad \quad \quad \quad \quad$$
\begin{equation*} 
\begin{split} 
{} & 
\bigoplus_{j\geq \max \{-k+1,0\}} 
R^{q-2j-k+2}
f_{\os{\circ}{X}{}^{(2j+k-1)}/\os{\circ}{S}*}
(a^{(2j+k-1)*}_{\rm conv}(E) \\
{} & \phantom{R^{q-k}
f_{(\os{\circ}{X}^{(k)}, 
Z\vert_{\os{\circ}{X}^{(k)}})/S*}
({\cal O}\quad \quad} 
\otimes_{\mab Z}\vp^{(2j+k-1)}_{\rm conv}
(\os{\circ}{X}_{T_0}/\os{\circ}{T}))(-j-k+1,u). 
\end{split}  
\end{equation*}   
\end{prop}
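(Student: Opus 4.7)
The plan is to identify the differential $d_1$ of the spectral sequence (\ref{ali:wtfapwt}) directly from the double-complex presentation of $(A_{\rm conv}(X/S,E),P)$, by computing the connecting homomorphism attached to the short exact sequence
\[
0\lo {\rm gr}^P_{k-1}A_{\rm conv}(X/S,E)\lo P_kA_{\rm conv}(X/S,E)/P_{k-2}A_{\rm conv}(X/S,E)\lo {\rm gr}^P_kA_{\rm conv}(X/S,E)\lo 0,
\]
and then to match the result against the \v{C}ech-Gysin morphism $G+\rho$ via the graded isomorphism (\ref{ali:ruoxvp}). First I would work locally with an embedding $X\os{\sus}{\lo} {\cal P}^{\rm ex}$ as in \S\ref{sec:lcs}, so that the underlying complex $H$ is represented by $L^{\rm UE}_{\os{\circ}{X}_{\bul}/\os{\circ}{S}}({\cal E}^{\bul}\otimes \Om^{\bul}_{{\cal P}^{\rm ex}_{\bul}/\os{\circ}{S}})$ filtered by $P$, and the $\theta$ occurring in the definition of $A_{\rm conv}(X/S,E)$ is literally $d\log t\wedge$. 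Then the total differential of the double complex (\ref{cd:v}) decomposes into the horizontal differential coming from $H$ itself, and the vertical $\theta$-differential shifting the column index by one.

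The next step is to trace a lift of a class in the summand of ${\rm gr}^P_k A_{\rm conv}(X/S,E)$ indexed by some $j_0\geq \max\{-k,0\}$ through this total differential. Under the graded identification (\ref{ali:ruoxvp}) such a class is represented by (a lift of) an element of $R^{q-2j_0-k}f^{\rm conv}_{\os{\circ}{X}^{(2j_0+k)}/\os{\circ}{S}*}\bigl(a^{(2j_0+k)*}_{\rm conv}(E)\otimes_{\mab Z}\vp^{(2j_0+k)}_{\rm conv}\bigr)(-j_0-k,u)$. Applying the horizontal differential of $H$ produces, by the very computation of the residue isomorphism (\ref{eqn:grps}) (used for $k$ replaced by $k+1$), a contribution living in the summand of ${\rm gr}^P_{k-1}$ indexed by the same $j$, but at intersection degree $2j_0+k+1$; this is the standard alternating sum of pull-backs along the immersions $\os{\circ}{X}_{\ul{\lam}}\os{\sus}{\lo} \os{\circ}{X}_{\ul{\lam}_j}$, hence precisely the morphism $\rho$ of (\ref{eqn:rhogsn}). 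Applying $\theta=f^*(d\log t)\wedge$ produces a contribution that increases the column index by one and which, after passing from the $P$-filtration on $H/\tau_{j_0}[1]\{j_0\}$ to the $P$-filtration on $H/\tau_{j_0+1}[1]\{j_0+1\}$, lands in the summand indexed by $j_0$ at intersection degree $2j_0+k-1$; by the very definition (\ref{eqn:gdxz}) of $G^{\rm conv}_{D/Z,F}$ out of (\ref{eqn:xzl}), this $\theta$-contribution is the log convergent Gysin morphism of the smooth divisors $\os{\circ}{X}_{\ul{\lam}}\os{\sus}{\lo} \os{\circ}{X}_{\ul{\lam}_\beta}$, hence precisely the morphism $G$ of (\ref{eqn:togsn}).

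Finally I would verify the signs: the $(-1)^\beta$ signs in (\ref{eqn:cohgxdz}) and in $\rho$ come from the chosen isomorphism (\ref{eqn:coxts}) of orientation sheaves together with the Koszul sign rule for the total complex $s(H/\tau_j[1]\{j\})$, using that in our convention a double complex is anti-commutative (cf.~the discussion right after the definition of $A_{\rm conv}(X/S,E)$ in \S\ref{sec:wfcipp}); the Tate-twist shift $(-j-k,u)\mapsto (-j-k+1,u)$ on the Gysin component is forced by our convention that the abrelative Frobenius compatibility with $\theta$ multiplies by $\deg(u)$, as in (\ref{cd:fdegcbc}) and (\ref{prop:xxle}). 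The main obstacle, as usual in this circle of ideas, is the careful bookkeeping of signs and Tate twists: once the bookkeeping is set up, the identification of the two components of $d_1$ with $G$ and $\rho$ is a completely local and formal consequence of (\ref{eqn:grps}), (\ref{eqn:gdxz}) and the simplicial \v{C}ech description of the horizontal differential of $H$, and is, mutatis mutandis, the SNCL log convergent analogue of \cite[(10.1)]{ndw}.
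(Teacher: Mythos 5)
Your overall strategy is the right one and is in fact the strategy behind the proofs the paper invokes (its own proof is a one-line citation of \cite[(2.2.12), (2.2.13)]{nh3} and of the proof of \cite[(10.1)]{ndw}, which carry out exactly this local residue computation): compute $d_1$ as the connecting homomorphism of $0\to {\rm gr}^P_{k-1}\to P_k/P_{k-2}\to {\rm gr}^P_k\to 0$ on the double complex and identify the two resulting components under the graded isomorphism. However, your identification of which differential produces which component is reversed, and your bookkeeping shows it. Lift a class in the summand $j_0$ of ${\rm gr}^P_k$ to $\omega\in P_{2j_0+k+1}({\cal E}\otimes\Om^{q+1}_{{\cal P}^{\rm ex}/\os{\circ}{S}})$, locally $\omega=\tilde{\eta}\wedge d\log x_{\lam_0}\wedge\cdots\wedge d\log x_{\lam_{2j_0+k}}$. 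The $\theta$-component $\theta\wedge\omega$ sits in the column $j_0+1$, where the level $P_{k-1}$ of the filtration is $P_{2j_0+k+2}$ of the pole filtration; since locally $\theta=\sum_{\lam}d\log x_{\lam}$, its residue at level $2j_0+k+2$ is the alternating sum of the restrictions of $\eta$ to the $(2j_0+k+2)$-fold intersections, so it lands in the target summand $j'=j_0+1$ (intersection degree $2j_0+k+1$, same cohomological degree) and yields $\rho$, not $G$. Conversely the component coming from the differential of $H$ stays in the column $j_0$, where ${\rm gr}^P_{k-1}$ is ${\rm gr}^P_{2j_0+k}$ of the pole filtration, i.e.\ intersection degree $2j_0+k-1$ with cohomological degree raised by $2$; the classical residue computation there is precisely how the boundary morphism of (\ref{eqn:xzl})/(\ref{eqn:oealog}) — that is, the Gysin morphism (\ref{eqn:gdxz}) — arises, so this component is $G$. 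Note also that your description of the horizontal contribution (``the summand of ${\rm gr}^P_{k-1}$ indexed by the same $j$, but at intersection degree $2j_0+k+1$'') is internally inconsistent: the summand of ${\rm gr}^P_{k-1}$ indexed by $j_0$ has intersection degree $2j_0+k-1$, and a contribution at intersection degree $2j_0+k+1$ necessarily has index $j_0+1$. The Gysin component cannot come from $\theta\wedge$ at all: wedging with $d\log t$ preserves the cohomological degree of the residue class and the Tate twist, whereas $G$ raises the degree by $2$ and shifts the twist by one.

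If you swap the two attributions — de Rham direction of $H$ gives $G$ within the same column, $\theta\wedge$ into the next column gives $\rho$ — the remaining points you raise (the $(-1)^{\bet}$ signs via (\ref{eqn:coxts}) and the anticommutativity convention for the total complex, and the compatibility of twists, which for $G$ is the usual Gysin twist and for $\rho$ is no twist) do go through, and the argument becomes the SNCL log convergent analogue of \cite[(10.1)]{ndw}, which is what the paper's proof appeals to.
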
 
\begin{proof} 
This proposition follows from \cite[(2.2.12), (2.2.13)]{nh3} 
and the proof of \cite[(10.1)]{ndw}. 
\end{proof}

\section{Monodromy operators}\label{sec:mn} 
In this section we define the $p$-adic monodromy operator on the log convergent cohomologcal complex 
of a log smooth scheme. We also define the $p$-adic quasi-monodromy operator 
on the log convergent Steenbrink complex $(A_{\rm conv},P)$ of an SNCL scheme. 
We prove that they are equal. 
\par 
Let the notations be as in \S\ref{sec:mplf} and \S\ref{sec:wfcipp}. 
Let us recall the following triangle (\ref{ali:oqay}): 
\begin{align*} 
R\eps^{\rm conv}_{Y/S*}(\eps^{{\rm conv}*}_{Y/S/\os{\circ}{S}}(F))[-1]\os{\theta \wedge}{\lo}  
\wt{R}\eps_{Y/\os{\circ}{S}*}(F) 
\lo R\eps^{\rm conv}_{Y/S*}(\eps^{{\rm conv}*}_{Y/S/\os{\circ}{S}}(F)) 
\tag{13.0.1}\label{ali:omqay}
\os{+1}{\lo}.
\end{align*} 
Hence we obtain the following boundary morphism 
\begin{align*} 
d\col R\eps^{\rm conv}_{Y/S*}(\eps^{{\rm conv}*}_{Y/S/\os{\circ}{S}}(F))\lo 
R\eps^{\rm conv}_{Y/S*}(\eps^{{\rm conv}*}_{Y/S/\os{\circ}{S}}(F)). 
\tag{13.0.2}\label{ali:omcavy}
\end{align*}

\begin{defi}\label{defi:srdr}
We call the boundary morphism above 
the {\it convergent monodromy operator} of $F$
and we denote it by 
\begin{align*} 
N_{\rm conv}\col R\eps^{\rm conv}_{Y/S*}(\eps^{{\rm conv}*}_{Y/S/\os{\circ}{S}}(F))
\lo R\eps^{\rm conv}_{Y/S*}(\eps^{{\rm conv}*}_{Y/S/\os{\circ}{S}}(F)). 
\tag{13.1.1}\label{ali:omcvy}
\end{align*} 
In the case where $F$ is trivial, we call $N_{\rm conv}$
the {\it convergent monodromy operator} of $Y/S$. 
\end{defi} 

\begin{rema}\label{lemm:eit}
We can also construct the following morphism
\begin{align*} 
\nabla \col Rf^{\rm conv}_{Y/S*}(\eps^{{\rm conv}*}_{Y/S/\os{\circ}{S}}(F))
\lo Rf^{\rm conv}_{Y/S*}(\eps^{{\rm conv}*}_{Y/S/\os{\circ}{S}}(F))
\otimes_{{\cal O}_S}\Om^1_{S/\os{\circ}{S}}
\tag{13.2.1}\label{ali:stt}
\end{align*} 
by using the ``convergent Gauss-Manin connection'' for 
the composite morphism $Y\lo S\lo \os{\circ}{S}$ if 
we prove the base change formula for the log convergent cohomologies. 
(The base change formula will be proved in later sections.)
By using the following residue morphism 
\begin{align*} 
{\rm Res} \col \Om^1_{S/\os{\circ}{S}}\os{\sim}{\lo} {\cal O}_S,
\tag{13.2.2}\label{ali:stet}
\end{align*} 
we obtain the following composite morphism 
\begin{align*} 
N \col Rf^{\rm conv}_{Y/S*}(\eps^{{\rm conv}*}_{Y/S/\os{\circ}{S}}(F))
\os{\nabla}{\lo} Rf^{\rm conv}_{Y/S*}(\eps^{{\rm conv}*}_{Y/S/\os{\circ}{S}}(F))
\otimes_{{\cal O}_S}\Om^1_{S/\os{\circ}{S}}\os{\rm Res}{\lo} 
R\eps^{\rm conv}_{Y/S*}(\eps^{{\rm conv}*}_{Y/S/\os{\circ}{S}}(F)). 
\tag{13.2.3}\label{ali:abtt}
\end{align*} 
Though we can also prove that this $N$ is equal to the induced morphism by 
$N_{\rm conv}$, we do not discuss this in this paper. 
\end{rema}

Next we define the $p$-adic convergent quasi-monodromy operator and 
we formulate the variational $p$-adic monodromy-weight conjecture ((\ref{conj:remc})) 
and the variational filtered log $p$-adic hard Lefschetz conjecture ((\ref{conj:lhilc})). 
The former is a generalization of the $p$-adic monodromy-weight conjecture in \cite{msemi} as in \cite{nb}. 

\par 
Set 
\begin{align*}
A^{\bul \bul}:=H/\tau_0[1]\os{\theta}{\lo} H/\tau_1[1]\{1\}\os{\theta}{\lo} 
\cdots \os{\theta}{\lo} H/\tau_j[1]\{j\} \os{\theta}{\lo} \cdots
\end{align*}  
and 
\begin{align*}
P_kA^{\bul \bul}:=(\cdots \os{\theta}{\lo} \tau_{\max \{2j+k+1,j\}}H
/\tau_j[1]\{j\} \os{\theta}{\lo} \cdots) \quad
(k\in {\mab Z}, j\in {\mab N}).
\end{align*}  
Consider the following morphism 
\begin{align*} 
\nu ^{ij}:={\rm proj}  \col {\cal E}^{\bul}\otimes_{{\cal O}_{{\cal P}^{\rm ex}_{\bul}}}
{\Om}^{i+j+1}_{{\cal P}^{\rm ex}_{\bul}/\os{\circ}{S}}/P_{j+1}  
\lo {\cal E}^{\bul}\otimes_{{\cal O}_{{\cal P}^{\rm ex}_{\bul}}}
{\Om}^{i+j+1}_{{\cal P}^{\rm ex}_{\bul}/\os{\circ}{S}}/P_{j+2}.   
\tag{13.2.4}\label{eqn:reslbb}
\end{align*}  
This morphism induces the morphism  
\begin{equation*} 
\nu^{ij}:={\rm proj}. \col A^{ij}\lo A^{i-1,j+1}. 
\end{equation*}   
Set 
\begin{align*} 
\nu:=
s(\oplus_{i,j\in {\mab N}}\nu^{ij}).
\tag{13.2.5}\label{ali:ntzd}
\end{align*}  
Let 
\begin{equation*} 
\nu \col (A_{\rm conv}(X/S,E),P)
\lo 
(A_{\rm conv}(X/S,E),P\langle -2\rangle) 
\tag{13.2.6}\label{eqn:naxgdxdn}
\end{equation*} 
be a morphism of complexes induced by 
$\{\nu^{ij}\}_{i,j \in {\mab N}}$. 
Here $\langle -2\rangle$ means the (-2)-shift of the filtration: 
$P\langle -2\rangle_kA_{\rm conv}(X/S,E):=
P_{k-2}A_{\rm conv}(X/S,E)$. 
We also have the morphism 
$$\theta \col A^{ij}\lo A^{i,j+1}.$$ 
\par 
Let $g\col X\lo X$ be a morphism  fitting into the following commutative diagram
\begin{equation*} 
\begin{CD} 
X' @>{g'}>> X'' \\
@VVV @VVV \\ 
X @>{g}>> X \\
@VVV @VVV \\ 
S_1 @>>> S_1\\ 
@V{\bigcap}VV @VV{\bigcap}V \\ 
S @>{u}>> S, 
\end{CD}
\tag{13.2.7}\label{cd:xgspxy}
\end{equation*}
where $X''$ 
is another disjoint union 
of the member of an affine open covering of $X$. 
Since the following diagram  
\begin{equation*}
\begin{CD}
A^{ij}
@>{{\rm proj}.}>> A^{i-1,j+1}\\ 
@V{g^{*ij}}VV 
@VV{\deg(u)g^{*i-1,j+1}}V  \\
g_{*}(A^{ij}) @>{{\rm proj}.}>>
g_{*}(A^{i-1,j+1}) 
\end{CD}
\tag{13.2.8}\label{cd:phipnu}
\end{equation*}
is commutative, the morphism (\ref{eqn:naxgdxdn}) is 
the following morphism 
\begin{equation*} 
\nu \col (A_{\rm conv}(X/S,E),P)\lo 
(A_{\rm conv}(X/S,E),P\langle -2\rangle)(-1,u). 
\tag{13.2.9}\label{eqn:axdxdn}
\end{equation*}

\par  
Set 
\begin{equation*} 
B^{ij}
:=A^{i-1,j}(-1,u)\oplus A^{ij} 
\quad ( i,j\in {\mab N})
\tag{13.2.10}\label{eqn:axmdn}
\end{equation*} 
and 
\begin{equation*} 
B^{ij}
:=A^{i-1,j}(-1,u)\oplus A^{ij} 
\quad ( i,j\in {\mab N})
\tag{13.2.11}\label{eqn:anxdn}
\end{equation*} 
(cf.~\cite[p.~246]{st1}).
The horizontal differential morphism
$d' \col B^{ij} \lo B^{i+1,j}$ 
is, by definition, the induced morphism by a morphism 
$d' \col B^{ij} \lo B^{i+1,j}$ 
defined by the following formula: 
\begin{align*} 
d'(\om_1,\om_2)=(\nabla \om_1,-\nabla \om_2) 
\tag{13.2.12}\label{ali:sddclxn}
\end{align*} 
and the vertical one 
$d'' \col B^{ij} \lo B^{i,j+1}$ is the induced morphism 
by a morphism  $d'' \col B^{ij} \lo B^{i,j+1}$ 
defined by the following formula: 
\begin{align*} 
d''(\om_1,\om_2)=
(-\theta \wedge \om_1+\nu(\om_2),\theta \wedge \om_2).
\tag{13.2.13}\label{ali:sddpxn}
\end{align*}  
It is easy to check that 
$B^{\bul \bul}$ is actually a double complex. 
Let $B$ be the single complex of $B^{\bul \bul}$. 
\par
Let 
\begin{align*} 
\mu_{{X/\os{\circ}{S}}} \col 
\wt{R}u_{X/\os{\circ}{S}*}
(\eps^{{\rm conv}*}_{X/\os{\circ}{S}}(E))\lo B
\tag{13.2.14}\label{ali:sgsclxn}\\
\end{align*} 
be a morphism
of complexes induced by the following morphisms
\begin{align*} 
\mu^i_{\bul} \col {\cal E}^{\bul}
\otimes_{{\cal O}_{{\cal P}^{\rm ex}_{\bul}}}
{\Om}^i_{{\cal P}^{\rm ex}_{\bul}/\os{\circ}{S}} 
\lo  & ~{\cal E}^{\bul}
\otimes_{{\cal O}_{{\cal P}^{\rm ex}_{\bul}}}
{\Om}^i_{{\cal P}^{\rm ex}_{\bul}/\os{\circ}{S}}/P_0 \oplus   {\cal E}^{\bul}
\otimes_{{\cal O}_{{\cal P}^{\rm ex}_{\bul}}}
{\Om}^{i+1}_{{\cal P}^{\rm ex}_{\bul}/\os{\circ}{S}}/P_0 
\quad (i\in {\mab N})
\end{align*} 
defined by the following formula 
\begin{align*} 
\mu^i_{\bul}(\om):=(\om~{\rm mod}~P_0,
\theta \wedge \om~{\rm mod}~P_0) \quad 
(\om \in 
{\cal E}^{\bul}
\otimes_{{\cal O}_{{\cal P}^{\rm ex}_{\bul}}}
{\Om}^i_{{\cal P}^{\rm ex}_{\bul}/\os{\circ}{S}}).
\end{align*} 
Then we have the following 
morphism of triangles:
\begin{equation*}
\begin{CD}
@>>> 
A_{\rm conv}(X/S,E)[-1] @>{}>>B  \\
@. @A{(\theta_{X/S}\wedge *)[-1]}AA  
@AA{\mu_{{X/\os{\circ}{S}}}}A
\\
@>>> 
R\eps^{\rm conv}_{X/S*}(\eps^{{\rm conv}*}_{X/S}(E))[-1]   
@>{\theta \wedge}>> \wt{R}\eps^{\rm conv}_{X/S*}(\eps^{{\rm conv}*}_{X/S}(E))
\end{CD}
\tag{13.2.15}\label{cd:sgsctexn}
\end{equation*} 
\begin{equation*}
\begin{CD}  
@>>> A_{\rm conv}(X/S,E)
@>{+1}>>  \\  
@. @A{\theta_{X/S} \wedge}AA \\ 
@>>> R\eps^{\rm conv}_{X/S*}(\eps^{{\rm conv}*}_{X/S}(E))
@>{+1}>>.\\
\end{CD}
\end{equation*}

\parno 
Hence we obtain the following as in \cite[p.~246]{st1} 
and \cite[(11.10)]{ndw}:

\begin{prop}\label{prop:cmzoqm}
The zariskian monodromy operator
\begin{align*} 
N \col &
R\eps^{\rm conv}_{X/S*}
(\eps^*_{X/S}(E))\lo  
R\eps^{\rm conv}_{X/S*}
(\eps^*_{X/S}(E))(-1,u)
\tag{13.3.1}\label{eqn:minbv}\\
\end{align*}  
is equal to  
$$\nu \col 
A_{\rm conv}(X/S,E) 
\lo A_{\rm conv}(X/S,E)(-1,u)$$ 
via the isomorphism {\rm (\ref{ali:pi})}. 
\end{prop}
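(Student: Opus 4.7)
The plan is to exploit the morphism of triangles (\ref{cd:sgsctexn}) together with the definition of $N_{\rm conv}$ from (\ref{ali:omqay}). The lower row of (\ref{cd:sgsctexn}) is a rotation of the defining triangle of $N_{\rm conv}$, so under the identification $\theta\col R\eps^{\rm conv}_{X/S*}(\eps^{{\rm conv}*}_{X/S}(E))\os{\sim}{\lo} A_{\rm conv}(X/S,E)$ of (\ref{ali:pi}), the connecting morphism of the lower triangle is exactly $N$ of (\ref{eqn:minbv}) via (\ref{defi:srdr}). It therefore suffices to show that the connecting morphism of the upper triangle in (\ref{cd:sgsctexn}) is $\nu$ on $A_{\rm conv}(X/S,E)$.

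First I would check that $B^{\bul\bul}$ is genuinely a double complex. The nontrivial verification is $(d'')^2=0$ on the second factor and the cross-term $d'd''+d''d'=0$; both are immediate sign chases using $(\theta\wedge)^2=0$ together with the fact that $\nu$, being the canonical projection $H/\tau_j[1]\{j\}\lo H/\tau_{j+1}[1]\{j+1\}$ induced by (\ref{eqn:reslbb}), (anti)commutes with $\theta\wedge$ and with $\nabla$ in the double complex (\ref{cd:v}) defining $A_{\rm conv}(X/S,E)$. Given this, one observes that the formulas (\ref{ali:sddclxn})--(\ref{ali:sddpxn}) exhibit $B$ as a short exact sequence of complexes
\begin{equation*}
0\lo A_{\rm conv}(X/S,E) \os{\iota}{\lo} B \os{\pi}{\lo} A_{\rm conv}(X/S,E)(-1,u)[-1] \lo 0,
\end{equation*}
where $\iota(\omega_2)=(0,\omega_2)$ and $\pi(\omega_1,\omega_2)=\omega_1$, hence a distinguished triangle.

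Next I would compute the connecting morphism of this triangle directly. Given a local cocycle $\omega_1$ of $A_{\rm conv}(X/S,E)(-1,u)[-1]$, lift it to $(\omega_1,0)\in B$; apply $d_B=d'+d''$, which by (\ref{ali:sddclxn})--(\ref{ali:sddpxn}) produces an element whose image under $\pi$ is $d_A\omega_1=0$, so it lies in $\iota(A_{\rm conv}(X/S,E))$. The image in $A_{\rm conv}(X/S,E)$ is read off from the cross-term $\nu(\omega_2)$ in $d''$ after transposing: a bookkeeping on the formulas (\ref{ali:sddclxn})--(\ref{ali:sddpxn}) shows that this image is precisely $\nu(\omega_1)$. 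Thus the connecting morphism is
\begin{equation*}
A_{\rm conv}(X/S,E)(-1,u)[-1] \os{\nu[-1]}{\lo} A_{\rm conv}(X/S,E),
\end{equation*}
i.e.\ (after rotation) the map $A_{\rm conv}(X/S,E) \os{\nu}{\lo} A_{\rm conv}(X/S,E)(-1,u)$ of (\ref{eqn:axdxdn}). Commutativity of the two squares in (\ref{cd:sgsctexn}) follows from the explicit formula $\mu^i_\bul(\omega)=(\omega \bmod P_0,\theta\wedge\omega \bmod P_0)$: the second coordinate of $\mu$ matches the projection $\wt{R}\eps^{\rm conv}_{X/S*}\to A_{\rm conv}(X/S,E)$ followed by $\theta\wedge$, and the first coordinate matches the $\theta\wedge$ map into $A_{\rm conv}[-1]$ after rotation. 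Hence $\mu_{X/\os{\circ}{S}}$ is a morphism of triangles, and because its cone is the mapping fiber of an isomorphism (by (\ref{ali:pi}) applied to both factors), it is a quasi-isomorphism; so the induced map on connecting morphisms identifies $N$ with $\nu$.

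The main obstacle I anticipate is the sign bookkeeping: both $\nu$ as defined in (\ref{eqn:naxgdxdn}) and the connecting homomorphism of a short exact sequence of complexes carry sign conventions (from the definition of the double complex (\ref{cd:v}), from the minus sign in $d'(\omega_1,\omega_2)=(\nabla\omega_1,-\nabla\omega_2)$, and from the shift $\{1\}$ versus $[1]$ in the construction of $A_{\rm conv}(X/S,E)$), and these must be reconciled so that the final identification is $N=\nu$ on the nose rather than $N=-\nu$. This can be pinned down on a local model $X\os{\sus}{\lo}\ol{\cal P}^{\rm ex}$ using $\theta=g^*(d\log t)\wedge$, reducing the comparison to the sign convention already implicit in the definition (\ref{defi:srdr}) via the triangle (\ref{ali:omqay}); this matches the analogous computations in \cite{st1} and \cite[\S11]{ndw}.
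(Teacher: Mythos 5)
Your overall strategy is the paper's own: Proposition (\ref{prop:cmzoqm}) is proved there precisely by constructing the morphism of triangles (\ref{cd:sgsctexn}) and comparing boundary morphisms, the lower triangle giving $N$ by (\ref{defi:srdr}) and the upper one giving $\nu$, with the outer vertical isomorphisms supplied by (\ref{ali:pi}) (following \cite{st1} and \cite[(11.10)]{ndw}). However, the key step in your write-up --- identifying the boundary morphism of the upper triangle with $\nu$ --- fails as written, because you have interchanged the sub- and quotient complexes of $B$. By (\ref{ali:sddclxn})--(\ref{ali:sddpxn}) the summand stable under both $d'$ and $d''$ is the \emph{first} one: $d'(\om_1,0)=(\nabla\om_1,0)$ and $d''(\om_1,0)=(-\theta\wedge\om_1,0)$, so $\{(\om_1,0)\}$ is a subcomplex of $B$, isomorphic to $A_{\rm conv}(X/S,E)(-1,u)[-1]$, whereas $\{(0,\om_2)\}$ is \emph{not} a subcomplex, since $d''(0,\om_2)=(\nu(\om_2),\theta\wedge\om_2)$ has a nonzero first component --- this cross term is exactly what produces $\nu$. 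Hence your maps $\iota(\om_2)=(0,\om_2)$ and $\pi(\om_1,\om_2)=\om_1$ are not morphisms of complexes and the short exact sequence you assert does not exist; note also that its connecting morphism would have the shape $A_{\rm conv}(X/S,E)(-1,u)[-1]\lo A_{\rm conv}(X/S,E)[1]$, which is not the shape of $\nu$.

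Consequently your explicit computation collapses: lifting a cocycle $\om_1$ of the first summand to $(\om_1,0)$ and applying $d_B$ gives $(\nabla\om_1-\theta\wedge\om_1,0)$, whose second component vanishes identically, so the recipe ``read the image off the cross-term'' returns $0$, not $\nu(\om_1)$. The correct computation goes the other way: the exact sequence is $0\lo A_{\rm conv}(X/S,E)(-1,u)[-1]\lo B\lo A_{\rm conv}(X/S,E)\lo 0$ (first summand $=$ sub, second $=$ quotient); a cocycle $\om_2$ of the quotient lifts to $(0,\om_2)$, and $d_B(0,\om_2)=(\nu(\om_2),-\nabla\om_2+\theta\wedge\om_2)=(\nu(\om_2),0)$, so the boundary of the upper triangle in (\ref{cd:sgsctexn}) is indeed $\nu\col A_{\rm conv}(X/S,E)\lo A_{\rm conv}(X/S,E)(-1,u)$. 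With this correction, the remainder of your argument --- checking the commutativity of (\ref{cd:sgsctexn}) from the formula for $\mu^i_{\bul}$ and transporting along the outer isomorphisms $\theta\wedge$ of (\ref{ali:pi}), with the signs pinned down on a local model as you indicate --- coincides with the paper's proof.
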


\begin{coro}\label{coro:nncilp} 
Assume that $\os{\circ}{X}$ is quasi-compact. 
Then  the zariskian monodromy operator 
\begin{align*} 
N \col &  R\eps^{\rm conv}_{X/S*}(\eps^*_{X/S}(E))\lo  R\eps^{\rm conv}_{X/S*}(\eps^*_{X/S}(E))(-1,u)
\tag{13.4.1}\label{ali:mdmycoh} \\
\end{align*} 
is nilpotent.  
\end{coro}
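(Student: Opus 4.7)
The plan is to reduce nilpotency of $N$ to nilpotency of $\nu$ via (\ref{prop:cmzoqm}), and then to extract nilpotency of $\nu$ from the $p$-adic semi-purity theorem (\ref{theo:calvpc}) together with the column-indexed structure of the double complex $A^{\bullet\bullet}$. Setting $H:=\wt{R}\eps^{\rm conv}_{X/\os{\circ}{S}*}(\eps^{{\rm conv}*}_{X/\os{\circ}{S}}(E))$, recall that $A_{\rm conv}(X/S,E)$ is the single complex of the double complex whose $j$-th column is $H/\tau_{j}[1]\{j\}$ and whose vertical arrows are the canonical projections; by construction $\nu$ is essentially the column-shift $H/\tau_j \to H/\tau_{j+1}$, so $\nu^{n}$ factors through the subcomplex $F_{n}A_{\rm conv}(X/S,E)$ consisting of columns indexed by $j\geq n$.

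The key input is that $\os{\circ}{X}$ quasi-compact forces $H$ to have bounded cohomological amplitude. Indeed, $X$ being an SNCL scheme with finitely many smooth components $\{\os{\circ}{X}_{\lam}\}_{\lam \in \Lam}$, the intersection scheme $\os{\circ}{X}{}^{(m)}$ is empty for $m\geq \#\Lam$, and the underlying schemes $\os{\circ}{X}{}^{(m)}/\os{\circ}{S}{}_1$ are of bounded dimension. Combined with the explicit description (\ref{caseali:grmtkr}) of the cohomology sheaves $\wt{R}^k\eps^{\rm conv}_{X/\os{\circ}{S}*}(\eps^{{\rm conv}*}_{X/\os{\circ}{S}}(E))$ in terms of $a^{(k-1)}_{{\rm conv}*}$, this yields an integer $M$ with ${\cal H}^k(H)=0$ for $k>M$. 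Hence $H/\tau_j \simeq 0$ in $D^+({\cal K}_{\os{\circ}{X}/\os{\circ}{S}})$ for every $j\geq M$.

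With this in hand, I would conclude as follows: for $n\geq M$ each column of the subcomplex $F_{n}A_{\rm conv}(X/S,E)$ is acyclic, and since the double complex is bounded below (in both directions after filtering), the associated simple complex is acyclic; equivalently the inclusion $F_{n}A_{\rm conv}(X/S,E) \hookrightarrow A_{\rm conv}(X/S,E)$ is the zero map in the derived category. Because $\nu^{n}$ factors through this inclusion (up to the appropriate Tate twists $(-n,u)$ coming from (\ref{eqn:axdxdn}), which do not affect vanishing), one obtains $\nu^{n}=0$ in $D^+({\cal K}_{\os{\circ}{X}/\os{\circ}{S}})$, and therefore $N^{n}=0$ via (\ref{eqn:minbv}).

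The main obstacle will be the derived-categorical bookkeeping of the last step: one must make sure that the factorization $\nu^{n}\colon A_{\rm conv}(X/S,E)\to F_{n}A_{\rm conv}(X/S,E)\hookrightarrow A_{\rm conv}(X/S,E)$ is genuine at the level of representatives (after choosing a suitable flasque resolution of the modified $P$-filtered complex as in \S\ref{sec:mplf}), and that the acyclicity of columns really does propagate to acyclicity of the total complex in our setting. Once one fixes a representative in which the columns $j\geq M$ are honestly acyclic via the canonical truncation, the standard spectral-sequence argument for the column filtration on a bounded below double complex takes care of the remaining issue.
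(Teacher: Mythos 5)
Your reduction to the nilpotency of $\nu$ via (\ref{prop:cmzoqm}) is exactly the paper's proof, which consists of the single remark that the corollary follows from (\ref{prop:cmzoqm}) since $\nu$ is nilpotent. Your justification of that nilpotency---quasi-compactness of $\os{\circ}{X}$ bounds the number of branches, hence by the semi-purity theorem (\ref{theo:calvpc}) the cohomological amplitude of $H$, so the columns $H/\tau_j$ (equivalently, by $P=\tau$, the weight-quotient columns on the representative where $\nu$ is defined) are acyclic for large $j$ and $\nu^{n}$ factors through an acyclic subcomplex, forcing $\nu^{n}=0$ in the derived category---is correct and merely makes explicit the finiteness the paper leaves implicit.
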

\begin{proof}
(\ref{coro:nncilp}) immediately follows from 
(\ref{prop:cmzoqm}) since $\nu$ is nilpotent.
\end{proof}

Next we give two conjectures as in \cite{nb}.

\begin{conj}[{\bf Variational convergent monodromy-weight  conjecture}]
\label{conj:remc}  
Assume that $\os{\circ}{X} \lo \os{\circ}{S}_1$ is projective. 
Let $k$ and $q$ be nonnegative integers. 
Then we conjecture that the induced morphism 
\begin{equation*} 
N^k \col {\rm gr}^P_{q+k}R^qf_{X/S*}({\cal K}_{X/S}) 
\lo {\rm gr}^P_{q-k}R^qf_{X/S*}({\cal K}_{X/S})(-k,u)  
\tag{13.5.1}\label{eqn:pgme} 
\end{equation*}
by the monodromy operator is an isomorphism.  
\end{conj}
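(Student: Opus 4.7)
The plan is to split the conjecture into a base-change reduction and a fiberwise step, with the latter identified as a strict form of the absolute $p$-adic monodromy-weight conjecture of Mokrane. First I would invoke the filtered base change theorem for $(Rf^{\rm conv}_{\os{\circ}{X}/\os{\circ}{S}*}(A_{\rm conv}(X/S)),P)$ announced in \S\ref{sec:ct} to pass from $X/S$ to each geometric point $s\in \os{\circ}{S}_1$. Both $P$ and $N$ are functorial, the former by (\ref{coro:spfcz}) and the latter through the identification $N=\nu$ of (\ref{prop:cmzoqm}), so $N^k$ at $s$ is the base change of the generic $N^k$. Provided the source and target of (\ref{eqn:pgme}) are ${\cal K}_S$-locally free of the expected rank, which will follow from the $E_2$-degeneration used next, bijectivity of the generic $N^k$ is equivalent to bijectivity on each fiber.

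Second, I would use the $E_2$-degeneration of the convergent weight spectral sequence (\ref{ali:wtfapwt}), which for the trivial coefficient is proved in \cite{nb} and is reobtained here through the isozariskian comparison theorem (\ref{theo:izz}). This degeneration yields
\begin{align*}
{\rm gr}^P_{q+k} R^q f^{\rm conv}_{X/S*}({\cal K}_{X/S}) &\os{\sim}{\lo} E_2^{-k,q+k},\\
{\rm gr}^P_{q-k} R^q f^{\rm conv}_{X/S*}({\cal K}_{X/S})(-k,u) &\os{\sim}{\lo} E_2^{k,q-k}(-k,u),
\end{align*}
where the $E_1$-terms are the explicit direct sums of convergent cohomologies of the strata $\os{\circ}{X}{}^{(j)}$ with Gysin-restriction differentials described in (\ref{cd:gsmsd}). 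From (\ref{eqn:axdxdn}) and (\ref{prop:cmzoqm}), $N$ acts on $E_1^{-k,q+k}$ as the index-shift projection onto $E_1^{-k-2,q+k+2}(-1,u)$, so $N^k$ on the associated graded becomes an explicit morphism of complexes mirroring the classical Steenbrink-Rapoport-Zink picture.

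Third, the heart of the argument is to show that this explicit morphism is bijective on $E_2$-cohomology. By the formal reduction originally due to Guill\'{e}n-Navarro Aznar and Steenbrink in the $\infty$-adic setting and adapted by Mokrane to the $p$-adic setting, this bijectivity follows from Poincar\'{e} duality / hard Lefschetz on each stratum $\os{\circ}{X}{}^{(j)}$ combined with a Koszul-type combinatorial analysis of the $E_1$-complex. In our setting the required hard Lefschetz input in convergent cohomology is available via Katz-Messing once each geometric fiber is analysed over a finite field extension of $\kap$, with the Frobenius eigenvalues controlled by the Weil conjectures as realised in (\ref{ali:wtfapwt}).

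The main obstacle is precisely this last step: the reduction converts (\ref{conj:remc}) into the absolute $p$-adic monodromy-weight conjecture for proper SNCL schemes over a log point, which remains open in general and is known only in special situations (low dimensions, or schemes with additional geometric structure such as abelian varieties and Shimura-type uniformizations). The plan therefore does not yield an unconditional proof; what it furnishes is a clean reduction of the variational conjecture to Mokrane's absolute conjecture via the base change and $E_2$-degeneration machinery of this paper, together with an unconditional proof of (\ref{conj:remc}) in every case where the absolute conjecture is known fiberwise.
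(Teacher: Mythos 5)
You should note first that (\ref{conj:remc}) is stated in the paper as a \emph{conjecture}: the paper contains no proof of it, and the only result it establishes in its direction is the unlabeled corollary in \S\ref{sec:ct}, deduced from the comparison isomorphism (\ref{eqn:caad}) of (\ref{theo:izz}), namely that the variational convergent monodromy-weight conjecture holds whenever its crystalline analogue (the subject of \cite{nb} and the references there) holds. So the best one can honestly offer here is a conditional reduction, and your proposal is indeed of that type; but it takes a different and, as written, partly untenable route. The paper stays over the same base $S$ and transfers the statement wholesale to the crystalline side via $(A_{\rm iso{\textrm -}zar}(X/S),P)\os{\sim}{\lo}(A_{\rm zar}(X/S),P)\otimes^L_{\cal V}K$, with no fiberwise argument at all; you instead propose a base-change reduction to geometric points of $\os{\circ}{S}_1$ followed by an appeal to the absolute $p$-adic conjecture.

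Two concrete gaps. First, your third step overclaims: hard Lefschetz on the strata $\os{\circ}{X}{}^{(j)}$ (via Katz--Messing) together with the Steenbrink/Guill\'en--Navarro formalism does \emph{not} give bijectivity of $N^k$ on the $E_2$-terms. In the Hodge-theoretic setting that argument crucially uses the polarization (Hodge--Riemann positivity) on the primitive parts, and it is exactly this input that has no known $p$-adic substitute; Mokrane \emph{formulated} the absolute $p$-adic monodromy-weight conjecture and proved it only in special cases, so your paragraph three cannot be carried out, as your own paragraph four then concedes --- the proposal is internally inconsistent on this point. Second, the fiberwise reduction in your first step is not justified as stated: (\ref{prop:bchange}) is a filtered base change theorem for morphisms of $p$-adic formal families of log points, and to specialize to a geometric point of $\os{\circ}{S}_1$ and conclude that generic bijectivity of $N^k$ is equivalent to fiberwise bijectivity you need local freeness of ${\rm gr}^P_{q\pm k}R^qf^{\rm conv}_{X/S*}({\cal K}_{X/S})$ and compatibility of ${\rm gr}^P$ with base change (strictness); $E_2$-degeneration (\ref{theo:ssd}) alone does not supply this. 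If you delete the Katz--Messing claim and replace your fiberwise reduction by the paper's comparison (\ref{eqn:caad}), you recover exactly the conditional statement the paper actually proves: the convergent conjecture follows from its crystalline analogue, which remains open in general.
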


\par 
Let the assumption be as in (\ref{conj:remc}). 
Assume that the relative dimension of $\os{\circ}{X} \lo \os{\circ}{S}_1$ is of pure dimension $d$. 
Let $L$ be a relatively ample line bundle on 
$\os{\circ}{X}/\os{\circ}{S}_1$.  
Let ${\cal J}_{X/S}$ be the ``defining ideal sheaf'' in $(X/S)_{\rm conv}$: 
$\Gam((U,T),{\cal J}_{X/S}):={\rm Ker}({\cal K}_T\lo {\cal K}_U)$ for 
an object $(U,T)\in {\rm Conv}(X/S)$. 
Let $i\col X_{\rm zar} \lo (X/S)_{\rm conv}$ 
be the morphism of topoi defined by 
$i_*(F)((U,T)):=F(U)$ for an object $(U,T)$ in ${\rm Conv}(X/S)$. 
Then we have the following exact sequence 
\begin{align*} 
0\lo 1+{\cal J}_{X/S}\lo {\cal O}^*_{X/S}\lo i_*({\cal O}_X^*)\lo 0. 
\end{align*} 
Hence we have the following boundary morphism 
\begin{align*} 
R^1f_{*}({\cal O}^*_X) \lo R^2f_{X/S*}(1+{\cal J}_{X/S})(1). 
\end{align*} 
Composing the induced morphism 
\begin{align*}
R^2f_{X/S*}(1+{\cal J}_{X/S})(1) \lo R^2f_{X/S*}({\cal K}_{X/S})(1)
\end{align*} 
by the following logarithm morphism 
\begin{align*}
\log \col  1+{\cal J}_{X/S}\lo {\cal K}_{X/S}, 
\end{align*} 
we obtain the following composite morphism (cf.~\cite[\S3]{boi}): 
\begin{align*}
c_{1,{\rm conv}}\col R^1f_{*}({\cal O}^*_X) \lo R^2f_{X/S*}(1+{\cal J}_{X/S})(1) 
\lo  R^2f_{X/S*}({\cal K}_{X/S})(1). 
\end{align*} 
Here note that 
$\log(1+a):=\sum_{n=0}^{\infty}(-1)^na^n/n$ is 
defined for a local section $a$ of ${\cal J}_{X/S}$ since the topology of 
$T$ is $p$-adic for an enlargement $(U,T)$ of $X/S$.

Set $\eta=c_{1,{\rm conv}}(L)\in R^2f_{X/S*}({\cal K}_{X/S})(1)$. 
The variational filtered log $p$-adic hard Lefschetz conjecture
is the following: 

\begin{conj}[{\bf Variational filtered 
convergent log hard Lefschetz conjecture}]\label{conj:lhilc}
Let $L$ be a relatively ample line bundle on $X/S_1$. Set $\eta:=c_{1,{\rm conv}}(L)$. 
Then we conjecture the following$:$ 
\par 
$(1)$ The following cup product 
\begin{equation*} 
\eta^i \col 
R^{d-i}f_{X/S*}({\cal K}_{X/S})\lo R^{d+i}f_{X/S*}({\cal K}_{X/S})(i)
\tag{13.6.1}\label{eqn:fcvilpl} 
\end{equation*}
is an isomorphism. 
\par 
$(2)$ 
In fact, $\eta^i$ is the following isomorphism of filtered sheaves: 
\begin{equation*} 
\eta^i \col 
(R^{d-i}f_{X/S*}({\cal K}_{X/S}),P) 
\os{\sim}{\lo} (R^{d+i}f_{X/S*}({\cal K}_{X/S})(i),P). 
\tag{13.6.2}\label{eqn:filfilpl} 
\end{equation*}
Here $(i)$ means that 
$$P_k(R^{d+i}f_{X/S*}({\cal K}_{X/S})(i))
=P_{k+2i}R^{d+i}f_{X/S*}({\cal K}_{X/S}).$$ 
\end{conj}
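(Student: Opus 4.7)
The plan is to attack Conjecture~\ref{conj:lhilc} by reducing to hard Lefschetz on the smooth proper components $\os{\circ}{X}{}^{(k)}$ via the weight spectral sequence (\ref{ali:wtfapwt}). First I would reduce to the case where $\os{\circ}{S}$ is the spectrum of a perfect field of characteristic $p>0$ and $S$ is its associated log point, using the filtered base-change theorem for $(Rf^{\rm conv}_{\os{\circ}{X}/\os{\circ}{S}*}(A_{\rm conv}(X/S)),P)$ that is announced in the introduction and whose proof is promised in \S\ref{sec:ct}. Over such a base, the convergent cohomologies of the components $\os{\circ}{X}{}^{(k)}$ are isocrystals over a single point, for which $p$-adic hard Lefschetz (in the isocrystal/convergent setting, via Katz--Messing and its refinements) is available.

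Once we are over a log point, I would first verify that cupping with $\eta=c_{1,{\rm conv}}(L)$ shifts the filtration $P$ on $A_{\rm conv}(X/S,{\cal K}_{\os{\circ}{X}/\os{\circ}{S}})$ by $+2$. The class $\eta$ is defined through the Kummer-type exact sequence $0\to 1+{\cal J}_{X/S}\to {\cal O}_{X/S}^*\to i_*({\cal O}_X^*)\to 0$ and the logarithm, and one verifies at the level of the double complex (\ref{cd:v}) that cup product with a representative of $\eta$ respects the truncations $\tau_j$ after a shift of weight by $2$. Combined with the functoriality established in (\ref{coro:cwxstxs}) and (\ref{theo:fcpwczo}), this gives a morphism of filtered complexes $\eta\cup -\colon (A_{\rm conv}(X/S),P)\to (A_{\rm conv}(X/S),P\langle -2\rangle)(1,u)$, and hence after $Rf_*$ a morphism of filtered sheaves that is a candidate for (\ref{eqn:filfilpl}).

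With this compatibility in hand, together with the $E_2$-degeneration of (\ref{ali:wtawztc}) obtained in \S\ref{sec:ct} by comparison with the crystalline weight spectral sequence (\ref{ali:wtbwcc}) already proved to $E_2$-degenerate in \cite{nb}, the filtration $P$ on $R^qf_{X/S*}({\cal K}_{X/S})$ is identified with the one whose graded pieces are the $E_2^{-k,q+k}$ of (\ref{ali:wtawztc}). By (\ref{ali:ruamp}) each $E_1^{-k,q+k}$ is a direct sum of convergent cohomologies $R^{q-2j-k}f^{\rm conv}_{\os{\circ}{X}{}^{(2j+k)}/\os{\circ}{S}*}(\cdot)(-j-k)$ on smooth proper schemes. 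On each component the $p$-adic hard Lefschetz theorem applies to the pull-back $a^{(2j+k)*}(L)$, and one then checks that these hard Lefschetz isomorphisms descend to $E_2$ and give the desired isomorphism of graded pieces, which implies (\ref{eqn:filfilpl}) and hence (\ref{eqn:fcvilpl}).

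The main obstacle is twofold. First, one must show that the abstract cup product with $\eta$ on $A_{\rm conv}(X/S)$ is compatible, after passing to $E_1$, with cup product with $c_{1,{\rm conv}}(a^{(k)*}L)$ on each component; this is essentially a question of functoriality of $c_{1,{\rm conv}}$ under the closed immersions $a^{(k)}$ and of its interaction with the convergent Gysin morphisms of \S\ref{sec:bd}, which should follow from a careful examination of the Poincar\'e residue isomorphism (\ref{eqn:grps}) and of the construction (\ref{eqn:lcycls}). Second, one must match the edge morphisms $d_1$ described in (\ref{cd:gsmsd}) with the corresponding restriction and Gysin maps so that the component-wise hard Lefschetz isomorphisms assemble to an isomorphism on each $E_2$-term; this is the $p$-adic analogue of the classical combinatorial argument of Steenbrink--Saito--Guill\'en--Navarro, but must be carried out with the $p$-adic Tate twists and the $D$-twist convention of Definition~\ref{defi:wcel}, which is where most of the bookkeeping subtleties of this paper reside. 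Once these two compatibilities are established, part (1) follows from part (2) by forgetting the filtration.
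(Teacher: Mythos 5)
The statement you are trying to prove is stated in the paper as a \emph{conjecture}, and the paper offers no proof of it: the only result the paper establishes in its direction is the corollary in \S\ref{sec:ct} saying that if the \emph{crystalline} analogue of (\ref{conj:lhilc}) holds, then (\ref{conj:lhilc}) holds, via the comparison isomorphism (\ref{eqn:caad}) (and the author merely remarks that the crystalline analogue is known in some cases). So a complete proof along your lines would go far beyond the paper, and in fact your argument has a genuine gap at exactly the point where the problem is open.

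The gap is the assembly step. After the reduction to a log point, $E_2$-degeneration, and the identification of ${\rm gr}^P$ of the abutment with the $E_2$-terms of (\ref{ali:wtfapwt}), you invoke hard Lefschetz (Katz--Messing type) for each smooth proper component $\os{\circ}{X}{}^{(2j+k)}$ and claim that ``these hard Lefschetz isomorphisms descend to $E_2$ and give the desired isomorphism of graded pieces.'' Hard Lefschetz on each direct summand of $E_1^{-k,q+k}$ does \emph{not} imply hard Lefschetz on $E_2$, which is the cohomology of the complex $(E_1^{\bul,\bul},d_1)$ whose differentials are the \v{C}ech--Gysin maps of (\ref{cd:gsmsd}). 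In the Hodge-theoretic and $\ell$-adic settings this step is carried out with the theory of polarized (bigraded) Hodge--Lefschetz structures: one needs the $d_1$-differentials to be morphisms of polarized Lefschetz modules and a positivity statement for the pairings to conclude that the Lefschetz operator stays an isomorphism after taking cohomology of the $E_1$-complex. No such polarization/positivity input is available for convergent ($p$-adic) cohomology in the generality considered here; this missing input is precisely why (\ref{conj:lhilc}) --- and the companion monodromy-weight statement (\ref{conj:remc}), which is closely intertwined with it --- are formulated as conjectures rather than theorems. Your preliminary steps (filtered base change, weight shift of cup product with $\eta=c_{1,{\rm conv}}(L)$ on the double complex (\ref{cd:v}), compatibility of $c_{1,{\rm conv}}$ with $a^{(k)*}$ and the Gysin maps of \S\ref{sec:bd}) are reasonable and would be needed in any eventual proof, but they reduce the conjecture only to this unsolved core, so the proposal does not constitute a proof. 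If your goal is to match what the paper actually establishes, the correct statement to prove is the conditional one: reduce (\ref{conj:lhilc}) to its crystalline analogue by means of the comparison theorem (\ref{theo:izz}).
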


In the next section we give results for these conjectures after giving a comparison theorem 
on the relative log convergent cohomology and the relative log crystalline cohomology of $X/S$. 

\section{Comparison theorem}\label{sec:ct}
In this section we compare $(A_{\rm iso{\textrm -}zar},P)$ 
with $(A_{\rm zar},P)\otimes_{\mab Z}^L{\mab Q}$ 
in the case where $\pi{\cal V}$ has a PD-strucutre $\gam$. 
(In the case of the trivial coefficient, it is not necessary to assume that 
$\pi{\cal V}$ has a PD-strucutre $\gam$.) 
\par 
Let ${\cal V}$, $\pi$, $S$ and $S_1$ 
be as in \S\ref{sec:logcd}. 
Assume that $\pi{\cal V}$ has a PD-strucutre $\gam$. 
Because we have assumed that $S$ is flat over ${\cal V}$, 
$\gam$ extends to $S$.  
By abuse of notation we denote this extended PD-structure by $\gam$ again. 
Let $f \col Y\lo S_1$ be a fine log scheme over $S_1$.  
Then we have the log crystalline topos 
$({Y/S})_{\rm crys}$ of $Y/(S,\pi{\cal O}_S,\gam)$.  
Let 
$$u^{\rm crys}_{Y/S} \col (({Y/S})_{\rm crys},{\cal O}_{Y/S}) 
\lo  (Y_{\rm zar},f^{-1}({\cal O}_S))$$ 
be the canonical projection.  
To prove the comparison theorem, we recall the following, 
which is a special case of \cite[(11.4)]{nhw} and a relative version of  
\cite[Theorem 7.7]{oc} and \cite[Theorem 3.1.1]{s2}: 

\begin{theo}[{\bf Comparison theorem}]\label{theo:hct} 
Let $Y$ be a log smooth log scheme over $S_1$. 
Denote by 
\begin{equation*}
\Xi \col {\rm Isoc}_{\rm conv}(Y/S)
(:={\rm Isoc}_{\rm conv, zar}(Y/S)) 
\lo {\rm Isoc}_{\rm crys}(Y/S)
\tag{14.1.1}\label{eqn:icym}
\end{equation*} 
the relative version of 
the functor defined in {\rm \cite[Theorem 5.3.1]{s1}}
and denoted by $\Phi$ in $[${\rm loc.~cit.}$]$. 
Let $E$ be an object of 
${\rm Isoc}_{\rm conv}(Y/S)$. 
Then there exists a functorial isomorphism 
\begin{equation*}
Ru^{\rm conv}_{Y/S*}(E)
\os{\sim}{\lo}
Ru^{\rm crys}_{Y/S*}(\Xi(E)).
\tag{14.1.2}\label{eqn:kymc}  
\end{equation*} 
\end{theo}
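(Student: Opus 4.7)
My plan is to reduce the statement to a local computation via the two Poincaré lemmas and then invoke the Ogus–Shiho comparison between the system of universal enlargements and the $p$-adic PD-envelope after tensoring with $\mathbb{Q}$.

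First, since the statement is local on $Y$, I will cover $Y$ by affine opens $\{Y_i\}_{i\in I}$ each admitting an exact closed immersion $Y_i \hookrightarrow \mathcal{Y}_i$ into a log smooth $p$-adic formal $S$-scheme (this is possible by the criterion of log smoothness in \cite{klog1}). Forming $Y_{\bullet} := \mathrm{cosk}_0^Y(\coprod_i Y_i)$ and the corresponding $\mathcal{Y}_{\bullet}$, both sides of \eqref{eqn:kymc} satisfy cohomological descent: on the convergent side via $R\pi^{\mathrm{conv}}_{Y/S*}$ as in \S\ref{sec:rlct}, and on the crystalline side via the analogous simplicial projection for crystalline topoi. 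Thus it suffices to produce a natural, functorial isomorphism in each simplicial degree.

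Next, fix a single embedding $Y \hookrightarrow \mathcal{Y}$ over $S$ with $\mathcal{Y}$ log smooth. Let $\{\mathfrak{T}_{Y,n}(\mathcal{Y})\}_{n=1}^\infty$ be the system of universal enlargements, with $\mathcal{E} = \{\mathcal{E}_n\}_n$ the $\mathcal{K}_{\overrightarrow{\mathfrak{T}_Y(\mathcal{Y})}}$-module corresponding to $E$. By the log convergent Poincaré lemma \eqref{theo:pl} together with \eqref{prop:cdfza},
\begin{equation*}
Ru^{\mathrm{conv}}_{Y/S*}(E) \;=\; \varprojlim_n \beta_{n*}\!\bigl(\mathcal{E}_n \otimes_{\mathcal{O}_{\mathcal{Y}}} \Omega^{\bullet}_{\mathcal{Y}/S}\bigr)
\end{equation*}
in $D^+(f^{-1}(\mathcal{K}_S))$. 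On the crystalline side, let $D = D_{Y,\gamma}(\mathcal{Y})$ denote the $p$-adically completed log PD-envelope of $Y$ in $\mathcal{Y}$, and let $L \in \mathrm{Isoc}_{\mathrm{crys}}(Y/S)$ with associated PD-stratification giving a locally free $\mathcal{K}_D$-module $\mathcal{L}$ together with an integrable connection. Then the log crystalline Poincaré lemma (in the relative, $p$-adically completed form of \cite{klog1}) gives
\begin{equation*}
Ru^{\mathrm{crys}}_{Y/S*}(\Xi(E)) \;=\; \mathcal{L} \otimes_{\mathcal{O}_D} \Omega^{\bullet}_{\mathcal{Y}/S} \otimes_{\mathbb{Z}} \mathbb{Q}.
\end{equation*}

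The heart of the proof, and the main obstacle, is the comparison between the two coefficient rings on a single embedding. Concretely, one has a canonical morphism $\mathfrak{T}_{Y,n}(\mathcal{Y}) \to D_n := D/\pi^{n}D$ compatible with variation of $n$, inducing a morphism of pro-objects
\begin{equation*}
\varprojlim_n \mathcal{K}_{\mathfrak{T}_{Y,n}(\mathcal{Y})} \;\longleftarrow\; \mathcal{K}_D,
\end{equation*}
and Ogus' theorem (the log version, used in \cite[Theorem 5.3.1]{s1} and its relative generalization \cite[Theorem 3.1.1]{s2}) says that this morphism identifies $\mathrm{Isoc}_{\mathrm{conv}}(Y/S)$ with $\mathrm{Isoc}_{\mathrm{crys}}(Y/S)$ via $\Xi$, and moreover identifies $\varprojlim_n \beta_{n*}(\mathcal{E}_n)$ with $\mathcal{L}\otimes_{\mathbb{Z}}\mathbb{Q}$ as a module with integrable connection. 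Granting this (which is \cite[(11.4)]{nhw} in the generality we need), the natural morphism between the two de Rham complexes above is a quasi-isomorphism because both $\varprojlim_n \mathcal{E}_n$ and $\mathcal{L}\otimes\mathbb{Q}$ are flat over the respective coefficient rings and the comparison intertwines the connections.

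Finally, I would verify functoriality of the isomorphism \eqref{eqn:kymc} in $E$ (immediate from functoriality of $\Xi$ and of the linearization functors) and independence from the choice of embedding and open covering, by the standard argument taking refinements and fiber products of two choices $\mathcal{Y}_{\bullet}$ and $\mathcal{Y}'_{\bullet}$. Globalization via $R\pi^{\mathrm{conv}}_{Y/S*}$ and its crystalline analogue then yields the isomorphism on $Y$. The main technical obstacle is not the de Rham-style arguments, which are by now standard, but the careful justification of the Ogus–Shiho identification in the present relative, log-smooth, flat $p$-adic formal $\mathcal{V}$-scheme setting; this is where I rely on \cite[(11.4)]{nhw}.
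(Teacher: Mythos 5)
The paper does not prove (\ref{theo:hct}) at all: it recalls the statement as a special case of \cite[(11.4)]{nhw} and as the relative version of \cite[Theorem 7.7]{oc} and \cite[Theorem 3.1.1]{s2}. Your skeleton --- cover $Y$ by affines admitting embeddings into log smooth formal schemes, use cohomological descent, compute both sides over a single embedding $Y\os{\sus}{\lo}{\cal Y}$ by the log convergent Poincar\'{e} lemma (\ref{theo:pl}), (\ref{prop:cdfza}) and the log crystalline Poincar\'{e} lemma, and then compare the system of universal enlargements with the $p$-adically completed log PD-envelope after tensoring with ${\mab Q}$ --- is exactly the strategy underlying those references, so at that level you are consistent with the paper's citation-level treatment.

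The genuine gap is in the step you yourself call the heart of the proof. The equivalence $\Xi$ of categories of isocrystals does \emph{not} identify $\vpl_n\beta_{n*}({\cal E}_n)$ with ${\cal L}\otimes_{\mab Z}{\mab Q}$ as modules with integrable connection: these live over genuinely different rings, $\vpl_n{\cal K}_{T_n}$ (a limit of dilatations, i.e.\ functions on an open tube) versus ${\cal K}_D$ (a divided-power completion, i.e.\ functions on a strictly smaller tube already for a point embedded in $\wh{\mab A}{}^1_S$). What exists is only a canonical comparison morphism of de Rham complexes, obtained from the universal property of the PD-envelope because divided powers exist uniquely on ideals of ${\mab Q}$-algebras; it is not an isomorphism termwise, so ``both are flat and the comparison intertwines the connections'' proves nothing. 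The assertion that this comparison morphism is a quasi-isomorphism is precisely the nontrivial content of \cite[Theorem 7.7]{oc}, \cite[Theorem 3.1.1]{s2} and \cite[(11.4)]{nhw}; since you then ``grant'' it by citing \cite[(11.4)]{nhw} --- the very result of which (\ref{theo:hct}) is declared to be a special case --- your argument is circular as a proof, although acceptable as a reduction to the literature. A self-contained proof would have to supply the local computation comparing the de Rham cohomology of the PD-envelope tensored with ${\mab Q}$ with that of the universal enlargements, and you should also record the standing hypothesis of \S\ref{sec:ct} that $\pi{\cal V}$ carries a PD-structure $\gam$, without which the crystalline side of (\ref{eqn:kymc}) is not even defined.
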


\par  
Let $X$ be an SNCL scheme over $S_1$. 
Let $f\col X\lo S$ be the structural morphism. 
To give the comparison theorem, we have to give a generalization of 
$(A_{\rm zar}(X/S,F),P)$ 
for a more generalized sheaf than a quasi-coherent crystal $F$ on the crystalline site 
${\rm Crys}(\os{\circ}{X}/\os{\circ}{S})$ defined in \cite{nb}. 
Let $X_{\bul}$ be a simplicial log scheme obtained by an affine open covering of 
$X$ and  a simplicial immersion 
$X_{\bul} \os{\sus}{\lo} \ol{\cal P}_{\bul}$ into a formally log smooth log formal scheme 
over $\ol{S}$. Let $\ol{\mathfrak D}_{\bul}$ be the log PD-envelope of 
this immersion over $(\os{\circ}{S},p{\cal O}_S,[~])$. 
Set ${\mathfrak D}_{\bul}:=\ol{\mathfrak D}_{\bul}\times_{\ol{S}}S$.  
Assume that, we are given a 
a flat quasi-coherent ${\cal K}_{{\mathfrak D}_{\bul}}$-module 
(${\cal K}_{{\mathfrak D}_{\bul}}:=
{\cal O}_{{\mathfrak D}_{\bul}}\otimes_{\mab Z}{\mab Q}$)
with integrable connection  
$({\cal E}({\mathfrak D}_{\bul}),\nabla)$: 
\begin{equation*} 
\nabla \col {\cal E}({\mathfrak D}_{\bul})\lo 
{\cal E}({\mathfrak D}_{\bul})
\otimes_{{\cal O}_{{\cal P}^{\rm ex}_{\bul}}}
\Om^1_{{\cal P}^{\rm ex}_{\bul}/\os{\circ}{S}}. 
\end{equation*} 
We endow 
${\cal E}({\mathfrak D}_{\bul})
\otimes_{{\cal O}_{{\cal P}^{\rm ex}_{\bul}}}
\Om^{\bul}_{{\cal P}^{\rm ex}_{\bul}/\os{\circ}{S}}$ 
with the tensor product $P$ of the trivial filtration on 
${\cal E}({\mathfrak D}_{\bul})$ and 
the (pre)weight filtration on $\Om^{\bul}_{{\cal P}^{\rm ex}_{\bul}/\os{\circ}{S}}$. 
Then, as in \cite{nb}, we have the filtered double complex 
$(A_{\rm zar}({\cal P}^{\rm ex}_{\bul}/S,
{\cal E}({\mathfrak D}_{\bul}))^{\bul \bul},P)$ as follows: 
\begin{align*} 
A_{\rm zar}({\cal P}^{\rm ex}_{\bul}/S,{\cal E}({\mathfrak D}_{\bul}))^{ij}
& :={\cal E}({\mathfrak D}_{\bul})\otimes_{{\cal O}_{{\cal P}^{\rm ex}_{\bul}}}
{\Om}^{i+j+1}_{{\cal P}^{\rm ex}_{\bul}/\os{\circ}{S}}/P_j 
\tag{14.1.3}\label{cd:accef} \\
& :={\cal E}({\mathfrak D}_{\bul})\otimes_{{\cal O}_{{\cal P}^{\rm ex}_{\bul}}}
{\Om}^{i+j+1}_{{\cal P}^{\rm ex}_{\bul}/\os{\circ}{S}}/
P_j({\cal E}({\mathfrak D}_{\bul})\otimes_{{\cal O}_{{\cal P}^{\rm ex}_{\bul}}}
{\Om}^{i+j+1}_{{\cal P}^{\rm ex}_{\bul}/\os{\circ}{S}})  
\quad (i,j \in {\mab N}). 
\end{align*}   
We consider the following boundary morphisms of 
double complexes: 
\begin{equation*}
\begin{CD}
A_{\rm zar}({\cal P}^{\rm ex}_{\bul}/S,{\cal E}({\mathfrak D}_{\bul}))^{i,j+1}  @.  \\ 
@A{\theta \wedge}AA  @. \\
A_{\rm zar}({\cal P}^{\rm ex}_{\bul}/S,{\cal E}({\mathfrak D}_{\bul}))^{ij}
@>{-\nabla}>> 
A_{\rm zar}({\cal P}^{\rm ex}_{\bul}/S,{\cal E}({\mathfrak D}_{\bul}))^{i+1,j}.\\
\end{CD}
\tag{14.1.4}\label{cd:lccbd} 
\end{equation*}  
The sheaf 
$A_{\rm zar}({\cal P}^{\rm ex}_{\bul}/S,{\cal E}({\mathfrak D}_{\bul}))^{ij}$ 
has a quotient filtration $P$ obtained by the filtration $P$ on 
${\cal E}({\mathfrak D}_{\bul})\otimes_{{\cal O}_{{\cal P}^{\rm ex}_{\bul}}}
{\Om}^{i+j+1}_{{\cal P}^{\rm ex}_{\bul}/\os{\circ}{S}}$: 
\begin{equation*}
P_kA_{\rm zar}({\cal P}^{\rm ex}_{\bul}/S,{\cal E}({\mathfrak D}_{\bul}))^{ij}
:=(P_{2j+k+1}+P_j)
({\cal E}({\mathfrak D}_{\bul})\otimes_{{\cal O}_{{\cal P}^{\rm ex}_{\bul}}}
{\Om}^{i+j+1}_{{\cal P}^{\rm ex}_{\bul}/\os{\circ}{S}})/P_j.
\tag{14.1.5}\label{eq:lpbd} 
\end{equation*} 
Let $(A_{\rm zar}({\cal P}^{\rm ex}_{\bul}/S,{\cal E}
({\mathfrak D}_{\bul})),P)$ be the single filtered complex 
associated to this double complex.  
We assume that the following three conditions (A), (B), (C) hold: 
\par 
(A) We assume that, for another simplicial immersion  
$X'_{\bul} \os{\sus}{\lo} \ol{\cal P}{}'_{\bul}$ and 
for the following commutative diagram 
\begin{equation*} 
\begin{CD} 
X_{\bul} @>{\sus}>> {\cal P}_{\bul} \\ 
@VVV @VV{g_{\bul}}V \\ 
X'_{\bul} @>{\sus}>> 
{\cal P}{}'_{\bul},   
\end{CD} 
\tag{14.1.6}\label{cd:cop}
\end{equation*} 
there exists a morphism 
\begin{equation*} 
\rho_{g_{\bul}} 
\col {\cal E}({\mathfrak D}'_{\bul})
\lo g_{\bul*}({\cal E}({\mathfrak D}_{\bul}))
\tag{14.1.7}\label{cd:cngp}
\end{equation*} 
of ${\cal O}_{{\mathfrak D}'_{\bul}}$-modules 
or ${\cal K}_{{\mathfrak D}'_{\bul}}$-modules 
satisfying the usual cocycle condition 
for the following commutative diagram: 
\begin{equation*} 
\begin{CD} 
X_{\bul} @>{\sus}>> {\cal P}_{\bul} \\ 
@VVV @VV{}V \\ 
X'_{\bul} @>{\sus}>> {\cal P}{}'_{\bul}\\
@VVV @VV{}V \\ 
X''_{\bul} @>{\sus}>> 
{\cal P}{}''_{\bul}.   
\end{CD} 
\tag{14.1.8}\label{cd:cctpt}
\end{equation*} 
Here ${\cal P}'_{\bul}$ is the analogue of 
${\cal P}_{\bul}$ for the immersion 
$X'_{\bul} \os{\sus}{\lo} \ol{\cal P}{}'_{\bul}$
and 
${\mathfrak D}'_{\bul}$ is the analogue of 
${\mathfrak D}_{\bul}$ for the immersion 
$X'_{\bul} \os{\sus}{\lo} {\cal P}'_{\bul}$. 
\par 
(B) We assume that the following morphism 
\begin{align*} 
R\pi_{{\rm zar}*}(A_{\rm zar}({\cal P}'{}^{\rm ex}_{\bul}/S,
{\cal E}({\mathfrak D}'_{\bul}))\lo 
R\pi_{{\rm zar}*}(A_{\rm zar}({\cal P}^{\rm ex}_{\bul}/S,
{\cal E}({\mathfrak D}_{\bul})) 
\end{align*}
induced by the morphisms 
$g_{\bul}\col {\cal P}_{\bul}\lo {\cal P}'_{\bul}$ 
and $\rho_{g_{\bul}}$ is an isomorphism 
in $D^+(f^{-1}({\cal O}_T))$. 
\par 
(C) We assume that, for a positive integer $k$, 
for a nonnegative integer $n$, 
the following morphism 
\begin{equation*} 
{\rm gr}^P_k({\cal E}({\mathfrak D}'_{n})
\otimes_{{\cal O}_{{\cal P}'{}^{\rm ex}_{\! \!n}}}
{\Om}^{\bul}_{{\cal P}'{}^{\rm ex}_{\! \!n}/\os{\circ}{T}})
\os{\sim}{\lo}  
Rg_{n*}({\rm gr}^P_k({\cal E}({\mathfrak D}_{n})
\otimes_{{\cal O}_{{\cal P}^{\rm ex}_{n}}}
{\Om}^{\bul}_{{\cal P}^{\rm ex}_{n}/\os{\circ}{S}}))
\tag{14.1.9}\label{eqn:ptep}
\end{equation*} 
induced by the morphisms 
$g_{\bul}\col {\cal P}_{\bul}\lo {\cal P}'_{\bul}$ 
and $\rho_{g_{\bul}}$ is an isomorphism in $D^+(f^{-1}_{n}({\cal O}_T))$. 
\bigskip
\parno 
Then, by the same standard proof as that of the well-definedness of 
$(A_{\rm zar}(X/S,F),P)$ in \cite{nb},  
we can prove that the filtered complex 
$R\pi_{{\rm zar}*}
((A_{\rm zar}({\cal P}^{\rm ex}_{\bul}/S,
{\cal E}({\mathfrak D}_{\bul})),P))$ 
is a well-defined filtered complex.

\par  
Now let 
$E$ be an object of ${\rm Isoc}_{\rm conv}(\os{\circ}{X}/\os{\circ}{S})$.  
Let $(\ol{\cal E}{}^{\bul},\ol{\nabla})=\{(\ol{\cal E}{}^{\bul}_n,\ol{\nabla})\}_{n=1}^{\infty}$ 
$(\ol{\nabla}\col \ol{\cal E}{}^{\bul}_{n+1}\lo \ol{\cal E}{}^{\bul}_n
\otimes_{{\cal O}_{\ol{\cal P}_{\bul}}}\Om^1_{\ol{\cal P}_{\bul}/\os{\circ}{S}}$)
be the integrable connection obtained by 
$\eps^{{\rm conv}*}_{X/\os{\circ}{S}}(E)$ and the simplicial immersion 
$X_{\bul} \os{\sus}{\lo} \ol{\cal P}_{\bul}$. 
By the universality of the log enlargement $\{{\mathfrak T}_{X_{\bul},n}(\ol{\cal P}_{\bul})\}_{n=1}^{\infty}$,
there exists an $n(\bul)$ (depending on $\bul$ 
such that the morphism $\ol{\mathfrak D}_{\bul}\lo \ol{\cal P}_{\bul}$ 
factors through  a morphism 
$\ol{\lam}{}_{n(\bul)} \col \ol{\mathfrak D}_{\bul}\lo {\mathfrak T}_{X_{\bul},n}(\ol{\cal P}_{\bul})$. 
Hence we obtain the following integrable connection: 
\begin{align*} 
\ol{\lam}{}^*_{n(\bul)+1}(\ol{\cal E}{}^{\bul}_{n(\bul)+1})\lo  
\ol{\lam}{}^*_n(\ol{\cal E}{}^{\bul}_{n(\bul)}) \otimes_{{\cal O}_{\ol{\cal P}{}^{\rm ex}_{\bul}}}
\Om^1_{\ol{\cal P}{}^{\rm ex}_{\bul}/\os{\circ}{S}}.
\end{align*} 
Since $\ol{\lam}{}^*_n(\ol{\cal E}{}^{\bul}_{n(\bul)})$ is independent of for large $n(\bul)$'s, 
we set  $\ol{\lam}{}^*(\ol{\cal E}):=\ol{\lam}{}^*_{n(\bul)}(\ol{\cal E}{}^{\bul}_{n(\bul)})$ 
for a large $n(\bul)$. 
Then we have the following integrable connection: 
\begin{align*} 
\ol{\lam}{}^*(\ol{\cal E}{}^{\bul})\lo  
\ol{\lam}{}^*(\ol{\cal E}{}^{\bul}) \otimes_{{\cal O}_{\ol{\cal P}{}^{\rm ex}_{\bul}}}
\Om^1_{\ol{\cal P}{}^{\rm ex}_{\bul}/\os{\circ}{S}}.
\end{align*} 
The morphism $\ol{\lam}_{n(\bul)}
\col \ol{\mathfrak D}_{\bul}\lo \ol{\cal P}_{\bul}$ 
induces a morphism 
$\lam_{n(\bul)} \col {\mathfrak D}_{\bul}\lo {\mathfrak T}_{X_{\bul},n(\bul)}({\cal P}_{\bul})$. 
Set ${\cal E}{}^{\bul}_{n(\bul)}
:=\ol{\cal E}{}^{\bul}_{n(\bul)}\otimes_{{\cal K}_{{\mathfrak T}_{X_{\bul},n(\bul)}(\ol{\cal P}_{\bul})}}
{\cal K}_{{\mathfrak T}_{X_{\bul},n(\bul)}({\cal P}_{\bul})}$ and 
${\lam}{}^*({\cal E}):={\lam}{}^*_{n(\bul)}(\ol{\cal E}{}^{\bul}_{n(\bul)})$ for a large $n(\bul)$.  
Then we have the following integrable connection: 
\begin{align*} 
\nabla \col \lam^*({\cal E}{}^{\bul})\lo  
\lam^*({\cal E}{}^{\bul}) \otimes_{{\cal O}_{{\cal P}{}^{\rm ex}_{\bul}}}
\Om^1_{{\cal P}{}^{\rm ex}_{\bul}/\os{\circ}{S}}.
\tag{14.1.10}\label{ali:pl}
\end{align*} 

\begin{lemm}\label{lemm:alp}
The connection {\rm (\ref{ali:pl})} satisfies the three conditions {\rm (A), (B), (C)}.  
\end{lemm}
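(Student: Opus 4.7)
The plan is to read off each of the three conditions from the isocrystal structure of $E$ and the log convergent Poincar\'{e} lemma, using that the coefficient $\lam^*({\cal E}^{\bul})$ is, by construction, a pull-back of $\eps^{{\rm conv}*}_{X/\os{\circ}{S}}(E)$ via the transition morphism from the system of log universal enlargements to the log PD-envelopes.

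For condition (A), I would argue as follows. Given a commutative diagram (\ref{cd:cop}), the universal property of the system of log universal enlargements produces a morphism ${\mathfrak T}_{X_{\bul},n(\bul)}({\cal P}_{\bul}) \lo {\mathfrak T}_{X'_{\bul},n(\bul)}({\cal P}{}'_{\bul})$ compatible with $g_{\bul}$, and the universal property of the log PD-envelope produces a morphism ${\mathfrak D}_{\bul} \lo {\mathfrak D}'_{\bul}$. Since $E$ is an isocrystal of ${\cal K}_{\os{\circ}{X}/\os{\circ}{S}}$-modules, its transition isomorphism yields a canonical morphism $\rho_{g_{\bul}}\col {\cal E}({\mathfrak D}'_{\bul}) \lo g_{\bul *}({\cal E}({\mathfrak D}_{\bul}))$. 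The cocycle condition for a composite diagram (\ref{cd:cctpt}) is then a direct consequence of the cocycle condition for transition isomorphisms of the isocrystal $E$.

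For condition (B), I would use the log convergent Poincar\'{e} lemma (\ref{theo:pl}) together with (\ref{theo:wtvsca}). Indeed, by the construction of $A_{\rm zar}({\cal P}^{\rm ex}_{\bul}/S,\lam^*({\cal E}^{\bul}))$ as the single complex associated to the double complex arising from the filtration $P$ on $\lam^*({\cal E}^{\bul})\otimes \Om^{\bul}_{{\cal P}^{\rm ex}_{\bul}/\os{\circ}{S}}$, the filtered complex $R\pi^{\rm zar}_{\os{\circ}{X}/\os{\circ}{S}*}(A_{\rm zar}({\cal P}^{\rm ex}_{\bul}/S,\lam^*({\cal E}^{\bul})),P)$ is canonically identified, via the isozariskian analogue of the comparison $(\wt{R}\eps^{\rm conv}_{X/\os{\circ}{S}*},\tau)\os{\sim}{\lo}(\wt{R}\eps^{\rm conv}_{X/\os{\circ}{S}*},P)$ and the construction of $(A_{\rm iso{\textrm -}zar}(X/S,E),P)$, with an object depending only on the isocrystal $E$ and not on the simplicial immersion. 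Concretely, both sides in (B) compute $(A_{\rm iso{\textrm -}zar}(X/S,E),P)$ up to canonical filtered isomorphism, and therefore the morphism induced by $(g_{\bul},\rho_{g_{\bul}})$ is a filtered isomorphism.

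For condition (C), I would reduce to the convergent Poincar\'{e} lemma on each smooth component $\os{\circ}{X}{}^{(k)}$. By the Poincar\'{e} residue isomorphism (\ref{eqn:grps}), the graded piece ${\rm gr}^P_k(\lam^*({\cal E}^{\bul})\otimes \Om^{\bul}_{{\cal P}^{\rm ex}_{\bul}/\os{\circ}{S}})$ is identified, up to the orientation sheaf and a shift, with $b^{(k-1)}_{\bul *}(\lam^*({\cal E}^{\bul})\vert_{\os{\circ}{\cal P}^{{\rm ex},(k-1)}_{\bul}}\otimes \Om^{\bul}_{\os{\circ}{\cal P}^{{\rm ex},(k-1)}_{\bul}/\os{\circ}{S}})$, and similarly for ${\cal P}'_{\bul}$. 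Since $a^{(k-1)*}_{\rm conv}(E)$ is itself an isocrystal on $\os{\circ}{X}{}^{(k-1)}/\os{\circ}{S}$, the morphism induced by $(g_n,\rho_{g_n})$ on these graded pieces is the comparison morphism between the de Rham complexes associated to the same isocrystal with respect to two embedding systems of $\os{\circ}{X}{}^{(k-1)}_n$, which is a quasi-isomorphism by the classical (non-log) convergent Poincar\'{e} lemma. The main technical point to verify carefully is the compatibility of $\rho_{g_{\bul}}$ with the Poincar\'{e} residue morphism, i.e.~the commutativity of (\ref{eqn:relcmrp}) for the coefficient $\lam^*({\cal E}^{\bul})$; this is the main (but routine) obstacle, and it is settled by (\ref{prop:rescos}) applied component by component.
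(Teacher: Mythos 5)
Your treatment of condition (A) agrees with the paper's (the cocycle data comes from the isocrystal structure of $E$ pulled back through the universal enlargements and the PD-envelopes), but your arguments for (B) and (C) have a genuine gap: you never cross the bridge between the convergent side and the PD-envelope side. The complexes appearing in conditions (B) and (C) are built from the coefficient $\lam^*({\cal E}^{\bul})={\cal E}({\mathfrak D}_{\bul})$, which lives on the log PD-envelope ${\mathfrak D}_{\bul}$, whereas the log convergent Poincar\'{e} lemma (\ref{theo:pl}), the formula (\ref{eqn:uybo}) and the comparison (\ref{theo:wtvsca}) only control complexes built from the system of universal enlargements $\{T_{\bul n}\}_n$ (i.e.\ the objects out of which $(A_{\rm conv}(X/S,E),P)$ and $(A_{\rm iso{\textrm -}zar}(X/S,E),P)$ are constructed). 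When you assert for (B) that ``both sides compute $(A_{\rm iso{\textrm -}zar}(X/S,E),P)$ up to canonical filtered isomorphism,'' you are assuming exactly the identification that Theorem (\ref{theo:izz}) is in the process of establishing: the statement that the PD-envelope de Rham complex of $\lam^*({\cal E}^{\bul})$ computes the same object as the convergent one is not a formal consequence of the convergent Poincar\'{e} lemma; it requires the Ogus--Shiho comparison. This is what the paper's proof supplies: via (\ref{eqn:kymc}) (Theorem (\ref{theo:hct})) and (\ref{eqn:uybo}) one gets the commutative square (\ref{eqn:kyemc}) identifying $Ru^{\rm conv}_{X/S*}(\eps^{{\rm conv}*}_{X/S}(E))$ with $Ru^{\rm crys}_{X/S*}(\Xi(\eps^{{\rm conv}*}_{X/S}(E)))$, and then the mapping-cone argument of (\ref{ali:pi}) yields (\ref{eqn:kyapc}); embedding-independence of the crystalline cohomology of the crystal $\Xi(\eps^{{\rm conv}*}_{X/S}(E))$ then gives (B). Note also that even the existence of the underlying crystal is flagged in the paper as nontrivial.

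The same defect affects your argument for (C). After the Poincar\'{e} residue isomorphism, the graded pieces in (C) are de Rham complexes of the restriction of ${\cal E}({\mathfrak D}_n)$ to $\os{\circ}{\cal P}{}^{{\rm ex},(k-1)}_n$, i.e.\ PD-envelope-type objects on the smooth schemes $\os{\circ}{X}{}^{(k-1)}$; the ``classical (non-log) convergent Poincar\'{e} lemma'' you invoke does not apply to them, because they are not linearizations on universal enlargements. What is needed is either the crystalline Poincar\'{e} lemma for the crystal $\Xi^{(k-1)}(a^{(k-1)*}_{\rm conv}(E))$ --- whose existence is again the nontrivial content of the functor $\Xi^{(k-1)}$ --- or, as the paper does, the trivial-log version of the comparison theorem (\ref{theo:hct}) on each $\os{\circ}{X}{}^{(2j+k)}/\os{\circ}{S}$, identifying the graded pieces with $a^{(2j+k)}_*Ru^{\rm crys}_{\os{\circ}{X}{}^{(2j+k)}/\os{\circ}{S}*}(\Xi^{(2j+k)}(a^{(2j+k)*}_{\rm conv}(E)))\otimes_{\mab Z}\vp^{(2j+k)}_{\rm crys}(\os{\circ}{X}/\os{\circ}{S})$, which are independent of the embedding. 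Your appeal to (\ref{prop:rescos}) for compatibility with the residue maps is fine as a supplementary check, but without invoking $\Xi$, $\Xi^{(k)}$ and Theorem (\ref{theo:hct}) the proofs of (B) and (C) do not go through.
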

\begin{proof}
The condition (A) hold since it holds for the connection 
\begin{align*} 
\nabla \col {\cal E}{}^{\bul}\lo  {\cal E}{}^{\bul} 
\otimes_{{\cal O}_{{\cal P}{}^{\rm ex}_{\bul}}}\Om^1_{{\cal P}{}^{\rm ex}_{\bul}/\os{\circ}{S}}.
\end{align*} 
\par 
By (\ref{eqn:kymc}) and (\ref{eqn:uybo}), 
we obtain the following commutative diagram 
\begin{equation*}
\begin{CD}
R\pi^{\rm zar}_{X/S*}(\vpl_n\bet_{n*}({\cal E}^{\bul}_n{\otimes}_{{\cal O}_{{\cal P}^{\rm ex}_{\bul}}} 
\Om^{\bul}_{{\cal P}^{\rm ex}_{\bul}/S}))
@>>>
R\pi^{\rm zar}_{X/S*}(\lam^*({\cal E}^{\bul})
{\otimes}_{{\cal O}_{{\cal P}^{\rm ex}_{\bul}}} 
\Om^{\bul}_{{\cal P}^{\rm ex}_{\bul}/S})\\
@| @|\\
Ru^{\rm conv}_{X/S*}(\eps^{{\rm conv}*}_{X/S}(E))
@>{\sim}>>
Ru^{\rm crys}_{X/S*}(\Xi(\eps^{{\rm conv}*}_{X/S}(E))).
\end{CD}
\tag{14.2.1}\label{eqn:kyemc}  
\end{equation*} 
By the same proof as that of (\ref{ali:pi}), we also have the following commutative diagram: 
\begin{equation*}
\begin{CD}
A_{{\rm iso}{\textrm -}{\rm zar}}(X/S,E)
@>>>R\pi^{\rm zar}_{X/S*}
(A_{\rm zar}({\cal P}^{\rm ex}_{\bul}/S,\lam^*({\cal E}^{\bul})))
\\
@A{\theta \wedge}AA @AA{\theta \wedge}A\\
R\pi^{\rm zar}_{X/S*}(\vpl_n\bet_{n*}({\cal E}^{\bul}_n{\otimes}_{{\cal O}_{{\cal P}^{\rm ex}_{\bul}}} 
\Om^{\bul}_{{\cal P}^{\rm ex}_{\bul}/S}))
@>{\sim}>>
R\pi^{\rm zar}_{X/S*}(\lam^*({\cal E}^{\bul}){\otimes}_{{\cal O}_{{\cal P}^{\rm ex}_{\bul}}} 
\Om^{\bul}_{{\cal P}^{\rm ex}_{\bul}/S}).
\end{CD}
\tag{14.2.2}\label{eqn:kyapc}  
\end{equation*} 
Hence the condition (B) holds. 
\par 
We also have the following commutative diagram by using the Poincar\'{e} residue isomorphism:  
\begin{equation*}
\begin{CD}
{\rm gr}_k^PA_{{\rm iso}{\textrm -}{\rm zar}}(X/S,E)@=\\
@VVV \\
R\pi_{{\rm zar}*}({\rm gr}_k^PA_{\rm zar}({\cal P}^{\rm ex}_{\bul}/S,\lam^*({\cal E}^{\bul})))
@>{\sim}>>
\end{CD}
\tag{14.2.3}\label{cd:yapc}  
\end{equation*} 
\begin{equation*}
\begin{CD}
\us{j\geq \max \{-k,0\}}{\bigoplus}
R\pi_{{\rm zar}*}(\vpl_n\bet_{n*}({\cal E}^{\bul}_n{\otimes}_{{\cal O}_{{\cal P}^{\rm ex}_{\bul}}} 
\Om^{\bul}_{\os{\circ}{\cal P}{}^{{\rm ex},(2j+k)}_{\bul}/\os{\circ}{S}}\otimes_{\mab Z}
\vp^{(2j+k)}(\os{\circ}{\cal P}{}^{{\rm ex},(2j+k)}_{\bul}/\os{\circ}{S})))[-2j-k])\\
@VVV\\
\us{j\geq \max \{-k,0\}}{\bigoplus}
R\pi_{{\rm zar}*}(\lam^*({\cal E}^{\bul}){\otimes}_{{\cal O}_{{\cal P}^{\rm ex}_{\bul}}} 
\Om^{\bul}_{\os{\circ}{\cal P}{}^{{\rm ex},(2j+k)}_{\bul}/\os{\circ}{S}}
\otimes_{\mab Z}
\vp^{(2j+k)}(\os{\circ}{\cal P}{}^{{\rm ex},(2j+k)}_{\bul}/\os{\circ}{S}))[-2j-k]).
\end{CD}
\end{equation*} 
Because $\os{\circ}{X}{}^{(k)}/\os{\circ}{S}$ is smooth, 
we can define the following functor by (\ref{eqn:icym}): 
\begin{equation*}
\Xi^{(k)} \col {\rm Isoc}_{\rm conv}(\os{\circ}{X}{}^{(k)}/\os{\circ}{S})
\lo {\rm Isoc}_{\rm crys}(\os{\circ}{X}{}^{(k)}/\os{\circ}{S}). 
\end{equation*} 
Because the right hand sides of (\ref{cd:yapc}) 
are isomorphic to 
$$\bigoplus_{j\geq \max \{-k,0\}} 
a^{(2j+k)}_* 
(Ru^{\rm conv}_{\os{\circ}{X}{}^{(2j+k)}/\os{\circ}{S}*}
(a^{(2j+k)*}_{{\rm conv}}(E)) 
\otimes_{\mab Z}\vp_{\rm conv}^{(2j+k)}(\os{\circ}{X}/\os{\circ}{S})))$$
and 
$$\bigoplus_{j\geq \max \{-k,0\}} 
a^{(2j+k)}_* 
(Ru^{\rm crys}_{\os{\circ}{X}{}^{(2j+k)}/\os{\circ}{S}*}
(\Xi^{(2j+k)}(a^{(2j+k)*}_{{\rm conv}}(E))
\otimes_{\mab Z}\vp_{\rm crys}^{(2j+k)}(\os{\circ}{X}/\os{\circ}{S}))),$$ 
respectively, 
the right vertical morphism is an isomorphism 
by the trivial log version of (\ref{theo:hct}). 
Hence the condition (C) holds. 
\end{proof}

\begin{defi} 
Let $L$ be a crystal on ${\rm Crys}(X/S)$ 
such that $K\otimes_{\cal V}L=\Xi(\eps^{*{\rm conv}}_{X/S}(E))$. 
We set 
\begin{align*} 
(A_{{\rm iso{\textrm -}zar}}(X/S,\os{\circ}{L}),P)
:=
R\pi_{{\rm zar}*}((A_{\rm zar}({\cal P}^{\rm ex}_{\bul}/S,
\lam^*({\cal E})),P)).
\end{align*} 
Here note that we do not define $\os{\circ}{L}$. 
\end{defi}

\begin{coro}\label{coro:fgr}
There exists the following spectral sequence 
\begin{align*} 
E_1^{-k,q+k}=&\bigoplus_{j\geq \max \{-k,0\}} 
R^{q-2j-k}f^{\rm crys}_{\os{\circ}{X}{}^{(2j+k)}/\os{\circ}{S}*}(\Xi^{(2j+k)}(a^{(2j+k)*}_{{\rm conv}}(E))
\otimes_{\mab Z}\vp_{\rm crys}^{(2j+k)}(\os{\circ}{X}/\os{\circ}{S}))(-j-k)
\tag{14.4.1}\label{ali:sxa}\\
&\Lo R^qf^{\rm crys}_{X/S*}(\eps_{X/S}^{{\rm crys}*}(\Xi(E)))
\end{align*} 
\end{coro}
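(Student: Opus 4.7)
The plan is to derive the spectral sequence by applying $Rf_*$ to the filtered complex $(A_{\rm iso{\textrm -}zar}(X/S,\os{\circ}{L}),P)\in {\rm D}^+{\rm F}(f^{-1}({\cal K}_S))$ constructed immediately above the statement, and then identifying the $E_1$-terms with the direct sums of crystalline cohomologies of the smooth pieces $\os{\circ}{X}{}^{(2j+k)}/\os{\circ}{S}$. The standard spectral sequence of a filtered complex gives
\begin{equation*}
E_1^{-k,q+k}={\cal H}^{q}\bigl(Rf_*{\rm gr}^P_kA_{\rm iso{\textrm -}zar}(X/S,\os{\circ}{L})\bigr)\Longrightarrow R^qf_*\bigl(A_{\rm iso{\textrm -}zar}(X/S,\os{\circ}{L})\bigr),
\end{equation*}
and both sides must be identified.

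First I would compute the graded pieces. This is already packaged in the right column of the diagram (\ref{cd:yapc}) arising in the proof of (\ref{lemm:alp}): the Poincar\'{e} residue isomorphism (\ref{eqn:grps}) applied to ${\cal E}^{\bul}\otimes_{{\cal O}_{{\cal P}^{\rm ex}_{\bul}}}\Om^{\bul}_{{\cal P}^{\rm ex}_{\bul}/\os{\circ}{S}}$, combined with the comparison theorem (\ref{theo:hct}) applied to each smooth $\os{\circ}{X}{}^{(2j+k)}/\os{\circ}{S}$, yields
\begin{equation*}
{\rm gr}^P_kA_{\rm iso{\textrm -}zar}(X/S,\os{\circ}{L})\os{\sim}{\lo}\bigoplus_{j\geq \max\{-k,0\}}a^{(2j+k)}_*\bigl(Ru^{\rm crys}_{\os{\circ}{X}{}^{(2j+k)}/\os{\circ}{S}*}(\Xi^{(2j+k)}(a^{(2j+k)*}_{\rm conv}(E)))\otimes_{\mab Z}\vp^{(2j+k)}_{\rm crys}(\os{\circ}{X}/\os{\circ}{S})\bigr)[-2j-k].
\end{equation*}
Composing with $R\os{\circ}{f}{}^{(2j+k)}_*$ and using the factorization $f^{\rm crys}_{\os{\circ}{X}{}^{(2j+k)}/\os{\circ}{S}}=\os{\circ}{f}{}^{(2j+k)}\circ u^{\rm crys}_{\os{\circ}{X}{}^{(2j+k)}/\os{\circ}{S}}$ gives the claimed $E_1$-formula (modulo the Tate twist, discussed below).

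Second I would identify the abutment. The composite of $\theta\wedge$ (which is an isomorphism in the derived category by the argument of (\ref{theo:ifc}), appearing in (\ref{eqn:kyapc})) with the comparison isomorphism (\ref{eqn:kyemc}) yields
\begin{equation*}
A_{\rm iso{\textrm -}zar}(X/S,\os{\circ}{L})\os{\sim}{\longleftarrow}Ru^{\rm conv}_{X/S*}(\eps^{{\rm conv}*}_{X/S}(E))\os{\sim}{\lo}Ru^{\rm crys}_{X/S*}(\Xi(\eps^{{\rm conv}*}_{X/S}(E))),
\end{equation*}
and applying $Rf_*$ produces the right-hand side $R^qf^{\rm crys}_{X/S*}(\eps^{{\rm crys}*}_{X/S}(\Xi(E)))$ of the abutment (using $K\otimes_{\cal V}L=\Xi(\eps^{*{\rm conv}}_{X/S}(E))$ to pass to the isocrystal). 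Together with step one this produces the spectral sequence as an unramified consequence of the filtered $Rf_*$.

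The subtle point, and the main obstacle, is the Tate twist $(-j-k)$. This twist is not formal for a bare isocrystal $E$; it only acquires meaning when $E$ carries an $F$-isocrystal structure, and it then records the action of the abrelative Frobenius $X\lo X\times_{\os{\circ}{S}_1,F_{\os{\circ}{S}_1}}\os{\circ}{S}_1$ on the orientation sheaves $\vp^{(2j+k)}_{\rm crys}(\os{\circ}{X}/\os{\circ}{S})$, exactly as in (\ref{ali:ruoavp}) and the spectral sequence (\ref{ali:ixsxa}) recalled in the Introduction. I would therefore establish Frobenius-equivariance of the graded isomorphism above by applying the contravariant functoriality (\ref{prop:rescos}) to the abrelative Frobenius over $S\lo S_1^{[p]}(S)$; the normalization $\deg(u)^{-(j+1)}$ built into the contravariant action (\ref{cd:fdegcbc}) supplies precisely the factor $p^{j+k}$ that converts the geometric Frobenius on ${\rm gr}^P_k$ into the $(-j-k)$-Tate-twisted Frobenius on the summand indexed by $j$. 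Once this compatibility is checked, the displayed spectral sequence follows, and the entire construction is visibly functorial through (\ref{theo:fcpwczo}).
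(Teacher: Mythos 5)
Your proposal is essentially the paper's own argument: the proof of (\ref{coro:fgr}) is just a citation to the proof of (\ref{lemm:alp}), whose diagrams (\ref{eqn:kyemc}), (\ref{eqn:kyapc}) and (\ref{cd:yapc}) identify the abutment (via $\theta\wedge$ and the comparison theorem (\ref{theo:hct})) and the graded pieces (via the Poincar\'{e} residue isomorphism and (\ref{theo:hct}) applied to each smooth $\os{\circ}{X}{}^{(2j+k)}/\os{\circ}{S}$) exactly as you do, the spectral sequence then being the standard one of the filtered complex after applying $Rf_*$. The only divergence is your last paragraph: the paper does not carry out any Frobenius-equivariance check here, the twist $(-j-k)$ being treated notationally (it records the Frobenius action in the $F$-isocrystal case, as in (\ref{ali:ruoavp}) and (\ref{ali:wtfapwt})), so that discussion is extra and its claim about the normalization $\deg(u)^{-(j+1)}$ producing exactly the factor $p^{j+k}$ would need its own verification if you wanted to include it.
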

\begin{proof}
We have already proved this in the proof of (\ref{lemm:alp}). 
\end{proof}

\begin{coro}\label{coro:cgrss} 
The spectral sequences $(\ref{ali:wtfapwt})$ and 
$(\ref{ali:sxa})\otimes_{\mab Z}{\mab Q}$ 
are canonically isomorphic. 
\end{coro}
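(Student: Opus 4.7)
The plan is to deduce this isomorphism of spectral sequences from the filtered isomorphism {\rm (\ref{eqn:crs})} of Theorem~\ref{theo:izz}, whose existence is essentially what Lemma~\ref{lemm:alp} establishes by verifying conditions (A), (B), (C) for the connection $\lam^*({\cal E})$. Concretely, taking $(A_{\rm iso{\textrm -}zar}(X/S,E),P)\os{\sim}{\lo}(A_{\rm iso{\textrm -}zar}(X/S,\os{\circ}{L}),P)$ in ${\rm D}^+{\rm F}(f^{-1}({\cal K}_S))$ and applying $Rf_*$, one obtains a filtered isomorphism of bounded below filtered ${\cal K}_S$-modules, and I would then identify the two spectral sequences of {\rm (\ref{ali:wtfapwt})} and {\rm (\ref{ali:sxa})$\otimes_{\mab Z}{\mab Q}$} as the $E_1$-spectral sequences associated to the source and target of this filtered isomorphism, respectively.

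For the identification of the source, I would use the canonical isomorphism $\theta \col Ru^{\rm conv}_{X/S*}(\eps^{{\rm conv}*}_{X/S}(E))\os{\sim}{\lo} A_{\rm iso{\textrm -}zar}(X/S,E)$ together with the calculation of ${\rm gr}^P_kA_{\rm conv}(X/S,E)$ given in Proposition~\ref{theo:psap}, projected down via $Ru^{\rm conv}_{\os{\circ}{X}/\os{\circ}{S}*}$, to recover exactly the $E_1$-terms and abutment of {\rm (\ref{ali:wtfapwt})}. For the target, I would use the analogous gr-computation already carried out implicitly inside the proof of Lemma~\ref{lemm:alp} (diagram {\rm (\ref{cd:yapc})}), which identifies ${\rm gr}^P_kA_{\rm iso{\textrm -}zar}(X/S,\os{\circ}{L})$ as $\bigoplus_{j\geq \max\{-k,0\}} a^{(2j+k)}_*(Ru^{\rm crys}_{\os{\circ}{X}{}^{(2j+k)}/\os{\circ}{S}*}\Xi^{(2j+k)}(a^{(2j+k)*}_{\rm conv}(E))\otimes_{\mab Z}\vp^{(2j+k)}_{\rm crys}(\os{\circ}{X}/\os{\circ}{S}))[-2j-k]$, and to translate the abutment using $Ru^{\rm conv}_{X/S*}\eps^{{\rm conv}*}_{X/S}(E)\os{\sim}{\lo}Ru^{\rm crys}_{X/S*}\eps^{{\rm crys}*}_{X/S}(\Xi(E))\otimes^L_{\cal V}K$ from Theorem~\ref{theo:hct}. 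The commutative diagram {\rm (\ref{eqn:cad})} then ensures that the abutment isomorphism and the comparison of graded pieces are induced by the same filtered morphism, so these together produce a canonical isomorphism of the two spectral sequences term by term from $E_1$ onwards.

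The main obstacle will be verifying the compatibility of the $d_1$-differentials under this identification: on the convergent side, Proposition~\ref{prop:deccbd} describes $d_1$ as a signed sum of convergent Gysin/\v{C}ech-Gysin morphisms $G^{{\ul{\lam}_j}{\rm conv}}_{\ul{\lam}}$ built from the boundary of the residue triangle {\rm (\ref{eqn:oealog})}, while the $d_1$ of {\rm (\ref{ali:sxa})} is the obvious crystalline analogue. Since both Gysin morphisms are defined as boundaries of a Poincar\'{e} residue short exact sequence of de Rham type complexes, and the comparison functor $\Xi$ commutes with the formation of the log de Rham complex for log smooth schemes (this is exactly what underlies {\rm (\ref{eqn:kymc})} and its $\os{\circ}{X}{}^{(k)}$-versions), this compatibility should follow once one checks that the filtered isomorphism {\rm (\ref{eqn:crs})} is compatible with the Poincar\'{e} residue data used to build {\rm (\ref{eqn:gdxz})}. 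I expect this to reduce to a local computation on an embedding system $X_{\bul}\os{\sus}{\lo}\ol{\cal P}_{\bul}$ together with its PD-envelope ${\mathfrak D}_{\bul}$, by tracing through the construction of $\lam^*({\cal E}^{\bul})$ and checking that the natural map $\vpl_n\bet_{n*}{\cal E}^{\bul}_n\to \lam^*({\cal E}^{\bul})$ intertwines the two Poincar\'{e} residue morphisms — a routine but necessary verification that parallels the corresponding step in \cite{nb} for the integral zariskian Steenbrink complex.
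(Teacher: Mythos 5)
Your proposal follows essentially the same route as the paper: the corollary is obtained by pushing forward the filtered comparison isomorphism of Theorem \ref{theo:izz}, whose construction in the proof of Lemma \ref{lemm:alp} already contains the identification of graded pieces via the Poincar\'{e} residue and the trivial-log case of Theorem \ref{theo:hct} (diagram (\ref{cd:yapc})), so the two spectral sequences are the spectral sequences of filtered-isomorphic filtered complexes. The separate verification of $d_1$-compatibility you anticipate is not an extra obstacle, since the spectral sequence of a filtered complex is functorial and the identification of the $E_1$-terms on both sides is made through the same residue data appearing in (\ref{cd:yapc}).
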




\begin{theo}[{\bf Comparison theorem of $p$-adic isozariskian 
and zariskian Steenbrink complexes}]\label{theo:ccco} 
{\rm (\ref{theo:izz})} holds. 
\end{theo}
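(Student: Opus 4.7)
The plan is to construct the filtered isomorphism (\ref{eqn:crs}) by combining the machinery developed in \S\ref{sec:mplf}--\S\ref{sec:wfcipp} with the relative log comparison theorem (\ref{theo:hct}). First, observe that the construction of $(A_{\rm iso{\textrm -}zar}(X/S,\os{\circ}{L}),P)$ has already been carried out: one takes the simplicial log PD-envelope $\ol{\mathfrak D}_{\bul}$ of $X_{\bul}\os{\sus}{\lo}\ol{\cal P}_{\bul}$ over $(\os{\circ}{S},p{\cal O}_S,[~])$, sets ${\mathfrak D}_{\bul}:=\ol{\mathfrak D}_{\bul}\times_{\ol{S}}S$, and equips it with the integrable connection (\ref{ali:pl}) obtained via the factorization $\ol{\lam}{}_{n(\bul)}\col \ol{\mathfrak D}_{\bul}\lo {\mathfrak T}_{X_{\bul},n(\bul)}(\ol{\cal P}_{\bul})$; by (\ref{lemm:alp}) this datum satisfies the conditions (A), (B), (C), hence yields a well-defined object $(A_{\rm iso{\textrm -}zar}(X/S,\os{\circ}{L}),P)\in {\rm D}^+{\rm F}(f^{-1}({\cal O}_S))\otimes^L_{\mab Z}{\mab Q}$.

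Next, to produce (\ref{eqn:crs}), I would compare the two sides filtration-degree by filtration-degree. By (\ref{prop:ulcrz}), the underlying complex of $A_{\rm iso{\textrm -}zar}(X/S,E)=Ru^{\rm conv}_{\os{\circ}{X}/\os{\circ}{S}*}(A_{\rm conv}(X/S,E))$ is, up to Steenbrinkification, computed by $\vpl_n\bet_{\bul n*}({\cal E}^{\bul}_n\otimes_{{\cal O}_{{\cal P}^{\rm ex}_{\bul}}}\Om^{\bul}_{{\cal P}^{\rm ex}_{\bul}/\os{\circ}{S}})$, while the right-hand side of (\ref{eqn:crs}) is built from $\lam^*({\cal E}^{\bul})\otimes_{{\cal O}_{{\cal P}^{\rm ex}_{\bul}}}\Om^{\bul}_{{\cal P}^{\rm ex}_{\bul}/\os{\circ}{S}}$. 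The factorization $\ol{\lam}$ induces a natural morphism between these complexes that respects the $P$-filtration (since $P$ involves only log-pole counting, which is intrinsic to $\ol{\cal P}_{\bul}$), hence a filtered morphism
\begin{equation*}
(A_{\rm iso{\textrm -}zar}(X/S,E),P)\lo (A_{\rm iso{\textrm -}zar}(X/S,\os{\circ}{L}),P).
\end{equation*}
To show this is a filtered isomorphism, it suffices to check it on each ${\rm gr}^P_k$. By (\ref{theo:psap}) and the identification already performed inside the proof of (\ref{lemm:alp}) (see (\ref{cd:yapc})), both graded pieces reduce to direct sums, indexed by $j\geq \max\{-k,0\}$, of $a^{(2j+k)}_*$ applied to the log convergent, resp.~log crystalline, cohomology of $\os{\circ}{X}{}^{(2j+k)}/\os{\circ}{S}$ with coefficients $a^{(2j+k)*}(E)$, resp.~$\Xi^{(2j+k)}(a^{(2j+k)*}(E))$, twisted by the orientation sheaves; these coincide by the smooth componentwise application of Shiho's relative comparison theorem (\ref{theo:hct}).

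The commutativity of (\ref{eqn:cad}) then follows by splicing together the commutative diagrams (\ref{eqn:kyemc}) and (\ref{eqn:kyapc}) from the proof of (\ref{lemm:alp}): the vertical $\theta\wedge$-morphisms on both sides are induced by wedging with $d\log t$ pulled back from $\ol{S}$, and this operation commutes with the natural transformation $\vpl_n\bet_{\bul n*}(-)\to \lam^*(-)$ by functoriality. For the trivial-coefficient assertion (\ref{eqn:caad}), one takes the constant crystal $L={\cal O}_{X/S}$; then $\lam^*({\cal E}^{\bul})={\cal O}_{{\mathfrak D}_{\bul}}$ tautologically, and the resulting $A_{\rm zar}({\cal P}^{\rm ex}_{\bul}/S,{\cal O}_{{\mathfrak D}_{\bul}})$ is, by construction (after $\otimes^L_{\cal V}K$), the filtered zariskian Steenbrink complex $(A_{\rm zar}(X/S),P)\otimes^L_{\cal V}K$ of \cite{nb}, applied to the fixed simplicial embedding; independence of the embedding on both sides has been proved separately.

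The main obstacle I anticipate is the careful verification that the comparison morphism constructed from $\ol{\lam}$ is strictly compatible with the $P$-filtration defined in (14.1.5), since this filtration is built out of the quotients $P_{2j+k+1}/P_j$ and one must confirm that the log-pole filtration is preserved under the transition from the universal enlargement $\{{\mathfrak T}_{X_{\bul},n}(\ol{\cal P}_{\bul})\}$ to the PD-envelope $\ol{\mathfrak D}_{\bul}$; once this bookkeeping is settled, the reduction to (\ref{theo:hct}) on each $\os{\circ}{X}{}^{(k)}$ is routine, since each $\os{\circ}{X}{}^{(k)}/\os{\circ}{S}$ is smooth and so falls within the scope of Shiho's theorem.
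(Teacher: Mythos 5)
Your proposal is correct and follows essentially the same route as the paper: the paper's proof of (\ref{theo:ccco}) simply refers back to the proof of (\ref{lemm:alp}), where the comparison morphism induced by $\ol{\lam}$, the commutative diagrams (\ref{eqn:kyemc}), (\ref{eqn:kyapc}), (\ref{cd:yapc}) (giving (\ref{eqn:cad})), and the graded-piece identification via the residue isomorphism and the smooth-case comparison theorem (\ref{theo:hct}) on each $\os{\circ}{X}{}^{(2j+k)}$ are exactly the ingredients you spell out. Your unfolding of that argument, including the reduction of the filtered isomorphism to ${\rm gr}^P_k$ and the trivial-coefficient case (\ref{eqn:caad}), matches the paper's intended proof.
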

\begin{proof} 
We have already proved (\ref{theo:ccco}) 
in the proof of (\ref{lemm:alp}). 
\end{proof}

\begin{prop}[{\bf Base change theorem}]\label{prop:bchange} 
Let ${\cal V}'$, $K'$, $\pi'$, $S'$ and $S_1'$ 
be as in {\rm \S\ref{sec:fpw}}.  
Let  
\begin{equation*}
\begin{CD} 
S' @>{u}>> S \\
@VVV @VVV \\
{\rm Spf}({\cal V}')  @>>> {\rm Spf}({\cal V})
\end{CD}
\tag{14.7.1}\label{cd:bbsmlgs}
\end{equation*}
be  a commutative diagram of $p$-adic formal families of log points. 
Let
\begin{equation*}
\begin{CD}
X'@>{g}>> X\\
 @V{f'}VV  @VV{f}V \\
S'_1@>{u}>> S_1
\end{CD}
\end{equation*}
be a cartesian diagram of SNCL schemes.  
Assume that $\os{\circ}{f}$ is quasi-compact and quasi-separated.  
Then the base change morphism 
\begin{equation*}
{\cal K}_{S'}\otimes^L_{{\cal K}_S}(Rf^{\rm conv}_{X/S*}(\eps^{{\rm conv}*}_{X/S}(g^*_{\rm conv}(E))),P)
\os{\sim}{\lo} (Rf^{\rm conv}_{X'/S*}(\eps^{{\rm conv}*}_{X'/S'}(E)),P)
\tag{14.7.2}\label{eqn:simbct}
\end{equation*} 
is an isomorphism in the filtered derived category  
${\rm DF}({\cal K}_{Y'/S'})$. 
\end{prop}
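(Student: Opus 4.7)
The plan is to reduce the filtered convergent base change statement to the filtered crystalline base change statement already established in \cite{nb}, using the comparison theorem (\ref{theo:ccco}) as the bridge. Since the statement is about $Rf^{\rm conv}_{X/S*}$ applied to $\eps^{{\rm conv}*}_{X/S}(E)$, the first move is to rewrite both sides of (\ref{eqn:simbct}) in a form to which our comparison theorems apply. By (\ref{ali:pi}) together with the factorization $f^{\rm conv}_{X/S}=\os{\circ}{f}\circ u^{\rm conv}_{\os{\circ}{X}/\os{\circ}{S}}\circ \eps^{\rm conv}_{X/S}$, the source $(Rf^{\rm conv}_{X/S*}(\eps^{{\rm conv}*}_{X/S}(E)),P)$ is canonically identified with $R\os{\circ}{f}_{\rm conv*}((A_{\rm iso{\textrm -}zar}(X/S,E),P))$, and similarly on the $S'$ side.

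Next, I would invoke (\ref{theo:ccco}) on both sides: choose a crystal $L$ on ${\rm Crys}(X/S)$ with $K\otimes_{\cal V}L=\Xi(\eps^{{\rm conv}*}_{X/S}(E))$, obtain $g^*_{\rm crys}(L)$ on ${\rm Crys}(X'/S')$ (compatibly with the base change through $\Xi$), and apply the filtered isomorphism (\ref{eqn:crs}) over both $S$ and $S'$. This converts (\ref{eqn:simbct}) into the assertion that
\begin{equation*}
{\cal K}_{S'}\otimes^L_{{\cal K}_S}R\os{\circ}{f}_{{\rm crys}*}((A_{\rm zar}(X/S,\os{\circ}{L}),P))\otimes^L_{\cal V}K
\os{\sim}{\lo}
R\os{\circ}{f}{}'_{{\rm crys}*}((A_{\rm zar}(X'/S',\os{\circ}{g}{}^*\os{\circ}{L}),P))\otimes^L_{\cal V'}K'
\end{equation*}
is an isomorphism, which up to inverting $p$ is exactly the filtered crystalline base change statement applied to the zariskian Steenbrink complex, proved in \cite{nb}.

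The compatibility required between the two comparison isomorphisms (\ref{eqn:crs}) under pullback along $g$ follows from the naturality statement contained in the construction of $(A_{\rm zar}(X/S,\os{\circ}{L}),P)$: conditions (A)--(C) in \S\ref{sec:ct} are transitive with respect to the cartesian diagram, and the morphism $\rho_{g_{\bul}}$ in (\ref{cd:cngp}) restricts to the identity after the base change of simplicial embedding systems (which, after refining affine open coverings and taking fiber products, can be chosen compatibly over $\ol{S}\lo \ol{S}{}'$). Thus the two base change morphisms---the convergent one on the left of (\ref{eqn:simbct}) and the crystalline one from \cite{nb}---fit into a commutative square with the comparison isomorphisms as vertical arrows, reducing the convergent statement to the crystalline one filtered-piece by filtered-piece.

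The main obstacle I expect is precisely this compatibility of base change morphisms with the comparison isomorphism: one must verify that the natural transformation $u^*\circ Rf^{\rm conv}_{X/S*}\lo Rf^{\rm conv}_{X'/S'*}\circ g^*_{\rm conv}$, after applying the isomorphisms (\ref{eqn:caad}) on both source and target, coincides with the analogous crystalline natural transformation tensored with $K$. In practice this reduces to checking the identity on each graded piece of $(A_{\rm iso{\textrm -}zar},P)$, where by (\ref{ali:ruovp}) and the convergent-crystalline comparison on each $\os{\circ}{X}{}^{(2j+k)}$ (which is \emph{smooth} over $\os{\circ}{S}_1$), one is reduced to base change for $Rf^{\rm conv}_{\os{\circ}{X}{}^{(2j+k)}/\os{\circ}{S}*}$ applied to an isocrystal. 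This smooth base change in the convergent setting is in turn compatible with its crystalline counterpart via (\ref{theo:hct}), so the spectral sequence argument closes.
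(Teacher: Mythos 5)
Your proposal follows essentially the same route as the paper: the paper's proof also reduces (\ref{eqn:simbct}) to the crystalline side via the comparison diagram (\ref{eqn:cad}), invokes the log crystalline and classical crystalline base change theorems (the latter handling the graded pieces on the smooth strata $\os{\circ}{X}{}^{(2j+k)}$ exactly as in the proof of (\ref{lemm:alp})), and concludes by descending induction on the filtration $P$ — which is your graded-piece/spectral-sequence argument in slightly different packaging.
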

\begin{proof}
This follows from (\ref{eqn:cad}) and the log crystalline base change theorem 
and the classical crystalline base change theorem and 
the proof of (\ref{lemm:alp}) and the descending induction on the number of the filtration $P$. 
\end{proof}

\begin{rema}
By (\ref{prop:iekp}) we can replace $p$ by $\pi$ for the case where $E$ is trivial. 
\end{rema}

Henceforth we consider the case of the trivial coefficient. 
Using (\ref{eqn:cad}) for the trivial coefficient, we obtain the following:

\begin{coro}
$(1)$ If the crystalline analogue of {\rm (\ref{conj:remc})} holds, 
then {\rm (\ref{conj:remc})} holds.   
\par 
$(2)$ If the crystalline analogue of {\rm (\ref{conj:lhilc})} holds, 
then {\rm (\ref{conj:lhilc})} holds.   
\end{coro}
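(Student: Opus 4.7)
The plan is to deduce both parts from the comparison theorem \ref{theo:izz}, specifically the isomorphism (\ref{eqn:caad}) for the trivial coefficient, together with compatibility of the monodromy operators and of the first Chern classes on the two sides.

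First I would apply $Rf^{\rm conv}_{\os{\circ}{X}/\os{\circ}{S}*}$ (which equals $Rf^{\rm crys}_{\os{\circ}{X}/\os{\circ}{S}*}$ at the level of underlying derived categories under the comparison) to the filtered isomorphism
\begin{equation*}
(A_{\rm iso{\textrm -}zar}(X/S),P) \os{\sim}{\lo} (A_{\rm zar}(X/S),P)\otimes^L_{\cal V}K
\end{equation*}
and then take the associated spectral sequences and abutments. By (\ref{coro:cgrss}) the convergent weight spectral sequence (\ref{ali:wtfapwt}) is canonically isomorphic to the crystalline one (\ref{ali:wtbwcc}) after $\otimes_{\cal V}K$. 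Since the $E_2$-degeneration on the crystalline side (proved in \cite{nb} under properness) transfers to the convergent side, one obtains a natural filtered isomorphism $(R^qf^{\rm conv}_{X/S*}({\cal K}_{X/S}),P)\os{\sim}{\lo}(R^qf^{\rm crys}_{X/S*}({\cal O}_{X/S}),P)\otimes_{\cal V}K$ for every $q$, which is the fundamental identification through which both conjectures will be transferred.

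Next, for part (1), I would verify that the convergent monodromy operator $N_{\rm conv}$ of \S\ref{sec:mn}, defined as the boundary of the triangle (\ref{ali:omqay}), corresponds under the comparison to the crystalline monodromy operator defined analogously in \cite{nb}. By (\ref{prop:cmzoqm}) $N_{\rm conv}$ is identified with the nilpotent endomorphism $\nu$ coming from the projection $\Om^{\bul}_{{\cal P}^{\rm ex}_{\bul}/\os{\circ}{S}}/P_j\to \Om^{\bul}_{{\cal P}^{\rm ex}_{\bul}/\os{\circ}{S}}/P_{j+1}$ on the Steenbrink double complex; the same description holds for the crystalline analogue on $A_{\rm zar}(X/S)$. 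Since the comparison isomorphism is constructed via the same simplicial embedding system and the $\lambda^*$-pullback of the connection (as in the proof of \ref{lemm:alp}), the two operators $\nu$ agree literally on the two double complexes. Hence $N^k$ on ${\rm gr}^P_{q+k}R^qf^{\rm conv}_{X/S*}({\cal K}_{X/S})$ is an isomorphism iff its crystalline counterpart tensored with $K$ is, proving (1).

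For part (2), the task is to match the convergent first Chern class $c_{1,\rm conv}(L)$ with the crystalline first Chern class $c_{1,\rm crys}(L)$ under the comparison. Both are constructed from the boundary map of the exponential/logarithm short exact sequence $0\to 1+{\cal J}\to {\cal O}^*\to i_*({\cal O}_{\os{\circ}{X}}^*)\to 0$ in the two respective topoi, and the morphism of sites ${\rm Conv}\to {\rm Crys}$ underlying $\Xi$ sends one exact sequence to the other; so $\Xi(c_{1,\rm conv}(L))=c_{1,\rm crys}(L)$. Cup product is functorial for the comparison isomorphism (which is induced by a morphism of ringed topoi of simplicial enlargements), hence the operator $\eta^i\wedge$ on $R^{d-i}f^{\rm conv}_{X/S*}({\cal K}_{X/S})$ corresponds to $c_{1,\rm crys}(L)^i\wedge$ on $R^{d-i}f^{\rm crys}_{X/S*}({\cal O}_{X/S})\otimes_{\cal V}K$, and its bijectivity (as a filtered morphism with the shift $(i)$) transfers from the crystalline side to the convergent side.

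The main technical obstacle will be the monodromy compatibility in part (1): although both operators are described at the level of double complexes by the same ``projection modulo $P_{j+1}$'' formula, the comparison isomorphism in (\ref{eqn:caad}) is only a zigzag in the filtered derived category (passing through $R\pi_{{\rm zar}*}$ of the two double complexes $A_{\rm zar}({\cal P}^{\rm ex}_{\bul}/S,\lambda^*({\cal E}))$ and $A_{\rm zar}({\cal P}^{\rm ex}_{\bul}/S,{\cal E}({\mathfrak D}_{\bul}))$), so the required diagram chase showing that this zigzag intertwines the two $\nu$'s must be carried out carefully at the level of the representative double complexes, using that both double complexes are constructed from the same log de Rham complex $\Om^{\bul}_{{\cal P}^{\rm ex}_{\bul}/\os{\circ}{S}}$ with its $P$-filtration. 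The analogous matter for the Chern class in part (2) is less delicate because it only requires functoriality of cup products through the comparison morphism at the level of derived functors.
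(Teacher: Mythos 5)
Your proposal is correct and follows essentially the same route as the paper: the paper deduces the corollary directly from the comparison isomorphism (\ref{eqn:caad}) for the trivial coefficient (together with the identification of the spectral sequences in (\ref{coro:cgrss})), exactly as you do. The compatibilities of the monodromy operator and of $c_{1}$ under the comparison, which you spell out, are left implicit in the paper, so your write-up simply makes explicit the checks the paper's one-line argument relies on.
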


\begin{rema}
As is well-known, the crystalline analogues of (\ref{conj:remc}) and (\ref{conj:lhilc})
and in some cases. See  \cite{nb} and the references in [loc.~cit.].  
\end{rema}

\begin{theo}[{\bf $E_2$-degeneration}]\label{theo:ssd}
The spectral sequence $(\ref{ali:wtfapwt})$ degenerates at $E_2$ if $\os{\circ}{X}$ is proper over 
$\os{\circ}{S}_1$ and if $E$ is the trivial coefficient. 
\end{theo}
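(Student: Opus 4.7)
The plan is to reduce the statement to the $E_2$-degeneration of the isozariskian (equivalently, the rationalized crystalline) weight spectral sequence which was already established in \cite{nb}. The key bridge is the comparison package assembled earlier in this section: by (\ref{theo:ccco}) there is a canonical filtered isomorphism
\begin{equation*}
(A_{\rm iso{\textrm -}zar}(X/S),P)\os{\sim}{\lo}(A_{\rm zar}(X/S),P)\otimes^L_{\cal V}K,
\end{equation*}
and then by (\ref{coro:cgrss}) the convergent weight spectral sequence (\ref{ali:wtfapwt}) is canonically isomorphic to (\ref{ali:sxa})$\,\otimes_{\mab Z}{\mab Q}$, which is precisely the rationalization of the $p$-adic weight spectral sequence (\ref{ali:wtbwcc}) constructed in \cite{nb}.

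First, I would apply $Rf^{\rm conv}_{\os{\circ}{X}/\os{\circ}{S}*}$ to the filtered isomorphism provided by (\ref{theo:ccco}); since $\os{\circ}{f}$ is proper the resulting direct image is well-behaved, and by the base change step used in the proof of (\ref{lemm:alp}) together with (\ref{prop:bchange}), this yields a filtered isomorphism between $Rf^{\rm conv}_{X/S*}((A_{\rm conv}(X/S),P))$ and $Rf^{\rm crys}_{X/S*}((A_{\rm zar}(X/S),P))\otimes^L_{\cal V}K$ in ${\rm D}^+{\rm F}({\cal K}_S)$. The spectral sequences associated to these two filtered complexes are then canonically isomorphic, which is exactly (\ref{coro:cgrss}).

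Second, I invoke the $E_2$-degeneration of (\ref{ali:wtbwcc}) proved in \cite{nb} under the properness hypothesis on $\os{\circ}{X}/\os{\circ}{S}_1$. Since $-\otimes_{\cal V}K$ is exact and commutes with the formation of spectral sequences term-by-term, $E_2$-degeneration is preserved by tensoring with $K$; therefore the rationalized spectral sequence also degenerates at $E_2$. Transporting this degeneration across the canonical isomorphism of (\ref{coro:cgrss}) gives the $E_2$-degeneration of (\ref{ali:wtfapwt}).

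There is essentially no main obstacle beyond verifying that the comparison isomorphism of (\ref{theo:ccco}) is strictly compatible with the filtrations after applying $Rf^{\rm conv}_{\os{\circ}{X}/\os{\circ}{S}*}$, so that the induced isomorphism of spectral sequences indeed identifies the $E_1$-pages and their differentials; this is already implicit in the construction of (\ref{coro:cgrss}), where the graded pieces are matched via the convergent-crystalline comparison (\ref{theo:hct}) applied to each $\os{\circ}{X}{}^{(k)}/\os{\circ}{S}$ (which is smooth), and it only remains to check that the edge morphisms described in (\ref{prop:deccbd}) correspond under $\Xi$ to their crystalline counterparts. This final compatibility is a routine check using the functoriality of the Gysin morphism (\ref{eqn:gdxz}) under the functor $\Xi$ of (\ref{eqn:icym}), since the Gysin map is built from the Poincar\'{e} residue exact sequence which exists on both sides and is preserved by $\Xi$.
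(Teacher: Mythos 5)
Your proposal is correct and follows essentially the same route as the paper: the paper's proof also reduces (\ref{ali:wtfapwt}) to the already-established $E_2$-degeneration of the rationalized crystalline weight spectral sequence (proved in \cite{nb}, with the argument as in \cite{nh3}) via the comparison isomorphism (\ref{eqn:caad}) (equivalently (\ref{coro:cgrss})). The extra compatibility checks you mention (strictness, edge morphisms under $\Xi$) are exactly what the paper subsumes in the construction of (\ref{coro:cgrss}), so no genuinely different argument is involved.
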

\begin{proof}  
In \cite{nh3}, by the same proof as that of 
\cite[(2.17.2)]{nh2}, 
we have proved the $E_2$-degeneration of 
$(\ref{ali:ixsxa})\otimes_{\mab Z}{\mab Q}$ 
when $\os{\circ}{X}$ is proper over $\os{\circ}{S}_1$. 
Hence (\ref{theo:ssd}) follows from (\ref{eqn:cad}) for the trivial coefficient. 
\end{proof}

Let us recall the notation (\ref{ali:ccwez}). 
Set 
\begin{align*} 
(Rf_{X/S*}(\eps^{{\rm conv}*}_{X/S}(E)),P):=
Rf_*(Ru^{\rm conv}_{X/S*}(\eps^{{\rm conv}*}_{X/S}(E)),P). 
\end{align*}

\begin{theo}[{\bf Strict compatibility}]\label{theo:stcom}  
Let $f \col X \lo S_1$ and $f' \col X'\lo S_1$ be proper SNCL schemes over $S_1$.
Let $g\col X' \lo X$ be a morphism of simplicial log 
schemes over $S_1$. 
Let $q$ be an integer. 
Then the induced morphism 
\begin{equation*} 
g^* \col 
R^qf^{\rm conv}_{X/S*}
({\cal K}_{X/S})
\lo R^qf^{\rm conv}_{X'/S*}
({\cal K}_{X'/S}) 
\tag{14.12.1}\label{eqn:hfcds}
\end{equation*}
is strictly compatible with the weight filtration.
\end{theo}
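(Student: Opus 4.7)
The plan is to reduce this statement to its crystalline analogue, which has been established in \cite{nb}, by means of the comparison theorem (\ref{theo:izz}) together with the $E_2$-degeneration of the weight spectral sequence (\ref{theo:ssd}).

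First, I would apply $Rf_*$ and $Rf'_*$ to the filtered isomorphism in (\ref{eqn:caad}) to obtain filtered isomorphisms
\begin{equation*}
(Rf^{\rm conv}_{X/S*}({\cal K}_{X/S}),P) \os{\sim}{\lo} (Rf^{\rm crys}_{X/S*}({\cal O}_{X/S}),P)\otimes^L_{\cal V}K
\end{equation*}
and similarly for $X'$, both in the filtered derived category. By the naturality of the comparison isomorphism (\ref{eqn:caad}), which itself is constructed via the common intermediate filtered complex $(A_{\rm iso{\textrm -}zar}(X/S,\os{\circ}{L}),P)$ for $L={\cal O}_{X/S}$ and is functorial for the morphism $g$ thanks to (\ref{theo:fcpwczo}) and its crystalline analogue in \cite{nb}, the morphisms $g^*_{\rm conv}$ and $g^*_{\rm crys}\otimes 1$ are intertwined by the vertical isomorphisms in the square
\begin{equation*}
\begin{CD}
(Rf^{\rm conv}_{X/S*}({\cal K}_{X/S}),P) @>{g^*_{\rm conv}}>>
(Rf'^{\rm conv}_{X'/S*}({\cal K}_{X'/S}),P) \\
@V{\sim}VV @VV{\sim}V \\
(Rf^{\rm crys}_{X/S*}({\cal O}_{X/S}),P)\otimes^L_{\cal V}K
@>{g^*_{\rm crys}\otimes 1}>> (Rf'^{\rm crys}_{X'/S*}({\cal O}_{X'/S}),P)\otimes^L_{\cal V}K.
\end{CD}
\end{equation*}

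Second, I pass to the $q$-th cohomology sheaf. By (\ref{theo:ssd}) the convergent weight spectral sequence (\ref{ali:wtfapwt}) degenerates at $E_2$ for a proper SNCL scheme with trivial coefficient, and the same holds for its crystalline counterpart (the $p$-adic weight spectral sequence of \cite{nb} recalled in the Introduction around (\ref{ali:ixsxa})). The $E_2$-degeneration guarantees that the induced filtration on $R^q$ of each of the four corners coincides with the filtration inherited from the filtered complex, so after applying $R^q$ the diagram above becomes a commutative square of filtered ${\cal K}_S$-modules in which the vertical arrows are filtered isomorphisms.

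Third, the analogue of (\ref{eqn:hfcds}) in the log crystalline setting, namely that
$g^*_{\rm crys}\colon R^qf^{\rm crys}_{X/S*}({\cal O}_{X/S})\to R^qf'^{\rm crys}_{X'/S*}({\cal O}_{X'/S})$
is strictly compatible with the weight filtration $P$, has already been proved in \cite{nb}. Since ${\cal V}\to K$ is a flat localization, tensoring a strict morphism of filtered ${\cal V}$-modules by $K$ yields a strict morphism of filtered $K$-vector spaces; indeed flatness gives $(f\otimes 1)(M\otimes K)\cap(P_iM'\otimes K)=(f(M)\cap P_iM')\otimes K$. Combining this with the commutative square of the preceding paragraph yields the strict compatibility of $g^*_{\rm conv}$.

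The main obstacle is the careful verification of the naturality of the comparison isomorphism (\ref{theo:izz}) with respect to the morphism $g$: one must trace the construction of $(A_{\rm iso{\textrm -}zar}(X/S,\os{\circ}{L}),P)$ and check that the pull-back defined via \S\ref{sec:fpw} on the convergent side matches the pull-back on the crystalline side under the Shiho-type equivalence $\Xi$ of (\ref{eqn:icym}). The rest — $E_2$-degeneration, flatness of $K/{\cal V}$, and the cited crystalline result — is then routine.
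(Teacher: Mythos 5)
Your proposal is correct and follows essentially the same route as the paper, which simply reduces strict compatibility to the crystalline case proved in \cite{nb} and transfers it via the comparison theorem; your extra steps (functoriality of the comparison, flatness of ${\cal V}\to K$ preserving strictness) are just a fuller spelling-out of that reduction, and the appeal to $E_2$-degeneration is harmless but not actually needed, since the filtered isomorphism of complexes already identifies the induced filtrations on $R^q$.
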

\begin{proof} 
In \cite{nb} we have proved the strict compatibility of $g^*$ 
with respect to the weight filtration in the case of 
log crystalline cohomologies.  
Now (\ref{theo:stcom}) follows from the comparison theorem 
(\ref{theo:hct}). 
\end{proof}

\bigskip
\bigskip
\parno
Yukiyoshi Nakkajima 
\parno
Department of Mathematics,
Tokyo Denki University,
5 Asahi-cho Senju Adachi-ku,
Tokyo 120--8551, Japan. 
\parno
{\it E-mail address\/}: nakayuki@cck.dendai.ac.jp
\end{document}